\documentclass[11pt,a4paper,openright]{amsbook}
\usepackage[T1]{fontenc}
\usepackage{amssymb}
\usepackage{amsmath}
\usepackage{amsthm}
\usepackage{color}
\usepackage{graphicx}
\usepackage[usenames,dvipsnames]{xcolor}
\usepackage{shadethm}
\usepackage[margin=1.0in]{geometry}
\usepackage{enumerate}
\usepackage{dsfont}
\usepackage{hyperref}
\usepackage[titletoc]{appendix}
\usepackage{mathrsfs}

\usepackage{hyperref}
\hypersetup{colorlinks=true, citecolor=purple, linktocpage, linkcolor=blue, urlcolor=cyan}

\newtheorem{thm}{Theorem}[section]
\newtheorem{lem}[thm]{Lemma}
\newtheorem{prop}[thm]{Proposition}
\newtheorem{cor}[thm]{Corollary}
\newtheorem{exer}[thm]{Exercise}
\newtheorem{defn}[thm]{Definition}

\newtheorem{ex}[thm]{Example}
\newtheorem{rem}[thm]{Remark}

\numberwithin{equation}{section}

\setlength{\parindent}{0pt}

\newcommand{\R}{\mathbb{R}}
\newcommand{\A}{\mathcal{A}}
\newcommand{\B}{\mathcal{B}}
\newcommand{\N}{\mathbb{N}}
\newcommand{\C}{\mathbb{C}}
\newcommand{\Rbar}{\overline{\mathbb{R}}}

\newcommand{\Q}{\mathbb{Q}}
\newcommand{\E}{\mathbb{E}}
\newcommand{\p}{\mathbb{P}}
\newcommand{\one}{\mathds{1}}
\newcommand{\0}{\mathcal{O}}

\newcommand{\F}{\mathcal{F}}

\setlength\shadeboxsep{9pt} 
\setlength\shadedtextwidth{\dimexpr\textwidth-2\shadeboxsep\relax}

\setcounter{tocdepth}{4}

\title[Lectures on Probability Theory]{Lectures on Probability Theory}
\author[N. Moshayedi]{Nima Moshayedi\vspace{20cm}}
\address{Institut f\"ur Mathematik\\ Universit\"at Z\"urich\\ 
Winterthurerstrasse 190
CH-8057 Z\"urich}
\email[N.~Moshayedi]{nima.moshayedi@math.uzh.ch}

\begin{document}

\maketitle

\newpage

\chapter*{Abstract}
These notes have been written for the courses \emph{Stochastics I} and \emph{Stochastics II} given in the spring semester 2014 and spring semester 2015 at the University of Zurich. 
% I want to thank Professor Nikeghbali for his remarkable lectures and for the opportunity writing a comprehensive script. I also want to thank those assistants who have spend time with me correcting these notes. I want to mention that these notes are not suppose to replace the lectures itself but rather to be read parallel with another book on some introduction to probability theory or even another course about probability theory. These notes use the notion of measure theory and require the reader to know an introductory level of it. This script follows basically the notes on \emph{Measure and Integral}, written by myself in fall 2013, which are actually the topics covered in a usual course on measure theory. The basic background can be found in the appendices.
The first part of the notes gives an introduction to probability theory. It explains the notion of random events and random variables, probability measures, expectation, distributions, characteristic function, independence of random variables, types of convergence and limit theorems. The first part is separated into two different chapters. The first chapter is about combinatorial aspects of probability theory and the second chapter is the actual introduction to probability theory, which contains the modern probability language. The second part covers conditional expectations, martingales and Markov chains, which are easily accessible after reading the first part. The chapters are exactly covered in this order and go into some more details of the respective topic. 

\vspace{0.5cm}

Nima Moshayedi, \today

\newpage
\tableofcontents
\newpage

\part{The Modern Probability Language}

\chapter*{Introduction}

We think of probability theory as a mathematical module for random events. Therefore probability theory is widely used in many different topics, for example:

\begin{itemize}

\item{Biology}

\item{Economics / Insurance / Stochastic finance theory}

\item{Physics (for example statistical mechanics / Quantum mechanics)}

\item{Mathematics (Random matrix theory / Number theory / Group theory / etc.)}

\end{itemize}

{\bf Random Experiments:} Random experiments are experiments whose output cannot be surely predicted in advance. The theory of probability aims towards a mathematical theory which describes such phenomena. There are four important objects for probability theory:

\vspace{0.5cm}

\begin{itemize}

\item{{\bf State space $\Omega$:} The state space represents all possible outcomes of the experiment.

\vspace{0.5cm}

\begin{ex} There are many different state spaces:
 
\begin{enumerate}[$(i)$]

\item{A toss of a coin: $\Omega=\{H,T\}$ (head or tail)}

\item{Two successive tosses of a coin: $\Omega=\{HH,HT,TT,TH\}$}

\item{A toss of two dice: $\Omega=\{(i,j)\mid1\leq i\leq 6,1\leq j\leq 6\}$}

\item{The lifetime of a bulb: $\Omega=\R_+$}
\end{enumerate}
\end{ex}

}

\vspace{0.2cm}
\item{{\bf Events:} An event is a property which can observed either to hold or not to hold after the experiment is done. In mathematical terms an event is a subset of $\Omega$. We shall denote by $\A$ the family of all events. If $A$ and $B$ are two events, then

\begin{enumerate}[$(i)$]

\item{The contrary event is interpreted as $A^C$.
}

\item{The event $A$ or $B$ is interpreted as $A\cup B$.
}
\item{The event $A$ and $B$ is interpreted as $A\cap B$.
}

\item{The sure event is $\Omega$
}

\item{The impossible event is $\emptyset$
}
\end{enumerate}

}

\vspace{0.3cm}

\item{{\bf Probability Measure:} With each event $A$, one associates a number by $\p[A]$ and call it the probability of $A$. The number measures the likelihood of the event $A$ to be realized a priori before performing the experiment and $\p[A]\in[0,1]$. Imagine that these numbers are frequencies. Let us repeat the same experiment $n$ times. Denote by $f_n(A)$ the frequency with which $A$ is realized (i.e. number of times the event $A$ occurs divided by $n$) and we write $\p[A]=\lim_{n\to\infty}f_n(A)$. With this interpretation, we have to expect that 

\begin{enumerate}[$(i)$]

\item{$\p[\Omega]=1$}
\item{$\p[A\cup B]=\p[A]+\p[B]$ if $A\cap B=\emptyset$}

\end{enumerate}

In terms of mathematical modeling we have 

$$\p:\A\to[0,1],\hspace{0.5cm}\A\hspace{0.2cm}\sigma\text{-Algebra},$$

which is a probability measure.

In this notes, a mathematical model for our random experiment is a triple

$$(\Omega,\A,\p)$$

with $\Omega,\A,\p$ as we have seen.

}

\item{{\bf Random Variable:} A  random variable (r.v.) is a quantity which depends on the outcome of the experiment. In mathematical terms, this is a map from the space $\Omega$ into some space $E$ (and $E$ is very often $\R$, $\R^d$,$\N$,$\mathbb{Z}$) and this map $X:\Omega\to E$ should be measurable. We always think of $E$ as a measure space that is $E$ is endowed with a $\sigma$-Algebra $\mathcal{E}$, and from measure theory we know that one can transport $\p$ on $E$: for all $B\in\mathcal{E},\hspace{0.3cm}\p^X$ (or $\p_X$) is a probability measure on $\mathcal{E}$ such that for all $B\in\mathcal{E}$, 

$$\p^X[B]=\p_X[B]=\p[X^{-1}(B)]=\p\left[\underbrace{\left\{\omega\in\Omega\mid X(\omega)\in B\right\}}_{\in \A}\right]$$

and $\p_X$ is called the law or the distribution of $X$.

\vspace{0.2cm}

\begin{ex} We got the following examples:

\vspace{0.3cm}
\begin{enumerate}[$(i)$]

\item{Toss of two dice:

$$\Omega=\{(i,j)\mid 1\leq i\leq 6,1\leq j\leq 6\}$$
$$\A=\mathcal{P}(\Omega)(= 2^{\Omega})$$
$$\forall \omega\in\Omega,\hspace{0.3cm}\p[\{\omega\}]=\frac{1}{36}$$
$$\forall A\subset\Omega,\hspace{0.3cm}\p[A]=\frac{\vert A\vert}{36}$$

The map $\Omega\to(\N,\mathcal{P}(\N))$, $(i,j)\mapsto i+j$ is a r.v. for all $B\subset \N$, 

$$\p_X[B]=\frac{\vert\{(i,j)\mid i+j\in B\}\vert}{36}.$$

For instance if $B=\{2\}$ then $\p_X[\{2\}]=\p[\{1,1\}]=\frac{1}{36}.$

To conclude the introduction, let us emphasize the fact that $\Omega$ can be more involved and the construction of $\p$ "completed".

}

\item{We throw a die until we get a 6. Here the choice of $\Omega$ is less obvious. The number of time, we have to throw the die, is not bounded. A natural choice for $\Omega$ would be 

$$\Omega=\{1,2,3,4,5,6\}^{\N^{\times}},$$

$\omega\in\Omega$ that is $\omega=(\omega_1,...,\omega_n,...)$ with $\omega_j=\{1,2,3,4,5,6\}$ for all $j\geq 1$. $\A$ will be the product $\sigma$-Algebra i.e. the smallest $\sigma$-Algebra containing all sets of the form $\{\omega\mid\omega_1=i_1,...,\omega_n=i_n\}$, $n\geq 1$, $i_1,...,i_n\in\{1,2,3,4,5,6\}$ and $\p$ is the unique probability measure on $\Omega$.

\item{We are interested in the motion of a particle in the space, and this particle is subject to some random perturbations. If the time interval is $[0,1]$ a natural space of outcomes can be the space of continuous functions from $[0,1]$ into $\R^d$ and we set 

$$\Omega=\mathcal{C}([0,1],\R^3),$$ 

$\omega\in\Omega$, is a possible trajectory of the particle: $\omega:[0,1]\to\R^3$. Take $\A$ to be the Borel $\sigma$-Algebra when we consider the supremumnorm on $\Omega$ $\left(\|\omega\|_\infty=\sup_{t\in[0,1]}\vert\omega(t)\vert \right)$. A famous example for $\p$ is the so called \emph{Wiener} measure (denoted by $\mathbb{W}$). Under this measure, a typical trajectory is a \emph{Brownian motion}.
}
}
\end{enumerate}
\end{ex}
}
\end{itemize}

\chapter{Elements of Combinatorial Analysis and Simple Random Walks}
In this chapter we are going to give an introduction to combinatorial aspects of probability theory and describe several simple methods for the computation of different problems, where the usage of these is an important aspect. We will also already give some of the most basic examples of discrete distributions and consider combinatorial aspects of random events such as a first look at a simple random walk. We will look at the reflection principle and basic terminology of random walks as a combinatorial theory. Finally, we will consider probabilities of events for special random walks.

\section{Summability conditions}
Let $I$ be an index set, and  $(a_i)_{i\in I}\subset \Rbar_+^I$ (i.e. $a_i\in[0,\infty]$, for all $ i\in I$). We will write
$$\sum_{i\in I}a_i:=\sup_{J\subset I\atop J\hspace{0.1cm}\text{finite}}\sum_{j\in J}a_j.$$
The family $(a_i)_{i\in I}$ is said to be summable if $\sum_{i\in I}a_i$ is finite. The following elementary properties are important for such a family $(a_i)_{i\in I}$.
\begin{enumerate}[$(i)$]
\item{For $H\subseteq I$ we have 
$$\sum_{h\in H}a_h\leq \sum_{i\in I}a_i$$
and $(a_h)_{h\in H}$ is summable if $(a_i)_{i\in I}$ is summable.
}
\item{For a family $(a_i)_{i\in I}$ to be summable, it is necessary and sufficient that the Cauchy criterion is satisfied, i.e. for all $\epsilon>0$, there is a finite set $J\subset I$, with all finite subsets $K\subset I$ satisfying $K\cap J=\varnothing$, we get 
$$\sum_{i\in K}a_i<\epsilon.$$
}
\item{If the family $(a_i)_{i\in I}$ is summable, the set $\{i\in I\mid a_i\not=0\}$ is at most countable, as the union of the sets $I_n=\left\{i\in I\mid a_i\geq \frac{1}{n}\right\}$. Moreover, $I_n$ has at most $\left[n\left(\sum_{i\in I}a_i\right)\right]$ elements, where $[x]$ denotes the integer part of $x$.
}
\end{enumerate}
To avoid confusion we want to emphasize that If $A_1,...,A_n$ are disjoint sets, we will sometimes write $\sum_{n\geq 1}A_n$ instead of $\bigsqcup_{n\geq 1}A_n$.
\begin{lem}
Let $(a_i)_{i\in I}\subset\bar\R_+^I$ be a family of numbers in $\bar\R_+$. If $I=\sum_{\lambda\in L}H_{\lambda}$ and $q_\lambda=\sum_{i\in H_{\lambda}}a_i$, then 
$$\sum_{i\in I}a_i=\sum_{\lambda\in L}q_\lambda,$$
where $L$ is an Index set and $(H_\lambda)_{\lambda\in L}$ is a partition of $I$.
\end{lem}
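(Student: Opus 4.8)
The plan is to prove the two inequalities $\sum_{\lambda\in L} q_\lambda \le \sum_{i\in I} a_i$ and $\sum_{i\in I} a_i \le \sum_{\lambda\in L} q_\lambda$ separately, working directly from the definition of the unordered sum as a supremum over finite subsets. Write $S=\sum_{i\in I}a_i=\sup_{J\subset I\text{ finite}}\sum_{j\in J}a_j$ and $T=\sum_{\lambda\in L}q_\lambda$; everything takes values in $[0,\infty]$, and because all terms are non-negative there are no convergence or cancellation issues, so only the bookkeeping with the partition and the handling of possibly infinite blocks require care.

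For the inequality $S\le T$ I would take an arbitrary finite set $J\subset I$. Since $(H_\lambda)_{\lambda\in L}$ is a partition of $I$, each $i\in J$ lies in exactly one block, so the set $M=\{\lambda\in L\mid J\cap H_\lambda\ne\varnothing\}$ is finite and $J=\bigsqcup_{\lambda\in M}(J\cap H_\lambda)$. Then
$$\sum_{j\in J}a_j=\sum_{\lambda\in M}\ \sum_{j\in J\cap H_\lambda}a_j\le\sum_{\lambda\in M}q_\lambda\le T,$$
where the first inequality applies property $(i)$ to the finite subset $J\cap H_\lambda\subseteq H_\lambda$ and the second uses $q_\lambda\ge 0$. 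Taking the supremum over all finite $J\subset I$ yields $S\le T$.

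For the reverse inequality $T\le S$ I would fix a finite set $M\subset L$ and, for each $\lambda\in M$, an arbitrary finite set $J_\lambda\subset H_\lambda$. Because the blocks are pairwise disjoint, $J=\bigsqcup_{\lambda\in M}J_\lambda$ is a finite subset of $I$, whence
$$\sum_{\lambda\in M}\ \sum_{j\in J_\lambda}a_j=\sum_{j\in J}a_j\le S.$$
Now I would take the supremum over the choices of $J_\lambda$ independently for each $\lambda\in M$; since the terms are non-negative, the supremum of the sum equals the sum of the suprema, giving $\sum_{\lambda\in M}q_\lambda\le S$. Finally, taking the supremum over finite $M\subset L$ gives $T\le S$.

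The step I expect to be the main obstacle is this last passage to the supremum in the direction $T\le S$: one must justify that $\sup_{(J_\lambda)}\sum_{\lambda\in M}\sum_{j\in J_\lambda}a_j=\sum_{\lambda\in M}q_\lambda$ and simultaneously cover the case where some $q_\lambda=+\infty$. The infinite case is in fact automatic, since if $q_{\lambda_0}=+\infty$ then finite subsets of $H_{\lambda_0}\subseteq I$ already have arbitrarily large partial sums, forcing $S=+\infty=T$; otherwise all $q_\lambda$ are finite and the factorization of the supremum over the disjoint blocks is elementary. Combining the two inequalities gives $S=T$.
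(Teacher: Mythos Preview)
Your argument is correct and is the standard two-inequality proof via the definition $\sum_{i\in I}a_i=\sup_{J\subset I\text{ finite}}\sum_{j\in J}a_j$; the paper itself leaves this lemma as an exercise and gives no proof, so there is nothing to compare against. Your treatment of the potentially delicate step---factoring the supremum over independent finite subsets $J_\lambda$ and separately dispatching the case $q_{\lambda_0}=+\infty$---is handled cleanly.
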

\begin{proof}
Exercise.
\end{proof}
In the case $I=\N$, we say that $\sum_{n\in\N}u_n$ converges absolutely if $\sum_{n\in\N}\vert u_n\vert$ converges. In this case we can modify the order in which the terms are taken without changing the convergence nor the sum of the series.
\begin{thm}[Stirling]
For all $n\in\N\setminus\{0\}$ we have 
$$n!=\kappa n^{n+\frac{1}{2}}\exp\left(-n+\frac{\theta(n)}{12n}\right),$$
where $1-\frac{1}{12n+1}\leq \theta(n)\leq 1$ and $\kappa=\int_{-\infty}^\infty e^{-\frac{1}{2}w^2}dw=\sqrt{2\pi}$.
\end{thm}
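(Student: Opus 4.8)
The plan is to study the sequence $a_n := n!/(n^{n+1/2}e^{-n})$, because the claimed identity is exactly the statement that $\theta(n)/(12n) = \log a_n - \log\kappa$. Thus it suffices to prove that $a_n$ converges to a finite positive limit $\kappa$ and that $\frac{1}{12n+1} \le \log a_n - \log\kappa \le \frac{1}{12n}$; the upper bound then forces $\theta(n)\le 1$ and the lower bound forces $\theta(n)\ge 1 - \frac{1}{12n+1}$. My strategy is to extract monotonicity of three auxiliary sequences from a single estimate on the one-step difference $\log a_n - \log a_{n+1}$, and then to pin down the limiting constant by an independent argument.

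First I would compute the difference. A direct simplification gives $\log a_n - \log a_{n+1} = (n+\tfrac12)\log\frac{n+1}{n} - 1$. Writing $t = \frac{1}{2n+1}$, so that $\frac{n+1}{n} = \frac{1+t}{1-t}$ and $n+\tfrac12 = \frac{1}{2t}$, and inserting the power series $\log\frac{1+t}{1-t} = 2\sum_{k\ge 1}\frac{t^{2k-1}}{2k-1}$, I obtain
$$\log a_n - \log a_{n+1} = \sum_{k\ge1}\frac{t^{2k}}{2k+1} > 0,$$
so $(\log a_n)_n$ is decreasing. Majorizing the series by a geometric one, $\sum_{k\ge1}\frac{t^{2k}}{2k+1} < \frac{t^2}{3}\cdot\frac{1}{1-t^2}$, and a short computation with $t=\frac1{2n+1}$ rewrites the right-hand side as $\frac{1}{12n}-\frac{1}{12(n+1)}$; hence $\log a_n - \frac{1}{12n}$ is increasing.

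The sharp lower bound is the technical heart. Here I would show $\log a_n - \log a_{n+1} > \frac{1}{12n+1}-\frac{1}{12(n+1)+1}$. Keeping only the leading term of the series, it suffices to verify $\frac{t^2}{3} > \frac{1}{12n+1}-\frac{1}{12n+13}$, which after clearing denominators reduces to the elementary polynomial inequality $(12n+1)(12n+13) > 36(2n+1)^2$, i.e. $24n > 23$, true for every $n\ge1$. Consequently $\log a_n - \frac{1}{12n+1}$ is decreasing. Now the decreasing sequence $(\log a_n)$ is bounded below by the first term $\log a_1 - \frac1{12}$ of its increasing companion, so it converges to a limit $C$; since $\log a_n - \frac{1}{12n}$ and $\log a_n - \frac{1}{12n+1}$ also tend to $C$—the former increasing to it, the latter decreasing to it—squeezing yields $\frac{1}{12n+1} \le \log a_n - C \le \frac{1}{12n}$, which are precisely the required bounds with $\kappa = e^C$.

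It remains to identify $\kappa$, and this is the second obstacle: the monotonicity argument alone only gives $0 < \kappa < \infty$, so the value $\sqrt{2\pi}$ must come from elsewhere. I would feed the already-established asymptotic $n! \sim \kappa\, n^{n+1/2}e^{-n}$ into Wallis' product in the form $\frac{(n!)^2\,2^{2n}}{(2n)!\,\sqrt n} \to \sqrt\pi$. Substituting the asymptotics for $n!$ and $(2n)!$ collapses the left-hand side to $\kappa/\sqrt2$, forcing $\kappa/\sqrt2 = \sqrt\pi$ and hence $\kappa = \sqrt{2\pi}$, in agreement with the stated Gaussian integral $\int_{-\infty}^\infty e^{-w^2/2}\,dw = \sqrt{2\pi}$.
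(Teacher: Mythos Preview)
Your argument is correct and complete. The paper states Stirling's formula without proof, so there is nothing to compare against; what you have written is the standard Robbins-type derivation, and each step checks out: the series identity $\log a_n-\log a_{n+1}=\sum_{k\ge1}t^{2k}/(2k+1)$ with $t=1/(2n+1)$, the geometric majorization yielding $\frac{1}{12n}-\frac{1}{12(n+1)}$, the polynomial inequality $24n>23$ for the sharp lower bound, and the Wallis-product identification of $\kappa$ all go through exactly as you describe.
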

\section{Finite Probability Spaces}
\subsection{General Probability Spaces}
\begin{defn}[Probability measure]
Let $(\Omega,\A)$ be a measurable space. A probability measure defined on $\A$ is an application 
$$\p:\A\to[0,1],$$ 
such that 
\begin{enumerate}[$(i)$]
\item{$\p[\Omega]=1$.}
\item{for all $(A_n)_{n\in \N}$, with $A_n\cap A_m=\varnothing$ for $n\not=m$, we have 
$$\p\left[\bigcup_{n\in\N}A_n\right]=\sum_{n\in \N}\p[A_n].$$
}
\end{enumerate}
\end{defn}
\begin{rem}
We want to recall the following properties without a proof.
\begin{itemize}
\item{For $A,B\in\A$ with $A\subset B$ we get $\p[A]\leq \p[b]$.
}
\item{For $A\in\A$ we get $\p[A^C]=1-\p[A]$.
}
\item{If $(A_n)_{n\in\N}\subset\A$ and $A_n\uparrow A$ as $n\to\infty$, then $\p[A_n]\uparrow \p[A]$ as $n\to\infty$.
}
\item{If $(A_n)_{n\in\N}\subset\A$ and $A_n\downarrow A$ as $n\to\infty$, then $\p[A_n]\downarrow \p[A]$ as $n\to\infty$.
}
\end{itemize}
\end{rem}
\begin{thm}
Let $\mu$ be a measure on a measureable space $(\Omega,\A)$. 
\begin{enumerate}[$(i)$]
\item{If $(A_n)_{n\in\N}\subset\A$ is an increasing family of measurable sets, then
$$\mu\left( \bigcup_{n\in\N}A_n \right)=\lim_{n\to\infty}\mu(A_n).$$
Conversely, if an additive function satisfies the property above, then it is a measure.
}
\item{Let $\mu$ be an additive, positive and bounded function on $\A$. Then $\sigma$-additivity is equivalent to say that for all decreasing sequences $(B_n)_{n\in\N}\subset\A$, we get the implication
$$\bigcap_{n\in\N}B_n=\varnothing\Longrightarrow \lim_{n\to\infty}\mu(B_n)=0.$$
}
\end{enumerate}
\end{thm}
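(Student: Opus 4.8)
The plan is to handle both statements with the standard device of converting between disjoint, increasing, and decreasing families, so that everything reduces to finite additivity combined with the appropriate monotone limit.

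For part $(i)$, forward direction, I would first disjointify the increasing sequence: set $C_1=A_1$ and $C_n=A_n\setminus A_{n-1}$ for $n\geq 2$. Then the $C_n$ are pairwise disjoint, $\bigcup_{k=1}^N C_k=A_N$, and $\bigcup_n C_n=\bigcup_n A_n$. Applying $\sigma$-additivity to the $C_n$ and finite additivity to the partial unions gives
$$\mu\Big(\bigcup_n A_n\Big)=\sum_n\mu(C_n)=\lim_{N\to\infty}\sum_{k=1}^N\mu(C_k)=\lim_{N\to\infty}\mu(A_N).$$
For the converse, assume $\mu$ is finitely additive and continuous from below. Given pairwise disjoint $(D_n)$, I set $A_N=\bigcup_{k=1}^N D_k$; this sequence increases to $\bigcup_n D_n$, and finite additivity gives $\mu(A_N)=\sum_{k=1}^N\mu(D_k)$. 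Passing to the limit via the assumed continuity from below yields $\mu(\bigcup_n D_n)=\sum_n\mu(D_n)$, which is $\sigma$-additivity.

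For part $(ii)$, I would exploit complementation to pass between decreasing and increasing sequences, which is precisely where boundedness enters. Assuming first $\sigma$-additivity, let $(B_n)$ decrease with $\bigcap_n B_n=\varnothing$. Put $A_n=B_1\setminus B_n$; then $(A_n)$ increases to $B_1$, and since $B_n\subseteq B_1$ finite additivity gives $\mu(A_n)=\mu(B_1)-\mu(B_n)$. By the continuity from below established in part $(i)$, $\mu(B_1)=\lim_n\mu(A_n)=\mu(B_1)-\lim_n\mu(B_n)$, and because $\mu(B_1)\leq\mu(\Omega)<\infty$ we may cancel to obtain $\lim_n\mu(B_n)=0$. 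Conversely, assuming the $\varnothing$-continuity property, let $(D_n)$ be pairwise disjoint with union $S$ and set $B_N=\bigcup_{k>N}D_k$, so that $(B_N)$ decreases to $\varnothing$. Finite additivity gives $\mu(S)=\sum_{k=1}^N\mu(D_k)+\mu(B_N)$, and letting $N\to\infty$ with $\mu(B_N)\to0$ produces $\mu(S)=\sum_n\mu(D_n)$.

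The only genuinely delicate point is the cancellation $\mu(B_1)-\lim_n\mu(B_n)$ in part $(ii)$: it requires $\mu(B_1)$ to be finite, which is exactly what the boundedness assumption guarantees. I expect this to be the main obstacle to keep track of, since without finiteness the implication fails (for instance, counting measure on $\N$ with $B_n=\{n,n+1,\dots\}$ has empty intersection yet $\mu(B_n)=\infty$ for every $n$). Everything else is routine manipulation of finite additivity and monotone limits.
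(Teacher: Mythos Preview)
The paper does not actually supply a proof for this theorem in Chapter~1; it is stated without demonstration. The only overlap is with the Appendix's proposition on basic properties of measures, where the forward direction of part~$(i)$ is proved via exactly the disjointification $C_n=A_n\setminus A_{n-1}$ you use, and the decreasing case is handled by complementing inside $B_0$. Your argument is correct throughout, covers both converses that the paper omits entirely, and correctly isolates boundedness as the ingredient permitting the cancellation in part~$(ii)$; there is nothing to compare against beyond the shared standard technique in the one place the paper does argue.
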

\subsection{Probability measures on finite spaces}
There are two cases where $(\Omega,\A)$ is a measurable space, either $\Omega$ is finite or $\A$ is finite. An important question is how to define probability measures on finite or countable spaces. If $I$ is at most countable, then having a probability measure on $\mathcal{P}(I)$ is equivalent to having a family $(a_i)_{i\in I}$ of positive real numbers such that $\sum_{i\in I}a_i=1$. Moreover, for $A\subset I$, we can define a measure $\mu$ by 
$$\mu(A)=\sum_{i\in A}a_i,$$
where $\mu(\{i\})=a_i$ for all $i\in I$.
\begin{ex}
We got the following examples:
\subsubsection{The Poisson distribution} The Poisson distribution with parameter $\lambda>0$, is a probability measure $\Pi_\lambda$ on $(\N,\mathcal{P}(\N))$, defined for all $n\geq0$ by 
$$\Pi_{\lambda}(\{ n\})=e^{-\lambda}\frac{\lambda^n}{n!}.$$
For $A\subset \N$ we thus get 
$$\Pi_\lambda(A)=\sum_{n\in A}e^{-\lambda}\frac{\lambda^n}{n!}.$$
In order to check that it is indeed a probability distribution, we need to check that $\sum_{i\geq 0}\Pi_\lambda(\{i\})=1$. indeed, we have 
$$\sum_{i\geq 0}e^{-\lambda}\frac{\lambda^i}{i!}=e^{-\lambda}\sum_{i\geq 0}\frac{\lambda^i}{i!}=1.$$
\subsubsection{The Riemann $\zeta$-function} Another example can be obtained by considering the function $\zeta(s)=\sum_{n\geq 1}n^{-s}$ for $s>1$, which we can deform to a probability measure. Indeed, if we set $a_n=\frac{1}{\zeta(s)}\cdot \frac{1}{n^s}$ for all $n\geq 1$, we get 
$$\sum_{n\geq 1}a_n=\frac{1}{\zeta(s)}\sum_{n\geq 1}n^{-s}=\frac{1}{\zeta(s)}\cdot \zeta(s)=1.$$
\subsubsection{The Geometric distribution} Similarly one can define the geometric distribution with parameter $r\in(0,1)$ on $(\N\setminus\{0\},\mathcal{P}(\N\setminus\{0\}))$ by 
$$\gamma_r(\{n\})=(1-r)r^{n-1}.$$
\subsubsection{The Uniform distribution} For a finite state space $\Omega$, we can define the uniform distribution for any measurable set $A\in\A$ by 
$$\p[A]=\frac{\vert A\vert}{\vert\Omega\vert}.$$
For $\omega\in\Omega$ we get $\p[\{\omega\}]=\frac{1}{\vert\Omega\vert}.$ Take for example $\Omega=\{1,2,...,N\}$. Then $\p[\{k\}]=\frac{1}{N}$. 
\begin{rem}
There is no uniform measure on $\N$, since $\N$ is not finite. Thus from now on we shall only consider the case of a finite state space $\Omega$ or a finite $\sigma$-Algebra $\A$.
\end{rem}
\end{ex}
\begin{thm}
Let $\A$ be a finite $\sigma$-Algebra on a state space $\Omega$. Then there exists a partition $A_1,...,A_n$ of $\Omega$, such that $A_k\in\A$ for all $k\in\{1,...,n\}$ and with the property that any $A\in \A$ can be written as a union of the events $A_1,...,A_n$. The sets are called the atoms of $\A$.
\end{thm}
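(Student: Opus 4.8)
The plan is to build the atoms explicitly out of the points of $\Omega$ and then verify the three required properties: membership in $\A$, the partition property, and the union property. For each $\omega \in \Omega$, I would define
$$A(\omega) := \bigcap_{B \in \A,\, \omega \in B} B,$$
the intersection of all events containing $\omega$. Since $\A$ is finite this is a finite intersection, and a $\sigma$-Algebra is closed under finite intersections, so $A(\omega) \in \A$; moreover $\omega \in A(\omega)$, so each such set is nonempty. By construction $A(\omega)$ is the \emph{smallest} element of $\A$ containing $\omega$: if $B \in \A$ and $\omega \in B$, then $A(\omega) \subseteq B$. This minimality is the engine that drives the whole argument.

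The key step, and the one I expect to be the main obstacle, is proving the dichotomy that any two atoms are either equal or disjoint. I would establish the sharper statement: if $\omega' \in A(\omega)$, then $A(\omega') = A(\omega)$. One inclusion is immediate from minimality, since $\omega' \in A(\omega)$ and $A(\omega')$ is the smallest event containing $\omega'$, giving $A(\omega') \subseteq A(\omega)$. For the reverse inclusion I would consider the event $A(\omega) \setminus A(\omega') = A(\omega) \cap A(\omega')^C \in \A$. If $\omega$ were \emph{not} in $A(\omega')$, this event would contain $\omega$ yet be strictly smaller than $A(\omega)$ (it omits $\omega'$), contradicting the minimality of $A(\omega)$. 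Hence $\omega \in A(\omega')$, and minimality of $A(\omega)$ then yields $A(\omega) \subseteq A(\omega')$. Thus $A(\omega) = A(\omega')$, from which the equal-or-disjoint dichotomy follows at once: if two atoms share a point $x$, then both equal $A(x)$.

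With the dichotomy in hand, the distinct atoms form a partition of $\Omega$, being pairwise disjoint by the dichotomy and covering $\Omega$ because $\omega \in A(\omega)$ for every $\omega$. Since every atom belongs to the finite collection $\A$, there are only finitely many distinct atoms, which I label $A_1, \dots, A_n$. Finally, for the union property I would take an arbitrary $A \in \A$ and show $A = \bigcup_{\omega \in A} A(\omega)$: each $\omega \in A$ satisfies $A(\omega) \subseteq A$ by minimality, which gives the inclusion $\supseteq$, while $\omega \in A(\omega)$ gives $\subseteq$. Grouping together the atoms that coincide, $A$ is exhibited as a union of some of the $A_1, \dots, A_n$, which completes the proof.
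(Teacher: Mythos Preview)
Your proof is correct and follows essentially the same approach as the paper: both define the atom at $\omega$ as the intersection of all events in $\A$ containing $\omega$, observe this lies in $\A$ by finiteness, and then establish that $\omega'\in A(\omega)$ forces $A(\omega)=A(\omega')$ so that the distinct atoms partition $\Omega$. If anything, you supply more detail than the paper does, spelling out the minimality argument for the dichotomy and explicitly verifying the union property for arbitrary $A\in\A$, both of which the paper leaves to the reader.
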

\begin{proof}
Fix $\omega\in\Omega$. Define the set $C(\omega):=\bigcap_{B\in\A,\atop\omega\in B}B$. Then by finite intersection it follows that $C(\omega)\in\A$. In fact $C(\omega)$ is the smallest element of $\A$, which contains $\omega$. Moreover, it is easy to see that $\omega\sim\omega'$ if and only if $C(\omega)=C(\omega')$ defines an equivalence relation on $\Omega$ and that $\omega'\in C(\omega)$ if and only if $C(\omega)=C(\omega')$. Hence if $A$ is an equivalence class and $\omega\in A$, then $A=C(\omega)$. Therefore, one can take the events $A_1,...,A_n$ as the equivalence classes.
\end{proof}
\section{Basics of combinatorial Analysis}
\subsection{Sampling}
Let us consider a population of size $n\in\N$, i.e. a set $S=\{S_1,...,S_n\}$ with $n$ elements. We call any ordered sequence $(S_{i_1},...,S_{i_r})$ of $r$ elements of $S$ a sample of size $r$, drawn from this population. Then there are two possible procedures.
\begin{enumerate}[$(i)$]
\item{\emph{Sampling with replacement}, i.e. the same element can be drawn more than once.
}
\item{\emph{Sampling without replacement}. Here an element one choses is removed from the population. In this case $r\leq n$.
}
\end{enumerate}
In either case, our experiment is described by a sample space $\Omega$ in which each individual points represents a sample of size $r$. In the first case, we get $\vert\Omega\vert=n^r$, where an element $\omega\in\Omega$ is of the form $\omega=(S_{i_1},...,S_{i_r})$. In the second case we get 
$$\vert\Omega\vert=n(n-1)\dotsm(n-r+1)=\frac{n!}{(n-r)!}.$$
\begin{ex}
We got the following examples:
\begin{enumerate}[$(i)$]
\item{If by a "ten letter word" is meant a (possibly meaningless) sequence of ten letters, then such a word represents a sample from the population of 26 letters. There are $26^{10}$ such words.
}
\item{Tossing a coin $r$ times is one way of obtaining a sample of size $r$ drawn from the population of the letters $H$ and $T$. Usually we assign equal probabilities to all samples, namely $n^{-r}$ is sampling with replacement and $\frac{1}{n(n-1)...(n-r+1)}$ is a sampling without replacement.
}
\item{A random sample of size $r$ with replacement is taken from a population of size $n$, and we assume that $r\leq n$. The probability $p$ that in the sample no element appears twice is
$$p=\frac{n(n-1)\dotsm (n-r+1)}{n^r}\hspace{0.2cm}\left(\p[A]=\frac{\vert A\vert}{\vert\Omega\vert}\right).$$
Consequently, the probability of having four different numbers chosen from $\{0,1,2,3,...,9\}$ is (here $n=10$, $r=4$)
$$\frac{10\times 9\times 8\times 7}{10^4}\approx 0.5.$$
In sampling without replacement the probability for any fixed element of the population to be included in a random sample of size $r$ is 
$$1-\frac{(n-1)(n-2)\dotsm (n-r)}{\underbrace{n(n-1)\dotsm (n-r+1)}_{\text{probability of the complementary event}}}=1-\frac{n-r}{n}=\frac{r}{n}.$$ 
We have used that for $A\in\A$, $\p[A]=1-\p[A^C]$. Similarly, in sampling with replacement the probability that a fixed element of the population to be included in a random sample of size $r$ is 
$$1-\left(\frac{n-1}{n}\right)^r=1-\left(1-\frac{1}{n}\right)^2$$
}
\end{enumerate}
\end{ex}
\subsection{Subpopulations}
Let $S=\{S_1,...,S_n\}$ be a population of size $n$. We call subpopulations of size $r$ any set of $r$ elements, distinct or not, chosen from $S$. Elements of a subpopulation are not ordered, i.e. two samples of size $r$ from $S$ correspond to the same subpopulation if they only differ by the order of the elements. Here we have two cases as well. The case $\underline{without}$ replacement: We must have $r\leq n$. There are $\binom{n}{r}=\frac{n!}{r!(n-r)!}$. We have that $\binom{n}{r}=\binom{n}{n-r}$. We always use the convention $0!=1$.
\begin{ex} We got the following examples:
\subsubsection{Poker} There are $\binom{52}{5}=2'598'960$ hands at poker. Let us calculate the probability $p$ that a hand at poker contains 5 different face values. The face values can be chosen in $\binom{13}{5}$ different ways, and corresponding to each case we can choose one of the four suits. It follows that 
$$p=\frac{4^5\cdot\binom{13}{5}}{\binom{52}{5}}\approx 0.5.$$
\subsubsection{Senator problem} Each of the 50 states has two senators. We consider the event that in a committee of 50 senators chose at random
\begin{itemize}
\item{a given state is represented.}
\item{all states are represented.}
\end{itemize}
In the first case, it is better to calculate the probability $q$ of the complementary event that the given state is not represented
$$q=\frac{\binom{98}{50}}{\binom{100}{50}}=\frac{50\times 49}{100\times 99}.$$
For the second case we note that a comittee including a senator of all states can be chosen in $2^{50}$ different ways. The probability that all states are included is 
$$\frac{2^{50}}{\binom{100}{50}}\approx 4.126\cdot 10^{-14}.$$
\subsubsection{An occupancy problem} Consider a random distribution of $r$ balls in $n$ cells. To find the probability $p_k$ that a specified cell contains exactly $k$ balls $(k=0,1,...,r)$. We note that $k$ balls can be chosen in $\binom{r}{k}$ different ways and the remaining $(r-k)$ balls can be placed into the remaining $(n-1)$ cells in $(n-1)^{r-k}$ so we get 
$$p_k=\binom{r}{k}(n-1)^{r-k}\frac{1}{n^r}\hspace{0.2cm}\left(\p[A]=\frac{\vert A\vert}{\vert \Omega\vert}\right)$$
We call $\tilde p=\frac{1}{n}$, then 
$$p_k=\binom{r}{k}\tilde p^k(1-\tilde p)^{r-k}.$$
This is called the binomial distribution of parameters $p$ and $r$. It is a probability distribution on $\{0,1,...,r\}$ and 
$$\sum_{k=0}^rp_k=\sum_{k=0}^r\binom{r}{k}\tilde p^k(1-\tilde p)^{r-k}=(\tilde p+1-\tilde p)^r=1.$$
For example in a coin tossing game we have $r$ tosses and we set $H\leftrightarrow 1$ and $T\leftrightarrow 0$. We have the state space 
$$\Omega=\{(\omega_j)_{1\leq j\leq r}\},\hspace{0.2cm}\omega_j\in\{0,1\}.$$
Moreover, we have that 
$$S_r=\sum_{j=1}^r\omega_j,$$
where $S_r$ is the number of heads. Assume that $\p(H)=p\in[0,1]$, then 
$$\p[S_r=k]=\binom{r}{k}p^k(1-p)^{r-k}.$$
\begin{thm}
Let $r_1,...,r_k$ be integers such that $r_1+...+r_k=n$ $(r_i\geq 0,r_i\in\N)$. The number of ways in which a population of $n$ elements can be partitioned into $k$ subpopulations of which the first contains $r_1$ elements, the second contains $r_2$ elements etc., is given by 
$$\frac{n!}{r_1!r_2!\dotsm r_k!}$$
\end{thm}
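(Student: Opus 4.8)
The plan is to construct such a partition one subpopulation at a time and to count the number of admissible choices at each stage, then to multiply these counts by the multiplication principle. Since the subpopulations are labelled (the ``first'', the ``second'', and so on), the selections are genuinely sequential and no extra ordering needs to be divided out.

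First I would choose the $r_1$ elements forming the first subpopulation: by the count of subpopulations without replacement this can be done in $\binom{n}{r_1}$ ways. Having fixed these, there remain $n-r_1$ elements, from which I choose the $r_2$ elements of the second subpopulation, in $\binom{n-r_1}{r_2}$ ways; and in general, at the $j$-th stage there remain $n-(r_1+\dotsm+r_{j-1})$ elements and I choose $r_j$ of them, in $\binom{n-(r_1+\dotsm+r_{j-1})}{r_j}$ ways. At the final stage exactly $r_k$ elements are left and the last subpopulation is forced, consistent with $\binom{r_k}{r_k}=1$. Because each such sequence of choices yields a distinct partition and every partition arises in exactly one way, the total count is the product
$$\prod_{j=1}^{k}\binom{n-(r_1+\dotsm+r_{j-1})}{r_j}.$$

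The remaining step is to simplify this product, and here I expect a clean telescoping rather than any real obstacle. Writing each factor as $\frac{m_j!}{r_j!\,(m_j-r_j)!}$ with $m_j=n-(r_1+\dotsm+r_{j-1})$, I note that $m_j-r_j=m_{j+1}$, so the factorial in the numerator of the $(j+1)$-th factor cancels the term $(m_j-r_j)!$ in the denominator of the $j$-th factor. After all cancellations only the initial numerator $n!$ and the denominators $r_1!\,r_2!\dotsm r_k!$ survive, together with the trailing term $(n-(r_1+\dotsm+r_k))!=0!=1$ using the stated convention. This yields the claimed value $\frac{n!}{r_1!\,r_2!\dotsm r_k!}$.

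The only point requiring care is the bookkeeping of the multiplication principle, namely checking that distinct sequences of selections correspond bijectively to distinct labelled partitions, together with the boundary behaviour when some $r_i=0$, which is harmless since $\binom{m}{0}=1$ and $0!=1$ leave both the product and the formula unchanged. An alternative route, which I would keep in mind as a sanity check, is an overcounting argument: list all $n!$ orderings of the population, assign the first $r_1$ positions to group one, the next $r_2$ to group two, and so on, and then divide by the $r_1!\,r_2!\dotsm r_k!$ internal reorderings that produce the same partition.
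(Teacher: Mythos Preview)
Your argument is correct and coincides with the paper's primary approach: the paper simply records the identity
\[
\frac{n!}{r_1!\dotsm r_k!}=\binom{n}{r_1}\binom{n-r_1}{r_2}\dotsm\binom{n-r_1-\dotsm-r_{k-2}}{r_{k-1}},
\]
which is exactly your sequential-selection product together with the telescoping you carried out (you have in fact supplied the counting justification and the algebraic details that the paper leaves implicit). The paper also sketches an independent second proof, expanding $(U_1+\dotsm+U_k)^n$ both as $\sum U_{k_1}\dotsm U_{k_n}$ and via the multinomial theorem and reading off the coefficient; your overcounting sanity check is closer in spirit to this, since grouping the $n!$ orderings by which letters land in which block is precisely what produces the multinomial coefficient in that expansion.
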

\begin{proof}
For the first part we note that 
$$\frac{n!}{r_1!\dotsm r_k!}=\binom{n}{r_1}\binom{n-r_1}{r_2}\binom{n-r_1-r_2}{r_3}\dotsm\binom{n-r_1-r_2-...-r_{k-2}}{r_k-1}.$$
For the second part we see that an induction on $n$ shows that 
\begin{equation}
(U_1+...+U_k)^n=\sum_{\substack{r_1\geq 0,...,r_k\geq 0\\ r_1+...+r_k=n}}\frac{n!}{r_1!\dotsm r_k!}U_1^{r_1}\dotsm U_k^{r_k}.
\end{equation}
But the left hand side is also 
$$\sum_{1\leq k_1,...,k_n\leq k}U_{k_1}\dotsm U_{k_n}.$$
Now ordering the terms and compering with $(1)$ yields the result.
\end{proof}
\end{ex}
\begin{ex}
A throw of twelve dice can result in $6^{12}$ different outcomes. The event that each face can appear twice is 
$$\frac{12!}{2^6\cdot 6^{12}}.$$
\end{ex}
The case $\underline{with}$ replacement: We take the point of view of occupancy problems. Our model is that of placing randomly $r$ balls into $n$ cells. Such an event is completely described by its occupancy numbers $r_1,...,r_n$ where $r_k$ stands for the number of balls in the $k$-th cell. Every $n$-tupel of integers satisfying $r_1+...+r_k=r$ describes a possible configuration. Two distributions are distinguishable if the occupancy numbers are different. We denote by $A_{r,n}$ the number of distinguishable distributions. It is also the number of subpopulations of size $r$ with replacement from a population $S=\{S_1,...,S_n\}$ of size $n$ and it is characterized by the number $r_k$ of appearances of the individual $S_k$ with the restriction $r_1+...+r_n=r$.
\begin{thm}
$$A_{r,n}=\binom{n+r-1}{r}=\binom{n+r-1}{n-r}.$$
\end{thm}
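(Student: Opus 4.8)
The plan is to identify $A_{r,n}$ with a quantity we already know how to count, via the classical ``stars and bars'' device. Recall that, by definition, $A_{r,n}$ is the number of $n$-tuples $(r_1,\dots,r_n)$ of nonnegative integers satisfying $r_1+\dots+r_n=r$, where $r_k$ is the occupancy number of the $k$-th cell. I would encode each such tuple as a linear arrangement of $r$ identical symbols $\star$ (the balls) and $n-1$ identical symbols $\mid$ (the $n-1$ walls separating the $n$ cells), in which the block of stars lying strictly between the $(k-1)$-st and the $k$-th wall has length exactly $r_k$. For example, with $n=3$ the tuple $(2,0,3)$ becomes $\star\star\mid\mid\star\star\star$, an empty middle block signalling $r_2=0$.

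First I would verify that this encoding is a genuine bijection. Given any arrangement of the fixed multiset consisting of $r$ stars and $n-1$ bars, the $n-1$ bars cut the row into exactly $n$ (possibly empty) blocks of stars, whose lengths are nonnegative integers summing to $r$; this recovers a unique tuple $(r_1,\dots,r_n)$. Conversely each admissible tuple produces exactly one such arrangement. Hence the configurations counted by $A_{r,n}$ are in one-to-one correspondence with the arrangements of a fixed multiset of $n+r-1$ symbols of two types.

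It then remains to count those arrangements. There are $n+r-1$ positions in total, and an arrangement is completely determined once we select which $r$ of them carry a star, since the remaining $n-1$ positions are forced to carry bars. This is precisely an unordered choice of $r$ positions out of $n+r-1$ without replacement, which by the subpopulation count established earlier equals $\binom{n+r-1}{r}$. Equivalently one may select the $n-1$ positions for the bars, obtaining $\binom{n+r-1}{n-1}$; since $\binom{m}{j}=\binom{m}{m-j}$ with $m=n+r-1$ and $j=r$ these two expressions coincide, which is the identity asserted in the statement (the second binomial coefficient should read $\binom{n+r-1}{n-1}$, as $\binom{m}{r}=\binom{m}{m-r}$ forces the lower index $m-r=n-1$).

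The one genuinely delicate point, and thus the main obstacle, is confirming that the star--bar encoding is bijective rather than merely injective or surjective: one must allow empty star-blocks (cells with $r_k=0$, appearing as adjacent bars or as a bar at either end of the row), so that every nonnegative tuple, including those with zero entries, is realised by exactly one arrangement. Once this correspondence is checked carefully the rest is immediate. As an alternative route avoiding the bijection, I could argue by induction on $n$ using the recurrence $A_{r,n}=\sum_{j=0}^{r}A_{r-j,\,n-1}$, obtained by conditioning on the number $j$ of balls in the last cell, together with the identity $\sum_{i=0}^{r}\binom{n-2+i}{i}=\binom{n-1+r}{r}$; however, the stars-and-bars argument is cleaner and makes the symmetric form of the answer transparent.
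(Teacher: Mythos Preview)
Your stars-and-bars argument is correct and is essentially the paper's first proof, which likewise encodes occupancy vectors as arrangements of $r$ balls and $n-1$ free bars in $n+r-1$ slots; you also rightly flag that the second binomial should read $\binom{n+r-1}{n-1}$. The paper additionally gives a second, generating-function proof by expanding $\prod_{i=1}^{n}(1-t_i)^{-1}$, setting all $t_i=t$, and reading off the coefficient of $t^r$ in $(1-t)^{-n}$ via Taylor's formula.
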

\begin{proof} There are two different proofs.
\begin{enumerate}[$(i)$]
\item{We represent the balls by  $\bigotimes$ and the cells by the $n$ spaces between $n+1$ bars. Then 
$$\mid \bigotimes \bigotimes \bigotimes \mid\bigotimes\mid\hspace{0.3cm}\mid\hspace{0.3cm}\mid\hspace{0.3cm}\mid \bigotimes \bigotimes \bigotimes \bigotimes\mid$$
is used as a symbol for a distribution of $r=8$ balls in $n=6$ cells. The occupancy numbers are 
$$3,1,0,0,0,4\Longrightarrow 3+1+0+0+0+4=8.$$
Such a symbol necessarily starts and ends with a bar, but the remaining $(n-1)$ bars and $r$ balls can appear in an arbitrary order. We have that 
$$\binom{n+r-1}{r}$$
is the number of ways of selecting $r$ places (for the balls) out of $n+r-1$.
}
\item{For $|t_i|<1$ with $1\leq i\leq n$
\begin{align*}
\prod_{1\leq i\leq n}\frac{1}{1-t_i}=\prod_{1\leq i\leq n}\left(\sum_{m_i\geq 0}t_i^{m_i}\right)&=\sum_{0\leq m_1,...,m_n}t_1^{m_1}\cdot t_2^{m_2}\dotsm t_n^{m_n}\\
&=\sum_{r\geq 0}\sum_{m_1+...+m_n=r\atop m_1\geq 0,...,m_n\geq 0}t_1^{m_1}\cdot t_2^{m_2}\dotsm t_n^{m_n}
\end{align*}
For $t_1=t_2=\dotsm=t_n=t$ we have 
$$\frac{1}{(1-t)^n}=\sum_{r\geq 0}\sum_{m_1+...+m_n=r\atop m_1\geq 0,...,m_n\geq 0}t^r=\sum_{r\geq 0}A_{r,n}\cdot t^r.$$
Taylor's formula gives us that 
$$\frac{1}{(1-r)^n}=\sum_{r\geq 0}\binom{n+r-1}{r}t^n\Longrightarrow A_{r,n}=\binom{n+r-1}{r}.$$
}
\end{enumerate}
\end{proof}
\begin{ex}The partial derivatives of order $r$ of a $C^{\infty}$ function $f(x_1,...,x_n)$ of $n$ variables do not depend on the order of differentiation but only on the number of times that each variable appears. Hence there exists $\binom{n+r-1}{r}$ different partial derivatives of order $r$.
\end{ex}
\begin{rem}
$A_{r,n}$ is also the number of different integer solutions to the equation $r_1+...+r_n=r$.
\end{rem}
\subsection{Combination of events}
We write a permutation $\sigma\in\Sigma_N$ in the following way
$$\sigma=\begin{pmatrix}1&2&\dotsm &N\\\sigma(1)&\sigma(2)&\dotsm&\sigma(N)\end{pmatrix}.$$
We know that the permutation group has cardinality $\vert\Sigma_N\vert=N!$. The probability of a permutation is given by 
$$\p[\sigma]=\frac{1}{N!}.$$
We go back to a probability space $(\Omega,\A,\p)$. If $A_1,A_2\in\A$ with $A=A_1\cup A_2$, we get
$$\p[A_1\cup A_2]=\p[A_1]+\p[A_2]-\p[A_1\cap A_2].$$
What about $\p[A]$ when $A=\bigcup_{i=1}^NA_i$, where $A_1,...,A_N\subset \A$? We shall note $\p_i=\p[A_i]$, $\forall 1\leq i\leq N$ and $\p_{ij}=\p[A_i\cap A_j]$ for $i<j$. For $i_1<...<i_k$ we have that 
$$\p_{i_1,...,i_k}=\p\left[\bigcap_{j=1}^kA_{i_j}\right].$$
We note that
\begin{align*}
S_1&=\sum_{1\leq i\leq N}\p_i\\
S_2&=\sum_{1\leq i<j\leq N}\p_{ij}\\
&\vdots\\
S_r&=\sum_{1\leq i_1<...<i_r\leq N}\p_{i_1,...,i_r}
\end{align*}
Moreover, $S_r$ has $\binom{N}{r}$ terms. For $N=2$ we get 
$$\p[A]=S_1-S_2.$$
\begin{thm}[Inclusion-Exclusion]
$$\p\left[\bigcup_{i=1}^nA_i\right]=S_1-S_2+S_3-S_4+...\pm S_N\Longrightarrow \p\left[\bigcup_{i=1}^NA_i\right]=\sum_{r=1}^N(-1)^{r-1}S_r$$
\end{thm}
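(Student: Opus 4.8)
The plan is to induct on $N$, using the two--set identity $\p[A_1\cup A_2]=\p[A_1]+\p[A_2]-\p[A_1\cap A_2]$ stated above both as the base case and as the engine of the inductive step. The case $N=1$ is trivial ($\p[A_1]=S_1$) and $N=2$ is precisely the quoted formula, so I assume the result for any collection of $N-1$ events and prove it for $N$.

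Set $B=\bigcup_{i=1}^{N-1}A_i$. Applying the two--set formula to $B$ and $A_N$ gives
$$\p\left[\bigcup_{i=1}^{N}A_i\right]=\p[B]+\p[A_N]-\p[B\cap A_N].$$
The first term is handled directly by the induction hypothesis applied to $A_1,\dots,A_{N-1}$. For the third term I would use $B\cap A_N=\bigcup_{i=1}^{N-1}(A_i\cap A_N)$ and apply the induction hypothesis to the $N-1$ events $A_i\cap A_N$; since the $r$--fold intersection of these equals $A_{i_1}\cap\cdots\cap A_{i_r}\cap A_N$, their $r$--th symmetric sum is exactly $T_r:=\sum_{1\le i_1<\cdots<i_r\le N-1}\p[A_{i_1}\cap\cdots\cap A_{i_r}\cap A_N]$, and I set $T_0:=\p[A_N]=\p_N$.

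The heart of the argument is the bookkeeping identity $S_r=S_r'+T_{r-1}$ for $1\le r\le N$, where $S_r'$ denotes the $r$--th symmetric sum of $\p$--values for the first $N-1$ events (with the convention $S_N':=0$): every $r$--fold intersection among $A_1,\dots,A_N$ either avoids the index $N$, contributing to $S_r'$, or contains it, contributing to $T_{r-1}$. Substituting the two induction hypotheses into the displayed equation, I would reindex the $A_N$--contributions by $s=r+1$ so that $\p[A_N]-\sum_{r=1}^{N-1}(-1)^{r-1}T_r=\sum_{s=1}^{N}(-1)^{s-1}T_{s-1}$, and then regroup to obtain $\sum_{r=1}^{N}(-1)^{r-1}(S_r'+T_{r-1})=\sum_{r=1}^{N}(-1)^{r-1}S_r$. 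The main obstacle is purely combinatorial: keeping the index ranges and the alternating signs aligned through the reindexing, and verifying the boundary cases $r=1$ (where $T_0=\p_N$ makes $S_1=S_1'+\p_N$ the full single sum) and $r=N$ (where $S_N'=0$ forces $S_N=T_{N-1}=\p[A_1\cap\cdots\cap A_N]$).

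An alternative I would keep in reserve is the indicator--function proof: from $\one_{\bigcup_i A_i}=1-\prod_{i=1}^{N}(1-\one_{A_i})$ one expands the product into $\sum_{\varnothing\ne S}(-1)^{|S|-1}\one_{\bigcap_{i\in S}A_i}$ and integrates against $\p$, grouping the terms by $|S|=r$. This is slicker, but it presupposes linearity of expectation (or at least integration of simple functions), which has not yet been introduced at this point in the notes, so the inductive argument above is the more self--contained route.
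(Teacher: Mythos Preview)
The paper states this theorem without proof and proceeds directly to the card--matching example, so there is no argument in the notes to compare against. Your induction is correct: the splitting $S_r=S_r'+T_{r-1}$ (with $S_N'=0$ and $T_0=\p_N$) is exactly the right bookkeeping, and the reindexing $s=r+1$ lines up the signs as you claim. Both boundary cases check out.

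Your reservation about the indicator route is slightly over--cautious: the identity $\one_{\bigcup_i A_i}=1-\prod_{i=1}^{N}(1-\one_{A_i})$ holds pointwise, and after expanding the product each side is a finite $\mathbb{Z}$--linear combination of indicators of measurable sets, so equating the $\p$--values of both sides needs only finite additivity on disjoint unions, not a general theory of expectation. Still, the inductive proof you wrote is the one that fits most naturally at this point in the exposition.
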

\begin{ex} Two equivalent decks of $N$ cards. Each are put into random order and matched against each other. If a card occupies the same place in both decks we speack of a match. We want to compute the probability of having at least one match. Let us number the cards $1,...,N$ with 
$$\sigma=\begin{pmatrix}1&2&\dotsm &N\\\sigma(1)&\sigma(2)&\dotsm&\sigma(N)\end{pmatrix},$$
where the first line denotes the first deck and the second line denotes the second deck. A match for $k$ corresponds to $k=\sigma(k)$. In that case we call $k$ a fix point of the permutation $\sigma$. We look for the number of permutations of $\{1,...,N\}$ (out of the $N$) which have at least 1 fixed point. Let $A_k$ be the event $k=\sigma(k)$. Clearly 
$$\p[A_k]=\p_k=\frac{(N-1)!}{N!}=\frac{1}{N}.$$
similarly if $i<j$, 
$$\p_{ij}=\p[A_i\cap A_j]=\frac{(N-2)!}{N!}=\frac{1}{N(N-1)}.$$
More generally 
$$\p_{i_1,...,i_r}=\frac{(N-r)!}{N!}$$
and hence 
$$S_r=\sum_{1\leq i_1<...<i_r\leq N}\p_{i_1,...,i_r}=\sum_{1\leq i_1<...<i_r\leq N}\frac{(N-r)!}{N!}=\frac{(N-r)!}{N!}\sum_{1\leq i_1<...<i_r\leq N}1=\frac{(N-r)!}{N!}\binom{N}{r}=\frac{1}{r!}.$$
If $\p_1$ is the probability of the least fixed point, then 
$$\p_1=\p\left[\bigcup_{i=1}^NA_i\right]=1-\frac{1}{2!}+\frac{1}{3!}-...\pm\frac{1}{N!}.$$
\end{ex}
\section{Random Walks}
\subsection{The Reflection Principle}
From a formal point of view, we shall be concerned with arrangements of finitely many $+1$ and $-1$. Consider $n=p+q$ symbols $\epsilon_1,...,\epsilon_n$, where $\epsilon_j\in\{-1,+1\}$ for all $1\leq  j\leq  n$. Suppose that there are $p$ $\{+1\}$'s and $q$ $\{-1\}$'s. Then $S_k=\epsilon_1+...+\epsilon_k$ represents the difference between the number of $\{+1\}$'s and $\{-1\}$'s at the first $k$ places. 
\begin{equation}
S_k-S_{k-1}=\epsilon_k=\pm1\hspace{0.5cm}S_0=0,\hspace{0.1cm}S_n=p-q
\end{equation}
\begin{figure}
\begin{center}
\includegraphics[height=6cm, width=10cm]{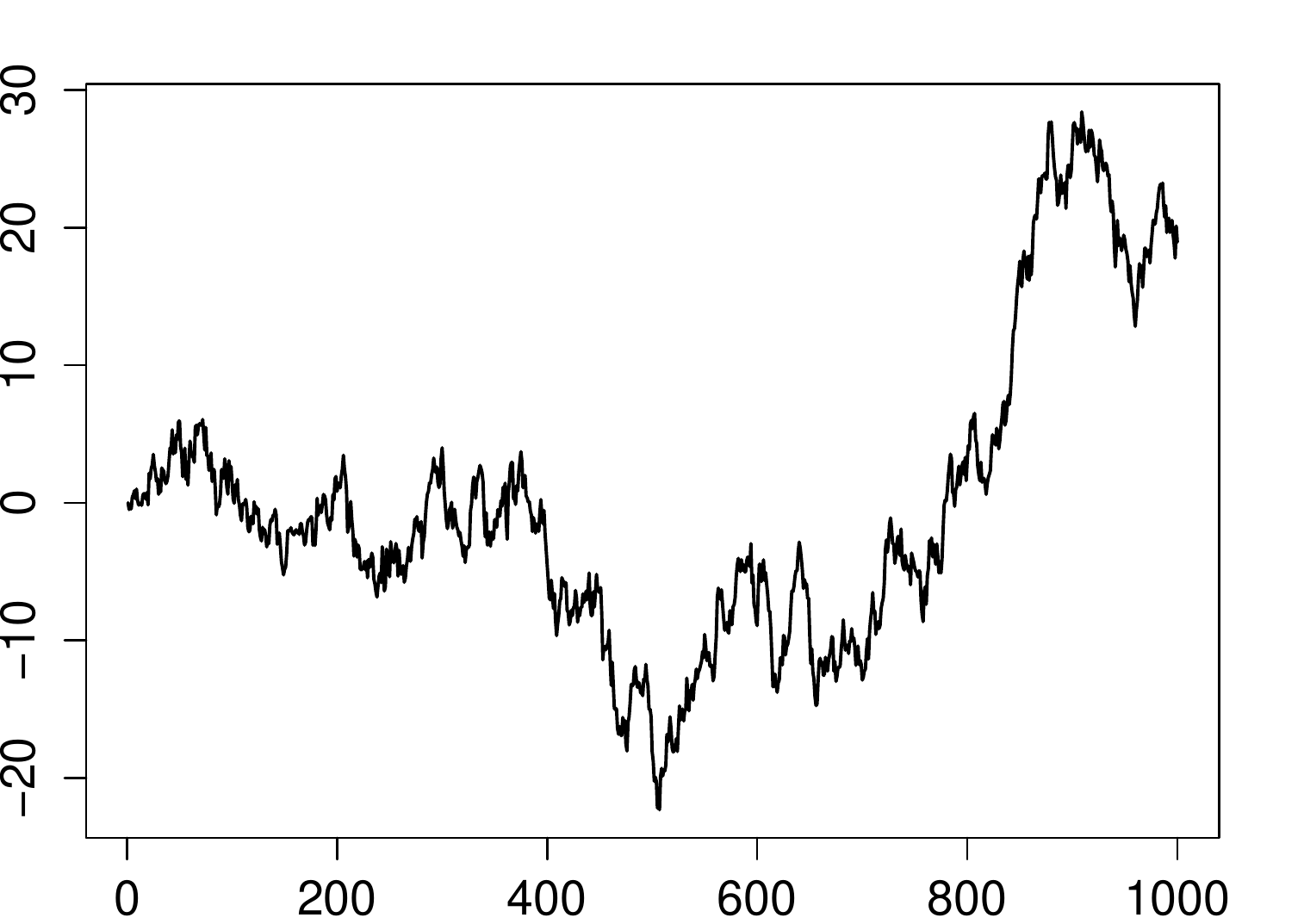}
\end{center}
\caption{Example for a random walk}
\end{figure}
The arrangement $(\epsilon_1,...,\epsilon_n)$ will be represented by a polygonal line whose $k$'th side has slope $\epsilon_k$, and whose $k$'th vertex has ordinate $S_k$. Such lines will be called a path. We shall use $(t,x)$ for coordinates.
\begin{defn}
Let $n>0$ and $x$ be integers. A path $(S_0,S_1,...,S_n)$ from the origin to the point $(n,x)$ is a polygonal line whose vertices have abscissas $0,1,2,...,n$ and ordinates $S_0,S_1,...,S_n$ satisfying (10) with $S_n=x$. We shall refer to $n$ as the length of the path. There are $2^n$ paths of length $n$. Moreover, we have 
\begin{align*}
n&=p+q\\
x&=p-q
\end{align*}
\end{defn}
A path from the origin to an arbitrary point $(n,x)$ exists only if $n$ and $x$ are of the form as in the definition. In this case the $p$ places for the positive $\epsilon_k$'s can be chosen from the $n=p+q$ available places in 
$$N_{n,x}=\binom{p+q}{p}=\binom{p+q}{q}$$
different ways.
\vspace{0.5cm}
{$Convention:$} $N_{n,x}=0$ whenever $n$ and $x$ are not of the form as in the definition. this implies that, $N_{n,x}$ represents the number of different paths from the origin to an arbitrary point $(n,x)$.
\vspace{0.5cm}
\begin{ex}[Ballot theorem]
Suppose that in a ballot candidate $P$ scores $p$ votes and candidate $Q$ scores $q$ votes, where $p>q$. the probability that throughout the counting there are always more votes for $P$ than for $Q$ equals $\frac{p-q}{p+q}$. The whole voting record may be represented by a path of length $p+q$ in which $\epsilon_k=+1$, if the $k$'th vote is for $P$ and $\epsilon_k=-1$ otherwise. Conversely every path from the origin to the point $(p+q,p-q)$ can be interpreted as a voting with the given totals $p$ and $q$. $S_k$ is the number of votes by which $P$ leads just after the $k$'th vote. The candidate $P$ leads throughout the voting if $S_1>0,S_2>0,...,S_n>0$ (in the ballot theorem, it is implicitly assumed that all paths are equally probable).
\end{ex}
\begin{figure}
\begin{center}
\includegraphics[height=8cm, width=12cm]{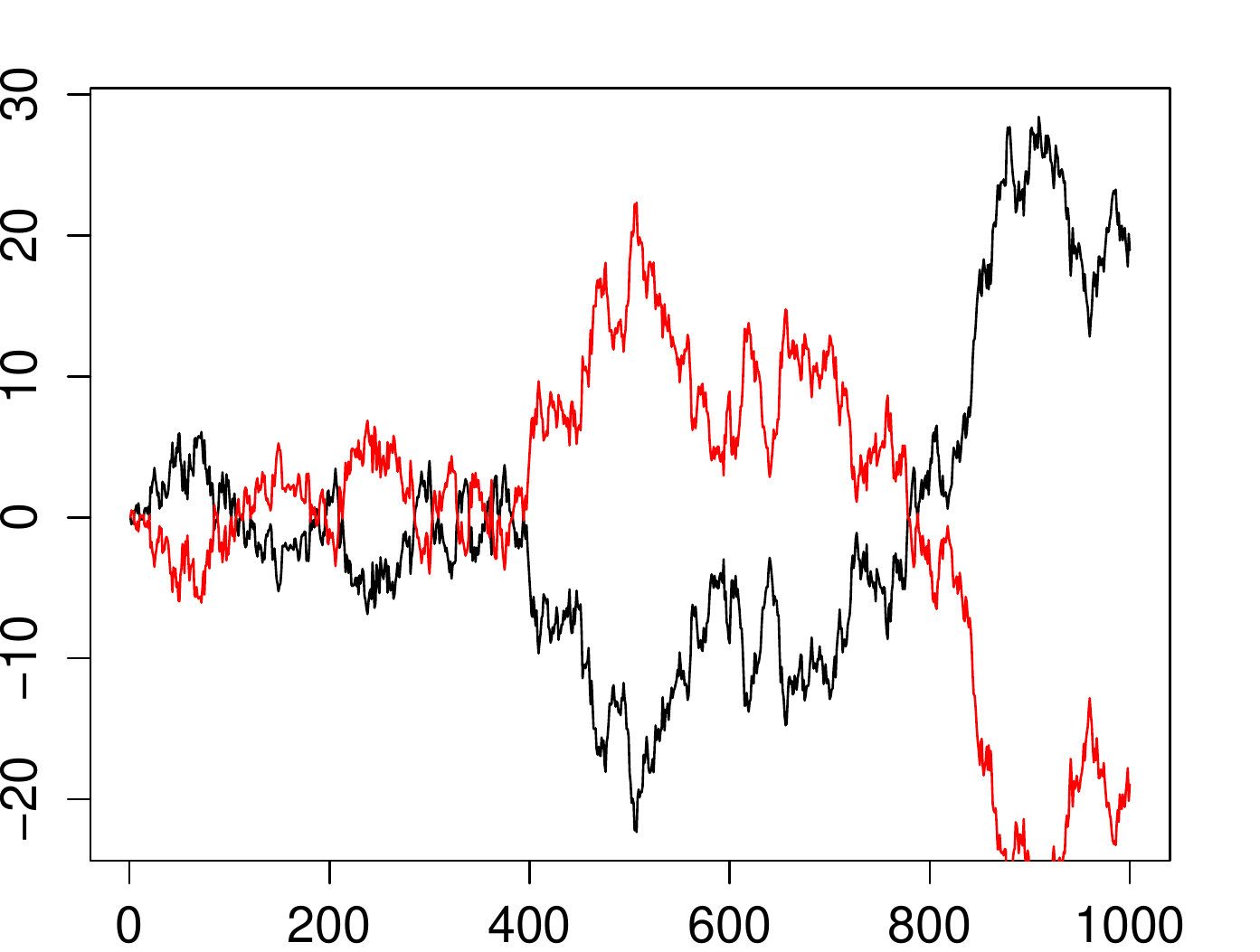}
\end{center}
\caption{Example for the reflection principle}
\end{figure}
Let $A=(a,\alpha)$ and $B=(b,\beta)$ be two independent points with $b>a\geq 0$, $a,b\in\N$, $\alpha>0$, $\beta>0$, $\alpha,\beta\in\N$. By reflection of $A$ on the $t$-axis we mean the point $A'=(a,-\alpha)$.
\begin{lem}[The reflection principle]
The number of paths from $A$ to $B$ which touch or cross the $t$-axis equals the number of paths from $A'$ to $B$.
\end{lem}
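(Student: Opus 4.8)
The plan is to construct an explicit bijection between the set of paths from $A$ to $B$ that touch or cross the $t$-axis and the set of \emph{all} paths from $A'$ to $B$. The governing idea is to reflect across the $t$-axis an appropriately chosen initial segment of each path.

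First I would observe that every path from $A'=(a,-\alpha)$ to $B=(b,\beta)$ automatically touches or crosses the $t$-axis. Indeed, such a path starts at ordinate $-\alpha<0$ and ends at ordinate $\beta>0$; since consecutive ordinates differ by exactly $\pm 1$ (by the defining relation of a path), a discrete intermediate value argument forces the path to attain the value $0$ at some intermediate abscissa. Hence there is no constraint to impose on the right-hand side of the correspondence, and it suffices to match up the two families.

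Next I would define the map. Given a path from $A$ to $B$ that touches or crosses the $t$-axis, let $T$ be the smallest abscissa $t$ with $a\le t\le b$ at which the ordinate equals $0$; this first-passage time exists precisely because the path meets the axis. I would reflect the portion of the path joining $A$ to the point $(T,0)$ across the $t$-axis, replacing each ordinate $S_t$ by $-S_t$ for $a\le t\le T$, and leave the portion from $(T,0)$ to $B$ unchanged. The reflected initial segment now runs from $(a,-\alpha)=A'$ to $(T,0)$, while the terminal segment still ends at $B$, so the result is a genuine path from $A'$ to $B$. In the opposite direction I would define a map on paths from $A'$ to $B$ by the same recipe: take the first abscissa $T'$ at which the ordinate vanishes (which exists by the previous paragraph), reflect the part before $T'$, and keep the rest; this produces a path from $A$ to $B$ that meets the axis at $T'$.

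Finally I would check that these two maps are mutually inverse, which yields the claimed equality of cardinalities. The crucial point is that reflection preserves the first-passage time: a path and its reflected image both first reach ordinate $0$ at the same abscissa, since only the signs of the ordinates strictly before $T$ are altered and, by minimality of $T$, none of those ordinates equals $0$. Consequently applying the reflection twice restores the original path in each direction. The step demanding the most care is exactly this verification that the first-passage time is common to a path and its image, together with the observation that the reflected initial segment is again an admissible $\pm1$ path; once these are settled, the bijection, and hence the lemma, follows at once.
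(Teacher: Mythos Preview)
Your proof is correct and follows essentially the same approach as the paper: both reflect the initial segment of the path up to the first time it hits the $t$-axis and argue that this gives a bijection. If anything, you are more careful than the paper in spelling out why every path from $A'$ to $B$ must meet the axis and why the first-passage time is preserved under reflection, making the bijection genuinely mutual.
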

\begin{proof}
Consider a path $(S_a=\alpha,S_{a+1},...,S_b=B)$ from $A$ to $B$ having one or more vertices on that axis. Let $t$ be the abscissa of the first such vertex that is $S_a>0,S_{a+1}>0,...,S_{k-1}>0,S_k>0$. Then $(-S_a,-S_{a+1},...,-S_{k-1},S_k,S_{k+1},...,S_b)$ is a path leading from $A'$ to $B$ and having $T=(t,0)$ as its vertex on the $t$-axis. This gives a one to one correspondence between all paths from $A'$ to $B$ and paths from $A$ to $B$ that have a vertex on the $t$-axis.
\end{proof}
Let us now prove the ballot theorem. Let $n$ and $x$ be positive integers. There are exactly $\frac{x}{n}N_{n,x}$ paths $(S_1,...,S_n=x)$ such that 
$$S_1>0,S_2>0,...,S_n>0.$$
Indeed clearly there exists exactly as many admissible paths as there are paths from $(1,1)$ to $(n,x)$, which neither or cross the $t$-axis. From the previous lemma the number of such paths equals 
$$N_{n-1,x-1}-N_{n-1,x+1}=\binom{p+q-1}{p-1}-\binom{p+q-1}{p},$$
where we used that $N_{n,x}=\binom{p+q}{p}$. So we get
\begin{align*}
\binom{p+q-1}{p-1}-\binom{p+q-1}{p}&=\frac{(p+q-1)!}{(p-1)!q!}-\frac{(p+q-1)!}{p!(q-1)!}\\
&=\frac{p}{p+q}\cdot\frac{(p+q)!}{p!q!}-\frac{q}{p+q}\cdot \frac{(p+q)!}{p!q!}\\
&=\frac{p-q}{p+q}N_{n,x}=\frac{x}{n}N_{n,x}.
\end{align*}
\subsection{Random Walk terminology}
We set $S_0=0$, $S_n=X_1+...+X_n$, $X_j\in\{-1,+1\}$. Our state space would be $\Omega_n$, containing all possible paths. Therefore we get that $\vert\Omega\vert=2^n$. We set our $\sigma$-Algebra $\A=\mathcal{P}(\Omega_n)$. We set $\p$ to be our uniform probability measure. Consider the event $\{S_n=r\}$ (at time $n$ the particle is at the point $r$). We shall also speak about a visit to $r$ at time $n$. the number $N_{n,x}$ of paths from the origin to $N_{n,r}$ is given by 
$$\binom{n}{\frac{n+r}{2}},$$ 
with $n=p+q$, $r=p-q$ and hence $p=\frac{n+r}{2}$. Here we interpret $\binom{n}{\frac{n+r}{2}}$ as 0 if $n+r$ is not an even integer between 0 and $n$. Hence we get that
$$\p_{n,r}=\p[S_n=r]=\binom{n}{\frac{n+r}{2}}\cdot 2^n.$$
A return to the origin occurs at time $k$ if $S_k=0$. Here $k$ is necessarily an even integer, that we note $k=2\nu$ with $\nu$ an integer. The probability of return to the origin is 
$$\p_{2\nu,0}.$$
We shall denote it by $U_{2\nu}$, which is now given by
$$\boxed{U_{2\nu}=\p[S_{2\nu}=0]=\binom{2\nu}{\nu}\cdot 2^{-2\nu}}$$
Stirling's formula implies that $U_{2\nu}\sim\frac{1}{\sqrt{\pi r}}$. Among the returns to the origin, the first return receives special attention. A first return occurs at $2\nu$ if 
$$S_1\not=0,S_2\not=0,...,S_{2\nu-1}\not=0,S_{2\nu}=0.$$
We denote the probability of this event by $f_{2\nu}$.
\begin{lem}
\label{lem3}
For all $n\geq 0$ we have
$$\p[S_1\not=0,S_2\not=0,...,S_{2n-1}\not=0,S_{2n}\not=0]=\p[S_{2n}=0]=U_{2n}.$$
\end{lem}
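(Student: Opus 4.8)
The plan is to reduce the no-return event to a count of strictly positive paths, evaluate that count by a single application of the reflection principle, and then recognise the resulting binomial coefficient as $U_{2n}$.

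First I would exploit the $\pm$ symmetry of the walk. The sign-flip map $(\epsilon_1,\dots,\epsilon_{2n})\mapsto(-\epsilon_1,\dots,-\epsilon_{2n})$ is a probability-preserving bijection of the event $\{S_1\neq 0,\dots,S_{2n}\neq 0\}$ onto itself which interchanges the paths lying strictly above the $t$-axis with those lying strictly below it. Since $S$ moves by $\pm 1$ at each step, a path that never hits $0$ at times $1,\dots,2n$ cannot change sign, so it is either entirely positive or entirely negative. This gives
$$\p[S_1\neq 0,\dots,S_{2n}\neq 0]=2\,\p[S_1>0,\dots,S_{2n}>0],$$
and it remains to show $\p[S_1>0,\dots,S_{2n}>0]=\tfrac12 U_{2n}$.

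Next I would count the strictly positive paths. Such a path is forced to take $\epsilon_1=+1$, hence passes through $(1,1)$, and thereafter must never touch the $t$-axis before ending at some $(2n,2r)$ with $r\geq 1$. The number of paths from $(1,1)$ to $(2n,2r)$ is $N_{2n-1,2r-1}$, while by the reflection principle the number of those that touch or cross the axis equals the number of paths from the reflected point $(1,-1)$ to $(2n,2r)$, namely $N_{2n-1,2r+1}$. Thus the number of strictly positive paths ending at $2r$ is $N_{2n-1,2r-1}-N_{2n-1,2r+1}$, and summing over $r=1,\dots,n$ the sum telescopes to $N_{2n-1,1}-N_{2n-1,2n+1}=N_{2n-1,1}=\binom{2n-1}{n}$, using that $N_{2n-1,2n+1}=0$ since the displacement $2n+1$ exceeds the $2n-1$ available steps.

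Finally I would convert back to probabilities and simplify. Dividing by $2^{2n}$ gives $\p[S_1>0,\dots,S_{2n}>0]=2^{-2n}\binom{2n-1}{n}$, and the elementary identity $\binom{2n-1}{n}=\tfrac12\binom{2n}{n}$ turns this into $\tfrac12\cdot 2^{-2n}\binom{2n}{n}=\tfrac12 U_{2n}$; combined with the symmetry step this yields the claim. The main obstacle is the middle step: setting up the reflection so that ``strictly positive'' is expressed as a difference of unrestricted path counts, and checking that the telescoping leaves exactly the single term $N_{2n-1,1}$. The symmetry reduction and the final binomial identity are routine. As an alternative to the reflection computation one could invoke the ballot theorem directly, which yields $\tfrac{2r}{2n}N_{2n,2r}$ strictly positive paths ending at $2r$, but then one must evaluate $\sum_{r}\tfrac{r}{n}N_{2n,2r}$ explicitly, so the telescoping route seems more economical.
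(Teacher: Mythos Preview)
Your proof is correct and follows essentially the same approach as the paper: the symmetry reduction to strictly positive paths, the reflection-principle count $N_{2n-1,2r-1}-N_{2n-1,2r+1}$ for paths ending at $2r$, the telescoping to $N_{2n-1,1}$, and the binomial identity $\binom{2n-1}{n}=\tfrac12\binom{2n}{n}$ all appear in the paper's argument in the same order. The only cosmetic difference is that the paper phrases the count as an application of the ballot theorem (whose proof had already exhibited the difference $N_{n-1,x-1}-N_{n-1,x+1}$) rather than citing the reflection principle directly, so your ``alternative'' via the ballot theorem is in fact the same computation.
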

\begin{rem}
When the event on the left hand side occurs, either all $S_j>0$ or all $S_j<0$. Since these events are equally probable it follows that 
$$\p[S_1>0,...,S_{2n}>0]=\frac{1}{2}U_{2n}.$$
\end{rem}
\begin{proof}[Proof of Lemma \ref{lem3}] We have 
$$\p[S_1>0,...,S_{2n}>0]=\sum_{r=1}^\infty\p[S_1>0,...,S_{2n}=2r],$$
where all the terms with $r>n$ are zero. By the ballot theorem, the number of paths $[S_1>0,...,S_{2n-1}>0,S_{2n}=2r]$ is equal to $N_{2n-1,2r+1}$ and thus
\begin{align*}
\p[S_1>0,...,S_{2n-1}>0,S_{2n}=2r]&=\frac{1}{2}\left(\p_{2n-1,2r-1}-\p_{2n-1,2r+1}\right)\\
\p[S_1>0,S_2>0,...,S_{2n}>0]&=\frac{1}{2}\sum_{r=1}^n\left(\p_{2n-1,2r-1}-\p_{2n-1,2r+1}\right)
\end{align*}
$$\frac{1}{2}\left(\p_{2n-1,1}-\p_{2n-1,3}+\p_{2n-1,3}-\p_{2n-1,5}\pm...+\p_{2n-1,2n-1}-\underbrace{\p_{2n-1,2n+1}}_{=0}\right)$$
$$=\frac{1}{2}\p_{2n-1,1}=\frac{1}{2}U_{2n}.$$
Therefore we get 
$$\p_{2n-1,1}=\p[S_{2n-1}=1]=\binom{2n-1}{\frac{2n-1+1}{2}}2^{-(2n-1)}=2^{-2n}\cdot 2\cdot \frac{(2n-1)!}{n!(n-1)!}=2^{2n}\binom{2n}{2}=U_{2n}$$
\end{proof}
Saying that the first return to the origin occurs at $2^n$ amounts $S_1\not=0,S_2\not=0,...,S_{2n-1}\not=0,S_{2n}=0$, we have
$$\p[S_1\not=0,S_2\not=0,...,S_{2n-1}\not=0,S_{2n}=0]=f_{2n},$$
and
$$\{S_1\not=0,S_2\not=0,...,S_{2n-1}\not=0\}=\{S_1\not=0,...,S_{2n-1}\not=0,S_{2n}\not=0\}\cup\{S_1\not=0,...,S_{2n-1}\not=0,S_{2n}=0\},$$
which implies that
$$U_{2n-2}=U_{2n}+f_{2n},$$
and hence for all $n\geq 1$ we have
$$ f_{2n}=U_{2n-2}-U_{2n}.$$
Therefore we get the relation
$$\boxed{f_{2n}=\frac{1}{2n-1}U_{2n}}$$
\begin{thm}
The probability that up to time $2n$, the last visit to the origin occurs at $2k$ is $$\alpha_{2k,2n}=U_{2k}N_{2n-2k},\hspace{0.2cm}k=0,1,...,n$$
\end{thm}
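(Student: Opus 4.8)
The plan is to split the path at time $2k$ and identify each of the two resulting pieces with an object I already know how to count, reducing everything to Lemma \ref{lem3}. Write $2m:=2n-2k$. By definition, the last visit to the origin up to time $2n$ occurs at time $2k$ exactly when
$$S_{2k}=0\quad\text{and}\quad S_{2k+1}\neq0,\ S_{2k+2}\neq0,\ \dots,\ S_{2n}\neq0.$$
Since $\p$ is uniform over the $2^{2n}$ paths of length $2n$, it suffices to count the paths meeting both requirements and divide by $2^{2n}$.

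First I would factor each admissible path into its prefix $(S_0,\dots,S_{2k})$ and its suffix $(S_{2k},\dots,S_{2n})$. Every length-$2n$ sign sequence splits uniquely into a length-$2k$ block followed by a length-$2m$ block, and the two conditions above involve disjoint blocks: the first requires only that the prefix run from $(0,0)$ to $(2k,0)$, while the second requires only that the suffix, starting from $(2k,0)$, keep all of its $2m$ remaining vertices off the $t$-axis. Hence the number of favorable paths is the product of the number of admissible prefixes and the number of admissible suffixes.

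Next I would count the two factors. The prefixes are exactly the paths from the origin to $(2k,0)$, of which there are $\binom{2k}{k}=2^{2k}U_{2k}$ by the boxed formula $U_{2\nu}=\binom{2\nu}{\nu}2^{-2\nu}$. For the suffixes, translation invariance lets me move the initial vertex $(2k,0)$ back to the origin, turning the condition into $S_1\neq0,\dots,S_{2m}\neq0$ for a walk started at $0$; by Lemma \ref{lem3} this event has probability $U_{2m}$, so there are $2^{2m}U_{2m}$ admissible suffixes. Multiplying and dividing by $2^{2n}=2^{2k}2^{2m}$ gives
$$\alpha_{2k,2n}=\frac{\bigl(2^{2k}U_{2k}\bigr)\bigl(2^{2m}U_{2m}\bigr)}{2^{2n}}=U_{2k}U_{2m}=U_{2k}U_{2n-2k},$$
the asserted value, with the terminal no-return probability $N_{2n-2k}$ identified through Lemma \ref{lem3} as $U_{2n-2k}$. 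The single genuine ingredient is Lemma \ref{lem3}; the step to get right is the clean factorization, namely checking that the ``no return after $2k$'' condition depends only on the suffix, so that the prefix and suffix counts legitimately multiply. I expect this to be the main (and essentially only) obstacle, since once the decomposition is justified the arithmetic is immediate.
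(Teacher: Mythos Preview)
Your proof is correct and follows essentially the same route as the paper: split the path at time $2k$, count prefixes ending at the origin as $2^{2k}U_{2k}$, count suffixes that avoid the origin via Lemma~\ref{lem3} as $2^{2n-2k}U_{2n-2k}$, and multiply. You are also right to note that the paper's $N_{2n-2k}$ in the statement is just $U_{2n-2k}$ (the paper's notation drifts here), which is exactly what Lemma~\ref{lem3} gives.
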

\begin{proof}
We are concerned with paths satisfying $S_{2k}=0,S_{2k+1}\not=0,...,S_{2n}\not=0$. The first $2k$ vertices of such paths can be chosen in $2^{2k}N_{2k}$ different ways. Taking the point $(2k,0)$ as new origin and using the last lemma, we see that the next $(2n-k)$ vertices can be chosen in $2^{2n-2k}N_{2n-2k}$ different ways. Therefore we get $$\alpha_{2k,2n}=\frac{1}{2^{2n}}\left(2^{2k}N_{2k}2^{2n-2k}N_{2n-2k}\right)=U_{2k}N_{2n-2k}.$$
\end{proof}

\chapter{The Modern Probability Language}
This is the main chapter, covering a basic introduction of the modern probability theory. We will discuss the concept of distributions and the notion of expectation at the beginning. Afterwards, the concept of moments, variance and covariance and several properties of those will be covered. We will continue with the concept of the characteristic function and independence, both for $\sigma$-Algebras and for random variables. Futhermore, we look at the Borel-Cantelli lemma and move on to the weak and strong law of large numbers. The concept of different convergences will follow and finally we are going to spend time on the central limit theorem. After one has read this chapter, the more advanced structures and notions of probability theory will be accessible and lead to a fundamental understanding of modern probability theory.

\section{General Definitions}
\subsection{Law of a Random Variable}
\begin{defn}[Random Variable]
Let $(\Omega,\A,\p)$ be a probability space. Let $(E,\mathcal{E})$ be a measurable space. A measurable map $X:(\Omega,\A,\p)\to (E,\mathcal{E})$ is called a random variable (and is noted r.v.) with values in $E$. 
\end{defn}
\begin{defn}[Law/Distribution]
The law or distribution of a random variable is the image measure of $\p$ by $X$, and is usually noted $\p_X$. It is hence a probability measure on $(E,\mathcal{E})$.
$$\p_X[B]=\p[X^{-1}(B)]=\p[X\in B]=\p\left[\{\omega\in\Omega\mid X(\omega)\in B\}\right]$$
\end{defn}
If $\mu$ is a probability measure on $(\R^d,\B(\R^d))$, (or even on a more general space $(E,\mathcal{E})$), there is a canonical way of constructing a r.v. $X$ such that $\p_X=\mu$ as a map
$$X:(\R^d,\B(\R^d),\mu)\to\R^d.$$
There are two special cases. 
\begin{enumerate}[$(i)$]
\item{\emph{Discrete r.v.:} Let $E$ be a countable space and $\mathcal{E}=\mathcal{P}(E)$. The law of $X$ is given by 
$$\p_X:=\sum_{x\in E}P(x)\delta_x,$$ 
where $P(x)=\p[X=x]$ and $\delta_x$ is the Dirac measure of $x$, meaning that for all $A\subset E$,
$$\delta_x(A)=\begin{cases}1&\text{if $x\in A$}\\ 0&\text{if $x\not\in A$}\end{cases}$$
We note that if $\p_X[E]=1$, then 
$$\sum_{x\in E}P(x)\delta_x(E)=\sum_{x\in E}P(x)=1.$$
Indeed, for all $B\in E$ we have that
$$\p_X[B]=\p[X\in B]=\p\left[\bigcup_{x\in B}\{X=x\}\right]=\sum_{x\in B}\p[X=x]=\sum_{x\in E}P(x)\delta_x(B).$$
}
\vspace{0.5cm}
\item{\emph{Continuous r.v.:} A random variable $X$ with values in $(\R^d,\B(\R^d))$ is said to have a density if $\p_X\ll \lambda$, where $\lambda$ is the lebesgue measure on $\R^d$. The Radon-Nikodym theorem says there exists $P:\R^d\to\R$, measurable such that for al $B\in \B(\R^d)$
$$\p_X[B]=\int_BP(x)dx.$$
In particular, $\int_{\R^d}P(x)dx=\p_X(\R^d)=1$. Moreover the map $P$ is unique up to sets of lebesgue measure 0. $P$ is called the density of $X$. If $d=1$, then 
$$\p[\alpha\leq X\leq \beta]=\p_X[[\alpha,\beta]]=\int_\alpha^\beta P(x)dx.$$
}
\end{enumerate}
\begin{defn}[Expected Value/Expectation]
Let $(\Omega,\A,\p)$ be a probability space. Let $X$ be a real valued r.v. (i.e. with values in $\R$). The expectation of such a r.v. is defined as 
$$\E[X]=\int_{\Omega}X(\omega)d\p(\omega)=\int_\R xd\p_X(x),$$
which is well defined in the following two cases.
\begin{itemize}
\item{If $x\geq 0$, and then $\E[X]\in[0,\infty]$.
}
\item{If $\E[X]=\int_{\Omega}\vert X(\omega)\vert d\p(\omega)<\infty$.
}
\end{itemize}
\end{defn}
We extend this definition to the case of a r.v. $X=(X_1,...,X_d)$ taking values in $\R^d$ by defining 
$$\E[X]=(\E[X_1],...,\E[X_d])$$
provided each $\E[X_i]$ is well defined.
\begin{rem}
If $B\in\A$ and $X=\one_{B}$, then 
$$0\leq \E[X]=\E[\one_B]=\p[B]\leq 1.$$
In general, $\E[X]$ is interpreted as the average or the mean of the r.v. $X$. If $X$ takes values in $\{x_1,...,x_n,...\}$ then 
$$\E[X]=\sum_{n=1}^\infty x_n\p[X=x],$$
whenever it is well defined.
\end{rem}
The expectation is a special case of an integral with respect to a positive measure. In particular,  
\begin{itemize}
\item{For all $X,Y$ integrable and $a,b\in\R$ we have
$$\E[aX+bY]=a\E[X]+b\E[Y].$$
}
\item{If $C$ is a constant and $\E[X]=C$, then 
$$\int_\Omega Cd\p(\omega)=C\p[\Omega]=C.$$
}
\item{If $X\geq 0$ and $\E[X]\geq 0$ and if $X\leq Y$ both integrable then 
$$\E[X]\leq \E[Y].$$
}
\item{(\emph{Monotne convergence}) If $(X_n)_{n\geq 1}$ is a sequence of real valued r.v.'s, and if $X_n\geq 0$ for all $n\geq 1$ and $X_n\uparrow X$ as $n\to\infty$, then 
$$\E[X_n]\uparrow\E[X]\hspace{0.3cm}\text{as $n\to\infty$}.$$
}
\vspace{0.5cm}
\item{(\emph{Fatou}) If $(X_n)_{n\geq 1}$ is a sequence of real valued r.v.'s with $X_n\geq 0$ for all $n\geq 1$, then 
$$\E\left[\liminf_{n\to\infty}X_n\right]\leq \liminf_{n\to\infty}\E[X_n].$$
}
\vspace{0.5cm}
\item{(\emph{Dominated convergence}) If $(X_n)_{n\geq 1}$ is a sequence of real valued r.v.'s with $\vert X_n\vert \leq Z$ for all $n\geq 1$, such that $\E[Z]<\infty$, for another real valued r.v. $Z$, and $X_n\xrightarrow{n\to\infty}X$ a.e., then 
$$\E[X_n]\xrightarrow{n\to\infty}\E[X].$$ 
}
\end{itemize}
\begin{rem}
In probability theory we say almost sure convergence and write a.s., rather than almost everywhere. If $X_n\xrightarrow{n\to\infty}X$ a.s., then we mean 
$$\p\left[\{\omega\in\Omega\mid X_n(\omega)\xrightarrow{n\to\infty}X(\omega)\}\right]=1.$$
\end{rem}
\vspace{0.5cm}
\begin{prop}
\label{random}
Let $X$ be a r.v. with values in $(E,\mathcal{E})$. If $f:E\to [0,\infty]$ is measurable, then 
$$\E[f(X)]=\int_E f(x)d\p_X(x).$$
Similarly, if $f:E\to \R$ is such that $\E[f(X)]<\infty$, then 
$$\E[f(X)]=\int_E f(x)d\p_X(x).$$
\end{prop}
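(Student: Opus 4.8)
The plan is to prove the identity by the standard ``machine'' of measure theory, building $f$ up from indicators, to simple functions, to general nonnegative measurable functions, and finally to integrable real-valued functions. The engine of the whole argument is the base case, which is nothing but the defining property of the image measure $\p_X$.

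First I would establish the statement when $f=\one_B$ for some $B\in\mathcal{E}$. The key observation here is the pointwise identity $\one_B(X(\omega))=\one_{X^{-1}(B)}(\omega)$, valid because $X(\omega)\in B$ if and only if $\omega\in X^{-1}(B)$. Hence $\E[\one_B(X)]=\E[\one_{X^{-1}(B)}]=\p[X^{-1}(B)]$, using the earlier remark that the expectation of an indicator is the probability of the corresponding event, and this equals $\p_X[B]=\int_E\one_B\,d\p_X(x)$ by the very definition of the law of $X$. So both sides agree for indicators.

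Next, by linearity of both the expectation and the integral, I would extend to nonnegative simple functions $f=\sum_{i=1}^n c_i\one_{B_i}$ with $c_i\geq 0$ and $B_i\in\mathcal{E}$: applying the previous step to each $\one_{B_i}$ and summing gives $\E[f(X)]=\sum_i c_i\,\p_X[B_i]=\int_E f\,d\p_X(x)$. Then for an arbitrary measurable $f:E\to[0,\infty]$ I would pick an increasing sequence of simple functions $f_k\uparrow f$ (which exists by the usual approximation of nonnegative measurable functions). Composition preserves this monotone limit, so $f_k(X)\uparrow f(X)$, and invoking monotone convergence on both sides yields $\E[f(X)]=\lim_k\E[f_k(X)]=\lim_k\int_E f_k\,d\p_X(x)=\int_E f\,d\p_X(x)$. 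This proves the first assertion.

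Finally, for the real-valued case with $\E[f(X)]<\infty$ (i.e. $\E[|f(X)|]<\infty$), I would decompose $f=f^+-f^-$ into its positive and negative parts. Each of $f^+$ and $f^-$ is nonnegative and measurable, so the first part applies to both, and integrability guarantees $\int_E f^\pm\,d\p_X(x)<\infty$, which legitimizes the subtraction $\E[f(X)]=\E[f^+(X)]-\E[f^-(X)]=\int_E f^+\,d\p_X(x)-\int_E f^-\,d\p_X(x)=\int_E f\,d\p_X(x)$. The only step requiring real care is the base case, where one must correctly unwind the defining property $\p_X[B]=\p[X^{-1}(B)]$ of the image measure; everything after that is the routine monotone-class bootstrap.
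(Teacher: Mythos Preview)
Your argument is correct and follows exactly the same route as the paper: the base case $f=\one_B$ reduces to the definition of $\p_X$, linearity extends this to positive simple functions, and monotone convergence handles general $f\geq 0$. You are in fact more thorough than the paper's proof, which omits the final $f=f^+-f^-$ step for the real-valued integrable case.
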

\begin{rem}
$f(X)$ is also a r.v.
\end{rem}
\begin{proof}[Proof of Proposition \ref{random}]
In the case $f=\one_B$ with $B\in\mathcal{E}$ we get that 
$$\E[f(X)]=\p[X\in B]=\p_X[B]$$
from the definition of the distribution of a r.v. Then by linearity, the result is true for positive simple functions. And then we use the fact that for $f\geq 0$ measurable, $\exists (f_n)_{n\in\N}$, where the $f_n$'s are simple and positive such that $f_n\uparrow f$ as $n\to\infty$ and we apply the monotone convergence theorem.
\end{proof}
\begin{rem}
One often uses the proposition to compute the law of a r.v. $X$. If one is able to write $\E[X]=\int f d\nu$ for a sufficiently large class of functions $f$, then one can deduce that $\p_X=\nu$. The idea is to be able to take $f=\one_B$, for then $\E[f(X)]=\p_X[B]=\nu(B)$.
\end{rem}
\begin{ex} Assume that $\p_X$ is absolutely continuous with density $h(x)=\frac{1}{\sqrt{2\pi}}e^{-\frac{x^2}{2}}$ for $x\in\R$ and $Y=X^2$. Then one can ask about the distribution of $Y$. Let $f:\R\to[0,\infty]$ be measurable. Then 
$$\E[f(Y)]=\E[f(X^2)]=\int_{-\infty}^\infty f(x^2)\frac{1}{\sqrt{2\pi}}e^{-\frac{x^2}{2}}dx.$$
We can write 
$$\int_{-\infty}^\infty f(x^2)\frac{1}{\sqrt{2\pi}}e^{-\frac{x^2}{2}}dx=2\int_{0}^\infty f(x^2)\frac{1}{\sqrt{2\pi}}e^{-\frac{x^2}{2}}dx.$$
Now we can set $y=x^2$. Then $dy=2xdx$ and hence $dx=\frac{dy}{2\sqrt{y}}$. Now we can write
$$2\int_0^\infty f(y)\frac{e^{-\frac{y}{2}}}{2\sqrt{2\pi y}}dy=\int_0^\infty f(y)\frac{e^{-\frac{y}{2}}}{\sqrt{2\pi y}}dy,$$
which implies that
$$d\nu(y)=\frac{e^{-\frac{y}{2}}}{\sqrt{2\pi y}}\one_{\{y>0\}}dy.$$
So we see that the distribution of $Y$ is given by $\frac{e^{-\frac{y}{2}}}{\sqrt{2\pi y}}\one_{\{y>0\}}$.
\end{ex}
\begin{prop}
\label{random2}
Let $X=(X_1,...,X_d)\in\R^d$ be a r.v. Assume that $X$ has density $P(x_1,...,x_d).$ Then $\forall j\in\{1,...,n\}$, $X_j$ has density
$$P_j(x)=\int_{\R^{d-1}}P(x_1,...,x_{j-1},x_{j},x_{j+1},...,x_d)dx^1\dotsm dx^{j-1}dx^{j+1}\dotsm dx^d$$
\end{prop}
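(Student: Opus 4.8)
The plan is to identify the law of $X_j$ by testing it against an arbitrary nonnegative measurable function, using the transfer formula of Proposition \ref{random} together with Tonelli's theorem, and then invoking the remark following Proposition \ref{random} to read off the density.

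First I would fix $j\in\{1,\ldots,d\}$ and let $\pi_j:\R^d\to\R$ denote the $j$-th coordinate projection, so that $X_j=\pi_j\circ X$. For an arbitrary measurable $f:\R\to[0,\infty]$, the composite $f\circ\pi_j:\R^d\to[0,\infty]$ is measurable, and applying Proposition \ref{random} to the $\R^d$-valued r.v. $X$ with this function gives
$$\E[f(X_j)]=\E[(f\circ\pi_j)(X)]=\int_{\R^d}f(x_j)\,d\p_X(x)=\int_{\R^d}f(x_j)P(x_1,\ldots,x_d)\,dx_1\dotsm dx_d,$$
where the last equality uses the standing assumption $d\p_X=P\,dx$.

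Next I would invoke Tonelli's theorem. Since the integrand $f(x_j)P(x)\geq 0$ and $P$ is measurable, the multiple integral equals the iterated integral in which all coordinates except $x_j$ are integrated out first; this is legitimate with no integrability hypothesis precisely because everything is nonnegative. This yields
$$\E[f(X_j)]=\int_\R f(x_j)\left(\int_{\R^{d-1}}P(x_1,\ldots,x_d)\,dx_1\dotsm dx_{j-1}\,dx_{j+1}\dotsm dx_d\right)dx_j=\int_\R f(x_j)P_j(x_j)\,dx_j,$$
so that the inner integral is exactly the asserted $P_j$, and Tonelli simultaneously guarantees that $P_j$ is measurable.

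Finally, specializing to $f=\one_B$ for $B\in\B(\R)$ gives $\p_{X_j}[B]=\E[\one_B(X_j)]=\int_B P_j(x)\,dx$, which by the remark following Proposition \ref{random} identifies $P_j$ as the density of $X_j$. The only genuine subtlety is the passage from the $d$-fold integral to the iterated one, where the measurability of $P$ and the nonnegativity of the integrand are used; since nothing is ever subtracted, no integrability obstruction arises and the remainder of the argument is essentially bookkeeping of which variables are integrated.
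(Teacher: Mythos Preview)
Your proof is correct and follows essentially the same route as the paper: introduce the projection $\pi_j$, compute $\E[f(X_j)]$ via the transfer formula for an arbitrary nonnegative measurable $f$, and then split off the $x_j$-integral using Fubini/Tonelli to exhibit $P_j$. The paper's version is slightly terser (it writes ``Fubini'' where you more carefully say ``Tonelli'' for the nonnegative case, and it stops at the identity $\E[f(X_j)]=\int f(y)P_j(y)\,dy$ rather than explicitly specializing to indicators), but the argument is the same.
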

\begin{rem}
Let $d=2$ and $X=(X_1,X_2)$. Then $P_1(x)=\int_\R P(x,y)dy$ and $P_2(x)=\int_\R P(x,y)dx$.
\end{rem}
\begin{proof}[Proof of Proposition \ref{random2}]
Let $\pi_j:(x_1,...,x_d)\mapsto x_j$. From Fubini's theorem we get that $\forall f:\R\to \R^+$, Borel measurable
$$\E[f(X_j)]=\E[f(\pi_j(X))]=\int_{\R^{d}}f(x_j)P(x_1,...,x_d)dx^1\dotsm dx^d$$
$$=\int_\R f(x_j)\underbrace{\left(\int_{\R^{d-1}}P(x_1,...,x_{j-1},x_j,x_{j+1},...,x_d)dx^1\dotsm dx^{j-1}dx^{j+1}\dotsm dx^d\right)dx^j}_{d\nu(x_j)=P(x_j)dx^j}$$
By renaming $x_j=y$, we get 
$$\E[f(X_j)]=\int_\R f(y)P_j(y)dy.$$
Hence the distribution of $X_j$ has density $P_j(y)$ on $\R$.
\end{proof}
\begin{rem}
If $X=(X_1,...,X_d)\in\R^d$ is a r.v., then the distribution $\p_{X_j}$ are called the margins of $X$. The last proposition shows us that the margins are determined by 
$$\p_{X=(X_1,...,X_d)},$$
but the converse is wrong. For example take $Q$ to be a density on $\R$ and observe that $P(x_1,x_2)=Q(x_1)Q(x_2)$ is also a density on $\R^2$. We have already seen that we can construct (in a canonical way) a r.v. $X=(X_1,X_2)\in\R^2$ such that $\p_X$ has $P(x_1,x_2)$ as density. Now the margins of $X$, namely $\p_{X_1}$ and $\p_{X_2}$, have density $q(x)$. We now observe that the $r.v.$'s $X=(X_1,X_2)$ and $X'=(X_1,X_1)$ have the same margin but they are different. $\p_X$ has support in $\R^2$, while $\p_{X'}$ has support in the diagonal of $\R^2$, which is of Lebesgue measure 0 in $\R^2$. In general we have $\p_X\not=\p_{X'}$.
\end{rem}
\section{Classical Probability distributions}
Let $(\Omega,\A,\p)$ denote a probability space and let $X:(\Omega,\A,\p)\to(E,\mathcal{E})$ be a r.v. taking values in some measureable space $(E,\mathcal{E})$.
\subsection{Discrete distributions}
\subsubsection{The uniform distribution} Let $\vert E\vert<\infty$. A r.v. $X$ with values in $E$ is said to be uniform on $E$ if $\forall x\in E$ 
$$\p[X=x]=\frac{1}{\vert E\vert}.$$
\subsubsection{The Bernoulli distribution with parameter $p\in[0,1]$} This is a r.v. $X$ with values in $\{0,1\}$ such that 
$$\p[X=1]=p,\hspace{0.2cm}\p[X=0]=1-p.$$  
The r.v. $X$ can be interpreted as the outcome of a coin toss. The expectation of $X$ is then given by $$\E[X]=0\cdot\p[X=0]+1\cdot\p[X=1].$$
\subsubsection{The Binomial distribution $\B(n,p)$, $n\in \N$, $n\geq 1$, $p\in[0,1]$} This is the distribution of a r.v. $X$ taking its values in $\{0,1,...,n\}$ such that 
$$\p[X=k]=\binom{n}{k}p^k(1-p)^{n-k}.$$
\begin{figure}[h!]
\begin{center}
\includegraphics[height=8cm, width=10cm]{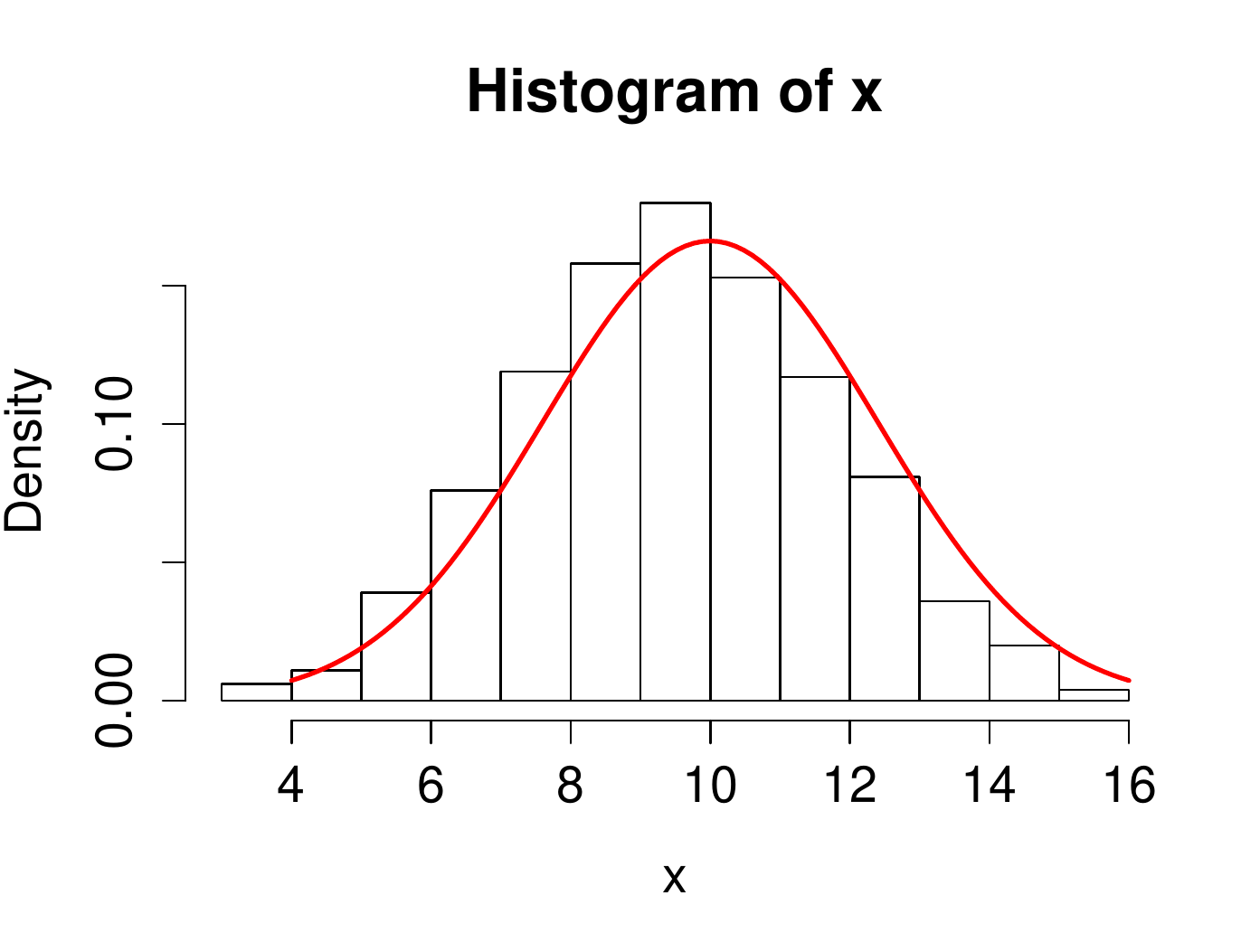}
\end{center}
\caption{Histogram of a binomial distributed r.v.}
\end{figure}
The r.v. $X$ is interpreted as the number of heads of the $n$ tosses of the previous case. One has to check that its a probability distribution:
$$\sum_{k=0}^n\p[X=k]=\sum_{k=0}^n\binom{n}{k}p^k(1-p)^{n-k}=(p+(1-p))^n=1.$$
The expected value for the binomial distribution is given by 
\begin{align*}
\E[X]=\sum_{k=0}^nk\p[X=k]&=\sum_{k=0}^nk\binom{n}{k}p^k(1-p)^{n-k}=np\sum_{k=0}^nk\frac{(n-1)!}{(n-k)!k!}p^{k-1}(1-p)^{(n-1)-(k-1)}\\
&=np\sum_{k=1}^n\frac{(n-1)!}{(n-k)!(k-1)!}p^{k-1}(1-p)^{(n-1)-(k-1)}\\
&=np\sum_{k=1}^n\binom{n-1}{k-1}p^{k-1}(1-p)^{(n-k)-(k-1)}\\
&=np\sum_{l=0}^{n-1}\binom{n-1}{l}p^l(1-p)^{(n-1)-l},\hspace{0.1cm}(l:=k-1)\\
&=np\sum_{l=0}^m\binom{m}{l}p^l(1-p)^{m-l},\hspace{0.2cm}(m:=n-1)=np(p+(1-p))^m=np
\end{align*}
\subsubsection{The Geometric distribution with parameter $p\in[0,1]$} This is a r.v. $X$ with values in $\N$ such that 
$$\p[X=k]=(1-p)p^k.$$
The r.v. $X$ can be interpreted as the number of heads obtained before tail shows for the first time. It is also a probability distribution, since
$$\sum_{k=0}^\infty\p[X=k]=\sum_{k=0}^\infty(1-p)p^k=(1-p)\sum_{k=0}^\infty p^k=\frac{1-p}{1-p}=1.$$
\subsubsection{The Poisson distribution with parameter $\lambda>0$} This is a r.v. $X$ with values in $\N$ such that 
$$\p[X=k]=e^{-\lambda}\frac{\lambda^k}{k!},\hspace{0.2cm}k\in\N.$$
\begin{figure}[h!]
\begin{center}
\includegraphics[height=8cm, width=10cm]{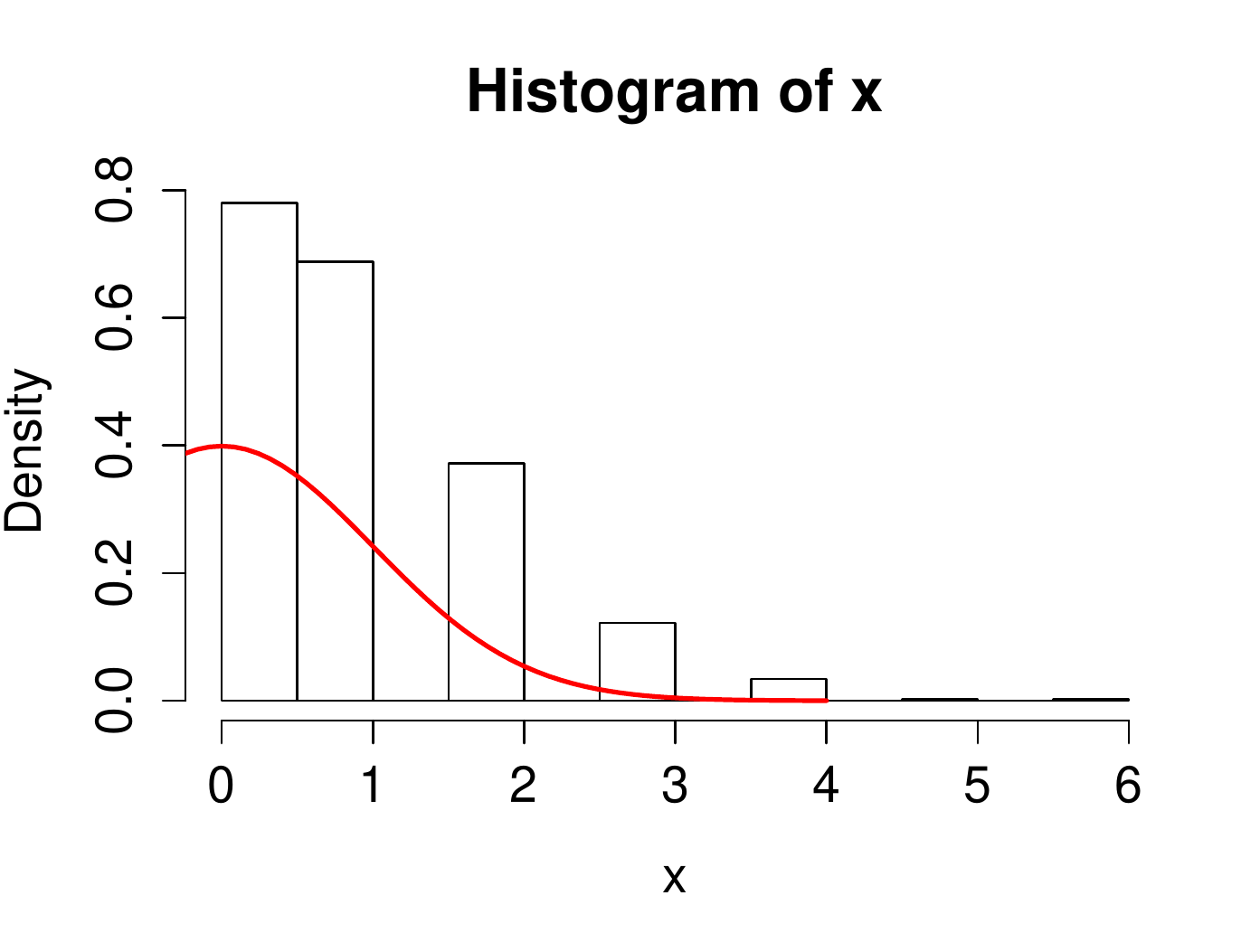}
\end{center}
\caption{Histogram of a poisson distributed r.v.}
\end{figure}
The Poisson distribution is very important, both from the point of view of applications and from the theoretical point of view. Intuitively it describes the number of rare events that have occurred during a long period. If $X_n\sim \B(n,p_n)$ and if $np_n\xrightarrow{n\to\infty}\lambda>0$, i.e. $p_n\sim \frac{\lambda}{n}$ for $n\geq 1$, then for every $k\in\N$
$$\p[X_n=k]\xrightarrow{n\to\infty} e^{-\lambda}\frac{\lambda^k}{k!}.$$
The expected value is then given by 
$$\E[X]=\sum_{k=0}^\infty k\frac{\lambda^k}{k!}e^{-\lambda}=\lambda e^{-\lambda}\sum_{k=1}^\infty\frac{\lambda^{k-1}}{(k-1)!}=\lambda e^{-\lambda}\sum_{j=0}\frac{\lambda^j}{j!}=\lambda.$$
\subsection{Absolutely continuous distributions}
Let now $E\subset\R$. The question here is about the densities $P(x)$ of a certain distributed r.v. in the continuous case.
\subsubsection{The uniform distribution on $[a,b]$} The density of a continuous, uniformly distributed r.v. $X$ is given by 
$$P(x)=\frac{1}{b-a}\one_{[a,b]}(x).$$
\begin{figure}[h!]
\begin{center}
\includegraphics[height=8cm, width=10cm]{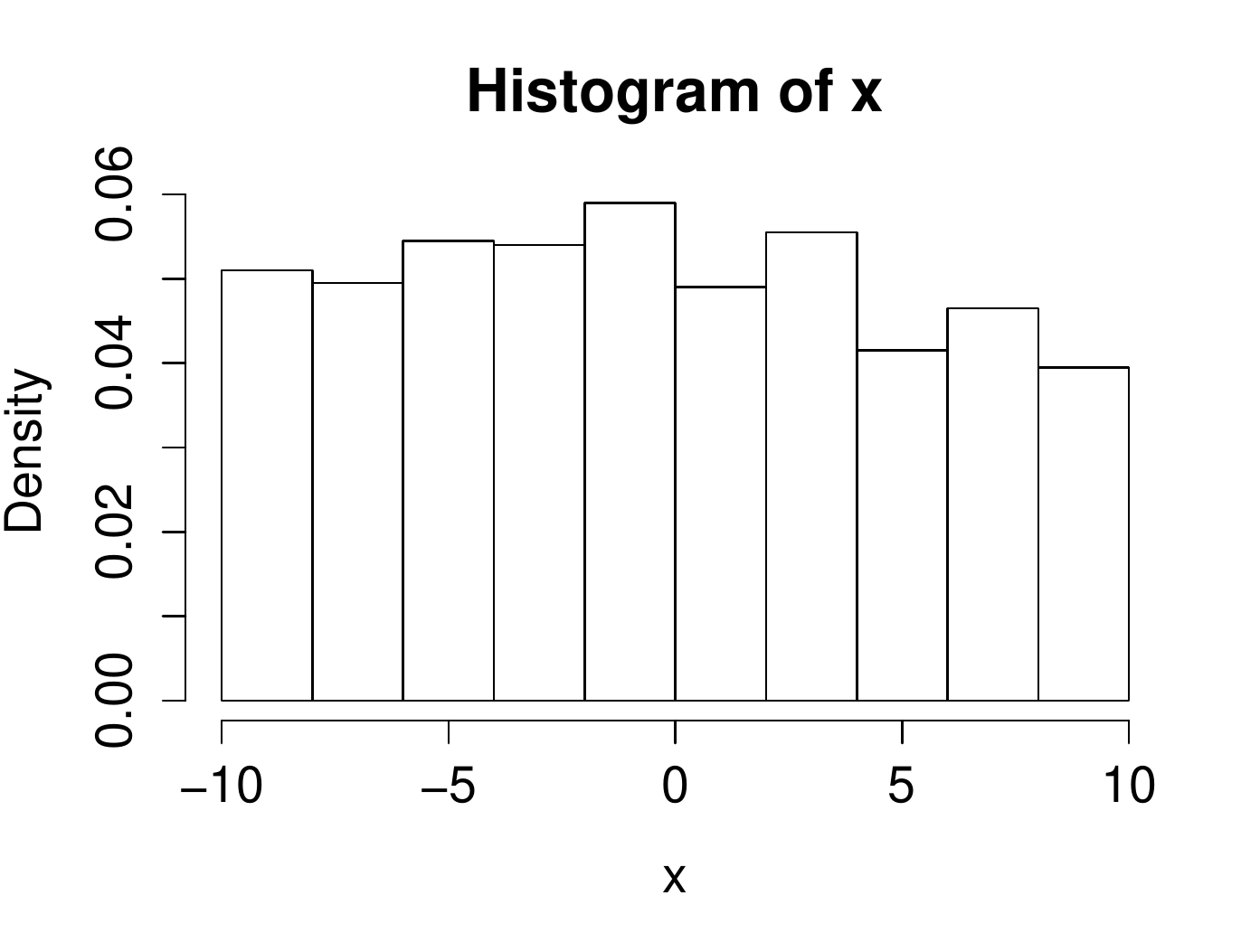}
\end{center}
\caption{Histogram of a uniformly distributed r.v.}
\end{figure}
We want to check that it is a probability density. We have to check that $\int_\R P(x)dx=1$, so we have
$$\int_{-\infty}^\infty P(x)dx=\int_{-\infty}^\infty \frac{1}{b-a}\one_{[a,b]}(x)dx=\frac{1}{b-a}\int_{-\infty}^\infty\one_{[a,b]}(x)dx=\frac{1}{b-a}(b-a)=1.$$
Hence it's a probability density. If $X$ is uniform on $[a,b]$, then $\vert X\vert\leq \vert a\vert +\vert b\vert<\infty$ a.s. and $\E[\vert a\vert +\vert b\vert ]=\vert a\vert +\vert b\vert<\infty\Longrightarrow \E[X]<\infty$. The expectation is given by
$$\E[X]=\int_{-\infty}^{\infty}xP(x)dx=\int_{-\infty}^\infty\frac{1}{b-a}\one_{[a,b]}(x)dx=\frac{1}{b-a}\int_a^bxdx=\frac{1}{b-a}\frac{1}{2}(b^2-a^2)=\frac{a+b}{2}.$$
\subsubsection{The Exponential distribution with parameter $\lambda>0$} The density is given by 
$$P(x)=\lambda e^{-\lambda x}\one_{\R^+}(x),$$
with $X\geq 0$ a.s. The expectation is given by
$$\E[X]=\int_{-\infty}^\infty xP(x)dx=\int_{0}^\infty x\lambda e^{-\lambda x}dx=\lambda\int_0^\infty xe^{-\lambda x}dx.$$
With $u=\lambda x$ we get $dx=\frac{du}{\lambda}$ and hence 
$$\lambda\int_0^\infty \frac{u}{\lambda}e^{-u}\frac{du}{\lambda}=\frac{1}{\lambda}\int_0^\infty ue^{-u}du=\frac{1}{\lambda}.$$
If $a,b>0$, then 
$$\p[X>a+b]=\int_{a+b}^\infty \lambda e^{-\lambda x}dx=\lambda\left[-\frac{1}{\lambda}e^{-\lambda x}\right]_{a+b}^\infty=e^{-\lambda(a+b)}=e^{-\lambda a}e^{-\lambda b}=\p[X>a]\p[X>b].$$
\begin{figure}[h!]
\begin{center}
\includegraphics[height=8cm, width=10cm]{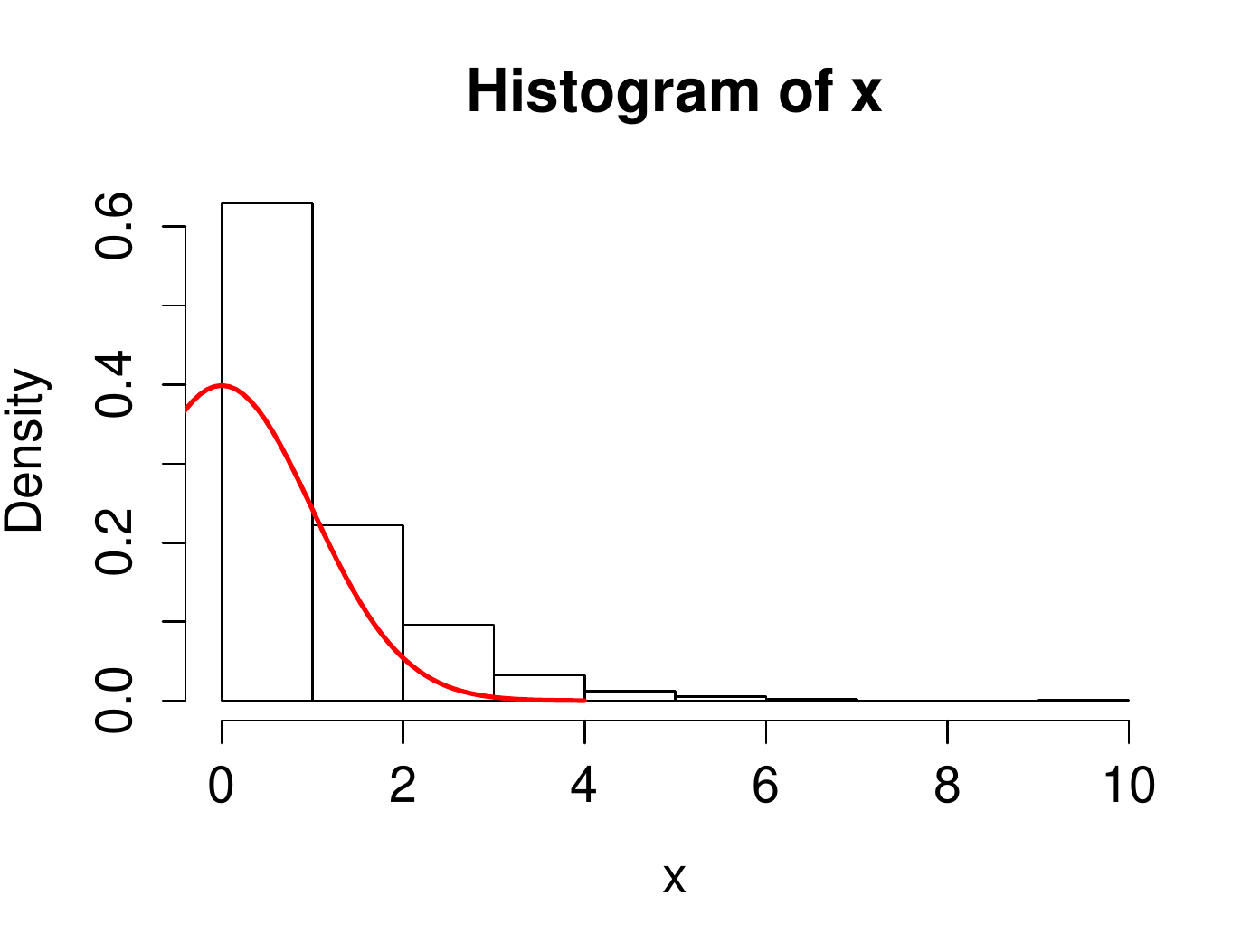}
\end{center}
\caption{Histogram of an exponentially distributed r.v.}
\end{figure}
Note that 
$$\p[X<0]=\E[\one_{\{X<0\}}]=\int_{-\infty}^\infty\one_{\{x<0\}}P(x)dx=\int_{-\infty}^\infty\one_{\{x<0\}}\lambda e^{-\lambda x}\one_{\{x\geq 0\}}dx=0$$
and also that 
$$\p[X=x]=\int_{-\infty}^{\infty} \one_{\{y=x\}}P(y)dy=0.$$
\subsubsection{The Gaussian distribution $\mathcal{N}(m,\sigma^2)$, $m\in\R$, $\sigma>0$} The density is given by 
$$P(x)=\frac{1}{\sigma\sqrt{2\pi}}\exp\left(-\frac{(x-m)^2}{2\sigma^2}\right).$$
This is the most important distribution in probability theory. We have to check that $P(x)$ is a probability density, i.e.
$$\int_{-\infty}^\infty\frac{1}{\sigma\sqrt{2\pi}}\exp\left(-\frac{(x-m)^2}{2\sigma^2}\right) dx=1.$$
\begin{figure}[h!]
\begin{center}
\includegraphics[height=8cm, width=10cm]{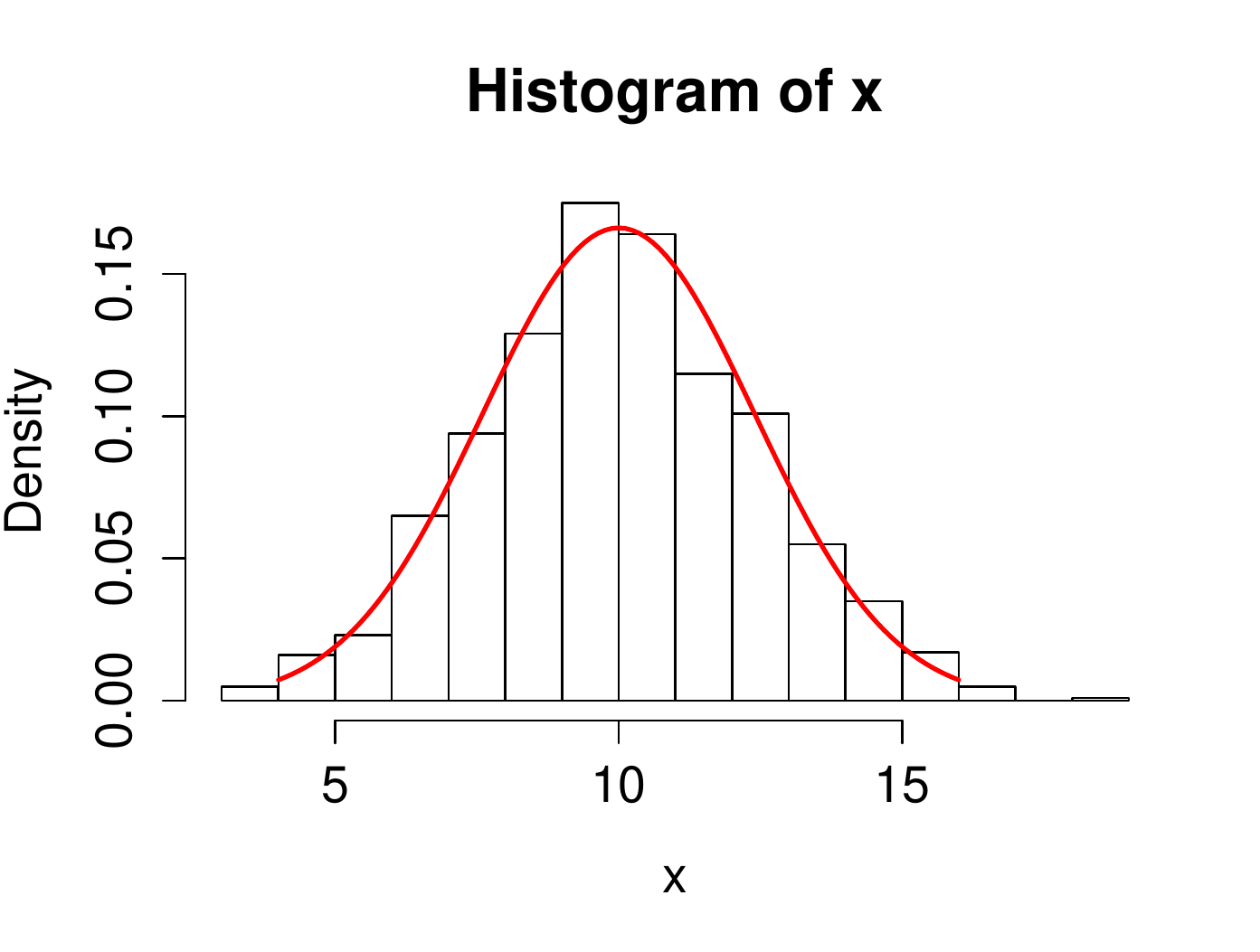}
\end{center}
\caption{Histogram of a Gaussian distributed r.v.}
\end{figure}
We set $u=x-m$ and hence $du=dx$. So we get
$$\int_{-\infty}^{\infty}\frac{1}{\sigma\sqrt{2\pi}}e^{-\frac{u^2}{2\sigma^2}}du.$$
Now we set $t=\frac{u}{\sigma}$ and hence $du=\sigma dt$. So now we get 
$$\int_{-\infty}^\infty \frac{1}{\sigma\sqrt{2\pi}}e^{-\frac{t^2}{2}}\sigma dt=\frac{1}{\sqrt{2\pi}}\int_{-\infty}^\infty e^{-\frac{t^2}{2}}dt=\frac{\sqrt{2\pi}}{\sqrt{2\pi}}=1.$$
We have used the fact that $\int_{-\infty}^{\infty}e^{-\frac{x^2}{2}}dx=\sqrt{2\pi}$, by change of coordinates from cartesian coordinates to polar coordinates. Consider $\mathcal{N}(0,1)$ with density $P(x)=\frac{1}{\sqrt{2\pi}}e^{-\frac{x^2}{2}}$. It is called the standard Gaussian distribution ($m=0$, $\sigma=1$). We note that if $X$ is distributed according to $\mathcal{N}(m,\sigma^2)$, then 
$$\E[X]=m,\hspace{0.5cm}\text{and}\hspace{0.5cm}\E[(X-m)^2]=\sigma^2.$$
Indeed we have
$$\E[\vert X\vert]=\int_{-\infty}^{\infty}\vert x\vert\frac{1}{\sigma\sqrt{2\pi}}\exp\left(-\frac{(x-m)^2}{2\sigma^2}\right)dx<\infty$$
and therefore
$$\E[X]=\int_{-\infty}^\infty x\frac{1}{\sigma\sqrt{2\pi}}\exp\left(-\frac{(x-m)^2}{2\sigma^2}\right)dx.$$
We set $u=x-m$ and hence $du=dx$. So we get
$$\int_{-\infty}^{\infty}\frac{(u+m)}{\sigma\sqrt{2\pi}}e^{-\frac{u^2}{2\sigma^2}}du=\underbrace{\int_{-\infty}^\infty\frac{u}{\sigma\sqrt{2\pi}}e^{-\frac{u^2}{2\sigma^2}}du}_{=0}+\underbrace{m\int_{-\infty}^\infty \frac{1}{\sigma\sqrt{2\pi}}e^{-\frac{u^2}{2\sigma^2}}du}_{=m}.$$
Therefore we get $\E[X]=m$. One can show similarly that $\E[(X-m)^2]=\sigma^2$.
\subsection{The distribution function}
Let $X:\Omega\to\R$ be a real valued r.v. The distribution function of $X$ is the function
$$F_X:\R\to[0,1],\hspace{0.5cm}t\mapsto F_X(t):=\p[X\leq t]=\p_X[(-\infty,t)].$$
We claim that $F_X$ is increasing and right continuous. Meaning that 
$$\lim_{t\to-\infty}F_X(t)=0,\hspace{1cm}\lim_{t\to \infty}F_X(t)=1$$
We can thus write 
$$\p[a\leq X\leq b]=F_X(b)-\underbrace{F_X(a^-)}_{\lim_{t\to a\atop t<a}F_X(t)}\hspace{0.5cm}\text{and thus}\hspace{0.5cm}\p[a<X<b]=F(b^-)-F_X(a),$$
Moreover, for a single value we get 
$$\p[X=a]=F_X(a)-F_X(a^-),$$
which is called the jump of the function $F_X$.
If $X$ and $Y$ are two r.v.'s, such that $F_X(t)=F_Y(t)$, then $\p_X=\p_Y$ (this is a consequence of the monotone class theorem). If $F$ is an increasing and right continuous function, then the set 
$$A=\{a\in\R\mid F(a)\not=F(a^-)\}$$
is at most countable. If $\p_X$ is absolutely continuous, then 
$$\p_X[\{ a\}]=\p[X=a]=0,$$
which implies that for all $a\in\R$ we have $F_X(a)=F_X(a^-)$ and hence $F_X$ is continuous. An alternative point of view is to say that, if $P(x)$ is the density of of $\p_X$, then 
$$F_X(x)=\p_X[(-\infty,t]]=\int_\R\one_{(-\infty,t]}(x)P(x)dx=\int_{-\infty}^tP(x)dx,$$
is a continuous function of $t$.
\subsection{$\sigma$-Algebras generated by a Random Variable} Let $(\Omega,\A,\p)$ be a probability space. Let $X$ be a r.v. taking values in $(E,\mathcal{E})$, i.e. $X:(\Omega,\A,\p)\to(E,\mathcal{E})$. The $\sigma$-Algebra generated by $X$, denoted by $\sigma(X)$, is by definition the smallest $\sigma$-Algebra, which makes $X$ measurable. So we have
$$\sigma(X)=\{A=X^{-1}(B)\mid B\in\mathcal{E}\}.$$
\begin{rem}
One can of course extend this definition to the case of a family of r.v.'s $X_i$ for $i\in I$, taking values in $(E_i,\mathcal{E}_i)$. In this case we have 
$$\sigma((X_i)_{i\in I})=\sigma(\{X_i^{-1}(B)\mid B_i\in\mathcal{E}_i,i\in I\}).$$
\end{rem}
\begin{prop}
Let $(\Omega,\A,\p)$ be a probability space. Let $X$ be a r.v. with values in a measure space $(E,\mathcal{E})$ and let $Y$ be a real valued r.v. Then the following are equivalent.
\begin{enumerate}[$(i)$]
\item{$Y$ is $\sigma(X)$-measurable.
}
\item{There exists a measurable map $f:(E,\mathcal{E})\to(\R,\B(\R))$, such that
$$Y=f(X).$$
}
\end{enumerate}
\end{prop}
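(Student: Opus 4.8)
The plan is to prove this factorization result (often called the Doob--Dynkin lemma) by treating the two implications separately, with $(ii)\Rightarrow(i)$ being immediate and $(i)\Rightarrow(ii)$ requiring the standard approximation argument.

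First I would dispose of $(ii)\Rightarrow(i)$. If $Y=f(X)$ with $f:(E,\mathcal{E})\to(\R,\B(\R))$ measurable, then for any $B\in\B(\R)$ one has $Y^{-1}(B)=X^{-1}(f^{-1}(B))$, and $f^{-1}(B)\in\mathcal{E}$ by measurability of $f$, so $Y^{-1}(B)\in\sigma(X)$ directly from the definition $\sigma(X)=\{X^{-1}(C)\mid C\in\mathcal{E}\}$. Hence $Y$ is $\sigma(X)$-measurable.

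For the substantive direction $(i)\Rightarrow(ii)$ I would run the usual ``standard machine''. I start with indicators: if $Y=\one_A$ with $A\in\sigma(X)$, then $A=X^{-1}(B)$ for some $B\in\mathcal{E}$, and $\one_A=\one_B\circ X$, so $f=\one_B$ works. By linearity this extends to $\sigma(X)$-measurable simple functions $Y=\sum_{i=1}^n c_i\one_{A_i}$: writing each $A_i=X^{-1}(B_i)$ gives $Y=f(X)$ with $f=\sum_{i=1}^n c_i\one_{B_i}$ measurable. Then for a nonnegative $\sigma(X)$-measurable $Y$ I would choose simple $\sigma(X)$-measurable functions $Y_n\uparrow Y$ (available by the same approximation theorem invoked in the proof of Proposition~\ref{random}), each of the form $Y_n=f_n(X)$ with $f_n$ measurable.

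The delicate point, and the main obstacle, is the passage to the limit: the $f_n$ are controlled only on the range $X(\Omega)\subset E$ and need not converge on the rest of $E$, so one cannot simply set $f=\lim_n f_n$. I would circumvent this by defining $f:=\limsup_{n\to\infty}f_n$, which is $\mathcal{E}$-measurable as a limsup of measurable functions. For every $\omega\in\Omega$ one then has $f(X(\omega))=\limsup_n f_n(X(\omega))=\lim_n Y_n(\omega)=Y(\omega)$, so $f(X)=Y$ on all of $\Omega$; if one wants $f$ to be genuinely real-valued, replacing it by $f\cdot\one_{\{f<\infty\}}$ changes nothing on the range of $X$. Finally, for a general real-valued $Y$ I would decompose $Y=Y^+-Y^-$, apply the nonnegative case to each part to obtain measurable $f^+$ and $f^-$, and take $f=f^+-f^-$, completing the argument.
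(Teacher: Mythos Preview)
Your proof is correct and follows essentially the same route as the paper: the easy direction via composition, then indicators $\to$ simple $\to$ limit for the hard direction. The only cosmetic differences are that the paper defines $f(x)=\lim_n f_n(x)$ on the measurable set where the limit exists and $0$ elsewhere (working directly with a general approximating sequence of simple functions), whereas you use $\limsup_n f_n$ on the nonnegative part and then decompose $Y=Y^+-Y^-$; both devices solve the same obstacle of controlling $f$ off the range of $X$.
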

\begin{proof}
So we have the following cases 
$$X:(\Omega,\A,\p)\longrightarrow(E,\mathcal{E}),\hspace{0.5cm}X:(\Omega,\sigma(X),\p)\longrightarrow (E,\mathcal{E}),\hspace{0.5cm}Y:(\Omega,\A,\p)\longrightarrow (\R,\B(\R))$$
\begin{itemize}
\item{$(2)\Longrightarrow (1)$: This follows from the fact that the composition of two measurable maps is measurable.
}
\item{$(1)\Longrightarrow(2)$: Assume that $Y$ is $\sigma(X)$-measurable. Assume first $Y$ is simple, i.e. 
$$Y=\sum_{i=1}^n\lambda_i\one_{A_i}(x),\hspace{0.2cm}\forall i\{1,...,n\},\lambda_i\in\R,A_i\in\sigma(X).$$
Now by definition of $\sigma(X)$, there is a $\B_i\in\mathcal{E}$, such that $A_i\in X_i^{-1}(B_i)$, $\forall i\in\{1,...,n\}$. So it follows that 
$$Y=\sum_{i=1}^n\lambda_i\one_{A_i}=\sum_{i=1}^n\lambda_i\one_{B_i}\circ X=f\circ X,$$
where $f=\sum_{i=1}^n\lambda_i\one_{B_i}$ is $\mathcal{E}$-measurable. More generally, if $Y$ is $\mathcal{E}$-measurable, there exists a seqence $(Y_n)$ of simple functions such that $Y_n$ is $\sigma(X)$-measurable and $Y_n\xrightarrow{n\to\infty} Y$. The above implies $Y_n=f_n(X)$ when $f_n:E\to\R$ is a measurable map. For $x\in E$, set 
$$f(x)=\begin{cases}\lim_{n\to\infty}f_n(x),&\text{if the limit exists}\\ 0,&\text{otherwise}\end{cases}$$
Then $f$ is measurable. Moreover for all $\omega \in\Omega$ we get
$$X(\omega)\in\left\{x\mid \lim_{n\to\infty}f_n(x)\hspace{0.2cm}\text{exists}\right\},$$
since $\lim_{n\to\infty}f_n(X(\omega))=\lim_{n\to\infty}Y_n(\omega)=Y(\omega)$ and $f(X(\omega))=\lim_{n\to\infty}f_n(X(\omega))$. Hence $Y=f(X)$.
}
\end{itemize}
\end{proof}
\section{Moments of Random Variables}
\subsection{Moments and Variance} Let $(\Omega,\A,\p)$ be a probability space. Let $X$ be a r.v. and let $p\geq 1$ be an integer (or even a real number). The $p$-th moment of $X$ is by definition $\E[X^p]$, which is well defined when $X\geq 0$ or $\E[\vert X\vert^p]<\infty$, which is by definition
$$\E[\vert X\vert^p]=\int_\Omega\vert X(\omega)\vert^pd\p(\omega)<\infty.$$
When $p=1$, we get the expected value. We say that $X$ is $centered$ if $\E[X]=0$. The spaces $L^p(\Omega,\A,\p)$ for $p\in[1,\infty)$ are defined as we have seen in the course $Measure$ $and$ $Integral$. From Hölder's inequality we can observe that 
$$\E[\vert XY\vert]\leq \E[\vert X\vert^p]^{\frac{1}{p}}\E[\vert Y\vert^q]^{\frac{1}{q}},$$
whenever $p,q\geq 1$ and $\frac{1}{p}+\frac{1}{q}=1$. If we take $Y=1$ above, we obtain 
$$\E[X]\leq \E[\vert X\vert^p]^{\frac{1}{p}},$$
which means $\|X\|_1\leq \|X\|_p$. This can be extended as $\|X\|_r\leq \|X\|_p$ if $r\leq p$. So it follows that $L^p(\Omega,\A,\p)\subset L^r(\Omega,\A,\p)$. For $p=q=2$ we get the Cauchy-Schwarz inequality as follows
$$\E[\vert XY\vert]\leq \E[\vert X\vert^2]^{\frac{1}{2}}\E[\vert Y\vert^2]^{\frac{1}{2}}.$$
With $Y=1$ we have $\E[\vert X\vert]^2=\E[X^2]$.
\begin{defn}[Variance]
Let $(\Omega,\A,\p)$ be a probability space. Consider a r.v. $X\in L^2(\Omega,\A,\p)$. The variance of $X$ is defined as 
$$Var(X)=\E[(X-\E[X])^2]$$
and the standard deviation of $X$ is given by 
$$\sigma_X=\sqrt{Var(X)}.$$
\end{defn}
\begin{rem}
Informally, the variance represents the deviation of $X$ around its mean $\E[X]$. Note that $Var(X)=0$ if and only if $X$ is a.s. constant.
\end{rem}
\begin{prop} 
$$Var(X)=\E[X^2]-\E[X]^2\hspace{0.2cm}\text{and for all $a\in\R$ we get}\hspace{0.2cm}\E[(X-a)^2]=Var(X)+(\E[X]-a)^2.$$ 
Consequently, we get
$$Var(X)=\inf_{a\in R}\E[(X-a)^2].$$ 
\end{prop}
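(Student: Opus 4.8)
The plan is to reduce everything to linearity of the expectation, which is available since $X\in L^2(\Omega,\A,\p)$ and hence $X\in L^1(\Omega,\A,\p)$ by the inclusion $L^p\subset L^r$ noted above; in particular $\E[X]$, $\E[X^2]$ and $\E[(X-a)^2]$ are all finite and well defined. Throughout I write $m:=\E[X]$ for brevity, keeping in mind that $m$ is a constant.

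First I would establish the first identity by expanding the square and using linearity:
$$Var(X)=\E[(X-m)^2]=\E[X^2-2mX+m^2]=\E[X^2]-2m\E[X]+m^2=\E[X^2]-m^2,$$
where the last step uses $\E[X]=m$ together with $\E[m^2]=m^2$ for the constant $m$. This is precisely $\E[X^2]-\E[X]^2$.

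Next, for the second identity I would insert and subtract $m$ inside the square, writing $X-a=(X-m)+(m-a)$ and expanding:
$$\E[(X-a)^2]=\E[(X-m)^2]+2(m-a)\E[X-m]+(m-a)^2.$$
The key observation is that the cross term vanishes, because $\E[X-m]=\E[X]-m=0$. Hence
$$\E[(X-a)^2]=Var(X)+(m-a)^2=Var(X)+(\E[X]-a)^2,$$
as claimed.

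Finally, the consequence follows immediately from the second identity: for every $a\in\R$ we have $\E[(X-a)^2]=Var(X)+(\E[X]-a)^2\geq Var(X)$, since $(\E[X]-a)^2\geq 0$, with equality precisely when $a=\E[X]$. Thus the infimum over $a\in\R$ is attained at $a=\E[X]$ and equals $Var(X)$. There is no genuine obstacle in this argument; the only point deserving a word of care is the finiteness of the moments, which is guaranteed by the assumption $X\in L^2$, so that all the manipulations with the linearity of $\E$ are legitimate.
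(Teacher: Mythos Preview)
Your proof is correct and follows essentially the same route as the paper: both expand $(X-\E[X])^2$ directly for the first identity, decompose $X-a=(X-\E[X])+(\E[X]-a)$ and observe that the cross term vanishes for the second, and then read off the infimum from the nonnegativity of $(\E[X]-a)^2$. Your explicit remark on $L^2\subset L^1$ ensuring finiteness is a welcome addition but does not change the underlying argument.
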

\begin{proof}
$$Var(X)=\E[(X-\E[X])^2]=\E[X^2-\{\E[X]X+\E[X]^2\}]=\E[X^2]-2\E[X]\E[X]+\E[X]^2=\E[X^2]-\E[X]^2$$
Moreover, we have
$$\E[(X-a)^2]=\E[(X-\E[X])+(\E[X]-a)^2]=Var(X)+(\E[X]-a)^2,$$
which implies that for all $a\in\R$
$$\E[(X-a)^2]\geq Var(X)$$
and there is equality when $a=\E[X]$.
\end{proof}
\begin{rem}
\label{random3}
It follows that if $X$ is centered (i.e. $\E[X]=0$), we get $Var(X)=\E[X^2]$. Moreover, the following two simple inequalities are very often used.
\begin{enumerate}[$(i)$]
\item{(\emph{Markov inequality}) If $X\geq 0$ and $a\geq 0$ then 
$$\boxed{\p[X>a]\leq \frac{1}{a}\E[X]}.$$
}
\item{(\emph{Tchebisheff inequality})
$$\boxed{\p[\vert X-\E[X]\vert>a]\leq \frac{1}{a^2}Var(X)}.$$
}
\end{enumerate}
\end{rem}
\begin{proof}[Proof of Remark \ref{random3}] We want to show both inequalities.
\begin{enumerate}[$(i)$]
\item{Note that 
$$\p[X\geq a]=\E[\one_{\{X\geq a\}}]\leq \E\left[\frac{X}{a}\underbrace{\one_{\{X\geq a\}}}_{\leq 1}\right]\leq \E\left[\frac{X}{a}\right].$$
}
\item{This follows from (1) because $\vert X-\E[X]\vert$ is a positive r.v. and hence 
$$\p[\vert X-\E[X]\vert\geq a]=\p[\vert X-\E[X]\vert^2\geq a^2]\leq \frac{1}{a^2}\underbrace{\E[\vert X-\E[X]\vert^2]}_{Var(X)}.$$
}
\end{enumerate}
\end{proof}
\begin{defn}[Covariance]
Let $(\Omega,\A,\p)$ be a probability space. Consider two r.v.'s $X,Y\in L^2(\Omega,\A,\p)$. The covariance of $X$ and $Y$ is defined as 
$$Cov(X,Y)=\E[(X-\E[X])(Y-\E[Y])]=\E[XY]-\E[X]\E[Y].$$
\end{defn}
If $X=(X_1,...,X_d)\in\R^d$ is a r.v. such that $\forall i\in\{1,...,d\}$, $X_i\in L^2(\Omega,\A,\p)$, then the covariance matrix of $X$ is defined as 
$$K_X=\left( Cov(X_i,X_j)\right)_{1\leq i,j\leq d}.$$
Informally speaking, the covariance between $X$ and $Y$ measures the correlation between $X$ and $Y$. Note that $Cov(X,X)=Var(X)$ and from Cauchy-Schwarz we get 
$$\left\vert Cov(X,Y)\right\vert\leq \sqrt{Var(X)}\cdot\sqrt{Var(Y)}.$$
The application $(X,Y)\mapsto Cov(X,Y)$ is a bilinear form on $L^2(\Omega,\A,\p)$. We also note that $K_X$ is symmetric and positive, i.e. if $\lambda_1,...,\lambda_d\in\R$, $\lambda=(\lambda_1,...,\lambda_d)^T$, then 
$$\left\langle K_X\lambda,\lambda\right\rangle=\sum_{i,j=1}^d\lambda_i\lambda_jCov(X_i,X_j)\geq 0.$$
So we get
\begin{align*}
\sum_{i,j=1}^d\lambda_i\lambda_jCov(X_i,X_j)&=Var\left(\sum_{j=1}^d\lambda_jX_j\right)=\E\left[\left(\sum_{j=1}^d\lambda_jX_j-\E\left[\sum_{j=1}^d\lambda_j X_j\right]\right)^2\right]\\
&=\E\left[\left(\sum_{j=1}^d\lambda_jX_j-\sum_{j=1}^d\lambda_j\E[X_j]\right)^2\right]=\E\left[\left(\sum_{j=1}^d (X_j-\E[X_j])\right)^2\right]\\
&=\E\left[\sum_{j=1}\lambda_j(X_j-\E[X_j])\sum_{i=1}^d\lambda_i(X_i-\E[X_i])\right]\\
&=\sum_{i,j=1}^d\lambda_i\lambda_j\E[(X_i-\E[X_i])(X_j-\E[X_j])]\geq 0
\end{align*}
\begin{exer} If $A$ is a matrix of size $n\times d$, and $Y=AX$, then prove that 
$$K_Y=AK_XA^T.$$
\end{exer}
\begin{rem}
Set $X=(X_1,...,X_d)^T$ and $XX^T=(X_iX_j)_{1\leq i,j\leq n}$, then informally 
$$K_X=\E[XX^T]=(\E[X_iX_j])_{1\leq i,j\leq n},$$
and for $Y=AX$ we get 
$$K_Y=\E[AXX^TA^T]=A\E[XX^T]A^T=AK_XA^T.$$
\end{rem}
\subsection{Linear Regression} Let $(\Omega,\A,\p)$ be a probability space. Let $X,Y_1,...,Y_n$ be r.v.'s in $L^2(\Omega,\A,\p)$. We want the best approximation of $X$ as an affine function of $Y_1,...,Y_n$. More precisely we want to minimize
$$\E[(X-\E[\beta_0+\beta_1Y_1+...+\beta_nY_n])^2]$$
over all possible choices of $(\beta_0,\beta_1,...,\beta_n)$.
\begin{prop}
Let $(\Omega,\A,\p)$ be a probability space. Let $X,Y\in L^1(\Omega,\A,\p)$ be two r.v.'s. Then
$$\inf_{(\beta_0,\beta_1,...,\beta_n)\in\R^{n+1}}\E[(X-(\beta_0+\beta_1Y_1+...+\beta_nY_n))^2]=\E[(X-Z)^2],$$
where $Z=\E[X]+\sum_{j=1}^n\alpha_j(Y_j-\E[Y_j])$ and the $\alpha_j$'s are solutions to the system
$$\sum_{j=1}^n\alpha_jCov(Y_j,Y_k)=Cov(X,Y_k)_{1\leq k\leq n}.$$
In particular if $K_Y$ is invertible, we have $\alpha=Cov(X,Y)K_Y^{-1}$, where 
$$Cov(X,Y)=\begin{pmatrix}Cov(X,Y_1)\\ \vdots \\ Cov(X,Y_n)\end{pmatrix}$$.
\end{prop}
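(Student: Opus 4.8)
The plan is to recognize this as an orthogonal projection problem in the Hilbert space $L^2(\Omega,\A,\p)$ equipped with the inner product $\langle U,W\rangle=\E[UW]$. As $(\beta_0,\beta_1,\dots,\beta_n)$ ranges over $\R^{n+1}$, the affine combinations $\beta_0+\sum_{j=1}^n\beta_jY_j$ range exactly over the linear subspace $V=\mathrm{span}\{1,Y_1,\dots,Y_n\}\subset L^2$, which is finite-dimensional and hence closed. Minimizing $\E[(X-W)^2]=\|X-W\|^2$ over $W\in V$ is therefore the computation of the orthogonal projection of $X$ onto $V$, and the classical characterization is that the minimizer $Z$ is the unique element of $V$ with $X-Z$ orthogonal to $V$.

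First I would verify that the proposed $Z=\E[X]+\sum_{j=1}^n\alpha_j(Y_j-\E[Y_j])$ lies in $V$ (clear) and satisfies the orthogonality conditions $\langle X-Z,1\rangle=0$ and $\langle X-Z,Y_k\rangle=0$ for $1\leq k\leq n$, since $1,Y_1,\dots,Y_n$ span $V$. By construction $\E[Z]=\E[X]$, so $\langle X-Z,1\rangle=\E[X-Z]=0$ automatically. Because $X-Z$ is then centered, $\langle X-Z,Y_k\rangle=Cov(X-Z,Y_k)=Cov(X,Y_k)-\sum_{j=1}^n\alpha_jCov(Y_j,Y_k)$, which vanishes for every $k$ precisely when the $\alpha_j$ solve the stated normal system.

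Next I would address existence of a solution $\alpha$ to the system $K_Y\alpha=Cov(X,Y)$, which is the one genuinely nontrivial point and the main obstacle when $K_Y$ is not invertible. The matrix $K_Y$ is symmetric and positive semidefinite (as shown earlier for covariance matrices), so its range equals $(\ker K_Y)^{\perp}$; it therefore suffices to check $Cov(X,Y)\perp\ker K_Y$. If $c\in\ker K_Y$ then $c^TK_Yc=Var\left(\sum_{k}c_kY_k\right)=0$, so $\sum_k c_k(Y_k-\E[Y_k])=0$ almost surely, whence $c^TCov(X,Y)=Cov\left(X,\sum_k c_kY_k\right)=0$. Thus the system is consistent and $Z$ is well defined, and moreover $Z$ is unique even when the vector $\alpha$ is not.

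Finally, with $X-Z\perp V$ and $Z\in V$ established, I would conclude by the Pythagorean identity: for any $W=\beta_0+\sum_j\beta_jY_j\in V$, writing $X-W=(X-Z)+(Z-W)$ with $Z-W\in V$ gives $\E[(X-W)^2]=\E[(X-Z)^2]+\E[(Z-W)^2]\geq\E[(X-Z)^2]$, the cross term vanishing by orthogonality, with equality iff $W=Z$ almost surely. Hence the infimum equals $\E[(X-Z)^2]$ and is attained at $Z$. The special case where $K_Y$ is invertible is then immediate: the normal equations have the unique solution $\alpha=K_Y^{-1}Cov(X,Y)$, which is the stated formula.
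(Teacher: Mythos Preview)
Your proof is correct and follows essentially the same approach as the paper: both recognize the problem as orthogonal projection of $X$ onto the finite-dimensional subspace $\mathrm{span}\{1,Y_1,\dots,Y_n\}$ in $L^2$ and read off the normal equations from the orthogonality conditions $\langle X-Z,1\rangle=0$ and $\langle X-Z,Y_k\rangle=0$. The only difference is one of presentation---the paper invokes the abstract existence of the projection and then derives the form of $Z$, whereas you start from the proposed $Z$ and verify orthogonality directly via Pythagoras---and you in fact go further by proving solvability of the system $K_Y\alpha=Cov(X,Y)$ when $K_Y$ is singular, a point the paper's proof leaves implicit.
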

\begin{proof}
Let $H$ be the linear subspace of $L^2(\Omega,\A,\p)$ spanned by $\{1,Y_1,...,Y_n\}$. Then we know that the r.v. $Z$, which minimizes
$$\|X-U\|_2=\E[(X-U)^2]$$
for $U\in H$, is the orthogonal projection of $X$ on $H$. We can thus write
$$Z=\alpha_0+\sum_{j=1}^n\alpha_j(Y_j-\E[Y_j]).$$
The orthogonality of $X-Z$ to $H$ can be written as $\E[(X-Z)\cdot 1]=0$. Therefore $\E[X]=\E[Z]$ and thus $\alpha_0=\E[X]$. Moreover, we get $\E[(X-Z)(Y_k-\E[Y_k])]=0$, which implies that for all $k\in\{1,...,n\}$ we get $\E[(X-\E[X]+\E[Z]-Z)]\cdot(Y_k-\E[Y_k])=0$.
\end{proof}
\begin{rem}
When $n=1$, we have 
$$Z=\E[X]+\frac{Cov(X,Y)}{Var(Y)}(Y-\E[Y]).$$
\end{rem}
\section{The Characteristic function}
\begin{defn}[Characteristic function]
Let $(\Omega,\A,\p)$ be a probability space. Let $X$ be a r.v. with values in $\R^d$, i.e. $X:(\Omega,\A,\p)\to\R^d$. Then we can look at the characteristic function of $X$, which is given by the Fourier transform
$$\Phi_X:\R^d\to\C,\hspace{0.5cm}\xi\mapsto \Phi_X(\xi)=\E\left[e^{i\langle \xi,X\rangle}\right]=\int_{\R^d}e^{i\langle \xi, x\rangle}d\p_X(x)$$
where $\langle\xi,X\rangle=\sum_{k=1}^d\xi_kX_k$.
\end{defn}
\begin{rem}
For $d=1$ and $\xi\in\R$, we get 
$$\Phi_X(\xi)=\E\left[e^{i\xi X}\right]=\int_\R e^{i\xi x}d\p_X(x).$$
\end{rem}
\begin{rem}
$\Phi_X(\xi)$ is continuous on $\R^d$ and bounded. For boundedness note 
$$\vert\Phi_X(\xi)\vert=\left\vert\E\left[e^{i\langle\xi,X\rangle}\right]\right\vert\leq \E\left[\underbrace{\left\vert e^{i\langle\xi,X\rangle}\right\vert}_{=1}\right]\leq 1.$$
Moreover, we know that $e^{i\langle\xi,x\rangle}$ is a continuous function of $\xi\in\R^d$ for every $x\in\R^d$.
$$\left\vert e^{i\langle\xi,X\rangle}\right\vert\leq 1,\hspace{0.2cm}\E[1]=1<\infty.$$
So it follows that $\Phi_X(\xi)$ is a continous function of $\xi$.
\end{rem}
\begin{thm}
The characteristic function uniquely characterizes probability distributions, meaning that for two r.v.'s $X$ and $Y$ satisfying
$$\Phi_X(\xi)=\Phi_Y(\xi),$$
for all $\xi\in\R^d$, we get that  
$$\p_X=\p_Y.$$
\end{thm}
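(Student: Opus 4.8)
The plan is to recover each law from its characteristic function by a Gaussian smoothing procedure, which makes Fourier inversion rigorous without assuming any integrability of $\Phi_X$ itself. For $\sigma>0$ I would introduce on $\R^d$ the Gaussian kernel $g_\sigma(x)=(2\pi\sigma^2)^{-d/2}\exp(-|x|^2/(2\sigma^2))$, whose relevant Fourier integral is the multivariate analogue of the one-dimensional computation $\int e^{-x^2/2}dx=\sqrt{2\pi}$ used earlier. The central object is
$$p_\sigma^X(y)=\frac{1}{(2\pi)^d}\int_{\R^d}e^{-i\langle\xi,y\rangle}\,e^{-\sigma^2|\xi|^2/2}\,\Phi_X(\xi)\,d\xi .$$
Because $|\Phi_X|\le 1$ and $e^{-\sigma^2|\xi|^2/2}$ is integrable, the integral converges absolutely, so I may insert $\Phi_X(\xi)=\int e^{i\langle\xi,x\rangle}d\p_X(x)$ and apply Fubini's theorem.

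The resulting inner $\xi$-integral is again a Gaussian integral, and I would evaluate it to $(2\pi/\sigma^2)^{d/2}e^{-|x-y|^2/(2\sigma^2)}$ by completing the square; after collecting constants this gives exactly
$$p_\sigma^X(y)=\int_{\R^d}g_\sigma(y-x)\,d\p_X(x).$$
In other words $p_\sigma^X$ is the density of the smoothed measure $\p_X*g_\sigma$, and it is manufactured out of $\Phi_X$ alone. This is the crucial point: the hypothesis $\Phi_X=\Phi_Y$ forces $p_\sigma^X=p_\sigma^Y$ for every $\sigma>0$, so the smoothed laws of $X$ and $Y$ coincide and $\int f(y)\,p_\sigma^X(y)\,dy=\int f(y)\,p_\sigma^Y(y)\,dy$ for every bounded measurable $f$.

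Finally I would remove the smoothing by letting $\sigma\to0$. For bounded continuous $f$, Fubini gives
$$\int_{\R^d}f(y)\,p_\sigma^X(y)\,dy=\int_{\R^d}\left(\int_{\R^d}f(x+\sigma u)\,g_1(u)\,du\right)d\p_X(x),$$
where the inner integral is bounded by $\sup|f|$ and converges to $f(x)$ as $\sigma\to0$ by dominated convergence, since $g_\sigma$ is an approximate identity. A second application of dominated convergence in $x$ then yields $\int f\,p_\sigma^X\,dy\to\int f\,d\p_X$, and likewise for $Y$. Comparing the two limits produces $\int f\,d\p_X=\int f\,d\p_Y$ for all bounded continuous $f$; the standard measure-determining fact for bounded continuous functions (approximate the indicator of a closed set $C$ by the continuous functions $x\mapsto\max(0,1-n\,\mathrm{dist}(x,C))$ and pass to the limit, then extend to all Borel sets by a monotone class argument) finally gives $\p_X=\p_Y$.

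The main obstacle is concentrated in the middle step: justifying the Fubini interchange and carrying out the two Gaussian integral computations correctly, so that $p_\sigma^X$ is genuinely identified with the convolution density. Once that identity is in hand, both the deduction $\Phi_X=\Phi_Y\Rightarrow p_\sigma^X=p_\sigma^Y$ and the limit $\sigma\to0$ are routine.
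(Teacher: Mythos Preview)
Your argument is correct: Gaussian smoothing makes Fourier inversion rigorous, the Fubini step is justified by $|\Phi_X|\le 1$ and the integrability of $e^{-\sigma^2|\xi|^2/2}$, your Gaussian integral evaluation is right, and the passage $\sigma\to 0$ via dominated convergence followed by the measure-determining property of $C_b(\R^d)$ is standard and sound.

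There is nothing to compare against, however: the paper states this theorem without proof (the proof environment reads ``No proof here.''). It is worth noting that your Gaussian-smoothing technique is precisely the one the paper deploys later in proving L\'evy's theorem on convergence in law, where the same kernel $g_\sigma$ appears and the identity $\int g_\sigma*f\,d\nu = \int f(x)\bigl(\tfrac{1}{\sigma\sqrt{2\pi}}\int e^{i\xi x}g_{1/\sigma}(\xi)\hat\nu(\xi)\,d\xi\bigr)dx$ is left as an exercise; so your proof fits naturally into the paper's toolkit even though it was omitted here.
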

\begin{proof}
No proof here.
\end{proof}
\begin{lem}
Let $X$ be a r.v. which is $\mathcal{N}(0,\sigma^2)$ distributed. Then 
$$\Phi_X(\xi)=\exp\left(-\frac{\sigma^2\xi^2}{2}\right),\hspace{0.2cm}\xi\in\R.$$ 
\end{lem}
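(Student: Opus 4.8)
The plan is to reduce the computation of $\Phi_X$ to a first-order linear ordinary differential equation in $\xi$, whose solution is immediate. Writing $\phi(\xi) = \Phi_X(\xi)$, by definition we have
$$\phi(\xi) = \int_{-\infty}^\infty e^{i\xi x}\frac{1}{\sigma\sqrt{2\pi}}e^{-\frac{x^2}{2\sigma^2}}\,dx,$$
and the normalization of the Gaussian density computed earlier gives the initial value $\phi(0) = 1$.

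First I would differentiate under the integral sign with respect to $\xi$. The derivative of the integrand is $ix\,e^{i\xi x}\frac{1}{\sigma\sqrt{2\pi}}e^{-x^2/(2\sigma^2)}$, whose modulus is bounded by $|x|\frac{1}{\sigma\sqrt{2\pi}}e^{-x^2/(2\sigma^2)}$, a function independent of $\xi$ and integrable on $\R$ (it is exactly the integrand defining $\E[|X|] < \infty$). Hence the dominated convergence theorem legitimizes the interchange of differentiation and integration, and we obtain
$$\phi'(\xi) = \frac{i}{\sigma\sqrt{2\pi}}\int_{-\infty}^\infty x\,e^{i\xi x}e^{-\frac{x^2}{2\sigma^2}}\,dx.$$

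The next step is an integration by parts exploiting the identity $\frac{d}{dx}e^{-x^2/(2\sigma^2)} = -\frac{x}{\sigma^2}e^{-x^2/(2\sigma^2)}$, so that $x\,e^{-x^2/(2\sigma^2)} = -\sigma^2\frac{d}{dx}e^{-x^2/(2\sigma^2)}$. Substituting this and integrating by parts (the boundary terms vanish because of the Gaussian factor decaying at $\pm\infty$) moves the derivative onto $e^{i\xi x}$ and produces the relation
$$\phi'(\xi) = -\sigma^2\xi\,\phi(\xi).$$
Finally I would solve this linear ODE together with the initial condition $\phi(0) = 1$, giving $\phi(\xi) = \exp(-\sigma^2\xi^2/2)$, as claimed.

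The step I expect to demand the most care is the justification of differentiation under the integral sign; everything else is a routine computation. An alternative route avoiding the ODE would be to complete the square in the exponent, writing $i\xi x - \frac{x^2}{2\sigma^2} = -\frac{1}{2\sigma^2}(x - i\sigma^2\xi)^2 - \frac{\sigma^2\xi^2}{2}$, and then to argue that the resulting shifted Gaussian integral still equals $\sigma\sqrt{2\pi}$. That approach trades the ODE justification for a contour-shift argument in the complex plane, which I would consider the harder part in that version.
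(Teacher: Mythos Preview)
Your proof is correct and follows essentially the same route as the paper: differentiate under the integral sign (justified by domination), integrate by parts using $x\,e^{-x^2/(2\sigma^2)} = -\sigma^2\frac{d}{dx}e^{-x^2/(2\sigma^2)}$, and solve the resulting first-order ODE with $\phi(0)=1$. The only cosmetic difference is that the paper first reduces to $\sigma=1$ and passes to the real part (killing the sine term by parity) before differentiating, whereas you work directly with the complex exponential and general $\sigma$.
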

\begin{proof}
According to the formula we get
$$\Phi_X(\xi)=\int_{-\infty}^{\infty}e^{i\xi x}e^{-\frac{x^2}{2\sigma^2}}\frac{dx}{\sigma\sqrt{2\pi}}.$$
Assume for simplicity $\sigma=1$ (change of variables: $\xi=\frac{x}{\sigma}$). Therefore we have
$$\Phi_X(\xi)=\int_{-\infty}^\infty\frac{1}{\sqrt{2\pi}} e^{-\frac{x^2}{2}}\cos(\xi x)dx+\underbrace{i\int_{-\infty}^\infty\frac{1}{\sqrt{2\pi}}e^{-\frac{x^2}{2}}\sin(\xi x)dx}_{0,\hspace{0.1cm}\text{by parity}}=\int_{-\infty}^\infty \frac{1}{\sqrt{2\pi}}e^{-\frac{x^2}{2}}\cos(\xi x)dx.$$
Hence we have
$$\frac{d\Phi_X}{d\xi}(\xi)=-\int_{-\infty}^{\infty}\frac{1}{\sqrt{2\pi}}xe^{-\frac{x^2}{2}}\sin(\xi x)dx.$$
We have used the fact that $e^{-\frac{x^2}{2}}\cos(\xi x)$ is $C^\infty$ in both variables and that $\left\vert x\sin(\xi x)e^{-\frac{x^2}{2}}\right\vert\leq \vert x\vert e^{-\frac{x^2}{2}}$, which is integrable on $\R$. Using integration by parts we get 
$$\frac{d\Phi_X}{d\xi}(\xi)=\underbrace{\left[\frac{1}{\sqrt{2\pi}}e^{-\frac{x^2}{2}}\sin(\xi x)\right]_{-\infty}^\infty}_{=0}-\xi\int_{-\infty}^\infty \frac{1}{\sqrt{2\pi}} e^{-\frac{x^2}{2}}\cos(\xi x)dx.$$
So we have the following Cauchy-problem
$$\begin{cases}\frac{d\Phi_X}{d\xi}(\xi)=-\xi \Phi_X(\xi)\\ \Phi_X(0)=1\end{cases}$$
Solving the differential equation, we get
$$\Phi_X(\xi)=e^{-\frac{-\xi^2}{2}}.$$
\end{proof}
\begin{prop}
Let $X=(X_1,...,X_d)\in\R^d$ such that $\E[\vert X\vert^2]<\infty$, where $\vert\cdot\vert$ denotes the euclidean norm. Then 
$$\lim_{\vert\xi\vert\to0}\Phi_X(\xi)=1+i\sum_{j=1}^d\E[X_j]\xi_j-\frac{1}{2}\sum_{j=1}^d\sum_{k=1}^d\xi_j\xi_k\E[X_jX_k]+o(\vert\xi\vert^2),$$
\end{prop}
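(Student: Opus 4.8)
The plan is to obtain the stated formula as a second-order Taylor expansion of the integrand $e^{i\langle\xi,X\rangle}$, and then integrate against $\p_X$ (equivalently, take the expectation) term by term. First I would record the elementary estimate for the complex exponential: for every real $u$,
$$\left|e^{iu} - 1 - iu + \frac{u^2}{2}\right| \leq \min\left(\frac{|u|^3}{6},\, u^2\right).$$
The cubic bound follows from the integral remainder form of Taylor's theorem, namely $e^{iu} = 1 + iu - \frac{u^2}{2} + \frac{i^3}{2}\int_0^u (u-s)^2 e^{is}\,ds$, by estimating $|e^{is}| \leq 1$ and integrating; the quadratic bound $u^2$ follows from the analogous one-order-lower remainder together with $|e^{is}-1| \leq 2$. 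Having \emph{both} bounds available simultaneously is exactly what will make the final step succeed.

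Next I would substitute $u = \langle\xi,X\rangle$ and take expectations. Writing $e^{i\langle\xi,X\rangle} = 1 + i\langle\xi,X\rangle - \frac{1}{2}\langle\xi,X\rangle^2 + R(\xi,X)$, linearity of $\E$ gives
$$\Phi_X(\xi) = 1 + i\,\E[\langle\xi,X\rangle] - \frac{1}{2}\E[\langle\xi,X\rangle^2] + \E[R(\xi,X)].$$
Expanding the inner products yields $\E[\langle\xi,X\rangle] = \sum_{j=1}^d \xi_j \E[X_j]$ and $\E[\langle\xi,X\rangle^2] = \sum_{j,k=1}^d \xi_j\xi_k\E[X_jX_k]$; here I would invoke the hypothesis $\E[|X|^2] < \infty$, which via the Cauchy--Schwarz inequality guarantees that each $\E[X_j]$ and each $\E[X_jX_k]$ is finite, so every term is well defined. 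This reproduces precisely the first three terms on the right-hand side of the claim.

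The only nontrivial step, and the main obstacle, is to show $\E[R(\xi,X)] = o(|\xi|^2)$ as $|\xi|\to 0$. Using $|\langle\xi,X\rangle| \leq |\xi|\,|X|$ by Cauchy--Schwarz together with the two bounds above, I would write
$$\frac{|R(\xi,X)|}{|\xi|^2} \leq \min\left(\frac{|\xi|\,|X|^3}{6},\, |X|^2\right).$$
For each fixed $\omega\in\Omega$ the right-hand side tends to $0$ as $|\xi|\to 0$, while being dominated, uniformly in $\xi$, by the integrable function $|X|^2$. The dominated convergence theorem then gives $\E[R(\xi,X)]/|\xi|^2 \to 0$, i.e. $\E[R(\xi,X)] = o(|\xi|^2)$, which finishes the expansion. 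The delicacy lies entirely in this estimate: without the second bound $u^2$ in the minimum one would be left with only the cubic majorant $|\xi|\,|X|^3$, whose dominating function $|X|^3$ need not be integrable under the sole assumption $\E[|X|^2]<\infty$; retaining both bounds is what simultaneously secures an integrable dominating function and pointwise decay of the integrand.
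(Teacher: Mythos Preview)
Your proof is correct and takes a genuinely different route from the paper's. The paper establishes that $\Phi_X$ is a $C^2$ function by differentiating under the integral sign twice (justified via the dominating functions $|X_j|$ and $|X_jX_k|$, the latter integrable by Cauchy--Schwarz), reads off the partial derivatives at the origin, and then invokes the multivariable Taylor theorem for $C^2$ functions to obtain the expansion with $o(|\xi|^2)$ remainder. You instead expand the integrand $e^{i\langle\xi,X\rangle}$ directly and control the expectation of the remainder via the double bound $\min(|u|^3/6,\,u^2)$ together with dominated convergence. The paper's approach yields more---it shows $\Phi_X\in C^2$ globally, not merely that $\Phi_X$ admits a second-order expansion at $0$---and this extra information is what supports the subsequent remark identifying $\E[X]$ with $-i\Phi_X'(0)$ and $\E[X^2]$ with $-\Phi_X''(0)$. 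Your approach, by contrast, is more self-contained: it bypasses both the differentiation-under-the-integral theorem and the multivariable Taylor theorem, and your discussion of why the $\min$ of two bounds is essential makes transparent exactly where the hypothesis $\E[|X|^2]<\infty$ (rather than a third-moment assumption) enters.
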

\begin{proof}
Note that we can write
$$\frac{\partial\Phi_X(\xi)}{\partial\xi_j}=i\E\left[X_je^{i\langle\xi,X\rangle}\right].$$
This follows from the differentiation under the integral sign with $\left\vert X_je^{i\langle \xi,X\rangle}\right\vert\leq \vert X_j\vert$, which is integrable. Since 
$$\E[\vert X_iX_k\vert]\leq \E[\vert X_j\vert^2]^\frac{1}{2}\E[\vert X_k\vert^2]^{\frac{1}{2}}<\infty,$$
we have 
$$\frac{\partial^2\Phi_X(\xi)}{\partial\xi_j\partial \xi_k}=-\E\left[\underbrace{X_jX_ke^{i\langle\xi,X\rangle}}_{\leq \vert X_jX_k\vert\in L^1}\right].$$
Taking $\xi=0$ we get that $\frac{\partial\Phi_X}{\partial\xi_j}(0)=i\E[X_j]$ and $\frac{\partial^2\Phi_X}{\partial\xi_j\partial\xi_k}(0)=-\E[X_jX_k]$. we see that the equation in the proposition is the taylor-expansion at order 2 near 0 of the $C^2$-function $\Phi_X(\xi)$. 
\end{proof}
\begin{rem}
From the proof we see that when $d=1$, we have 
$$\E[\vert X\vert]<\infty\Longrightarrow\E[X]=i\Phi_X'(0)\hspace{0.5cm}\text{and}\hspace{0.5cm}\E[X^2]<\infty\Longrightarrow\E[X^2]=-\Phi_X''(0).$$
\end{rem}

\section{Independence}
\subsection{Independent events} Let $(\Omega,\A,\p)$ be a probability space. If $A,B\in\A$, we say that $A$ and $B$ are independent if 
$$\p[A\cap B]=\p[A]\p[B].$$
\begin{ex}[Throw of a die] We have the state space $\Omega=\{1,2,3,4,5,6\}$, $\omega\in\Omega$. Hence we have $\p[\{\omega\}]=\frac{1}{6}$. Now let $A=\{1,2\}$ and $B=\{1,3,5\}$. Then 
$$\p[A\cap B]=\p[\{1\}]=\frac{1}{6}\hspace{0.5cm}\text{and}\hspace{0.5cm}\p[A]=\frac{1}{3},\hspace{0.2cm}\p[B]=\frac{1}{2}$$
Therefore we get
$$\p[A\cap B]=\p[A]\p[B].$$
Hence we get that $A$ and $B$ are independent.
\end{ex}
\begin{defn}[Independence of events]
We say that the $n$ events $A_1,...,A_n\in\A$ are independent if $\forall \{j_1,...,j_l\}\subset\{1,...,n\}$ we have
$$\p[A_{j_1}\cap A_{j_2}\cap\dotsm \cap A_{j_l}]=\p[A_{j_1}]\dotsm \p[A_{j_l}].$$
\end{defn}
\begin{rem}
It is not enough to have $\p[A_1\cap\dotsm\cap A_n]=\p[A_1]\dotsm\p[A_n]$. It is also not enough to check that $\forall \{i,j\}\subset\{1,...,n\}$, $\p[A_i\cap A_j]=\p[A_i]\p[A_j]$. For instance, let us consider two tosses of a coin and consider events $A,B$ and $C$ given by
$$A=\{\text{$H$ at the first throw}\},\hspace{0.3cm}B=\{\text{$T$ at the first throw}\},\hspace{0.3cm}C=\{\text{same outcome for both tosses}\}$$
The events $A,B$ and $C$ are two by two independent but $A,B$ and $C$ are not independent events.
\end{rem}
\begin{prop}
The $n$ events $A_1,...,A_n\in\A$ are independent if and only if 
$$(*)\hspace{0.3cm}\p[B_1\cap\dotsm \cap B_n]=\p[B_1]\dotsm\p[B_n]\hspace{0.2cm}$$
for all $B_i\in\sigma(A_i)=\{\emptyset,A_i,A_i^C,\Omega\}$, $\forall i\in\{1,...,n\}$.
\end{prop}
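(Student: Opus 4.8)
The plan is to prove both implications; the passage from independence to $(*)$ is the substantial one, while the converse is immediate. I would dispatch the direction $(*)\Rightarrow$ independence first. Given any subset $\{j_1,\dots,j_l\}\subseteq\{1,\dots,n\}$, I would invoke $(*)$ with the choice $B_{j_1}=A_{j_1},\dots,B_{j_l}=A_{j_l}$ and $B_i=\Omega$ for every remaining index $i$. Since $\Omega\in\sigma(A_i)$ for each $i$, this is admissible, and because $\p[\Omega]=1$ while intersecting with $\Omega$ changes nothing, $(*)$ collapses to $\p[A_{j_1}\cap\dots\cap A_{j_l}]=\p[A_{j_1}]\dotsm\p[A_{j_l}]$, which is exactly the definition of independence.

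For the forward direction I would first shrink the cases to be checked. If some $B_i=\emptyset$ then both sides of $(*)$ vanish --- the left because the intersection is empty, the right because the factor $\p[\emptyset]=0$ occurs --- so this case is trivial. Any factor $B_i=\Omega$ contributes $\p[\Omega]=1$ to the product and leaves the intersection unchanged, hence may be dropped. It therefore suffices to prove $(*)$ when each $B_i$ equals $A_i$ or $A_i^C$; writing $S$ for the indices with $B_i\neq\Omega$ and $T\subseteq S$ for those with $B_i=A_i^C$, the claim becomes $\p\big[\bigcap_{i\in S\setminus T}A_i\cap\bigcap_{i\in T}A_i^C\big]=\prod_{i\in S\setminus T}\p[A_i]\cdot\prod_{i\in T}\p[A_i^C]$.

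The heart of the argument is an induction on $|T|$, the number of complemented events. The base case $T=\emptyset$ is precisely the independence hypothesis. For the inductive step I would isolate one index $j\in T$ and form the event $D=\bigcap_{i\in S\setminus T}A_i\cap\bigcap_{i\in T\setminus\{j\}}A_i^C$, which imposes the other $|T|-1$ complements but no condition on $A_j$. The disjoint decomposition $D=(D\cap A_j)\sqcup(D\cap A_j^C)$ together with additivity gives $\p[D\cap A_j^C]=\p[D]-\p[D\cap A_j]$, and both $\p[D]$ and $\p[D\cap A_j]$ involve only $|T|-1$ complements, so they are governed by the inductive hypothesis. Factoring out the common product and using $1-\p[A_j]=\p[A_j^C]$ then yields the formula with $|T|$ complements.

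The main obstacle I anticipate is organizational rather than conceptual: one must state the inductive claim over an arbitrary index set $S$ with an arbitrary complemented subset $T\subseteq S$, so that the reduced events $D$ and $D\cap A_j$ genuinely match the inductive hypothesis with one fewer complement. Once the bookkeeping of which indices are complemented, uncomplemented, or absent is fixed, the remaining algebra reduces to the single line $\p[D]-\p[D\cap A_j]=(1-\p[A_j])\prod_{i\in S\setminus T}\p[A_i]\prod_{i\in T\setminus\{j\}}\p[A_i^C]$, applied repeatedly.
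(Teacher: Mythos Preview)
Your proof is correct and follows essentially the same route as the paper: the same reduction (discarding $B_i=\emptyset$ and $B_i=\Omega$), and the same key computation $\p[D\cap A_j^C]=\p[D]-\p[D\cap A_j]=(1-\p[A_j])\cdot(\text{common product})$. The only cosmetic difference is packaging: the paper phrases the inductive step as the lemma ``if $C_1,\dots,C_p$ are independent then $C_1^C,C_2,\dots,C_p$ are independent'' and iterates, whereas you induct directly on $|T|$ --- the underlying argument is identical.
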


\begin{proof}
If the above is satisfied and if $\{j_1,...,j_l\}\subset\{1,...,n\}$, then for $i\in\{j_1,...,j_l\}$ take $B_i=A_i$ and for $i\not\in\{j_1,...,j_l\}$ take $B_i=\Omega$. So it follows that 
$$\p[A_{j_1}\cap\dotsm \cap A_{j_l}]=\p[A_{j_1}]\dotsm\p[A_{j_l}].$$
Conversely, assume that $A_1,...,A_n\in\A$ are independent and we want to deduce $(*)$. We can assume that $\forall i\in\{1,...,n\}$ we have $B_i\not=\emptyset$ (for otherwise the identity is trivially satisfied). If $\{j_1,...,j_l\}=\{i\mid B_i\not=\Omega\}$, we have to check that 
$$\p[B_{j_1}\cap\dotsm\cap B_{j_l}]=\p[B_{j_1}]\dotsm\p[B_{j_l}],$$
as soon as $B_{j_k}=A_{j_k}$ or $B_{j_k}=A_{j_k}^C$. Finally it's enough to show that if $C_1,...,C_p$ are independent events, then 
$$C_1^C,C_2,...,C_p$$
are also independent. But if $1\not\in\{i_1,...,i_q\}$, for all $\{i_1,...,i_q\}\subset\{1,...,p\}$, then from the definition of independence we have 
$$\p[C_{i_1}\cap\dotsm\cap C_{i_q}]=\p[C_{i_1}]\dotsm\p[C_{i_q}].$$
If $1\in\{i_1,...,i_q\}$, say $1=i_1$, then 
\begin{align*}
\p[C_{i_1}^C\cap C_{i_2}\cap\dotsm\cap C_{i_q}]&=\p[C_{i_1}\cap\dotsm\cap C_{i_q}]-\p[C_1\cap C_{i_2}\cap\dotsm\cap C_{i_q}]\\
&=\p[C_{i_2}]\dotsm\p[C_{i_q}]-\p[C_1]\p[C_{i_2}]\dotsm\p[C_{i_q}]\\
&=(1-\p[C_1])\p[C_{i_2}]\dotsm\p[C_{i_q}]=\p[C_1^C]\p[C_{i_2}]\dotsm\p[C_{i_q}]
\end{align*}
\end{proof}
\begin{defn}[Conditional probability]
Let $(\Omega,\A,\p)$ be a probability space. Let $A,B\in\A$ such that $\p[B]>0$. The conditional probability of $A$ given $B$ is then defined as
$$\p[A\mid B]=\frac{\p[A\cap B]}{\p[B]}.$$
\end{defn}
\begin{thm}
Let $(\Omega,\A,\p)$ be a probability space. Let $A,B\in\A$ and suppose that $\p[B]>0$.
\begin{enumerate}[$(i)$]
\item{$A$ and $B$ are independent if and only if 
$$\p[A\mid B]=\p[A].$$
}

\item{The map 
$$\A\to [0,1],\hspace{0.5cm}A\mapsto \p[A\mid B]$$
defines a new probability measure on $\A$ called the conditional probability given $B$.
}
\end{enumerate}
\end{thm}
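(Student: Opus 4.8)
The plan is to reduce everything to the single defining identity $\p[A\mid B]=\p[A\cap B]/\p[B]$, which is legitimate precisely because $\p[B]>0$. For part $(i)$ I would handle both directions simultaneously by clearing the denominator. Multiplying the candidate equality $\p[A\mid B]=\p[A]$ by the strictly positive number $\p[B]$ shows it is equivalent to $\p[A\cap B]=\p[A]\p[B]$, which is exactly the definition of independence of $A$ and $B$. Thus no separate computation for the two implications is required; the equivalence drops out as soon as $\p[B]\neq 0$ is used to justify the division.

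For part $(ii)$ I would verify the two axioms of a probability measure (from the Definition given earlier) for the map $Q:\A\to[0,1]$, $Q(A):=\p[A\mid B]$. To ensure first that $Q$ really takes values in $[0,1]$, I would note that $A\cap B\subseteq B$ forces $\p[A\cap B]\leq\p[B]$ by monotonicity of $\p$ (recalled in the earlier Remark), so that $0\leq Q(A)\leq 1$. For the normalization I would use $\Omega\cap B=B$, giving
$$Q(\Omega)=\frac{\p[\Omega\cap B]}{\p[B]}=\frac{\p[B]}{\p[B]}=1.$$

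The one step demanding a little care is $\sigma$-additivity. Given a sequence $(A_n)_{n\in\N}$ of pairwise disjoint events, I would observe that the traces $(A_n\cap B)_{n\in\N}$ are again pairwise disjoint and that $\left(\bigcup_{n}A_n\right)\cap B=\bigcup_{n}(A_n\cap B)$. Applying $\sigma$-additivity of $\p$ to this disjoint union and then dividing the resulting series term by term by the constant $\p[B]$ yields
$$Q\left(\bigcup_{n\in\N}A_n\right)=\frac{1}{\p[B]}\sum_{n\in\N}\p[A_n\cap B]=\sum_{n\in\N}Q(A_n).$$
There is no genuine obstacle here: the only point to be attentive to is that pulling the factor $1/\p[B]$ through the infinite sum is valid precisely because $\p[B]$ is a fixed positive constant, so the manipulation is legitimate whether the series is finite or infinite. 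Together with the normalization, this shows that $Q$ is a probability measure on $\A$, completing the proof.
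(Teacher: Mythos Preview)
Your proof is correct and follows essentially the same route as the paper. The only cosmetic differences are that you handle the two implications of $(i)$ simultaneously by clearing the denominator (the paper writes out each direction separately), and you add the explicit verification that $Q$ lands in $[0,1]$, which the paper omits; the computation of $Q(\Omega)=1$ and the $\sigma$-additivity argument via the disjoint traces $A_n\cap B$ are identical.
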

\begin{proof} We need to show both points.
\begin{enumerate}[$(i)$]
\item{If $A$ and $B$ are independent, then 
$$\p[A\mid B]=\frac{\p[A\cap B]}{\p[B]}=\frac{\p[A]\p[B]}{\p[B]}=\p[A]$$
and conversely if $\p[A\mid B]=\p[A]$, we get that $$\p[A\cap B]=\p[A]\p[B],$$
and hence $A$ and $B$ are independent.
}
\item{Let $\Q[A]=\p[A\mid B]$. We have 
$$\Q[\Omega]=\p[\omega\mid B]=\frac{\p[\omega\cap B]}{\p[B]}=\frac{\p[B]}{\p[B]}=1.$$
Take $(A_n)_{n\geq 1}\subset \A$ as a disjoint family of events. Then 
\begin{align*}
\Q\left[\bigcup_{n\geq 1}A_n\right]&=\p\left[\bigcup_{n\geq 1}A_n\mid B\right]=\frac{\p\left[\left(\bigcup_{n\geq 1}A_n\right)\cap B\right]}{\p[B]}=\p\left[\bigcup_{n\geq 1}(A_n\cap B)\right]\\
&=\sum_{n\geq 1}\frac{\p[A_n\cap B]}{\p[B]}=\sum_{n\geq 1}\Q[A_n].
\end{align*}
}
\end{enumerate}
\end{proof}
\begin{thm}
Let $(\Omega,\A,\p)$ be a probability space. Let $A_1,...,A_n\in\A$ with $\p[A_1\cap\dotsm\cap A_n]>0$. Then 
$$\p[A_1\cap\dotsm\cap A_n]=\p[A_1]\p[A_2\mid A_1]\p[A_3\mid A_1\cap A_2]\dotsm\p[A_n\mid A_1\cap\dotsm\cap A_{n-1}].$$
\end{thm}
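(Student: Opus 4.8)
The plan is to argue by induction on $n$, using only the definition of conditional probability together with the monotonicity of $\p$. Before starting the induction I would first observe that every conditioning event appearing on the right-hand side has strictly positive probability, so that each factor is well defined. Indeed, for every $k\in\{1,\dots,n-1\}$ we have the inclusion $A_1\cap\dotsm\cap A_n\subseteq A_1\cap\dotsm\cap A_k$, and hence by monotonicity $\p[A_1\cap\dotsm\cap A_k]\geq \p[A_1\cap\dotsm\cap A_n]>0$. In particular $\p[A_1]>0$, and each conditional probability $\p[A_{k+1}\mid A_1\cap\dotsm\cap A_k]$ makes sense.

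The base case $n=1$ is the trivial identity $\p[A_1]=\p[A_1]$; the case $n=2$ is merely a rewriting of the definition of conditional probability, since $\p[A_2\mid A_1]=\p[A_1\cap A_2]/\p[A_1]$ gives $\p[A_1\cap A_2]=\p[A_1]\p[A_2\mid A_1]$.

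For the inductive step I would set $B:=A_1\cap\dotsm\cap A_{n-1}$, which satisfies $\p[B]>0$ by the observation above. Applying the definition of conditional probability to the pair $(A_n,B)$ yields
$$\p[A_1\cap\dotsm\cap A_n]=\p[B\cap A_n]=\p[B]\,\p[A_n\mid B]=\p[A_1\cap\dotsm\cap A_{n-1}]\,\p[A_n\mid A_1\cap\dotsm\cap A_{n-1}].$$
It then remains to expand the factor $\p[A_1\cap\dotsm\cap A_{n-1}]$ by the induction hypothesis applied to the $n-1$ events $A_1,\dots,A_{n-1}$ (whose intersection has positive probability), which produces exactly the product $\p[A_1]\p[A_2\mid A_1]\dotsm\p[A_{n-1}\mid A_1\cap\dotsm\cap A_{n-2}]$. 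Substituting this in completes the telescoping and gives the claimed formula.

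There is essentially no hard step here; the only point requiring care is the bookkeeping that guarantees positivity of all the conditioning events, which I address first so that the induction proceeds without the appearance of undefined quotients. Everything else is a repeated application of the single identity $\p[A\cap B]=\p[B]\,\p[A\mid B]$, peeling off one factor at a time.
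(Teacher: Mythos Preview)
Your proof is correct and follows the same inductive approach as the paper: both set $B=A_1\cap\dotsm\cap A_{n-1}$, apply the definition of conditional probability to write $\p[B\cap A_n]=\p[A_n\mid B]\p[B]$, and then invoke the induction hypothesis on $\p[B]$. Your version is in fact slightly more careful, since you explicitly verify via monotonicity that all the conditioning events have positive probability, a point the paper leaves implicit.
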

\begin{proof}
We prove this by induction. For $n=2$ it's just the definition of the conditional probability. Now we want to go from $n-1$ to $n$. Therefore set $B=A_1\cap \dotsm\cap A_{n-1}$. Then 
$$\p[B\cap A_n]=\p[A_n\mid B]\p[B]=\p[A_n\mid B]\p[A_1]\p[A_\mid A_1]\dotsm\p[A_{n-1}\mid A_1\cap\dotsm\cap A_{n-2}].$$
\end{proof}
\begin{thm}
Let $(\Omega,\A,\p)$ be a probability space. Let $\left(E_{n}\right)_{n\geq 1}$ be a finite or countable measurable partition of $\Omega$, such that $\p[E_n]>0$ for all $n$. If $A\in\A$, then 
$$\p[A]=\sum_{n\geq 1}\p[A\mid E_n]\p[E_n].$$
\end{thm}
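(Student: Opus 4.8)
The plan is to reduce everything to countable additivity of $\p$ together with the definition of conditional probability. The key observation is that the partition hypothesis gives both $\bigcup_{n\geq 1} E_n = \Omega$ and pairwise disjointness of the $E_n$, and these are exactly the two ingredients needed to decompose $A$ into disjoint pieces on which conditional probabilities are well defined.

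First I would intersect $A$ with the whole space and use the partition to write
$$A = A \cap \Omega = A \cap \Big( \bigcup_{n\geq 1} E_n \Big) = \bigcup_{n\geq 1} (A \cap E_n).$$
The next step is to note that the events $A \cap E_n$ are pairwise disjoint: if $n \neq m$ then $(A\cap E_n)\cap(A\cap E_m) \subseteq E_n \cap E_m = \varnothing$, since $(E_n)_{n\geq 1}$ is a partition. This disjointness is what licenses the use of $\sigma$-additivity (property $(ii)$ in the definition of a probability measure), giving
$$\p[A] = \p\Big[ \bigcup_{n\geq 1} (A \cap E_n) \Big] = \sum_{n\geq 1} \p[A \cap E_n].$$

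Finally I would rewrite each term using the definition of conditional probability. Since $\p[E_n] > 0$ for every $n$ by hypothesis, the quantity $\p[A \mid E_n] = \p[A \cap E_n]/\p[E_n]$ is well defined, and multiplying through yields $\p[A \cap E_n] = \p[A \mid E_n]\,\p[E_n]$. Substituting this into the sum above gives
$$\p[A] = \sum_{n\geq 1} \p[A \mid E_n]\,\p[E_n],$$
which is the desired identity.

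There is no real obstacle in this argument; the only point requiring a moment of care is the justification that the $A \cap E_n$ are genuinely disjoint so that $\sigma$-additivity applies rather than mere subadditivity, and the bookkeeping that the hypothesis $\p[E_n]>0$ guarantees every conditional probability is defined. The finite-partition case is subsumed by the countable one by setting all but finitely many $E_n$ to be empty (or simply applying finite additivity), so no separate treatment is needed.
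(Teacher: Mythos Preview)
Your proof is correct and follows essentially the same route as the paper: decompose $A$ via the partition as $\bigcup_{n\geq 1}(A\cap E_n)$, invoke $\sigma$-additivity on the disjoint pieces, and then rewrite each $\p[A\cap E_n]$ using the definition of conditional probability. The paper's argument is slightly terser but identical in substance.
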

\begin{proof} Note that
$$A=A\cap\Omega=A\cap\left(\bigcup_{n\geq 1}E_n\right)=\bigcup_{n\geq 1}(A_n\cap E_n).$$
Now since the $(A\cap E_n)_{n\geq 1}$ are disjoint, we can write
$$\p[A]=\sum_{n\geq 1}\p[A\cap E_n]=\sum_{n\geq 1}\p[A\mid E_n]\p[E_n].$$
\end{proof}
\begin{thm}[Baye]
Let $(\Omega,\A,\p)$ be a probability space. Let $(E_n)_{n\geq 1}$ be a finite or countable partition of $\Omega$ and assume that $\p[A]>0.$ Then 
$$\p[E_n\mid A]=\frac{\p[A\mid E_n]\p[E_n]}{\sum_{n\geq 1}\p[A\mid E_n]\p[E_n]}.$$
\end{thm}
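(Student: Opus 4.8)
The plan is to derive the identity directly from the definition of conditional probability together with the law of total probability established in the preceding theorem. First I would apply the definition of $\p[E_n\mid A]$ relative to the conditioning event $A$, which has positive probability by assumption, to write
$$\p[E_n\mid A]=\frac{\p[E_n\cap A]}{\p[A]}.$$
This reduces the task to rewriting the numerator and the denominator separately into the desired shape.

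For the numerator, I would apply the definition of conditional probability in the other direction. Since the $(E_n)_{n\geq 1}$ form a partition with $\p[E_n]>0$, each quantity $\p[A\mid E_n]$ is defined, and by definition $\p[A\cap E_n]=\p[A\mid E_n]\p[E_n]$. Because intersection is symmetric, $\p[E_n\cap A]=\p[A\cap E_n]$, so the numerator becomes exactly $\p[A\mid E_n]\p[E_n]$.

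For the denominator, I would invoke the previous theorem (the law of total probability), which expresses $\p[A]=\sum_{m\geq 1}\p[A\mid E_m]\p[E_m]$, using a distinct dummy index $m$ to avoid clashing with the fixed index $n$ appearing in the statement. Substituting these two expressions into the fraction yields the claimed formula at once.

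I would expect no genuine obstacle here, since the result is a purely formal consequence of two already-proven facts; the only point demanding care is the notational overloading of $n$, which the statement uses both for the selected partition cell $E_n$ and as the summation variable in the denominator. I would therefore silently rename the summation index in the denominator so that the argument remains unambiguous, and then simply combine the three displays.
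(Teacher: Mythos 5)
Your proof is correct and follows essentially the same route as the paper: apply the definition of conditional probability to $\p[E_n\mid A]$, rewrite the numerator via $\p[A\cap E_n]=\p[A\mid E_n]\p[E_n]$, and replace the denominator using the law of total probability from the preceding theorem. Your remark about renaming the summation index is a sound notational improvement over the paper, which reuses $n$ for both roles, but it does not change the substance of the argument.
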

\begin{proof}
By the previous theorem we know that 
$$\p[A]=\sum_{n\geq 1}\p[A\mid E_n]\p[E_n],\hspace{0.3cm}\p[E_n\mid A]=\frac{\p[E_n\cap A]}{\p[A]},\hspace{0.3cm}\p[A\mid E_n]=\frac{\p[A\cap E_n]}{\p[E_n]}$$
Therefore, combining things, we get 
$$\p[E_n\mid A]=\frac{\p[E_n\cap A]}{\p[A]}=\frac{\p[A\mid E_n]\p[E_n]}{\sum_{n\geq 1}\p[A\mid E_n]\p[E_n]}.$$
\end{proof}
\subsection{Independent Random Variables and independent $\sigma$-Algebras}
\begin{defn}[Independence of $\sigma$-Algebras]
Let $(\Omega,\A,\p)$ be a probability space. We say that the sub $\sigma$-Algebras $\B_1,...,\B_n$ of $\A$ are independent if for all $ A_1\in\B_1,..., A_n\in\B_n$ we get 
$$\p[A_1\cap\dotsm \cap A_n]=\p[A_1]\dotsm\p[A_n].$$
Let now $X_1,...,X_n$ be $n$ r.v.'s with values in measureable spaces $(E_1,\mathcal{E}_1),...,(E_n,\mathcal{E}_n)$ respectively. We say that the r.v.'s $X_1,...,X_n$ are independent if the $\sigma$-Algebras $\sigma(X_1),...,\sigma(X_n)$ are independent. This is equivalent to the fact that for all $F_1\in\mathcal{E}_1,...,F_n\in\mathcal{E}_n$ we have
$$\p[\{X_1\in F_1\}\cap\dotsm\cap\{X_n\in F_n\}]=\p[X_1\in F_1]\dotsm \p[X_n\in F_n].$$
(This comes from the fact that for all $i\in\{1,...,n\}$ we have that $\sigma(X_i)=\{X_i^{-1}(F)\mid F\in\mathcal{E}_i\}$)
\end{defn}
\begin{rem}
If $\B_1,...,\B_n$ are $n$ independent sub $\sigma$-Algebras and if $X_1,...,X_n$ are independent r.v.'s such that $X_i$ is $\B_i$ measurable for all $i\in\{1,...,n\}$, then $X_1,...,X_n$ are independent r.v.'s (This comes from the fact that for all $i\in\{1,...,n\}$ we have that $\sigma(X_i)\subset \B_i$).
\end{rem}
\begin{rem}
The $n$ events $A_1,...,A_n\in\A$ are independent if and only if $\sigma(A_1),...,\sigma(A_n)$ are independent.
\end{rem}
\begin{thm}[Independence of Random Variables]
\label{thm7}
Let $(\Omega,\A,\p)$ be a probability space. Let $X_1,...,X_n$ be $n$ r.v.'s. Then $X_1,...,X_n$ are independent if and only if the law of the vector $(X_1,...,X_n)$ is the product of the laws of $X_1,...,X_n$, i.e.
$$\p_{(X_1,...,X_n)}=\p_{X_1}\otimes\dotsm\otimes \p_{X_n}.$$
Moreover, for every measurable map $f_i:(E_i,\mathcal{E}_i)\to\R_+$ defined on a measurable space $(E_i,\mathcal{E}_i)$ for all $i\in\{1,...,n\}$, we have 
$$\E\left[\prod_{i=1}^nf_i(X_i)\right]=\prod_{i=1}^n\E[f_i(X_i)].$$
\end{thm}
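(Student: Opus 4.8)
The plan is to prove the two equivalent characterizations of independence by comparing the joint law and the product measure on measurable rectangles, and then to deduce the factorization of expectations from the product structure of the joint law together with the transfer formula of Proposition \ref{random}.

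First I would establish the equivalence between independence and the identity $\p_{(X_1,\ldots,X_n)}=\p_{X_1}\otimes\cdots\otimes\p_{X_n}$ by evaluating both measures on a measurable rectangle $F_1\times\cdots\times F_n$ with $F_i\in\mathcal{E}_i$. On one side the joint law gives
$$\p_{(X_1,\ldots,X_n)}[F_1\times\cdots\times F_n]=\p[\{X_1\in F_1\}\cap\cdots\cap\{X_n\in F_n\}],$$
while the product measure gives, by definition,
$$(\p_{X_1}\otimes\cdots\otimes\p_{X_n})[F_1\times\cdots\times F_n]=\prod_{i=1}^n\p_{X_i}[F_i]=\prod_{i=1}^n\p[X_i\in F_i].$$
Hence the two measures agree on every rectangle precisely when the defining independence relation holds. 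The $(\Leftarrow)$ direction is then immediate: specializing the measure identity to rectangles recovers the independence condition.

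For $(\Rightarrow)$, I would argue that the measurable rectangles form a $\pi$-system that generates the product $\sigma$-Algebra $\mathcal{E}_1\otimes\cdots\otimes\mathcal{E}_n$; indeed the intersection of two rectangles is again a rectangle, since $(F_1\times\cdots\times F_n)\cap(G_1\times\cdots\times G_n)=(F_1\cap G_1)\times\cdots\times(F_n\cap G_n)$. Since $\p_{(X_1,\ldots,X_n)}$ and $\p_{X_1}\otimes\cdots\otimes\p_{X_n}$ are both probability measures that agree on this generating $\pi$-system, the monotone class theorem (the same uniqueness principle already invoked for distribution functions above) forces them to coincide on the whole product $\sigma$-Algebra. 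This settles the equivalence.

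For the factorization of expectations I would first apply the transfer formula of Proposition \ref{random}, with the vector-valued r.v. $X=(X_1,\ldots,X_n)$ and the nonnegative measurable map $(x_1,\ldots,x_n)\mapsto\prod_{i=1}^n f_i(x_i)$, to write
$$\E\left[\prod_{i=1}^n f_i(X_i)\right]=\int_{E_1\times\cdots\times E_n}\prod_{i=1}^n f_i(x_i)\,d\p_{(X_1,\ldots,X_n)}(x_1,\ldots,x_n).$$
Substituting the product-measure identity just established and invoking Tonelli's theorem, which is legitimate because each $f_i\geq 0$, the integral over the product space factors as
$$\int_{E_1\times\cdots\times E_n}\prod_{i=1}^n f_i(x_i)\,d(\p_{X_1}\otimes\cdots\otimes\p_{X_n})=\prod_{i=1}^n\int_{E_i}f_i(x_i)\,d\p_{X_i}(x_i)=\prod_{i=1}^n\E[f_i(X_i)],$$
where the last equality is again Proposition \ref{random} applied to each $X_i$ individually. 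The main obstacle is precisely the passage from agreement on rectangles to agreement on the full product $\sigma$-Algebra: this is where one genuinely needs the uniqueness part of the monotone class theorem, resting on the fact that the rectangles form a generating $\pi$-system and that the measures involved are finite. Everything else reduces to routine applications of the transfer formula and of Tonelli's theorem.
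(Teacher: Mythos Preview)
Your proof is correct and follows essentially the same approach as the paper: both compare the joint law and the product measure on rectangles, invoke the monotone class theorem to pass from rectangles to the full product $\sigma$-Algebra, and then factor the expectation via Fubini/Tonelli. Your version is slightly more explicit in naming the $\pi$-system structure of the rectangles and in citing Proposition~\ref{random} for the transfer formula, but the argument is the same.
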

\begin{proof}
Let $F_i\in\mathcal{E}_i$ for all $i\in\{1,...,n\}$. Thus we have
$$\p_{(X_1,...,X_n)}(F_1\times\dotsm \times F_n)=\p[\{X_1\in F_1\}\cap\dotsm\cap\{X_n\in F_n\}]$$
and on the other hand 
$$\left(\p_{X_1}\otimes\dotsm\otimes\p_{X_n}\right)(F_1\times\dotsm\times F_n)=\p_{X_1}[F_1]\dotsm\p_{X_n}[F_n]=\prod_{i=1}^n\p_{X_i}[F_i]=\prod_{i=1}^n\p[X_i\in F_i].$$
If $X_1,...,X_n$ are independent, then $$\p_{(X_1,...,X_n)}(F_1\times\dotsm \times F_n)=\prod_{i=1}^n\p[X_i\in F_i]=\left(\p_{X_1}\otimes\dotsm\otimes \p_{X_n}\right)(F_1\times\dotsm\times F_n),$$ which implies that $ \p_{(X_1,...,X_n)}$ and $\p_{X_1}\otimes\dotsm\otimes \p_{X_n}$ are equal on rectangles. Hence the monotone class theorem implies that 
$$\p_{(X_1,...,X_n)}=\p_{X_1}\otimes\dotsm\otimes\p_{X_n}.$$
Conversely, if $\p_{(X_1,...,X_n)}=\p_{X_1}\otimes\dotsm\otimes\p_{X_n}$, then for all $F_i\in\mathcal{E}_i$, with $i\in\{1,...,n\}$, we get that
$$\p_{(X_1,...,X_n)}(F_1\times\dotsm\times F_n)=\left(\p_{X_1}\otimes\dotsm\otimes\p_{X_n}\right)(F_1\times\dotsm\times F_n)$$
and therefore
$$\p[\{X_1\in F_1\}\cap\dotsm\cap\{X_n\in F_n\}]=\p[X_1\in F_1]\dotsm\p[X_n\in F_n].$$
This implies that $X_1,...,X_n$ are independent. For the second assumption we get  
$$\E\left[\prod_{i=1}^nf_i(X_i)\right]=\int_{E_1\times\dotsm\times E_n}\prod_{i=1}^nf_i(X_i)\underbrace{P_{X_1}dx_1\dotsm P_{X_n}dx_n}_{\p_{X_1,...,X_n}(dx_1\dotsm dx_n)}=\prod_{i=1}^n\int_{E_i}f_i(x_i)P_{X_i}dx_i=\prod_{i=1}^n\E[f_i(X_i)],$$
where we have used the first part and Fubini's theorem.
\end{proof}
\begin{rem}
We see from the proof above that as soon as for all $i\in\{1,...,n\}$ we have $\E[\vert f_i(X_i)\vert]<\infty$, it follows that 
$$\E\left [\prod_{i=1}^n f_i(X_i)\right]=\prod_{i=1}^n\E[ f_i(X_i) ].$$
Indeed, the previous result shows that 
$$\E\left[\prod_{i=1}^n\vert f_i(X_i)\vert\right]=\prod_{i=1}^n\E[\vert f_i(X_i)\vert]<\infty$$
and thus we can apply Fubini's theorem. In particular if $X_1,...,X_n\in L^1(\Omega,\A,\p)$ and independent, we get that
$$\E\left[\prod_{i=1}^nX_i\right]=\prod_{i=1}^n\E[X_i].$$
\end{rem}
\begin{cor}
Let $(\Omega,\A,\p)$ be a probability space. Let $X_1$ and $X_2$ be two independent r.v.'s in $L^2(\Omega,\A,\p)$. Then we get 
$$Cov(X_1,X_2)=0.$$
\end{cor}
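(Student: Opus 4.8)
The plan is to reduce everything to the factorization of expectations for independent random variables established just after Theorem \ref{thm7}. First I would recall the second expression for the covariance coming from its definition, namely $Cov(X_1,X_2)=\E[X_1X_2]-\E[X_1]\E[X_2]$, so that the task becomes showing $\E[X_1X_2]=\E[X_1]\E[X_2]$.

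Before invoking that factorization I need to check that all three expectations make sense. Since $X_1,X_2\in L^2(\Omega,\A,\p)$ and we have already observed that $L^2(\Omega,\A,\p)\subset L^1(\Omega,\A,\p)$, both $X_1$ and $X_2$ are integrable, so $\E[X_1]$ and $\E[X_2]$ are well defined. Moreover, the Cauchy--Schwarz inequality gives $\E[\vert X_1X_2\vert]\leq \E[\vert X_1\vert^2]^{1/2}\E[\vert X_2\vert^2]^{1/2}<\infty$, so the product $X_1X_2$ is integrable as well and $\E[X_1X_2]$ is likewise well defined.

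With integrability in hand, I would apply the remark following Theorem \ref{thm7}: for independent random variables lying in $L^1(\Omega,\A,\p)$ one has $\E[X_1X_2]=\E[X_1]\E[X_2]$. Substituting this into the covariance formula yields $Cov(X_1,X_2)=\E[X_1]\E[X_2]-\E[X_1]\E[X_2]=0$. There is no genuine obstacle here; the only point requiring care is the integrability verification, since the factorization result presupposes membership in $L^1(\Omega,\A,\p)$, and this is exactly what the inclusion $L^2\subset L^1$ together with Cauchy--Schwarz supply.
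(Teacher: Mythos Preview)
Your proof is correct and follows essentially the same route as the paper: recall $Cov(X_1,X_2)=\E[X_1X_2]-\E[X_1]\E[X_2]$, use $L^2\subset L^1$ to ensure integrability, and apply the factorization $\E[X_1X_2]=\E[X_1]\E[X_2]$ from the remark after Theorem~\ref{thm7}. The paper's version is terser and does not spell out the Cauchy--Schwarz step for the integrability of $X_1X_2$, but that is a minor elaboration rather than a different approach.
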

\begin{proof}
Recall that if $X\in L^2(\Omega,\A,\p)$, we also have that $X\in L^1(\Omega,\A,\p)$. Thus
$$Cov(X_1,X_2)=\E[X_1X_2]-\E[X_1]\E[X_2]=\E[X_1]\E[X_2]-\E[X_1]\E[X_2]=0.$$
\end{proof}
\begin{rem}
Note that the converse is not true! Let $X_1\sim\mathcal{N}(0,1)$. We can also take for $X_1$ any symmetric r.v. in $L^2(\Omega,\A,\p)$ with density $P(x)$, such that $P(-x)=P(x)$. Recall that being in $L^2(\Omega,\A,\p)$ simply means 
$$\E[X^2]=\int_\R x^2 P(x)dx<\infty,$$
which implies that $P(x)=P(-x)$ and thus $\E[X^2]=\int_\R x^2P(x)dx=0.$
Now consider a r.v. $Y$ with values in $\{-1,+1\}$. Then we get $\p[Y=1]=\p[Y=-1]=\frac{1}{2}$ and thus $Y$ is independent of $X_1$. Define $X_2:=YX_1$ and observe then 
$$Cov(X_1,X_2)=\E[X_1X_2]-\E[X_1]\E[X_2]=\E[YX_1^2]-\E[YX_1]\E[X_1]$$
and hence
$$\E[Y]\E[X_1^2]-\E[Y]\E^2[X_1]=0-0=0.$$
If $X_1$ and $X_2$ are independent, we note that $\vert X_1\vert$ and $\vert X_2\vert$ would also be independent. But $\vert X_2\vert =\vert Y\vert \vert X_1\vert=\vert X_1\vert$. This would mean that $\vert X_1\vert$ is independent of itself. So it follows that $\vert X_1\vert$ is equal to a constant a.s. If $c=\E[\vert X_1\vert]$, and we want to look at $\E[(\vert X_1\vert-c)^2]$, we now know that $\vert X_1\vert -c$ is independent of itself. Therefore we get
$$\E[(\vert X_1\vert-c)^2]=\E[\vert X_1\vert-c]\E[\vert X_1\vert-c]=0\Longrightarrow \vert X_1\vert=c\hspace{0.3cm}\text{a.s.}$$
This cannot happen since $\vert X_1\vert$ is the absolute value of a standard Gaussian distribution, which has a density given by 
$$P(x)=\frac{1}{\sqrt{2\pi}}e^{-\frac{x^2}{2}}.$$
\end{rem}
\begin{cor}
Let $(\Omega,\A,\p)$ be a probability space. Let $X_1,...,X_n$ be $n$ r.v.'s with values in $\R$.
\begin{enumerate}[$(i)$]
\item{Assume that for $i\in \{1,...,n\}$, $\p_{X_i}$ has density $P_i$ and that the r.v.'s $X_1,...,X_n$ are independent. Then the law of $(X_1,...,X_n)$ also has density given by $P(x_1,...,x_n)=\prod_{i=1}^nP_i(x_i)$.
}
\item{Conversely assume that the law of $(X_1,...,X_n)$ has density $P(x_1,...,x_n)=\prod_{i=1}^nq_i(x_i)$, where $q_i$ is Borel measurable and positive. Then the r.v.'s $X_1,...,X_n$ are independent and the law of $X_i$ has density $P_i=c_iq_i$, with $c_i>0$ for $i\in\{1,...,n\}$.
}
\end{enumerate}
\end{cor}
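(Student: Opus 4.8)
The plan is to reduce both statements to Theorem \ref{thm7}, which characterizes independence of $X_1,\dots,X_n$ by the factorization $\p_{(X_1,\dots,X_n)}=\p_{X_1}\otimes\dots\otimes\p_{X_n}$, together with the rule from Proposition \ref{random2} for reading off a marginal density by integrating out the remaining variables. The two auxiliary facts I will lean on throughout are Tonelli's theorem (for nonnegative integrands) and the uniqueness of a measure given its Lebesgue density.

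For part $(i)$ I would start from the hypothesis that the $X_i$ are independent, so that by Theorem \ref{thm7} the joint law equals the product measure $\p_{X_1}\otimes\dots\otimes\p_{X_n}$, and then identify the density of that product measure. On a rectangle $F_1\times\dots\times F_n$ one computes
$$(\p_{X_1}\otimes\dots\otimes\p_{X_n})(F_1\times\dots\times F_n)=\prod_{i=1}^n\int_{F_i}P_i(x_i)\,dx_i=\int_{F_1\times\dots\times F_n}\prod_{i=1}^nP_i(x_i)\,dx_1\dots dx_n,$$
the second equality being Tonelli applied to the nonnegative integrand $\prod_iP_i$. Thus the product measure and the measure with density $\prod_iP_i$ agree on all rectangles; since rectangles are stable under intersection and generate $\B(\R^n)$, the monotone class theorem (exactly as invoked in the proof of Theorem \ref{thm7}) upgrades this to equality on all of $\B(\R^n)$, giving $\prod_iP_i$ as a density of $\p_{(X_1,\dots,X_n)}$.

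For part $(ii)$ the approach is to first extract the marginals and then check that their product returns the joint density. Setting $c_i':=\int_\R q_i(x)\,dx$, the fact that $P=\prod_iq_i$ is a probability density gives, via Tonelli, $\prod_ic_i'=\int_{\R^n}P=1$, which forces each $c_i'\in(0,\infty)$. Applying Proposition \ref{random2} to $P$ and integrating out all variables but the $j$-th yields that $X_j$ has density
$$P_j(x_j)=\Big(\prod_{i\neq j}c_i'\Big)q_j(x_j)=c_jq_j(x_j),\qquad c_j:=\prod_{i\neq j}c_i'=\frac{1}{c_j'}>0,$$
establishing the claimed form $P_i=c_iq_i$. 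For independence, the computation of part $(i)$ shows that $\p_{X_1}\otimes\dots\otimes\p_{X_n}$ has density $\prod_iP_i=\big(\prod_ic_i\big)\prod_iq_i$, and since $\prod_ic_i=\prod_i(1/c_i')=1$ this equals $P$; two measures with the same Lebesgue density coincide, so $\p_{(X_1,\dots,X_n)}=\p_{X_1}\otimes\dots\otimes\p_{X_n}$ and Theorem \ref{thm7} delivers independence.

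The only genuinely delicate point is the bookkeeping of the normalizing constants in part $(ii)$: one must verify that each $c_i'$ is strictly positive and finite, which is precisely where the hypothesis $\int_{\R^n}P=1$ (hence $\prod_ic_i'=1$) is essential, and one must observe that the renormalization constants multiply back to $1$ so that the product of marginals reproduces the joint density \emph{exactly} rather than up to a stray constant. Everything else is a direct application of Tonelli's theorem, Proposition \ref{random2}, Theorem \ref{thm7}, and the uniqueness of a measure given its density.
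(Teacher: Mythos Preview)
Your argument is correct and matches the paper's approach essentially line for line: the paper likewise uses Fubini/Tonelli to get $\prod_i\int q_i=1$ (hence each $K_i:=\int q_i\in(0,\infty)$), invokes Proposition~\ref{random2} to obtain $P_i=\big(\prod_{j\neq i}K_j\big)q_i=\tfrac{1}{K_i}q_i$, and then observes $\prod_iP_i=P$ to conclude independence via Theorem~\ref{thm7}. The only difference is that the paper omits the proof of $(i)$ altogether, whereas you supply it.
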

\begin{proof}
We only need to show $(ii)$. From Fubini we get 
$$\int_\R\prod_{i=1}^nq_i(x_i)dx_i=\prod_{i=1}^n\int_\R q_i(x_i)dx_i=\int_{\R^{n}}P(x_1,...,x_n)dx_1\dotsm dx_n=1.$$
which implies that $K_i:=\int_\R q_i(x_i)dx_i\in(0,\infty)$, for all $i\in\{1,...,n\}$. Now we know that the law of $X_i$ has density $P_i$ given by 
$$P_i(x_i)=\int_{\R^{n-1}}P(x_1,...,x_{i-1},x_i,x_{i+1},...,x_n)dx_1\dotsm dx_{i-1}dx_{i+1}\dotsm dx_n=\left(\prod_{j\not=i}K_j\right)q_i(x_i)=\frac{1}{K_i}q_i(x_i).$$
We can rewrite 
$$P(x_1,...,x_n)=\prod_{i=1}^nq_i(x_i)=\prod_{i=1}^nP_i(x_i).$$
Hence we get $P(x_1,...,x_n)=\p_{X_1}\otimes\dotsm \otimes \p_{X_n}$ and therefore $X_1,...,X_n$ are independent.
\end{proof}
\begin{ex} Let $U$ be a r.v. with exponential distribution. Let $V$ be a uniform r.v. on $[0,1]$. We assume that $U$ and $V$ are independent. Define the r.v.'s $X=\sqrt{U}\cos(2\pi V)$ and $Y=\sqrt{U}\sin(2\pi V)$. Then $X$ and $Y$ are independent. Indeed, for a measurable function $\varphi:\R^2\to \R_+$ we get
$$\E[\varphi(X,Y)]=\int_0^\infty\int_{0}^1\varphi(\sqrt{u}\cos(2\pi v),\sqrt{u}\sin(2\pi v))e^{u}dudv$$
$$=\frac{1}{\sqrt{\pi}}\int_{0}^\infty\int_0^{2\pi}\varphi(r\cos(\theta),r\sin(\theta))re^{-r^2}drd\theta,$$
which implies that $(X,Y)$ has density $\frac{e^{-x^2}e^{-y^2}}{\pi}$ on $\R\times\R$. With the previous corollary we get that $X$ and $Y$ are independent and $X$ and $Y$ have the same density $P(x)=\frac{1}{\sqrt{\pi}}e^{-x^2}$. This means that $X$ and $Y$ are independent. 
\end{ex}
\begin{rem}
We write $X\stackrel{law}{=}Y$ to say that $\p_X=\p_Y$. Thus in the example above we would have $$X\stackrel{law}{=}Y\sim\mathcal{N}(0,\frac{1}{2}).$$
\end{rem}
\subsubsection{Important facts}
Let $X_1,...,X_n$ be $n$ real valued r.v.'s. Then the following are equivalent
\begin{enumerate}[$(i)$]
\item{$X_1,...,X_n$ are independent.
}
\item{For $X=(X_1,...,X_n)\in\R^n$ we have
$$\Phi_X(\xi_1,...,\xi_n)=\prod_{i=1}^n\Phi_{X_i}(\xi_i).$$
}
\item{For all $a_1,..,a_n\in\R$, we have
$$\p[X_1\leq a_1,..,X_n\leq a_n]=\prod_{i=1}^n\p[X_i\leq a_i]$$ 

}
\item{If $f_1,...,f_n:\R\to\R_+$ are continuous, measurable maps with compact support, then
$$\E\left[\prod_{i=1}^nf_i(X_i)\right]=\prod_{i=1}^n\E[f_i(X_i)].$$
}
\end{enumerate}
\begin{proof}
First we show $(i)\Longrightarrow (ii)$. By definition and the iid property, we get 
$$\Phi_X(\xi_1,..,\xi_n)=\E\left[e^{i(\xi_1X_1+...+\xi_nX_n)}\right]=\E\left[e^{i\xi_1X_1}\dotsm e^{i\xi_nX_n}\right]=\prod_{i=1}^n\E[e^{i\xi X_1}]=\prod_{i=1}^n\Phi_{X_i}(\xi_i),$$ 
where the map $t\mapsto e^{it}$ is measurable and bounded.
Next we show $(ii)\Longrightarrow (i)$. Note that by theorem \ref{thm7} we have $\p_X=\p_Y$ if 
$$\Phi_X(\xi_1,...,\xi_n)=\Phi_Y(\xi_1,...,\xi_n).$$
Now if $\Phi_X(\xi_1,...,\xi_n)=\prod_{i=1}^n\Phi_{X_i}(\xi_i)$, we note that $\prod_{i=1}^n\Phi_{X_i}(\xi_i)$
is the characteristic function of the probability distribution if the probability distribution is $\p_{X_1}\otimes\dotsm \otimes\p_{X_n}$. Now from injectivity it follows that $\p_{(X_1,...,X_n)}=\p_{X_1}\otimes\dotsm\otimes\p_{X_n}$, which implies that $X_1,...,X_n$ are independent.
\end{proof}
\begin{prop}
Let $(\Omega,\A,\p)$ be a probability space. Let $\B_1,...,\B_n\subset\A$ be sub $\sigma$-Algebras of $\A$. For every $i\in\{1,...,n\}$, let $\mathcal{C}_i\subset\B_i$ be a family of subsets of $\Omega$ such that  $\mathcal{C}_i$ is stable under finite intersection and $\sigma(\mathcal{C}_i)=\B_i$. Assume that for all $C_i\in\mathcal{C}_i$ with $i\in\{1,...,n\}$ we have
$$\p\left[\prod_{i=1}^nC_i\right]=\prod_{i=1}^n\p[C_i].$$
Then $\B_1,...,\B_n$ are independent $\sigma$-Algebras.
\end{prop}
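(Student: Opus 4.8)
The plan is to reduce the independence of $\B_1,\dots,\B_n$ to the uniqueness part of the monotone class theorem, upgrading the product identity from the generating $\pi$-systems $\mathcal{C}_i$ to the full $\sigma$-Algebras $\B_i$ one factor at a time. First I would make a harmless normalization: replace each $\mathcal{C}_i$ by $\mathcal{C}_i\cup\{\Omega\}$. This keeps the family stable under finite intersection (since $\Omega\cap C=C$), does not change $\sigma(\mathcal{C}_i)=\B_i$, and preserves the hypothesis, because inserting a factor $\Omega$ on the left multiplies the right-hand side by $\p[\Omega]=1$. So from now on I may assume $\Omega\in\mathcal{C}_i$ for every $i$.

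Next I would set up an induction on $k\in\{0,1,\dots,n\}$ on the statement $P(k)$: for all $A_1\in\B_1,\dots,A_k\in\B_k$ and all $C_{k+1}\in\mathcal{C}_{k+1},\dots,C_n\in\mathcal{C}_n$,
$$\p[A_1\cap\dotsm\cap A_k\cap C_{k+1}\cap\dotsm\cap C_n]=\p[A_1]\dotsm\p[A_k]\,\p[C_{k+1}]\dotsm\p[C_n].$$
Here $P(0)$ is exactly the hypothesis, while $P(n)$ is precisely the defining property that $\B_1,\dots,\B_n$ are independent. For the inductive step $P(k-1)\Rightarrow P(k)$ I would fix $A_1\in\B_1,\dots,A_{k-1}\in\B_{k-1}$ and $C_{k+1}\in\mathcal{C}_{k+1},\dots,C_n\in\mathcal{C}_n$, and define two set functions on $\B_k$:
$$\mu(B)=\p[A_1\cap\dotsm\cap A_{k-1}\cap B\cap C_{k+1}\cap\dotsm\cap C_n],\qquad \nu(B)=\p[A_1]\dotsm\p[A_{k-1}]\,\p[B]\,\p[C_{k+1}]\dotsm\p[C_n].$$
Both are finite measures on $\B_k$, their $\sigma$-additivity being inherited directly from that of $\p$. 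By $P(k-1)$, with the $k$-th slot occupied by an element $C_k$ ranging over $\mathcal{C}_k$, the measures $\mu$ and $\nu$ agree on the $\pi$-system $\mathcal{C}_k$; and taking $C_k=\Omega\in\mathcal{C}_k$ shows $\mu(\Omega)=\nu(\Omega)$, so they carry the same total mass. Since $\mathcal{C}_k$ is stable under finite intersection and $\sigma(\mathcal{C}_k)=\B_k$, the monotone class theorem forces $\mu=\nu$ on all of $\B_k$, which is exactly $P(k)$.

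The only genuine subtlety, and the one place I would be careful, is the total-mass hypothesis in the uniqueness theorem: two finite measures agreeing on a generating $\pi$-system need not coincide on the generated $\sigma$-Algebra unless they also agree on $\Omega$. This is precisely what the preliminary enlargement by $\Omega$ is designed to furnish, since it lets the inductive hypothesis supply $\mu(\Omega)=\nu(\Omega)$ for free at each step. Everything else—the finiteness and countable additivity of $\mu$ and $\nu$, and the bookkeeping of the induction—is routine.
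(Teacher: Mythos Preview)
Your proof is correct and follows essentially the same route as the paper: both upgrade the product identity from the generating $\pi$-systems to the full $\sigma$-Algebras one coordinate at a time via the monotone class machinery, the paper by showing the ``good set'' $M_k\subset\B_k$ is a monotone class containing $\mathcal{C}_k$, you by phrasing the same step as two finite measures agreeing on the $\pi$-system $\mathcal{C}_k$. Your preliminary adjunction of $\Omega$ to each $\mathcal{C}_i$ to secure the total-mass condition is a point the paper leaves implicit, so if anything your write-up is the more careful of the two.
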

\begin{proof}
Let us fix $C_2\in \mathcal{C}_2,...,C_n\in\mathcal{C}_n$ and define 
$$M_1:=\left\{B_1\in\B_1\mid \p[B_1\cap C_2\cap\dotsm\cap C_2]=\p[B_1]\p[C_2]\dotsm \p[C_n]\right\}.$$
Now since $\mathcal{C}_1\subset M_1$ and $M_1$ is a monotone class, we get $\sigma(\mathcal{C}_1)=\B_1\subset M_1$ and thus $\B_1=M_1$. Let now $B_1\in\B_1,$ $C_3\in\mathcal{C}_3,...,C_n\in\mathcal{C}_n$ and define
$$M_2:=\{B_2\in\B_2\mid \p[B_2\cap B_1\cap C_3\cap\dotsm\cap C_n]=\p[B_2]\p[B_1]\p[C_3]\dotsm\p[C_n]\}.$$
Again, since $\mathcal{C}_2\subset M_2$, we get $\sigma(\mathcal{C}_2)=\B_2\subset M_2$ and thus $B_2=M_2$. By induction we complete the proof.
\end{proof}
$Consequence:$ Let $\B_1,...,\B_n$ be $n$ independent $\sigma$-Algebras and let $m_0=0<m_1<...<m_p=n$. Then the $\sigma$-Algebras
\begin{align*}
\mathcal{D}_1&=\B_1\lor\dotsm\lor\B_n=\sigma(\B_1,...,\B_n)=\sigma\left(\bigcup_{k=1}^n\B_k\right)\\
\mathcal{D}_2&=\B_{m_i+1}\lor\dotsm\lor\B_{n_2}\\
\vdots\\
\mathcal{D}_p&=\B_{n_{p-1}+1}\lor\dotsm\lor\B_{n_p}
\end{align*}
are also independent. Indeed, we can apply The previous proposition to the class of sets 
$$C_j=\{B_{n_{j-1}+1}\cap\dotsm\cap B_{n_j}\mid B_i\in\mathcal{C}_i, i\in\{n_{j-1}+1,...,n_j\}\}.$$
In particular if $X_1,...,X_n$ are independent r.v.'s, then 
\begin{align*}
Y_1&=(X_1,...,X_n)\\
\vdots\\
Y_p&=(X_{n_{p_1}},...,X_{n_p})
\end{align*}
are also independent.
\begin{ex} Let $X_1,...,X_4$ be real valued independent r.v.'s. Then $Z_1=X_1X_3$ and $Z_2=X_2^3+X_4$ are independent and $Z_3=\sigma(X_1,X_3)$ and $Z_4=\sigma(X_2,X_4)$ are measurable.
From above $\sigma(X_1,X_3)$ and $\sigma(X_2,X_4)$ are independent if for $X:\Omega\to\R$ we have that $Y$ is $\sigma(X)$ measurable if and only if $Y=f(X)$ with $f$ being a measurable map, i.e. if $Y$ is $\sigma(X_1,...,X_n)$ measurable, then $Y=f(X_1,....,X_n)$.
\end{ex}
\begin{prop}[Independence for an infinite family]
Let $(\Omega,\A,\p)$ be a probability space. Let $(\B_i)_{i\in I}$ be an infinite family of sub $\sigma$-Algebras of $A$. We say that the family $(\B_i)_{i\in I}$ is independent if for all $\{i_1,..,i_p\}\in I$, $\B_{i_1},...,\B_{i_p}$ are independent. If $(X_i)_{i\in I}$ is a family of r.v.'s we say that they are independent if $(\sigma(X_i))_{i\in I}$ is independent.
\end{prop}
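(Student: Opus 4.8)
Strictly speaking, the statement is a \emph{definition}: it extends the notion of independence from finitely many to infinitely many sub-$\sigma$-algebras, and then transfers it to families of random variables through the generated $\sigma$-algebras $\sigma(X_i)$. Accordingly, there is no nontrivial implication to establish; the plan is to check that the two clauses are well posed and consistent with the finite-family theory developed above, rather than to carry out a measure-theoretic argument.

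First I would verify that the new notion genuinely generalizes the old one. If $I=\{1,\dots,n\}$ is finite, then the maximal finite subset of $I$ is $I$ itself, so the requirement that \emph{every} finite subfamily $\B_{i_1},\dots,\B_{i_p}$ be independent collapses to the single condition $\p[A_1\cap\dots\cap A_n]=\p[A_1]\dotsm\p[A_n]$ for all $A_i\in\B_i$, which is exactly the earlier Definition of independence of $\sigma$-Algebras. Hence the infinite-family definition restricts correctly on finite index sets.

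Next, for the random-variable clause, I would observe that a finite subfamily $X_{i_1},\dots,X_{i_p}$ is independent in the sense of the earlier definition precisely when $\sigma(X_{i_1}),\dots,\sigma(X_{i_p})$ are independent $\sigma$-algebras, since by construction $\sigma(X_i)=\{X_i^{-1}(F)\mid F\in\mathcal{E}_i\}$. Therefore declaring $(X_i)_{i\in I}$ independent whenever $(\sigma(X_i))_{i\in I}$ is independent is compatible with, and reduces to, the finite notion on every finite index subset, so the two parts of the statement are coherent.

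The only point that deserves emphasis — and the one I would treat as the genuine subtlety — is \emph{why} the quantifier ``for all finite $\{i_1,\dots,i_p\}\subset I$'' cannot be weakened, for instance to pairwise independence of the $\B_i$. Here I would simply recall the example of the events $A,B,C$ arising from two coin tosses given earlier, which are pairwise (two by two) independent but not independent; this shows that controlling all finite subfamilies is strictly stronger than controlling pairs and is the correct ``full'' independence condition. Crucially, every defining relation involves only finitely many indices at once, so no extension device (such as the monotone class / $\pi$–$\lambda$ argument used previously) is required to pass to the infinite family.
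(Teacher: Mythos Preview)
Your reading is correct: in the paper this ``proposition'' is purely a definition and carries no proof at all --- it is stated and immediately followed by the next proposition. Your observation that there is nothing to prove, together with the consistency checks you outline, is exactly the right response.
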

\begin{prop}
Let $(\Omega,\A,\p)$ be a probability space. Let $(X_n)_{n\geq 1}$ be a sequence of independent r.v.'s. Then for all $p\in\N$ we get that $p_1=\sigma(X_1,...,X_p)$ and $p_2=\sigma(X_{p+1},...,X_n)$ are independent.
\end{prop}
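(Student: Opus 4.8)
The plan is to reduce the claim to the preceding proposition on generating classes, applied to the two sub-$\sigma$-algebras $\B_1:=p_1=\sigma(X_1,\dots,X_p)$ and $\B_2:=p_2=\sigma(X_{p+1},X_{p+2},\dots)$. For this I need, for each $j\in\{1,2\}$, a family $\mathcal{C}_j\subset\B_j$ which is stable under finite intersection, satisfies $\sigma(\mathcal{C}_j)=\B_j$, and on which the product formula holds. The natural choice is to take finite-dimensional cylinders:
$$\mathcal{C}_1=\Big\{A_1\cap\dots\cap A_p \ \Big|\ A_i\in\sigma(X_i),\ 1\le i\le p\Big\},$$
$$\mathcal{C}_2=\Big\{A_{p+1}\cap\dots\cap A_{p+k} \ \Big|\ k\ge 1,\ A_i\in\sigma(X_i),\ p+1\le i\le p+k\Big\}.$$

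First I would check the two structural hypotheses. Stability under finite intersection is immediate: the intersection of two elements of $\mathcal{C}_1$ is again of the same form (replace each coordinate factor by $A_i\cap A_i'\in\sigma(X_i)$), and for $\mathcal{C}_2$ one first pads the shorter cylinder with factors $\Omega\in\sigma(X_i)$ so that both range over the same index block, and then intersects coordinatewise. That $\sigma(\mathcal{C}_1)=\B_1$ follows because on the one hand $\mathcal{C}_1\subset\sigma(X_1,\dots,X_p)$, and on the other hand each $\sigma(X_i)$ is contained in $\sigma(\mathcal{C}_1)$ (take all other factors equal to $\Omega$), so that $\bigcup_{i=1}^p\sigma(X_i)\subset\sigma(\mathcal{C}_1)$ and hence $\B_1\subset\sigma(\mathcal{C}_1)$; the same argument gives $\sigma(\mathcal{C}_2)=\B_2$.

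Next I would verify the product formula on $\mathcal{C}_1\times\mathcal{C}_2$. Fix $C_1=A_1\cap\dots\cap A_p\in\mathcal{C}_1$ and $C_2=A_{p+1}\cap\dots\cap A_{p+k}\in\mathcal{C}_2$ with $A_i\in\sigma(X_i)$. Since $X_1,\dots,X_{p+k}$ is a finite subfamily of the independent family $(X_n)_{n\ge1}$, the $\sigma$-algebras $\sigma(X_1),\dots,\sigma(X_{p+k})$ are independent, whence
$$\p[C_1\cap C_2]=\p\Big[\bigcap_{i=1}^{p+k}A_i\Big]=\prod_{i=1}^{p+k}\p[A_i]=\Big(\prod_{i=1}^{p}\p[A_i]\Big)\Big(\prod_{i=p+1}^{p+k}\p[A_i]\Big)=\p[C_1]\,\p[C_2],$$
the two factored products being exactly $\p[C_1]$ and $\p[C_2]$, again by finite independence. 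With all hypotheses verified, the preceding proposition yields that $\B_1=p_1$ and $\B_2=p_2$ are independent.

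The only genuinely delicate point is the passage from the independence of the single-variable $\sigma$-algebras $\sigma(X_n)$ to the independence of the two infinite blocks: this is precisely what the generating-class proposition is designed to handle, since one can only ever exploit independence of finitely many $X_i$ at a time. Everything else is bookkeeping with cylinder sets. For the special case in which $p_2=\sigma(X_{p+1},\dots,X_n)$ is generated by finitely many variables, one may alternatively appeal directly to the \emph{Consequence} of the preceding proposition on regrouping finitely many independent $\sigma$-algebras.
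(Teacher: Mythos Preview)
Your proof is correct and follows the same strategy as the paper: apply the preceding proposition on $\pi$-system generators to suitable classes $\mathcal{C}_1,\mathcal{C}_2$. The only difference is cosmetic: the paper takes $\mathcal{C}_1=\sigma(X_1,\dots,X_p)$ itself and $\mathcal{C}_2=\bigcup_{k\ge p+1}\sigma(X_{p+1},\dots,X_k)$, an increasing union of $\sigma$-algebras (hence trivially intersection-stable), whereas you work with explicit cylinder sets; the paper's choice then relies on the finite-regrouping \emph{Consequence} to check the product formula, while yours reduces directly to independence of the individual $\sigma(X_i)$.
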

\begin{proof}
Apply Proposition 5.9. to $\mathcal{C}_1=\sigma(X_1,...,X_p)$ and 
$\mathcal{C}_2=\bigcup_{k=p+1}^\infty\sigma(X_{p+1},...,X_n)\in\B_2$.
\end{proof}
\subsection{The Borel-Cantelli Lemma}
Let $(\Omega,\A,\p)$ be a probability space. Let $(A_n)_{n\in\N}$ be a sequence of events in $\A$. Recall that we can write
\[
\limsup_{n\to \infty} A_n=\bigcap_{n=0}^\infty\left(\bigcup_{k=n}^\infty A_k\right)\hspace{0.5cm}\text{and}\hspace{0.5cm}\liminf_{n\to \infty} A_n=\bigcup_{n=0}^\infty\left(\bigcap_{k=n}^\infty A_k\right).
\]
Moreover, both are again measurable sets. For $\omega\in\limsup_n A_n$ we get that $\omega\in\bigcup_{k=n}^\infty A_k$, for all $n\geq 0$. Moreover, for all $n\geq 0$, there exists a $k\geq n$ such that, $\omega\in A_n$ and $\omega$ is in infinitely many $A_k$'s. For $\omega\in\liminf_n A_n$, we get that for all $n\geq 0$ such that $\omega\in\bigcap_{k=n}^\infty A_k$, there exists $n\geq 0$, such that for all $k\geq n$ we have $\omega\in A_k$, which shows that $\liminf_nA_n\subset \limsup_nA_n$.
\begin{lem}[Borel-Cantelli]
Let $(\Omega,\A,\p)$ be a probability space. Let $(A_n)_{n\in\N}\in\A$ be a family of measurable sets.
\begin{enumerate}[$(i)$]
\item{If $\sum_{n\geq 1}\p[A_n]<\infty$, then 
$$\p\left[\limsup_{n\to\infty} A_n\right]=0,$$
which means that the set $\{n\in\N\mid \omega\in A_n\}$ is a.s. finite.
}
\item{If $\sum_{n\geq 1}\p[A_n]=\infty$, and if the events $(A_n)_{n\in\N}$ are independent, then 
$$\p\left[\limsup_{n\to\infty} A_n\right]=1,$$
which means that the set $\{n\in\N\mid \omega\in A_n\}$ is a.s. finite.
}
\end{enumerate}
\end{lem}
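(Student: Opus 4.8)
The plan is to treat the two parts separately, exploiting in both cases the representation $\limsup_{n\to\infty}A_n=\bigcap_{n=0}^\infty B_n$ with $B_n:=\bigcup_{k=n}^\infty A_k$, where $(B_n)_{n\in\N}$ is a decreasing sequence of measurable sets.

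For part $(i)$, I would observe that $\limsup_{n\to\infty}A_n\subseteq B_n$ for every $n$, so that monotonicity of $\p$ together with countable subadditivity gives
$$\p\left[\limsup_{n\to\infty}A_n\right]\leq \p[B_n]\leq \sum_{k=n}^\infty \p[A_k].$$
Since the series $\sum_{n\geq 1}\p[A_n]$ converges, its tail $\sum_{k=n}^\infty\p[A_k]$ tends to $0$ as $n\to\infty$; letting $n\to\infty$ then forces $\p[\limsup_n A_n]=0$. The interpretation that $\{n\in\N\mid\omega\in A_n\}$ is a.s.\ finite follows because $\omega\in\limsup_n A_n$ precisely when $\omega$ lies in infinitely many of the $A_k$.

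For part $(ii)$, I would instead pass to the complement and show $\p[(\limsup_n A_n)^C]=0$ (so that the claim becomes $\p[\limsup_n A_n]=1$). By De Morgan, $(\limsup_n A_n)^C=\bigcup_{n=0}^\infty\bigcap_{k=n}^\infty A_k^C$, so by countable subadditivity it suffices to prove $\p[\bigcap_{k=n}^\infty A_k^C]=0$ for each fixed $n$. Here the independence hypothesis enters: since $A_n,A_{n+1},\dots$ are independent, so are their complements, which is exactly the stability under complementation established in the earlier Proposition on independence of events. Thus for every $N\geq n$,
$$\p\left[\bigcap_{k=n}^N A_k^C\right]=\prod_{k=n}^N\bigl(1-\p[A_k]\bigr).$$

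Finally I would estimate this product by the elementary inequality $1-x\leq e^{-x}$, valid for all $x\in\R$, to obtain
$$\prod_{k=n}^N\bigl(1-\p[A_k]\bigr)\leq\exp\left(-\sum_{k=n}^N\p[A_k]\right).$$
Because $\sum_{k\geq 1}\p[A_k]=\infty$, the partial sums $\sum_{k=n}^N\p[A_k]$ diverge to $+\infty$ as $N\to\infty$, so the right-hand side tends to $0$. Invoking continuity of $\p$ along the decreasing sequence $\bigcap_{k=n}^N A_k^C\downarrow\bigcap_{k=n}^\infty A_k^C$ (the continuity-from-above property recalled after the definition of a probability measure) gives $\p[\bigcap_{k=n}^\infty A_k^C]=0$, and summing over $n$ yields $\p[(\limsup_n A_n)^C]=0$, i.e.\ $\p[\limsup_n A_n]=1$. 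The only delicate point is the legitimate use of independence after passing to complements and in the infinite intersection; the exponential bound and the monotone-continuity step are otherwise routine.
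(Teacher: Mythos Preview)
Your proof is correct. For part $(ii)$ your argument is essentially the paper's: pass to complements, use independence to factor the finite intersections, and show the product vanishes. You are in fact more explicit, invoking the bound $1-x\leq e^{-x}$ and continuity from above, whereas the paper simply asserts that $\sum_{n}\p[A_n]=\infty$ forces $\p\bigl[\bigcap_{k=n_0}^{n}A_k^C\bigr]\to 0$.

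For part $(i)$ you take a genuinely different route. You bound $\p[\limsup_n A_n]\leq\sum_{k\geq n}\p[A_k]$ by subadditivity and let the tail of the convergent series go to zero. The paper instead writes $\E\bigl[\sum_{n\geq 1}\one_{A_n}\bigr]=\sum_{n\geq 1}\p[A_n]<\infty$ via Fubini and concludes directly that the counting variable $\sum_n\one_{A_n}$ is a.s.\ finite. Both are standard and equally short; your argument stays entirely in the measure-of-sets language and uses only monotonicity and subadditivity, while the paper's version immediately produces the a.s.-finiteness of $\{n:\omega\in A_n\}$ without a separate interpretation step.
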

\begin{proof} We need to show both points.
\begin{enumerate}[$(i)$]
\item{If $\sum_{n\geq 1}\p[A_n]<\infty,$ then, by Fubini, we get
$$\E\left[\sum_{n\geq 1}\one_{A_n}\right]=\sum_{n\geq 1}\p[A_n],$$
which implies that $\sum_{n\geq 1}\one_{A_n}<\infty$ and $\one_{A_n}\not=0$ a.s. for finite numbers of $n$.
}
\item{Fix $n_0\in\N$ and note that for all $n\geq n_0$ we have
$$\p\left[\bigcap_{k=n_0}^nA_k^C\right]=\prod_{k=n_0}^n\p[A_k^C]=\prod_{k=n_0}^n\p[1-A_n].$$
Now we see that 
$$\sum_{n\geq 1}\p[A_n]=\infty$$
and thus
$$\p\left[\bigcap_{k=n_0}^nA_k^C\right]=0.$$
Since this is true for every $n_0$ we have that 
$$\p\left[\bigcup_{n=0}^\infty\bigcap_{k=n_0}^\infty A_k^C\right]\leq \sum_{n\geq 1}\p[A_k^C]=0.$$
Hence we get
$$\p\left[\bigcup_{n=0}^\infty\bigcap_{k=n_0}^\infty A_k^C\right]=\p\left[\bigcap_{n=0}^\infty\bigcup_{k=n}^\infty A_k\right]=\p\left[\limsup_{n\to\infty} A_n\right]=1.$$
}
\end{enumerate}
\end{proof}
\subsubsection{Application 1} Let $(\Omega,\A,\p)$ be a probability space. There does not exist a probability measure on $\N$ such that the probability of the set of multiples of an integer $n$ is $\frac{1}{n}$ for $n\geq 1$. Let us assume that such a probability measure exists. Let $\tilde{p}$ denote the set of prime numbers. For $p\in\tilde{p}$ we note that $A_p=p\N$, i.e. the set of all multiples of $p$. We first show that the sets $(A_p)_{p\in\tilde{p}}$ are independent. Indeed let $p_1,...,p_n\in\tilde{p}$ be distinct. Then we have
$$\p[p_1\N\cap\dotsm\cap p_n\N]=\p[p_1,...,p_n\N]=\frac{1}{p_1\dotsm p_n}=\p[p_1\N]\dotsm\p[p_n\N].$$
Moreover it is known that 
$$\sum_{p\in\tilde{p}}\p[p\N]=\sum_{p\in\tilde{p}}\frac{1}{p}=\infty.$$
The second part of the Borel-Cantelli lemma implies that all integers $n$ belong to infinitely many $A_p$'s. So it follows that $n$ is divisible by infinitely many distinct prime numbers.
\subsubsection{Application 2} Let $(\Omega,\A,\p)$ be a probability space. Let $X$ be an exponential r.v. with parameter $\lambda=1$. Thus we know that $X$ has density $e^{-x}\one_{\R_+}(x)$. Now consider a sequence $(X_n)_{n\geq 1}$ of independent r.v.'s with the same distribution as $X$, i.e. for all $n\geq 1$,we have $X_n\sim X$. Then $\limsup_n \frac{X_n}{\log(n)}=1$ a.s., i.e. there exists an $N\in\A$ such that $\p[N]=0$ and  for $\omega\not\in N$ we get 
$$\limsup_{n\to\infty} \frac{X_n(\omega)}{\log(n)}=1.$$
Therefore we can compute the probability
$$\p[X>t]=\int_t^\infty e^{-x}dx=e^{-t}.$$
Now let $\epsilon>0$ and consider the sets $A_n=\{X_n>(1+\epsilon)\log(n)\}$ and $B_n=\{X_n>\log(n)\}$. Then
$$\p[A_n]=\p[X_n>(1+\epsilon)\log(n)]=\p[X>(1+\epsilon)\log(n)]=e^{-(1+\epsilon)\log(n)}=\frac{1}{n^{1+\epsilon}}.$$
This implies that 
$$\sum_{n\geq 1}\p[A_n]<\infty.$$
With the Borel-Cantelli lemma we get that $\p\left[\limsup_{n\to\infty} A_n\right]=0$. Let us define 
$$N_{\epsilon}=\limsup_{n\to\infty} A_n.$$ Then we have $\p[N_\epsilon]=0$ for $\omega\not\in N_{\epsilon}$, which implies that there exists an $n_0(\omega)$ such that for all $n\geq n_0$ we have 
$$X_n(\omega)\leq (1+\epsilon)\log(n)$$ 
and thus for $\omega\not\in N_{\epsilon}$, we get $\limsup_{n\to\infty}\frac{X_n(\omega)}{\log(n)}\leq 1+\epsilon$. Moreover, let 
$$N'=\bigcup_{\epsilon\in \Q_+}N_{\epsilon}.$$ 
Therefore we get $\p[N']\leq \sum_{\epsilon\in\Q_+}\p[N_{\epsilon}]=0$ for $\omega\not\in N'$. Hence we get 
$$\limsup_{n\to\infty}\frac{X_n(\omega)}{\log(n)}\leq 1.$$
Now we note that the $B_n$'s are independent, since $B_n\in\sigma(X_n)$ and the fact that the $X_n$'s are independent. Moreover, 
$$\p[B_n]=\p[X_n>\log(n)]=\p[X>\log(n)]=\frac{1}{n},$$ 
which gives that
$$\sum_{n\geq 1}\p[B_n]=\infty.$$
Now we can use Borel-Cantelli to get 
$$\p\left[\limsup_{n\to\infty} B_n\right]=1.$$ 
If we denote $N''=\left(\limsup_{n\to\infty} B_n\right)^C$, then for $\omega\not\in N''$ we get that $X_n(\omega)>\log(n)$ for infinitely many $n$. So it follows that for $\omega\not\in N''$ we have 
$$\limsup_{n\to\infty}\frac{X_n(\omega)}{\log(n)}\geq 1.$$ 
Finally, take $N=N'\cup N''$ to obtain $\p[N]=0$. Thus for $\omega\not\in N$ we get
$$\limsup_{n\to\infty} \frac{X_n(\omega)}{\log(n)}=1.$$
\subsection{Sums of independent Random Variables}
Let us first define the convolution of two probability measures. If $\mu$ and $\nu$ are two probability measures on $\R^d$, we denote by $\mu*\nu$ the image of the measure $\mu\otimes\nu$ by the application 
$$\R^d\times\R^d\to\R^d,\hspace{1cm}(x,y)\mapsto x+y.$$
Moreover, for all measurable maps $\varphi:\R^d\to \R_+$, we have
$$\int_{\R^d}\varphi(z)(\mu*\nu)(dz)=\iint_{\R^d}\varphi(x+y)\mu(dx)\nu(dy).$$
\begin{prop}
Let $(\Omega,\A,\p)$ be a probability space. Let $X$ and $Y$ be two independent r.v.'s with values in $\R^d$. Then the following hold.
\begin{enumerate}[$(i)$]
\item{The law of $X+Y$ is given by $\p_X*\p_Y$. In particular if $X$ has density $f$ and $Y$ has density $g$, then $X+Y$ has density $f*g$, where $*$ denotes the convolution product, which is given by 
\[
f*g(\xi)=\int_{\R^d} f(x)g(\xi-x)dx.
\]
}
\item{
$\Phi_{X+Y}(\xi)=\Phi_X(\xi)\Phi_Y(\xi).$
}
\item{If $X$ and $Y$ are in $L^2(\Omega,\A,\p)$, we get
$$K_{X+Y}=K_X+K_Y.$$
In particular when $d=1$, we obtain 
$$Var(X+Y)=Var(X)+Var(Y).$$
}
\end{enumerate}
\end{prop}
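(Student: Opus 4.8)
The plan is to treat the three parts in order, each reducing to the single structural fact that independence of $X$ and $Y$ gives $\p_{(X,Y)}=\p_X\otimes\p_Y$ by Theorem \ref{thm7}, combined with Fubini's theorem. Throughout I would test against nonnegative measurable functions $\varphi$ and then specialize to indicators, exactly as suggested by the remark following Proposition \ref{random}.

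For $(i)$, I would observe that $X+Y=s(X,Y)$ with $s:\R^d\times\R^d\to\R^d$ the addition map $(x,y)\mapsto x+y$. Since $\p_{(X,Y)}=\p_X\otimes\p_Y$, the law of $X+Y$ is the image of $\p_X\otimes\p_Y$ under $s$, which is $\p_X*\p_Y$ by the very definition of convolution. To make this explicit, for measurable $\varphi:\R^d\to\R_+$ I would compute $\E[\varphi(X+Y)]=\iint\varphi(x+y)\,\p_X(dx)\,\p_Y(dy)=\int\varphi(z)\,(\p_X*\p_Y)(dz)$, using the integral formula for the convolution stated just before the proposition; taking $\varphi=\one_B$ identifies $\p_{X+Y}=\p_X*\p_Y$. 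For the density assertion I would substitute $\p_X(dx)=f(x)\,dx$ and $\p_Y(dy)=g(y)\,dy$ and change variables $z=x+y$ at fixed $x$, obtaining $\E[\varphi(X+Y)]=\int\varphi(z)\big(\int f(x)g(z-x)\,dx\big)\,dz$, which exhibits $f*g$ as the density.

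For $(ii)$, I would start from $\Phi_{X+Y}(\xi)=\E[e^{i\langle\xi,X+Y\rangle}]=\E[e^{i\langle\xi,X\rangle}e^{i\langle\xi,Y\rangle}]$ and factor the expectation by independence. The only delicate point is that the factorization $\E[\,g(X)h(Y)\,]=\E[g(X)]\E[h(Y)]$ was proved for nonnegative (or integrable) real-valued $g,h$, whereas here $g(x)=h(x)=e^{i\langle\xi,x\rangle}$ is complex. I would resolve this by writing each exponential as $\cos\langle\xi,\cdot\rangle+i\sin\langle\xi,\cdot\rangle$, so the product expands into four real terms, each bounded in modulus by $1$ and hence integrable; applying the real factorization to each and recombining yields $\Phi_{X+Y}(\xi)=\Phi_X(\xi)\Phi_Y(\xi)$.

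For $(iii)$, I would expand each entry $Cov\big((X+Y)_i,(X+Y)_j\big)$ by bilinearity of the covariance into $Cov(X_i,X_j)+Cov(X_i,Y_j)+Cov(Y_i,X_j)+Cov(Y_i,Y_j)$. Independence of $X$ and $Y$ makes each $X_i$ independent of each $Y_j$, so the corollary that independent $L^2$ variables have vanishing covariance annihilates the two cross terms, leaving $K_{X+Y}=K_X+K_Y$; specializing to $d=1$ gives $Var(X+Y)=Var(X)+Var(Y)$. None of the three steps is genuinely hard, and I expect the main point requiring attention to be the passage to complex integrands in $(ii)$ together with the justification of Fubini and the change of variables in $(i)$, both routine once the boundedness by $1$ and finiteness of the density integrals are noted.
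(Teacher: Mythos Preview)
Your proposal is correct and follows essentially the same route as the paper: the product form $\p_{(X,Y)}=\p_X\otimes\p_Y$ together with Fubini and a change of variables for $(i)$, direct factorization of the exponential for $(ii)$, and bilinearity of covariance plus vanishing of the cross terms for $(iii)$. Your treatment of $(ii)$ is in fact more careful than the paper's, which factors $\E[e^{i\langle\xi,X\rangle}e^{i\langle\xi,Y\rangle}]$ without commenting on the complex-valued integrand.
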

\begin{proof} We need to show all three points.
\begin{enumerate}[$(i)$] 
\item{If $X$ and $Y$ are independent r.v.'s, then $\p_{(X,Y)}=\p_X\otimes\p_Y$. Consequently, for all measurable maps $\varphi:\R^d\to\R_+$, we have 
\begin{multline*}
\E[\varphi(X+Y)]=\iint_{\R^d}\varphi(X+Y)\p_{(X,Y)}(dxdy)=\iint_{\R^d}\varphi(X+Y)\p_X(dx)\p_{Y}(dy)\\=\int_{\R^d}\varphi(\xi)(\p_X*\p_Y)(d\xi).
\end{multline*}
Now since $X$ and $Y$ have densities $f$ and $g$ respectively, we get
$$\E[\varphi(Z=X+Y)]=\iint_{\R^d}\varphi(X+Y)f(x)*g(y)dxdy=\iint_{\R^d}\varphi(\xi)\left(\int_{\R^d} f(x)g(\xi-x)dx\right)d\xi.$$
Since this identity here is true for all measurable maps $\varphi:\R^d\to\R_+$, the r.v. $Z:=X+Y$ has density 
$$h(\xi)=(f*g)(\xi)=\int_{\R^d}f(x)g(\xi-x)dx.$$
}
\item{By definition of the characteristic function and the independence property, we get
\[
\Phi_{X+Y}(\xi)=\E\left[e^{i\xi(X+Y)}\right]=\E\left[e^{i\xi X}e^{i\xi Y}\right]=\E\left[e^{i\xi X}\right]\E\left[e^{i\xi Y}\right]=\Phi_X(\xi)\Phi_Y(\xi).
\]
}
\item{If $X=(X_1,...,X_d)$ and $Y=(Y_1,...,Y_d)$ are independent r.v.'s on $\R^d$, we get that $Cov(X_i,Y_j)=0$, for all $0\leq  i,j\leq  d$. By using the multi linearity of the covariance we get that 
$$Cov(X_i+Y_i,X_j+Y_j)=Cov(X_i,X_j)+Cov(Y_j+Y_j),$$
and hence $K_{X+Y}=K_X+K_Y$. For $d=1$ we get 
\begin{align*}
Var(X+Y)&=\E[((X+Y)-\E[X+Y])^2]=\E[((X-\E[X])+(Y-\E[Y]))^2]\\
&=\underbrace{\E[(X-\E[X])^2]}_{Var(X)}+\underbrace{\E[(Y-\E[Y])^2]}_{Var(Y)}+\underbrace{2\E[(X-\E[X])(Y-\E[Y])]}_{2Cov(X,Y)}
\end{align*}
Now since $Cov(X,Y)=0$, we get the result.
}
\end{enumerate}
\end{proof}
\begin{thm}[Weak law of large numbers]
Let $(\Omega,\A,\p)$ be a probability space. Let $(X_n)_{n\geq 1}$ be a sequence of independent r.v.'s. Moreover, write $\mu=\E[X_n]$ for all $n\geq1$ and assume $\E[(X_n-\mu)^2]\leq  C$ for all $n\geq1$ and for some constant $C<\infty$. We also write $S_n=\sum_{j=1}^nX_j$ and $\tilde X_n=\frac{S_n}{n}$ for all $n\geq 1$. Then for all $\epsilon>0$
\[
\p[\vert \tilde X_n-\mu\vert >\epsilon]\xrightarrow{n\to\infty}0.
\]
Thus, we also have
\[
\E[S_n]=\frac{1}{n}\E\left[\sum_{j=1}^nX_j\right]=\frac{1}{n}n\E[X_j]=\E[X_j].
\]
\end{thm}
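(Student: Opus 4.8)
The plan is to reduce everything to the Tchebisheff inequality from Remark~\ref{random3} by controlling the mean and the variance of the empirical average $\tilde X_n$. The key observation is that centering and scaling interact well with the moments, so that $\tilde X_n$ concentrates around $\mu$ at a rate governed by $Var(\tilde X_n)$, which will turn out to be of order $1/n$.

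First I would compute $\E[\tilde X_n]$. By linearity of the expectation,
$$\E[\tilde X_n]=\frac{1}{n}\sum_{j=1}^n\E[X_j]=\frac{1}{n}\cdot n\mu=\mu,$$
so that $\tilde X_n-\mu=\tilde X_n-\E[\tilde X_n]$ and the event in question is exactly a deviation of $\tilde X_n$ from its own mean. This is precisely the situation to which Tchebisheff applies.

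Next I would estimate $Var(\tilde X_n)$. Since the $X_j$ are independent, the corollary on independent r.v.'s gives $Cov(X_i,X_j)=0$ for $i\neq j$, and hence by bilinearity of the covariance
$$Var(S_n)=\sum_{i,j=1}^n Cov(X_i,X_j)=\sum_{j=1}^n Var(X_j)\leq nC,$$
where I have used the hypothesis $\E[(X_j-\mu)^2]=Var(X_j)\leq C$. Because $Var(aX)=a^2Var(X)$, it follows that
$$Var(\tilde X_n)=\frac{1}{n^2}Var(S_n)\leq\frac{nC}{n^2}=\frac{C}{n}.$$

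Finally, I would apply the Tchebisheff inequality to the centered variable $\tilde X_n-\mu$: for every $\epsilon>0$,
$$\p[\vert\tilde X_n-\mu\vert>\epsilon]\leq\frac{1}{\epsilon^2}Var(\tilde X_n)\leq\frac{C}{n\epsilon^2}\xrightarrow{n\to\infty}0.$$
I do not expect a genuine obstacle here; the only step requiring care is the passage from the two-variable identity $Var(X+Y)=Var(X)+Var(Y)$ to the $n$-fold version, which is exactly where independence (through the vanishing of the pairwise covariances) enters. Everything else is linearity of the expectation and the quadratic scaling of the variance under $S_n\mapsto S_n/n$.
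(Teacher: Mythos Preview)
Your proof is correct and follows essentially the same approach as the paper: both compute $\E[(S_n-n\mu)^2]\leq nC$ via independence (vanishing cross terms) and then apply Markov/Tchebisheff to obtain the bound $\frac{C}{n\epsilon^2}$. The only cosmetic difference is that the paper works directly with $S_n-n\mu$ and phrases the final step as Markov's inequality applied to $(S_n-n\mu)^2$, whereas you rescale to $\tilde X_n$ first and invoke Tchebisheff by name.
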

\begin{proof}
We note that 
\[
\E[(S_n-n\mu)^2]=\sum_{j=1}^n\E[(X_j-\mu)^2]\leq  nC.
\]
Hence for $\epsilon>0$ we get by Markov's inequality
\[
\p[\vert \tilde X-\mu>\epsilon]=\p[(S_n-n\mu)^2>(n\epsilon)^2]\leq  \frac{\E[(S_n-n\mu)^2]}{n^2\epsilon^2}\leq  \frac{C}{n\epsilon^2}\xrightarrow{n\to\infty}0
\]
\end{proof}
%
%\begin{prop}
%
%Let everything be as in theorem 9.12. but with the stronger assumption $\E[X_j^p]<\infty$, we have 
%
%$$\lim_{n\to\infty\atop \text{a.s.}}\frac{1}{n}(X_1+...+X_n)=\E[X_i].$$
%
%This means $\exists N\in \A$, $\p[N]=0$, $\omega\not\in N\Rightarrow \frac{1}{n}(X_1(\omega)+...+X_n(\omega))\xrightarrow{n\to\infty}\E[X_i]$.
%
%\end{prop}
%
%\begin{proof}
%
%By considering $X_n-\E[X_n]$ instead of $X_n$, we can without loss of generality assume that $\E[X_i]=0$.
%
%$$\E\left[\left(\frac{1}{n}(X_1+...+X_n)\right)^n\right]=\frac{1}{n^n}\sum_{i_1,...,i_n\atop \in \{1,...,n\}}\E[X_{i_1}X_{i_2}X_{i_3}X_{i_4}]$$
%$$=\frac{1}{n^4}\left(n\E[X_1^4]+3n(n-1)\E[X_1^2]\underbrace{\E[X_2^2]}_{\E[X_1^2]}\right)$$
%
%If $X,Y,Z,T$ are independent then $\E[X,Y,Z,T]=\E[X]\E[Y]\E[Z]\E[T]$. Hence 
%
%
%$$\frac{1}{n^4}\left(n\E[X_1^4]+3n(n-1)\E[X_1^2]\underbrace{\E[X_2^2]}_{\E[X_1^2]}\right)\leq \frac{Cn^2}{n^n}=\frac{C}{n^2},$$
%
%where $C$ is some well chosen constant. So it follows that 
%
%$$ \sum_{n=1}^\infty\E\left[\left(\frac{1}{n}(X_1+...+X_n)\right)^4\right]<\infty\Rightarrow \E\left[\sum_{n=1}^\infty\left(\frac{1}{n}(X_1+...+X_n)\right)^4\right]<\infty$$
%$$\Rightarrow \sum_{n=1}^\infty \left(\frac{1}{n}(X_1+...+X_n)\right)^4<\infty,\hspace{0.2cm}\text{a.s.}\Rightarrow \lim_{n\to\infty\atop a.s.}\frac{1}{n}(X_1+...+X_n)=0$$
%
%\end{proof}
\begin{cor}
Let $(\Omega,\A,\p)$ be a probability space. Let $(A_n)_{n\geq 1}\in \A$ be a sequence of independent events with the same probabilities, i.e. $\p[A_n]=\p[A_m]$, for all $n,m\geq 1$. Then 
$$\lim_{n\to\infty}\frac{1}{n}\sum_{i=1}^\infty \one_{A_i}=\p[A_1]\hspace{0.5cm}a.s.$$
\end{cor}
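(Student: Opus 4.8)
The plan is to strengthen the convergence in probability given by the weak law of large numbers into almost sure convergence, by producing a deviation bound that is \emph{summable} in $n$ and then invoking the first Borel--Cantelli lemma. Set $X_i=\one_{A_i}$; these are independent (the events $A_i$ are independent, and $\sigma(\one_{A_i})=\sigma(A_i)$) and identically Bernoulli distributed with parameter $p:=\p[A_1]$. Writing $S_n=\sum_{i=1}^n X_i$ and $\tilde X_n=S_n/n$, the estimate coming from the weak law only gives $\p[|\tilde X_n-p|>\epsilon]\le C/(n\epsilon^2)$, which is \emph{not} summable; so I would instead control a fourth moment. Put $Y_i=X_i-p$, so that the $Y_i$ are independent, centered, and bounded by $|Y_i|\le 1$, and $S_n-np=\sum_{i=1}^n Y_i$.

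The technical heart is the bound $\E[(S_n-np)^4]\le 4n^2$. Expanding the fourth power gives $\E[(S_n-np)^4]=\sum_{i,j,k,l=1}^n\E[Y_iY_jY_kY_l]$, and since the $Y_i$ are independent and centered, the product-of-expectations formula from Theorem \ref{thm7} kills every term in which some index appears exactly once. Only the terms with $i=j=k=l$ and the terms splitting into two equal pairs survive, so
$$\E\left[(S_n-np)^4\right]=n\,\E[Y_1^4]+3n(n-1)\,\E[Y_1^2]^2\le n+3n(n-1)\le 4n^2,$$
using $\E[Y_1^4]\le\E[Y_1^2]\le 1$. The one place requiring genuine care is the combinatorial bookkeeping that produces the factor $3n(n-1)$ (each unordered pair of indices admits $\binom{4}{2}=6$ ordered arrangements into two pairs); this is what I expect to be the main obstacle.

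With this in hand, Markov's inequality applied to the nonnegative variable $(S_n-np)^4$ gives, for every $\epsilon>0$,
$$\p\left[|\tilde X_n-p|>\epsilon\right]=\p\left[(S_n-np)^4>(n\epsilon)^4\right]\le\frac{4n^2}{n^4\epsilon^4}=\frac{4}{\epsilon^4 n^2},$$
which is summable. The first part of the Borel--Cantelli lemma then yields $\p[\limsup_n\{|\tilde X_n-p|>\epsilon\}]=0$, i.e. almost surely $|\tilde X_n-p|\le\epsilon$ for all large $n$, whence $\limsup_n|\tilde X_n-p|\le\epsilon$ a.s. Applying this to the countable family $\epsilon=1/m$, $m\ge 1$, and taking the union of the resulting null sets (a countable union of null sets is null), I conclude that off a single null set $\limsup_n|\tilde X_n-p|=0$, that is $\tilde X_n\to\p[A_1]$ almost surely.
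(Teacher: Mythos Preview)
Your argument is correct: the fourth-moment computation is done properly (the $3n(n-1)$ count is right, since $\binom{n}{2}\cdot\binom{4}{2}=3n(n-1)$), the Markov step yields a summable bound $4/(\epsilon^4 n^2)$, and the passage from Borel--Cantelli to almost sure convergence via a countable intersection over $\epsilon=1/m$ is clean.

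Your route differs substantially from the paper's. The paper simply sets $X_j=\one_{A_j}$ and appeals to the weak law of large numbers proved just before the corollary. But the weak law as stated there only yields convergence in probability (indeed, the displayed line in the paper's proof is merely the trivial identity $\E[\tfrac{1}{n}\sum_j X_j]=\E[X_1]$), so the paper's argument does not actually establish the almost sure conclusion claimed in the corollary; a full justification would have to wait for the strong law proved later in the notes. Your fourth-moment plus Borel--Cantelli approach is self-contained and genuinely delivers almost sure convergence at this point in the development, at the modest cost of the combinatorial expansion. In short: your proof is not only different but strictly more complete than what the paper offers here.
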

\begin{proof}
Note that by the weak law of large numbers, we get for a sequence of independent r.v.'s $(X_n)_{n\geq 1}$ with the same expectation for all $n\geq 1$
\[
\lim_{n\to\infty}\E\left[\frac{1}{n}\sum_{j=1}^nX_j\right]=\E[X_1]
\]
and thus we can take $X_j=\one_{A_j}$, since we know that $\E[\one_A]=\p[A]$.
\end{proof}
\section{Finding the distribution of some Random Variables}
\subsection{The case of Sums of independent Random Variables}
Let $X$ be a Poisson distributed r.v. with parameter $\lambda>0$. We already know that for all $\xi\in \R$, we then have $\E\left[e^{i\xi X}\right]=\exp(-\lambda(1-e^{i\xi}))$. Let $X_1,...,X_n$ be $n$ independent r.v.'s for $n\in\N$ with $X_j$ being a Poisson distributed r.v. with parameter $\lambda_j>0$ for all $1\leq  j\leq  n$. Now let $S_n=\sum_{j=1}^nX_j$. We want to figure out what the law of $S_n$ is. We have 
\begin{align*}
\E\left[e^{i\xi S_n}\right]&=\E\left[ e^{i\xi(X_1+...+X_n)}\right]=\E\left[e^{i\xi X_1}\dotsm e^{i\xi X_n}\right]=\E\left[e^{i\xi X_1}\right]\dotsm\E\left[e^{i\xi X_n}\right]\\
&=\exp(-\lambda_1(1-e^{i\xi}))\dotsm \exp(-\lambda_n(1-e^{i\xi}))\\
&=\exp(-(\lambda_1+...+\lambda_n)(1-e^{i\xi}))\\
&=\exp(-Y(1-e^{i\xi})),
\end{align*}
with $Y=\lambda_1+...+\lambda_n$. Since the characteristic function uniquely characterizes the probability distributions, we can conclude that 
$$S_n\sim \Pi(Y=\lambda_1+...+\lambda_n).$$
Let now $X$ be a r.v. with $X\sim\mathcal{N}(m,\sigma^2)$. Then we know
$$\E\left[e^{i\xi X}\right]=e^{i\xi m}e^{-i\sigma^2\frac{\xi^2}{2}}.$$
Now let $X_1,...,X_n$ be $n$ independent r.v.'s for $n\in \N$, such that $X_j\sim\mathcal{N}(m_j,\sigma_j^2)$ for all $0\leq  j\leq  n$. Set again $S_n=X_1+...+X_n$. Therefore 
$$\E\left[e^{i\xi S_n}\right]=e^{im_1\xi}e^{-\sigma^2_1\frac{\xi^2}{2}}\dotsm e^{im_n\xi}e^{-\sigma_n^2\frac{\xi^2}{2}}=e^{i(m_1+...+m_n)\xi}e^{-(\sigma_1^2+....+\sigma_n^2)\frac{\xi^2}{2}},$$
which implies, because of the same argument as above, that
$$S_n\sim \mathcal{N}(m_1+...+m_n,\sigma_1^2+....+\sigma_n^2).$$
\subsection{Using change of variables}
Let $g:\R^n\to\R^n$ be a measurable function given as $g(x)=(g_1(x),...,g_n(x))$ for $x\in\R^n$. Then the jacobian of $g$ is given by 
$$J_g(x)=\left(\frac{\partial g_i(x)}{\partial x_j}\right)_{1\leq i,j\leq n}.$$
Recall that for $g:G\subset \R^n\to\R^n$, where $G$ is a open subset of $\R^n$, with $J_g$ injective such that $\det(J_g(x))\not=0$ for all $x\in G$, we have for every measurable and positive map $f:\R^n\to\R_+$, or for every integrable $f=\one_{G}$, that
$$\int_{g(G)}f(y)dy=\int_Gf(g(x))\vert \det(J_g(x))\vert dx,$$
where $g(G)=\{y\in\R^n\mid \exists x\in G; g(x)=y\}$. 
\begin{thm}
Let $(\Omega,\A,\p)$ be a probability space. Let $X=(X_1,...,X_n)$ be a r.v. on $\R^n$ for $n\in\N$, having a joint density $f$. Let $g:\R^n\to\R^n\in C^1(\R^n)$ be an injective measurable map, such that $\det(J_g(x))\not=0$ for all $x\in\R^n$. Then $Y=g(X)$ has the density 
$$f_Y(y)=\begin{cases}f_X(g^{-1}(y))\vert\det{(J_{g^{-1}}(y))\vert},& y\in g(x)\\ 0,&\text{otherwise}\end{cases}$$
\end{thm}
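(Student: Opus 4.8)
The plan is to determine the law of $Y=g(X)$ by computing $\E[\varphi(Y)]$ for an arbitrary measurable test function $\varphi:\R^n\to\R_+$ and then reading off the density, exactly in the spirit of the remark following Proposition \ref{random}: if one can show $\E[\varphi(Y)]=\int_{\R^n}\varphi(y)f_Y(y)\,dy$ for every such $\varphi$, then specializing to $\varphi=\one_B$ yields $\p_Y[B]=\int_B f_Y(y)\,dy$, which is precisely the statement that $f_Y$ is the density of $Y$.

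First I would apply Proposition \ref{random} to the random variable $X$ and the positive measurable map $\varphi\circ g$, using that $X$ has density $f_X$ (so $d\p_X(x)=f_X(x)\,dx$). This gives
$$\E[\varphi(Y)]=\E[(\varphi\circ g)(X)]=\int_{\R^n}\varphi(g(x))\,f_X(x)\,dx.$$
Next I would invoke the change of variables formula recalled just before the statement, applied on the open set $G=\R^n$. The hypotheses are exactly what is needed: $g$ is $C^1$, injective, and $\det J_g(x)\neq 0$ everywhere, so by the inverse function theorem $g$ is a diffeomorphism onto its image $g(\R^n)$, with $C^1$ inverse $g^{-1}$ satisfying $J_{g^{-1}}(g(x))=(J_g(x))^{-1}$. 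Setting $F(y):=\varphi(y)\,f_X(g^{-1}(y))\,\lvert\det J_{g^{-1}}(y)\rvert$ and using $\lvert\det J_{g^{-1}}(g(x))\rvert=1/\lvert\det J_g(x)\rvert$, one checks the identity $F(g(x))\,\lvert\det J_g(x)\rvert=\varphi(g(x))\,f_X(x)$. Reading the change of variables formula from right to left then produces
$$\int_{\R^n}\varphi(g(x))\,f_X(x)\,dx=\int_{g(\R^n)}\varphi(y)\,f_X(g^{-1}(y))\,\lvert\det J_{g^{-1}}(y)\rvert\,dy.$$
Extending the integrand by zero outside $g(\R^n)$ (harmless, since the complement contributes nothing) rewrites the right-hand side as $\int_{\R^n}\varphi(y)f_Y(y)\,dy$ with $f_Y$ as in the statement, and the identification of the law completes the argument.

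The step requiring the most care is the justification that $g$ is a genuine $C^1$-diffeomorphism onto its image and that $g(\R^n)$ is measurable (in fact open), so that the recalled change of variables formula truly applies and $f_Y$ is well defined almost everywhere; this rests on the inverse function theorem together with invariance of domain, which are granted by injectivity combined with the nonvanishing Jacobian. The accompanying relation $\lvert\det J_{g^{-1}}(g(x))\rvert=1/\lvert\det J_g(x)\rvert$ is the algebraic heart of the computation, while the remaining manipulations are a routine transport of the integral through Proposition \ref{random} and the change of variables formula.
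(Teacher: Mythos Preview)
Your proof is correct and follows essentially the same route as the paper's own argument: both reduce the claim to the change of variables formula recalled just before the statement. The only cosmetic difference is that the paper works directly with $\varphi=\one_B$ from the outset, computing $\p[Y\in B]=\p[X\in g^{-1}(B)]=\int_{g^{-1}(B)}f_X(x)\,dx$ and then applying the change of variables, whereas you first treat a general positive measurable test function $\varphi$ and specialize at the end; your version is slightly more careful about the inverse function theorem and the extension by zero outside $g(\R^n)$, but the substance is the same.
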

\begin{proof}
Let $B\in\B(\R^n)$ be a Borel set and $A=g^{-1}(B)$. Then we have 
$$\p[X\in A]=\int_Af_X(x)dx=\int_{g^{-1}(B)}f_X(x)dx=\int_Bf_X(g^{-1}(x))\vert \det(J_{g^{-1}}(x))\vert dx.$$
But we know $\p[Y\in B]=\p[X\in A]$, for all $B\in\B(\R^n)$ with  
$$\p[Y\in B]=\int_Bf_X(g^{-1}(x))\vert\det(J_{g^{-1}}(x))\vert dx.$$ 
It follows that $Y$ has density given by 
$$f_Y(x)=f_X(g^{-1}(x))\vert \det(J_{g^{-1}}(x))\vert.$$
\end{proof}
\begin{ex}We got the following examples:
\begin{enumerate}[$(i)$]
\item{Let $X$ and $Y$ be two independent $\mathcal{N}(0,1)$ distributed r.v.'s. We want to know what is the joint union distribution of $(U,V)=(X+Y,X-Y)$. Therefore, let $g:\R^2\to\R^2$ be given by $(x,y)\mapsto (x+y,x-y)$. The inverse $g^{-1}:\R^2\to\R^2$ is then given by $(u,v)\mapsto \left(\frac{u+v}{2},\frac{u-v}{2}\right)$. We have the following jacobian
$$J_{g^{-1}}=\begin{pmatrix}\frac{1}{2}&\frac{1}{2}\\ \frac{1}{2}&-\frac{1}{2}\end{pmatrix},\hspace{0.3cm}\det(J_{g^{-1}})=-\frac{1}{2}.$$
Moreover we get 
\begin{align*}
f_{(U,V)}(u,v)&=f_{(X,Y)}\left(\frac{u+v}{2},\frac{u-v}{2}\right)\det(J_{g^{-1}})=\left(\frac{1}{\sqrt{2\pi}}e^{-\frac{1}{2}\left(\frac{u+v}{2}\right)^2}\frac{1}{\sqrt{2\pi}}e^{-\frac{1}{2}\left(\frac{u-v}{2}\right)^2}\right)\left(\frac{1}{2}\right)\\
&=\frac{1}{\sqrt{4\pi}}e^{-\frac{u^2}{4}}\frac{1}{\sqrt{4\pi}}e^{-\frac{v^2}{4}}.
\end{align*}
Thus $U$ and $V$ are independent and $U\stackrel{law}{=}V\sim\mathcal{N}(0,2)$.
}
\item{Let $(X,Y)$ be a r.v. on $\R^2$ with joint density $f$. We want to find the density of $Z=XY$. In this case, consider $h:\R^2\to\R^2$, given by $(x,y)\mapsto xy$. We then define $g:\R^2\to\R^2$, given by $(x,y)\mapsto (xy,x)$. We write $S_0=\{(x,y)\mid x=0,y\in\R\}$ and $S_1=\R^2\setminus S_0$. Now $g$ is injective from $S_1$ to $\R\setminus\{0\}$ and $g^{-1}(u,v)=\left(v,\frac{u}{v}\right)$. The jacobian is thus given by 
$$J_{g^{-1}}(u,v)=\begin{pmatrix}0&\frac{1}{v}\\ 1&-\frac{u}{v^2}\end{pmatrix},\hspace{0.3cm}\det(J_{g^{-1}}(u,v))=-\frac{1}{v}.$$
Moreover we have
$$f_{(U,V)}(u,v)=f_{(X,Y)}\left(u,\frac{u}{v}\right)\frac{1}{\vert v\vert}\one_{V\not=0}.$$
Therefore, we get
$$f_U(u)=\int_{\R}f_{(X,Y)}(u,v)\frac{1}{\vert v\vert}dv.$$
}
\end{enumerate}
\end{ex}
\section{Convergence of Random Variables}
\subsection{Types of Convergences}
We have already seen the notion of a.s. convergence. There are different types of convergences for r.v.'s in probability theory. Let us recall the notion of a.s. convergence. 
\begin{defn}[Almost sure convergence]
Let $(\Omega,\A,\p)$ be a probability space. Let $(X_n)_{n\geq 1}$ be a sequence of r.v.'s and let $X$ be a r.v. with values in $\R^d$. Then
$$\lim_{n\to\infty\atop a.s.}X_n=X\Longleftrightarrow\p\left[\left\{\omega\in\Omega\mid \lim_{n\to\infty}X_n(\omega)=X(\omega)\right\}\right]=1.$$
\end{defn}
\begin{rem}
Another very important convergence type is the $L^p$-convergence as it is described in measure theory. Recall that convergence in $L^p$ for $p\in[1,\infty)$ in the probability language means 
$$\lim_{n\to\infty\atop L^p}X_n=X\Longleftrightarrow \lim_{n\to\infty}\E\left[\vert X_n-X\vert^p\right]=0.$$
\end{rem}
\begin{defn}[Convergence in probability]
Let $(\Omega,\A,\p)$ be a probability space. We say that the sequence $(X_n)_{n\geq 1}$ converges in probability to $X$ if for all $\epsilon>0$ 
$$\lim_{n\to\infty\atop \p}X_n=X\Longleftrightarrow\lim_{n\to\infty}\p[\vert X_n-X\vert>\epsilon]=0.$$
\end{defn}
\begin{prop}
Let $(\Omega,\A,\p)$ be a probability space. Let $\mathcal{L}^0_{\R^d}(\Omega,\A,\p)$ be the space of r.v.'s with values in $\R^d$ and let $L^0_{\R^d}(\Omega,\A,\p)$ be the quotient of $\mathcal{L}^0_{\R^d}(\Omega,\A,\p)$ by the equivalence relation $X\sim Y:\Longleftrightarrow X=Y$ a.s. Then the map
\begin{align*}
d:L^0_{\R^d}(\Omega,\A,\p)\times L^0_{\R^d}(\Omega,\A,\p)&\longrightarrow \R_+\\ 
(X,Y)&\longmapsto d(X,Y)=\E[\vert X-Y\vert\land 1]
\end{align*}
defines a distance (metric) in $L^0_{\R^d}(\Omega,\A,\p)$, which is compatible with convergence in probability, i.e.
$$\lim_{n\to\infty\atop \p}X_n=X\Longleftrightarrow \lim_{n\to\infty}d(X_n,X)=0.$$
Moreover $L^0_{\R^d}(\Omega,\A,\p)$ is complete for the metric $d$.
\end{prop}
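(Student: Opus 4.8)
The plan is to establish the three claims in turn: that $d$ is genuinely a metric, that $d$-convergence coincides with convergence in probability, and that the resulting metric space is complete. Throughout I would abbreviate $Z_n=|X_n-X|$ and write $\phi(t)=t\wedge 1$ for $t\geq 0$, and I would repeatedly use the elementary facts that $\phi$ is nondecreasing and subadditive on $[0,\infty)$, i.e. $\phi(a+b)\leq \phi(a)+\phi(b)$ (which one checks by splitting on whether $a+b\leq 1$).

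First I would verify the metric axioms. Symmetry is immediate from $|X-Y|=|Y-X|$, and $d\geq 0$ since $\phi\geq 0$. For separation, if $d(X,Y)=0$ then the nonnegative r.v. $|X-Y|\wedge 1$ has zero expectation, hence vanishes a.s., so $X=Y$ a.s., which is exactly equality in the quotient $L^0_{\R^d}(\Omega,\A,\p)$. The triangle inequality follows by combining the pointwise bound $|X-Z|\leq |X-Y|+|Y-Z|$ with monotonicity and subadditivity of $\phi$: this gives $\phi(|X-Z|)\leq \phi(|X-Y|)+\phi(|Y-Z|)$ pointwise, and taking expectations yields $d(X,Z)\leq d(X,Y)+d(Y,Z)$.

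Next I would prove compatibility with convergence in probability. For the forward direction, assume $d(X_n,X)\to 0$ and fix $\epsilon\in(0,1)$; then $\{Z_n>\epsilon\}=\{Z_n\wedge 1>\epsilon\}$, so Markov's inequality applied to the nonnegative r.v. $Z_n\wedge 1$ gives $\p[Z_n>\epsilon]\leq \epsilon^{-1}\E[Z_n\wedge 1]\to 0$, and monotonicity in $\epsilon$ settles the case $\epsilon\geq 1$. For the converse, assume $X_n\to X$ in probability and fix $\epsilon\in(0,1)$; splitting the expectation over $\{Z_n\leq \epsilon\}$ and its complement, and bounding $Z_n\wedge 1$ by $\epsilon$ on the first set and by $1$ on the second, gives $d(X_n,X)=\E[Z_n\wedge 1]\leq \epsilon+\p[Z_n>\epsilon]$. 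Taking $\limsup_n$ yields $\limsup_n d(X_n,X)\leq \epsilon$, and letting $\epsilon\downarrow 0$ concludes.

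The main work, and the step I expect to be the real obstacle, is completeness, since $d$-convergence gives no direct pointwise control. Let $(X_n)$ be $d$-Cauchy; I would extract a subsequence $(X_{n_k})$ with $d(X_{n_{k+1}},X_{n_k})<2^{-k}$. The crucial move is to upgrade this to a.s. convergence along the subsequence: by the Markov estimate above, $\p\!\left[|X_{n_{k+1}}-X_{n_k}|>2^{-k/2}\right]\leq 2^{-k/2}$, which is summable, so the first Borel--Cantelli lemma shows that a.s. only finitely many of these events occur. Hence for a.e. $\omega$ the tail increments are dominated by the summable sequence $2^{-k/2}$, making $(X_{n_k}(\omega))_k$ Cauchy in $\R^d$; I would define $X$ as this pointwise limit (and $0$ on the exceptional null set), which is measurable as an a.s. limit of r.v.'s. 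Then $|X_{n_k}-X|\wedge 1\to 0$ a.s. and is bounded by the integrable constant $1$, so dominated convergence gives $d(X_{n_k},X)\to 0$. Finally, the standard metric-space principle that a Cauchy sequence possessing a convergent subsequence converges to the same limit gives $d(X_n,X)\to 0$, establishing completeness.
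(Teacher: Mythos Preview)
Your proof is correct and follows the same overall architecture as the paper: the metric axioms (which the paper dismisses as ``easy to see''), the two-way equivalence via the splitting bound $d(X_n,X)\leq\epsilon+\p[|X_n-X|>\epsilon]$ and the Markov-type estimate $\p[|X_n-X|>\epsilon]\leq\epsilon^{-1}d(X_n,X)$, and completeness via a fast subsequence with $d(X_{n_{k+1}},X_{n_k})<2^{-k}$.

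The one noteworthy technical difference is in how you upgrade the fast subsequence to a.s.\ convergence. You pass through Borel--Cantelli with the auxiliary threshold $2^{-k/2}$, whereas the paper argues more directly: from $\sum_k d(Y_k,Y_{k+1})<\infty$ it gets $\E\bigl[\sum_k(|Y_k-Y_{k+1}|\wedge 1)\bigr]<\infty$ by monotone convergence, hence $\sum_k(|Y_k-Y_{k+1}|\wedge 1)<\infty$ a.s., and since a.s.\ only finitely many increments can exceed $1$ this forces $\sum_k|Y_k-Y_{k+1}|<\infty$ a.s.\ as well. The paper's route avoids introducing a second scale and the Borel--Cantelli lemma; your route is perhaps more transparent about why the limit exists pointwise. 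Both are standard and neither is materially simpler.
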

\begin{proof}
It's easy to see that $d$ defines a distance. If $\lim_{n\to\infty\atop \p}X_n=X$, then for all $ \epsilon>0$ we get
$$\lim_{n\to\infty}\p[\vert X_n-X\vert >\epsilon]=0.$$
Fix $\epsilon >0$. Then 
$$\E[\vert X_n-X\vert \land 1]=\E[(\vert X_n-X\vert\land 1)\one_{\vert X_n-X\vert \leq \epsilon}]+\E[(\vert X_n-X\vert\land 1)\one_{\vert X_n-X\vert >\epsilon}]\leq \epsilon+\E[\one_{\vert X_n-X\vert>\epsilon}]\xrightarrow{n\to\infty} 0,$$
for $\epsilon$ arbitrary small. Conversely assume that $\lim_{n\to\infty}d(X_n,X)=0$. Then for all $\epsilon\in (0,1)$ we have
$$\p[\vert X_n-X\vert>\epsilon]\leq \frac{1}{\epsilon}\E[\vert X_n-X\vert\land 1]=\frac{1}{\epsilon}d(X_n,X).$$
Now we show completeness. Let $(X_k)_{k\geq 0}$ be a Cauchy sequence for $d$. Then there exists a subsequence $Y_k=X_{n_k}$, such that $d(Y_k,Y_{k+1})\leq \frac{1}{2^k}$. It follows that 
\[
\E\left[\sum_{k=1}^\infty(\vert Y_k-Y_{k+1}\vert\land1)\right]=\sum_{k=1}^\infty d(Y_k,Y_{k+1})<\infty,
\]
which implies that $\sum_{k=1}^\infty (\vert Y_k-Y_{k+1}\vert\land 1)<\infty$ and hence $\sum_{k=1}^\infty\vert Y_k-Y_{k+1}\vert<\infty$.
The r.v. $X=Y_1+\sum_{k=1}^\infty Y_{k+1}-Y_k=X_{n_1}+\sum_{k=1}^\infty X_{n_k+1}-X_{n_k}$ is well defined and $(X_n)_{n\geq 1}$ converges to $X$.
\end{proof}
\begin{prop}
Let $(\Omega,\A,\p)$ be a probability space. If $(X_n)_{n\geq 1}$ converges a.s. or in $L^p$ to $X$, it also converges in probability to $X$. Conversely, if $(X_n)_{n\geq 1}$ converges to $X$ in probability, then there exists a subsequence $(X_{n_k})_{k\geq 1}$ of $(X_n)_{n\geq 1}$ such that 
$$\lim_{n\to\infty\atop a.s.}X_n=X.$$
\end{prop}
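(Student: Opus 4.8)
The plan is to treat the three implications separately, each of which reduces to a tool already established earlier in the chapter.

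For the forward direction, suppose first that $X_n\to X$ a.s. I would exploit the metric $d(X,Y)=\E[\vert X-Y\vert\land 1]$ from the preceding proposition, which is compatible with convergence in probability. Since $\vert X_n-X\vert\land 1\to 0$ a.s. and is dominated by the constant $1$ (which is integrable, as $\E[1]=1$), the dominated convergence theorem gives $d(X_n,X)=\E[\vert X_n-X\vert\land 1]\to 0$, and by that proposition this is exactly convergence in probability. If instead $X_n\to X$ in $L^p$, the claim is immediate from the Markov inequality of Remark \ref{random3}: for a fixed $\epsilon>0$,
$$\p[\vert X_n-X\vert>\epsilon]=\p[\vert X_n-X\vert^p>\epsilon^p]\leq \frac{1}{\epsilon^p}\E[\vert X_n-X\vert^p]\xrightarrow{n\to\infty}0.$$

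For the converse, the key idea is to extract a fast-converging subsequence and apply Borel--Cantelli. Using the definition of convergence in probability, for each $k\geq 1$ I would choose an index $n_k$ — strictly increasing, which I can always arrange by taking $n_k>n_{k-1}$ — such that $\p[\vert X_{n_k}-X\vert>2^{-k}]\leq 2^{-k}$. Setting $A_k=\{\vert X_{n_k}-X\vert>2^{-k}\}$, we then have $\sum_{k\geq 1}\p[A_k]\leq \sum_{k\geq 1}2^{-k}<\infty$, so the first part of the Borel--Cantelli lemma yields $\p[\limsup_k A_k]=0$.

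Finally I would translate this into a.s. convergence of the subsequence: for every $\omega\notin\limsup_k A_k$ (a set of full measure) there is some $k_0(\omega)$ with $\omega\notin A_k$ for all $k\geq k_0$, i.e. $\vert X_{n_k}(\omega)-X(\omega)\vert\leq 2^{-k}$ for all large $k$, whence $X_{n_k}(\omega)\to X(\omega)$. Thus $X_{n_k}\to X$ a.s. I expect no serious obstacle here; the only point requiring a little care is ensuring the chosen indices $n_k$ are strictly increasing, so that $(X_{n_k})_{k\geq 1}$ is genuinely a subsequence of $(X_n)_{n\geq 1}$.
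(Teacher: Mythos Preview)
Your proof is correct. For the forward direction you follow the paper almost exactly in the a.s.\ case (dominated convergence applied to $\vert X_n-X\vert\land 1$); for the $L^p$ case you use Markov's inequality directly, while the paper instead bounds the metric $d(X_n,X)=\E[\vert X_n-X\vert\land 1]\leq \|X_n-X\|_1\leq \|X_n-X\|_p$ and then invokes the preceding proposition. Both routes are equally short.

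For the converse, the paper's proof block actually omits the argument entirely, presumably leaving the reader to adapt the completeness proof of the preceding proposition (choose $n_k$ with $d(X_{n_k},X)\leq 2^{-k}$, so $\sum_k\E[\vert X_{n_k}-X\vert\land 1]<\infty$, hence $\vert X_{n_k}-X\vert\land 1\to 0$ a.s.). Your Borel--Cantelli extraction is a genuinely different but equally standard route: you control $\p[\vert X_{n_k}-X\vert>2^{-k}]$ directly rather than the expectations $d(X_{n_k},X)$. Your version has the advantage of being self-contained and not relying on the metric $d$ at all; the paper's implicit approach has the advantage of reusing machinery already built. Either is fine.
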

\begin{proof}
Consider $d(X_n,X)$. We need to prove that $\lim_{n\to\infty\atop a.s.}X_n=X$ or $\lim_{n\to\infty\atop L^p}X_n=X$, which implies that $\lim_{n\to\infty}d(X_n,X)=0$. If $\lim_{n\to\infty\atop a.s.}X_n=X$, then we apply Lebesgue's dominated convergence theorem (we can do this, because $\vert X_n-X\vert\land 1\leq 1$ and $\E[1]<\infty$) to obtain that $\lim_{n\to\infty}\E[\vert X_n-X\vert\land 1]=\E[\lim_{n\to\infty}(\vert X_n-X\vert\land 1)]=0$. If $\lim_{n\to\infty\atop L^p}X_n=X$, we can use the fact that for all $p\geq 1$,
$$\E[\vert X_n-X\vert\land 1]\leq \underbrace{\E[\vert X_n-X\vert]}_{\| X_n-X\|_1}\leq \|X_n-X\|_p\xrightarrow{n\to\infty}0.$$
\end{proof}
\begin{prop}
Let $(\Omega,\A,\p)$ be a probability space. Let $(X_n)_{n\geq 1}$ be a sequence of r.v.'s and let $\lim_{n\to\infty\atop\p}X_n=X$. Assume there is some $r<1$, such that $(X_n)_{n\geq 1}$ is bounded in $L^r$, i.e.
$$\sup_{n}\E[\vert X_n\vert^r]<\infty.$$
Then for every $p\in[1,\infty)$, we get that  $\lim_{n\to\infty\atop L^p}X_n=X$.
\end{prop}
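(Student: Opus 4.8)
It is worth first testing the statement on an example, because as written the hypothesis is too weak to support the conclusion. Boundedness in $L^r$ for an exponent $r<1$ does not even force integrability, so it cannot yield $L^p$-convergence for \emph{every} $p\geq 1$. Indeed, on $\Omega=[0,1]$ with Lebesgue measure, set $X_n=n\,\one_{[0,1/n]}$ and $X=0$. Then $\E[\vert X_n\vert^r]=n^r\cdot\tfrac1n=n^{r-1}\to 0$ whenever $r<1$, so the family is bounded (even convergent to $0$) in $L^r$, while $\p[\vert X_n\vert>\epsilon]\leq \tfrac1n\to 0$ shows $X_n\to 0$ in probability. Nevertheless $\E[\vert X_n\vert]=1$ for all $n$, so there is no $L^1$-convergence, and \emph{a fortiori} none in $L^p$ for $p\geq1$. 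Thus the exponent inequality must run the other way: one must dominate by a moment \emph{higher} than the target, so the hypothesis should read $r>1$ and the conclusion should be $L^p$-convergence for every $p\in[1,r)$.

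I will therefore prove the corrected statement: if $X_n\to X$ in probability and $\sup_n\E[\vert X_n\vert^r]=M<\infty$ for some $r>1$, then $X_n\to X$ in $L^p$ for every $p\in[1,r)$. The plan is to deduce uniform integrability of the powers $\vert X_n-X\vert^p$ and then invoke the Vitali-type fact that uniform integrability together with convergence in probability yields $L^1$-convergence. First I would control the limit: by the previous proposition, convergence in probability provides a subsequence $X_{n_k}\to X$ a.s., and Fatou's lemma gives $\E[\vert X\vert^r]\leq \liminf_k\E[\vert X_{n_k}\vert^r]\leq M$, so $X\in L^r$. The triangle inequality in $L^r$ (valid since $r>1$) then shows $\sup_n\E[\vert X_n-X\vert^r]\leq (2M^{1/r})^r<\infty$.

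Next I would fix $p\in[1,r)$ and set $q=r/p>1$, so that $\vert X_n-X\vert^p$ is bounded in $L^q$. The standard moment estimate for uniform integrability then applies: on $\{\vert X_n-X\vert^p>a\}$ one has $\vert X_n-X\vert^p\leq a^{1-q}\vert X_n-X\vert^{pq}$, whence $\E[\vert X_n-X\vert^p\,\one_{\{\vert X_n-X\vert^p>a\}}]\leq a^{1-q}\,\sup_n\E[\vert X_n-X\vert^{r}]\xrightarrow{a\to\infty}0$ uniformly in $n$, which is exactly uniform integrability of $\{\vert X_n-X\vert^p\}_{n\geq1}$. Since $X_n\to X$ in probability and $t\mapsto t^p$ is continuous and vanishes at $0$, one gets $\vert X_n-X\vert^p\to0$ in probability as well; combining this with uniform integrability gives $\E[\vert X_n-X\vert^p]\to0$, i.e.\ the desired $L^p$-convergence.

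The genuine obstacle here is not in the analysis but in the statement itself: as worded it is false, and the counterexample pins down that the role of the moment bound is to dominate from \emph{above}, forcing $p<r$ with $r>1$. Once the hypotheses are corrected the argument is essentially routine, the only substantive computation being the one-line Markov-type bound that converts boundedness in $L^q$ (with $q>1$) into uniform integrability; the Vitali convergence theorem then closes the proof.
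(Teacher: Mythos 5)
Your diagnosis of the statement is right, and the paper tacitly agrees with you: although the proposition is printed with ``$r<1$'' and ``every $p\in[1,\infty)$'', the paper's own proof begins ``we apply H\"older's inequality to obtain for $p\in[1,r)$'', which only makes sense for $r>1$ --- so the hypothesis is a typo and you have proved exactly the statement the author actually establishes, with your $X_n=n\one_{[0,1/n]}$ counterexample pinning down why the printed version fails. Your route, however, is genuinely different from the paper's. Both proofs start the same way (Fatou along an a.s.\ convergent subsequence gives $X\in L^r$, and then $\sup_n\E[\vert X_n-X\vert^r]<\infty$), but from there the paper argues directly: it splits $\E[\vert X_n-X\vert^p]$ over $\{\vert X_n-X\vert\leq\epsilon\}$ and $\{\vert X_n-X\vert>\epsilon\}$ and applies H\"older with exponents $r/p$ and its conjugate to the second piece, obtaining
\[
\E[\vert X_n-X\vert^p]\leq \epsilon^p+\E[\vert X_n-X\vert^r]^{p/r}\,\p[\vert X_n-X\vert>\epsilon]^{1-p/r},
\]
which tends to $\epsilon^p$ as $n\to\infty$; nothing beyond H\"older is needed. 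You instead convert the $L^r$-bound into uniform integrability of $\{\vert X_n-X\vert^p\}$ via the one-line Markov-type estimate and then invoke the Vitali-type theorem (uniform integrability plus convergence in probability implies $L^1$-convergence). That is correct, but note that in these notes the Vitali theorem is only proved much later, in the martingale chapter on uniform integrability, so within the notes' logical ordering the paper's direct split is the more economical and self-contained choice; what your approach buys in exchange is the isolation of the reusable principle (boundedness in a strictly higher moment forces uniform integrability) that the notes themselves rely on later, and arguably a cleaner conceptual picture. As a small bonus, your Fatou step is stated more carefully than the paper's, which writes $\E[\vert X\vert]\leq C$ where $\E[\vert X\vert^r]\leq C$ is meant, and your final bound avoids the paper's sloppy replacement of $\p[\cdot]^{1-p/r}$ by $\p[\cdot]$, which as written goes in the wrong direction (harmlessly, since both tend to $0$).
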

\begin{proof}
The fact that $(X_n)_{n\geq 1}$ is bounded in $L^r$ implies that there is some $C>0$, such that for all $n\geq 1$
$$\E[\vert X_n\vert^r]\leq C.$$
With Fatou's lemma we get 
$$\E[\vert X\vert]\leq C.$$
So it follows that $X\in L^r$. Now we apply Hölder's inequality to obtain for $p\in[1,r),$
\begin{align*}
\E[\vert X_n-X\vert^p]&=\E[\vert X_n-X\vert^p\one_{\{\vert X_n-X\vert \leq \epsilon\}}]+\E[\vert X_n-X\vert^p\one_{\{\vert X_n-X\vert>\epsilon\}}]\\
&\leq \epsilon^p+\E[\vert X_n-X\vert^r]^{\frac{p}{r}}\p[\vert X_n-X\vert>\epsilon]^{1-\frac{p}{r}}\\
&\leq \epsilon^p+2^pC^{\frac{p}{r}}\p[\vert X_n-X\vert>\epsilon]\xrightarrow{n\to\infty}0.
\end{align*}
\end{proof}
\subsection{The strong law of large numbers}
\begin{thm}[Kolmogorov's 0-1 law]
Let $(\Omega,\A,\p)$ be a probability space. Let $(X_n)_{n\geq 1}$ be a sequence of independent r.v.'s with values in arbitrary measure spaces. For $n\geq 1$, define the $\sigma$-Algebra
$$\B_n:=\sigma(X_k\mid k\geq n).$$
The tail $\sigma$-Algebra $\B_\infty$ is defined as 
$$\B_\infty:=\bigcap_{n=1}^\infty\B_n.$$
Then $\B_\infty$ is trivial in the sense that for all $B\in \B_\infty$ we get that $\p[B]\in\{0,1\}$.
\end{thm}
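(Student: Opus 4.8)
The plan is to exploit the self-referential nature of the tail $\sigma$-algebra: I will show that $\B_\infty$ is independent of \emph{itself}, which forces $\p[B]=\p[B]^2$ for every $B\in\B_\infty$ and hence $\p[B]\in\{0,1\}$.

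First I would record that, for each fixed $p\geq 1$, the tail $\sigma$-algebra $\B_\infty$ is independent of $\sigma(X_1,\ldots,X_p)$. Indeed, by the preceding proposition the $\sigma$-algebras $\sigma(X_1,\ldots,X_p)$ and $\sigma(X_{p+1},X_{p+2},\ldots)=\B_{p+1}$ are independent; and since $\B_\infty=\bigcap_{n\geq 1}\B_n\subseteq\B_{p+1}$, any pair consisting of one event from $\B_\infty$ and one from $\sigma(X_1,\ldots,X_p)$ still factorizes. Thus for all $C_1\in\B_\infty$ and $C_2\in\sigma(X_1,\ldots,X_p)$ we have $\p[C_1\cap C_2]=\p[C_1]\p[C_2]$.

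The key step is to upgrade this to independence of $\B_\infty$ and the full $\sigma$-algebra $\sigma(X_1,X_2,\ldots)$. Set $\mathcal{C}:=\bigcup_{p\geq 1}\sigma(X_1,\ldots,X_p)$. Because the $\sigma(X_1,\ldots,X_p)$ increase with $p$, the class $\mathcal{C}$ is stable under finite intersection, and clearly $\sigma(\mathcal{C})=\sigma(X_1,X_2,\ldots)$. By the previous paragraph every $C_1\in\B_\infty$ and every $C_2\in\mathcal{C}$ satisfy $\p[C_1\cap C_2]=\p[C_1]\p[C_2]$. Applying the proposition that independence of sub-$\sigma$-algebras may be checked on generating classes stable under finite intersection, with the two such classes $\mathcal{C}_1=\B_\infty$ (already a $\sigma$-algebra, so $\sigma(\mathcal{C}_1)=\B_\infty$) and $\mathcal{C}_2=\mathcal{C}$, I conclude that $\B_\infty$ and $\sigma(X_1,X_2,\ldots)$ are independent. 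This monotone-class passage from the finite pasts to the whole sequence is where the real work sits; everything else is bookkeeping.

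Finally, since $\B_\infty=\bigcap_{n\geq 1}\B_n\subseteq\B_1=\sigma(X_1,X_2,\ldots)$, the tail $\sigma$-algebra is a sub-$\sigma$-algebra of something it is independent of, hence it is independent of itself. Concretely, for $B\in\B_\infty$ I take $C_1=C_2=B$ in the factorization to obtain $\p[B]=\p[B\cap B]=\p[B]\p[B]$, so $\p[B]^2=\p[B]$ and therefore $\p[B]\in\{0,1\}$.
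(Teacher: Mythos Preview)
Your proof is correct and follows essentially the same approach as the paper: both first show that $\B_\infty$ is independent of each finite past $\sigma(X_1,\ldots,X_p)$ via the inclusion $\B_\infty\subset\B_{p+1}$, then pass to $\sigma(X_1,X_2,\ldots)$ by the monotone-class/$\pi$-system argument on $\bigcup_{p\geq 1}\sigma(X_1,\ldots,X_p)$, and finally conclude that $\B_\infty$ is independent of itself, forcing $\p[B]=\p[B]^2$.
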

\begin{rem}
We can easily see that a r.v. which is $\B_\infty$-measurable is constant a.s. indeed its distribution function can only take the values 0 and 1.
\end{rem}
\begin{proof}
Define $\mathcal{D}_n:=\sigma(X_k\mid k\leq n)$. We have already observed that $\mathcal{D}_n$ and $\B_{n+1}$ are independent and hence since $\B_\infty\subset \B_{n+1}$, we get that for all $n\geq 1$, $\mathcal{D}_{n}$ and $\B_{\infty}$ are also independent. This implies that for all $A\in\bigcup_{n=1}^\infty\mathcal{D}_n$ and for all $B\in\B_\infty$ we get
$$\p[A\cap B]=\p[A]\p[B].$$
Since $\bigcup_{n=1}^\infty \mathcal{D}_n$ is stable under finite intersection, we obtain that $\sigma\left(\bigcup_{n=1}^\infty\mathcal{D}_n\right)$ is independent of $\B_\infty$ and 
$$\sigma\left(\bigcup_{n=1}^\infty\mathcal{D}_n\right)=\sigma\left(X_1,X_2,...\right).$$
We also note that the fact that $\B_\infty\subset \sigma(X_1,X_2,...)$ implies that $\B_\infty$ is independent of itself. Thus it follows that for all $B\in\B_{\infty}$, we get $\p[B]=\p[B\cap B]=\p[B]\p[B]=\p[B]^2$. Hence $\p[B]=\p[B]^2$ and therefore $\p[B]\in\{0,1\}$.
\end{proof}
\begin{rem}
If $(X_n)_{n\geq 1}$ is a sequence of independent r.v.'s, then $\limsup_{n}\frac{X_1+...+X_n}{n}$ $(\in[-\infty,\infty])$ is $\B_\infty$ measurable. It follows that $\frac{1}{n}(X_1+...+X_n)$ converges a.s. Moreover, its limit is a.s. constant.
\end{rem}
\begin{prop}
Let $(\Omega,\A,\p)$ be a probability space. Let $(X_n)_{n\geq 1}$ be a sequence of independent r.v.'s with the same distribution 
$$\p[X_n=1]=\p[X_n=-1]=\frac{1}{2}$$
for all $n\geq 1$and set $S_n=\sum_{j=1}^nX_j$. Then
$$\begin{cases}\sup_{n\geq 1}S_n=\infty&\text{a.s.}\\ \inf_{n\geq 1}S_n=-\infty&\text{a.s.}\end{cases}$$
\end{prop}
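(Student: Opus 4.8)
The plan is to treat the statement about $\sup_{n\ge1} S_n$ first and derive the one about $\inf_{n\ge1} S_n$ by symmetry. Since $(-X_n)_{n\ge1}$ has the same joint law as $(X_n)_{n\ge1}$, the walk $(-S_n)$ has the same law as $(S_n)$, and from $\inf_n S_n = -\sup_n(-S_n)$ we get $\p[\inf_n S_n = -\infty] = \p[\sup_n S_n = +\infty]$. It therefore suffices to prove $\p[\sup_{n\ge1} S_n = +\infty]=1$.

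First I would check that the event $A=\{\sup_{n\ge0}S_n=\infty\}$ (with $S_0=0$, so that $A$ coincides with $\{\sup_{n\ge1}S_n=\infty\}$) lies in the tail $\sigma$-algebra $\B_\infty=\bigcap_k\B_k$ where $\B_k=\sigma(X_j:j\ge k)$. The point is that for each fixed $k$ the finite sum $S_k$ does not affect whether a supremum is infinite, so
$$\{\sup_{n\ge0}S_n=\infty\}=\Big\{\sup_{n\ge k}(S_n-S_k)=\infty\Big\},$$
and the right-hand event is $\B_{k+1}$-measurable since $S_n-S_k=X_{k+1}+\dots+X_n$ for $n\ge k$. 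Hence $A\in\B_{k+1}$ for every $k$, so $A\in\B_\infty$, and Kolmogorov's $0$--$1$ law gives $\p[A]\in\{0,1\}$.

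The main work is then to exclude $\p[A]=0$, and the key step is a distributional self-similarity of the walk. Writing $M=\sup_{n\ge0}S_n\in[0,\infty]$ and $M'=\sup_{m\ge0}(X_2+\dots+X_{m+1})$, the sequence $(X_2,X_3,\dots)$ is an independent copy of $(X_1,X_2,\dots)$, so $M'$ is independent of $X_1$ and $M'\stackrel{law}{=}M$; splitting off the first step yields
$$M=\max\big(0,\,X_1+M'\big).$$
Setting $G(t)=\p[M\le t]$ for integers $t\ge-1$, using $G(-1)=0$ together with $\p[X_1=\pm1]=\tfrac12$ and independence, I obtain for every integer $t\ge0$
$$G(t)=\tfrac12 G(t-1)+\tfrac12 G(t+1).$$
This forces the increments $G(t)-G(t-1)$ to be constant in $t\ge0$, whence $G(t)=(t+1)G(0)$; since $0\le G(t)\le1$ for all $t$, necessarily $G(0)=0$ and so $G\equiv0$. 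Thus $\p[M\le t]=0$ for every $t$, i.e. $\p[A]=\p[M=\infty]=1$. Combined with the symmetry reduction this proves $\sup_{n\ge1}S_n=\infty$ and $\inf_{n\ge1}S_n=-\infty$ almost surely.

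The step I expect to be the main obstacle is justifying the identity $M=\max(0,X_1+M')$ together with the independence and equality-in-law of $M'$ and $M$; once that recursion is in place the conclusion is immediate, and in fact it already yields $\p[A]=1$ directly, so the $0$--$1$ law only serves to frame the dichotomy conceptually. An alternative exclusion of the $0$ case is a first-passage argument: the probability $\phi$ of ever reaching $+1$ satisfies $\phi=\tfrac12+\tfrac12\phi^2$, forcing $\phi=1$, after which one climbs to every level; but that route implicitly invokes the strong Markov property, which is why I prefer the self-contained distributional recursion above.
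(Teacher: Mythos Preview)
Your argument is correct. The recursion $M=\max(0,X_1+M')$ with $M'\stackrel{law}{=}M$ independent of $X_1$ is justified exactly as you say, and the resulting discrete harmonic equation $G(t)=\tfrac12 G(t-1)+\tfrac12 G(t+1)$ with $G(-1)=0$ and $0\le G\le 1$ forces $G\equiv 0$; this directly yields $\p[\sup_n S_n=\infty]=1$, and symmetry gives the $\inf$ statement.

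The paper follows a different route. It first shows (left as an exercise via Borel--Cantelli) that for every $p\ge1$ one has $\p[-p\le\inf_n S_n\le\sup_n S_n\le p]=0$; letting $p\to\infty$ gives $\p[\{\inf_n S_n=-\infty\}\cup\{\sup_n S_n=\infty\}]=1$, whence by subadditivity and symmetry $\p[\sup_n S_n=\infty]\ge\tfrac12$. The tail measurability and Kolmogorov's $0$--$1$ law then upgrade this to probability $1$. So in the paper the $0$--$1$ law is genuinely needed to pass from $\ge\tfrac12$ to $1$, whereas in your approach (as you note) it is only decorative: the distributional recursion already pins down the value. Your method is more self-contained and avoids the Borel--Cantelli step entirely; the paper's method is more elementary in that it never manipulates the law of the supremum, but it outsources the crucial exit-from-bounded-interval estimate to the reader.
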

\begin{proof}
We first need to show that for $p\geq 1$ we get $\p[-p\leq \inf_n S_n\leq \sup_n S_n\leq p]=0$. This is a good exercise \footnote{Hint: Borel-Cantelli}. Now take $p\to\infty$ to obtain 
$$\p[\{\inf_n S_n>-\infty\}\cap\{\sup_n S_n<\infty\}]=0,$$
and therefore $\p[\{\inf_n S_n=-\infty\}\cup\{\sup_n S_n=\infty\}]=1$. So it follows 
$$1\leq \p[\inf_n S_n=-\infty]+\p[\sup_n S_n=\infty].$$
By symmetry, we get $\p[\inf_n S_=-\infty]=\p[\sup_n S_n=\infty]$ and hence $\p[\sup_n S_n=\infty]>0$. Now note that $\{\sup_n S_n=\infty\}\in\B_\infty$. Indeed, for all $k\geq 1$ we get $\{\sup_n S_n=\infty\}=\{\sup_{n\geq k}(X_k,X_{k+1}+...+X_n)=\infty\}\in\B_k$. Since $\{\sup_n S_n=\infty\}\in\B_\infty$ it follows that $\p[\sup_n S_n =\infty]\in\{0,1\}$, but we have just seen that $\p[\sup_n S_n=\infty]\geq \frac{1}{2}>0$, which implies then $\p[\sup_n S_n=\infty]=1$.
\end{proof}
\begin{thm}[Strong law of large numbers]
Let $(\Omega,\A,\p)$ be a probability space. Let $(X_n)_{n\geq 1}$ be a sequence of iid r.v.'s, such that $X_i\in L^1(\Omega,\A,\p)$ for all $i\in\{1,...,n\}$. Then 
$$\lim_{n\to\infty\atop a.s.}\frac{1}{n}(X_1+...+X_n)=\E[X_1].$$
Moreover, for $\bar X_n:=\frac{1}{n}\sum_{j=1}^n(X_j-\E[X_j])$ we have
$$\p\left[\limsup_{n\to\infty}\vert\bar X_n\vert=0\right]=1.$$
\end{thm}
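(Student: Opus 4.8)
The plan is to prove this by Etemadi's truncation argument, since the only machinery it requires—Tchebisheff's inequality, the first Borel--Cantelli lemma, and dominated convergence—has all been established above. I first dispose of the \textbf{moreover} part, which is a triviality once the main limit is known: because the $X_j$ are iid we have $\E[X_j]=\E[X_1]$, so $\bar X_n=\frac1n\sum_{j=1}^nX_j-\E[X_1]$, and the first assertion gives $\bar X_n\to 0$ a.s.; hence $\limsup_n|\bar X_n|=\lim_n|\bar X_n|=0$ a.s., i.e. $\p[\limsup_n|\bar X_n|=0]=1$. Everything therefore reduces to proving $\frac1n\sum_{k=1}^nX_k\to\E[X_1]$ a.s.

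First I would reduce to nonnegative variables by writing $X_n=X_n^+-X_n^-$: both $(X_n^+)_{n\ge1}$ and $(X_n^-)_{n\ge1}$ are iid and lie in $L^1$, so it suffices to treat the case $X_n\ge 0$ and subtract. Assuming $X_n\ge 0$, I then truncate: set $Y_k=X_k\one_{\{X_k\le k\}}$ and $T_n=\sum_{k=1}^nY_k$. Since $\sum_{k\ge 1}\p[X_k\ne Y_k]=\sum_{k\ge 1}\p[X_1>k]\le\E[X_1]<\infty$, the first Borel--Cantelli lemma shows $X_k=Y_k$ for all large $k$ a.s., so $\frac1n\sum_{k=1}^n(X_k-Y_k)\to 0$ a.s. and it is enough to study $T_n/n$.

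The heart of the argument is a subsequence estimate. Fix $\alpha>1$, put $k_n=\lfloor\alpha^n\rfloor$, and apply Tchebisheff to $T_{k_n}$: by independence $\mathrm{Var}(T_{k_n})=\sum_{j=1}^{k_n}\mathrm{Var}(Y_j)\le\sum_{j=1}^{k_n}\E[X_1^2\one_{\{X_1\le j\}}]$, so that $\p[|T_{k_n}-\E[T_{k_n}]|>\epsilon k_n]\le \mathrm{Var}(T_{k_n})/(\epsilon^2k_n^2)$. Summing over $n$ and interchanging the order of summation (Tonelli), one uses the two-step bound $\sum_{n:\,k_n\ge j}k_n^{-2}\le C(\alpha)\,j^{-2}$ followed by $\sum_{j\ge 1} j^{-2}\E[X_1^2\one_{\{X_1\le j\}}]\le C'\E[X_1]<\infty$ to conclude the series is finite. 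By Borel--Cantelli, $(T_{k_n}-\E[T_{k_n}])/k_n\to 0$ a.s. (letting $\epsilon$ run through a sequence tending to $0$). Since $\E[Y_j]=\E[X_1\one_{\{X_1\le j\}}]\to\E[X_1]$ by dominated convergence, the Ces\`aro means give $\E[T_{k_n}]/k_n\to\E[X_1]$, whence $T_{k_n}/k_n\to\E[X_1]$ a.s.

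Finally I would fill the gaps using that $T_n$ is nondecreasing (valid since $Y_k\ge 0$): for $k_n\le m\le k_{n+1}$ we have $\frac{k_n}{k_{n+1}}\frac{T_{k_n}}{k_n}\le\frac{T_m}{m}\le\frac{k_{n+1}}{k_n}\frac{T_{k_{n+1}}}{k_{n+1}}$, and since $k_{n+1}/k_n\to\alpha$ this forces $\frac1\alpha\E[X_1]\le\liminf_m T_m/m\le\limsup_m T_m/m\le\alpha\E[X_1]$ a.s. Letting $\alpha\downarrow 1$ along a sequence and discarding the resulting countable union of null sets yields $T_m/m\to\E[X_1]$ a.s.; undoing the truncation and the $X^+/X^-$ decomposition then completes the proof. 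I expect the main obstacle to be precisely the double interchange-of-summation estimate bounding the series by a multiple of $\E[X_1]$, since that is the single place where the $L^1$ hypothesis—and nothing stronger—is consumed.
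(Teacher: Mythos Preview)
Your argument is correct: this is Etemadi's proof, and every ingredient you invoke (Tchebisheff, Borel--Cantelli part $(i)$, dominated convergence, Ces\`aro averaging) is available in the notes. The variance double sum is handled exactly as you outline, and the monotonicity sandwich along the geometric subsequence is the standard way to close the gaps.

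However, the paper takes a completely different route. It does \emph{not} truncate. Instead, for $a>\E[X_1]$ it sets $M=\sup_{n\ge 0}(S_n-na)$ and shows $M<\infty$ a.s., which forces $\limsup_n S_n/n\le a$; letting $a\downarrow\E[X_1]$ and applying the same to $(-X_n)$ finishes. The key point is that $\{M<\infty\}$ is a tail event, so by Kolmogorov's 0--1 law (proved immediately before) its probability is $0$ or $1$; one then rules out $\p[M=\infty]=1$ via a distributional recursion $M_{k+1}=\sup\{0,M'_k+X_1-a\}$, where $M'_k\stackrel{d}{=}M_k$ is the same supremum computed from the shifted sequence $(X_2,X_3,\dots)$, together with dominated convergence. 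Your approach is more hands-on and quantitative, and has the well-known bonus that it only uses pairwise independence; the paper's approach is slicker but leans essentially on the 0--1 law and full iid structure, and is placed precisely to showcase that result.
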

\begin{rem}
The assumption $\E[X_1]<\infty$ is important, but if $X_1\geq 0$ and $\E[X_1]=\infty$ we can apply the theorem to $X_1\land k$ for $k>0$, and obtain that the theorem also holds with $\E[X_1]=\infty$.
\end{rem}
\begin{proof}
Let $S_n=X_1+...+X_n$ with $S_0=0$ and take $a>\E[X_1]$. Define $M=\sup_{n>0}(S_n-na)$. We shall show that $M<\infty$ a.s. Since we obviously have $S_n\leq na+M$, it follows immediately that $\frac{S_n}{n}\leq a$ a.s. Choosing $a\searrow \E[X_1]$ we obtain that $\limsup_n\frac{S_n}{n}\leq \E[X_1]$. Replacing $(X_n)_{n\geq 1}$ with $(-X_n)_{n\geq 1}$, we also get $\liminf_n\frac{S_n}{n}\geq \E[X_1]$ a.s. So it follows that 
$$\liminf_n\frac{S_n}{n}=\limsup_n\frac{S_n}{n}=\E[X_1]\hspace{0.2cm}a.s.$$ 
Hence we only need to show that $M<\infty$ a.s. We first note that $\{M<\infty\}\in\B_\infty$. Indeed, for all $k\geq 0$ we get that $\{M<\infty\}=\{\sup_{n\in\N}(S_n-an)<\infty\}=\{\sup_{n\geq k}(S_n-S_k-(n-k)a)<\infty\}$. So it follows that $\p[M<\infty]\in\{0,1\}$. Now we need to show that $\p[M<\infty]=1$ or equivalently $\p[M=\infty]<1$. We do it by contradiction. For $k\in\N$, set $M_k=\sup_{0\leq n\leq k}(S_n-na)$ and $M'_k=\sup_{0\leq n\leq k}(S_{n+1}-S_n-na)$. Then $M_k$ and $M'_k$ have the same distribution. Indeed, $(X_1,...,X_k)$ and $(X_2,...,X_{k+1})$ have the same distribution and $M_k=F_k(X_1,...,X_k)$ and $M'_k=F_k(X_2,...,X_{k+1})$ with some map $F_k:\R^k\to\R$. Moreover, $M=\lim_{k\to\infty}\uparrow M_k$ and therefore $M'=\lim_{k\to\infty}M_k$. Since $M_k$ and $M_k'$ have the same distribution, $M$ and $M'$ also have the same distribution. Indeed $\p[M'\leq X]=\lim_{k\to\infty}\downarrow \p[M_k'\leq X]=\lim_{k\to\infty}\downarrow \p[M_k\leq X]=\p[M\leq X]$. So $M$ and $M'$ have the same distribution function. Moreover, $M_{k+1}=\sup\{0,\sup_{1\leq n\leq k+1}(S_n-na)\}=\sup\{0,M_k'+X_1-a\}$, which implies that $M_{k+1}=M_k'-\inf\{a-X_1,M_k'\}$. Now we can use the fact that $M_k'$ and $M_k$ are bounded to obtain
\begin{align*}
\E[\inf\{a-X_1,M_k'\}]&=\E[M_k']-\E[M_{k+1}]\\
&\leq \vert a-X_1\vert\\
&\leq \vert a\vert +\vert X_1\vert\in \mathcal{L}^1(\Omega,\A,\p)
\end{align*}
and apply the dominated convergence theorem to obtain 
$$\E[\inf\{a-X_1,M'\}]\leq 0.$$
If we had $\p[M\leq \infty]=1$, then since $M'$ and $M$ have the same distribution we would also have $\p[M'=\infty]=1$, in which case $\inf\{a-X_1,M'\}<a-X_1$ and $\E[a-X_1]>0$ and this contradicts 
$$\E[\inf\{a-X_1,M'\}]\leq 0.$$
\end{proof}
\section{More convergence in probability, $L^p$ and almost surely}
\begin{prop}
Let $(\Omega,\A,\p)$ be a probability space. Let $(X_n)_{n\geq 1}$ be a sequence of r.v.'s and assume that for all $\epsilon>0$ we have 
$$\sum_{n\geq 1}\p[\vert X_n-X\vert >\epsilon]<\infty.$$
Then 
$$\lim_{n\to\infty\atop a.s.}X_n=X.$$
\end{prop}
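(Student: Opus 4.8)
The plan is to apply the first part of the Borel--Cantelli lemma for each fixed threshold, and then to amalgamate the resulting null sets over a countable family of thresholds shrinking to zero.

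First I would fix $\epsilon>0$ and introduce the events $A_n^\epsilon:=\{\vert X_n-X\vert>\epsilon\}$. The hypothesis states exactly that $\sum_{n\geq 1}\p[A_n^\epsilon]<\infty$, so part $(i)$ of the Borel--Cantelli lemma yields $\p\left[\limsup_{n\to\infty}A_n^\epsilon\right]=0$. Writing $N_\epsilon:=\limsup_{n\to\infty}A_n^\epsilon$, the content of this null set is that for $\omega\notin N_\epsilon$ the event $A_n^\epsilon$ occurs only finitely often; that is, there exists $n_0(\omega)$ such that $\vert X_n(\omega)-X(\omega)\vert\leq\epsilon$ for all $n\geq n_0(\omega)$.

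The key remaining step is to pass from ``for each $\epsilon$, convergence holds off a null set'' to ``convergence holds off a single null set''. Since taking the union of the $N_\epsilon$ over all real $\epsilon>0$ would be an uncountable union, I would instead restrict to the countable family $\epsilon=\tfrac1k$, $k\geq 1$, and set $N:=\bigcup_{k\geq 1}N_{1/k}$. Subadditivity then gives $\p[N]\leq\sum_{k\geq 1}\p[N_{1/k}]=0$, so $N$ is null.

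Finally I would verify convergence pointwise off $N$. For $\omega\notin N$ and any $k\geq 1$ we have $\omega\notin N_{1/k}$, so there is an index $n_0=n_0(\omega,k)$ with $\vert X_n(\omega)-X(\omega)\vert\leq\tfrac1k$ for all $n\geq n_0$; since $k$ is arbitrary this forces $\lim_{n\to\infty}X_n(\omega)=X(\omega)$. Hence $X_n\to X$ on the complement of the null set $N$, which is precisely the definition of a.s.\ convergence. I do not anticipate any serious obstacle; the only point that genuinely requires care is the reduction to a countable sequence of thresholds, since the almost-sure statement cannot be asserted simultaneously for the uncountably many values of $\epsilon$ at once.
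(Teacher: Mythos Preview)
Your proposal is correct and follows essentially the same approach as the paper: apply Borel--Cantelli for each $\epsilon=\tfrac{1}{k}$, take the countable union of the resulting null sets, and verify pointwise convergence on the complement. The paper phrases the final step via the complementary set $\Omega'=\bigcap_{k\geq 1}\liminf_n\{\vert X_n-X\vert\leq \tfrac{1}{k}\}$ of full probability, but this is just the complement of your $N$.
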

\begin{proof}
Take $\epsilon_k=\frac{1}{k}$ for $k\in\N$ with $k\geq 1$. Now with the Borel-Cantelli lemma we get 
\[
\p\left[\limsup_n\left\{\vert X_n-X\vert>\frac{1}{k}\right\}\right]=0,
\]
which implies that $\p\left[\bigcup_{k\geq 1}\limsup_n\left\{\vert X_n-X\vert>\frac{1}{k}\right\}\right]=0$ and hence 
\[
\p\left[\underbrace{\bigcap_{k\geq 1}\liminf_n\left\{\vert X_n-X\vert\leq \frac{1}{k}\right\}}_{\Omega'}\right]=1.
\]
Moreover, we have that $\p[\Omega']=1$ and for $\omega\in\Omega'$ we get that for all $k\geq 1$ there is  $n_0(\omega)\in\N\setminus\{0\}$ such that for $n\geq n_0(\omega)$ we get that $\vert X_n(\omega)-X(\omega)\vert\leq \frac{1}{k}$, i.e. $\lim_{n\to\infty}X_n(\omega)=X(\omega)$ for $\omega\in\Omega'$.
\end{proof}
\begin{ex} Let $(\Omega,\A,\p)$ be a probability space. Let $(X_n)_{n\geq 1}$ be a sequence of r.v.'s such that $\p[X_n=0]=1-\frac{1}{1+n^2}$ and $\p[X_n=1]=\frac{1}{1+n^2}$. Then for all $\epsilon>0$ we get $\p[\vert X_n\vert>\epsilon]=\p[X_n>\epsilon]=\frac{1}{1+n^2}$, so it follows 
$$\sum_{n\geq 1}\p[\vert X_n\vert>\epsilon]<\infty,$$ 
which implies that $\lim_{n\to\infty\atop a.s.}X_n=0.$
\end{ex}
\begin{prop}
Let $(\Omega,\A,\p)$ be a probability space. Let $(X_n)_{n\geq 1}$ be a sequence of r.v.'s. Then 
$$\lim_{n\to\infty\atop a.s.}X_n=X\Longleftrightarrow \lim_{n\to\infty\atop \p}\sup_{m>n}\vert X_m-X\vert=0.$$
\end{prop}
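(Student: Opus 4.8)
The plan is to reduce the statement to a question about a single monotone sequence of nonnegative random variables. Set $Y_n := \sup_{m>n}\vert X_m - X\vert$, which is measurable (with values in $[0,\infty]$) as a countable supremum of measurable functions. The decisive structural observation is that $(Y_n)_{n\geq 1}$ is non-increasing in $n$: enlarging $n$ shrinks the index set over which the supremum is taken. Hence $(Y_n)$ converges everywhere to $Y_\infty := \inf_{n\geq 1} Y_n$, so there is never any issue about the \emph{existence} of a limit for $Y_n$; the whole content of the statement comes from identifying when that limit is almost surely $0$.

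First I would translate the left-hand side. For a fixed $\omega$, the statement $X_n(\omega)\to X(\omega)$ is, by the $\epsilon$--$N$ definition of convergence, exactly the statement that $Y_n(\omega)\to 0$, i.e. $Y_\infty(\omega)=0$. Thus $\{X_n\to X\}=\{Y_\infty=0\}$ as events, and consequently $X_n\to X$ a.s. holds if and only if $\p[Y_\infty=0]=1$.

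Next I would show that the right-hand side is equivalent to the same condition $Y_\infty=0$ a.s. For the ``if'' direction, suppose $Y_\infty=0$ a.s.; then $Y_n\downarrow 0$ a.s., and since almost sure convergence implies convergence in probability (by the earlier proposition on types of convergence), we get that $Y_n\to 0$ in probability, which is precisely the right-hand side. For the converse, suppose $Y_n\to 0$ in probability. By that same proposition there is a subsequence $(Y_{n_k})_k$ with $Y_{n_k}\to 0$ a.s. But $Y_n\to Y_\infty$ a.s. (indeed everywhere, by monotonicity), so the subsequence also satisfies $Y_{n_k}\to Y_\infty$ a.s.; uniqueness of the a.s. limit forces $Y_\infty=0$ a.s.

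Combining the two paragraphs, both sides of the claimed equivalence are equivalent to $\p[Y_\infty=0]=1$, which finishes the proof. I do not expect a serious obstacle: the only points needing care are the measurability and monotonicity of $Y_n$ (both immediate), the bookkeeping that $\sup_{m>n}\vert X_m-X\vert$ may take the value $+\infty$ (harmless, since convergence in probability and the event $\{Y_\infty=0\}$ remain well defined), and the correct invocation of the subsequence half of the earlier proposition, which is where the bridge between ``in probability'' and ``almost surely'' is actually built.
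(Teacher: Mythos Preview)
Your argument is correct and complete: the reduction to the monotone sequence $Y_n=\sup_{m>n}\vert X_m-X\vert$, the pointwise identification $\{X_n\to X\}=\{Y_\infty=0\}$, and the use of the subsequence proposition to pass between convergence in probability and $Y_\infty=0$ a.s.\ are all sound. The paper does not actually give a proof here---it leaves this proposition as an exercise---so there is no approach to compare against; your write-up is exactly the kind of solution the reader is being asked to supply.
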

\begin{proof}
Exercise.
\end{proof}
\begin{ex} Let $(Y_n)_{n\geq 1}$ be iid r.v.'s such that $\p[Y_n\leq X]=1-\frac{1}{1+X}$ for $X\geq 0$ and $n\geq 1$. Take $X_n=\frac{Y_n}{n}$ and let $\epsilon>0$. Then 
$$\p[\vert X_n\vert>\epsilon]=\p[\vert Y_n\vert>n\epsilon]=\frac{1}{1+n\epsilon}\xrightarrow{n\to\infty}0,$$
and thus $\lim_{n\to\infty\atop \p}X_n=0$. Moreover, we have
$$\p\left[\sup_{m\geq n}\vert X_m\vert>\epsilon\right]=1-\p\left[\sup_{m\geq n}\vert X_n\vert\leq \epsilon\right]=1-\prod_{m\geq n}^\infty\left(1-\frac{1}{1+m\epsilon}\right),$$
but $\prod_{m\geq n}^\infty\left(1-\frac{1}{1+m\epsilon}\right)=0$. Hence $\p[\sup_{m\geq n}\vert X_n\vert>\epsilon]\not\rightarrow 0$ as $n\to\infty$ and therefore $(X_n)_{n\geq 1}$ doesn't converge a.s. to $X$. 
\end{ex}
\begin{lem}
Let $(\Omega,\A,\p)$ be a probability space. Let $(X_n)_{n\geq 1}$ be a sequence of r.v.'s. Then $\lim_{n\to\infty\atop \p}X_n=X$ if and only if for very subsequence of $(X_n)_{n\geq 1}$, there exists a further subsequence which converges a.s.
\end{lem}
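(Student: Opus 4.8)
The plan is to reduce the statement to a general fact about metric spaces by exploiting the metric $d(X,Y)=\E[\vert X-Y\vert\land 1]$, which by the earlier proposition metrizes convergence in probability. Thus $\lim_{n\to\infty\atop\p}X_n=X$ is equivalent to $d(X_n,X)\to 0$, and the claim becomes the familiar subsequence criterion: in a metric space a sequence converges to a limit if and only if every subsequence admits a further subsequence converging to that same limit. I would prove the two implications separately, invoking the two earlier propositions that relate the three modes of convergence. Throughout I read ``converges a.s.'' in the statement as ``converges a.s.\ to $X$'', which is what the converse direction genuinely requires.

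For the forward direction I would first observe that if $X_n\to X$ in probability then so does every subsequence $(X_{n_k})_{k\geq 1}$, since $d(X_{n_k},X)$ is a subsequence of the null sequence $d(X_n,X)$ and hence itself tends to $0$. Applying the earlier proposition (convergence in probability yields an a.s.\ convergent subsequence) to $(X_{n_k})_k$, I obtain a further subsequence $(X_{n_{k_l}})_l$ with $\lim_{l\to\infty\atop a.s.}X_{n_{k_l}}=X$, which is exactly what is required.

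For the converse I would argue by contradiction. Suppose $X_n$ does not converge to $X$ in probability; then $d(X_n,X)\not\to 0$, so there exist $\epsilon>0$ and a subsequence $(X_{n_k})_k$ with $d(X_{n_k},X)\geq\epsilon$ for all $k$. By hypothesis this subsequence has a further subsequence $(X_{n_{k_l}})_l$ converging a.s.\ to $X$. Since a.s.\ convergence implies convergence in probability (the earlier proposition), we get $d(X_{n_{k_l}},X)\to 0$, contradicting $d(X_{n_{k_l}},X)\geq\epsilon$. Hence $d(X_n,X)\to 0$, i.e.\ $\lim_{n\to\infty\atop\p}X_n=X$.

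The argument is essentially routine once the metric $d$ is in hand; the only points that need slight care are the bookkeeping of indices when passing to a sub-subsequence and the remark that $d(X_{n_k},X)$ inherits the limit from $d(X_n,X)$ in the forward direction. The main conceptual step, and the only place where probability-specific input enters, is recognizing that the two previously established facts (that $d$ metrizes convergence in probability, and that a.s.\ convergence implies convergence in probability) are precisely the two ingredients needed, so that no new estimates are required.
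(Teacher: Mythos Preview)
Your proof is correct and follows essentially the same route as the paper: the forward direction uses that subsequences inherit convergence in probability and then extracts an a.s.\ convergent sub-subsequence, while the converse argues by contradiction via a subsequence bounded away from $X$ in probability. The only cosmetic difference is that you phrase the contradiction through the metric $d(X_{n_k},X)\geq\epsilon$, whereas the paper writes it as $\p[\vert X_{n_k}-X\vert>\epsilon]>\nu$ for some $\nu>0$; your remark that the a.s.\ limit must be read as $X$ for the converse to go through is well taken.
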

\begin{proof}
If $\lim_{n\to\infty\atop\p}X_n=X$, then any of its subsequences also converge in probability. We already know that there exists a subsequence which converges a.s. Conversely, if $\lim_{n\to\infty\atop\p}X_n=X$, then there is  an $\epsilon>0$, some $n_k\in\N$ and a $\nu>0$ such that for all $k\geq 1$ we get 
$$\p[\vert X_{n_k}-X\vert>\epsilon]>\nu$$ 
and therefore we cannot extract a subsequence from $(X_{n_k})_{k\geq 1}$ which would converge a.s.
\end{proof}
\begin{prop}
Let $(\Omega,\A,\p)$ be a probability space. Let $(X_n)_{n\geq 1}$ be a sequence of r.v.'s and $g:\R\to\R$ a continuous map. Moreover, assume that $\lim_{n\to\infty\atop\p}X_n=X$. Then 
$$\lim_{n\to\infty\atop \p}g(X_n)=g(X).$$
\end{prop}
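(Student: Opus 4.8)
The plan is to reduce everything to the subsequence characterization of convergence in probability established in the lemma immediately above, thereby sidestepping the fact that $g$ need not be uniformly continuous. First I would fix an arbitrary subsequence $(g(X_{n_k}))_{k\geq 1}$ of $(g(X_n))_{n\geq 1}$. Since $(X_{n_k})_{k\geq 1}$ is itself a subsequence of $(X_n)_{n\geq 1}$, and the latter converges to $X$ in probability, the lemma guarantees a further subsequence $(X_{n_{k_j}})_{j\geq 1}$ with $\lim_{j\to\infty\atop a.s.}X_{n_{k_j}}=X$.

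Next I would invoke the continuity of $g$ pointwise. On the almost sure event $\Omega_0=\{\omega\in\Omega\mid X_{n_{k_j}}(\omega)\to X(\omega)\}$, which satisfies $\p[\Omega_0]=1$, continuity of $g$ gives $g(X_{n_{k_j}}(\omega))\to g(X(\omega))$ for every $\omega\in\Omega_0$; hence $\lim_{j\to\infty\atop a.s.}g(X_{n_{k_j}})=g(X)$. Thus every subsequence of $(g(X_n))_{n\geq 1}$ admits a further subsequence converging a.s. to $g(X)$. Applying the converse direction of the same lemma to the sequence $(g(X_n))_{n\geq 1}$ then yields $\lim_{n\to\infty\atop \p}g(X_n)=g(X)$, which is the claim.

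The main obstacle is not a computation but a temptation: one should resist attempting a direct $\epsilon$--$\delta$ argument. A naive direct proof fails because continuity of $g$ is not uniform, so given $\delta>0$ one cannot bound $\vert g(x)-g(y)\vert$ uniformly over all pairs with $\vert x-y\vert<\delta$. To force a direct argument through, one would have to localize: choose a compact set $K\subset\R$ with $\p[X\in K]$ close to $1$ (using tightness of the law of a single real random variable), exploit that $g$ is uniformly continuous on $K$, and then separately control the contributions of the events $\{X\notin K\}$ and $\{\vert X_n-X\vert\geq\delta\}$. The subsequence method circumvents all of this bookkeeping, so the genuine content of the proof is simply the correct invocation of the two directions of the preceding lemma together with pointwise continuity of $g$.
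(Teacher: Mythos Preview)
Your proof is correct and follows essentially the same approach as the paper: take an arbitrary subsequence, use the forward direction of the preceding lemma to extract a further subsequence along which $X_{n_{k_j}}\to X$ a.s., apply continuity of $g$ pointwise to get $g(X_{n_{k_j}})\to g(X)$ a.s., and conclude via the converse direction of the lemma. Your write-up is in fact cleaner than the paper's, which is terse to the point of containing a small typo.
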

\begin{proof}
Any subsequence $g((X_{n_k})_{k\geq 1})$ and $(X_{n_k})_{k\geq 1}$ converges in probability. So it follows that there exists a subsequence $(X_{m_k})_{k\geq 1}$ of $(X_{n_k})_{k\geq 1}$ such that 
$$\lim_{n\to\infty\atop a.s.} X_n=X\hspace{0.3cm}\text{and}\hspace{0.3cm}\lim_{k\to\infty\atop a.s.}g(X_{m_k})=g(X)$$ 
because $g$ is continuous. Now with the previous lemma we get that
$$\lim_{n\to\infty\atop \p}g(X_n)=g(X).$$
\end{proof}
\begin{prop}
Let $(\Omega,\A,\p)$ be a probability space. Let $(X_n)_{n\geq 1}$ and $(Y_n)_{n\geq 1}$ be sequences of r.v.'s such that $\lim_{n\to\infty\atop \p}X_n=X$ and $\lim_{n\to\infty\atop\p} Y_n=Y$. Then 
\begin{enumerate}[$(i)$]
\item{$\lim_{n\to\infty\atop\p} X_n+Y_n=X+Y$}
\item{$\lim_{n\to\infty\atop \p}X_n\cdot Y_n=X\cdot Y$}
\end{enumerate}
\end{prop}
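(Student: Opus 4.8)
The plan is to treat the two parts separately: part (i) is a direct triangle-inequality estimate, while part (ii) is best handled through the subsequence characterization of convergence in probability established in the preceding lemma.

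For (i), I would fix $\epsilon>0$ and exploit the set inclusion
$$\{|X_n+Y_n-(X+Y)|>\epsilon\}\subseteq\{|X_n-X|>\epsilon/2\}\cup\{|Y_n-Y|>\epsilon/2\},$$
which holds because $|X_n+Y_n-(X+Y)|\le|X_n-X|+|Y_n-Y|$, so that if both individual deviations are at most $\epsilon/2$ the left-hand event cannot occur. Applying subadditivity of $\p$ gives
$$\p[|X_n+Y_n-(X+Y)|>\epsilon]\le\p[|X_n-X|>\epsilon/2]+\p[|Y_n-Y|>\epsilon/2],$$
and both terms on the right tend to $0$ by hypothesis. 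This settles (i).

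For (ii), the obstacle is that convergence in probability does not behave multiplicatively the way the triangle inequality handles sums: the factors $X_n$ and $Y_n$ are unbounded, so one cannot directly dominate $|X_nY_n-XY|$ by the individual deviations. My plan is to bypass this using the subsequence lemma. Given any subsequence of $(X_nY_n)_{n\ge1}$, I would first extract (along the same indices) a further subsequence along which $X_{n_k}\to X$ a.s., and then from that one extract yet another along which $Y_{m_k}\to Y$ a.s.; on this final sub-subsequence both factors converge almost surely, hence so does their product, $X_{m_k}Y_{m_k}\to XY$ a.s. Since every subsequence of $(X_nY_n)_{n\ge1}$ thus admits a further subsequence converging a.s. to $XY$, the lemma yields $\lim_{n\to\infty\atop\p}X_nY_n=XY$.

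Alternatively, one can argue directly via the decomposition
$$X_nY_n-XY=(X_n-X)(Y_n-Y)+X(Y_n-Y)+Y(X_n-X),$$
showing each summand tends to $0$ in probability and then invoking part (i). The cross term is handled through the inclusion $\{|(X_n-X)(Y_n-Y)|>\epsilon\}\subseteq\{|X_n-X|>\sqrt{\epsilon}\}\cup\{|Y_n-Y|>\sqrt{\epsilon}\}$, while the remaining two terms require truncating the fixed factors: choosing $K$ so that $\p[|Y|>K]$ is small and bounding $|Y(X_n-X)|\le K|X_n-X|$ off that small exceptional set. I expect this truncation step, rather than anything in (i), to be the main technical point; the subsequence route avoids it entirely, so I would present that as the primary argument and reserve the direct decomposition as a remark.
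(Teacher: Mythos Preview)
Your proof of (i) is exactly the paper's argument. For (ii), your subsequence-extraction argument is correct but differs from the paper's route: the paper invokes the preceding proposition (continuous $g$ preserves convergence in probability) with $g(x)=x^2$, together with the polarization identity
\[
2X_nY_n=(X_n+Y_n)^2-X_n^2-Y_n^2,
\]
so that $(X_n+Y_n)^2\to(X+Y)^2$, $X_n^2\to X^2$, $Y_n^2\to Y^2$ in probability (the first via part (i)), and then part (i) again gives $X_nY_n\to XY$. Your approach applies the subsequence lemma directly to the product, whereas the paper applies it indirectly, since the continuous-mapping proposition was itself proved via that lemma; what the polarization trick buys is that one never has to argue that a product of a.s.\ convergent sequences converges a.s.\ (trivial as that is) nor perform any truncation, reducing everything to sums and a single continuous one-variable map. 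Your alternative decomposition is also valid but, as you note, heavier than either of the other two routes.
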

\begin{proof}
We need to show both points.
\begin{enumerate}[$(i)$]
\item{Let $\epsilon>0$. Then $\vert X_n-X\vert\leq \frac{\epsilon}{2}$ and $\vert Y_n-Y\vert\leq \frac{\epsilon}{2}$ implies that $\vert (X_n+Y_n)-(X+Y)\vert\leq \epsilon$, and thus we get 
$$ \p[\vert X_n+Y_n-(X+Y)\vert>\epsilon]\leq \p\left[\vert X_n-X\vert>\frac{\epsilon}{2}\right]+\p\left[\vert Y_n-Y\vert>\frac{\epsilon}{2}\right].$$
}
\item{We apply proposition 8.4 to the continuous map $g(X)=X^2$. Hence we get
$$2X_nY_n=(X_n+Y_n)^2-X_n^2-Y_n^2.$$
}
\end{enumerate}
\end{proof}
\section{Convergence in Law}
We denote by $C_b(\R^d)$ the space of bounded and continuous functions $\varphi:\R^d\to\R$. Moreover, we endow $C_b(\R^d)$ with the supremums norm $\|\varphi\|_\infty=\sup_{x\in\R^d}\vert\varphi(x)\vert$. The space $(C_b(\R^d),\|\cdot\|_\infty)$ forms a Banach space, i.e. it is a complete normed vector space. Next, we want to introduce the notion of law convergence in terms of probability measures.
\begin{defn}[Weak and Law convergence]
\begin{enumerate}[$(i)$]
The following hold.
\item{Let $(\mu_n)_{n\geq 1}$ be a sequence of probability measures on $\R^d$. We say that $(\mu_n)_{n\geq 1}$ is converging weakly to a probability measure $\mu$ on $\R^d$, and we write
$$\lim_{n\to\infty\atop w}\mu_n=\mu,$$
if for all $\varphi\in C_b(\R^d)$ we have
$$\lim_{n\to\infty}\int_{\R^d} \varphi d\mu_n=\int_{\R^d} \varphi d\mu.$$
}
\item{Let $(\Omega,\A,\p)$ be a probability space. A sequence of r.v.'s $(X_n)_{n\geq 1}$, taking values in $\R^d$, is said to converge in law to a r.v. $X$ with values in $\R^d$ and we write 
$$\lim_{n\to\infty\atop law}X_n=X,$$
if $\lim_{n\to\infty\atop w}\p_{X_n}=\p_X$, or equivalently if for all $\varphi\in C_b(\R^d)$ we have 
$$\lim_{n\to\infty}\E[\varphi(X_n)]=\E[\varphi(X)]\Longleftrightarrow \lim_{n\to\infty}\int_{\R^d}\varphi(x)d\p_{X_n}(x)=\int_{\R^d}\varphi(x)d\p_{X}(x).$$
}
\end{enumerate}
\end{defn}
\begin{rem} One has to consider the following: 
\begin{enumerate}[$(i)$]
\item{There is an abuse of language when we say that $\lim_{n\to\infty\atop law}X_n=X$ because the r.v. $X$ is not determined in a unique way, only $\p_X$ is unique.
}
\item{Note also that the r.v.'s $X_n$ and $X$ need not be defined on the same probability space $(\Omega,\A,\p)$.
}
\item{The space of probability measures on $\R^d$ can be viewed as a subspace of $C_b(\R^d)^*$ (the dual space of $C_b(\R^d)$). The weak convergence then corresponds to convergence for the \emph{weak*}-topology. 
}
\item{It is enough to show that $\lim_{n\to\infty}\E[\varphi(X_n)]=\E[\varphi(X)]$ or $\lim_{n\to\infty}\int_{\R^d}\varphi(x)d\p_{X_n}(x)=\int_{\R^d} \varphi(x)d\p_X(x)$ is satisfied for all $\varphi\in C_c(\R^d)$, where $C_c(\R^d)$ is the space of continuous functions with compact support. That is, $\varphi\in C_c(\R^d)$ if $supp(\varphi):=\overline{\{x\in\R^d\mid \varphi(x)\not=0\}}$ is compact.
}
\end{enumerate}
\end{rem}
\begin{ex} We got the following examples:
\begin{enumerate}[$(i)$]
\item{If $X_n$ and $X\in\mathbb{Z}^d$ for all $n\geq 1$, then $\lim_{n\to\infty\atop law}X_n=X$ if and only if for all $X\in\mathbb{Z}^d$ we have
$$\lim_{n\to\infty}\p[X_n=x]=\p[X=x].$$
To see this, we use point (4) of the remark above. Let therefore $\varphi\in C_c(\R^d)$. Then 
$$\E[\varphi(X_n)]=\sum_{k\in\mathbb{Z}^d}\varphi(k)\p[X_n=k].$$
Since $\varphi$ has compact support, i.e. $\varphi(x)=0$ for $\vert x\vert \leq C$ for some $C\geq 0$, we get
$$\E[\varphi(X_n)]=\sum_{k\in\mathbb{Z}^d\atop \vert k\vert\leq C}\varphi(k)\p[X_n=k].$$
Hence we have
$$\lim_{n\to\infty}\E[\varphi(X_n)]=\lim_{n\to\infty}\sum_{k\in\mathbb{Z}^d\atop \vert k\vert\leq C}\varphi(k)\p[X_n=k]=\sum_{k\in\mathbb{Z}^d\atop \vert k\vert \leq C}\varphi(k)\p[X=k]=\E[\varphi(X)].$$
}
\item{If $X_n$ has density $\p_{X_n}(dx)=P_n(x)dx$ for all $n\geq1$ and if we assume that 
$$\lim_{n\to\infty\atop a.e.}P_n(x)=P(x),$$
then there is a $q\geq 0$ such that $\int_{\R^d}q(x)dx<\infty$ and $P_n(x)\leq q(x)$ a.e. Then an application of the dominated convergence theorem shows that 
$$\int_{\R^d} P(x)dx=1,$$
and thus there exists a r.v. $X$ with density $P$ such that $\lim_{n\to\infty\atop law}X_n=X$ and for $\varphi\in C_b(\R^d)$ we get
$$\E[\varphi(X_n)]=\int_{\R^d}\varphi(x)P_n(x)dx,$$
and $\vert\varphi(x)P_n(x)\vert\leq \underbrace{\|\varphi\|_\infty q(x)}_{\in\mathcal{L}^1(\R^d)}$. So with the dominated convergence theorem we get 
$$\lim_{n\to\infty}\int_{\R^d}\varphi(x)P_n(x)dx=\int_{\R^d}\varphi(x)P(x)dx=\E[\varphi(X)].$$
}
\item{Let $X_n\sim \mathcal{N}(0,\sigma_n^2)$ such that $\lim_{n\to\infty}\sigma_n=0$. Then $\lim_{n\to\infty\atop law}X_n=0$ and 
$$\E[\varphi(X_n)]=\int_\R \varphi(x)e^{-\frac{x^2}{2\sigma_n^2}}\frac{1}{\sigma_n\sqrt{2\pi}}dx.$$
Now using that $u=\frac{x}{\sigma_n}$, we get $dx=\sigma_n du$ and hence we have 
$$\E[\varphi(X_n)]=\int_\R \varphi(x)e^{-\frac{x^2}{2\sigma_n^2}}\frac{1}{\sigma_n\sqrt{2\pi}}dx=\int_\R \varphi(\sigma_n u)e^{-\frac{u^2}{2}}\frac{1}{\sqrt{2\pi}}du.$$
Moreover, we have $\vert \varphi(\sigma_n u)e^{-\frac{u^2}{2}}\vert\leq \underbrace{\|\varphi\|_\infty e^{-\frac{u^2}{2}}}_{\in\mathcal{L}^1(\R)}$. Hence we get
$$\lim_{n\to\infty}\E[\varphi(X_n)]=\int_{\R}\varphi(0)e^{-\frac{u^2}{2}}\frac{1}{\sqrt{2\pi}}du.$$
}
\end{enumerate}
\end{ex}
\begin{prop}
Let $(\Omega,\A,\p)$ be a probability space. Let $(X_n)_{n\geq 1}$ be a sequence of r.v.'s and assume that $\lim_{n\to\infty\atop \p}X_n=X$. Then $\lim_{n\to\infty\atop law}X_n=X$.
\end{prop}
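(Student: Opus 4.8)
The plan is to reduce convergence in law to almost sure convergence taken along subsequences, using the subsequence characterization of convergence in probability established just above together with the dominated convergence theorem.

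Fix $\varphi\in C_b(\R^d)$ and set $a_n=\E[\varphi(X_n)]$. I want to show that $a_n\to\E[\varphi(X)]$ as $n\to\infty$. I will invoke the elementary fact about real sequences that $(a_n)$ converges to a limit $\ell$ if and only if every subsequence of $(a_n)$ admits a further subsequence converging to $\ell$. Thus it suffices to show that from any subsequence $(a_{n_k})_{k\geq 1}$ one can extract a further subsequence converging to $\E[\varphi(X)]$.

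To this end consider the corresponding subsequence $(X_{n_k})_{k\geq 1}$ of random variables. Since $\lim_{n\to\infty\atop\p}X_n=X$, the subsequence $(X_{n_k})_{k\geq 1}$ also converges to $X$ in probability. By the subsequence lemma characterizing convergence in probability through almost surely convergent subsequences, there exists a further subsequence $(X_{n_{k_j}})_{j\geq 1}$ with $\lim_{j\to\infty\atop a.s.}X_{n_{k_j}}=X$. Because $\varphi$ is continuous, $\varphi(X_{n_{k_j}})\to\varphi(X)$ almost surely, and because $\varphi$ is bounded, say $\|\varphi\|_\infty\leq C$, the constant $C$ serves as an integrable dominating function. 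The dominated convergence theorem then gives $\E[\varphi(X_{n_{k_j}})]\to\E[\varphi(X)]$, that is $a_{n_{k_j}}\to\E[\varphi(X)]$.

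Since every subsequence of $(a_n)$ admits a further subsequence converging to the common value $\E[\varphi(X)]$, we conclude that $a_n\to\E[\varphi(X)]$, and as $\varphi\in C_b(\R^d)$ was arbitrary this is exactly $\lim_{n\to\infty\atop law}X_n=X$. The only point requiring care is the double-subsequence argument: it is precisely what upgrades the weaker statement produced by convergence in probability (namely, a.s.\ convergence only along \emph{some} subsequence) into convergence of the full sequence of expectations. The analytic ingredients, continuity of $\varphi$ and its boundedness feeding into dominated convergence, are routine, so I expect the subsequence bookkeeping to be the main conceptual step.
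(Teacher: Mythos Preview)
Your argument is correct and is essentially the same as the paper's: both extract from an arbitrary subsequence a further subsequence converging almost surely and then apply dominated convergence with the bound $\|\varphi\|_\infty$. The only cosmetic difference is that the paper phrases the double-subsequence step as a proof by contradiction, whereas you invoke the equivalent ``every subsequence has a convergent sub-subsequence'' criterion for real sequences directly.
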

\begin{proof}
We first note that if $\lim_{n\to\infty\atop a.s.}X_n=X$ then $\lim_{n\to\infty}\E[\varphi(X_n)]=\E[\varphi(X)]$ for every $\varphi\in C_b(\R^d)$. Let us now assume that $(X_n)_{n\geq 1}$ does not converge in law to $X$. Then there is a $\varphi\in C_b(\R^d)$ such that $\E[\varphi(X_n)]$ does not converge to $\E[\varphi(X)]$. We can hence extract a subsequence $(X_{n_k})_{k\geq 1}$ from $(X_n)_{n\geq 1}$ and find an $\epsilon>0$ such that 
$$\vert\E[\varphi(X_{n_k})]-\E[\varphi(X)]\vert>\epsilon.$$
But this contradicts the fact that we can extract a further subsequence $(X_{n_{k_l}})_{l\geq 1}$ from $(X_{n_k})_{k\geq 1}$ such that 
$$\lim_{l\to\infty\atop a.s.} X_{n_{k_{l}}}=X.$$
\end{proof}
Let $(\Omega,\A,\p)$ be a probability space. Let $(X_n)_{n\geq 1}$ be a sequence of r.v.'s, A natural question would be to ask whether, under these condition, we have a $B\in\B(\R)$ such that $\lim_{n\to\infty}\p[X_n\in B]=\p[X\in B]$. If we take $B=\{0\}$ and use the previous example, we would get 
$$\lim_{n\to\infty}\underbrace{\p[X_n=0]}_{=0}\not=\underbrace{\p[X=0]}_{=1},$$
which shows that the answer to the question is negative.
\begin{prop}
Let $(\mu_n)_{n\geq 1}$ be a sequence of probability measures on $\R^d$ and $\mu$ be a probability measure on $\R^d$. Then the following are equivalent.
\begin{enumerate}[$(i)$]
\item{$\lim_{n\to\infty\atop w}\mu_n=\mu$.
}
\item{For all open subsets $G\subset \R^d$ we have 
$$\limsup_n\mu_n(G)\geq \mu(G).$$
}
\item{For all closed subsets $F\subset\R^d$ we have 
$$\limsup_n\mu_n(F)\leq \mu(F).$$
}
\item{For all Borel measurable sets $B\in\B(\R^d)$ with $\mu(\partial B)=0$ we have
$$\lim_{n\to\infty}\mu_n(B)=\mu(B).$$
}
\end{enumerate}
\end{prop}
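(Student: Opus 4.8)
The plan is to prove the four assertions equivalent by establishing $(i)\Rightarrow(iii)$ via approximation, $(ii)\Leftrightarrow(iii)$ by passing to complements, then $(ii)\wedge(iii)\Rightarrow(iv)$ by a sandwiching argument, and finally $(iv)\Rightarrow(i)$ by a layer-cake decomposition. The two endpoints of this chain, $(i)\Rightarrow(iii)$ and $(iv)\Rightarrow(i)$, carry all the analytic content; the middle two steps are essentially set-theoretic bookkeeping.

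For $(i)\Rightarrow(iii)$ I would approximate the indicator of a closed set $F$ from above by bounded continuous functions. Writing $d(x,F)=\inf_{y\in F}|x-y|$ for the (continuous) distance to $F$, the functions $\varphi_k(x)=(1-k\,d(x,F))^+$ lie in $C_b(\R^d)$, satisfy $\one_F\leq\varphi_k\leq 1$, and decrease pointwise to $\one_F$ as $k\to\infty$ (here closedness of $F$ is used, so that $d(x,F)=0$ exactly on $F$). Then $\mu_n(F)\leq\int\varphi_k\,d\mu_n$; taking $\limsup_n$ and applying $(i)$ for the fixed function $\varphi_k$ gives $\limsup_n\mu_n(F)\leq\int\varphi_k\,d\mu$, and letting $k\to\infty$ with dominated convergence (the $\varphi_k$ are bounded by $1$) yields $\limsup_n\mu_n(F)\leq\mu(F)$.

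The equivalence $(ii)\Leftrightarrow(iii)$ is pure complementation: if $G$ is open then $F=G^C$ is closed, and $\mu_n(G)=1-\mu_n(F)$, $\mu(G)=1-\mu(F)$, so the closed-set inequality for $F$ is equivalent to the open-set inequality $\liminf_n\mu_n(G)\geq\mu(G)$, which is how I read $(ii)$. Having both $(ii)$ and $(iii)$ at hand, I prove $(iv)$ by taking $B\in\B(\R^d)$ with $\mu(\partial B)=0$ and sandwiching it between its interior $\mathring B$ (open) and closure $\overline B$ (closed). Applying the open-set bound to $\mathring B$ and the closed-set bound to $\overline B$,
$$\mu(\mathring B)\leq\liminf_n\mu_n(\mathring B)\leq\liminf_n\mu_n(B)\leq\limsup_n\mu_n(B)\leq\limsup_n\mu_n(\overline B)\leq\mu(\overline B).$$
Since $\mu(\partial B)=0$ forces $\mu(\mathring B)=\mu(\overline B)=\mu(B)$, the two outer terms coincide, so all intermediate quantities equal $\mu(B)$, i.e.\ $\lim_n\mu_n(B)=\mu(B)$.

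The main obstacle is $(iv)\Rightarrow(i)$. After reducing to $\varphi\in C_b(\R^d)$ with $0\leq\varphi\leq 1$ (an affine normalisation $\varphi\mapsto a\varphi+b$ does not affect convergence of the integrals), I would use the layer-cake identity $\int\varphi\,d\nu=\int_0^1\nu(\{\varphi>t\})\,dt$, valid for every probability measure $\nu$ by Fubini. The crucial point is that for the open set $B_t=\{\varphi>t\}$ one has $\partial B_t\subseteq\{\varphi=t\}$, and since the level sets $\{\varphi=t\}$ are pairwise disjoint, at most countably many of them carry positive $\mu$-mass; hence $\mu(\partial B_t)=0$ for Lebesgue-almost every $t\in(0,1)$. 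For such $t$, assumption $(iv)$ gives $\mu_n(B_t)\to\mu(B_t)$, and since $0\leq\mu_n(B_t)\leq 1$ uniformly, dominated convergence in the $t$-integral yields $\int\varphi\,d\mu_n\to\int\varphi\,d\mu$, which is $(i)$. The delicate part here is exactly the almost-everywhere statement for the boundaries of the level sets together with the interchange of limit and integration; everything else reduces to the layer-cake formula.
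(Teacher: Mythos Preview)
Your proof is correct and follows essentially the same route as the paper's: the paper also notes $(ii)\Leftrightarrow(iii)$ by complementation, proves $(i)\Rightarrow(ii)$ by approximating $\one_G$ from below with $\varphi_p(x)=p(d(x,G^C)\wedge 1)$ (the dual of your approximation of $\one_F$ from above), obtains $(iv)$ from $(ii)$ and $(iii)$ by the identical sandwich $\mu(\mathring B)\leq\liminf_n\mu_n(B)\leq\limsup_n\mu_n(B)\leq\mu(\overline B)$, and closes the loop $(iv)\Rightarrow(i)$ with the same layer-cake argument via $\int_0^K\nu(\{\varphi\geq t\})\,dt$ and the observation that at most countably many level sets carry positive $\mu$-mass. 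The only differences are cosmetic (open versus closed in the first implication, strict versus non-strict super-level sets in the last).
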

\begin{proof}
We immediately note that $(ii)\Longleftrightarrow (iii)$ by taking complements.
First we show $(i)\Longrightarrow (ii):$ Let $G$ be an open subset of $\R^d$. Define $\varphi_p(x):=p(d(x,G^C)\land 1)$. Then $\varphi_p$ is continuous, bounded, $0\leq \varphi_p(x)\leq \one_{G}(x)$ for all $x\in\R^d$ and $\varphi_p\uparrow \one_{G}$ (note that $d(x,F)=\inf_{y\in F}d(x,y)$) as $p\to\infty$. Moreover, $F$ is closed if and only if $d(x,F)=0$. We also get that $\varphi_p(x)=0$ on $G^C$ and $0\leq \varphi_p(x)\leq 1\leq \one_{G}(x)$ for all $x\in\R^d$. Therefore we get
\[
\liminf_n\mu_n(G)\geq \sup_{p}\left(\liminf_nF\int_{\R^d}\varphi_pd\mu_n\right)=\sup_p\int_{\R^d} \varphi_pd\mu=\int \one_G d\mu=\mu(G).
\]
Now we show that $(ii)$ and $(iii)\Longrightarrow (iv):$ For Borel measurable set $B\in\B(\R^d)$ with $\mathring{B}\subset B\subset \bar B$ we get 
$$\limsup_n\mu_n(B)\leq \limsup_n\mu_n(\bar B)\leq \mu(\bar B),$$
$$\liminf_n\mu_n(B)\geq \liminf_n\mu_n(\mathring{B})\geq \mu(\mathring{B}).$$
Therefore it follows that 
$$\mu(\mathring{B})\leq \liminf_n \mu_n(B)\leq \limsup_n\mu_n(B)\leq \mu(B).$$
Moreover, if $\mu(\partial B)=0$, we get that $\mu(\bar B)=\mu(\mathring{B})=\mu(B)$ and thus $\lim_{n\to\infty}\mu_n(B)=\mu(B)$.
Now we show $(iv)\Longrightarrow (i):$ Let therefore $\varphi\in C_b(\R^d)$. We can always use that $\varphi=\varphi^+-\varphi^-$ and so, without loss of generality, we may assume that $\varphi\geq 0$. Let $\varphi\geq 0$ and $K\geq 0$ be such that $0\leq \varphi\leq K$. Then
$$\int_{\R^d}\varphi(x)d\mu(x)=\int_{\R^d}\underbrace{\left(\int_0^K\one_{\{t\leq \varphi(x)\}}dt\right)}_{K\land \varphi(x)=\varphi(x)}d\mu(x)=\int_0^K\mu(E_t^\varphi)dt,$$
where $E_t^\varphi:=\{x\in\R^d\mid \varphi(x)\geq t\}$. Similarly, we have
$$\int_{\R^d}\varphi(x)d\mu_n(x)=\int_0^K\mu_n(E_t^\varphi)dt.$$
Now we can note that $\partial E_t^\varphi\subset\{x\in\R^d\mid \varphi(x)=t\}$. Moreover, there are at most countably many values of $t$ for which $\mu\left(\{x\in\R^d\mid \varphi(x)=t\}\right)>0$. Indeed, for an integer$K\geq 1$ we get that $\mu\left(\{x\in\R^d\mid \varphi(x)=t\}\right)\geq \frac{1}{K}$. This can happen for at most $K$ distinct values of $t$. Thence we have
$$\lim_{n\to\infty}\mu_n(E_t^\varphi)=\mu(E_t^\varphi)dt\hspace{0.2cm}\text{a.e.},$$
which implies that 
$$\lim_{n\to\infty}\int_{\R^d} \varphi(x)d\mu_n(x)=\int_0^K\mu_n(E_t^\varphi)dt\xrightarrow{n\to\infty} \int_0^K\mu(E_t^\varphi)dt=\int_{\R^d}\varphi(x)d\mu(x).$$
\end{proof}
\emph{Consequences:} We look at the case of $d=1$. Let $(X_n)_{n\geq 1}$ be a sequence of r.v.'s with values in $\R$ and let $X$ be a r.v. with values in $\R$. One can show that
$$\lim_{n\to\infty\atop law}X_n=X\Longleftrightarrow \lim_{n\to\infty}F_{X_n}(t)=F_X(t).$$
\begin{prop}
Let $(\mu_n)_{n\geq 1}$ and $\mu$ be probability measures on $\R^d$. Let $H\subset C_b(\R^d)$ such that $\bar H\supset C_c(\R^d)$. Then the following are equivalent.
\begin{enumerate}[$(i)$]
\item{$\lim_{n\to\infty\atop w}\mu_n=\mu.$
}
\item{For all $\varphi\in C_c(\R^d)$ we have
$$\lim_{n\to\infty}\int_{\R^d} \varphi d\mu_n=\int_{\R^d} \varphi d\mu.$$
}
\item{For all $\varphi\in H$ we have 
$$\lim_{n\to\infty}\int_{\R^d}\varphi d\mu_n=\int_{\R^d} \varphi d\mu.$$
}
\end{enumerate}
\end{prop}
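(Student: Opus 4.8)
The plan is to establish the cycle $(i)\Rightarrow(iii)\Rightarrow(ii)\Rightarrow(i)$, which is economical because the first implication is immediate, the second is a routine density argument, and essentially all the work sits in the last one. Throughout, the decisive structural fact is that $\mu_n$ and $\mu$ are \emph{probability} measures, so each has total mass $1$; this is what lets me pass from local to global control.

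The implication $(i)\Rightarrow(iii)$ is trivial: by hypothesis $H\subset C_b(\R^d)$, so if $\int_{\R^d}\varphi\,d\mu_n\to\int_{\R^d}\varphi\,d\mu$ holds for every $\varphi\in C_b(\R^d)$, it holds in particular for every $\varphi\in H$. For $(iii)\Rightarrow(ii)$ I would exploit the density hypothesis $\bar H\supset C_c(\R^d)$. Fix $\varphi\in C_c(\R^d)$ and $\epsilon>0$, and choose $\psi\in H$ with $\|\psi-\varphi\|_\infty<\epsilon$. Writing $\int\varphi\,d\mu_n-\int\varphi\,d\mu$ as a telescoping sum through $\int\psi\,d\mu_n$ and $\int\psi\,d\mu$, I bound the two approximation terms by $\|\varphi-\psi\|_\infty\,\mu_n(\R^d)$ and $\|\varphi-\psi\|_\infty\,\mu(\R^d)$; since both masses equal $1$, each term is $<\epsilon$. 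The remaining term $\lvert\int\psi\,d\mu_n-\int\psi\,d\mu\rvert$ tends to $0$ by $(iii)$, so $\limsup_n\lvert\int\varphi\,d\mu_n-\int\varphi\,d\mu\rvert\le 2\epsilon$, and letting $\epsilon\to0$ yields $(ii)$.

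The main obstacle is $(ii)\Rightarrow(i)$, where one must upgrade from compactly supported test functions to all of $C_b(\R^d)$, controlling the mass near infinity. I would introduce cutoff functions $\chi_R\in C_c(\R^d)$ with $0\le\chi_R\le1$, $\chi_R\equiv1$ on the ball $B(0,R)$ and $\chi_R\equiv0$ outside $B(0,R+1)$. Because $\mu$ is a probability measure, monotone convergence gives $\int\chi_R\,d\mu\to1$ as $R\to\infty$, so given $\epsilon>0$ I can fix $R$ with $\int(1-\chi_R)\,d\mu<\epsilon$. Applying $(ii)$ to $\chi_R\in C_c(\R^d)$ gives $\int\chi_R\,d\mu_n\to\int\chi_R\,d\mu$, whence $\int(1-\chi_R)\,d\mu_n=1-\int\chi_R\,d\mu_n<\epsilon$ for all large $n$; this is the tail-control step, and it is exactly where the normalization to mass $1$ is used.

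To finish, take any $\varphi\in C_b(\R^d)$ and set $M:=\|\varphi\|_\infty$. The product $\varphi\chi_R$ again lies in $C_c(\R^d)$, so $(ii)$ gives $\int\varphi\chi_R\,d\mu_n\to\int\varphi\chi_R\,d\mu$. Splitting $\int\varphi\,d\mu_n-\int\varphi\,d\mu$ through $\int\varphi\chi_R\,d\mu_n$ and $\int\varphi\chi_R\,d\mu$, and using $\lvert\int\varphi(1-\chi_R)\,d\nu\rvert\le M\int(1-\chi_R)\,d\nu$ for $\nu\in\{\mu,\mu_n\}$, I obtain $\limsup_n\lvert\int\varphi\,d\mu_n-\int\varphi\,d\mu\rvert\le 2M\epsilon$; since $\epsilon$ was arbitrary this gives $(i)$ and closes the cycle. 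The only point requiring care is ensuring the cutoff functions are genuinely continuous and compactly supported (e.g.\ taking $\chi_R(x)=\bigl(1-d(x,B(0,R))\bigr)^+\wedge 1$), but this is a standard construction and poses no real difficulty.
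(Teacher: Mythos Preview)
Your proof is correct and follows essentially the same approach as the paper. The paper organizes the implications slightly differently (stating $(i)\Rightarrow(ii)$ and $(i)\Rightarrow(iii)$ as obvious, then proving $(ii)\Rightarrow(i)$ and $(iii)\Rightarrow(ii)$), but the substantive arguments---the density step for $(iii)\Rightarrow(ii)$ and the cutoff/tail-control argument for $(ii)\Rightarrow(i)$---are identical to yours, with your $\chi_R$ playing the role of the paper's increasing sequence $f_k\in C_c(\R^d)$ with $f_k\uparrow 1$.
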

\begin{proof}
It is obvious that $(i)\Longrightarrow (ii)$ and $(i)\Longrightarrow (iii)$. 
Therefore we first show $(ii)\Longrightarrow (i):$ Let therefore $\varphi\in C_b(\R^d)$ and let $(f_k)_{k\geq 1}\in C_c(\R^d)$ with $0\leq f_k\leq 1$ and $f_k\uparrow 1$ as $k\to\infty$. Then for all $k\geq 1$ we get that $\varphi f_k\in C_c(\R^d)$ and hence 
$$\lim_{n\to\infty}\int_{\R^d}\varphi f_kd\mu_n=\int_{\R^d} \varphi f_k d\mu.$$
Moreover, we have 
$$\left\vert \int_{\R^d} \varphi d\mu-\int_{\R^d}\varphi f_k d\mu\right\vert\leq \sup_{x\in\R^d}\vert\varphi(x)\vert\left(1-\int_{\R^d} f_kd\mu\right)$$
and also 
$$\left\vert \int_{\R^d} \varphi d\mu_n-\int_{\R^d} \varphi f_k d\mu_n\right\vert\leq \sup_{x\in\R^d}\vert \varphi(x)\vert \left(1-\int_{\R^d} f_k d\mu_n\right).$$ 
Hence, for all $k\geq 1$ we get
\begin{align*}
\limsup_n \left\vert\int_{\R^d}\varphi d\mu-\int_{\R^d} \varphi  d\mu_n\right\vert&\leq \sup_{x\in \R^d}\vert \varphi(x)\vert \limsup_n\left[\left(1-\int_{\R^d} f_k d\mu_n\right)+\left(1-\int_{\R^d} f_k d\mu\right)\right]\\
&=2\sup_{x\in\R^d}\vert\varphi(x)\vert\left(1-\int f_kd\mu\right)\xrightarrow{k\to\infty}0.
\end{align*}
Now we show $(iii)\Longrightarrow (ii):$ Let therefore $\varphi\in C_c(\R^d)$. Then there is a sequence $(\varphi_k)_{k\geq 1}\subset H$ such that $\|\varphi-\varphi_k\|_\infty\leq \frac{1}{k}$ for all $k\geq 1$. This implies that
\begin{multline*}
\limsup_n\left\vert\int \varphi d\mu_n-\int \varphi d\mu\right\vert\\\leq \limsup_n\left(\left\vert\int \varphi d\mu_n-\int\varphi_k d\mu_n\right\vert+\underbrace{\left\vert\int \varphi_k d\mu_n-\int\varphi_k d\mu\right\vert}_{\xrightarrow{n\to\infty}0} +\left\vert \int \varphi_k d\mu-\int\varphi d\mu\right\vert\right)\leq \frac{2}{k}.
\end{multline*}
The claim follows now for $k\to\infty$.
\end{proof}
\begin{thm}[Lèvy]
Let $(\Omega,\A,\p)$ be a probability space. Let $(\mu_n)_{n\geq 1}$ be a sequence of probability measures on $\R^d$ associated to a sequence of real r.v.'s $(X_n)_{n\geq 1}$. Moreover, let $\hat{\mu}_n(\xi)=\int_{\R^d}e^{i\xi x}d\mu_n(x)$ and $\Phi_X(\xi)=\E[e^{i\xi x}]$. Then for all $\xi\in\R^d$ we get
$$\lim_{n\to\infty\atop w}\mu_n=\mu \Longleftrightarrow\lim_{n\to\infty}\hat\mu_n(\xi)=\hat\mu(\xi).$$ 
Equivalently, for all $\xi\in\R^d$ we get 
$$\lim_{n\to\infty\atop law}X_n=X\Longleftrightarrow\lim_{n\to\infty}\Phi_{X_n}(\xi)=\Phi_X(\xi).$$
\end{thm}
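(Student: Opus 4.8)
The plan is to prove the two implications separately, with the forward direction being essentially immediate and the converse carrying all the content.

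\textbf{The easy direction.} Suppose $\lim_{n\to\infty,w}\mu_n=\mu$. For each fixed $\xi\in\R^d$ the map $x\mapsto e^{i\langle\xi,x\rangle}$ splits into real part $\cos\langle\xi,x\rangle$ and imaginary part $\sin\langle\xi,x\rangle$, both of which lie in $C_b(\R^d)$. Applying the definition of weak convergence to these two bounded continuous functions gives $\int e^{i\langle\xi,x\rangle}\,d\mu_n\to\int e^{i\langle\xi,x\rangle}\,d\mu$, i.e. $\hat\mu_n(\xi)\to\hat\mu(\xi)$. No further work is needed here, and the same argument shows that \emph{any} weak limit of a subsequence automatically matches $\hat\mu$ in its characteristic function, a fact I will reuse below.

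\textbf{The hard direction, via tightness.} The strategy for the converse is: (a) prove the family $(\mu_n)$ is tight; (b) extract weakly convergent subsequences; (c) identify every limit as $\mu$ using the uniqueness theorem for characteristic functions stated earlier; and (d) conclude by the subsequence principle. Tightness is the crux. First I would establish, for a single probability measure $\nu$ on $\R$ and any $T>0$, the elementary bound
\[
\nu\Bigl(\{x:|x|\geq 2/T\}\Bigr)\leq \frac{1}{T}\int_{-T}^{T}\bigl(1-\hat\nu(\xi)\bigr)\,d\xi,
\]
which follows by Fubini after computing the kernel $\frac{1}{T}\int_{-T}^T(1-e^{i\xi x})\,d\xi=2\bigl(1-\tfrac{\sin(Tx)}{Tx}\bigr)$ and using that $1-\tfrac{\sin u}{u}\geq\tfrac12$ whenever $|u|\geq 2$. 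Continuity of $\hat\mu$ at the origin together with $\hat\mu(0)=1$ makes the right-hand side, with $\nu=\mu$, tend to $0$ as $T\to0$; since $|1-\hat\mu_n(\xi)|\leq 2$ and $\hat\mu_n\to\hat\mu$ pointwise, dominated convergence transfers this smallness uniformly to all large $n$, while the finitely many remaining measures are each tight on their own. In $\R^d$ I apply the one-dimensional bound coordinate by coordinate to control $\mu_n$ outside a large box. This uniform tail estimate is the step I expect to be the main obstacle, precisely because it is where the continuity of the limiting transform at $0$ enters essentially; without it the implication is false.

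\textbf{Extraction and identification.} With tightness in hand I invoke Prokhorov's theorem (or, concretely in $d=1$, the Helly selection theorem applied to the distribution functions $F_{\mu_n}$, followed by the consequence after the previous proposition that weak convergence is equivalent to convergence of distribution functions at continuity points): every subsequence of $(\mu_n)$ admits a further subsequence $(\mu_{n_k})$ converging weakly to some probability measure $\nu$. By the easy direction applied to this convergent subsequence, $\hat\nu(\xi)=\lim_k\hat\mu_{n_k}(\xi)=\hat\mu(\xi)$ for every $\xi$, so the uniqueness theorem forces $\nu=\mu$. Thus every subsequence has a further subsequence converging weakly to the \emph{same} limit $\mu$, and the standard subsequence principle yields $\lim_{n\to\infty,w}\mu_n=\mu$. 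The equivalent statement for the random variables follows by reading $\mu_n=\p_{X_n}$, $\mu=\p_X$, $\hat\mu_n=\Phi_{X_n}$ and $\hat\mu=\Phi_X$.
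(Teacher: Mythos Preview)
Your proof is correct, but it follows a genuinely different route from the paper. You argue via \emph{tightness}: the Fourier tail bound $\nu(\{|x|\ge 2/T\})\le T^{-1}\int_{-T}^T(1-\hat\nu(\xi))\,d\xi$ yields uniform tightness of $(\mu_n)$, then Prokhorov (or Helly) extracts weakly convergent subsequences, and the uniqueness theorem for characteristic functions identifies every limit as $\mu$. This is the classical compactness-plus-identification argument.

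The paper instead avoids compactness altogether and works \emph{directly} through Fourier inversion with a Gaussian mollifier. It smooths test functions $f\in C_c(\R)$ by the heat kernel $g_\sigma(x)=\frac{1}{\sigma\sqrt{2\pi}}e^{-x^2/(2\sigma^2)}$, observes that $\int g_\sigma*f\,d\nu$ can be rewritten as a double integral involving $\hat\nu$ (essentially Fourier inversion against a Gaussian weight), and then dominated convergence turns pointwise convergence $\hat\mu_n\to\hat\mu$ into $\int g_\sigma*f\,d\mu_n\to\int g_\sigma*f\,d\mu$. Since the class $H=\{g_\sigma*f:\sigma>0,\,f\in C_c\}$ has $\bar H\supset C_c(\R^d)$ (because $g_\sigma*f\to f$ uniformly as $\sigma\to 0$), the previous proposition on testing against a dense subclass finishes the job.

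Your approach is more portable and yields the useful tightness criterion as a by-product, but it imports Prokhorov/Helly, which the paper never states. The paper's approach is more self-contained within its own development---it only needs the immediately preceding proposition---and showcases the Fourier-analytic mechanism explicitly, at the cost of being less modular.
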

\begin{proof}
It is obvious that $\lim_{n\to\infty\atop w}\mu_n=\mu$ implies that $\lim_{n\to\infty}\hat{\mu}_n(\xi)=\hat{\mu}(\xi)$. Therefore $e^{i\xi X}$ is continuous and bounded. For notation conventions we deal with the case $d=1$. Let therefore $f\in C_c(\R^d)$. For $\sigma>0$ we also note $g_\sigma(x)=\frac{1}{\sigma\sqrt{2\pi}}e^{-\frac{x^2}{2\sigma^2}}$.
\begin{exer} 
Show that $g_\sigma *f\xrightarrow{\sigma\to 0}f$ uniformly on $\R$.
\end{exer}
\begin{exer}
Show that if $\nu$ is a probability measure, then
$$\int_\R g_\sigma * fd\nu=\int_\R f(x)(g_\sigma *\nu)(x)dx=\int_\R f(x)\frac{1}{\sigma\sqrt{2\pi}}\int_\R e^{i\xi x}g_{\frac{1}{\sigma}}(\xi)\hat{\nu}(\xi)d\xi dx.$$
\end{exer}
Since $\lim_{n\to\infty}\hat{\mu}_n(\xi)=\hat{\mu}(\xi)$, we get by the dominated convergence theorem that 
$$\int_\R e^{i\xi x}g_{\frac{1}{\sigma}}(\xi)\hat{\mu}_n(\xi)d\xi\xrightarrow{n\to\infty}\int_\R e^{i\xi x}g_{\frac{1}{\sigma}}(\xi)\hat{\mu}(\xi)d\xi.$$
These quantities are bounded by 1, and hence we can apply the dominated convergence theorem to obtain 
$$\int_\R g_\sigma * fd\mu_n\xrightarrow{n\to\infty} \int_\R g_\sigma * fd\mu.$$
Let now $H:=\{\varphi=g_\sigma *f\mid \sigma>0,f\in C_c(\R^d)\}\subset C_b(\R^d)$. Since $f\in C_c(\R^d)$ we get that $\|g_\sigma *f-f\|_{\infty}\xrightarrow{n\to\infty}0$ and thus $\bar H\supset C_c(\R^d)$. The result now follows from the previous proposition.
\end{proof}
\begin{thm}[Lévy]
Let $(\mu_n)_{n\geq 1}$ be a sequence of probability measures on $\R^d$ with characteristic functions $(\Phi_n)_{n\geq 1}$. If $\Phi_n$ converges pointwise to a function $\Phi$ which is continuous at 0, then 
$$\lim_{n\to\infty\atop w}\mu_n=\mu$$
for some probability measure $\mu$ on $\R^d$.
\end{thm}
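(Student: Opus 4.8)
The plan is to deduce the theorem from the previous Lévy theorem by showing that the sequence $(\mu_n)_{n\geq 1}$ is \emph{tight}, extracting a weakly convergent subsequence, and then identifying every subsequential limit through its characteristic function. For notational convenience I would treat the case $d=1$, the general case following by applying the tail estimate to each coordinate.

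The first step is to establish the elementary tightness inequality
$$\mu_n\left(\left\{x\in\R\mid |x|\ge \tfrac{2}{\delta}\right\}\right)\le \frac{1}{\delta}\int_{-\delta}^{\delta}\bigl(1-\Phi_n(t)\bigr)\,dt,\qquad \delta>0.$$
To derive it I would write $1-\Phi_n(t)=\int_\R(1-e^{itx})\,d\mu_n(x)$, apply Fubini to interchange the $t$-integral with $\mu_n$, compute $\frac{1}{\delta}\int_{-\delta}^{\delta}(1-e^{itx})\,dt=2\bigl(1-\tfrac{\sin(\delta x)}{\delta x}\bigr)$ (the imaginary part integrates to zero over the symmetric interval, since $\Phi_n(-t)=\overline{\Phi_n(t)}$, so the whole quantity is real), and finally use that this integrand is nonnegative and is bounded below by $1$ as soon as $|\delta x|\ge 2$.

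Next I would exploit the continuity of $\Phi$ at $0$. Since each $\Phi_n$ is a characteristic function we have $\Phi_n(0)=1$, hence $\Phi(0)=1$; given $\varepsilon>0$ I would choose $\delta>0$ so small that $|1-\Phi(t)|<\varepsilon$ for $|t|\le\delta$. Because $|\Phi_n|\le 1$, dominated convergence yields $\frac{1}{\delta}\int_{-\delta}^{\delta}(1-\Phi_n(t))\,dt\to\frac{1}{\delta}\int_{-\delta}^{\delta}(1-\Phi(t))\,dt\le 2\varepsilon$, so the right-hand side of the inequality above is $\le 3\varepsilon$ for all large $n$; the finitely many remaining indices are controlled by enlarging the radius. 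This produces a compact set carrying $\mu_n$-mass $\ge 1-\varepsilon$ uniformly in $n$, which is exactly tightness. I expect this passage — from mere pointwise continuity of $\Phi$ at the single point $0$ to uniform smallness of the tails of all the $\mu_n$ — to be the heart of the proof and its main obstacle, since it is the only place where the continuity hypothesis is genuinely used.

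Finally, by a compactness argument (Prokhorov's theorem, or Helly's selection theorem when $d=1$) tightness gives a subsequence $(\mu_{n_k})$ with $\mu_{n_k}\xrightarrow{w}\mu$ for some probability measure $\mu$. Since $x\mapsto e^{i\xi x}$ lies in $C_b(\R^d)$, the previous Lévy theorem gives $\Phi_{n_k}(\xi)\to\hat\mu(\xi)$, while by hypothesis $\Phi_{n_k}(\xi)\to\Phi(\xi)$; hence $\hat\mu=\Phi$. Thus every weakly convergent subsequence has the same characteristic function $\Phi$, so by the uniqueness theorem for characteristic functions all such subsequential limits coincide with one measure $\mu$. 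A routine subsequence argument then forces $\mu_n\xrightarrow{w}\mu$, which completes the proof.
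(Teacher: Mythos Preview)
Your proposal is correct and follows the standard route to L\'evy's continuity theorem: the tightness estimate $\mu_n(\{|x|\ge 2/\delta\})\le \frac{1}{\delta}\int_{-\delta}^{\delta}(1-\Phi_n(t))\,dt$, dominated convergence combined with continuity of $\Phi$ at $0$ to make the right-hand side uniformly small, Prokhorov (or Helly) to extract subsequential limits, and then the first L\'evy theorem plus injectivity of the Fourier transform to pin down the limit and run the subsequence argument.

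There is nothing to compare against, however: the paper explicitly omits this proof (``No proof here.''). Your argument is exactly the one that would normally fill that gap, and it uses only tools already available in the notes --- the earlier L\'evy theorem, the uniqueness of characteristic functions, Fubini, and dominated convergence --- except for Prokhorov/Helly, which is not stated in the paper but is the natural compactness input here. The one cosmetic point: you might make explicit that the $d$-dimensional tightness follows by applying the one-dimensional estimate to each marginal $\langle e_j,\cdot\rangle_*\mu_n$ (equivalently, by evaluating $\Phi_n$ along coordinate axes), since the paper works in $\R^d$ throughout.
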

\begin{proof}
No proof here.
\end{proof}
\begin{ex} Let $(X_n)_{n\geq 1}$ be a sequence of poisson r.v.'s with parameter $\lambda$. Moreover, consider the sequence $Z_n=\frac{X_n-1}{\sqrt{n}}$. Then we have
\begin{multline*}
E\left[e^{iu Z_n}\right]=\E\left[e^{iu\left(\frac{X_n-u}{\sqrt{n}}\right)}\right]=e^{-i\frac{u}{\sqrt{n}}}\E\left[e^{iu\frac{X_n}{\sqrt{n}}}\right]=e^{-i\frac{u}{\sqrt{n}}}e^{\lambda\left(e^{i \frac{u}{\sqrt{n}}}-1\right)}\\=e^{-i\frac{u}{\sqrt{n}}}e^{\lambda\left(i\frac{u}{\sqrt{n}}-\frac{u^2}{2n}+O\left(\frac{1}{n}\right)\right)}\xrightarrow{n\to\infty}e^{-\frac{u^2}{2}}.
\end{multline*}
Since $\E\left[e^{iu\mathcal{N}(0,1)} \right]=e^{-\frac{u^2}{2}}$, we deduce that $\lim_{n\to\infty\atop law}Z_n=\lim_{n\to\infty\atop law}\frac{X_n-u}{\sqrt{n}}=\mathcal{N}(0,1)$. Before stating and proving the central limit theorem, we give two extra results on convergence in law.
\end{ex}
\begin{thm}
Let $(\Omega,\A,\p)$ be a probability space. Let $(X_n)_{n\geq 1}$ be a sequence of r.v.'s and $X$ a r.v. and assume that $\lim_{n\to\infty\atop law}X_n=X$ and that $X$ is a.s. equal to a constant $a$. Then 
$$\lim_{n\to\infty\atop\p}X_n=X.$$
\end{thm}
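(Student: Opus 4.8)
The plan is to reduce the statement to an elementary estimate by evaluating the convergence-in-law hypothesis on a single, well-chosen bounded continuous test function and then invoking Markov's inequality. Since $X=a$ a.s., its law is the Dirac mass $\delta_a$, so for every $\varphi\in C_b(\R^d)$ the assumption $\lim_{n\to\infty\atop law}X_n=X$ reads $\lim_{n\to\infty}\E[\varphi(X_n)]=\E[\varphi(X)]=\varphi(a)$. To establish $\lim_{n\to\infty\atop\p}X_n=X$ it suffices to prove $\p[\vert X_n-a\vert>\epsilon]\to 0$ for every $\epsilon\in(0,1)$, because this probability is nonincreasing in $\epsilon$, so larger thresholds are automatically controlled by smaller ones.

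First I would introduce the map $\varphi(x)=\vert x-a\vert\land 1$. It is continuous and bounded by $1$, hence $\varphi\in C_b(\R^d)$, and it satisfies $\varphi(a)=0$. Applying the definition of convergence in law to this particular $\varphi$ then gives
$$\lim_{n\to\infty}\E[\vert X_n-a\vert\land 1]=\lim_{n\to\infty}\E[\varphi(X_n)]=\varphi(a)=0.$$

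Next, I fix $\epsilon\in(0,1)$ and note that the nonnegative r.v. $\vert X_n-a\vert\land 1$ satisfies $\{\vert X_n-a\vert>\epsilon\}=\{\vert X_n-a\vert\land 1>\epsilon\}$ precisely because $\epsilon<1$. Markov's inequality (the boxed inequality in Remark \ref{random3}) then yields
$$\p[\vert X_n-a\vert>\epsilon]=\p[\vert X_n-a\vert\land 1>\epsilon]\leq\frac{1}{\epsilon}\E[\vert X_n-a\vert\land 1]\xrightarrow{n\to\infty}0.$$
By the monotonicity remark above, this forces $\p[\vert X_n-a\vert>\epsilon]\to 0$ for every $\epsilon>0$, which is exactly convergence in probability to $X$.

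There is no genuine obstacle in this argument; the only points demanding a little care are the replacement of the unbounded distance $x\mapsto\vert x-a\vert$ by the bounded continuous surrogate $\vert x-a\vert\land 1$ so that the test function lies in $C_b(\R^d)$, and the bookkeeping distinguishing $\{>\epsilon\}$ from $\{\geq\epsilon\}$ while restricting to $\epsilon<1$. An alternative route bypassing Markov's inequality is to apply the portmanteau-type proposition directly: the set $F=\{x\mid\vert x-a\vert\geq\epsilon\}$ is closed with $\delta_a(F)=0$, so condition $(iii)$ of that proposition gives $\limsup_n\p[\vert X_n-a\vert\geq\epsilon]\leq\delta_a(F)=0$, and the same conclusion follows.
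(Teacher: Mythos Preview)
Your proof is correct and follows essentially the same approach as the paper: both introduce the test function $f(x)=\vert x-a\vert\land 1\in C_b(\R^d)$ and use convergence in law to conclude $\E[\vert X_n-a\vert\land 1]\to 0$. The only cosmetic difference is that the paper finishes by implicitly invoking the earlier proposition identifying convergence in probability with convergence in the metric $d(X,Y)=\E[\vert X-Y\vert\land 1]$, whereas you spell out this last step via Markov's inequality.
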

\begin{proof}
Let $f(x):=\vert x-a\vert\land 1$. Then $f$ is a continuous and bounded map and therefore $\lim_{n\to\infty}\E[f(X_n)]=\E[f(X)]=0$, i.e. $\lim_{n\to\infty}\E[\vert X_n-a\vert \land 1]=0$ which implies that $\lim_{n\to\infty\atop \p}X_n=X$.
\end{proof}
\begin{thm}
Let $(\Omega,\A,\p)$ be a probability space. Let $(X_n)_{n\geq 1}$ be a sequence of r.v.'s and $X$ be r.v. in $\R^d$. Assume that $X_n$ has density $f_n$ for all $n\geq 1$ and $X$ has density $f$. Moreover, assume that $\lim_{n\to\infty}f_n(x)=f(x)$, a.e. Then 
$$\lim_{n\to\infty\atop law}X_n=X.$$
\end{thm}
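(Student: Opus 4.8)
The plan is to reduce the statement to an $L^1$-convergence property of the densities (this is Scheffé's lemma) and then bound the test-function integrals directly. Since $X_n$ has density $f_n$ and $X$ has density $f$, Proposition \ref{random} together with the definition of a density gives, for every $\varphi\in C_b(\R^d)$,
$$\E[\varphi(X_n)]=\int_{\R^d}\varphi(x)f_n(x)dx,\qquad \E[\varphi(X)]=\int_{\R^d}\varphi(x)f(x)dx.$$
Therefore
$$\left|\E[\varphi(X_n)]-\E[\varphi(X)]\right|\leq \|\varphi\|_\infty\int_{\R^d}|f_n(x)-f(x)|dx,$$
and it suffices to prove that $\int_{\R^d}|f_n-f|dx\to 0$: the right-hand side then tends to $0$ for every $\varphi\in C_b(\R^d)$, which is exactly the definition of $\lim_{n\to\infty\atop law}X_n=X$.

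To establish the $L^1$-convergence I would introduce the truncated differences $g_n:=(f-f_n)^+=\max(f-f_n,0)$. By hypothesis $f_n\to f$ a.e., so $g_n\to 0$ a.e., and moreover $0\leq g_n\leq f$ with $f\in L^1(\R^d)$ because $f$ is a probability density. The dominated convergence theorem, applied with the integrable dominating function $f$, then yields
$$\int_{\R^d}g_n\,dx\xrightarrow{n\to\infty}0.$$

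It remains to pass from the positive part to the full $L^1$-norm. The key is the elementary identity $|a|=a+2\max(-a,0)$ applied to $a=f_n-f$, which (noting $\max(-(f_n-f),0)=(f-f_n)^+$) gives $|f_n-f|=(f_n-f)+2g_n$. Integrating and using that both $f_n$ and $f$ are probability densities, so that $\int_{\R^d}(f_n-f)dx=1-1=0$, we obtain
$$\int_{\R^d}|f_n-f|dx=2\int_{\R^d}g_n\,dx\xrightarrow{n\to\infty}0.$$
Combining this with the bound of the first paragraph completes the argument.

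The main obstacle is the justification of the dominated convergence step: one must observe that $f$ itself serves as the integrable envelope for the truncated differences $g_n$, and that the cancellation $\int_{\R^d}(f_n-f)dx=0$ — which is precisely where the normalization $\int f_n=\int f=1$ enters — is what converts control of the positive part into control of the entire $L^1$-distance. The a.e. convergence of the densities is essential here, since the integral cancellation alone would not force $\int|f_n-f|\to 0$.
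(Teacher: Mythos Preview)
Your proof is correct; it is the standard Scheff\'e argument and is carried out cleanly.

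The paper, however, takes a different route. Rather than passing through the $L^1$-convergence of the densities, it works directly with a bounded measurable test function $h$: setting $\alpha=\sup|h|$, it applies Fatou's lemma to the nonnegative sequences $(h+\alpha)f_n$ and $(\alpha-h)f_n$, obtaining $\E[h(X)]\leq\liminf_n\E[h(X_n)]$ from the first and $\limsup_n\E[h(X_n)]\leq\E[h(X)]$ from the second. The normalization $\int f_n=\int f=1$ enters there when the additive constants $\alpha$ cancel. Your approach actually proves more, namely total variation convergence $\int|f_n-f|\to 0$, from which convergence in law follows trivially; the paper's Fatou argument is slightly more bare-handed but yields only the weak convergence that is asked for. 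Both hinge on the same two ingredients---a.e.\ convergence and the common mass $1$---just deployed in different order.
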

\begin{proof}
We need to show that $\E[h(X_n)]\xrightarrow{n\to\infty}\E[h(X)]$, where $h:\R^d\to\R$ is a bounded and measurable map and
$$\E[h(X_n)]=\int_{\R^d}h(x)f_n(x)dx,$$
$$\E[h(X)]=\int_{\R^d}h(x)f(x)dx.$$
Let $h:\R^d\to\R$ be a bounded and measurable map. Moreover, let $\alpha=\sup_{x\in\R^d}\vert h(x)\vert.$ Set $h_1(x)=h(x)+\alpha\geq 0$ and $h_2(x)=\alpha-h(x)\geq 0$. So it follows that $h_1f_n\geq 0$ and $h_2f_n\geq 0$. Moreover, we also get that
$$h_1f_n\xrightarrow{n\to\infty}h_1f\hspace{0.2cm}\text{a.e.}$$
$$h_2f_n\xrightarrow{n\to\infty}h_2f\hspace{0.2cm}\text{a.e.}$$
With Fatou's lemma we get 
$$\E[h_1(X)]=\int_{\R^d}h_1(x)f(x)dx\leq \liminf_n\int_{\R^d}h_1(x)f_n(x)dx=\liminf_n\E[h_1(X_n)].$$
Similarly we get $\E[h_2(X)]\leq \liminf_n\E[h_2(X_n)]$. Now substitute $h_1(x)=h(x)+\alpha$ and $h_2(x)=\alpha-h(x)$, where we use $\liminf_n(-a_n)=\liminf_n(a_n)$ to obtain
$$\limsup_n\E[h(X_n)]\leq \E[h(X)]\leq \liminf_n\E[h(X_n)],$$
which implies that
$$\liminf_n\E[h(X_n)]=\limsup_n\E[h(X_n)]=\E[h(X)].$$
\end{proof}
\section{The Central limit theorem (real case)}
\begin{thm}[Central limit theorem (CLT)]
Let $(\Omega,\A,\p)$ be a probability space. Let $(X_n)_{n\geq 1}$ be a sequence of iid r.v.'s with values in $\R$. We assume that $\E[X_k^2]<\infty$ (i.e. $X_k\in L^2(\Omega,\A,\p)$) and let $\sigma^2=Var(X_k)$ for all $k\in\{1,...,n\}$. Then for all $k\in\{1,...,n\}$ we get
$$\sqrt{n}\left(\frac{1}{n}\sum_{i=0}^nX_i-\E[X_k]\right)\xrightarrow{n\to\infty\atop law}\mathcal{N}(0,\sigma^2).$$
Equivalently, for all $a,b\in\bar\R$ with $a<b$ and for all $k\in\{1,...,n\}$ we get
$$\lim_{n\to\infty}\p\left[\sum_{i=0}^nX_i\in \left[n\E[X_k]+a\sqrt{n},n\E[X_k]+b\sqrt{n}\right]\right]=\frac{1}{\sigma\sqrt{2\pi}}\int_a^be^{-\frac{x^2}{2\sigma^2}}dx.$$
\end{thm}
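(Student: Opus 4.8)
The plan is to establish the convergence in law through characteristic functions, invoking Lévy's theorem from the preceding section. First I would reduce to the centered case: set $\mu=\E[X_1]$ and $Y_i=X_i-\mu$, so that the $Y_i$ are iid with $\E[Y_i]=0$ and $\E[Y_i^2]=\sigma^2$. Writing $T_n=\frac{1}{\sqrt n}\sum_{i=1}^n Y_i$, the claim $\sqrt n\big(\frac1n\sum_{i=1}^n X_i-\mu\big)\to\mathcal N(0,\sigma^2)$ is exactly $\lim_{n\to\infty\atop law}T_n=\mathcal N(0,\sigma^2)$. By Lévy's theorem it then suffices to prove that $\Phi_{T_n}(\xi)\to e^{-\sigma^2\xi^2/2}$ for every $\xi\in\R$, because the right-hand side is the characteristic function of $\mathcal N(0,\sigma^2)$ computed in the earlier Lemma, and it is continuous at $0$.

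Next I would compute $\Phi_{T_n}$ explicitly. Since the $Y_i$ are independent and identically distributed, the factorization of characteristic functions of sums of independent r.v.'s gives
$$\Phi_{T_n}(\xi)=\E\left[e^{i\xi T_n}\right]=\prod_{i=1}^n\E\left[e^{i\xi Y_i/\sqrt n}\right]=\left(\Phi_Y\!\left(\frac{\xi}{\sqrt n}\right)\right)^n,$$
where $\Phi_Y$ is the common characteristic function of the $Y_i$. Because $\E[Y^2]<\infty$, the Proposition on the second-order expansion of characteristic functions applies and yields, as $t\to 0$,
$$\Phi_Y(t)=1+it\,\E[Y]-\frac{t^2}{2}\E[Y^2]+o(t^2)=1-\frac{\sigma^2 t^2}{2}+o(t^2).$$
Substituting $t=\xi/\sqrt n$ gives $\Phi_Y(\xi/\sqrt n)=1-\frac{\sigma^2\xi^2}{2n}+o(1/n)$ for each fixed $\xi$.

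The key analytic step is to pass to the limit in the $n$-th power, i.e. to justify
$$\left(1-\frac{\sigma^2\xi^2}{2n}+o(1/n)\right)^n\xrightarrow{n\to\infty}e^{-\sigma^2\xi^2/2}.$$
This is where I expect the main obstacle to lie, since one must control the accumulation of the error term $o(1/n)$ across the $n$ factors. I would handle it with the standard elementary fact that for complex numbers $z_1,\dots,z_n$ and $w_1,\dots,w_n$ of modulus at most $1$ one has $\left\vert\prod_k z_k-\prod_k w_k\right\vert\le\sum_k\vert z_k-w_k\vert$, comparing $\Phi_Y(\xi/\sqrt n)^n$ with $(1-\sigma^2\xi^2/(2n))^n$. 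The latter converges to $e^{-\sigma^2\xi^2/2}$ by the classical limit $(1+c_n/n)^n\to e^c$ whenever $c_n\to c$, while the comparison error is bounded by $n\cdot o(1/n)\to 0$. One has to verify that the relevant moduli are at most $1$ for $n$ large, which follows from $\vert\Phi_Y\vert\le 1$ together with $\vert 1-\sigma^2\xi^2/(2n)\vert\le 1$ once $n$ is large.

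Finally I would derive the equivalent integral formulation. The event $\left\{\sum_{i=1}^n X_i\in[n\mu+a\sqrt n,\,n\mu+b\sqrt n]\right\}$ coincides with $\{T_n\in[a,b]\}$. Since $\lim_{n\to\infty\atop law}T_n=\mathcal N(0,\sigma^2)$ and the limiting law is absolutely continuous — hence assigns measure $0$ to the boundary $\{a,b\}$ — the portmanteau characterization of weak convergence from the earlier Proposition yields
$$\p\big[T_n\in[a,b]\big]\xrightarrow{n\to\infty}\frac{1}{\sigma\sqrt{2\pi}}\int_a^b e^{-x^2/(2\sigma^2)}\,dx,$$
which is precisely the second assertion of the theorem.
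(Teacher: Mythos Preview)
Your proof is correct and follows essentially the same route as the paper: reduce to the centered case, factor the characteristic function using independence, apply the second-order expansion of $\Phi_Y$ near $0$, and conclude via L\'evy's theorem. If anything, you are more careful than the paper in two places---you spell out the product inequality that controls the $o(1/n)$ error in the $n$-th power, and you explicitly invoke the portmanteau proposition to derive the second (interval) formulation---whereas the paper simply writes the limit $\left(1-\tfrac{\sigma^2\xi^2}{2n}+O(1/n)\right)^n\to e^{-\sigma^2\xi^2/2}$ without further comment and does not separately argue the equivalent statement.
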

\begin{ex} If $\E[X_k]=0$ for all $k\in\{1,...,n\}$, then
$$\lim_{n\to\infty}\p\left[\frac{\sum_{i=1}^nX_i-n\E[X_k]}{\sqrt{n}}\in[a,b]\right]=\frac{1}{\sigma\sqrt{2\pi}}\int_a^be^{-\frac{x^2}{2\sigma^2}}dx.$$
\end{ex}
\begin{proof}
Without loss of generality we can assume that $\E[X_k]=0$, for all $k\in\{1,...,n\}$. Define now a sequence $Z_n=\frac{\sum_{i=1}^nX_i}{\sqrt{n}}$. Then we can obtain
$$\Phi_{Z_n}(\xi)=\E\left[e^{i\xi Z_n}\right]=\E\left[e^{i\xi \left(\frac{\sum_{i=1}^nX_i}{\sqrt{n}}\right)}\right]=\prod_{j=1}^n\E\left[e^{i\xi \frac{X_j}{\sqrt{n}}}\right]=\E\left[e^{i\xi\frac{X_k}{\sqrt{n}}}\right]^n=\Phi_{X_k}^n(\xi).$$
We have already seen that 
$$\Phi_{X_k}(\xi)=1+n\xi\E[X_k]-\frac{1}{2}\xi^2\E[X_k^2]+O(\xi^2)=1-\frac{\sigma^2}{2}\xi^2+O(\xi^2).$$
Finally, for fixed $\xi$, we have 
$$\Phi_{X_k}\left(\frac{\xi}{\sqrt{n}}\right)=1-\frac{\sigma^2\xi^2}{2n}+O\left(\frac{1}{n}\right)$$
$$\lim_{n\to\infty}\Phi_{X_k}^n(\xi)=\lim_{n\to\infty}\left(1-\frac{\sigma^2\xi^2}{2}+O\left(\frac{1}{n}\right)\right)^n=\E\left[e^{i\xi \mathcal{N}(0,\sigma^2)}\right].$$
\end{proof}
\begin{figure}
\begin{center}
\includegraphics[height=8cm, width=10cm]{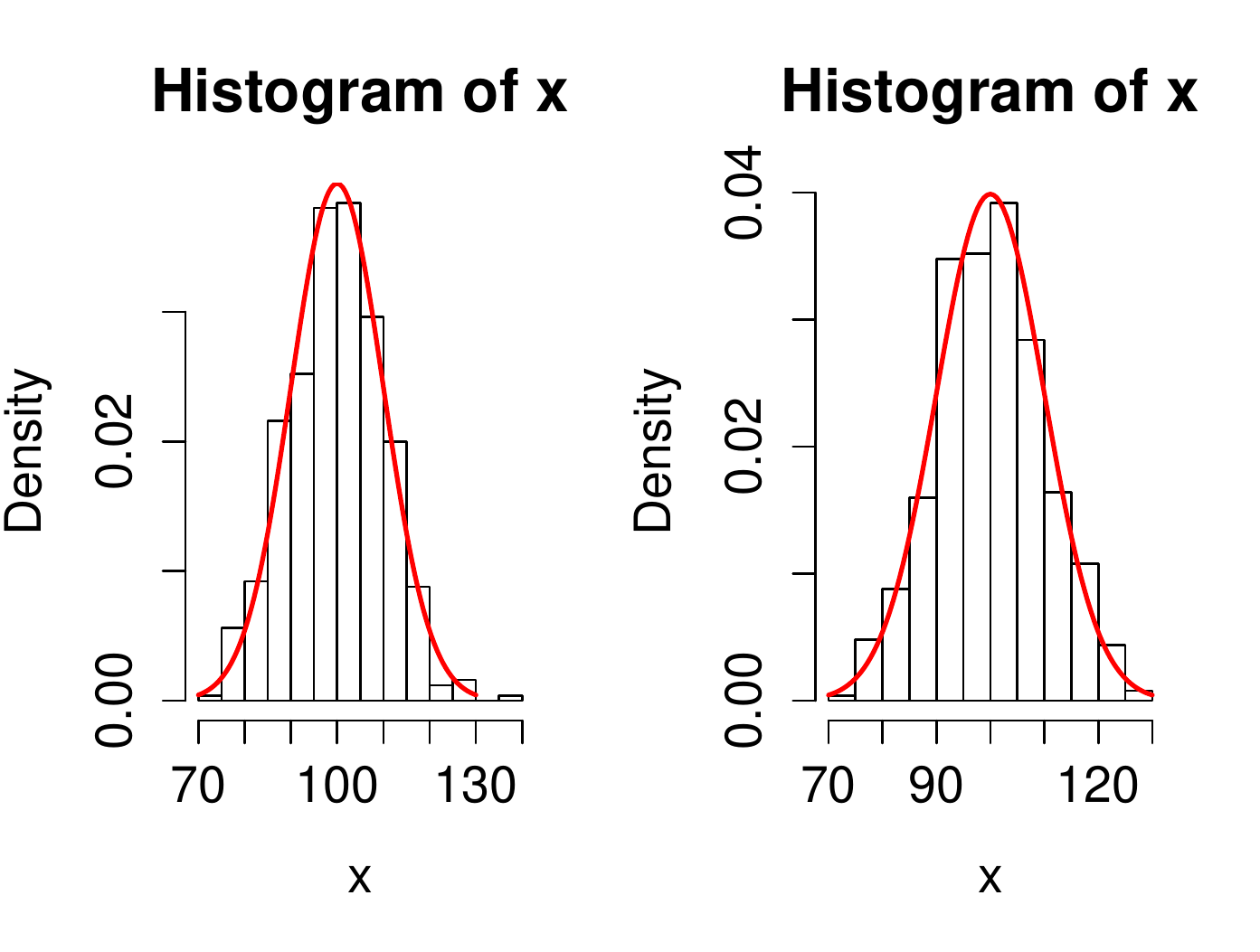}
\end{center}
\caption{For the illustration of the CLT, the distribution of a Gaussian distributed r.v. $X$ with $\mu=100$ and $\sigma=10$ ($X\sim \mathcal{N}(\mu,\sigma^2)$).}
\end{figure}
\begin{figure}
\begin{center}
\includegraphics[height=8cm, width=10cm]{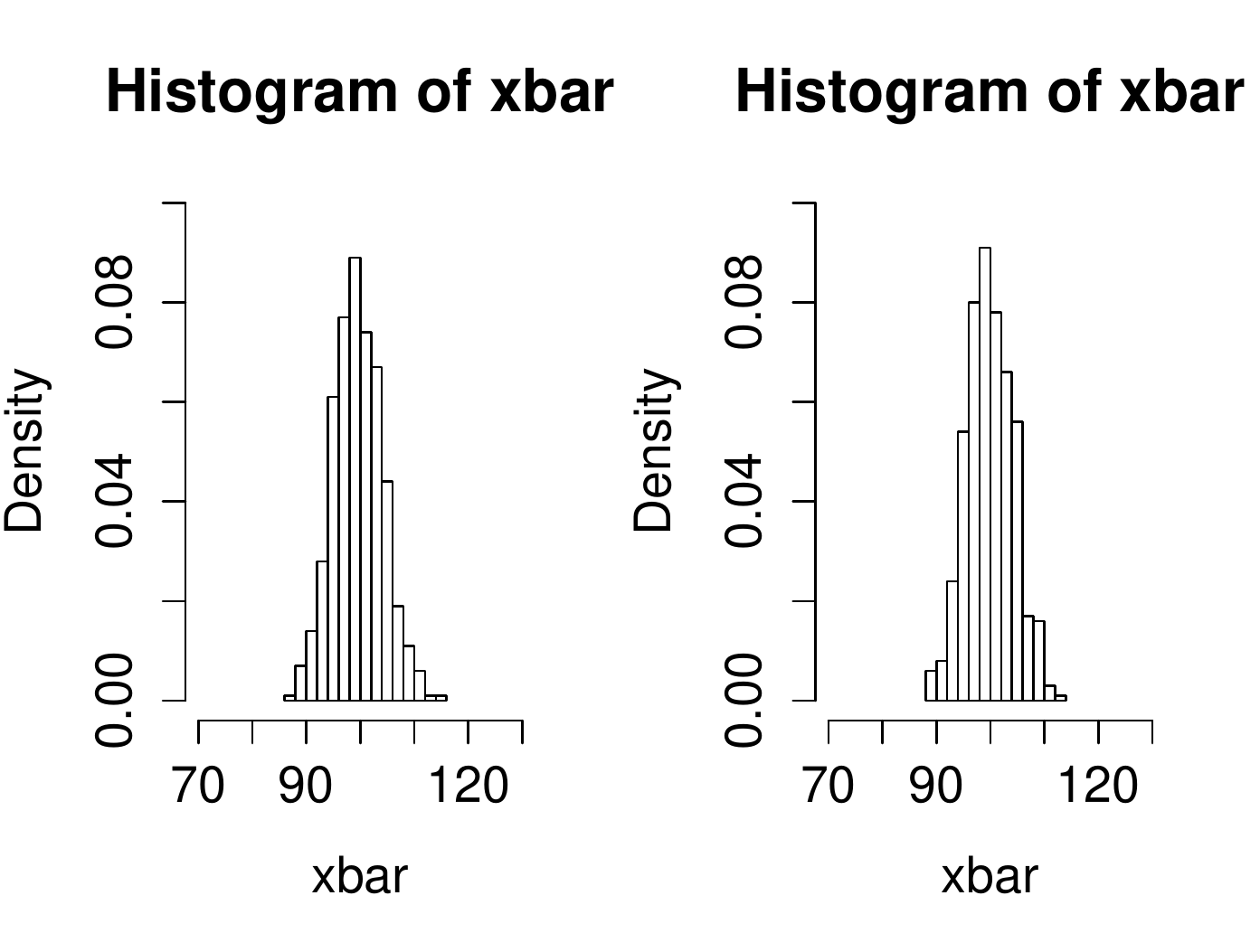}
\end{center}
\caption{The distribution of the mean of $n=5$ i.i.d. Gaussian r.v.'s $(X_n)$ with $\mu=100$ and $\sigma^2=10$ ($X_k\sim \mathcal{N}(\mu,\sigma^2)$). The figure illustrates the behavior of the distribution of the mean, which is given by $\bar X_n=\frac{1}{n}\sum_{k=1}^nX_k$.}
\end{figure}

\begin{figure}
\begin{center}
\includegraphics[height=8cm, width=10cm]{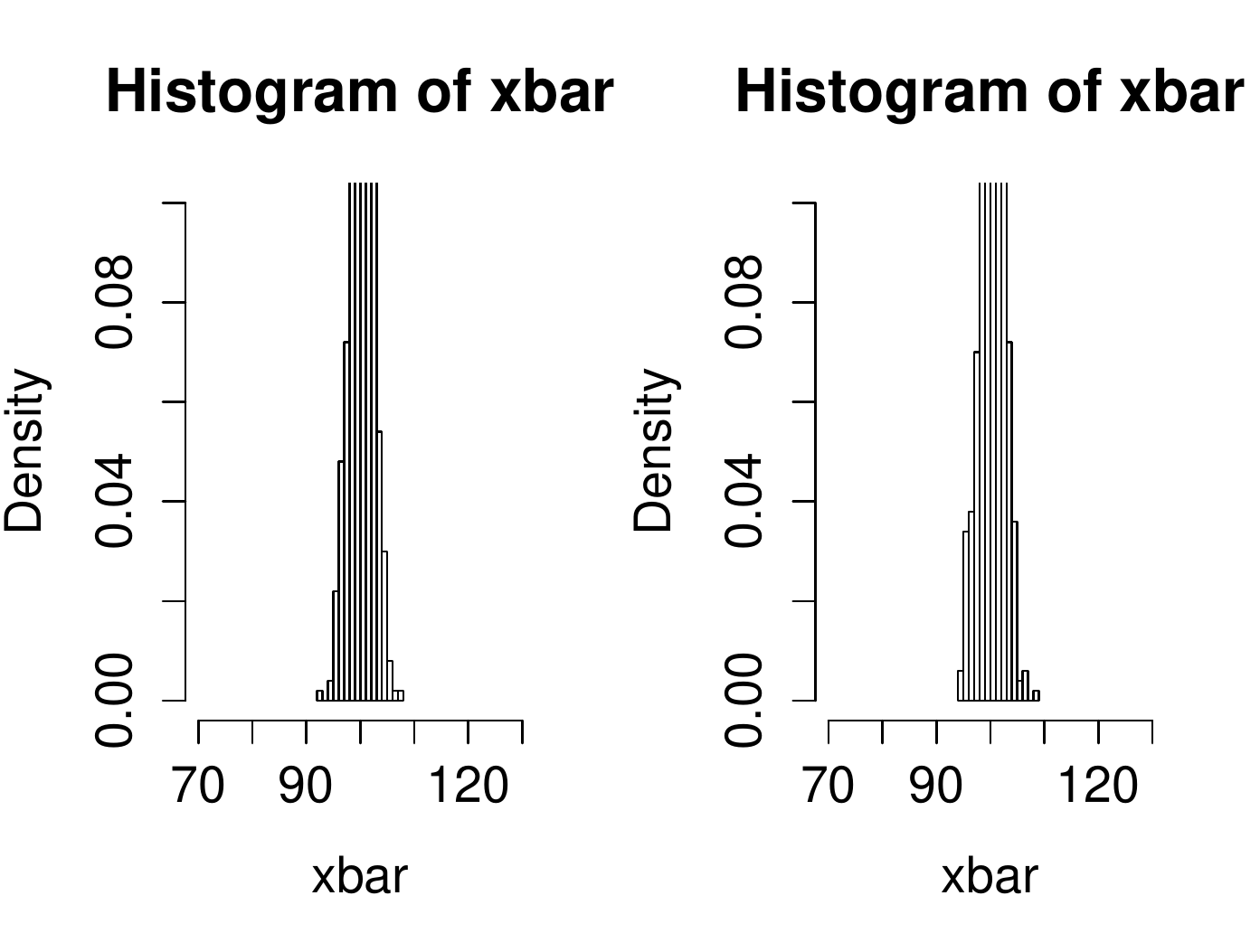}
\end{center}
\caption{The distribution of the mean of $n=20$ i.i.d. Gaussian r.v.'s $(X_n)$ with $\mu=100$ and $\sigma^2=10$ ($X_k\sim \mathcal{N}(\mu,\sigma^2)$). The figure illustrates the behavior of the distribution of the mean, which is given by $\bar X_n=\frac{1}{n}\sum_{k=1}^nX_k$.}
\end{figure}

\begin{figure}
\begin{center}
\includegraphics[height=8cm, width=10cm]{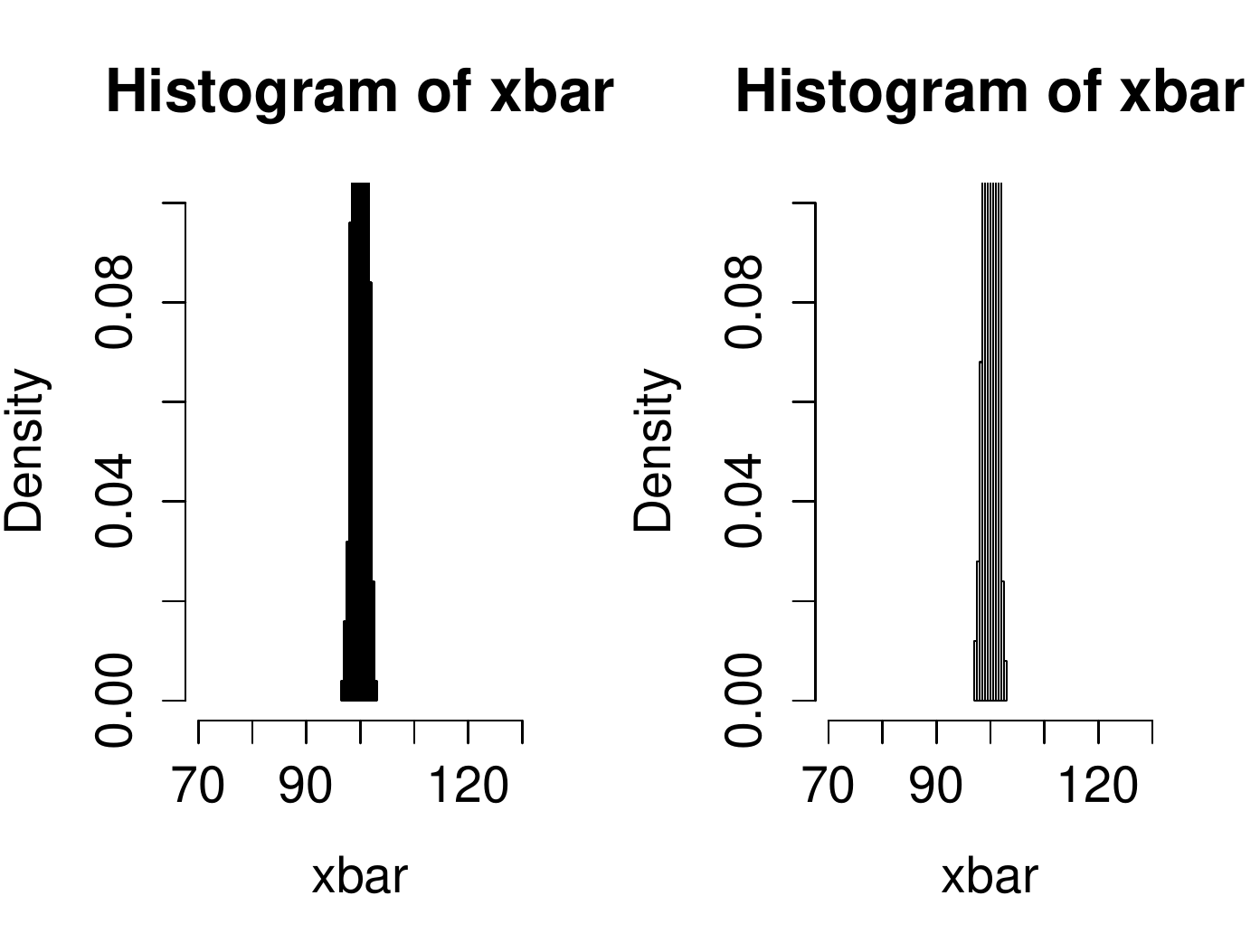}
\end{center}
\caption{The distribution of the mean of $n=100$ i.i.d. Gaussian r.v.'s $(X_n)$ with $\mu=100$ and $\sigma^2=10$ ($X_k\sim \mathcal{N}(\mu,\sigma^2)$). The figure illustrates the behavior of the distribution of the mean, which is given by $\bar X_n=\frac{1}{n}\sum_{k=1}^nX_k$.}
\end{figure}

\begin{figure}
\begin{center}
\includegraphics[height=8cm, width=10cm]{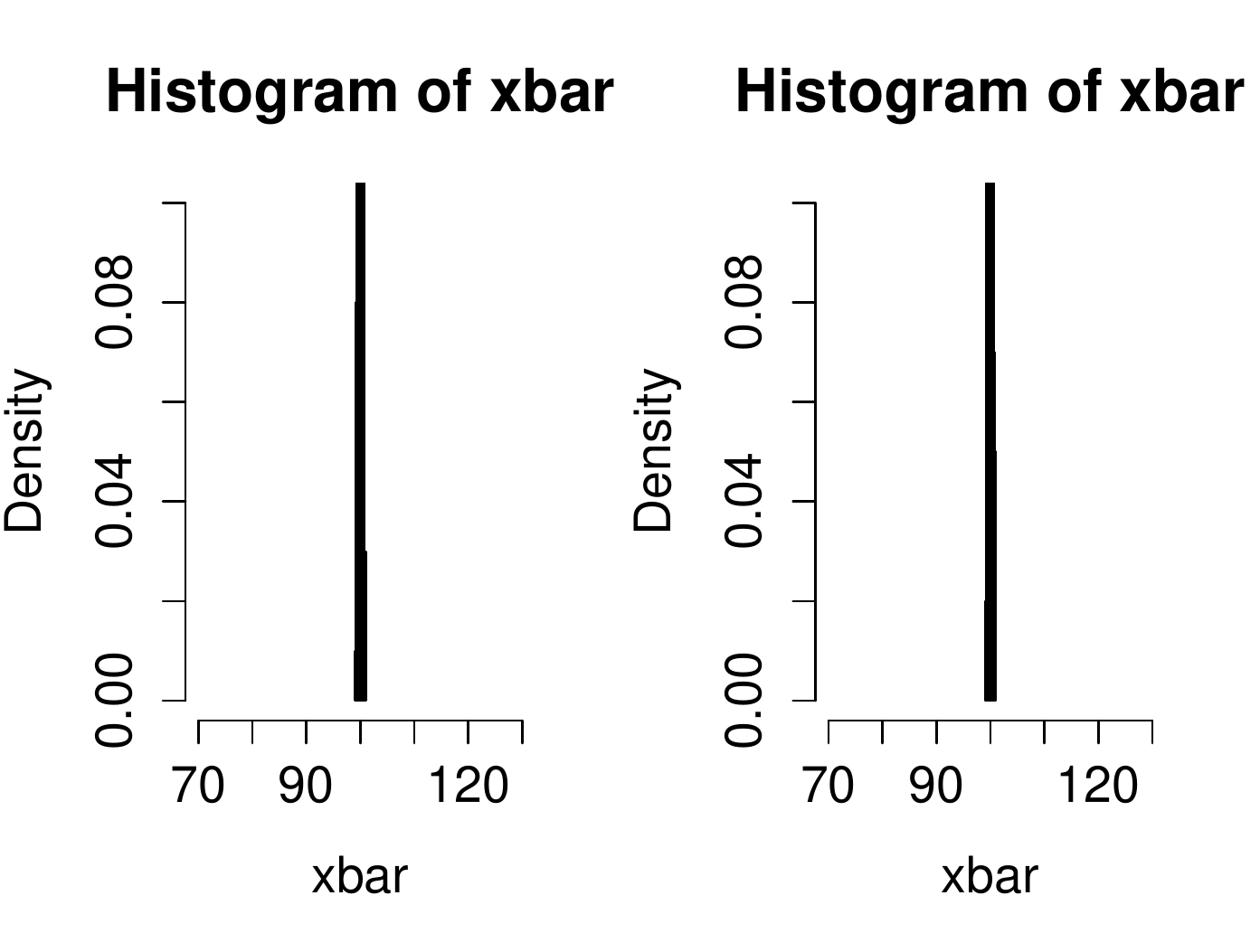}
\end{center}
\caption{The distribution of the mean of $n=1000$ i.i.d. Gaussian r.v.'s $(X_n)$ with $\mu=100$ and $\sigma^2=10$ ($X_k\sim \mathcal{N}(\mu,\sigma^2)$). The figure illustrates the behavior of the distribution of the mean, which is given by $\bar X_n=\frac{1}{n}\sum_{k=1}^nX_k$. Clearly we see that it's converging to $\mu$ for $n\to\infty$.}
\end{figure}

\begin{thm}
Let $(X_n)_{n\geq 1}$ be independent r.v.'s but not necessarily i.i.d. We assume that $\E[X_j]=0$ and that $\E[X_j^2]=\sigma_j^2<\infty$, for all $j\in\{1,...,n\}$. Assume further that $\sup_n\E[\vert X_n\vert^{2+\delta}]<\infty$ for some $\delta>0$, and that $\sum_{j=1}^\infty \sigma_j^2<\infty.$ Then 
$$\frac{\sum_{j=1}^nX_j}{\sqrt{\sum_{j=1}^n \sigma_j^2}}\xrightarrow{n\to\infty\atop law}\mathcal{N}(0,1).$$
\end{thm}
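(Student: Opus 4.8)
The plan is to prove convergence in law through characteristic functions, invoking Lévy's theorem (the continuity theorem stated above): it suffices to show that the characteristic function of $T_n := \bigl(\sum_{j=1}^n X_j\bigr)/s_n$, where $s_n^2 := \sum_{j=1}^n \sigma_j^2$, converges pointwise to $\xi \mapsto e^{-\xi^2/2}$, the characteristic function of $\mathcal{N}(0,1)$ computed in the lemma above. Since the $X_j$ are independent, the factorization property of the characteristic function for sums of independent r.v.'s gives
$$\Phi_{T_n}(\xi) = \prod_{j=1}^n \E\!\left[e^{i\xi X_j/s_n}\right] = \prod_{j=1}^n \Phi_{X_j}\!\left(\frac{\xi}{s_n}\right).$$

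First I would establish a quantitative second-order expansion for each factor. Using the elementary estimate $\left|e^{ix} - 1 - ix + \tfrac{x^2}{2}\right| \le C\min(|x|^2,|x|^{2+\delta})$ and integrating against $\p_{X_j}$, together with $\E[X_j]=0$ and $\E[X_j^2]=\sigma_j^2$, one obtains
$$\Phi_{X_j}\!\left(\frac{\xi}{s_n}\right) = 1 - \frac{\sigma_j^2 \xi^2}{2 s_n^2} + r_{j,n}(\xi), \qquad |r_{j,n}(\xi)| \le C\,\frac{|\xi|^{2+\delta}}{s_n^{2+\delta}}\,\E\!\left[|X_j|^{2+\delta}\right].$$
The next step is to pass from the product to a sum. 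After checking the infinitesimality condition $\max_{j\le n}\sigma_j^2/s_n^2 \to 0$ (so that, for fixed $\xi$ and $n$ large, every factor and every linear approximant lies in the closed unit disc), I would apply the telescoping bound $\left|\prod_j a_j - \prod_j b_j\right| \le \sum_j |a_j - b_j|$, valid for complex numbers of modulus at most $1$. This reduces the claim to the two limits
$$\sum_{j=1}^n \frac{\sigma_j^2 \xi^2}{2 s_n^2} \longrightarrow \frac{\xi^2}{2} \quad\text{and}\quad \sum_{j=1}^n |r_{j,n}(\xi)| \longrightarrow 0.$$
The first is immediate, since $\sum_{j=1}^n \sigma_j^2 / s_n^2 = 1$ by the very definition of $s_n$.

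The main obstacle is the second limit, which is a Lyapunov-type condition in disguise. Summing the remainder bound gives
$$\sum_{j=1}^n |r_{j,n}(\xi)| \le C\,|\xi|^{2+\delta}\,\frac{\sum_{j=1}^n \E[|X_j|^{2+\delta}]}{s_n^{2+\delta}},$$
so the entire argument hinges on showing that this ratio tends to $0$. Here is exactly where the two standing hypotheses must be married: writing $M := \sup_n \E[|X_n|^{2+\delta}] < \infty$, the numerator is at most $Mn$, and one must control $n/s_n^{2+\delta}$, i.e. verify that the normalization $s_n$ grows fast enough to absorb the linear factor $n$. Isolating and quantifying this growth of $s_n$ is the delicate crux of the proof (and the same information supplies the infinitesimality $\max_{j\le n}\sigma_j^2/s_n^2 \to 0$ used earlier). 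Once the Lyapunov ratio is shown to vanish, combining the two limits yields $\Phi_{T_n}(\xi) \to e^{-\xi^2/2}$ for every $\xi \in \R$, and Lévy's theorem delivers the convergence in law to $\mathcal{N}(0,1)$.
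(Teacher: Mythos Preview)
The paper states this theorem without proof, so there is nothing to compare against directly; your characteristic-function/Lyapunov strategy is the natural one. However, there is a genuine gap at exactly the point you flag as ``the delicate crux,'' and it cannot be closed under the stated hypotheses. First, the printed assumption $\sum_{j=1}^\infty \sigma_j^2<\infty$ makes $s_n$ \emph{bounded}, so the ``growth of $s_n$'' you invoke does not exist; worse, the conclusion is then false (take $X_1$ any centred non-Gaussian r.v.\ with finite $(2+\delta)$-moment and $X_j\equiv 0$ for $j\ge 2$, so that $S_n/s_n=X_1/\sigma_1$ for all $n$). The intended hypothesis is surely $\sum_{j=1}^\infty\sigma_j^2=\infty$, consistent with the prime-divisor example that follows.

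But even with that correction your bound $\sum_{j\le n}\E[|X_j|^{2+\delta}]\le Mn$ is too crude, and the needed conclusion $n/s_n^{2+\delta}\to 0$ can fail. Take $X_j\in\{-a_j,0,a_j\}$ with $a_j=j^{1/\delta}$ and $\p[X_j=\pm a_j]=\tfrac{1}{2}j^{-1-2/\delta}$. Then $\E[X_j]=0$, $\sigma_j^2=1/j$ so $s_n^2\sim\log n\to\infty$, and $\E[|X_j|^{2+\delta}]=1$ so the sup-moment hypothesis holds; yet the Lyapunov ratio is $n/(\log n)^{1+\delta/2}\to\infty$. And the CLT genuinely fails here: $\p[X_j\ne 0]=j^{-1-2/\delta}$ is summable, so by Borel--Cantelli $S_n$ is eventually constant a.s., whence $S_n/s_n\to 0$ in law, not $\mathcal{N}(0,1)$. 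In short, the theorem as written is false, and your outline cannot be completed without an additional hypothesis---either the Lyapunov condition $\sum_{j\le n}\E[|X_j|^{2+\delta}]=o(s_n^{2+\delta})$ itself, or something like $\inf_j\sigma_j^2>0$, which forces $s_n^2\ge cn$ and makes your estimate go through.
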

\begin{ex} We got the following examples:
\begin{enumerate}[$(i)$]
\item{Let $(X_n)_{n\geq 1}$ be i.i.d. r.v.'s with $\p[X_n=1]=p$ and $\p[X_n=0]=1-p$. Then $S_n=\sum_{i=1}^nX_i$ is a binomial r.v. $\B(p,n)$. We have $\E[S_n]=np$ and $Var(S_n)=np(1-p)$. Now with the strong law of large numbers we get $\frac{S_n}{n}\xrightarrow{n\to\infty\atop a.s.}p$ and with the central limit theorem we get 
$$\frac{S_n-np}{\sqrt{np(1-p)}}\xrightarrow{n\to\infty\atop law}\mathcal{N}(0,1).$$
}
\item{Let $\mathcal{P}$ be the set of prime numbers. For $p\in\mathcal{P}$, define $\B_p$ as $\p[\B_p=1]=\frac{1}{p}$ and $\p[\B_p=0]=1-\frac{1}{p}$. We take the $(\B_p)_{p\in\mathcal{P}}$ to be independent and 
$$W_n=\sum_{\mathcal{P}\subset\N\atop p\in\mathcal{P}}\B_p,$$
the probabilistic model for the total numbers of distinct prime divisors of $n:=W(0)$. It's a simple exercise to  check that $(W_n)_{n\geq 1}$ satisfies the assumption of theorem 13.2 and using the fact that $\sum_{p\leq n\atop p\in\mathcal{P}}\frac{1}{p}\sim \log\log n$ and we obtain 
$$\frac{W_n-\log\log n}{\sqrt{\log\log n}}\xrightarrow{n\to\infty\atop law}\mathcal{N}(0,1).$$
\begin{figure}
\begin{center}
\includegraphics[height=8cm, width=10cm]{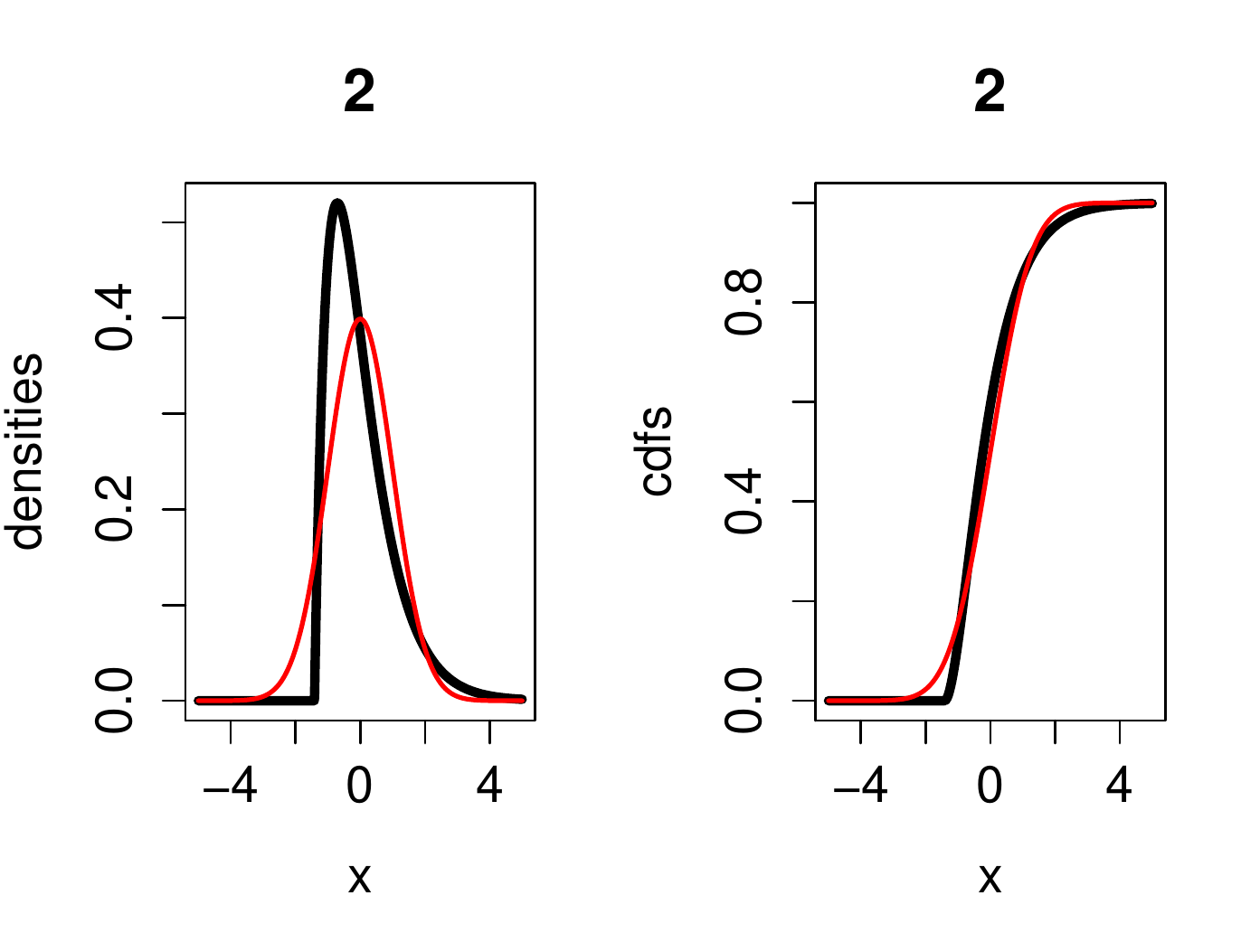}
\end{center}
\caption{An illustration of the CLT, where the r.v.'s $(X_n)$ are i.i.d. exponentially distributed. Also with the cumulative distribution functions. Here we have $n=2$ exponentially distributed r.v's $\left(X_k\sim \lambda e^{-\lambda}\right)$. On the left side, the black curve represents the density function of the r.v.'s and the red curve represents the density of a Gaussian r.v. $Y$ with $\mu=0$ ($Y\sim\mathcal{N}(0,\sigma^2)$). On the right side, the black curve represents the cumulative distribution function of the r.v.'s $X_k$ and the red curve represents the cumulative distribution function of a Gaussian r.v. $Y$.}
\end{figure}

\begin{figure}
\begin{center}
\includegraphics[height=8cm, width=10cm]{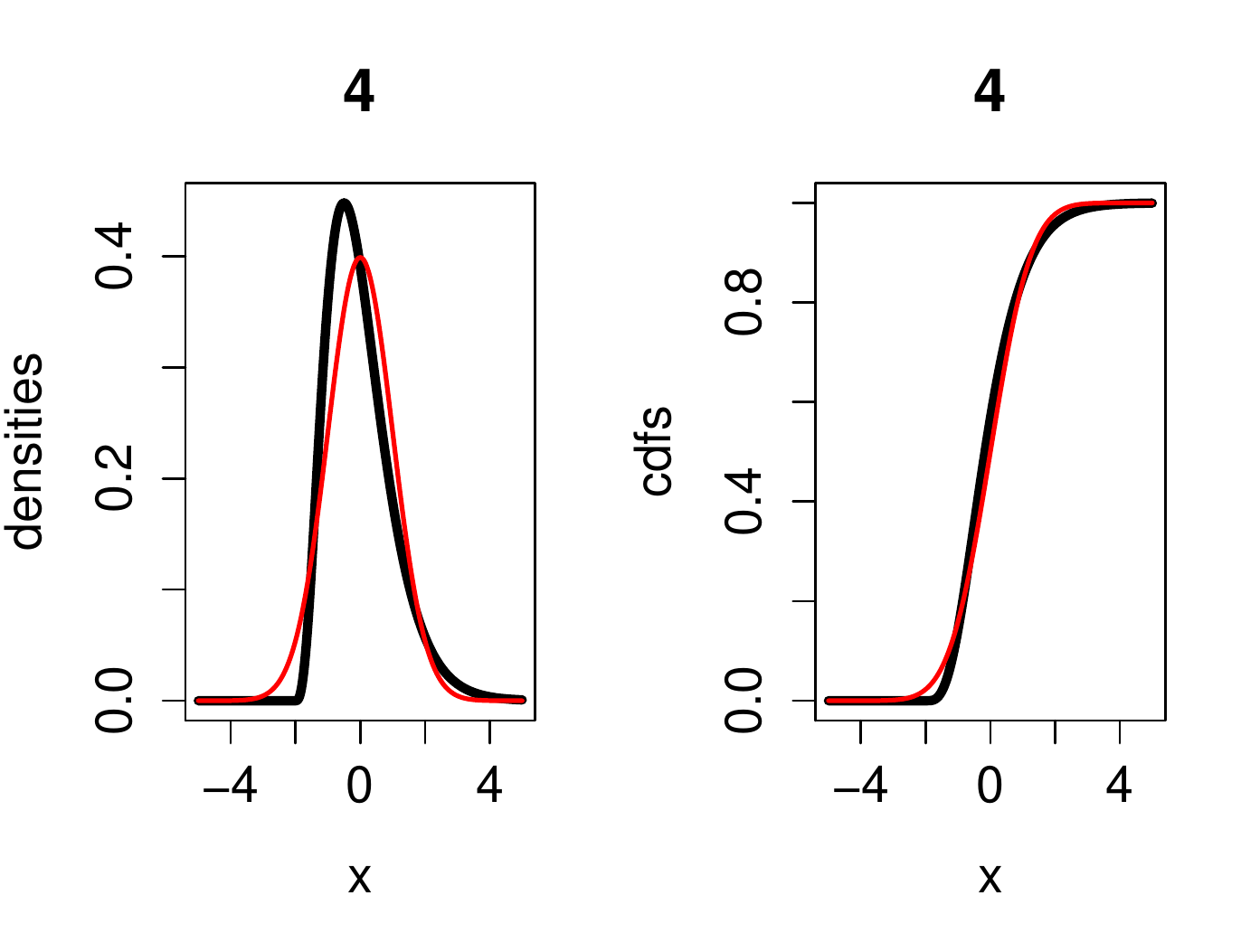}
\end{center}
\caption{An illustration of the CLT, where the r.v.'s $(X_n)$ are i.i.d. exponentially distributed. Also with the cumulative distribution functions. Here we have $n=4$ exponentially distributed r.v's $\left(X_k\sim \lambda e^{-\lambda}\right)$. On the left side, the black curve represents the density function of the r.v.'s and the red curve represents the density of a Gaussian r.v. $Y$ with $\mu=0$ ($Y\sim\mathcal{N}(0,\sigma^2)$). On the right side, the black curve represents the cumulative distribution function of the r.v.'s $X_k$ and the red curve represents the cumulative distribution function of a Gaussian r.v. $Y$.}
\end{figure}

\begin{figure}
\begin{center}
\includegraphics[height=8cm, width=10cm]{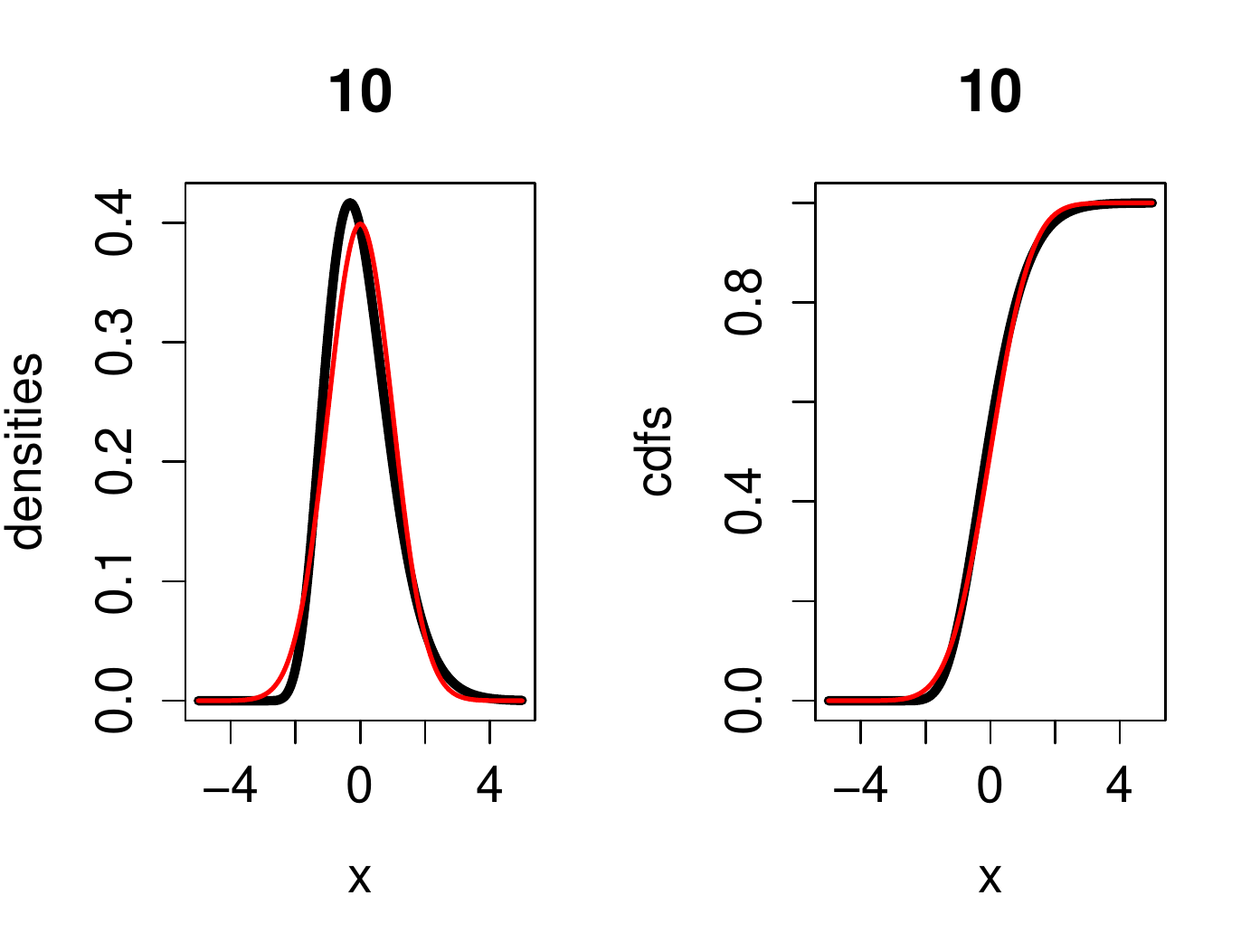}
\end{center}
\caption{An illustration of the CLT, where the r.v.'s $(X_n)$ are i.i.d. exponentially distributed. Also with the cumulative distribution functions. Here we have $n=10$ exponentially distributed r.v's $\left(X_k\sim \lambda e^{-\lambda}\right)$. On the left side, the black curve represents the density function of the r.v.'s and the red curve represents the density of a Gaussian r.v. $Y$ with $\mu=0$ ($Y\sim\mathcal{N}(0,\sigma^2)$). On the right side, the black curve represents the cumulative distribution function of the r.v.'s $X_k$ and the red curve represents the cumulative distribution function of a Gaussian r.v. $Y$.}
\end{figure}

\begin{figure}
\begin{center}
\includegraphics[height=8cm, width=10cm]{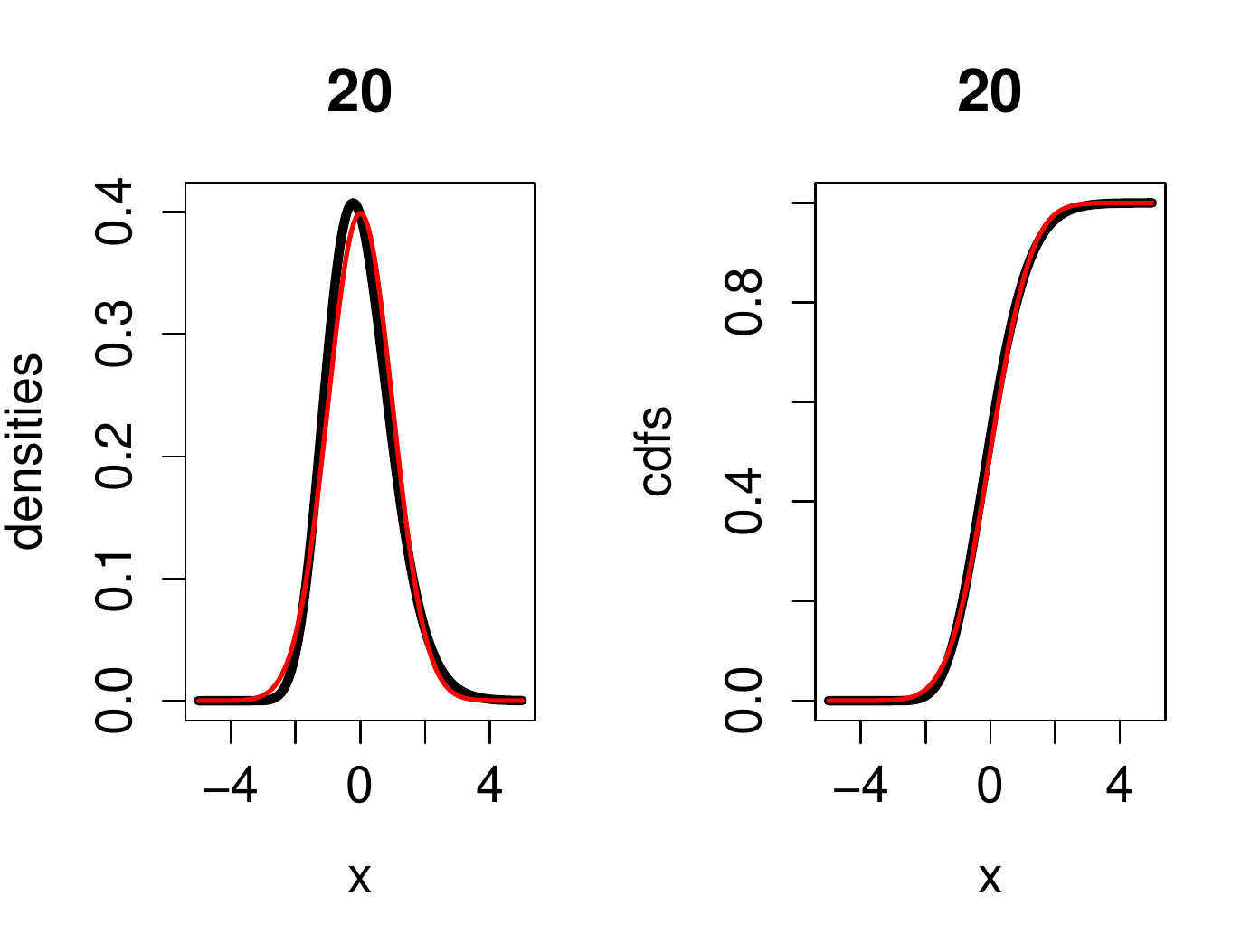}
\end{center}
\caption{An illustration of the CLT, where the r.v.'s $(X_n)$ are i.i.d. exponentially distributed. Also with the cumulative distribution functions. Here we have $n=20$ exponentially distributed r.v's $\left(X_k\sim \lambda e^{-\lambda}\right)$. On the left side, the black curve represents the density function of the r.v.'s and the red curve represents the density of a Gaussian r.v. $Y$ with $\mu=0$ ($Y\sim\mathcal{N}(0,\sigma^2)$). On the right side, the black curve represents the cumulative distribution function of the r.v.'s $X_k$ and the red curve represents the cumulative distribution function of a Gaussian r.v. $Y$. Now we can see that both, the density and the cumulative distribution function, are converging to a Gaussian density and a Gaussian cumulative distribution function for $n\to\infty$.}
\end{figure}

\begin{thm}[Erdös-Kac]
Let $N_n$ be a r.v. with uniformly distribution in $\{1,...,n\}$, then 
$$\frac{W(N_n)-\log\log n}{\sqrt{\log\log n}}\xrightarrow{n\to\infty\atop law}\mathcal{N}(0,1),$$
where $W(n)=\sum_{p\leq n\atop p\in\mathcal{P}}\one_{p|n}$.
\end{thm}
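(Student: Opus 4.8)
The plan is to transfer the central limit theorem for the independent Bernoulli model $W_n=\sum_{p\in\mathcal{P}}B_p$, established in the preceding example, to the genuinely arithmetic quantity $W(N_n)$. Writing $\xi_p:=\one_{\{p\mid N_n\}}$ for primes $p\le n$, we have $W(N_n)=\sum_{p\le n}\xi_p$, and the elementary count $\p[p\mid N_n]=\frac1n\lfloor n/p\rfloor=\frac1p+O(1/n)$ shows that each $\xi_p$ imitates $B_p$. The variables $\xi_p$ are not independent, and quantifying how close to independent they are is the entire difficulty. I would argue by the method of moments: show that every moment of $Z_n:=\frac{W(N_n)-\log\log n}{\sqrt{\log\log n}}$ converges to the corresponding moment of $\mathcal{N}(0,1)$; since the standard Gaussian is determined by its moments, this gives $\lim_{n\to\infty\atop law}Z_n=\mathcal{N}(0,1)$.

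First I would fix the normalization. For distinct primes the multiplicative estimate
$$\E\left[\xi_{p_1}\cdots\xi_{p_r}\right]=\p\left[p_1\cdots p_r\mid N_n\right]=\frac1n\left\lfloor\frac{n}{p_1\cdots p_r}\right\rfloor=\frac{1}{p_1\cdots p_r}+O\left(\frac1n\right)$$
holds whenever $p_1\cdots p_r\le n$, and the left side is $0$ otherwise. Combined with Mertens' estimate $\sum_{p\le n}\frac1p=\log\log n+O(1)$, this yields $\E[W(N_n)]=\log\log n+O(1)$ and $\mathrm{Var}(W(N_n))=\log\log n\,(1+o(1))$, confirming that the centering and scaling in $Z_n$ are the right ones and that, to leading order, the first two moments already agree with those of $W_n$.

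The main obstacle is that the error $O(1/n)$ accumulates disastrously over the large primes (summed over pairs it is of size $\pi(n)^2/n\to\infty$), and for higher moments the products $p_1\cdots p_r$ overshoot $n$, so the match with the independent model breaks down. I would remove this obstruction by truncation. Fix $z=z(n)=n^{1/\log\log n}$ and split $W(N_n)=W'+W''$ with $W'=\sum_{p\le z}\xi_p$ and $W''=\sum_{z<p\le n}\xi_p$. Since $\log\log z=\log\log n-\log\log\log n+O(1)$, one gets $\E[W'']=\log\log\log n+O(1)=o(\sqrt{\log\log n})$; as $W''\ge 0$, Markov's inequality forces $W''/\sqrt{\log\log n}\to 0$ in probability, so the tail may be discarded. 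On the truncated part $W'$, any product of at most $k$ distinct primes below $z$ satisfies $p_1\cdots p_k\le z^k\le n$ for each fixed $k$ and all large $n$, so the multiplicative estimate applies with genuinely small error, and the centered moments $\E\!\left[(W'-\E[W'])^k\right]$ are computed, after grouping the expansion of $(W')^k$ by distinct primes and retaining only the dominant pairings, to coincide up to $o\!\left((\log\log n)^{k/2}\right)$ with those of the independent sum $W_n$.

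It then remains to combine the pieces. The moments of $\frac{W'-\E[W']}{\sqrt{\log\log n}}$ converge to the moments of $\mathcal{N}(0,1)$, because by the preceding computation they match those of $\frac{W_n-\E[W_n]}{\sqrt{\log\log n}}$, which were shown to be asymptotically Gaussian in the preceding example. Together with $W''/\sqrt{\log\log n}\to 0$ in probability and the negligible centering correction $\frac{\E[W']-\log\log n}{\sqrt{\log\log n}}=O\!\left((\log\log n)^{-1/2}\right)$, a Slutsky-type argument — using that convergence in probability implies convergence in law, together with the stability of convergence in law under addition of a term tending to a constant in probability — identifies the limit of $Z_n$ as $\mathcal{N}(0,1)$, which is the assertion.
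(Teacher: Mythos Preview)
The paper does not prove this theorem: it is stated inside an example as an illustration of the CLT, with no argument given. So there is nothing in the text to compare your proposal against.

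On its own merits, your outline is the classical moment-method proof of Erd\H{o}s--Kac (truncate at a threshold $z$ small enough that products of a fixed number of primes stay below $n$, match the centered moments of the truncated count with those of the independent Bernoulli model, and dispose of the tail $W''$ via Markov), and it is sound as a sketch. Two remarks. First, a small quantitative slip: with $z=n^{1/\log\log n}$ one has $\E[W']=\log\log z+O(1)=\log\log n-\log\log\log n+O(1)$, so the centering correction $\frac{\E[W']-\log\log n}{\sqrt{\log\log n}}$ is of order $\frac{\log\log\log n}{\sqrt{\log\log n}}$, not $O\!\bigl((\log\log n)^{-1/2}\bigr)$; it still tends to $0$, so the Slutsky step goes through unchanged. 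Second, the sentence ``the centered moments \ldots\ are computed, after grouping the expansion \ldots, to coincide up to $o\!\bigl((\log\log n)^{k/2}\bigr)$ with those of the independent sum'' is precisely the heart of the argument and is only asserted here. In a full proof you would expand $\bigl(W'-\E[W']\bigr)^k$, classify the terms by the partition of $\{1,\dots,k\}$ induced by prime coincidences, use the multiplicative estimate together with $p_1\cdots p_r\le z^k\le n$ to show that each block contributes as in the independent model up to $O(1/n)$ per term, and then check that only the complete pairings survive at the leading order $(\log\log n)^{k/2}$. None of this is difficult, but it is where the work lies.
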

}
\item{Suppose that $(X_n)_{n\geq 1}$ are i.i.d. r.v.'s with distribution function $F(x)=\p[X_1\leq x]$. Let $Y_n(x)=\one_{X_n\leq x}$, where $(Y_n)_{n\geq 1}$ are i.i.d. Define $F_n(x)=\frac{1}{n}\sum_{k=1}^nY_k(x)=\frac{1}{n}\sum_{k=1}^n\one_{X_k\leq x}$. $F_n$ is called the empirical distribution function. With the strong law of large numbers we get $\lim_{n\to\infty\atop a.s.}F_n(x)=\E[Y_1(x)]$ and 
$$\E[Y_1(x)]=\E[\one_{X_1\leq x}]=\p[X_1\leq x]=F(x).$$
In fact, it is a theorem (Gliwenko-Cantelli) which says that 
$$\sup_{x\in\R}\vert F_n(x)-F(x)\vert\xrightarrow{n\to\infty\atop a.s.}0.$$
Next we note that 
$$\sqrt{n}(F_n(x)-F(x))=\sqrt{n}\left(\frac{1}{n}\sum_{k=1}^nY_k(x)-\E[Y_1(x)]\right)=\frac{1}{\sqrt{n}}\left(\sum_{k=1}^nY_k(x)-n\E[Y_1(x)]\right)$$
Now with the central limit theorem we get 
$$\sqrt{n}(F_n(x)-F(x))\xrightarrow{n\to\infty\atop law}\mathcal{N}(0,\sigma^2(x)),$$
where $\sigma^2(x)=Var(Y_1)=\E[Y_1^2(x)]=\E[Y_1(x)]=F(x)-F^2(x)=F(x)(1-F(x))$. Hence
$$\sqrt{n}(F_n(x)-F(x))\xrightarrow{n\to\infty\atop law}\mathcal{N}(0,F(x)(1-F(x))).$$
}
\end{enumerate}
\end{ex}
\begin{thm}[Berry-Esseen]
Let $(X_n)_{n\geq 1}$ be i.i.d. r.v's and suppose that $\E[\vert X_k\vert^3]<\infty$, $\forall k\in\{1,...,n\}$. Let 
$$G_n(x)=\p\left[\frac{\sum_{i=1}^nX_i-n\E[X_k]}{\sigma\sqrt{n}}\leq  x\right],\hspace{0.4cm}\forall k\in\{1,...,n\},$$ 
where $\sigma^2=\E[X_k^2]$ $\forall k\in\{1,...,n\}$ and $\Phi(x)=\p\left[\mathcal{N}(0,1)\leq x\right]=\int_{-\infty}^xe^{-\frac{u^2}{2}}\frac{1}{\sqrt{2\pi}}du$. Then
$$\sup_{x\in\R}\vert G_n(x)-\Phi(x)\vert\leq  C\frac{\E[\vert X_k\vert^3]}{\sigma^3\sqrt{n}},\hspace{0.4cm}\forall k\in\{1,...,n\}$$
where $C$ is a universal constant.
\end{thm}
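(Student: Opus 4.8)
The plan is to reduce the statement to an estimate on characteristic functions and then feed that estimate into a smoothing inequality of Esseen type. First I would normalize: replacing each $X_k$ by $(X_k-\E[X_k])/\sigma$, I may assume $\E[X_k]=0$ and $\sigma^2=\E[X_k^2]=1$, so that $\rho:=\E[|X_k|^3]$ is the quantity controlling the bound and $G_n$ becomes the distribution function of $Z_n:=(X_1+\dots+X_n)/\sqrt{n}$. The centering changes $\E[|X_k|^3]$ only by a universal factor, which will be absorbed into the final constant. The goal is thus $\sup_x|G_n(x)-\Phi(x)|\leq C\rho/\sqrt{n}$.

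The central analytic tool is Esseen's smoothing inequality: for any distribution function $F$ and for every $T>0$,
\[
\sup_x|F(x)-\Phi(x)|\leq\frac{1}{\pi}\int_{-T}^{T}\left|\frac{\hat F(t)-e^{-t^2/2}}{t}\right|dt+\frac{C_1}{T},
\]
where $C_1$ is a universal constant arising from $\sup_x\Phi'(x)=1/\sqrt{2\pi}$. I would state and prove this as a preliminary lemma, the proof resting on Fourier inversion applied to a suitably smoothed version of $F-\Phi$. Applying it with $F=G_n$, whose characteristic function factors as $\hat G_n(t)=\varphi(t/\sqrt{n})^n$ with $\varphi(s)=\E[e^{isX_1}]$ (here the i.i.d. hypothesis is exactly what makes $\hat G_n$ an $n$-th power), reduces everything to estimating the integrand on a well-chosen window $|t|\leq T$.

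The heart of the matter is a pointwise bound on $|\varphi(t/\sqrt{n})^n-e^{-t^2/2}|$. Using the third-moment hypothesis together with the elementary estimate $|e^{iu}-(1+iu-u^2/2)|\leq|u|^3/6$, I obtain $|\varphi(s)-(1-s^2/2)|\leq\rho|s|^3/6$, and hence, for $|s|$ small, $\varphi(s)=e^{-s^2/2+r(s)}$ with $|r(s)|$ controlled by $\rho|s|^3$. Raising to the $n$-th power via the telescoping inequality $|a^n-b^n|\leq n\,|a-b|\,\max(|a|,|b|)^{n-1}$, applied with $a=\varphi(t/\sqrt{n})$ and $b=e^{-t^2/2n}$, yields a bound of the shape $|\hat G_n(t)-e^{-t^2/2}|\leq c\,\rho\,\frac{|t|^3}{\sqrt{n}}e^{-t^2/4}$, valid on the range $|t|\leq\sqrt{n}/(c\rho)$. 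This is the step I expect to be most delicate, since one must retain the Gaussian damping factor $e^{-t^2/4}$ through the $n$-th power estimate so that the resulting integral converges; controlling $\max(|a|,|b|)^{n-1}$ uniformly on the window is where the careful bookkeeping lives.

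Finally I would choose the cutoff $T=a\sqrt{n}/\rho$ for a suitable universal constant $a$. Dividing the displayed pointwise bound by $|t|$ and integrating over $[-T,T]$ gives a contribution of order $\rho/\sqrt{n}$, because $\int_{\R}|t|^2e^{-t^2/4}dt$ is a finite absolute constant, while the remainder term $C_1/T=C_1\rho/(a\sqrt{n})$ is also of order $\rho/\sqrt{n}$. Adding the two estimates and collapsing all numerical factors into a single universal $C$ produces the claimed inequality $\sup_x|G_n(x)-\Phi(x)|\leq C\rho/\sqrt{n}$. The only hypotheses genuinely used are the finiteness of the third moment and the i.i.d. structure, the latter solely to factor $\hat G_n$ as a power.
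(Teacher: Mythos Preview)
The paper does not prove this theorem; it is stated without proof (immediately followed by the section on the multidimensional CLT), so there is no proof of the paper's own to compare against. Your outline is the standard characteristic-function route due to Esseen and is correct in structure: normalize, invoke the smoothing inequality to pass from the Kolmogorov distance to an integral of $|\hat G_n(t)-e^{-t^2/2}|/|t|$, use the third-moment Taylor bound on $\varphi$ together with a telescoping or logarithmic estimate to control $|\varphi(t/\sqrt{n})^n-e^{-t^2/2}|$ while retaining a Gaussian damping factor, and then choose $T\asymp\sqrt{n}/\rho$. One point to watch: when you write ``the centering changes $\E[|X_k|^3]$ only by a universal factor,'' that is not literally true---the centered third absolute moment is bounded by a constant times the uncentered one (via $|X-\E X|^3\leq 8(|X|^3+|\E X|^3)$ and $|\E X|\leq\E|X|\leq(\E|X|^3)^{1/3}$), so the inequality you obtain after normalization does imply the stated one, but you should make that reduction explicit rather than asserting equality up to a factor. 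Otherwise the plan is sound and, with the bookkeeping you flag in the $n$-th power step carried out carefully (e.g., via $|\varphi(s)|\leq e^{-s^2/4}$ for $|s|$ in the relevant range), it yields the theorem.
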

\section{Multidimensional CLT}
\subsection{Gaussian Vectors}
\begin{defn}[Gaussian Random Vector]
An $\R^n$-valued r.v. $X=(X_1,...,X_n)$ is called a gaussian random vector if every linear combination $\sum_{j=1}^n\lambda_jX_j$, with $\lambda_j\in\R$, is a gaussian r.v. (Possibly degenerated $\mathcal{N}(\mu,0)=\mu$ a.s.).
\end{defn}
\begin{thm}
$X$ is an $\R^n$-valued gaussian r.v. if and only if its characteristic function has the form 
$$\varphi_X(u)=\exp\left(i\langle u,\mu\rangle-\frac{1}{2}\langle u,Qu\rangle\right),\hspace{2cm}(*)$$
where $\mu\in\R^n$ and $Q$ is a symmetric nonnegative semidefinit matrix of size $n\times n$. $Q$ is then the covariance matrix of $X$ and $\mu$ is the mean vector, i.e. $\E[X_j]=\mu_j$.
\end{thm}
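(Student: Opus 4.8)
The plan is to prove both implications using the characteristic-function machinery already developed, in particular the formula for the characteristic function of a one-dimensional Gaussian (the Lemma computing $\Phi_X(\xi)=\exp(-\sigma^2\xi^2/2)$ for $\mathcal{N}(0,\sigma^2)$, together with the $e^{im\xi}$ factor for nonzero mean) and the uniqueness theorem stating that the characteristic function determines the law.

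First I would treat the direction that a Gaussian vector has characteristic function of the form $(*)$. Suppose $X=(X_1,\dots,X_n)$ is a Gaussian vector. Fix $u\in\R^n$ and set $Y=\langle u,X\rangle=\sum_j u_jX_j$; by definition $Y$ is a (possibly degenerate) Gaussian random variable. Taking $u=e_j$ shows each $X_j$ is Gaussian, hence has all moments finite, so $X_j\in L^2$ and the mean vector $\mu=(\E[X_1],\dots,\E[X_n])$ and the covariance matrix $K_X$ are well defined. By linearity of expectation and bilinearity of the covariance, $Y\sim\mathcal{N}(m,s^2)$ with $m=\E[Y]=\langle u,\mu\rangle$ and $s^2=Var(Y)=\langle u,K_Xu\rangle$. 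Evaluating the one-dimensional Gaussian characteristic function $t\mapsto e^{imt-s^2t^2/2}$ at $t=1$ gives
$$\varphi_X(u)=\E\left[e^{iY}\right]=\exp\left(i\langle u,\mu\rangle-\tfrac{1}{2}\langle u,K_Xu\rangle\right),$$
which is exactly $(*)$ with $Q=K_X$. Recall that $K_X$ was already shown to be symmetric and nonnegative semidefinite, so $Q$ has the required properties.

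For the converse, assume $\varphi_X$ has the form $(*)$. Fix $\lambda\in\R^n$ and put $Y=\langle\lambda,X\rangle$. For every $t\in\R$ I compute the scalar characteristic function
$$\Phi_Y(t)=\E\left[e^{it\langle\lambda,X\rangle}\right]=\varphi_X(t\lambda)=\exp\left(it\langle\lambda,\mu\rangle-\tfrac{t^2}{2}\langle\lambda,Q\lambda\rangle\right).$$
Writing $m=\langle\lambda,\mu\rangle$ and $s^2=\langle\lambda,Q\lambda\rangle\geq 0$ (here the nonnegative semidefiniteness of $Q$ is used), this is precisely the characteristic function of $\mathcal{N}(m,s^2)$, with the degenerate case $s^2=0$ reading as the Dirac mass at $m$, i.e. $Y=m$ a.s. By the uniqueness theorem $Y\sim\mathcal{N}(m,s^2)$, so every linear combination of the $X_j$ is Gaussian and $X$ is a Gaussian vector. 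The identification of $\mu$ and $Q$ as the mean vector and covariance matrix then follows exactly as in the first part, since we now know each $X_j$ is Gaussian and hence square-integrable (one may alternatively differentiate $(*)$ once and twice at $u=0$ and invoke the earlier relation between the derivatives of the characteristic function at the origin and the first two moments).

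The computations here are routine, and the only point requiring genuine care is the degenerate case $\langle\lambda,Q\lambda\rangle=0$: this is exactly why the definition of a Gaussian vector permits degenerate linear combinations $\mathcal{N}(\mu,0)=\mu$, and it is where nonnegative semidefiniteness of $Q$ is essential to guarantee $s^2\geq 0$ so that $\mathcal{N}(m,s^2)$ is meaningful. No serious obstacle arises beyond handling this case consistently.
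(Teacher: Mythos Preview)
Your proof is correct and follows essentially the same route as the paper: in both directions one reduces to the scalar case by setting $Y=\langle u,X\rangle$, uses the one-dimensional Gaussian characteristic function, and reads off $\varphi_X(u)=\varphi_Y(1)$. Your write-up is in fact more careful than the paper's, since you explicitly justify that each $X_j\in L^2$ so that $\mu$ and $K_X$ are well defined, and you handle the degenerate case $\langle\lambda,Q\lambda\rangle=0$.
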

\begin{proof}
Suppose that  $(*)$ holds. Let $Y=\sum_{j=1}^na_jX_j=\langle a,X\rangle$. For $v\in\R$, $\varphi_Y(v)=\varphi_X(va)=\exp\left(iv\langle a,\mu\rangle-\frac{v^2}{2}\langle a,Qa\rangle\right)\Longrightarrow Y\sim \mathcal{N}\left(\langle a,\mu\rangle,\langle a,Qa\rangle\right)\Longrightarrow X$ is a gaussian vector. Conversely assume that $X$ is a gaussian vector and let $Y=\sum_{j=1}^na_jX_j=\langle a,X\rangle$. Let $\omega=Cov(X)$ and note that $\E[Y]=\langle \sigma,\mu\rangle$ and $Var(Y)=\sigma^2(Y)=\langle a,Qa\rangle$. Since $Y$ is a gaussian r.v.
$$\varphi_Y(v)=\exp\left(iv\langle a,\mu\rangle-\frac{v^2}{2}\langle a,\omega a\rangle\right).$$
Now $\varphi_X(v)=\varphi_Y(1)=(*)$.
\end{proof}
\emph{Notation:} We write $X\sim \mathcal{N}(\mu,Q)$.
\begin{ex} Let $X_1,...,X_n$ be independent gaussian r.v.'s with $X_k\sim\mathcal{N}(\mu_k,\sigma_k^2)$. Then $X=(X_1,...,X_n)$ is a gaussian vector. Indeed, we have
\begin{align*}
\varphi_X(v_1,...,v_n)&=\E\left[e^{i(v_1X_1+...+v_nX_n)}\right]\\
&=\prod_{j=1}^n\E\left[e^{iv_jX_j}\right]\\
&=e^{i\langle v,\mu\rangle}-\frac{1}{2}\langle v,Qv\rangle,
\end{align*}
where $\mu=(\mu_1,...,\mu_n)$, $Q=\begin{pmatrix}\sigma_1^2&\dotsm &0\\ \vdots&\ddots&\vdots\\0&\dotsm&\sigma_n^2\end{pmatrix}$.
\end{ex}
\begin{cor}
Let $X$ be an $\R^n$-valued gaussian vector. The components $X_j$ of $X$ are independent if and only if $Q$ is a diagonal matrix. 
\end{cor}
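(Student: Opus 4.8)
The plan is to read ``independent'' through the characteristic-function criterion established in the \emph{Important facts} subsection, namely that $X_1,\dots,X_n$ are independent if and only if the joint characteristic function factors as $\varphi_X(u_1,\dots,u_n)=\prod_{j=1}^n\varphi_{X_j}(u_j)$. Since a Gaussian vector is completely described by the closed form $(*)$ of the preceding theorem, everything reduces to inspecting when this product structure appears.

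First I would dispose of the easy direction. If the $X_j$ are independent, then in particular any two distinct components are independent, so by the corollary relating independence and covariance we have $Cov(X_i,X_j)=0$ for $i\neq j$. Since $Q=K_X=\left(Cov(X_i,X_j)\right)_{1\leq i,j\leq n}$, this says precisely that $Q$ is diagonal. Note that this implication uses nothing about Gaussianity; it holds for any square-integrable vector.

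For the converse, suppose $Q=\mathrm{diag}(\sigma_1^2,\dots,\sigma_n^2)$. Then $\langle u,Qu\rangle=\sum_{j=1}^n\sigma_j^2u_j^2$ and $\langle u,\mu\rangle=\sum_{j=1}^n\mu_ju_j$, so $(*)$ splits as
$$\varphi_X(u)=\prod_{j=1}^n\exp\!\left(i\mu_ju_j-\tfrac12\sigma_j^2u_j^2\right).$$
To interpret each factor, I would evaluate $(*)$ along the coordinate axis $u=u_je_j$ (all other entries zero): this gives $\varphi_{X_j}(u_j)=\E\!\left[e^{iu_jX_j}\right]=\exp\!\left(i\mu_ju_j-\tfrac12\sigma_j^2u_j^2\right)$, so each $X_j$ is $\mathcal{N}(\mu_j,\sigma_j^2)$ and the $j$-th factor above is exactly the marginal characteristic function $\varphi_{X_j}(u_j)$ (using the formula for the characteristic function of a one-dimensional Gaussian). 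Hence $\varphi_X(u)=\prod_{j=1}^n\varphi_{X_j}(u_j)$, and the characteristic-function criterion yields independence of $X_1,\dots,X_n$.

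The conceptual point to emphasize is that the converse genuinely needs the Gaussian hypothesis: in general uncorrelated random variables need not be independent (cf.\ the remark following the covariance corollary), and it is only because the full joint law of a Gaussian vector is encoded by the quadratic form $Q$ that the vanishing of the off-diagonal covariances forces independence. The step I expect to require the most care is the bookkeeping that identifies the factor $\exp\!\left(i\mu_ju_j-\tfrac12\sigma_j^2u_j^2\right)$ with the \emph{marginal} characteristic function $\varphi_{X_j}$, rather than merely with a factor of a formal product; evaluating $(*)$ along the coordinate axes is exactly what makes this identification rigorous.
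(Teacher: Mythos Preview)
Your proof is correct and follows the same route as the paper: the paper also uses the closed form $(*)$ to factor $\varphi_X$ when $Q$ is diagonal, identifies each factor as the characteristic function of $X_j\sim\mathcal{N}(\mu_j,\sigma_j^2)$, and then invokes the characteristic-function criterion (phrased there as ``uniqueness of the r.v.''). You additionally spell out the trivial direction (independence $\Rightarrow$ $Q$ diagonal via $Cov(X_i,X_j)=0$), which the paper leaves implicit.
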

\begin{proof}
Suppose $Q=\begin{pmatrix}\sigma_1^2&\dotsm &0\\ \vdots&\ddots&\vdots\\0&\dotsm&\sigma_n^2\end{pmatrix}$, then $(*)$ shows that 
$$\varphi_X(v_1,...,v_n)=\prod_{j=1}^n\varphi_{X_j}(v_j),$$
where $X_j\sim \mathcal{N}(\mu_j,\sigma^2_j)$. The result follows from the uniqueness of the r.v.
\end{proof}
\begin{thm}
Let $X$ be an $\R^n$-valued gaussian vector with mean $\mu$. Then there exists independent gaussian r.v.'s $Y_1,..,Y_n$ with 
$$Y_j=\mathcal{N}(0,\lambda_j),\hspace{0.2cm}\lambda_j\geq 0,\hspace{0.1cm}1\leq j\leq n$$
and an orthogonal matrix $A$ such that 
$$X=\mu+AY.$$
\end{thm}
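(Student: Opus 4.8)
The plan is to diagonalize the covariance matrix $Q$ of $X$ by an orthogonal change of coordinates and then read off the independence of the new components directly from the resulting diagonal covariance structure. First I would invoke the spectral theorem: by the preceding theorem, $Q$ is symmetric and nonnegative semidefinite, being the covariance matrix of a gaussian vector. Hence there exists an orthogonal matrix $A$, i.e. with $A^TA=AA^T=I$, such that
$$A^TQA=D=\mathrm{diag}(\lambda_1,\dots,\lambda_n),$$
where the $\lambda_j\geq 0$ are the (necessarily nonnegative) eigenvalues of $Q$.

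Next I would set $Y:=A^T(X-\mu)$ and check that $Y$ is again a gaussian vector. This follows straight from the definition: any linear combination $\sum_j a_jY_j=\langle a,A^T(X-\mu)\rangle=\langle Aa,X\rangle-\langle Aa,\mu\rangle$ is an affine function of the linear combination $\langle Aa,X\rangle$ of the components of $X$, which is gaussian (allowing degenerate cases), so $\langle a,Y\rangle$ is gaussian as well. Then I would compute the first two moments. The mean is $\E[Y]=A^T(\E[X]-\mu)=0$. For the covariance I would apply the exercise formula $K_{BZ}=BK_ZB^T$ with $B=A^T$ and $Z=X-\mu$; since shifting by the constant $\mu$ does not change the covariance, $K_{X-\mu}=Q$, and therefore
$$K_Y=A^TQ(A^T)^T=A^TQA=D,$$
which is diagonal.

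Now I would apply the corollary that the components of a gaussian vector are independent if and only if its covariance matrix is diagonal. Since $K_Y=D$ is diagonal, the $Y_1,\dots,Y_n$ are independent, and from the diagonal entries each $Y_j$ is centered with variance $\lambda_j$, that is $Y_j\sim\mathcal{N}(0,\lambda_j)$. Finally, orthogonality of $A$ gives $AA^T=I$, so
$$\mu+AY=\mu+AA^T(X-\mu)=\mu+(X-\mu)=X,$$
which is exactly the asserted representation.

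Every step after the spectral decomposition is routine, so the proof really rests on two small observations rather than a genuine obstacle: that the affine image $Y=A^T(X-\mu)$ of a gaussian vector is still gaussian (immediate from the definition via linear combinations), and that this is precisely what licenses the use of the independence-from-diagonal-covariance corollary. I would therefore expect the only point worth stating carefully to be the verification that $Y$ is gaussian, after which the independence and the distributional identification of the $Y_j$ are automatic.
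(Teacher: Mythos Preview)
Your argument is correct and follows essentially the same route as the paper: diagonalize the covariance matrix $Q$ via an orthogonal $A$, set $Y=A^T(X-\mu)$, note that $Y$ is gaussian with diagonal covariance $A^TQA$, and invoke the preceding corollary to get independence. You have in fact been more explicit than the paper in verifying that $Y$ is gaussian and that $\mu+AY$ recovers $X$.
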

\begin{rem}
It is possible that $\lambda_j=0$. In that case, it is also possible to get $Y_j=0$ a.s.
\end{rem}
\begin{proof}
There is an $A\in O(\R)$, such that $Q=A\Lambda A^*$, with $\Lambda=\begin{pmatrix}\lambda_1&\dotsm &0\\ \vdots&\ddots&\vdots\\0&\dotsm&\lambda_n\end{pmatrix}$, $\lambda_j\leq 0$. Set $Y=A^*(X-\mu)$. Then one can check that $Y$ is gaussian. So we get that $Cov(Y)=A^*QA=\Lambda$, which implies that $Y_1,...,Y_n$ are independent because $Cov(Y)$ is diagonal.
\end{proof}
\begin{cor}
An $\R^n$-valued gaussian vector $X$ has density on $\R^n$ if and only if $\det(Q)\not=0$.
\end{cor}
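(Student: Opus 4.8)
The plan is to lean entirely on the structure theorem just established, which writes $X=\mu+AY$ with $A$ orthogonal and $Y=(Y_1,\dots,Y_n)$ a vector of independent coordinates $Y_j\sim\mathcal{N}(0,\lambda_j)$, where the $\lambda_j\geq 0$ are the eigenvalues occurring in the diagonalization $Q=A\Lambda A^*$. Since $A$ is orthogonal we have $\det Q=\det\Lambda=\prod_{j=1}^n\lambda_j$, so the whole statement reduces to deciding when the product vector $Y$ has a density and then transporting that density through the affine bijection $y\mapsto\mu+Ay$.

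First I would treat the case $\det Q\neq 0$. Then every $\lambda_j>0$, so each $Y_j$ is a genuine one-dimensional Gaussian with density $\frac{1}{\sqrt{2\pi\lambda_j}}e^{-y_j^2/(2\lambda_j)}$; by the corollary on densities of products of independent variables, $Y$ has density $f_Y(y)=\prod_{j=1}^n\frac{1}{\sqrt{2\pi\lambda_j}}e^{-y_j^2/(2\lambda_j)}$ on $\R^n$. The map $g(y)=\mu+Ay$ is $C^1$, injective, with $g^{-1}(x)=A^*(x-\mu)$ and $|\det(J_{g^{-1}})|=|\det A|=1\neq 0$, so the change-of-variables theorem recorded earlier applies and yields that $X=g(Y)$ has density $f_X(x)=f_Y(A^*(x-\mu))$. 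A short computation, using $Q^{-1}=A\Lambda^{-1}A^*$ and $\det Q=\prod\lambda_j$, rewrites this as
$$f_X(x)=\frac{1}{(2\pi)^{n/2}\sqrt{\det Q}}\exp\left(-\frac{1}{2}\langle x-\mu,Q^{-1}(x-\mu)\rangle\right),$$
which is in particular a bona fide probability density, so $X$ has a density.

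For the converse I would argue by contraposition and assume $\det Q=0$. Then at least one eigenvalue vanishes, say $\lambda_{j_0}=0$, and by the degenerate convention $\mathcal{N}(0,0)=0$ a.s. the coordinate $Y_{j_0}$ equals $0$ almost surely. Hence $Y$ takes values almost surely in the hyperplane $H_0=\{y\mid y_{j_0}=0\}$, and therefore $X=\mu+AY$ lies almost surely in the affine set $\mu+AH_0$, which has dimension at most $n-1$ and thus Lebesgue measure zero in $\R^n$. If $X$ possessed a density $f$ with respect to Lebesgue measure, we would get the contradiction $1=\p[X\in\mu+AH_0]=\int_{\mu+AH_0}f(x)\,dx=0$. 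So no density can exist, completing the equivalence.

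Both implications are short once the structure theorem is in hand; the only place demanding real care is the change-of-variables computation producing the explicit Gaussian density, where one must keep track of $A^*A=I$, $|\det A|=1$, and $Q^{-1}=A\Lambda^{-1}A^*$. I expect the main obstacle to be this bookkeeping rather than any conceptual difficulty, the degenerate half resting only on the clean measure-theoretic fact that a law carried by a Lebesgue-null set cannot be absolutely continuous with respect to Lebesgue measure.
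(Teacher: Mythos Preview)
Your argument is correct and is precisely the natural one the paper has in mind: the corollary is stated there without proof, as an immediate consequence of the preceding structure theorem $X=\mu+AY$, with only the explicit density formula recorded in the subsequent remark. Your proposal fills in exactly the expected details---change of variables through the orthogonal affine map for the forward direction, and the support-on-a-null-hyperplane observation for the converse---so there is nothing to correct or add.
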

\begin{rem}
If $\det(Q)\not=0$, then $f_X(x)=\frac{1}{\sqrt{2\pi}\sqrt{\det(Q)}}e^{-\frac{1}{2}\langle x-\mu,Q^{-1}(\lambda-\mu)\rangle}$.
\end{rem}
\begin{thm}
Let $X$ be an $\R^n$-valued gaussian r.v. and let $Y$ be an $\R^m$-valued gaussian r.v. If $X$ and $Y$ are independent, then $Z=(X,Y)$ is an $\R^{n+m}$-valued gaussian vector.
\end{thm}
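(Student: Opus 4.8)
The plan is to verify directly that the characteristic function of $Z=(X,Y)$ has the exponential form $(*)$ characterizing gaussian vectors in the preceding theorem; this reduces the whole statement to a one-line factorization plus a block-matrix identification. Write a generic argument $w=(u,v)\in\R^{n+m}$ with $u\in\R^n$ and $v\in\R^m$, so that $\langle w,Z\rangle=\langle u,X\rangle+\langle v,Y\rangle$.

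First I would compute, using that $X$ and $Y$ are independent (hence $e^{i\langle u,X\rangle}$ and $e^{i\langle v,Y\rangle}$ are independent bounded r.v.'s, to which the factorization of expectations for independent variables applies),
$$\varphi_Z(u,v)=\E\left[e^{i\langle u,X\rangle}e^{i\langle v,Y\rangle}\right]=\E\left[e^{i\langle u,X\rangle}\right]\E\left[e^{i\langle v,Y\rangle}\right]=\varphi_X(u)\,\varphi_Y(v).$$
Since $X$ and $Y$ are gaussian, the theorem gives $\varphi_X(u)=\exp\left(i\langle u,\mu_X\rangle-\tfrac12\langle u,Q_Xu\rangle\right)$ and $\varphi_Y(v)=\exp\left(i\langle v,\mu_Y\rangle-\tfrac12\langle v,Q_Yv\rangle\right)$ for mean vectors $\mu_X,\mu_Y$ and symmetric nonnegative semidefinite matrices $Q_X,Q_Y$. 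Multiplying and collecting the exponents, I would rewrite the result as $\exp\left(i\langle w,\mu\rangle-\tfrac12\langle w,Qw\rangle\right)$ with $\mu=(\mu_X,\mu_Y)$ and $Q$ the block-diagonal matrix with blocks $Q_X$ and $Q_Y$; the only computation here is the elementary identity $\langle w,Qw\rangle=\langle u,Q_Xu\rangle+\langle v,Q_Yv\rangle$, valid precisely because the off-diagonal blocks vanish. As a block-diagonal matrix built from two symmetric nonnegative semidefinite blocks, $Q$ is itself symmetric and nonnegative semidefinite, so $\varphi_Z$ has exactly the form $(*)$ and the theorem yields that $Z$ is an $\R^{n+m}$-valued gaussian vector.

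I do not expect a genuine obstacle: the entire content is the factorization of the joint characteristic function through independence, which is already available in the excerpt. If one instead preferred to argue straight from the \textbf{definition}, the alternative is to observe that any linear combination $\sum_i a_iX_i+\sum_j b_jY_j$ equals $U+V$, where $U$ is a gaussian r.v. (a linear combination of the components of the gaussian vector $X$) and $V$ is an independent gaussian r.v., and then invoke the earlier result that the sum of two independent gaussian variables is again gaussian; the only point requiring care on that route is the possibly degenerate case, which is admissible since the definition explicitly allows $\mathcal{N}(\mu,0)$. Either way the verification is routine, so I would present the characteristic-function computation as the clean proof.
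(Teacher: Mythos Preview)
Your proposal is correct and follows exactly the paper's approach: factor $\varphi_Z(u,v)=\varphi_X(u)\varphi_Y(v)$ by independence, insert the exponential form $(*)$ for each factor, and recognize the result as $(*)$ with mean $(\mu_X,\mu_Y)$ and block-diagonal covariance $Q=\begin{pmatrix}Q_X&0\\0&Q_Y\end{pmatrix}$. Your write-up is in fact more careful than the paper's (which contains a typo, writing a sum of exponentials instead of a product), and your added remark about the alternative route via sums of independent gaussians is a nice bonus.
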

\begin{proof}
Let $u=(w,v)$, $w\in\R^n$ and $v\in\R^m$. Take $Q=\begin{pmatrix}Q^X&0\\ 0&Q^Y\end{pmatrix}$. Now we get
\begin{align*}
\varphi_Z(u)&=\varphi_X(w)\varphi_Y(v)\\
&=\exp\left(i\langle w,\mu X\rangle-\frac{1}{2}\langle w,Q^Xw\rangle\right)+\exp\left(i\langle v,\mu^Y\rangle-\frac{1}{2}\langle v,Q^Yv\rangle \right)\\
&=\exp\left(i\langle (w,v),(\mu^X,\mu^Y)\rangle-\frac{1}{2}\langle u,Qu\rangle\right),
\end{align*}
implying that $Z$ is a gaussian vector.
\end{proof}
\begin{thm}
Let $X$ be an $\R^n$-valued gaussian vector. Two components $X_j$ and $X_k$ of $X$ are independent if and only if $Cov(X_j,X_k)=0$.
\end{thm}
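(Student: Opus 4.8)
The plan is to treat the two implications separately, with the backward direction being the one that genuinely exploits the Gaussian structure, while the forward direction is a general fact about square-integrable variables.

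For the forward direction, suppose $X_j$ and $X_k$ are independent. Since $X$ is a Gaussian vector, each component is square-integrable, so $X_j,X_k\in L^2(\Omega,\A,\p)$, and the earlier corollary on independent $L^2$ random variables gives immediately $Cov(X_j,X_k)=0$. Nothing beyond integrability and independence is used here, so this direction holds for any (not necessarily Gaussian) pair of $L^2$ variables.

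For the backward direction, the essential first step is to observe that the two-dimensional subvector $(X_j,X_k)$ is itself a Gaussian vector. Indeed, any linear combination $\lambda X_j+\mu X_k$ can be rewritten as a linear combination $\sum_{i=1}^n\lambda_i X_i$ of all the components of $X$ by setting every coefficient except those in positions $j$ and $k$ equal to zero; by the defining property of a Gaussian vector this is a Gaussian random variable. Hence $(X_j,X_k)$ satisfies the definition of a Gaussian vector in $\R^2$. Its covariance matrix is
$$Q'=\begin{pmatrix}Var(X_j)&Cov(X_j,X_k)\\ Cov(X_k,X_j)&Var(X_k)\end{pmatrix},$$
and the hypothesis $Cov(X_j,X_k)=0$ says precisely that $Q'$ is diagonal. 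Applying the earlier corollary (the components of a Gaussian vector are independent if and only if its covariance matrix is diagonal) to the Gaussian vector $(X_j,X_k)$ then yields that $X_j$ and $X_k$ are independent, which completes the proof.

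The only point requiring care, and thus the main (though mild) obstacle, is the justification that passing to a subvector preserves the Gaussian-vector property. This is exactly where the chosen definition pays off: phrasing Gaussianity as ``every linear combination is Gaussian'' makes closure under taking subvectors essentially tautological, since a linear combination of $X_j,X_k$ is a degenerate linear combination of all of $X$. Had one instead started from the explicit exponential form of the characteristic function, one would have to verify that marginalizing reproduces the correct form $\varphi_{(X_j,X_k)}(u)=\exp\bigl(i\langle u,\mu'\rangle-\tfrac12\langle u,Q'u\rangle\bigr)$ with $Q'$ the indicated $2\times2$ block; everything else then reduces to invoking the diagonal-covariance corollary already established above.
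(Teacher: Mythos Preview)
Your proof is correct and follows essentially the same approach as the paper: form the subvector $Y=(X_j,X_k)$, observe it is Gaussian, and invoke the earlier corollary that the components of a Gaussian vector are independent if and only if its covariance matrix is diagonal. Your version is in fact more complete, since you explicitly justify why the subvector inherits the Gaussian property and you spell out the forward implication via the general $L^2$ corollary, whereas the paper's proof is very terse and addresses only the nontrivial direction.
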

\begin{proof}
Consider $Y=(Y_1,Y_2)$, with $Y_1=X_j$ and $Y_2=X_k$. If $Y$ is a gaussian vector, then $Cov(Y_1,Y_2)=0$, which implies that $Y_1$ and $Y_2$ are independent.
\end{proof}
\emph{Warning!:}
Let $Y\sim\mathcal{N}(0,1)$ and for $a>0$ fix $Z=Y\one_{\vert Y\vert\leq a}-Y\one_{\vert Y\vert>a}$. Then $Z\sim\mathcal{N}(0,1)$. But $Y+Z=2Y\one_{\vert Y\vert \leq  a}$ is not gaussian because it is a bounded r.v. and it is not constant. Therefore $(Y,Z)$ is not a gaussian vector.

\part{Conditional Expectations, Martingales and Markov Chains}

\chapter*{Introduction}
If we consider a probability space $(\Omega,\F,\p)$ with a sequence of iid r.v.'s $(X_n)_{n\geq1}$, we can look at the expectation $\E[\vert X_1\vert]=\dotsm =\E[\vert X_n\vert]<\infty$. Now limit theorems play a central role, as we have seen in stochastics I. For example we have seen the strong law of large numbers, that is 
\[
\frac{X_1+\dotsm+X_n}{n}\xrightarrow{n\to \infty}\E[X_1] \hspace{0.3cm}\text{a.s.}
\]
Notice that $\E[X_1]=\dotsm =\E[X_n]$ since $(X_n)_{n\geq1}$ are iid. Another very important limit theorem is the central limit theorem (CLT), that is
\[
\sqrt{n}\left(\frac{X_1+\dotsm+X_n}{n}\right)\xrightarrow{n\to\infty\atop law}\mathcal{N}(0,1).
\]
That means that the distribution of the sum of the r.v.'s over $\sqrt{n}$ converges to a standard Gaussian distribution. Notice that it doesn't matter what the distribution of the $X_i$ is. The way we have proved this in stochastics I was the approach of the characteristic function. Take 
\[
\E\left[e^{it\left(\frac{X_1+\dotsm+X_n}{\sqrt{n}}\right)}\right]=\prod_{i=1}^n\E\left[e^{it\frac{X_1}{\sqrt{n}}}\right]=\left(\E\left[e^{it\frac{X_1}{\sqrt{n}}}\right]\right)^n\xrightarrow{n\to\infty}e^{-\frac{t^2}{2}}.
\]
Since we know that the characteristic function of a standard Gaussian is $e^{-\frac{t^2}{2}}$, we get the claim. 

\vspace{1cm}

Now the more interesting question is what kind of dependence structure can one put on a family of r.v.'s $(Z_n)_{n\geq1}$? 

\vspace{0.5cm}

This question will be discussed in detail in this notes and will lead to two very important notions in probability theory:

\begin{itemize}
\item{The notion of martingales}
\item{The notion of Markov-chains (ergodicity)}
\end{itemize}

The extra notion we get is that $n$ is a representative for the time, i.e. one can imagine stochastic processes changing with time (time evolution).

\vspace{0.5cm}

Therefore we consider tuples of the form $(Z_n,\F_n)_{n\geq1}$, where $\F_n$ is a $\sigma$-Algebra for all $n\geq 1$. It is often important to write the tuple down and to emphasize the $\F_n's$. Assume that the space of events in infinite time steps is known and denote it by $\F$. Then one will see that $\F_n\subset \F$ and moreover $\F_n\subset \F_{n+1}$. This is a very important fact and is known as filtration. 

\vspace{1cm}

Another very important thing is the notion of conditional expectation and conditional distribution, which we will cover as the first part of these notes. 

\chapter{Conditional expectations}
\section{$L^2(\Omega,\F,\p)$ as a Hilbert space and orthogonal projections}
In this section we will always work on a probability space $(\Omega,\F,\p)$. Consider the space $L^2(\Omega,\F,\p)$, which is given by
\[
L^2(\Omega,\F,\p):=\{X:\Omega\to \R \hspace{0.2cm}\text{a r.v.}\mid \E[X^2]<\infty\}.
\]
More precisely, consists only of equivalence classes, i.e. $X$ and $Y$ are identified if $X=Y$ a.s. We also know that we have a norm on this space given by 
\[
\|X\|_2=\E[X^2]^{1/2}.
\]
\begin{rem}
Recall that on $\mathcal{L}^2(\Omega,\F,\p)$ this would only be a semi-norm rather than a norm.
\end{rem}
We can also define an inner product on $L^2(\Omega,\F,\p)$ by 
\[
\langle X,Y\rangle=\E[XY].
\]
One can easily check that this satisfies the conditions of an inner product. It remains to show that $\vert\langle X,Y\rangle\vert<\infty$. By Cauchy-Schwarz we get 
\[
\vert\langle X,Y\rangle\vert\leq \|X\|_2\|Y\|_2=\E[X^2]^{1/2}\E[Y^2]^{1/2},
\]
and since we assume that the second moment of our r.v.'s exists, this is finite. Moreover one can see that 
\[
\sqrt{\langle X,X\rangle}=\E[X^2]^{1/2}=\|X\|_2.
\]
Now we know that $L^2(\p)$ is a Banach space and therefore a complete, normed vector space, i.e. every Cauchy sequence has a limit inside the space with respect to the norm. Since we have also an inner product on $L^2(\p)$, it is also a Hilbert space. We shall recall what a Hilbert space is.
\begin{defn}[Hilbert space]
An inner product space $(\mathcal{H},\langle\cdot,\cdot\rangle)$ is called a Hilbert space, if it is complete and the norm is derived from the inner product. If it is not complete it is called a pre-Hilbert space.
\end{defn}
\begin{rem}
It is in fact important that we have noted $L^2(\Omega,\F,\p)$, for instance if $\mathcal{G}$ is a $\sigma$-Algebra and $\mathcal{G}\subset\F$, then $L^2(\Omega,\mathcal{G},\p)\subset L^2(\Omega,\F,\p)$. When we have several $\sigma$-Algebras, we write explicitly the dependence of them, by noting for instance $L^2(\Omega,\mathcal{G},\p),L^2(\Omega,\F,\p)$, etc.
\end{rem}
\begin{ex}
Take $\mathcal{H}=\R^n$. This is indeed a Hilbert space with the euclidean inner product, i.e. if $X,Y\in\R^n$ then 
\[
\langle X,Y\rangle=\sum_{i=1}^nX_iY_i.
\]
Be aware that this is an example of a  finite dimensional Hilbert space, but $L^2(\p)$ is a infinite dimensional Hilbert space. Another example would be $\mathcal{H}=l^2(\N)$ the space of square summable sequences a subspace of $l^\infty(\N)$ which is the space of convergent sequences, i.e. 
\[
l^2(\N):=\left\{x=(x_n)_{n\geq1}\in l^\infty(\N)\mid \sum_{n=1}^\infty \vert x_n\vert^2<\infty\right\}.
\]
This is also an example of an infinite dimensional Hilbert space. It is clearly related to the $L^p(\mu)$ spaces, where we use the counting measure. The theory of Hilbert spaces is discussed in more detail in a course on functional analysis.

\vspace{1cm}

We want to give here more, maybe a bit harder, examples of Hilbert spaces used in functional analysis. 

\begin{enumerate}[$(i)$]
\item{{\bf Sobolev spaces:} Sobolev spaces, denoted by $H^s$ or $W^{s,2}$, are Hilbert spaces. These are a special kind of a function space in which differentiation may be performed, but that support the structure of an inner product. Because differentiation is permitted, Sobolev spaces are a convenient setting for the theory of partial differential equations. They also form the basis of the theory of direct methods in the calculus of variations. For $s$ a nonnegative integer and $\Omega\subset\R^n$, the Sobolev space $H^s(\Omega)$ contains $L^2$-functions whose weak derivatives of order up to $s$ are also in $L^2$. The inner product in $H^s(\Omega)$ is 
\[
\langle f,g\rangle_{H^s(\Omega)}=\int_\Omega f(x)\bar g(x)d\mu(x)+\int_\Omega Df(x)\cdot D\bar g(x)d\mu(x)+...+\int_\Omega D^sf(x)\cdot D^s\bar g(x)d\mu(x),
\]
where the dot indicates the dot product in the euclidean space of partial derivatives of each order. Slobber spaces can also be defined when $s$ is not an integer.
}
\item{{\bf Hardy spaces:} The Hardy spaces are function spaces, arising in complex analysis and harmonic analysis, whose elements are certain holomorphic functions in a complex domain. Let $U$ denote the unit disc in the complex plane. Then the Hardy space $\mathcal{H}^2(U)$ is defined as the space of holomorphic functions $f$ on $U$ such that the means 
\[
M_r(f)=\frac{1}{2\pi}\int_0^{2\pi}\vert f(re^{i\theta})\vert^2d\theta
\]
remains bounded for $r<1$. The norm on this Hardy space is defined by 
\[
\|f\|_{\mathcal{H}^2(U)}=\lim_{r\to 1}\sqrt{M_r(f)}.
\]
Hardy spaces in the disc are related to Fourier series. A function $f$ is in $\mathcal{H}^2(U)$ if and only if $f(z)=\sum_{n=0}^\infty a_nz^n$, where $\sum_{n=0}^\infty\vert a_n\vert^2<\infty$. Thus $\mathcal{H}^2(U)$ consists of those functions that are $L^2$ on the circle and whose negative frequency Fourier coefficients vanish.
}
\item{{\bf Bergman spaces:} The Bergman spaces are another family of Hilbert spaces of holomorphic functions. Let $D$ be a bounded open set in the complex plane (or a higher-dimensional complex space) and let $L^{2,h}(D)$ be the space of holomorphic functions $f$ in $D$ that are also in $L^2(D)$ in the sense that 
\[
\|f\|^2=\int_D\vert f(z)\vert^2 d\mu(z)<\infty,
\] 
where the integral is taken with respect to the Lebesgue measure in $D$. Clearly $L^{2,h}(D)$ is a subspace of $L^2(D)$; in fact, it is a closed subspace and so a Hilbert space in its own right. This is a consequence of the estimate, valid on compact subsets $K$ of $D$, that 
\[
\sup_{z\in K}\vert f(z)\vert\leq  C_K\|f\|_2,
\]
which in turn follows from Cauchy's integral formula. Thus convergence of a sequence of holomorphic functions in $L^2(D)$ implies also compact convergence and so the function is also holomorphic. Another consequence of this inequality is that the linear functional that evaluates a function $f$ at a point of $D$ is actually continuous on $L^{2,h}(D)$. The Riesz representation theorem (see notes on measure and integral) implies that the evaluation functional can be represented as an element of $L^{2,h}(D)$. Thus, for every $z\in D$, there is a function $\eta_z\in L^{2,h}(D)$ such that 
\[
f(z)=\int_Df(\zeta)\overline{\eta_z(\zeta)}d\mu(\zeta)
\]
for all $f\in L^{2,h}(D)$. The integrand $K(\zeta,z)=\overline{\eta_z(\zeta)}$ is known as the Bergman kernel of $D$. This integral kernel satisfies a reproducing property
\[
f(z)=\int_D f(\zeta) K(\zeta,z)d\mu(\zeta).
\]
A Bergman space is an example of a reproducing kernel Hilbert space, which is a Hilbert space of functions along with a kernel $K(\zeta,z)$ that verifies a reproducing property analogous to this one. The Hardy space $\mathcal{H}^2(D)$ also admits a reproducing kernel, known as the $Szeg\ddot{o}$ kernel. Reproducing kernels are common in other areas of mathematics as well.  For instance, in harmonic analysis the Poisson kernel is a reproducing kernel for the Hilbert space of square integrable harmonic functions in the unit ball. That the latter is a Hilbert space at all is a consequence of the mean value theorem for harmonic functions.
}
\end{enumerate}
\end{ex}
\begin{rem}
The notion of Hilbert spaces allow us to do basic geometry on them. Even if our space is an infinite dimensional vector space, we can still make sense of geometrical meanings, for example orthogonality, only by using the inner product on the space. 
\end{rem}
\begin{defn}[Orthogonal]
Two elements $X,Y$ in a Hilbert space $(\mathcal{H},\langle\cdot,\cdot\rangle)$ are said to be orthogonal if 
\[
\langle X,Y\rangle=0.
\]
\end{defn}
\begin{rem}
For a real valued Hilbert space $\mathcal{H}$ we get the following identity. For every $X,Y\in\mathcal{H}$ 
\[
\|X+Y\|^2=\langle X+Y,X+Y\rangle=\langle X,X\rangle+\langle X,Y\rangle+\langle Y,X\rangle+\langle Y,Y\rangle=\|X\|^2+\|Y\|^2+2\langle X,Y\rangle,
\]
and if $X$ and $Y$ are orthogonal, i.e. $\langle X,Y\rangle=0$, we get the usual pythagorean relation
\[
\|X+Y\|^2=\|X\|^2+\|Y\|^2.
\]
\end{rem}
\begin{thm}
Let $(X_n)_{n\geq1}$ and $(Y_n)_{n\geq 1}$ be two converging sequences in a Hilbert space $\mathcal{H}$ such that $X_n\xrightarrow{n\to\infty}X$ and $Y_n\xrightarrow{n\to\infty}Y$. Then 
\[
\langle X_n,Y_n\rangle\xrightarrow{n\to\infty}\langle X,Y\rangle.
\]
(In particular, $X_n=Y_n$ gives us that $\|X_n\|\xrightarrow{n\to\infty}\|X\|$)
\end{thm}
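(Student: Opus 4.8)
The plan is to reduce everything to the Cauchy--Schwarz inequality, which is already available from the inner product discussion above, together with the triangle inequality for the norm. The key algebraic observation is to insert and subtract a cross term so that the difference $\langle X_n, Y_n\rangle - \langle X, Y\rangle$ splits into two pieces, each having one factor that tends to zero.

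First I would use the bilinearity of the inner product to write
$$\langle X_n, Y_n\rangle - \langle X, Y\rangle = \langle X_n, Y_n - Y\rangle + \langle X_n - X, Y\rangle.$$
Applying the triangle inequality in $\R$ and then Cauchy--Schwarz to each summand gives
$$\left|\langle X_n, Y_n\rangle - \langle X, Y\rangle\right| \leq \|X_n\|\,\|Y_n - Y\| + \|X_n - X\|\,\|Y\|.$$

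Next I would control the factor $\|X_n\|$. Since $X_n\to X$, the reverse triangle inequality $\bigl|\|X_n\| - \|X\|\bigr| \leq \|X_n - X\| \to 0$ shows that $\|X_n\|\to \|X\|$, so in particular $(\|X_n\|)_{n\geq1}$ is bounded by some constant $M<\infty$. Feeding this bound back in yields
$$\left|\langle X_n, Y_n\rangle - \langle X, Y\rangle\right| \leq M\,\|Y_n - Y\| + \|Y\|\,\|X_n - X\|,$$
and since both $\|Y_n - Y\|\to 0$ and $\|X_n - X\|\to 0$ by hypothesis, the right-hand side tends to $0$. This proves the claim, and the parenthetical remark $\|X_n\|\to\|X\|$ is exactly the special case $X_n = Y_n$ (equivalently, it is the reverse triangle inequality already invoked).

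The step I expect to be the only genuine subtlety is the boundedness of $(\|X_n\|)$: one must remember that a convergent sequence in a normed space is norm-bounded, since otherwise the product $\|X_n\|\,\|Y_n - Y\|$ could in principle fail to vanish even though its second factor does. Everything else is a routine application of Cauchy--Schwarz and the triangle inequality.
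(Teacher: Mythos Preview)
Your proof is correct and follows essentially the same approach as the paper: the same add-and-subtract decomposition $\langle X_n,Y_n\rangle-\langle X,Y\rangle=\langle X_n,Y_n-Y\rangle+\langle X_n-X,Y\rangle$, followed by Cauchy--Schwarz and the observation that $(\|X_n\|)_{n\geq 1}$ is bounded. If anything, your justification of that last boundedness step via the reverse triangle inequality is slightly more explicit than the paper's.
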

\begin{proof}
We can look at the difference, which is given by 
\begin{align*}
\vert \langle X,Y\rangle-\langle X_n,Y_n\rangle\vert&=\vert \langle X-X_n,Y\rangle+\langle X_n,Y\rangle-\langle X_n,Y_n\rangle\vert\\
&=\vert\langle X-X_n,Y\rangle+\langle X_n,Y-Y_n\rangle\vert\\
&\leq  \vert \langle X-X_n,Y\rangle\vert +\vert X_n,Y-Y_n\rangle\vert\\
&\leq  \|X-X_n\|\|Y\|+\|X_n\|\|Y-Y_n\|,
\end{align*}
where we have used the Cauchy-Schwarz inequality. Now since for $n$ large enough there is some $\varepsilon>0$ such that $\|X-X_n\|<\varepsilon$ and $\|Y-Y_n\|<\varepsilon$ by assumption, and the fact that $\|X_n\|$ is bounded independently of $n$, we get the claim.
\end{proof}
\begin{lem}[Parallelogram identity]
Let $\mathcal{H}$ be a Hilbert space. For all $X,Y\in\mathcal{H}$ we get 
\[
\|X+Y\|^2+\|X-Y\|^2=2(\|X\|^2+\|Y\|^2).
\]
Moreover if a norm satisfies the parallelogram identity, it can be derived from an inner product.
\end{lem}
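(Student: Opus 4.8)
The plan is to handle the two directions separately, since the forward implication is a one-line expansion while the converse is the substantive statement.

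For the identity itself I would simply expand both norms through the inner product. Writing $\|X+Y\|^2=\langle X+Y,X+Y\rangle=\|X\|^2+2\langle X,Y\rangle+\|Y\|^2$ and likewise $\|X-Y\|^2=\|X\|^2-2\langle X,Y\rangle+\|Y\|^2$, the cross terms cancel upon addition and leave $2(\|X\|^2+\|Y\|^2)$. This uses that $\mathcal{H}$ is real, so that $\langle X,Y\rangle=\langle Y,X\rangle$. For the converse, suppose $\|\cdot\|$ is a norm satisfying the parallelogram identity. I would define a candidate inner product by the polarization formula
$$\langle X,Y\rangle:=\tfrac14\left(\|X+Y\|^2-\|X-Y\|^2\right)$$
and then verify the inner-product axioms. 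Symmetry is immediate since $\|X-Y\|=\|Y-X\|$, and positivity follows from $\langle X,X\rangle=\tfrac14\|2X\|^2=\|X\|^2$, which simultaneously shows that the norm is recovered from the inner product. The nontrivial axiom — and the main obstacle — is additivity in the first slot, $\langle X+Z,Y\rangle=\langle X,Y\rangle+\langle Z,Y\rangle$.

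To establish additivity I would apply the parallelogram identity twice, to the pairs $(X+Y,\,Z+Y)$ and $(X-Y,\,Z-Y)$, which express $\|X+Y\|^2+\|Z+Y\|^2$ and $\|X-Y\|^2+\|Z-Y\|^2$ in terms of $\|(X+Z)\pm 2Y\|^2$ and a common $\|X-Z\|^2$. Subtracting the two relations cancels $\|X-Z\|^2$ and yields
$$\langle X,Y\rangle+\langle Z,Y\rangle=\tfrac12\langle X+Z,2Y\rangle.$$
Setting $Z=0$ and noting $\langle 0,Y\rangle=0$ specializes this to $\langle X,2Y\rangle=2\langle X,Y\rangle$; feeding that back into the displayed relation gives exactly $\langle X+Z,Y\rangle=\langle X,Y\rangle+\langle Z,Y\rangle$.

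Finally, for homogeneity $\langle\lambda X,Y\rangle=\lambda\langle X,Y\rangle$ I would bootstrap from additivity: induction gives the identity for $\lambda\in\N$, the relation $\langle X,Y\rangle+\langle -X,Y\rangle=\langle 0,Y\rangle=0$ extends it to $\lambda\in\mathbb{Z}$, and a clearing-of-denominators argument extends it to all $\lambda\in\Q$. The passage to real $\lambda$ then follows by continuity: the reverse triangle inequality $\bigl|\,\|a\|-\|b\|\,\bigr|\le\|a-b\|$ makes $\lambda\mapsto\|X+\lambda Y\|$ continuous, so $\lambda\mapsto\langle\lambda X,Y\rangle$ is continuous and agrees with the linear map $\lambda\mapsto\lambda\langle X,Y\rangle$ on the dense set $\Q$. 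This completes the verification that the polarization formula defines an inner product inducing $\|\cdot\|$. I expect the additivity step to be the only delicate point; everything else is bookkeeping.
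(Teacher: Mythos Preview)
Your argument is correct in both directions; the forward expansion matches the paper's hint, and your converse is the standard Jordan--von Neumann polarization argument. The paper itself leaves the lemma as an exercise (with only the footnoted hint ``use $\|X+Y\|^2=\langle X+Y,X+Y\rangle$''), so your write-up in fact supplies what the paper omits.
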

\begin{proof}
Exercise\footnote{Use the fact $\|X+Y\|^2=\langle X+Y,X+Y\rangle$. This proof can be found in the notes of measure and integral}.
\end{proof}
\begin{defn}{\bf Closed linear subset}
Let $\mathcal{H}$ be a Hilbert space and let $\mathcal{L}\subset\mathcal{H}$ be a linear subset. $\mathcal{L}$ is called closed if for every sequence $(X_n)_{n\geq1}$ in $\mathcal{L}$ with $X_n\xrightarrow{n\to\infty}X$ we get that $X\in\mathcal{L}$.
\end{defn}
\begin{thm}
Let $\mathcal{H}$ be a Hilbert space and let $\Gamma\subset\mathcal{H}$ be a subset. Let $\Gamma^\perp$ denote the set of all elements of $\mathcal{H}$ which are orthogonal to $\Gamma$, i.e.
\[
\Gamma^\perp=\{X\in\mathcal{H}\mid \langle X,\gamma\rangle=0,\forall\gamma\in\Gamma\}.
\]
Then $\Gamma^\perp$ is a closed subspace of $\mathcal{H}$. We call $\Gamma^\perp$ the orthogonal complement of $\Gamma$.
\end{thm}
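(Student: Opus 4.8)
The plan is to verify two things in turn: that $\Gamma^\perp$ is a linear subspace, and that it is closed under limits. Both follow almost immediately from the bilinearity of the inner product and from the continuity theorem proved just above (the one asserting $\langle X_n,Y_n\rangle\to\langle X,Y\rangle$ whenever $X_n\to X$ and $Y_n\to Y$).

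First I would check the subspace property. Clearly $0\in\Gamma^\perp$ since $\langle 0,\gamma\rangle=0$ for every $\gamma\in\Gamma$. Now take $X,Y\in\Gamma^\perp$ and scalars $a,b\in\R$. For an arbitrary $\gamma\in\Gamma$, linearity of $\langle\cdot,\cdot\rangle$ in the first argument gives $\langle aX+bY,\gamma\rangle=a\langle X,\gamma\rangle+b\langle Y,\gamma\rangle=0$. Since $\gamma\in\Gamma$ was arbitrary, $aX+bY\in\Gamma^\perp$, so $\Gamma^\perp$ is a linear subspace of $\mathcal{H}$.

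Next I would prove closedness. Let $(X_n)_{n\geq 1}\subset\Gamma^\perp$ with $X_n\xrightarrow{n\to\infty}X$ for some $X\in\mathcal{H}$; the goal is $X\in\Gamma^\perp$. Fix $\gamma\in\Gamma$ and apply the continuity theorem with the constant sequence $Y_n=\gamma$ (which trivially converges to $\gamma$). This yields $\langle X_n,\gamma\rangle\xrightarrow{n\to\infty}\langle X,\gamma\rangle$. But $X_n\in\Gamma^\perp$ means $\langle X_n,\gamma\rangle=0$ for every $n$, so the limit forces $\langle X,\gamma\rangle=0$. As this holds for each $\gamma\in\Gamma$, we conclude $X\in\Gamma^\perp$, and hence $\Gamma^\perp$ is closed.

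There is no genuine obstacle here; the only step worth flagging is the right instantiation of the continuity result, namely choosing the second sequence to be the constant $\gamma$, which reduces the whole claim to the fact that the map $Z\mapsto\langle Z,\gamma\rangle$ is continuous on $\mathcal{H}$. One could alternatively give a self-contained estimate via Cauchy--Schwarz, since $|\langle X,\gamma\rangle|=|\langle X-X_n,\gamma\rangle|\leq\|X-X_n\|\,\|\gamma\|\to 0$, but invoking the already-proved continuity theorem is the cleanest route.
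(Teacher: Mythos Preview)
Your proof is correct and follows essentially the same approach as the paper: bilinearity of the inner product for the subspace property, and the previously established continuity theorem $\langle X_n,Y_n\rangle\to\langle X,Y\rangle$ (applied with the constant sequence $Y_n=\gamma$) for closedness. The paper's version is just slightly terser, omitting the explicit mention of $0\in\Gamma^\perp$ and of the constant-sequence instantiation.
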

\begin{proof}
Let $\alpha,\beta\in \R$ and $X,X'\in\Gamma^\perp$. It is clear that for all $Y\in\Gamma$
\[
\langle\alpha X + \beta X',Y\rangle=0.
\]
Hence $\Gamma^\perp$ is a linear subspace of $\mathcal{H}$. Next we want to check whether it's closed. Take a sequence $(X_n)_{n\geq1}$ in $\Gamma^\perp$ such that $X_n\xrightarrow{n\to\infty}X$ with $X\in\mathcal{H}$. Now for all $Y\in\Gamma$ we get $\langle X_n,Y\rangle=0$ and $\langle X_n,Y\rangle\xrightarrow{n\to\infty}\langle X,Y\rangle$ because of the previous theorem. Hence $\langle X,Y\rangle=0$ and therefore $X\in \Gamma^\perp$ and the claim follows.
\end{proof}
\begin{defn}[Distance to a closed subspace]
Let $\mathcal{H}$ be a Hilbert space and let $X\in\mathcal{H}$. Moreover let $\mathcal{L}\subset\mathcal{H}$ be a closed subspace. The distance of $X$ to $\mathcal{L}$ is given by 
\[
d(X,\mathcal{L})=\inf_{Y\in\mathcal{L}}\|X-Y\|=\inf\{\|X-Y\|\mid Y\in\mathcal{L}\}.
\]
\end{defn}
\begin{rem}
Since $\mathcal{L}$ is closed, $X\in\mathcal{L}$ if and only if $d(X,\mathcal{L})=0$. 
\end{rem}
\subsection{Convex sets in uniformly Convex spaces} 
While the emphasis in this section is on Hilbert spaces, it is useful to isolate a more abstract property which is precisely what is needed for several proofs. 
\begin{defn}[Uniformly convex vector space]
A normed vector space $(V,\|\cdot\|)$ is called uniformly convex if for $X,Y\in V$
\[
\|X\|,\|Y\|\leq  1\Longrightarrow\left\|\frac{X+Y}{2}\right\|\leq  1-\psi(\|X-Y\|),
\]
where $\psi:[0,2]\to[0,1]$ is a monotonically increasing function with $\psi(r)>0$ for all $r>0$.
\end{defn}
\begin{lem}
A Hilbert space $(\mathcal{H},\langle\cdot,\cdot\rangle)$ is uniformly convex.
\end{lem}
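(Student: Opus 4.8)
The plan is to reduce everything to the Parallelogram identity, which is the only special feature of the Hilbert-space norm we shall need. First I would apply that identity (the preceding lemma) to arbitrary $X,Y\in\mathcal{H}$ and rewrite it as
$$\|X+Y\|^2=2\|X\|^2+2\|Y\|^2-\|X-Y\|^2.$$
Under the hypothesis $\|X\|,\|Y\|\leq 1$ this immediately gives $\|X+Y\|^2\leq 4-\|X-Y\|^2$, and hence, after dividing by $4$,
$$\left\|\frac{X+Y}{2}\right\|^2\leq 1-\frac{1}{4}\|X-Y\|^2.$$

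The next step is to take square roots and read off the right comparison function. Writing $r=\|X-Y\|\in[0,2]$, the bound above becomes $\left\|\frac{X+Y}{2}\right\|\leq\sqrt{1-r^2/4}$, so I would define
$$\psi(r):=1-\sqrt{1-\frac{r^2}{4}},$$
which makes the required inequality $\left\|\frac{X+Y}{2}\right\|\leq 1-\psi(\|X-Y\|)$ hold by construction.

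It then remains only to check that $\psi$ has the properties demanded by the definition of uniform convexity. For $r\in[0,2]$ we have $r^2/4\in[0,1]$, so $\sqrt{1-r^2/4}\in[0,1]$ and therefore $\psi$ maps $[0,2]$ into $[0,1]$. Since $r\mapsto r^2/4$ is increasing on $[0,2]$, the quantity $\sqrt{1-r^2/4}$ is decreasing, so $\psi$ is monotonically increasing; and for $r>0$ we have $\sqrt{1-r^2/4}<1$, giving $\psi(r)>0$. This verifies all the conditions, so $\mathcal{H}$ is uniformly convex.

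There is no serious obstacle here: the entire content is the parallelogram identity, and the remainder is the elementary bookkeeping of choosing $\psi$ and confirming its monotonicity and positivity. The one point worth a moment's care is confirming that $\psi$ indeed lands in $[0,1]$ and is strictly positive away from $0$, since these are precisely the structural requirements in the definition of a uniformly convex space.
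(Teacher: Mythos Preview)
Your proof is correct and follows essentially the same approach as the paper: apply the parallelogram identity, bound the right-hand side using $\|X\|,\|Y\|\leq 1$, take a square root, and read off $\psi$. Your version is in fact slightly more careful than the paper's---you obtain the correct coefficient $\tfrac{1}{4}$ (the paper writes $\tfrac{1}{2}$, which is a slip), and you explicitly verify that $\psi$ maps $[0,2]$ into $[0,1]$, is monotone increasing, and is strictly positive away from $0$, whereas the paper leaves these checks implicit.
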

\begin{proof}
For $X,Y\in \mathcal{H}$ with $\|X\|,\|Y\|\leq  1$ then by parallelogram identity we have
\[
\left\|\frac{X+Y}{2}\right\|=\sqrt{\frac|{1}{2}\|X\|^2+\frac{1}{2}\|Y\|^2-\frac{1}{2}\|X-Y\|^2}\leq \sqrt{1-\frac{1}{2}\|X-Y\|^2}=1-\psi(\|X-Y\|)
\]
as required, with $\psi(r)=1-\sqrt{1-\frac{1}{2}r^2}$.
\end{proof}
Heuristically, we can think of Definition 1.4 as having the following geometrical meaning. If vectors $X$ and $Y$ have norm (length) one, then their mid-point $\frac{X+Y}{2}$ has much smaller norm unless $X$ and $Y$ are very close together. This accords closely with the geometrical intuition from finite-dimensions spaces with euclidean distance. The following theorem, whose conclusion is illustrated in the figure, will have many important consequences for the study of Hilbert spaces.
\begin{thm}[Unique approximation of a closed convex set]
Let $(V,\|\cdot\|)$ be a Banach space with a uniformly convex norm, let $K\subset V$ be a closed convex subset and assume that $v_0\in V$. Then there exists a unique element $w\in K$ that is closest to $v_0$ in the sense that $w$ is the only element of $K$ with 
\[
\|w-v_0\|=d(v_0,K)=\inf_{k\in K}\|k-v_0\|.
\]
\end{thm}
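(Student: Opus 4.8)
The plan is to produce a minimizing sequence in $K$, show that uniform convexity forces it to be Cauchy, and then invoke completeness and closedness to obtain the minimizer; uniqueness will fall out of essentially the same convexity estimate. First I would set $d=d(v_0,K)=\inf_{k\in K}\|k-v_0\|$ and choose a minimizing sequence $(k_n)_{n\geq 1}$ in $K$ with $\|k_n-v_0\|\to d$. Writing $a_n=k_n-v_0$, so that $\|a_n\|\to d$, the goal becomes to prove that $(a_n)_{n\geq1}$ is Cauchy. The case $d=0$ is immediate: then $k_n\to v_0$, and since $K$ is closed we get $v_0\in K$, so $w=v_0$ works and is trivially the only point at distance $0$.

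For $d>0$, the heart of the argument is to feed two far-apart members of the sequence into the uniform convexity inequality (with its associated function $\psi$). Given $n,m$, set $R=\max(\|a_n\|,\|a_m\|)$ and $X=a_n/R$, $Y=a_m/R$, so that $\|X\|,\|Y\|\leq 1$ and uniform convexity applies. The key observation is that convexity of $K$ gives $\tfrac12(k_n+k_m)\in K$, hence $\|\tfrac12(a_n+a_m)\|=\|\tfrac12(k_n+k_m)-v_0\|\geq d$. Combining this lower bound with the uniform convexity upper bound on $\|\tfrac12(X+Y)\|$ yields
\[
\frac{d}{R}\leq\left\|\frac{X+Y}{2}\right\|\leq 1-\psi\!\left(\frac{\|a_n-a_m\|}{R}\right),
\]
so that $\psi\!\left(\|a_n-a_m\|/R\right)\leq (R-d)/R$. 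Since $R\to d$ as $n,m\to\infty$, the right-hand side tends to $0$; because $\psi$ is increasing and strictly positive away from $0$, this forces $\|a_n-a_m\|/R\to 0$, and hence $\|a_n-a_m\|\to 0$. I would phrase the last implication by contradiction: if $\|a_n-a_m\|\not\to 0$ along some pair of subsequences, then the argument of $\psi$ stays bounded below by some $c>0$, so $\psi(\cdot)\geq\psi(c)>0$ by monotonicity, contradicting that it tends to $0$.

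With $(a_n)_{n\geq1}$ Cauchy and $V$ complete, $a_n$ converges, so $k_n\to w:=v_0+\lim_n a_n$; closedness of $K$ gives $w\in K$, and continuity of the norm gives $\|w-v_0\|=\lim_n\|a_n\|=d$, establishing existence. For uniqueness, suppose $w,w'\in K$ both satisfy $\|w-v_0\|=\|w'-v_0\|=d$. Applying uniform convexity directly to the unit vectors $X=(w-v_0)/d$ and $Y=(w'-v_0)/d$, together with the same midpoint bound $\|\tfrac12(w+w')-v_0\|\geq d$, gives $1\leq 1-\psi(\|w-w'\|/d)$, i.e. $\psi(\|w-w'\|/d)\leq 0$; since $\psi(r)>0$ for $r>0$ this forces $w=w'$.

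The main obstacle is getting the scaling in the uniform convexity step exactly right: in particular, handling that $\|a_n\|$ and $\|a_m\|$ are only approximately $d$, which is why I normalize by $R=\max(\|a_n\|,\|a_m\|)$ rather than by $d$, and why the estimate $\psi(\cdot)\leq (R-d)/R$ comes out with the correct decay to $0$. Everything else (completeness to produce the limit, closedness to keep it in $K$, convexity to place midpoints in $K$) is routine once this estimate is in place.
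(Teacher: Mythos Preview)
Your proof is correct and follows the same overall strategy as the paper: take a minimizing sequence, normalize to apply uniform convexity, deduce the sequence is Cauchy, then use completeness and closedness. The one technical difference is in the normalization step. The paper normalizes each $k_n$ by its own norm $s_n=\|k_n\|$, which means the midpoint $\tfrac12(x_m+x_n)$ of the normalized vectors is no longer a rescaling of a point in $K$; the paper therefore has to rewrite this midpoint as $\bigl(\tfrac{1}{2s_m}+\tfrac{1}{2s_n}\bigr)(ak_m+bk_n)$ with $a+b=1$ to invoke convexity of $K$. Your choice to normalize both $a_n$ and $a_m$ by the common factor $R=\max(\|a_n\|,\|a_m\|)$ sidesteps this: the midpoint $\tfrac12(X+Y)$ is simply $R^{-1}\cdot\tfrac12(a_n+a_m)$, so the lower bound $d/R$ is immediate from $\tfrac12(k_n+k_m)\in K$. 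This makes the estimate a little cleaner, at the cost of a slightly less direct Cauchy conclusion (you argue by contradiction, whereas the paper gets $\|x_m-x_n\|\leq\varepsilon$ directly from monotonicity of $\psi$). Both routes are fine.
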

\begin{proof}
By translating both the set $K$ and the point $v_0$ by $-v_0$ we may assume without loss of generality that $v_0=0$. We define 
\[
s=\inf_{k\in K}\|k-v_0\|=\inf_{k\in K}\|k\|.
\]
If $s=0$, then we must have $0\in K$ since $K$ is closed and the only choice is then $w=v_0=0$ (the uniqueness of $w$ is a consequence of the strict positivity of the norm). So assume that $s>0$. By multiplying by the scalar $\frac{1}{s}$ we have found a point $w\in K$ with norm 1, then its uniqueness is an immediate consequence of the uniform convexity: if $w_1,w_2\in K$ have $\|w_1\|=\|w_2\|=1$, then $\frac{w_1+w_2}{2}\in K$ because $K$ is convex. Also, $\left\|\frac{w_1+w_2}{2}\right\|=1$ by the triangle inequality and since $s=1$. By uniform convexity this implies that $w_1=w_2$. Turning to the existence, let us first sketch the argument. Choose a sequence $(k_n)$ in $K$ with $\| k_n\|\to 1$ as $n\to\infty$. Then the mid-points $\frac{k_n+k_m}{2}$ also lie in $K$, since $K$ is convex. However, this shows that the mid-point must have norm greater than or equal to 1, since $s=1$. Therefore $k_n$ and $k_m$ must be close together by uniform convexity. Making this precise, we will see that $(k_n)$ is a Cauchy sequence. Since $V$ is complete and $K$ is closed, this will give a point $w\in K$ with $\|w\|=1=s$ as required. To make this more precise, we apply uniform convexity to the normalized vectors
\[
 x_n=\frac{1}{s_n}k_n,
\]
where $s_n=\|k_n\|$. The mid-point of $x_n$ and $x_m$ can now be expressed as 
\[
\frac{x_m+x_n}{2}=\frac{1}{2s_m}k_m+\frac{1}{2s_n}k_n=\left(\frac{1}{2s_m}+\frac{1}{2s_n}\right (ak_m+bk_n)
\]
with 
\[
a=\frac{\frac{1}{2s_m}}{\frac{1}{2s_m}+\frac{1}{2s_n}}\geq0
\]
\[
b=\frac{\frac{1}{2s_n}}{\frac{1}{2s_m}+\frac{1}{2s_n}}\geq0
\]
and $a+b=1$. Therefore $ak_m+bk_n\in K$ by convexity and so 
\[
\left\|\frac{x_m+x_n}{2}\right\|=\left(\frac{1}{2s_m}+\frac{1}{2s_n}\right)\|ak_m+bk_n\|\geq \frac{1}{2s_m}+\frac{1}{2s_n}.
\]
Let $\psi$ be as in Definition 1.4 and fix $\varepsilon>0$. Choose $N=N(\varepsilon)$ large enough to ensure that $m\geq N$ implies that 
\[
\frac{1}{s_m}\geq 1-\psi(\varepsilon).
\]
Then $m,n\geq N$ implies that 
\[
\frac{1}{2s_m}+\frac{1}{s_n}\geq 1-\psi(\varepsilon),
\]
which together with the definition of uniform convexity gives
\[
1-\psi(\|x_m-x_n\|)\geq \left\|\frac{x_m+x_n}{2}\right\|\geq 1-\psi(\varepsilon).
\]
By monotonicity of the function $\psi$ this implies for all $m,n\geq N$ that $\|x_m-x_n\|\leq \varepsilon$, showing that $(x_n)$ is a Cauchy sequence. As $V$ is assumed to be complete, we deduce that $(x_n)$ converges to some $x\in V$. Since $s_n\to 1$ and $k_n\to s_nx_n$ as $n\to\infty$ it follows that $\lim_{n\to\infty}k_n=x$. As $K$ is closed the limit $x$ belongs to $K$ and by contradiction is an (and hence is the unique) element closest to $v_0=0$.
\begin{figure}
\begin{center}
\includegraphics[height=1.5cm, width=3cm]{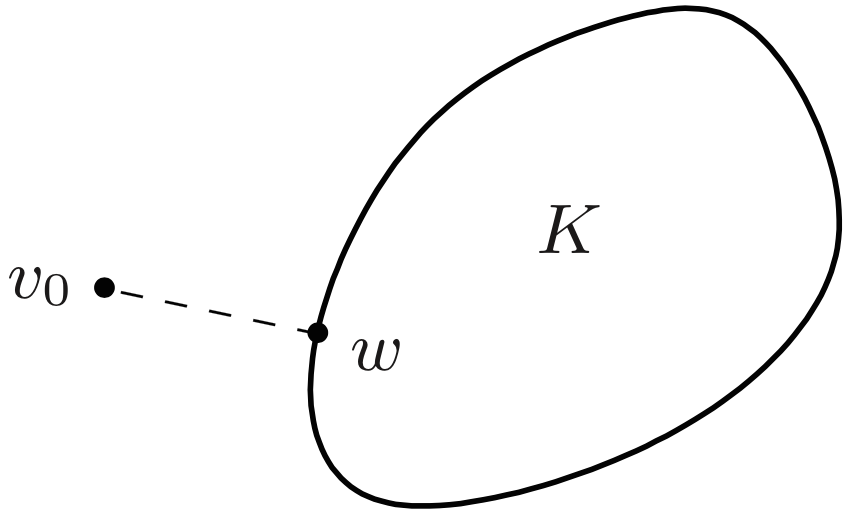}
\end{center}
\caption{The unique closest element of $K$ to $v_0$.}
\end{figure}
\end{proof}
\begin{rem}
This unique approximation is clearly true for Hilbert spaces, since they are uniformly convex spaces. 
\end{rem}
\begin{cor}[Orthogonal decomposition]
Let $\mathcal{H}$ be a Hilbert space and let $\mathcal{L}\subset\mathcal{H}$ be a closed subspace. Then $\mathcal{L}^\perp$ is a closed subspace with 
\[
\mathcal{H}=\mathcal{L}\oplus\mathcal{L}^\perp,
\]
meaning that every element $H\in \mathcal{H}$ can be written in the form 
\[
H=Y+Z
\]
with $Y\in \mathcal{L}$ and $Z\in\mathcal{L}^\perp$ and $Y$ and $Z$ are unique with these properties. Moreover, $Y=(Y^\perp)^\perp$ and 
\[
\|H\|^2=\|Y\|^2+\|Z\|^2
\]
if $H=Y+Z$ with $Y\in\mathcal{L}$ and $Z\in\mathcal{L}^\perp$.
\end{cor}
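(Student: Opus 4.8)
The plan is to build the decomposition one vector at a time, using the existence of closest points guaranteed by the unique approximation theorem for uniformly convex spaces. Since the Lemma establishes that every Hilbert space is uniformly convex and $\mathcal{L}$ is a closed subspace (hence closed and convex), for a fixed $H\in\mathcal{H}$ I would apply that theorem with $K=\mathcal{L}$ and $v_0=H$ to obtain a unique $Y\in\mathcal{L}$ minimizing $\|H-Y\|=d(H,\mathcal{L})$. I would then simply \emph{define} $Z:=H-Y$, so that $H=Y+Z$ holds by construction, and the entire problem reduces to verifying that this $Z$ lands in $\mathcal{L}^\perp$ (that $\mathcal{L}^\perp$ is itself a closed subspace is already supplied by the earlier theorem on orthogonal complements).

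The main obstacle, and the only step requiring real work, is showing $Z\in\mathcal{L}^\perp$. I would exploit the minimality of $Y$ through a variational argument: for an arbitrary $W\in\mathcal{L}$ and any $t\in\R$, the vector $Y+tW$ again lies in $\mathcal{L}$, so minimality forces $\|Z\|^2=\|H-Y\|^2\leq\|H-Y-tW\|^2=\|Z-tW\|^2$. Expanding the right-hand side via the inner product gives
\[
0\leq -2t\langle Z,W\rangle+t^2\|W\|^2\qquad\text{for all }t\in\R.
\]
A quadratic in $t$ that is nonnegative for every real $t$ and vanishes at $t=0$ must have a nonpositive coefficient interplay that forces $\langle Z,W\rangle=0$; concretely, choosing $t$ small of the sign of $\langle Z,W\rangle$ would make the linear term dominate and violate the inequality unless $\langle Z,W\rangle=0$. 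Since $W\in\mathcal{L}$ was arbitrary, this yields $Z\in\mathcal{L}^\perp$.

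With existence in hand, the remaining assertions are routine. For uniqueness, if $H=Y+Z=Y'+Z'$ with $Y,Y'\in\mathcal{L}$ and $Z,Z'\in\mathcal{L}^\perp$, then $Y-Y'=Z'-Z$ lies in $\mathcal{L}\cap\mathcal{L}^\perp$; any such common element $X$ satisfies $\langle X,X\rangle=0$ and hence $X=0$, giving $Y=Y'$ and $Z=Z'$. The norm identity follows from orthogonality, since $\langle Y,Z\rangle=0$ gives $\|H\|^2=\|Y+Z\|^2=\|Y\|^2+\|Z\|^2$ by the Pythagorean relation already noted. Finally, for the identity $\mathcal{L}=(\mathcal{L}^\perp)^\perp$ (the intended meaning of the displayed clause), the inclusion $\mathcal{L}\subset(\mathcal{L}^\perp)^\perp$ is immediate from the definition of the orthogonal complement; for the reverse, I would take $X\in(\mathcal{L}^\perp)^\perp$, decompose $X=Y+Z$ as above, observe that $Z=X-Y\in(\mathcal{L}^\perp)^\perp$ because both $X$ and $Y\in\mathcal{L}\subset(\mathcal{L}^\perp)^\perp$ belong to it, and conclude $Z\in\mathcal{L}^\perp\cap(\mathcal{L}^\perp)^\perp=\{0\}$, so that $X=Y\in\mathcal{L}$.
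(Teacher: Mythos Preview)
Your proof is correct and follows essentially the same route as the paper: apply the unique approximation theorem to the closed convex set $\mathcal{L}$ to obtain $Y$, set $Z=H-Y$, run the variational argument $\|Z\|^2\le\|Z-tW\|^2$ to force $\langle Z,W\rangle=0$, and handle uniqueness, the Pythagorean identity, and $\mathcal{L}=(\mathcal{L}^\perp)^\perp$ exactly as the paper does. The only cosmetic difference is that for the double-complement step the paper computes $0=\langle v,Z\rangle=\|Z\|^2$ directly, whereas you phrase it as $Z\in\mathcal{L}^\perp\cap(\mathcal{L}^\perp)^\perp=\{0\}$; these are the same observation.
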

\begin{proof}
As $H\mapsto \langle H,Y\rangle$ is a (continuous linear) functional for each $Y\in\mathcal{L}$, the set $\mathcal{L}^\perp$ is an intersection of closed subspaces and hence is a closed subspace. Using positivity of the inner product, it is easy to see that $\mathcal{L}\cap\mathcal{L}^\perp=\{0\}$ and from this the uniqueness of the decomposition 
\[
H=Y+Z
\]
with $Y\in\mathcal{L}$ and $Z\in\mathcal{L}^\perp$ follows at once. So it remains to show the existence of this decomposition. Fix $H\in\mathcal{H}$ and apply the theorem of unique approximation with $K=\mathcal{L}$ to find a point $Y\in\mathcal{L}$ that is closest to $h$. Let $Z=H-Y$, so that for any $v\in \mathcal{L}$ and any scalar $t$ we have 
\[
\|Z\|^2\leq  \|H-\underbrace{(tv+Y)}_{\in\mathcal{L}}\|^2=\|Z-tv\|^2=\|Z\|^2-2t\langle v,Z\rangle +\vert t\vert^2\|Z\|^2.
\]
However, this shows that $t\langle v,Z\rangle=0$ for all scalars $t$ and $v\in\mathcal{L}$ and so $\langle v,Z\rangle=0$ for all $v\in\mathcal{L}$. Thus $Z\in\mathcal{L}^\perp$ and hence 
\[
\|H\|^2=\langle H,H\rangle=\langle Y+Z,Y+Z\rangle=\|Y\|^2+\|Z\|^2.
\]
It is clear from the definitions that $\mathcal{L}\subset (\mathcal{L}^\perp)^\perp$. If $v\in(\mathcal{L}^\perp)^\perp$ then 
\[
v=Y+Z
\]
for some $Y\in\mathcal{L}$ and $Z\in\mathcal{L}^\perp$ by the first part of the proof. However,
\[
0=\langle v,Z\rangle=\|Z\|^2
\]
implies that $v=Y$ and so $\mathcal{L}=(\mathcal{L}^\perp)^\perp$.
\end{proof}
\subsection{Orthogonal projection}
Let again $\mathcal{H}$ be a Hilbert space. The projection of an element $X\in\mathcal{H}$ onto a closed subspace $\mathcal{L}\subset\mathcal{H}$ is the unique point $Y\in\mathcal{L}$ such that 
\[
d(X,\mathcal{L})=\|X-Y\|.
\]
We denote this projection by
\[
\Pi:\mathcal{H}\to\mathcal{L},\hspace{0.3cm}X\mapsto \Pi X=Y
\]
\begin{thm}
Let $\mathcal{H}$ be a Hilbert space and let $\mathcal{L}\subset\mathcal{H}$ be a closed subspace. Then the projection operator $\Pi$ of $\mathcal{H}$ onto $\mathcal{L}$ satisfies
\begin{enumerate}[$(i)$]
\item{$\Pi^2=\Pi$}
\item{$\begin{cases}\Pi X=X,& X\in\mathcal{L}\\ \Pi X=0,& X\in\mathcal{L}^\perp\end{cases}$
}
\item{$(X-\Pi X)\perp \mathcal{L}$ for all $X\in\mathcal{H}$}
\end{enumerate}
\end{thm}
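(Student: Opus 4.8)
The plan is to reduce all three claims to the orthogonal decomposition corollary, which supplies, for each $X\in\mathcal{H}$, a unique splitting $X=Y+Z$ with $Y\in\mathcal{L}$ and $Z\in\mathcal{L}^\perp$. The first step is to identify the projection $\Pi X$ with the component $Y$: by definition $\Pi X$ is the unique point of $\mathcal{L}$ realizing $d(X,\mathcal{L})$, and the proof of the decomposition corollary produced exactly such a closest point $Y$ together with $Z=X-Y\in\mathcal{L}^\perp$. Since the closest point is unique (by the theorem on unique approximation of a closed convex set, applied to the convex set $K=\mathcal{L}$), we must have $\Pi X=Y$, and hence $X-\Pi X=Z\in\mathcal{L}^\perp$.

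This identification immediately gives $(iii)$: for every $X\in\mathcal{H}$ the difference $X-\Pi X$ equals the $\mathcal{L}^\perp$-component $Z$, so $\langle X-\Pi X,\ell\rangle=0$ for all $\ell\in\mathcal{L}$, i.e. $(X-\Pi X)\perp\mathcal{L}$. For $(ii)$ I would invoke uniqueness of the decomposition twice. If $X\in\mathcal{L}$, then $X=X+0$ is a splitting with $X\in\mathcal{L}$ and $0\in\mathcal{L}^\perp$, so by uniqueness $\Pi X=X$. If instead $X\in\mathcal{L}^\perp$, then $X=0+X$ is a splitting with $0\in\mathcal{L}$ and $X\in\mathcal{L}^\perp$, so by uniqueness $\Pi X=0$.

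Finally $(i)$ follows formally: for arbitrary $X\in\mathcal{H}$ we have $\Pi X\in\mathcal{L}$, and applying the first case of $(ii)$ to the element $\Pi X$ yields $\Pi(\Pi X)=\Pi X$, that is $\Pi^2=\Pi$. There is no genuine analytic obstacle here; the only point requiring care is the bookkeeping in the first step, namely verifying that the closest point furnished by the \emph{definition} of $\Pi$ coincides with the $\mathcal{L}$-summand of the orthogonal decomposition and not merely some element of $\mathcal{L}$. Once this matching is pinned down through the uniqueness of the closest point, the three properties are purely algebraic consequences of the splitting $\mathcal{H}=\mathcal{L}\oplus\mathcal{L}^\perp$.
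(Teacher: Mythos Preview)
Your proof is correct, but it takes a different route from the paper. You package everything through the orthogonal decomposition corollary: identify $\Pi X$ with the $\mathcal{L}$-summand $Y$ (legitimate, since that corollary's proof constructs $Y$ precisely as the closest point, and the closest point is unique), and then read off $(iii)$, $(ii)$, $(i)$ in that order as algebraic consequences of the splitting and its uniqueness. The paper instead argues more directly from the minimizing definition of $\Pi$: for $(iii)$ it uses that $\|X-\Pi X\|^2\le\|X-\Pi X-Y\|^2$ for every $Y\in\mathcal{L}$, expands, replaces $Y$ by $\alpha Y$, and lets $\alpha\to 0$ to force $\langle X-\Pi X,Y\rangle=0$; part $(ii)$ for $X\in\mathcal{L}^\perp$ is handled by the Pythagorean identity $\|X-Y\|^2=\|X\|^2+\|Y\|^2$, minimized at $Y=0$. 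Your approach is cleaner and avoids repeating the variational step already carried out inside the decomposition corollary; the paper's approach is more self-contained at the level of this theorem and makes the role of the minimization explicit. Note also that you run the implications $(iii)\Rightarrow(ii)\Rightarrow(i)$, whereas the paper treats $(i)$ and the first half of $(ii)$ as immediate and spends its effort on $(iii)$.
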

\begin{proof}
$(i)$ is clear. The first statement of $(ii)$ is clear from $(i)$. For the second statement of $(ii)$, if $X\in\mathcal{L}^\perp$, then for $Y\in\mathcal{L}$ we get 
\[
\|X-Y\|^2=\|X\|^2+\|Y\|^2.
\]
This is going to be minimized if $Y=0$. Hence then $\Pi X=0$. For $(iii)$, If $Y\in \mathcal{L}$ we get
\[
\|X-\underbrace{\Pi X}_{\in\mathcal{L}}\|^2\leq  \|X-\underbrace{\Pi X-Y}_{\in\mathcal{L}}\|^2=\|Y\|^2+\|X-\Pi X\|^2-2\langle X-\Pi X,Y\rangle.
\]
Therefore 
\begin{equation}
2\langle X-\Pi X,Y\rangle\leq  \|Y\|^2
\end{equation}
for all $Y\in \mathcal{L}$. Now since $\mathcal{L}$ is a linear space we get that for all $\alpha>0$
\[
\alpha Y\in\mathcal{L}.
\]
So in particular (1) is true when $Y$ is replaced by $\alpha Y$. Therefore we get 
\[
2\langle X-\Pi X,Y\rangle\leq  \alpha\|Y\|^2.
\]
Now let $\alpha\to 0$. Hence we obtain 
\[
\langle X-\Pi X,Y\rangle\leq  0
\]
for all $Y\in \mathcal{L}$. Since $-Y\in\mathcal{L}$ we get that 
\[
-\langle X-\Pi X,Y\rangle \leq  0.
\]
But this means $\langle X-\Pi X,Y\rangle=0$ and the claim follows.
\end{proof}
\begin{cor}
Let $\mathcal{H}$ be a Hilbert space and let $\mathcal{L}\subset\mathcal{H}$ be a closed subspace. Moreover let $\Pi$ be the projection operator of $\mathcal{H}$ onto $\mathcal{L}$. Then 
\[
X=(X-\Pi X)+\Pi X
\] 
is the unique representation of $X$ as the sum of an element of $\mathcal{L}$ and an element of $\mathcal{L}^\perp$. 
\end{cor}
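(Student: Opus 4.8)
The plan is to read this statement off as an immediate consequence of the structure theorem for the projection operator $\Pi$ together with the orthogonal decomposition $\mathcal{H}=\mathcal{L}\oplus\mathcal{L}^\perp$ already established. First I would verify that the proposed expression genuinely has the claimed form. By the very definition of $\Pi$ we have $\Pi X\in\mathcal{L}$, and by part $(iii)$ of the projection theorem $(X-\Pi X)\perp\mathcal{L}$, that is $X-\Pi X\in\mathcal{L}^\perp$. Hence
\[
X=(X-\Pi X)+\Pi X
\]
is indeed a sum of an element of $\mathcal{L}$ and an element of $\mathcal{L}^\perp$, so existence of the representation is settled without any further work.

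For uniqueness I would argue directly, rather than re-invoking the orthogonal decomposition corollary, although either route is legitimate. Suppose $X=Y_1+Z_1=Y_2+Z_2$ with $Y_1,Y_2\in\mathcal{L}$ and $Z_1,Z_2\in\mathcal{L}^\perp$. Then $Y_1-Y_2=Z_2-Z_1$, and I would name this common vector $W$. Since $\mathcal{L}$ is a linear subspace it is closed under subtraction, so $W=Y_1-Y_2\in\mathcal{L}$; and since $\mathcal{L}^\perp$ is also a subspace (by the orthogonal-complement theorem), $W=Z_2-Z_1\in\mathcal{L}^\perp$. Thus $W\in\mathcal{L}\cap\mathcal{L}^\perp$, so $W$ is orthogonal to every element of $\mathcal{L}$, in particular to itself, giving $\langle W,W\rangle=0$. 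Positivity of the inner product then forces $W=0$, whence $Y_1=Y_2$ and $Z_1=Z_2$, which is exactly the asserted uniqueness.

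There is no serious obstacle here: the entire content is borrowed from part $(iii)$ of the projection theorem and from positive-definiteness of $\langle\cdot,\cdot\rangle$. The only points deserving a moment's care are the two linearity remarks used in the uniqueness step, namely that $Y_1-Y_2\in\mathcal{L}$ and $Z_2-Z_1\in\mathcal{L}^\perp$, both of which rest on $\mathcal{L}$ and $\mathcal{L}^\perp$ being subspaces. Once those are noted the argument closes at once, and the corollary is really just a convenient repackaging of the earlier decomposition result in terms of the operator $\Pi$.
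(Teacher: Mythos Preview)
Your proof is correct and follows essentially the same approach as the paper, which simply cites the earlier orthogonal decomposition corollary ($\mathcal{H}=\mathcal{L}\oplus\mathcal{L}^\perp$) as the entire argument. You unpack the existence and uniqueness more explicitly than the paper does, but the content is identical: existence via $\Pi X\in\mathcal{L}$ and $X-\Pi X\in\mathcal{L}^\perp$, and uniqueness via $\mathcal{L}\cap\mathcal{L}^\perp=\{0\}$.
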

\begin{proof}
This is just a consequence of Corollary 1.6.
\end{proof}
\begin{rem}
The uniqueness of the projection operator implies that, for $X_1\in\mathcal{L}$ and $X_2\in\mathcal{L}^\perp$
\[
\Pi(X_1+X_2)=X_1.
\]
\end{rem}
\begin{cor}
Let $\mathcal{H}$ be a Hilbert space and let $\mathcal{L}\subset\mathcal{H}$ be a closed subspace. Moreover let $\Pi$ be the projection operator of $\mathcal{H}$ onto $\mathcal{L}$. Then 
\begin{enumerate}[$(i)$]
\item{$\langle \Pi X,Y\rangle =\langle X,\Pi Y\rangle$ for all $X,Y\in\mathcal{H}$.}
\item{$\Pi(\alpha X+\beta Y)=\alpha\Pi X+\beta \Pi Y$ for all $\alpha,\beta\in\R$ and $X,Y\in\mathcal{H}$.}
\end{enumerate}
\end{cor}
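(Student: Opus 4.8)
The plan is to derive both identities directly from the orthogonal decomposition Corollary and from the characterization of $\Pi$ established in the previous theorem, in particular from the property $(X-\Pi X)\perp\mathcal{L}$ for all $X\in\mathcal{H}$. The key observation throughout is that for every $X\in\mathcal{H}$ we may write $X=\Pi X+(X-\Pi X)$, where $\Pi X\in\mathcal{L}$ and $X-\Pi X\in\mathcal{L}^\perp$, and that this decomposition is unique. Everything else is a short computation exploiting that $\mathcal{L}$ and $\mathcal{L}^\perp$ are mutually orthogonal subspaces.

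For part $(i)$, the strategy is to reduce both sides to the symmetric quantity $\langle \Pi X,\Pi Y\rangle$. First I would decompose $Y=\Pi Y+(Y-\Pi Y)$ and expand by bilinearity of the inner product:
\[
\langle \Pi X,Y\rangle=\langle \Pi X,\Pi Y\rangle+\langle \Pi X,Y-\Pi Y\rangle.
\]
Since $\Pi X\in\mathcal{L}$ and $Y-\Pi Y\in\mathcal{L}^\perp$, the second term vanishes, giving $\langle \Pi X,Y\rangle=\langle \Pi X,\Pi Y\rangle$. Running the same argument on the other slot, decomposing $X=\Pi X+(X-\Pi X)$, yields $\langle X,\Pi Y\rangle=\langle \Pi X,\Pi Y\rangle$. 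Comparing the two identities proves the self-adjointness $\langle \Pi X,Y\rangle=\langle X,\Pi Y\rangle$.

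For part $(ii)$, I would argue by the uniqueness of the orthogonal decomposition. Writing $X=\Pi X+Z_X$ and $Y=\Pi Y+Z_Y$ with $\Pi X,\Pi Y\in\mathcal{L}$ and $Z_X,Z_Y\in\mathcal{L}^\perp$, linearity of the vector space operations gives
\[
\alpha X+\beta Y=(\alpha\Pi X+\beta\Pi Y)+(\alpha Z_X+\beta Z_Y).
\]
Because $\mathcal{L}$ and $\mathcal{L}^\perp$ are both subspaces (the latter closed by the earlier theorem on orthogonal complements), the first summand lies in $\mathcal{L}$ and the second in $\mathcal{L}^\perp$. By the uniqueness of the representation of $\alpha X+\beta Y$ as an element of $\mathcal{L}$ plus an element of $\mathcal{L}^\perp$ (the Remark following the projection theorem), the $\mathcal{L}$-component must equal $\Pi(\alpha X+\beta Y)$, whence $\Pi(\alpha X+\beta Y)=\alpha\Pi X+\beta\Pi Y$.

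I do not expect any genuine obstacle here: both parts are formal consequences of the decomposition already proved, and the only care needed is to invoke the correct orthogonality relation ($\Pi X\in\mathcal{L}$ against $Y-\Pi Y\in\mathcal{L}^\perp$) in $(i)$ and the uniqueness of the decomposition in $(ii)$. The mild subtlety, if any, is simply to keep track of which factor is being decomposed so that the cross terms are the ones that vanish.
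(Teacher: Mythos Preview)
Your proposal is correct and follows essentially the same route as the paper: both parts are obtained by writing $X=\Pi X+(X-\Pi X)$ and $Y=\Pi Y+(Y-\Pi Y)$ with the pieces in $\mathcal{L}$ and $\mathcal{L}^\perp$, reducing $(i)$ to the common value $\langle \Pi X,\Pi Y\rangle$ and deducing $(ii)$ from the uniqueness of the decomposition. There is nothing to add.
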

\begin{proof}
For $(i)$, let $X,Y\in\mathcal{H}$, $X=X_1+X_2$ with $X_1\in\mathcal{L}$ and $X_2\in\mathcal{L}^\perp$ and $Y=Y_1+Y_2$ with $Y_1\in\mathcal{L}$ and $Y_2\in\mathcal{L}^\perp$. Then we get 
\[
\langle \Pi X,Y\rangle=\langle \Pi (X_1+X_2),Y_1+Y_2\rangle=\langle X_1,Y_1+Y_2\rangle=\langle X_1,Y_1\rangle
\]
\[
\langle X,\Pi Y\rangle=\langle X_1+X_2,\Pi (Y_1+Y_2)\rangle=\langle X_1+X_2,Y_1\rangle=\langle X_1,Y_1\rangle.
\] 
Therefore they are the same. For $(ii)$, take $\alpha,\beta\in\R$ and look at
\[
\alpha X+\beta Y=\underbrace{(\alpha X_1+\beta Y_1)}_{\in\mathcal{L}}+\underbrace{(\alpha X_2+\beta Y_2)}_{\in\mathcal{L}^\perp}.
\]
Hence we get 
\[
\Pi (\alpha X+\beta Y)=\alpha \Pi(X_1+X_2)+\beta\Pi (Y_1+Y_2)=\alpha \Pi X+\beta \Pi Y.
\]
\end{proof}
\section{The Conditional expectation}
\subsection{Conditional probability}
Let $(\Omega,\F,\p)$ be a probability space and let $A,B\in\F$ such that $\p[B]>0$. Then the conditional probability\footnote{One can look it up for more details in the stochastics I part.} of $A$ given $B$ is defined as 
\[
\p[A\mid B]=\frac{\p[A\cap B]}{\p[B]}.
\]
The important fact here is that the application $\F\to [0,1]$, $A\mapsto \p[A\mid B]$ defines a new probability measure on $\F$ called the conditional probability given $B$. There are several facts, which we need to recall:
\begin{enumerate}[$(i)$]
\item{If $A_1,...,A_n\in\F$ and if $\p\left[\bigcap_{k=1}^nA_k\right]>0$, then 
\[
\p\left[\bigcap_{k=1}^nA_k\right]=\prod_{j=1}^n\p\left[A_j\Big|\bigcap_{k=1}^{j-1}A_k\right].
\]
}
\item{Let $(E_n)_{n\geq 1}$ be a measurable partition of $\Omega$, i.e. for all $n\geq 1$ we have that $E_n\in\F$ and for $n\not=m$ we get $E_n\cap E_m=\varnothing$ and $\bigcup_{n\geq 1}E_n=\Omega$. Now for $A\in \F$ we get 
\[
\p[A]=\sum_{n\geq 1}\p[A\mid E_n]\p[E_n].
\]
}
\item{(Baye's formula)\footnote{Use the previous facts for the proof of Baye's formula. One can also look it up in the stochastics I part.} Let $(E_n)_{n\geq 1}$ be a measurable partition of $\Omega$ and $A\in\F$ with $\p[A]>0$. Then 
\[
\p[E_n\mid A]=\frac{\p[A\mid E_n]\p[E_n]}{\sum_{m\geq 1}\p[A\mid E_m]\p[E_m]}.
\]
}
\end{enumerate}
\begin{rem}
We can reformulate the definition of the conditional probability to obtain 
\begin{align*}
\p[A\mid B]\p[B]&=\p[A\cap B]\\
\p[B\mid A]\p[A]&=\p[A\cap B]
\end{align*}
Therefore one can prove the statements (1) to (3) by using these two equations\footnote{One also has to notice that if $A$ and $B$ are two independent events, then $\p[A\mid B]=\frac{\p[A\cap B]}{\p[B]}=\frac{\p[A]\p[B]}{\p[B]}=\p[A]$}.
\end{rem}
\subsection{Discrete construction of the conditional expectation}
Let $X$ and $Y$ be two r.v.'s on a probability space $(\Omega,\F,\p)$. Let $Y$ take values in $\R$ and $X$ take values in a countable discrete set $\{x_1,x_2,...,x_n,...\}$. The goal is to describe the expectation of the r.v. $Y$ by knowing the observed r.v. $X$. For instance, let $X=x_j\in\{x_1,x_2,...,x_n,...\}$. Therefore we look at a set $\{\omega\in\Omega\mid X(\omega)=x_j\}$ rather than looking at whole $\Omega$. For $\Lambda\in\F$, we thus define
\[
\Q[\Lambda]=\p[\Lambda\mid \{X=x_j\}],
\]
a new probability measure $\Q$, with $\p[X=x_j]>0$. Therefore it makes more sense to compute 
\[
\E_\Q[Y]=\int_\Omega Y(\omega)d\Q(\omega)=\int_{\{\omega\in\Omega\mid X(\omega)=x_j\}}Y(\omega)d\p(\omega)
\]
rather than 
\[
\E_\p[Y]=\int_\Omega Y(\omega)d\p(\omega)=\int_\R yd\p_Y(y).
\]
\begin{defn}[Conditional expectation ($X$ discrete, $Y$ real valued, single value case)]
Let $(\Omega,\F,\p)$ be a probability space. Let $X:\Omega\to\{x_1,x_2,...,x_n,...\}$ be a r.v. taking values in a discrete set and let $Y$ be a real valued r.v. on that space. If $\p[X=x_j]>0$, we can define the conditional expectation of $Y$ given $\{X=x_j\}$ to be 
\[
\E[Y\mid X=x_j]=\E_\Q[Y],
\]
where $\Q$ is the probability measure on $\F$ defined by 
\[
\Q[\Lambda]=\p[\Lambda\mid X=x_j],
\]
for $\Lambda\in\F$, provided that $\E_\Q[\vert Y\vert]<\infty$.
\end{defn}
\begin{thm}[Conditional expectation ($X$ discrete, $Y$ discrete, single value case)]
Let $(\Omega,\F,\p)$ be a probability space. Let $X$ be a r.v. on that space with values in $\{x_1,x_2,...,x_n,...\}$ and let $Y$ also be a r.v. with values in $\{y_1,y_2,...,y_n,...\}$. If $\p[X=x_j]>0$, we can write the conditional expectation of $Y$ given $\{X=x_j\}$ as
\[
\E[Y\mid X=x_j]=\sum_{k=1}^\infty y_k\p[Y=y_k\mid X=x_j].
\]
provided that the series is absolutely convergent.
\end{thm}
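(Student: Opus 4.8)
The plan is to reduce everything to the definition of the conditional expectation given $\{X=x_j\}$ that was just introduced, namely $\E[Y\mid X=x_j]=\E_\Q[Y]$, where $\Q$ is the probability measure on $\F$ given by $\Q[\Lambda]=\p[\Lambda\mid X=x_j]=\p[\Lambda\cap\{X=x_j\}]/\p[X=x_j]$. This is a genuine probability measure because $\p[X=x_j]>0$, as was established in the Stochastics I part, and $\E_\Q[Y]=\int_\Omega Y\,d\Q$ is then simply the ordinary expectation of $Y$ computed with respect to $\Q$ in place of $\p$.

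First I would observe that $Y$ remains a discrete random variable when the underlying probability is changed from $\p$ to $\Q$: the map $Y:\Omega\to\{y_1,y_2,\dots\}$ is unchanged, and the events $\{Y=y_k\}$ lie in $\F$, which is the $\sigma$-Algebra on which $\Q$ is defined. Hence I can apply to $(\Omega,\F,\Q)$ the formula for the expectation of a discrete random variable recorded earlier, which states that a r.v. taking the values $\{y_k\}$ has $\E[Y]=\sum_{k\geq 1}y_k\,\p[Y=y_k]$ whenever this is well defined. Written for $\Q$, this reads
\[
\E_\Q[Y]=\sum_{k=1}^\infty y_k\,\Q[Y=y_k].
\]
It then only remains to substitute $\Q[Y=y_k]=\p[Y=y_k\mid X=x_j]$, which is the definition of $\Q$ evaluated on the event $\Lambda=\{Y=y_k\}$, to arrive at
\[
\E[Y\mid X=x_j]=\E_\Q[Y]=\sum_{k=1}^\infty y_k\,\p[Y=y_k\mid X=x_j],
\]
which is the claimed identity.

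The one point that needs care — and the only place where the absolute-convergence hypothesis is used — is the justification that the discrete expectation formula applies, i.e. that $\E_\Q[Y]$ is well defined and equals the stated series. I would make this precise by splitting $Y=Y^+-Y^-$ into positive and negative parts, applying the monotone convergence theorem (listed among the basic properties of the expectation) to each of $Y^+$ and $Y^-$, each being an increasing limit of the positive simple functions $\sum_{k\leq N}y_k^{\pm}\one_{\{Y=y_k\}}$, to obtain $\E_\Q[Y^{\pm}]=\sum_{k\geq 1} y_k^{\pm}\,\Q[Y=y_k]$. The hypothesis that the series is absolutely convergent is exactly the statement $\E_\Q[|Y|]=\sum_{k\geq 1}|y_k|\,\p[Y=y_k\mid X=x_j]<\infty$, so both of the series for $\E_\Q[Y^+]$ and $\E_\Q[Y^-]$ converge and may be subtracted term by term. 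This interchange is the main (and essentially only) obstacle; everything else is a direct unwinding of the definition of $\Q$.
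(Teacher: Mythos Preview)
Your proof is correct and follows exactly the same route as the paper: apply the definition $\E[Y\mid X=x_j]=\E_\Q[Y]$, use the discrete expectation formula under $\Q$, and substitute $\Q[Y=y_k]=\p[Y=y_k\mid X=x_j]$. The paper's version is a one-line computation without the additional justification of the absolute-convergence step that you supply.
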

\begin{proof}
Apply the definitions above to obtain
\[
\E[Y\mid X=x_j]=\E_\Q[Y]=\sum_{k=1}^\infty y_k\Q[Y=y_k]=\sum_{k=1}^\infty y_k\p[Y=y_k\mid X=x_j]
\]
\end{proof}
Now let again $X$ be a r.v. with values in $\{x_1,x_2,...,x_n,...\}$ and $Y$ a real valued r.v. The next step is to define $\E[Y\mid X]$ as a function $f(X)$. Therefore we introduce the function 
\begin{equation}
f:\{x_1,x_2,...,x_n,...\}\to \R\hspace{1cm}f(x)=\begin{cases}\E[Y\mid X=x],&\p[X=x]>0\\ \text{any value in $\R$},&\p[X=x]=0\end{cases}
\end{equation}
\begin{rem}
It doesn't matter which value we assign to $f$ for $\p[X=x]=0$, since it doesn't affect the expectation because it's defined on a null set. For convention we want to assign to it the value 0.
\end{rem}
\begin{defn}[Conditional expectation ($X$ discrete, $Y$ real valued, complete case)]
Let $(\Omega,\F,\p)$ be a probability space. Let $X$ be a countably valued r.v. and let $Y$ be a real valued r.v. The conditional expectation of $Y$ given $X$ is defined by 
\[
\E[Y\mid X]=f(X), 
\]
with $f$ as in (2), provided that for all $j$: if $\Q_j[\Lambda]=\p[\Lambda\mid X=x_j]$, with $\p[X=x_j]>0$, we get $\E_{\Q_j}[\vert Y\vert]<\infty$.
\end{defn}
\begin{rem}
The above definition does not define $\E[Y\mid X]$ everywhere but rather almost everywhere, since on each set $\{X=x\}$, where $\p[X=x]=0$, its value is arbitrary.
\end{rem}
\begin{ex}
Let\footnote{Recall that this means that $X$ is Poisson distributed: $\p[X=k]=e^{-\lambda}\frac{\lambda^k}{k!}$ for $k\in\N$} $X\sim\Pi(\lambda)$. Let us consider a tossing game, where we say that when $X=n$, we do $n$ independent tossing of a coin where each time one obtains 1 with probability $p\in[0,1]$ and 0 with probability $1-p$. Define also $S$ to be the r.v. giving the total number of 1 obtained in the game. Therefore, if $X=n$ is given, we get that $S$ is binomial distributed with parameters $(p,n)$. We want to compute 
\begin{enumerate}[$(i)$] 
\item{$\E[S\mid X]$}
\item{$\E[X\mid S]$}
\end{enumerate}
\begin{rem}
It is more natural to ask for the expectation of the amount of 1 obtained for the whole game by knowing how many games were played. The reverse is a bit more difficult. Logically, we may also notice that it definitely doesn't make sense to say $S\geq X$, because we can not obtain more wins in a game than the amount of games that were played.
\end{rem}
\begin{enumerate}[$(i)$]
\item{First we compute $\E[S\mid X=n]$: If $X=n$, we know that $S$ is binomial distributed with parameters $(p,n)$ ($S\sim \B(p,n)$) and therefore we already know\footnote{If $X\sim \B(p,n)$ then $\E[X]=pn$. For further calculation, one can look it up in the stochastics I notes}
\[
\E[S\mid X=n]=pn.
\]
Now we need to identify the function $f$ defined as in (2) by 
\begin{align*}
f:\N&\longrightarrow\R\\
n&\longmapsto pn.
\end{align*}
Therefore we get by definition 
\[
\E[S\mid X]=pX.
\]
}
\item{
Next we want to compute $\E[X\mid S=k]$: For $n\geq k$ we have 
\[
\p[X=n\mid S=k]=\frac{\p[S=k\mid X=n]\p[X=n]}{\p[S=k]}=\frac{\begin{binom}nk\end{binom}p^k(1-p)^{n-k}e^{-\lambda}\frac{\lambda^n}{n!}}{\sum_{m=k}^\infty\begin{binom}mk\end{binom}p^k(1-p)^{m-k}e^{-\lambda}\frac{\lambda^m}{m!}},
\]
since $\{S=k\}=\bigsqcup_{m\geq k}\{S=k,X=m\}$. By some algebra we obtain that 
\[
\frac{\begin{binom}nk\end{binom}p^k(1-p)^{n-k}e^{-\lambda}\frac{\lambda^n}{n!}}{\sum_{m=k}^\infty\begin{binom}mk\end{binom}p^k(1-p)^{m-k}e^{-\lambda}\frac{\lambda^m}{m!}}=\frac{(\lambda(1-p))^{n-k}e^{-\lambda(1-p)}}{(n-k)!}
\]
Hence we get that 
\[
\E[X\mid S=k]=\sum_{n\geq k}n\p[X=n\mid S=k]=k+\lambda(1-p).
\]
Therefore $\E[X\mid S]=S+\lambda(1-p)$.
}
\end{enumerate}
\end{ex}
\subsection{Continuous construction of the conditional expectation}
Now we want to define $\E[Y\mid X]$, where $X$ is no longer assumed to be countably valued. Therefore we want to recall the following two facts:
\begin{defn}[$\sigma$-Algebra generated by a random variable]
Let $(\Omega,\F,\p)$ be a probability space. Let $X:(\Omega,\F,\p)\to (\R^n,\B(\R^n),\lambda)$ be a r.v. on that space. The $\sigma$-Algebra generated by $X$ is given by 
\[
\sigma(X)=X^{-1}(\B(\R^n))=\{A\in\Omega\mid A=X^{-1}(B),B\in\B(\R^n)\}.
\]
\end{defn}
\begin{thm}
Let $(\Omega,\F,\p)$ be a probability space. Let $X:(\Omega,\F,\p)\to(\R^n,\B(\R^n),\lambda)$ be a r.v. on that space and let $Y$ be a real valued r.v. on that space. $Y$ is measurable with respect to $\sigma(X)$ if and only if there exists a Borel measurable function $f:\R^n\to\R$ such that 
\[
Y=f(X). 
\]
\end{thm}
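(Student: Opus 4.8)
The plan is to prove the two implications separately, the forward direction being routine and the converse requiring the standard approximation argument by simple functions. This is precisely the functional representation established earlier in the excerpt for a general measurable target space, here specialized to $X$ taking values in $(\R^n,\B(\R^n))$, so the same strategy transfers verbatim.

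First I would dispose of the direction $Y=f(X)\Longrightarrow Y$ is $\sigma(X)$-measurable. Since $\sigma(X)=X^{-1}(\B(\R^n))$ is by construction the smallest $\sigma$-Algebra making $X$ measurable, and $f$ is Borel measurable, the composition $f\circ X$ is $\sigma(X)$-measurable: for any $C\in\B(\R)$ we have $(f\circ X)^{-1}(C)=X^{-1}(f^{-1}(C))$ with $f^{-1}(C)\in\B(\R^n)$, hence $(f\circ X)^{-1}(C)\in\sigma(X)$.

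For the converse I would run the usual measure-theoretic induction. The base case is an indicator $Y=\one_A$ with $A\in\sigma(X)$: by definition of $\sigma(X)$ there is $B\in\B(\R^n)$ with $A=X^{-1}(B)$, so $\one_A=\one_B\circ X$ and $f=\one_B$ works. By linearity this extends to every simple $\sigma(X)$-measurable $Y=\sum_{i=1}^m\lambda_i\one_{A_i}$, yielding $f=\sum_{i=1}^m\lambda_i\one_{B_i}$ where $A_i=X^{-1}(B_i)$. For a general nonnegative $\sigma(X)$-measurable $Y$ I would choose simple functions $Y_k\uparrow Y$, each of the form $Y_k=f_k(X)$ with $f_k$ Borel, and then set
$$f(x)=\begin{cases}\lim_{k\to\infty}f_k(x),&\text{if the limit exists,}\\ 0,&\text{otherwise.}\end{cases}$$
An arbitrary real-valued $Y$ is finally handled by splitting $Y=Y^+-Y^-$ into positive and negative parts and treating each separately.

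The main obstacle, and the only genuinely delicate point, is verifying that this limiting $f$ is Borel measurable and that it still represents $Y$ everywhere. Measurability follows because the set $\{x\in\R^n\mid \lim_k f_k(x)\text{ exists}\}$ is Borel, being the set where $(f_k(x))_k$ is Cauchy, an explicit countable combination of Borel sets; on its complement $f$ is the constant $0$. The identity $Y=f(X)$ then holds pointwise rather than merely almost surely: for each $\omega\in\Omega$ we have $f_k(X(\omega))=Y_k(\omega)\to Y(\omega)$, so $X(\omega)$ always lies in the set where the limit exists, whence $f(X(\omega))=\lim_k f_k(X(\omega))=Y(\omega)$. This is exactly where the care in defining $f$ off the range of $X$ pays off.
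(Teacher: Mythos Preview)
Your proposal is correct and follows essentially the same approach as the paper: both prove the easy direction via composition of measurable maps, handle simple functions by pulling back each $A_i\in\sigma(X)$ to some $B_i\in\B(\R^n)$, and then pass to the limit by defining $f(x)=\lim_k f_k(x)$ where it exists and $0$ otherwise, checking that $X(\omega)$ always lands in the convergence set. The only cosmetic difference is that the paper approximates a general real-valued $Y$ directly by a (not necessarily monotone) sequence of simple functions, whereas you first treat nonnegative $Y$ via monotone approximation and then split $Y=Y^+-Y^-$; the substance is identical.
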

\begin{rem}
We want to make use of the fact that for the Hilbert space $L^2(\Omega,\F,\p)$ we get that $L^2(\Omega,\sigma(X),\p)\subset L^2(\Omega,\F,\p)$ is a complete subspace, since $\sigma(X)\subset\F$. This allows us to use the orthogonal projections and to interpret the conditional expectation as such a projection.
\end{rem}
\begin{defn}[Conditional expectation (as a projection onto a closed subspace)]
Let $(\Omega,\F,\p)$ be a probability space. Let $Y\in L^2(\Omega,\F,\p)$. Then the conditional expectation of $Y$ given $X$ is the unique element $\hat Y\in L^2(\Omega,\sigma(X),\p)$ such that for all $Z\in L^2(\Omega,\sigma(X),\p)$
\begin{equation}
\E[YZ]=\E[\hat Y Z].
\end{equation}
This result is due to the fact that if $Y-\hat Y\in L^2(\Omega,\F,\p)$ then for all $Z\in L^2(\Omega,\sigma(X),\p)$ we get $\langle Y-\hat Y,Z\rangle=0$. We write $\E[Y\mid X]$ for $\hat Y$.
\end{defn}
\begin{rem}
$\hat Y$ is the orthogonal projection of $Y$ onto $L^2(\Omega,\sigma(X),\p)$.
\end{rem}
\begin{rem}
Since $X$ takes values in $\R^n$, there exists a Borel measurable function $f:\R^n\to\R$ such that 
\[
\E[Y\mid X]=f(X)
\]
with $\E[f^2(X)]<\infty$. We can also rewrite (3) as: for all Borel measurable $g:\R^n\to\R$, such that $\E[g^2(X)]<\infty$, we get
\[
\E[Yg(X)]=\E[f(X)g(X)].
\]
\end{rem}
Now let $\mathcal{G}\subset\F$ be a sub $\sigma$-Algebra of $\F$ and consider the space $L^2(\Omega,\mathcal{G},\p)\subset L^2(\Omega,\F,\p)$. It is clear that $L^2(\Omega,\mathcal{G},\p)$ is a Hilbert space and thus we can project to it.
\begin{defn}[Conditional expectation (projection case)]
Let $(\Omega,\F,\p)$ be a probability space. Let $Y\in L^2(\Omega,\F,\p)$ and let $\mathcal{G}\subset\F$ be a sub $\sigma$-Algebra of $\F$. Then the conditional expectation of $Y$ given $\mathcal{G}$ is defined as the unique element $\E[Y\mid \mathcal{G}]\in L^2(\Omega,\mathcal{G},\p)$ such that for all $Z\in L^2(\Omega,\mathcal{G},\p)$
\begin{equation}
\label{4}
\E[YZ]=\E[\E[Y\mid \mathcal{G}]Z].
\end{equation}
\end{defn}
\begin{rem}
In (3) or (1), it is enough\footnote{Since we can always consider linear combinations of $\one_A$ and then apply density theorems to it} to restrict the test r.v. $Z$ to the class of r.v.'s of the form 
\[
Z=\one_A,\hspace{1cm}A\in\mathcal{G}. 
\]
\end{rem}
\begin{rem}
The conditional expectation is in $L^2$, so it's only defined a.s. and not everywhere in a unique way. So in particular, any statement like $\E[Y\mid\mathcal{G}]\geq0$ or $\E[Y\mid \mathcal{G}]=Z$ has to be understood with an implicit a.s.
\end{rem}
\begin{thm}
Let $(\Omega,\F,\p)$ be a probability space. Let $Y\in L^2(\Omega,\F,\p)$ and let $\mathcal{G}\subset \F$ be a sub $\sigma$-Algebra of $\F$.
\begin{enumerate}[$(i)$]
\item{If $Y\geq 0$, then $\E[Y\mid \mathcal{G}]\geq 0$}
\item{$\E[\E[Y\mid\mathcal{G}]]=\E[Y]$}
\item{The map $Y\mapsto\E[Y\mid\mathcal{G}]$ is linear.}
\end{enumerate}
\end{thm}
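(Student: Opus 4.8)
The plan is to read everything off the defining relation \eqref{4} of the conditional expectation together with the remark that it suffices to test against indicators $Z=\one_A$ with $A\in\mathcal{G}$. Write $\hat Y:=\E[Y\mid\mathcal{G}]$ throughout. The key structural fact I will use repeatedly is that $\hat Y\in L^2(\Omega,\mathcal{G},\p)$, so $\hat Y$ is $\mathcal{G}$-measurable; in particular every event $\{\hat Y\in B\}$ with $B\in\B(\R)$ lies in $\mathcal{G}$ and is therefore an admissible test set. I would treat the three items in the order $(iii)$, $(ii)$, $(i)$, saving the positivity statement for last since it is the only one that is not a soft consequence of the projection characterization.

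I would start with $(iii)$, which is essentially a restatement of the linearity of the orthogonal projection $\Pi$ established in the corollary above. Concretely, for $a,b\in\R$ and $Y_1,Y_2\in L^2(\Omega,\F,\p)$ the $\mathcal{G}$-measurable r.v. $a\hat Y_1+b\hat Y_2$ satisfies, for every $Z\in L^2(\Omega,\mathcal{G},\p)$,
$$\E[(aY_1+bY_2)Z]=a\E[Y_1Z]+b\E[Y_2Z]=\E[(a\hat Y_1+b\hat Y_2)Z],$$
using bilinearity of the expectation and \eqref{4} for each $Y_i$. The uniqueness of the element characterized by \eqref{4} then forces $\E[aY_1+bY_2\mid\mathcal{G}]=a\hat Y_1+b\hat Y_2$. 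For $(ii)$ I would simply take the test r.v. $Z\equiv 1$, which lies in $L^2(\Omega,\mathcal{G},\p)$ because $\p$ is a probability measure. Substituting $Z=1$ into \eqref{4} gives $\E[Y]=\E[Y\cdot 1]=\E[\hat Y\cdot 1]=\E[\E[Y\mid\mathcal{G}]]$, which is exactly the claim.

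The main obstacle is $(i)$, since positivity is not something the projection characterization yields automatically; it has to be extracted from the freedom to choose the test set within $\mathcal{G}$. Assuming $Y\geq 0$, I would, for each $n\geq 1$, consider the event
$$A_n=\left\{\hat Y\leq -\frac{1}{n}\right\}\in\mathcal{G},$$
which is measurable precisely because $\hat Y$ is $\mathcal{G}$-measurable, and take $Z=\one_{A_n}\in L^2(\Omega,\mathcal{G},\p)$. On one hand $Y\geq 0$ gives $\E[Y\one_{A_n}]\geq 0$; on the other hand \eqref{4} together with the bound $\hat Y\leq -1/n$ on $A_n$ gives
$$\E[Y\one_{A_n}]=\E[\hat Y\one_{A_n}]\leq -\frac{1}{n}\p[A_n].$$
Combining the two inequalities yields $-\frac{1}{n}\p[A_n]\geq 0$, hence $\p[A_n]=0$. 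Since $\{\hat Y<0\}=\bigcup_{n\geq 1}A_n$, countable subadditivity gives $\p[\hat Y<0]=0$, i.e. $\E[Y\mid\mathcal{G}]\geq 0$ a.s. The one point to keep explicit is that, exactly as in the earlier remark, the conclusion is an almost-sure statement, since $\hat Y$ is only determined up to a.s. equality.
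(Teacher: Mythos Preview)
Your proof is correct and follows essentially the same approach as the paper: $(ii)$ and $(iii)$ are handled identically (test against $Z=\one_\Omega$, respectively linearity of the projection plus the direct verification via \eqref{4} and uniqueness). For $(i)$ the paper tests directly against $Z=\one_{\{\hat Y<0\}}$ and concludes in one step, whereas you slice with $A_n=\{\hat Y\le -1/n\}$ and take a union; this is the same idea with slightly more explicit bookkeeping, not a different argument.
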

\begin{proof}
For $(i)$ take $Z=\one_{\{\E[Y\mid\mathcal{G}]<0\}}$ to obtain 
\[
\underbrace{\E[YZ]}_{\geq 0}=\underbrace{\E[\E[Y\mid \mathcal{G}]Z]}_{\leq  0}.
\]
This implies that $\p[\E[Y\mid \mathcal{G}]<0]=0$. For $(ii)$ take $Z=\one_{\Omega}$ and plug into (4). For $(iii)$ notice that linearity comes from the orthogonal projection operator. But we can also do it directly by taking $Y,Y'\in L^2(\Omega,\F,\p)$, $\alpha,\beta\in \R$ and $Z\in L^2(\Omega,\mathcal{G},\p)$ to obtain 
\[
\E[(\alpha Y+\beta Y')Z]=\E[YZ]+\beta\E[Y'Z]=\alpha\E[\E[Y\mid\mathcal{G}]Z]+\beta\E[\E[Y'\mid\mathcal{G}]Z]=\E[(\alpha\E[Y\mid\mathcal{G}]+\beta\E[Y'\mid\mathcal{G}])Z].
\]
Now we can conclude by using the uniqueness property that 
\[
\E[\alpha Y+\beta Y'\mid \mathcal{G}]=\alpha\E[Y\mid \mathcal{G}]+\beta\E[Y'\mid \mathcal{G}].
\]
\end{proof}
Now we want to extend the definition of the conditional expectation to r.v.'s in $L^1(\Omega,\F,\p)$ or to $L^+(\Omega,\F,\p)$, which is the space of non negative r.v.'s allowing the value $\infty$.
\begin{lem}
Let $(\Omega,\F,\p)$ be a probability space. Let $Y\in L^+(\Omega,\F,\p)$ and let $\mathcal{G}\subset \F$ be a sub $\sigma$-Algebra of $\F$. Then there exists a unique element $\E[Y\mid \mathcal{G}]\in L^+(\Omega,\mathcal{G},\p)$ such that for all $X\in L^+(\Omega,\mathcal{G},\p)$
\begin{equation}
\E[YX]=\E[\E[Y\mid \mathcal{G}]X]
\end{equation}
and this conditional expectation agrees with the previous definition when $Y\in L^2(\Omega,\F,\p)$. Moreover, if $0\leq  Y\leq  Y'$, then 
\[
\E[Y\mid \mathcal{G}]\leq \E[Y'\mid \mathcal{G}].
\]
\end{lem}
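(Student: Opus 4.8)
The plan is to build $\E[Y\mid\mathcal{G}]$ for $Y\in L^+(\Omega,\F,\p)$ by truncation, reducing everything to the $L^2$-theory already established. First I would record the monotonicity of the conditional expectation on $L^2$: if $0\le V\le V'$ with $V,V'\in L^2(\Omega,\F,\p)$, then by linearity and positivity (parts $(iii)$ and $(i)$ of the preceding theorem) $\E[V'\mid\mathcal{G}]-\E[V\mid\mathcal{G}]=\E[V'-V\mid\mathcal{G}]\ge 0$. Next, for $Y\in L^+$ set $Y_n:=Y\land n$. Each $Y_n$ is bounded and nonnegative, hence lies in $L^2(\Omega,\F,\p)$ (the measure is finite), and $Y_n\uparrow Y$. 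By the $L^2$-monotonicity the sequence $\E[Y_n\mid\mathcal{G}]$ is increasing in $n$, so I can define
$$\E[Y\mid\mathcal{G}]:=\lim_{n\to\infty}\E[Y_n\mid\mathcal{G}]=\sup_{n}\E[Y_n\mid\mathcal{G}],$$
an element of $L^+(\Omega,\mathcal{G},\p)$, being a pointwise increasing limit of $\mathcal{G}$-measurable nonnegative functions.

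To verify the defining identity I would use the monotone convergence theorem twice. Fix $X\in L^+(\Omega,\mathcal{G},\p)$ and put $X_m:=X\land m\in L^2(\Omega,\mathcal{G},\p)$. For each fixed $n$ the $L^2$-relation gives $\E[Y_n X_m]=\E[\E[Y_n\mid\mathcal{G}]X_m]$; letting $m\to\infty$ with $X_m\uparrow X$ yields $\E[Y_n X]=\E[\E[Y_n\mid\mathcal{G}]X]$. Then letting $n\to\infty$, using $Y_nX\uparrow YX$ on the left and $\E[Y_n\mid\mathcal{G}]X\uparrow\E[Y\mid\mathcal{G}]X$ on the right, monotone convergence produces $\E[YX]=\E[\E[Y\mid\mathcal{G}]X]$, as required. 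The monotonicity claim $0\le Y\le Y'\Rightarrow\E[Y\mid\mathcal{G}]\le\E[Y'\mid\mathcal{G}]$ then follows immediately, since $Y\land n\le Y'\land n$ gives the inequality for the truncations and passes to the limit.

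For uniqueness suppose $W,W'\in L^+(\Omega,\mathcal{G},\p)$ both satisfy the identity, so $\E[W\one_A]=\E[W'\one_A]$ for every $A\in\mathcal{G}$. I would show $\p[W>W']=0$: on $\{W>W'\}$ necessarily $W'<\infty$, so this set is the countable union of the sets $A_{j,k}=\{W\ge W'+1/k\}\cap\{W'\le j\}\in\mathcal{G}$, and on each $A_{j,k}$ the relation forces $\tfrac{1}{k}\p[A_{j,k}]\le\E[(W-W')\one_{A_{j,k}}]=0$, whence $\p[A_{j,k}]=0$. By symmetry $W=W'$ a.s. Finally, agreement with the $L^2$-definition for $Y\in L^2\cap L^+$ follows by the same test-function extension: the projection-defined $\hat Y\ge 0$ satisfies $\E[YX]=\E[\hat Y X]$ first for $X\in L^2(\Omega,\mathcal{G},\p)$ and then, by monotone convergence through $X\land m$, for all $X\in L^+(\Omega,\mathcal{G},\p)$; uniqueness then identifies $\hat Y$ with the newly constructed object. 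The main obstacle throughout is bookkeeping with the value $+\infty$ — ensuring the truncations genuinely land in $L^2$, that every limit I invoke is monotone so that monotone convergence (and not dominated convergence) applies, and that the uniqueness argument never subtracts two infinite quantities; this is precisely why the decomposition of $\{W>W'\}$ into the sets $A_{j,k}$ where $W'$ is bounded is needed.
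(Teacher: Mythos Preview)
Your proof is correct and follows essentially the same route as the paper: truncate $Y$ by $Y\land n$ to land in $L^2$, truncate the test function $X$ by $X\land m$, and pass to the limit via monotone convergence; your uniqueness argument via the sets $A_{j,k}=\{W\ge W'+1/k\}\cap\{W'\le j\}$ is a minor variant of the paper's $\Lambda_n=\{U<V\le n\}$, both serving to keep one version bounded so that subtraction is legitimate.
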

\begin{proof}
If $Y\leq  0$ and $Y\in L^2(\Omega,\F,\p)$, then we define $\E[Y\mid\mathcal{G}]$ as before. If $X\in L^+(\Omega,\mathcal{G},\p)$, we get that $X_n=X\land n$, is in $L^2(\Omega,\mathcal{G},\p)$ and is positive with $X_n\uparrow X$ for $n\to\infty$. Using the monotone convergence theorem we get 
\[
\E[YX]=\E[Y\lim_{n\to\infty}X_n]=\lim_{n\to\infty}\E[YX_n]=\lim_{n\to\infty}\E[\E[Y\mid\mathcal{G}]X_n]=\E[\E[Y\mid\mathcal{G}]\lim_{n\to\infty}X]=\E[\E[Y\mid\mathcal{G}]X].
\]
This shows that (5) is true whenever $Y\in L^2(\Omega,\F,\p)$ with $Y\geq 0$ and $X\in L^+(\Omega,\mathcal{G},\p)$. Now let $Y\in L^1(\Omega,\F,\p)$. Define $Y_m=Y\land m$. Hence we get $Y_m\in L^2(\Omega,\F,\p)$ and $Y_m\uparrow Y$ as $n\to\infty$. Each $\E[Y_m\mid\mathcal{G}]$ is well defined\footnote{because for $Y\in L^2$ and $U\in L^2$ we get $Y\geq U\Longrightarrow Y-U\geq 0\Longrightarrow \E[Y\mid\mathcal{G}]\geq \E[U\mid\mathcal{G}]$} and positive and increasing. We define 
\[
\E[Y\mid\mathcal{G}]=\lim_{n\to\infty}\E[Y_m\mid \mathcal{G}].
\]
Several applications of the monotone convergence theorem will give us for $X\in L^+(\Omega,\mathcal{G},\p)$
\[
\E[YX]=\lim_{m\to\infty}\E[Y_mX]=\lim_{m\to\infty}\E[\E[Y_m\mid\mathcal{G}]X]=\E[\E[Y\mid \mathcal{G}]X].
\]
Furthermore if $0\leq  Y\leq  Y'$, then $Y\land m\leq  Y'\land m$ and therefore 
\[
\E[Y\mid\mathcal{G}]\leq \E[Y'\mid\mathcal{G}].
\]
Now we need to show uniqueness\footnote{Note that for any $W\in L^+$, the set $E$ on which $W=\infty$ is a null set. For suppose not, then $\E[W]\geq \E[\infty \one_E]=\infty\p[E]$. But since $\p[E]>0$ this cannot happen}. Let $U$ and $V$ be two versions of $\E[Y\mid \mathcal{G}]$. Let 
\[
\Lambda_n=\{U<V\leq  n\}\in\mathcal{G}
\] 
and assume $\p[\Lambda_n]>0$. We then have
\[
\E[Y\one_{\Lambda_n}]=\underbrace{\E[U\one_{\Lambda_n}]=\E[V\one_{\Lambda_n}]}_{\E[(U-V)\one_{\Lambda_n}]=0}.
\]
This contradicts the fact that $\p[\Lambda_n]>0$. Moreover, $\{U<V\}=\bigcup_{n\geq 1}\Lambda_n$ and therefore 
\[
\p[U<V]=0
\]
and similarly $\p[V<U]=0$. This implies 
\[
\p[U=V]=1.
\]
\end{proof}
\begin{thm}
Let $(\Omega,\F,\p)$ be a probability space. Let $Y\in L^1(\Omega,\F,\p)$ and let $\mathcal{G}\subset\F$ be a sub $\sigma$-Algebra of $\F$. Then there exists a unique element $\E[Y\mid \mathcal{G}]\in L^1(\Omega,\mathcal{G},\p)$ such that for every $X$ bounded and $\mathcal{G}$-measurable
\begin{equation}
\E[YX]=\E[\E[Y\mid \mathcal{G}]X].
\end{equation}
This conditional expectation agrees with the definition for the $L^2$. Moreover it satisfies:
\begin{enumerate}[$(i)$]
\item{If $Y\geq 0$, then $\E[Y\mid\mathcal{G}]\geq 0$}
\item{The map $Y\mapsto \E[Y\mid\mathcal{G}]$ is linear.}
\end{enumerate}
\end{thm}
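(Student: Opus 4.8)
The plan is to build $\E[Y\mid\mathcal{G}]$ for $Y\in L^1(\Omega,\F,\p)$ out of the conditional expectation on $L^+(\Omega,\F,\p)$ constructed in the preceding lemma, using the positive/negative decomposition $Y=Y^+-Y^-$. Since $Y\in L^1$, both $Y^+$ and $Y^-$ lie in $L^+(\Omega,\F,\p)$ with $\E[Y^+]<\infty$ and $\E[Y^-]<\infty$. First I would apply the $L^+$-lemma to each of them to obtain $\E[Y^+\mid\mathcal{G}]$ and $\E[Y^-\mid\mathcal{G}]$ in $L^+(\Omega,\mathcal{G},\p)$, and then set $\E[Y\mid\mathcal{G}]:=\E[Y^+\mid\mathcal{G}]-\E[Y^-\mid\mathcal{G}]$.

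The first point to check is that this difference is meaningful. Taking $X=\one_\Omega$ in the defining identity (5) of the lemma gives $\E[\E[Y^+\mid\mathcal{G}]]=\E[Y^+]<\infty$ and likewise for $Y^-$, so each of $\E[Y^+\mid\mathcal{G}]$ and $\E[Y^-\mid\mathcal{G}]$ is a.s. finite and in fact lies in $L^1(\Omega,\mathcal{G},\p)$; hence no $\infty-\infty$ ambiguity arises and $\E[Y\mid\mathcal{G}]$ is a genuine $\mathcal{G}$-measurable element of $L^1(\Omega,\mathcal{G},\p)$. Next I would verify the characterizing relation (6). For a bounded $\mathcal{G}$-measurable $X$, write $X=X^+-X^-$ with $X^\pm$ bounded, nonnegative and $\mathcal{G}$-measurable, hence in $L^+(\Omega,\mathcal{G},\p)$; applying identity (5) to the four products of $Y^\pm$ against $X^\pm$ and reassembling by linearity yields $\E[YX]=\E[\E[Y\mid\mathcal{G}]X]$, all integrals being finite because $X$ is bounded and $Y\in L^1$.

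For uniqueness I would argue that if $U$ and $V$ are two $\mathcal{G}$-measurable elements of $L^1(\Omega,\mathcal{G},\p)$ satisfying (6), then choosing the bounded $\mathcal{G}$-measurable test function $X=\one_{\{U>V\}}-\one_{\{U<V\}}=\sign(U-V)$ gives $\E[(U-V)X]=\E[\,\vert U-V\vert\,]=0$, so $U=V$ a.s. Agreement with the $L^2$ definition is then immediate: when $Y\in L^2$, every bounded $\mathcal{G}$-measurable $X$ lies in $L^2(\Omega,\mathcal{G},\p)$ (the measure is finite), so the $L^2$ conditional expectation satisfies the same identity (6), and uniqueness forces the two to coincide. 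Property $(i)$ is read off directly from the construction: if $Y\geq 0$ then $Y^-=0$, whence $\E[Y\mid\mathcal{G}]=\E[Y^+\mid\mathcal{G}]\geq 0$ by the lemma.

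The step needing the most care is linearity $(ii)$, because $Y\mapsto Y^\pm$ is not additive, so one cannot simply add the defining decompositions. Instead I would fix $Y,Y'\in L^1$ and scalars $\alpha,\beta\in\R$ and show that the candidate $\alpha\,\E[Y\mid\mathcal{G}]+\beta\,\E[Y'\mid\mathcal{G}]$ satisfies relation (6) for $\alpha Y+\beta Y'$: for every bounded $\mathcal{G}$-measurable $X$,
\[
\E[(\alpha Y+\beta Y')X]=\alpha\,\E[YX]+\beta\,\E[Y'X]=\E\big[(\alpha\,\E[Y\mid\mathcal{G}]+\beta\,\E[Y'\mid\mathcal{G}])X\big],
\]
and then invoke the uniqueness just established to conclude $\E[\alpha Y+\beta Y'\mid\mathcal{G}]=\alpha\,\E[Y\mid\mathcal{G}]+\beta\,\E[Y'\mid\mathcal{G}]$ a.s. This \emph{verify the property, then quote uniqueness} pattern is the cleanest way around the non-linearity of the truncation, and it is exactly where the preceding uniqueness argument earns its keep.
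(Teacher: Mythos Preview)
Your proposal is correct and follows essentially the same route as the paper: decompose $Y=Y^{+}-Y^{-}$, apply the preceding lemma to each part, check integrability via $X=\one_\Omega$, and verify (6) by splitting $X=X^{+}-X^{-}$. The paper is terser, only writing out existence and dismissing uniqueness and properties $(i)$, $(ii)$ as ``exactly the same as before''; you spell these out (with a slightly different but equally valid test function for uniqueness), which is fine.
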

\begin{proof}
We will only prove the existence, since the rest is exactly the same as before. Write $Y=Y^+-Y^-$ with $Y^+,Y^-\in L^1(\Omega,\F,\p)$ and $Y^+,Y^-\geq 0$. So $\E[Y^+\mid\mathcal{G}]$ and $\E[Y^-\mid\mathcal{G}]$ are well defined. Now we set 
\[
\E[Y\mid\mathcal{G}]=\E[Y^+\mid \mathcal{G}]-\E[Y^-\mid\mathcal{G}].
\]
This is well defined because 
\[
\E[\E[Y^\pm\mid\mathcal{G}]]=\E[Y^\pm]<\infty
\]
if we let $X=\one_\Omega$ in the previous lemma and therefore $\E[Y^+\mid\mathcal{G}]$ and $\E[Y^-\mid \mathcal{G}]\in L^1(\Omega,\mathcal{G},\p)$. For all $X$ bounded and $\mathcal{G}$-measurable we can also write $X=X^+-X^-$ and it follows from the previous lemma that 
\[
\E[\E[Y^\pm\mid\mathcal{G}]X]=\E[Y^\pm X].
\]
This implies that $\E[Y\mid\mathcal{G}]$ satisfies (6).
\end{proof}
\begin{cor}
Let $(\Omega,\F,\p)$ be a probability space. Let $X\in L^1(\Omega,\F,\p)$ be a r.v. on that space. Then 
\[
\E[\E[X\mid\mathcal{G}]]=\E[X].
\]
\end{cor}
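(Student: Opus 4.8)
The plan is to read the statement off directly from the defining property of the $L^1$ conditional expectation established in the preceding theorem, namely that $\E[YX]=\E[\E[Y\mid\mathcal{G}]X]$ holds for every bounded $\mathcal{G}$-measurable test random variable. The only mild care needed is notational: in the corollary the integrable random variable is itself called $X$, whereas in equation (6) the symbol $X$ plays the role of the test variable. To avoid any clash I would keep the integrable variable named $X$ and choose the \emph{test} variable to be the constant function $\one_\Omega$.

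First I would observe that $\one_\Omega$ is bounded and $\mathcal{G}$-measurable, so it is an admissible test function in (6). Substituting it in place of the test variable (with $Y$ there taken to be our $X$) yields
\[
\E[X\,\one_\Omega]=\E[\E[X\mid\mathcal{G}]\,\one_\Omega].
\]
Since $\one_\Omega\equiv 1$, both sides simplify: the left-hand side is $\E[X]$ and the right-hand side is $\E[\E[X\mid\mathcal{G}]]$, which is exactly the claimed identity.

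There is essentially no obstacle here; the content is entirely contained in the construction of $\E[\cdot\mid\mathcal{G}]$ on $L^1$. Indeed, the same identity already appeared as part $(ii)$ of the $L^2$ theorem and, for nonnegative integrands, as the special case $X=\one_\Omega$ of the defining relation in the $L^+$ lemma; the corollary simply records that this persists for all of $L^1$. If one wished to be fully self-contained one could spell out the admissibility of $\one_\Omega$ explicitly, but this is immediate from boundedness and ($\mathcal{G}$-)measurability of constants.
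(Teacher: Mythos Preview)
Your proof is correct and is essentially identical to the paper's: the paper also sets the test function equal to $\one_\Omega$ in the defining relation of the conditional expectation (it cites equation~(4), but the point is the same as your use of~(6)). If anything, your reference to the $L^1$ version is the cleaner one given the hypothesis $X\in L^1$.
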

\begin{proof}
Take equation (4) and set $Z=\one_\Omega$.
\end{proof}
\begin{cor}
Let $(\Omega,\F,\p)$ be a probability space. Let $X\in L^1(\Omega,\F,\p)$ be a r.v. on that space. Then 
\[
\vert\E[X\mid\mathcal{G}]\vert\leq \E[\vert X\vert\mid\mathcal{G}].
\]
In particular 
\[
\E[\vert\E[X\mid\mathcal{G}]\vert]\leq \E[\vert X\vert].
\]
\end{cor}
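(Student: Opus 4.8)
The plan is to reduce everything to the positivity and linearity of the conditional expectation already established in the preceding theorem, together with the tower property from the previous corollary. The starting point is the canonical decomposition $X=X^+-X^-$, where $X^+,X^-\in L^1(\Omega,\F,\p)$ are nonnegative. First I would invoke linearity to write $\E[X\mid\mathcal{G}]=\E[X^+\mid\mathcal{G}]-\E[X^-\mid\mathcal{G}]$, and then use property $(i)$ (positivity) to record that both terms $\E[X^+\mid\mathcal{G}]$ and $\E[X^-\mid\mathcal{G}]$ are nonnegative almost surely.

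With these two nonnegative quantities in hand, the triangle inequality applied pointwise (a.s.) gives
\[
\vert\E[X\mid\mathcal{G}]\vert=\left\vert\E[X^+\mid\mathcal{G}]-\E[X^-\mid\mathcal{G}]\right\vert\leq \E[X^+\mid\mathcal{G}]+\E[X^-\mid\mathcal{G}].
\]
Since $\vert X\vert=X^++X^-$, one more application of linearity identifies the right-hand side as $\E[X^++X^-\mid\mathcal{G}]=\E[\vert X\vert\mid\mathcal{G}]$, which is exactly the first assertion. All of these equalities and inequalities are understood in the almost-sure sense, consistent with the remark that the conditional expectation is only defined up to a null set.

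For the ``in particular'' statement, I would simply take expectations in the a.s.\ inequality $\vert\E[X\mid\mathcal{G}]\vert\leq \E[\vert X\vert\mid\mathcal{G}]$, using that $\E$ preserves inequalities, and then apply the tower property $\E[\E[Y\mid\mathcal{G}]]=\E[Y]$ (the previous corollary, valid for $Y\in L^1$) to the nonnegative r.v.\ $Y=\vert X\vert$. This yields
\[
\E\left[\vert\E[X\mid\mathcal{G}]\vert\right]\leq \E\left[\E[\vert X\vert\mid\mathcal{G}]\right]=\E[\vert X\vert].
\]

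There is no serious obstacle here; the proof is essentially a bookkeeping exercise. The only points requiring mild care are that each appeal to linearity, positivity, and the tower property must be to the $L^1$-version of those facts (so that $X^\pm$ and $\vert X\vert$ are legitimately in the domain where the conditional expectation has been defined), and that every inequality is an a.s.\ statement rather than a pointwise one.
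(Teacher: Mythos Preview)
Your proof is correct and follows essentially the same approach as the paper: decompose $X=X^+-X^-$, apply linearity and positivity of the conditional expectation, use the triangle inequality, and recombine via $\vert X\vert=X^++X^-$. The paper's proof is even terser and does not spell out the ``in particular'' part, which you handle correctly via the tower property.
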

\begin{proof}
We can always write $X=X^+-X^-$ and also $\vert X\vert=X^++X^-$. Therefore we get 
\[
\vert\E[X\mid\mathcal{G}]\vert=\vert\E[X^+\mid\mathcal{G}]-\E[X^-\mid\mathcal{G}]\vert\leq  \E[X^+\mid\mathcal{G}]+\E[X^-\mid\mathcal{G}]=\E[X^++X^-\mid\mathcal{G}]=\E[\vert X\vert\mid\mathcal{G}].
\]
\end{proof}
\begin{prop}
Let $(\Omega,\F,\p)$ be a probability space. Let $Y\in L^1(\Omega,\F,\p)$ be a r.v. on that space and assume that $Y$ is independent of the sub $\sigma$-Algebra $\mathcal{G}\subset\F$, i.e. $\sigma(Y)$ is independent of $\mathcal{G}$. Then 
\[
\E[Y\mid\mathcal{G}]=\E[Y].
\]
\end{prop}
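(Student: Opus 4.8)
The plan is to verify that the constant random variable $\E[Y]$ satisfies the defining property of the conditional expectation and then to invoke uniqueness. First I would observe that the constant $c:=\E[Y]$ is measurable with respect to every sub $\sigma$-Algebra, in particular with respect to $\mathcal{G}$, and that $\vert c\vert\leq \E[\vert Y\vert]<\infty$, so that $c\in L^1(\Omega,\mathcal{G},\p)$. Hence $c$ is a legitimate candidate for $\E[Y\mid\mathcal{G}]$.

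Next, by the characterization of the conditional expectation on $L^1$ (the existence-and-uniqueness theorem above), together with the remark that it suffices to test against indicator functions, it is enough to check that
$$\E[Y\one_A]=\E[c\,\one_A]=\E[Y]\,\p[A]$$
for every $A\in\mathcal{G}$. The crucial step is the left-hand equality. Since $A\in\mathcal{G}$, the random variable $\one_A$ is $\mathcal{G}$-measurable, so $\sigma(\one_A)\subset\mathcal{G}$; as $Y$ is assumed independent of $\mathcal{G}$, the variables $Y$ and $\one_A$ are independent. Both lie in $L^1$ (indeed $\vert Y\one_A\vert\leq\vert Y\vert$), so the product formula for independent integrable random variables (the remark following Theorem \ref{thm7}) yields $\E[Y\one_A]=\E[Y]\E[\one_A]=\E[Y]\p[A]$, which is exactly the desired right-hand side.

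Finally, appealing to the uniqueness clause of the same characterization theorem, I would conclude that $\E[Y\mid\mathcal{G}]=\E[Y]$ almost surely, completing the proof. I do not expect a genuine obstacle here; the only points demanding care are the direction of the independence argument — namely that independence of $Y$ from the whole of $\mathcal{G}$ transfers to independence of $Y$ from the single variable $\one_A$ for each fixed $A\in\mathcal{G}$ — and the mild integrability check $\vert Y\one_A\vert\leq\vert Y\vert$ that licenses the product-expectation formula.
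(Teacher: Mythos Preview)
Your argument is correct and matches the paper's proof essentially line for line: verify that the constant $\E[Y]$ is $\mathcal{G}$-measurable and integrable, use independence to get $\E[YZ]=\E[Y]\E[Z]$ for the test functions, and conclude by uniqueness. The only cosmetic difference is that the paper tests against arbitrary bounded $\mathcal{G}$-measurable $Z$ rather than indicators $\one_A$, which is equivalent by the remark you already cite.
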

\begin{proof}
Let $Z$ be a bounded and $\mathcal{G}$-measurable r.v. and therefore $Y$ and $Z$ are independent. Hence we get 
\[
\E[YZ]=\E[Y]\E[Z]=\E[\E[Y]Z].
\]
This implies that, since $\E[Y]$ is constant, that $\E[Y]\in L^1(\Omega,\mathcal{G},\p)$ and satisfies (4). Therefore by uniqueness we get that $\E[Y\mid\mathcal{G}]=\E[Y]$.

\end{proof}

\begin{thm}

Let $(\Omega,\F,\p)$ be a probability space. Let $X$ and $Y$ be two r.v.'s on that space and let $\mathcal{G}\subset\F$ be a sub $\sigma$-Algebra of $\F$. Assume further that at least one of these two holds:

\begin{enumerate}[$(i)$]
\item{$X,Y$ and $XY$ are in $L^1(\Omega,\F,\p)$ with $X$ being $\mathcal{G}$-measurable.}
\item{$X\geq 0$, $Y\geq 0$ with $X$ being $\mathcal{G}$-mearuable.}
\end{enumerate}

Then 

\[
\E[XY\mid\mathcal{G}]=\E[Y\mid\mathcal{G}]X.
\]

In particular, if $X$ is a positive r.v. or in $L^1(\Omega,\mathcal{G},\p)$ and $\mathcal{G}$-measurable, then 

\[
\E[X\mid\mathcal{G}]=X.
\]

\end{thm}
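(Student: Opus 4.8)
The plan is to verify, in each case, that the $\mathcal{G}$-measurable candidate $X\,\E[Y\mid\mathcal{G}]$ satisfies the characterizing identity of the conditional expectation, and then to invoke uniqueness. Note first that $X\,\E[Y\mid\mathcal{G}]$ is indeed $\mathcal{G}$-measurable, being a product of two $\mathcal{G}$-measurable r.v.'s, so it is a legitimate candidate in both settings.

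Case $(ii)$ is the clean one and I would treat it first. Here $XY\in L^+(\Omega,\F,\p)$, so $\E[XY\mid\mathcal{G}]$ is, by the $L^+$-Lemma, the unique element $W\in L^+(\Omega,\mathcal{G},\p)$ with $\E[XY\,U]=\E[WU]$ for every $U\in L^+(\Omega,\mathcal{G},\p)$. Since $X\geq0$ is $\mathcal{G}$-measurable, for any such $U$ the product $XU$ again lies in $L^+(\Omega,\mathcal{G},\p)$, so the defining property of $\E[Y\mid\mathcal{G}]$ applies with test function $XU$ and gives
\[
\E[XY\,U]=\E[Y\,(XU)]=\E[\E[Y\mid\mathcal{G}]\,XU]=\E[(X\,\E[Y\mid\mathcal{G}])\,U].
\]
As this holds for all $U\in L^+(\Omega,\mathcal{G},\p)$, uniqueness forces $\E[XY\mid\mathcal{G}]=X\,\E[Y\mid\mathcal{G}]$.

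For case $(i)$ I would reduce to case $(ii)$ by splitting $X=X^+-X^-$ and $Y=Y^+-Y^-$, so that $XY=X^+Y^+-X^+Y^--X^-Y^++X^-Y^-$. Each of the four products is nonnegative and dominated by $|XY|\in L^1$, hence lies in $L^1\cap L^+$; applying case $(ii)$ to each (with $\mathcal{G}$-measurable nonnegative factor $X^\pm$) yields $\E[X^\pm Y^\pm\mid\mathcal{G}]=X^\pm\,\E[Y^\pm\mid\mathcal{G}]$, and the $L^+$ and $L^1$ conditional expectations agree there since the products are integrable. Linearity of the conditional expectation then lets me recombine the four terms and factor them as $(X^+-X^-)\big(\E[Y^+\mid\mathcal{G}]-\E[Y^-\mid\mathcal{G}]\big)=X\,\E[Y\mid\mathcal{G}]$, using $\E[Y\mid\mathcal{G}]=\E[Y^+\mid\mathcal{G}]-\E[Y^-\mid\mathcal{G}]$.

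The step I expect to require the most care is the integrability bookkeeping in case $(i)$: to manipulate the four terms as elements of $L^1$ and to rule out any $\infty-\infty$, I must verify that each product $X^\varepsilon\,\E[Y^\delta\mid\mathcal{G}]$ (for $\varepsilon,\delta\in\{+,-\}$) is integrable. This follows from the total-expectation identity $\E[\E[\,\cdot\mid\mathcal{G}]]=\E[\cdot]$, which gives $\E[X^\varepsilon\,\E[Y^\delta\mid\mathcal{G}]]=\E[\E[X^\varepsilon Y^\delta\mid\mathcal{G}]]=\E[X^\varepsilon Y^\delta]\leq\E[|XY|]<\infty$; since each term is nonnegative with finite expectation it lies in $L^1$, so the recombination is valid. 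Finally, the displayed special case is obtained by taking $Y\equiv1$: one checks $\E[1\mid\mathcal{G}]=1$ directly from the defining identity, and then case $(ii)$ (if $X\geq0$) or case $(i)$ (if $X\in L^1(\Omega,\mathcal{G},\p)$, noting $X\cdot1=X$) gives $\E[X\mid\mathcal{G}]=X\,\E[1\mid\mathcal{G}]=X$.
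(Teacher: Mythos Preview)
Your proof is correct and follows essentially the same route as the paper: for case $(ii)$ you use exactly the paper's one-line trick of absorbing the $\mathcal{G}$-measurable factor $X$ into the test function, and for case $(i)$ you carry out the decomposition into positive and negative parts that the paper merely flags as ``an easy exercise.'' Your treatment of $(i)$ is in fact more complete than the paper's, since you split both $X$ and $Y$ (necessary, as case $(ii)$ requires $Y\geq 0$) and supply the integrability bookkeeping the paper omits.
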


\begin{proof}

For $(ii)$ assume first that $X,Y\leq  0$. Let $Z$ be a positive and $\mathcal{G}$-measurable r.v. Then we can obtain 

\[
\E[(XY)Z]=\E[Y(XZ)]=\E[\E[Y\mid\mathcal{G}]XZ]=\E[(\E[Y\mid\mathcal{G}]X)Z].
\]

Note that $\E[(\E[Y\mid\mathcal{G}]X)Z]$ is a positive r.v. and $\mathcal{G}$-measurable. Hence $\E[XY\mid\mathcal{G}]=X\E[Y\mid\mathcal{G}]$. For $(i)$ we can write $X=X^++X^-$ and use $(ii)$. This is an easy exercise.

\end{proof}

\begin{rem}

Next we want to show that the classical limit theorems from measure theory also make sense in terms of the conditional expectation\footnote{Recall the classical limit theorems for integrals: $Monotone$ $convergence:$ Let $(f_n)_{n\geq 1}$ be an  increasing sequence of positive and measurable functions and let $f=\lim_{n\to\infty}\uparrow f_n$. Then $\int fd\mu=\lim_{n\to\infty}f_nd\mu$. $Fatou:$ Let $(f_n)_{n\geq 1}$ be a sequence of measurable and positive functions. Then $\int\liminf_n f_n d\mu\leq  \liminf_n \int f_nd\mu$. $Dominated$ $convergence:$ Let $(f_n)_{n\geq 1}$ be a sequence of integrable functions with $\vert f_n\vert\leq  g$ for all $n$ with $g$ integrable. Denote $f=\lim_{n\to\infty}f_n$. Then $\lim_{n\to\infty}\int f_nd\mu=\int fd\mu$}. 

\end{rem}

\begin{thm}[Limit theorems for the conditional expectation]

Let $(\Omega,\F,\p)$ be a probability space. Let $(Y_n)_{n\geq 1}$ be a sequence of r.v.'s on that space and let $\mathcal{G}\subset\F$ be a sub $\sigma$-Algebra of $\F$. Then we have:

\begin{enumerate}[$(i)$]
\item{(\emph{Monotone convergence}) Assume that $(Y_n)_{n\geq 1}$ is a sequence of positive r.v.'s for all $n$ such that $\lim_{n\to\infty}\uparrow Y_n=Y$ a.s. Then 

\[
\lim_{n\to\infty}\E[Y_n\mid\mathcal{G}]=\E[Y\mid \mathcal{G}].
\]

}
\item{(\emph{Fatou}) Assume that $(Y_n)_{n\geq 1}$ is a sequence of positive r.v.'s for all $n$. Then 

\[
\E[\liminf_n Y_n\mid\mathcal{G}]=\liminf_n\E[Y_n\mid\mathcal{G}].
\]

}

\item{(\emph{Dominated convergence}) Assume that $Y_n\xrightarrow{n\to\infty}Y$ a.s. and that there exists $Z\in L^1(\Omega,\F,\p)$ such that $\vert Y_n\vert\leq  Z$ for all $n$. Then 

\[
\lim_{n\to\infty}\E[Y_n\mid \mathcal{G}]=\E[Y\mid\mathcal{G}].
\]

}

\end{enumerate}
\end{thm}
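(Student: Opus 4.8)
The plan is to derive all three statements from the defining property of the conditional expectation together with the classical (unconditional) limit theorems recalled just before the statement, bootstrapping from the monotonicity established in the preceding lemma. For (i), I would first invoke that monotonicity: since $0\leq Y_n\leq Y_{n+1}$, we have $\E[Y_n\mid\mathcal{G}]\leq\E[Y_{n+1}\mid\mathcal{G}]$ a.s., so the sequence $(\E[Y_n\mid\mathcal{G}])_{n\geq 1}$ is increasing and I may set $W:=\lim_{n\to\infty}\E[Y_n\mid\mathcal{G}]$, which is $\mathcal{G}$-measurable and non-negative. It then remains to identify $W$ with $\E[Y\mid\mathcal{G}]$ through the uniqueness clause of the lemma. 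To that end, fix $X\in L^+(\Omega,\mathcal{G},\p)$ and start from $\E[Y_nX]=\E[\E[Y_n\mid\mathcal{G}]X]$. Since $X\geq 0$ and $Y_n\uparrow Y$, we have $Y_nX\uparrow YX$ and $\E[Y_n\mid\mathcal{G}]X\uparrow WX$; applying the ordinary monotone convergence theorem to each side gives $\E[YX]=\E[WX]$. As this holds for every $X\in L^+(\Omega,\mathcal{G},\p)$, uniqueness forces $W=\E[Y\mid\mathcal{G}]$ a.s.

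For (ii), where the displayed identity should be read as the inequality $\E[\liminf_n Y_n\mid\mathcal{G}]\leq\liminf_n\E[Y_n\mid\mathcal{G}]$, I would set $Z_n:=\inf_{k\geq n}Y_k$, so that $Z_n\geq 0$ and $Z_n\uparrow\liminf_n Y_n$. For each $k\geq n$ one has $Z_n\leq Y_k$, hence by monotonicity $\E[Z_n\mid\mathcal{G}]\leq\E[Y_k\mid\mathcal{G}]$ and therefore $\E[Z_n\mid\mathcal{G}]\leq\inf_{k\geq n}\E[Y_k\mid\mathcal{G}]$. Applying part (i) to the increasing sequence $(Z_n)$ gives $\E[Z_n\mid\mathcal{G}]\uparrow\E[\liminf_n Y_n\mid\mathcal{G}]$, and letting $n\to\infty$ in the previous inequality yields the claim.

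For (iii), I would run the standard Fatou-sandwich argument in its conditional form. Since $\vert Y_n\vert\leq Z$, the random variables $Z+Y_n$ and $Z-Y_n$ are non-negative, so part (ii) applies to each. Using $\liminf_n(Z+Y_n)=Z+Y$ and $\liminf_n(Z-Y_n)=Z-Y$, together with linearity of the conditional expectation on $L^1$, I obtain $\E[Y\mid\mathcal{G}]\leq\liminf_n\E[Y_n\mid\mathcal{G}]$ from the first and $\limsup_n\E[Y_n\mid\mathcal{G}]\leq\E[Y\mid\mathcal{G}]$ from the second, which pinch the two one-sided limits together and give $\lim_n\E[Y_n\mid\mathcal{G}]=\E[Y\mid\mathcal{G}]$.

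The main obstacle, and the only genuinely delicate point, is the cancellation of $\E[Z\mid\mathcal{G}]$ in this last step: to subtract it from both sides of the conditional-Fatou inequalities I must know that $\E[Z\mid\mathcal{G}]<\infty$ a.s. This is exactly where the hypothesis $Z\in L^1(\Omega,\F,\p)$ is used essentially, via $\E[\E[Z\mid\mathcal{G}]]=\E[Z]<\infty$ (the corollary above), which forces $\E[Z\mid\mathcal{G}]$ to be a.s. finite and hence legitimately cancellable. Throughout, one must also keep in mind that every manipulation of $\liminf$ and $\limsup$ of conditional expectations holds only almost surely, since $\E[\,\cdot\mid\mathcal{G}]$ is defined only up to a null set.
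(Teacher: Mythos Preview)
Your argument for (i) is essentially identical to the paper's: use monotonicity of $\E[\,\cdot\mid\mathcal{G}]$ to get an increasing limit $W$ (the paper calls it $Y'$), then apply ordinary monotone convergence on both sides of the defining identity $\E[Y_nX]=\E[\E[Y_n\mid\mathcal{G}]X]$ for positive $\mathcal{G}$-measurable $X$ and invoke uniqueness. The paper only proves (i) and leaves (ii) and (iii) as exercises ``proved in a similar way''; your proofs of (ii) via $Z_n=\inf_{k\ge n}Y_k$ and of (iii) via the Fatou sandwich with $Z\pm Y_n$, together with your observation that the displayed equality in (ii) should be read as $\leq$, are exactly the intended standard arguments.
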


\begin{proof}

We will only prove $(i)$, since $(ii)$ and $(iii)$ are proved in a similar way (it's a good exercise to do the proof). Since $(Y_n)_{n\geq 1}$ is an increasing sequence, it follows that 

\[
\E[Y_{n+1}\mid\mathcal{G}]\geq \E[Y_n\mid\mathcal{G}].
\]

Hence we can deduce that $\lim_{n\to\infty}\uparrow \E[Y_n\mid\mathcal{G}]$ exists and we denote it by $Y'$. Moreover, note that $Y'$ is $\mathcal{G}$-measurable, since it is a limit of $\mathcal{G}$-measurable r.v.'s. Let $X$ be a positive and $\mathcal{G}$-measurable r.v. and obtain then 

\[
\E[Y'X]=\E[\lim_{n\to\infty}\E[Y_n\mid\mathcal{G}]X]=\lim_{n\to\infty}\uparrow\E[\E[Y_n\mid\mathcal{G}]X]=\lim_{n\to\infty}\E[Y_n X]=\E[YX],
\]

where we have used monotone convergence twice and equation (4). Therefore we get 
\[
\lim_{n\to\infty}\E[Y_n\mid\mathcal{G}]=\E[Y\mid\mathcal{G}].
\]
\end{proof}

\begin{thm}[Jensen's inequality]
Let $(\Omega,\F,\p)$ be a probability space. Let $\varphi:\R\to\R$ be a real, convex function. Let $X\in\L^1(\Omega,\F,\p)$ such that $\varphi(X)\in L^1(\Omega,\F,\p)$. Then 
\[
\varphi(\E[X\mid\mathcal{G}])\leq  \E[\varphi(X)\mid\mathcal{G}]
\]
for all sub $\sigma$-Algebras $\mathcal{G}\subset\F$.
\end{thm}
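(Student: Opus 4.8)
The plan is to exploit the standard representation of a convex function as a countable supremum of affine minorants, and then to push each affine inequality through the conditional expectation using only the monotonicity and linearity properties established earlier in this section.

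First I would recall that every convex function $\varphi:\R\to\R$ can be written as
$$\varphi(x)=\sup_{n\geq 1}(a_nx+b_n),\qquad x\in\R,$$
for suitable sequences $(a_n)_{n\geq 1},(b_n)_{n\geq 1}\subset\R$. Indeed, convexity guarantees that at each rational point $q$ there is a supporting line $\ell_q(x)=a_qx+b_q$ with $\ell_q(q)=\varphi(q)$ and $\ell_q\leq\varphi$ everywhere; since a finite convex function on $\R$ is continuous and the rationals are dense, one recovers $\varphi$ as the supremum of the countable family $\{\ell_q:q\in\Q\}$, which we relabel as $(a_n,b_n)_{n\geq 1}$.

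Next, for each fixed $n$ the pointwise bound $\varphi(X)\geq a_nX+b_n$ holds, and both sides lie in $L^1(\Omega,\F,\p)$ since $X$ and $\varphi(X)$ are assumed integrable. Applying the linearity of $Y\mapsto\E[Y\mid\mathcal{G}]$, the identities $\E[a_nX\mid\mathcal{G}]=a_n\E[X\mid\mathcal{G}]$ and $\E[b_n\mid\mathcal{G}]=b_n$ (the latter because constants are $\mathcal{G}$-measurable), together with the monotonicity of the conditional expectation, I obtain
$$\E[\varphi(X)\mid\mathcal{G}]\geq a_n\E[X\mid\mathcal{G}]+b_n\qquad\text{a.s.}$$

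Finally I would take the supremum over $n$. The crucial point — and the only place requiring genuine care — is that the family is countable: each inequality above holds off a null set $N_n$, and because $\bigcup_{n\geq 1}N_n$ is again a null set, all of them hold simultaneously almost surely. Hence, almost surely,
$$\E[\varphi(X)\mid\mathcal{G}]\geq\sup_{n\geq 1}\left(a_n\E[X\mid\mathcal{G}]+b_n\right)=\varphi\left(\E[X\mid\mathcal{G}]\right),$$
which is the claim. The main obstacle is precisely this countability argument: had one worked directly with the supremum over \emph{all} supporting lines (an uncountable family), the union of the exceptional null sets would no longer be controllable, so the reduction to a countable dense set of contact points is exactly what makes the proof go through.
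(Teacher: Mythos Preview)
Your argument is correct and is the standard route to the conditional Jensen inequality: represent the convex function as a countable supremum of affine minorants, push each affine inequality through the conditional expectation via linearity and monotonicity, and then take the supremum, using countability to control the exceptional null sets. The paper itself leaves this result as an exercise, so there is no proof in the text to compare against; your write-up would serve perfectly well as the intended solution.
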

\begin{proof}
Exercise.
\end{proof}

\begin{ex}

Let $(\Omega,\F,\p)$ be a probability space. Let $\varphi(X)=X^2$ and let $X\in L^2(\Omega,\F,\p)$. Then 

\[
(\E[X\mid \mathcal{G}])^2\leq  \E[X^2\mid\mathcal{G}]
\]

for all sub $\sigma$-Algebras $\mathcal{G}\subset \F$.

\end{ex}

\begin{thm}[Tower property]

Let $(\Omega,\F,\p)$ be a probability space. Let $X\in L^1(\Omega,\F,\p)$ be a positive r.v. on that space. Let $\mathcal{C}\subset\mathcal{G}\subset \F$ be a tower of sub $\sigma$-Algebras of $\F$. Then 

\[
\E[\E[X\mid\mathcal{G}]\mathcal{C}]=\E[X\mid\mathcal{C}]. 
\]

\end{thm}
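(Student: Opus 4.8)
The plan is to identify $\E[X\mid\mathcal{C}]$ as a legitimate version of $\E[\E[X\mid\mathcal{G}]\mid\mathcal{C}]$ and then to invoke the uniqueness of the conditional expectation established in the constructions above. Write $Y:=\E[X\mid\mathcal{G}]$. By the preceding results $Y$ is $\mathcal{G}$-measurable, and since $X\geq 0$ we have $Y\in L^+(\Omega,\mathcal{G},\p)$, so $\E[Y\mid\mathcal{C}]$ is well defined as the unique $\mathcal{C}$-measurable r.v. satisfying the defining test identity against all positive $\mathcal{C}$-measurable test r.v.'s.

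First I would observe that $\E[X\mid\mathcal{C}]$ is itself $\mathcal{C}$-measurable, hence a legitimate candidate for the unique object $\E[Y\mid\mathcal{C}]$. It therefore suffices to verify the test identity
$$\E[YZ]=\E[\E[X\mid\mathcal{C}]\,Z]$$
for every positive $\mathcal{C}$-measurable r.v. $Z$ (by the remark following the definition it is even enough to take $Z=\one_A$ with $A\in\mathcal{C}$).

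The key step — and the only place where the hypothesis $\mathcal{C}\subset\mathcal{G}$ enters — is that any $\mathcal{C}$-measurable $Z$ is automatically $\mathcal{G}$-measurable. Hence $Z$ is an admissible test function for $\E[X\mid\mathcal{G}]=Y$ as well, and the defining relation (5) gives $\E[YZ]=\E[XZ]$. On the other hand, the defining relation for $\E[X\mid\mathcal{C}]$ gives $\E[\E[X\mid\mathcal{C}]\,Z]=\E[XZ]$. Comparing the two, both sides equal $\E[XZ]$, so the test identity holds.

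Finally I would conclude by the uniqueness part of the construction of the conditional expectation: since $\E[X\mid\mathcal{C}]$ is $\mathcal{C}$-measurable and satisfies the defining identity characterizing $\E[Y\mid\mathcal{C}]$, the two must agree almost surely, which is exactly the asserted equality $\E[\E[X\mid\mathcal{G}]\mid\mathcal{C}]=\E[X\mid\mathcal{C}]$. I expect no genuine obstacle here: the entire content is the inclusion $\mathcal{C}\subset\mathcal{G}$, which lets a $\mathcal{C}$-test function double as a $\mathcal{G}$-test function. The only mild care needed is bookkeeping of the positivity/integrability regime so that every expectation appearing is well defined in $[0,\infty]$, which is guaranteed because $X$ is assumed positive and in $L^1(\Omega,\F,\p)$.
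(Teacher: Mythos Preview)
Your proof is correct and follows essentially the same approach as the paper: take a $\mathcal{C}$-measurable test r.v.\ $Z$, use $\mathcal{C}\subset\mathcal{G}$ to view $Z$ as $\mathcal{G}$-measurable, apply the defining identity of conditional expectation twice to get both $\E[\E[X\mid\mathcal{G}]Z]$ and $\E[\E[X\mid\mathcal{C}]Z]$ equal to $\E[XZ]$, and conclude by uniqueness. The paper phrases it with bounded test functions rather than positive ones, but the argument is identical.
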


\begin{proof}

Let $Z$ be a bounded and $\mathcal{C}$-measurable r.v. Then we obtain

\[
\E[XZ]=\E[\E[X\mid\mathcal{C}]Z].
\]

But $Z$ is also $\mathcal{G}$-measurable and hence we get

\[
\E[XZ]=\E[\E[X\mid\mathcal{G}]Z].
\]

Therefore, for all $Z$ bounded and $\mathcal{C}$-measurable r.v.'s, we get

\[
\E[\E[X\mid\mathcal{G}]Z]=\E[\E[X\mid\mathcal{C}]Z]
\]

and thus 

\[
\E[\E[X\mid\mathcal{G}]\mathcal{C}]=\E[X\mid\mathcal{C}].
\]

\end{proof}

\section{The Radon-Nikodym approach for the conditional expectation}

\begin{rem}

Before stating the Radon-Nikodym theorem, we recall some definitions from measure theory. Let $(\Omega,\B)$ be a measurable space. A measure $\nu$ is $absolutely$ $continuous$ with respect to another measure $\mu$, written $\nu\ll\mu$ if there exists some measurable $f\geq 0$ with $d\nu=fd\mu$, that is if there is a finite measurable $f\geq 0$ with 

\[
\nu(B)=\int_B fd\mu
\]

for all $B\in\B$. Two measures $\mu$ and $\nu$ are $singular$ with respect to each other if there exists disjoint measurable sets $A_1,A_2\subset \Omega$ with $\Omega=A_1\sqcup A_2$ and with $\nu(A_1)=0=\mu(A_2)$. Finally, recall that a measure $\mu$ is $\sigma$-finite if there is a decomposition of $\Omega$ into measurable sets,

\[
\Omega=\bigsqcup_{i=1}^\infty A_i
\]

with $\mu(A_i)<\infty$.

\end{rem}

\begin{thm}[Radon-Nikodym]
\label{RN}
Let $\mu$ and $\nu$ be two $\sigma$-finite measures on a measurable space $(\Omega,\B)$. Then $\nu$ can be decomposed as 

\[
\nu=\nu_{abs}+\nu_{sing}
\]

into the sum of two $\sigma$-finite measure with $\nu_{abs}\ll\mu$ being absolutely continuous with respect to $\mu$, and with $\nu_{sing}$ and $\mu$ being singular to each other (which will be written $\nu_{sing}\perp\mu$).

\end{thm}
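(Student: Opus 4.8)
The plan is to give von Neumann's Hilbert space proof, which fits naturally with the $L^2$ machinery developed in the previous section. First I would reduce to the case where $\mu$ and $\nu$ are \emph{finite} measures: since both are $\sigma$-finite, I can intersect their two decompositions to partition $\Omega=\bigsqcup_{i\ge 1}\Omega_i$ into measurable pieces on which both $\mu$ and $\nu$ are finite, prove the decomposition on each $\Omega_i$, and then sum the resulting pieces. Countable additivity preserves both $\nu_{abs}\ll\mu$ and $\nu_{sing}\perp\mu$, the carrier set of the global singular part being the union of the individual carriers.

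Assuming $\mu,\nu$ finite, set $\lambda=\mu+\nu$, again a finite measure, and work in the Hilbert space $L^2(\Omega,\B,\lambda)$. The functional $T(f)=\int_\Omega f\,d\nu$ is linear and, by Cauchy--Schwarz together with $\nu\le\lambda$,
$$|T(f)|\le\int|f|\,d\lambda\le\|f\|_{L^2(\lambda)}\,\lambda(\Omega)^{1/2},$$
so $T$ is bounded. By the Riesz representation theorem there is a unique $g\in L^2(\lambda)$ with $\int f\,d\nu=\int fg\,d\lambda$ for all $f\in L^2(\lambda)$. Choosing $f=\one_A$ gives $\nu(A)=\int_A g\,d\lambda$, and since $0\le\nu(A)\le\lambda(A)$ for every $A\in\B$ I can conclude $0\le g\le1$ $\lambda$-almost everywhere.

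Next, rewriting $\lambda=\mu+\nu$ in the representing identity yields $\int f(1-g)\,d\nu=\int fg\,d\mu$ for all bounded measurable $f$. I would then split $\Omega$ along $A_1=\{g<1\}$ and $A_2=\{g=1\}$ and define $\nu_{abs}(B)=\nu(B\cap A_1)$ and $\nu_{sing}(B)=\nu(B\cap A_2)$. Taking $f=\one_{A_2}$ forces $\int_{A_2}g\,d\mu=0$, hence $\mu(A_2)=0$; since $\nu_{sing}$ lives on $A_2$ while $\mu$ lives on $A_1$, this gives $\nu_{sing}\perp\mu$. For absolute continuity I would insert $f=(1+g+\dots+g^n)\one_B$ and let $n\to\infty$: on $A_1$ the geometric series yields $\nu_{abs}(B)=\int_B \frac{g}{1-g}\one_{A_1}\,d\mu$ by monotone convergence, exhibiting an explicit nonnegative density and so $\nu_{abs}\ll\mu$.

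The main obstacle is bookkeeping rather than any single estimate: verifying that $g$ may be chosen with $0\le g\le1$ \emph{everywhere} before forming the sets $A_1,A_2$; justifying the limit in the geometric-series step, where the integrand increases to $g/(1-g)$ precisely on $\{g<1\}$ so monotone convergence applies; and finally checking that reassembling the countably many finite-measure pieces produces genuinely $\sigma$-finite $\nu_{abs}$ and $\nu_{sing}$ with a single singular carrier. The Hilbert-space input itself is immediate once Riesz representation is invoked.
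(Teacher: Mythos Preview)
Your proposal is correct and follows essentially the same route as the paper: both give von Neumann's $L^2$ proof, introducing $\lambda=\mu+\nu$, applying Riesz representation to the functional $f\mapsto\int f\,d\nu$ on $L^2(\lambda)$ to obtain a representer $g\in[0,1]$, and splitting $\nu$ along $\{g=1\}$ versus $\{g<1\}$. The only cosmetic difference is in the last step, where the paper plugs $\frac{g}{1-k}$ directly into the identity $\int g(1-k)\,d\nu=\int gk\,d\mu$ (after extending it to all nonnegative measurable $g$), while you reach the same density $g/(1-g)$ via the geometric-series truncation and monotone convergence.
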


\begin{rem}

The theorem implies that there exists another, more practical way of checking whether a given $\sigma$-finite measure $\nu$ is absolutely continuous with respect to another $\sigma$-finite measure $\mu$. If $\mu(N)=0$ implies that $\nu(N)=0$ for every measurable $N\subset \Omega$, then $\nu=\nu_{abs}$ is absolutely continuous. We also note that the density function $f$ with $fd\mu=d\nu$ is called the $Radon$-$Nikodym$ $derivative$ and is often written $f=\frac{d\nu}{d\mu}$.

\end{rem}

\begin{rem}

To prove this theorem, we need a theorem which gives us a nice relationship between a Hilbert space and its dual space. Actually we can identify a Hilbert space $\mathcal{H}$ with its dual space $\mathcal{H}^*$.

\end{rem}

\begin{lem}[Riesz-representation for Hilbert spaces]

For a Hilbert space $\mathcal{H}$, the map sending $h\in \mathcal{H}$ to $\phi(h)\in\mathcal{H}^*$ defined by 

\[
\phi(h)(x)=\langle x,h\rangle
\]

is a linear (resp. sesqui-linear in the complex case) isometric isomorphism between $\mathcal{H}$ and its dual space $\mathcal{H}^*$.

\end{lem}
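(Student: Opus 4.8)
The plan is to verify in turn that $\phi$ is well defined with values in $\mathcal{H}^*$, that it is (conjugate-)linear, that it is norm-preserving (whence injective), and finally that it is onto. The first three properties are essentially bookkeeping built on the Cauchy--Schwarz inequality and the (sesqui-)linearity of the inner product, while surjectivity is the genuine content and is where completeness of $\mathcal{H}$ enters through the orthogonal decomposition of Corollary 1.6.

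First I would check that $\phi(h)$ is a bounded linear functional for each fixed $h\in\mathcal{H}$. Linearity of $x\mapsto\langle x,h\rangle$ is immediate, and Cauchy--Schwarz gives $|\phi(h)(x)|=|\langle x,h\rangle|\le\|x\|\,\|h\|$, so $\phi(h)\in\mathcal{H}^*$ with $\|\phi(h)\|_{\mathcal{H}^*}\le\|h\|$. The map $h\mapsto\phi(h)$ is then linear in the real case and conjugate-linear in the complex case, directly from the behaviour of the inner product in its second slot. To see that $\phi$ is an isometry, note the reverse bound: for $h\ne 0$, evaluating at $x=h/\|h\|$ yields $\phi(h)(h/\|h\|)=\langle h,h\rangle/\|h\|=\|h\|$, so $\|\phi(h)\|_{\mathcal{H}^*}\ge\|h\|$ and hence equals $\|h\|$. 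An isometry is injective, so it only remains to prove surjectivity.

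For surjectivity, let $\psi\in\mathcal{H}^*$ be arbitrary. If $\psi=0$ then $\psi=\phi(0)$, so assume $\psi\ne 0$ and set $\mathcal{L}=\ker\psi$. Since $\psi$ is continuous, $\mathcal{L}$ is a closed subspace, and since $\psi\ne 0$ it is proper. By the orthogonal decomposition $\mathcal{H}=\mathcal{L}\oplus\mathcal{L}^\perp$ we may pick $z\in\mathcal{L}^\perp$ with $z\ne 0$. The key observation is that for every $x\in\mathcal{H}$ the vector $\psi(x)z-\psi(z)x$ lies in $\mathcal{L}$, because $\psi$ annihilates it. As $z\perp\mathcal{L}$, taking inner products with $z$ gives
\[
\psi(x)\,\|z\|^2-\psi(z)\,\langle x,z\rangle=0,
\]
so $\psi(x)=\langle x,h\rangle$ with $h=\big(\overline{\psi(z)}/\|z\|^2\big)z$ (the conjugate being dropped in the real case). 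Thus $\psi=\phi(h)$ and $\phi$ is onto.

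I expect the surjectivity step to be the main obstacle, and more precisely the point at which one must invoke completeness: the existence of a nonzero $z\in\mathcal{L}^\perp$ for a proper closed subspace $\mathcal{L}$ is exactly what the orthogonal decomposition (itself a consequence of the unique-approximation theorem for uniformly convex complete spaces) provides. The remaining care is purely notational --- tracking the complex conjugate so that $\phi$ comes out conjugate-linear rather than linear, and keeping the convention $\langle x,h\rangle$ consistent with the rest of the notes.
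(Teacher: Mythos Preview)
Your proof is correct and, in fact, the paper does not supply its own proof of this lemma --- it is stated without proof and then immediately used to prove the Radon--Nikodym theorem. Your argument is the standard one and fits seamlessly into the paper's development: the only nontrivial ingredient is the existence of a nonzero $z\in(\ker\psi)^\perp$, and this is exactly what the orthogonal decomposition of Corollary~1.6 (built on the unique-approximation theorem for uniformly convex Banach spaces) guarantees. So your write-up fills the gap the paper leaves, using precisely the tools the paper has made available.
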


\begin{proof}[Proof of Theorem \ref{RN}]

Suppose that $\mu$ and $\nu$ are both finite measures (the general case can be reduced to this case by using the assumption that $\mu$ and $\nu$ are both $\sigma$-finite). We define a new measure $m=\mu+\nu$ and will work with the real Hilbert space $\mathcal{H}=L^2(\Omega,m)$. On this Hilbert space we define a linear functional $\phi$ by 

\[
\phi(g)=\int gd\nu
\]

for $g\in\mathcal{H}$. For $g$ a simple function on $\Omega$, this is clearly well-defined and satisfies 

\[
\vert\phi(g)\vert=\left\vert\int gd\nu\right\vert\leq \int \vert g\vert d\nu\leq \int \vert g\vert dm\leq \| g\|_{\mathcal{H}}\|\one\|_{\mathcal{H}}
\]

where we have used the fact that $m=\mu+\nu$, that $\mu$ is a positive measure and the Cauchy-Schwartz inequality on $\mathcal{H}$. Since the simple functions are dense in $\mathcal{H}$, the functional extends to a functional on all of $\mathcal{H}$. By the Riesz-representation for Hilbert spaces there is some $k\in\mathcal{H}$ such that 

\begin{equation}
\int gd\nu=\phi(g)=\int gkdm.
\end{equation}

We claim that $k$ takes values in $[0,1]$ almost surely with respect to $m$. Indeed, for any $B\in\B$ we have 

\[
0\leq  \nu(B)\leq  m(B),
\]

so (using $g=\one_B$),

\[
0\leq  \int_B kdm\leq  m(B).
\]

Using the choices 

\[
B=\{\omega\in\Omega\mid k(\omega)<0\}
\]

and 

\[
B=\{\omega\in\Omega\mid k(\omega)>1\}
\]

implies the claim that $k$ takes $m$-almost surely values in $[0,1]$. Since $m=\mu+\nu$, we can reformulate (7) as 

\begin{equation}
\int g(1-k)d\nu=\int gkd\mu.
\end{equation}

This holds by construction for all simple functions $g$, and hence for all nonnegative measurable functions by monotone convergence. Now define $\nu_{sing}$ to be $\nu\mid_{A}$, where 

\[
A=\{\omega\in\Omega\mid k(\omega)=1\}.
\]

By definition, $\nu_{sing}(\Omega\setminus A)=0$ and by (8) applied with $g=\one_{A}$ we also have $\mu(A)=0$. Therefore 

\[
\nu_{sing}\perp\mu.
\]

We also define 

\[
\nu_{abs}=\nu\mid_{\Omega\setminus A}=\nu_{\{\omega\in\Omega\mid k(\omega)<1\}}
\]

so that $\nu=\nu_{sing}+\nu_{abs}$. Define the function $f=\frac{k}{1-k}\geq 0$ on $\Omega\setminus A$ and let $g\geq 0$ be measurable. Then by (8) we have 

\[
\int_{\Omega\setminus A}gf d\mu=\int_{\Omega\setminus A}\frac{g}{1-k}kd\mu=\int_{\Omega\setminus A}\frac{g}{1-k}(1-k)d\nu=\int_{\Omega\setminus A}gd\nu_{abs},
\]

which shows that $d\nu_{abs}=fd\mu$ and so $\nu_{abs}\ll \mu$.

\end{proof}

\begin{thm}

Let $(\Omega,\F,\p)$ be a probability space. Let $\mathcal{G}\subset \F$ be a sub $\sigma$-Algebra of $\F$ and let $X\in L^1(\Omega,\F,\p)$ be a r.v. Then there exists a unique r.v. in $L^1(\Omega,\mathcal{G},\p)$, denoted by $\E[X\mid\mathcal{G}]$, such that for all $B\in\mathcal{G}$

\[
\E[X\one_B]=\E[\E[X\mid\mathcal{G}]\one_B].
\]

More generally, for every bounded and $\mathcal{G}$-measurable r.v. $Z$ we get

\[
\E[XZ]=\E[\E[X\mid\mathcal{G}]Z]
\]

and if $X\geq 0$, then $\E[X\mid\mathcal{G}]\geq 0$.

\end{thm}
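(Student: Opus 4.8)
The plan is to deduce existence from the Radon-Nikodym theorem (Theorem \ref{RN}) and to handle the general $L^1$ case by splitting into positive and negative parts. First I would treat the case $X \geq 0$. Define a set function $\nu$ on the measurable space $(\Omega, \mathcal{G})$ by $\nu(B) = \E[X \one_B]$ for $B \in \mathcal{G}$; since $X \in L^1$ and $X \geq 0$, countable additivity of the integral (monotone convergence) shows that $\nu$ is a finite, hence $\sigma$-finite, measure on $\mathcal{G}$. Let $\mu = \p|_{\mathcal{G}}$ be the restriction of $\p$ to $\mathcal{G}$, which is also a finite measure. If $B \in \mathcal{G}$ satisfies $\mu(B) = \p[B] = 0$, then $X \one_B = 0$ a.s. and so $\nu(B) = 0$; by the remark following Theorem \ref{RN} this yields $\nu \ll \mu$.

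Applying the Radon-Nikodym theorem then produces a $\mathcal{G}$-measurable density $f = \frac{d\nu}{d\mu} \geq 0$ with $\nu(B) = \int_B f \, d\mu$ for every $B \in \mathcal{G}$. I would set $\E[X \mid \mathcal{G}] := f$; it lies in $L^1(\Omega, \mathcal{G}, \p)$ because $\int_\Omega f \, d\mu = \nu(\Omega) = \E[X] < \infty$, and by construction $\E[X \one_B] = \int_B f \, d\mu = \E[f \one_B]$ for all $B \in \mathcal{G}$, which is exactly the defining identity (with $\E[X \mid \mathcal{G}] \geq 0$ immediate from $f \geq 0$). For general $X \in L^1$ I would write $X = X^+ - X^-$, apply the construction to each nonnegative part, and define $\E[X \mid \mathcal{G}] = \E[X^+ \mid \mathcal{G}] - \E[X^- \mid \mathcal{G}]$; linearity of the integral propagates the defining identity to $X$.

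To obtain the more general statement for bounded $\mathcal{G}$-measurable $Z$, I would run the standard approximation argument: the identity holds for $Z = \one_B$ by construction, extends to $\mathcal{G}$-measurable simple functions by linearity, and then to arbitrary bounded $\mathcal{G}$-measurable $Z$ by approximating $Z$ by simple functions and passing to the limit under the integral, which is justified because $\vert X Z\vert \leq \|Z\|_\infty \vert X\vert$ with $X \in L^1$, so dominated convergence applies on both sides.

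Finally, for uniqueness, suppose $U, V \in L^1(\Omega, \mathcal{G}, \p)$ both satisfy $\E[X \one_B] = \E[U \one_B] = \E[V \one_B]$ for all $B \in \mathcal{G}$. Then $\E[(U - V)\one_B] = 0$ for every $B \in \mathcal{G}$. Since $U - V$ is $\mathcal{G}$-measurable, the sets $\{U > V\}$ and $\{U < V\}$ belong to $\mathcal{G}$; choosing $B = \{U > V\}$ forces $\E[(U-V)^+] = 0$ and $B = \{U < V\}$ forces $\E[(U-V)^-] = 0$, whence $U = V$ a.s. The main obstacle here is less conceptual than bookkeeping: one must carefully verify that $\nu$ is genuinely a measure and that $\nu \ll \mu$ before invoking Theorem \ref{RN}, and one must ensure the approximation step for bounded $Z$ respects the integrability needed for the limit exchange. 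Both are routine once the finiteness afforded by the probability space is exploited.
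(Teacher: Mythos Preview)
Your proposal is correct and follows essentially the same approach as the paper: define $\nu(B)=\E[X\one_B]$ on $(\Omega,\mathcal{G})$ for $X\geq 0$, observe $\nu\ll \p|_{\mathcal{G}}$, apply Radon--Nikodym to obtain the $\mathcal{G}$-measurable density, then handle general $X$ via $X=X^+-X^-$. Your write-up is in fact more detailed than the paper's, which defers uniqueness to an earlier lemma and does not spell out the extension from indicators to bounded $\mathcal{G}$-measurable $Z$.
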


\begin{proof}

The uniqueness part was already done. To show existence, assume first that $X$ is positive. Define a new measure $\Q$ on $(\Omega,\mathcal{G})$ by 

\[
\Q[A]=\E[X\one_A]=\int_A X(\omega)d\p(\omega)
\]

for all $A\in\mathcal{G}$. Now consider the measure $\p$ restricted to $\mathcal{G}$. Then we get that 

\[
\Q\ll\p
\]

on $\mathcal{G}$. The Radon-Nikodym theorem implies that there exists a positive and $\mathcal{G}$-measurable r.v. $\tilde X$ such that 

\[
\Q[A]=\E[\tilde X\one_A]
\]
for all $A\in\mathcal{G}$. For $A\in\mathcal{G}$ we get that 

\[
\E[X\one_A]=\E[\tilde X\one_A].
\]

Now taking $A=\Omega$, we get that $\E[X]=\E[\tilde X]$. Therefore we have that $\tilde X\in L^1(\Omega,\mathcal{G},\p)$ and hence we see that $\tilde X=\E[X\mid\mathcal{G}]$ For the general case, we can just write $X=X^++X^-$ and do the same as before.

\end{proof}

\section{More properties of the conditional expectation}

\begin{thm}

Let $(\Omega,\F,\p)$ be a probability space. Let $\mathcal{G}_1\subset\F$ and $\mathcal{G}_2\subset\F$ be two sub $\sigma$-Algebras of $\F$. Then $\mathcal{G}_1$ and $\mathcal{G}_2$ are independent if and only if for every positive and $\mathcal{G}_2$-measurable r.v. $X$ (or for $X\in L^1(\Omega,\mathcal{G}_2,\p)$ or $X=\one_A$ for $A\in \mathcal{G}_2$) we have 

\[
\E[X\mid\mathcal{G}_2]=\E[X].
\]

\end{thm}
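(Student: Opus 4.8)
The plan is to prove both implications, after first clarifying that the conditioning in the displayed identity must be on $\mathcal{G}_1$: since $X$ is $\mathcal{G}_2$-measurable, the earlier theorem on pulling out a $\mathcal{G}$-measurable factor gives $\E[X\mid\mathcal{G}_2]=X$, which is not the intended content, so the identity to establish reads $\E[X\mid\mathcal{G}_1]=\E[X]$. For the forward direction, suppose $\mathcal{G}_1$ and $\mathcal{G}_2$ are independent and let $X$ be a positive $\mathcal{G}_2$-measurable r.v. Then $\sigma(X)\subset\mathcal{G}_2$, and any $A\in\sigma(X)$ lies in $\mathcal{G}_2$, so $\sigma(X)$ is independent of $\mathcal{G}_1$; that is, $X$ is independent of $\mathcal{G}_1$. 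I would first treat the bounded truncation $X\wedge m\in L^1(\Omega,\F,\p)$ and invoke the earlier proposition stating that a variable independent of a sub $\sigma$-Algebra $\mathcal{G}$ satisfies $\E[\,\cdot\mid\mathcal{G}]=\E[\,\cdot\,]$, obtaining $\E[X\wedge m\mid\mathcal{G}_1]=\E[X\wedge m]$. Then I would let $m\to\infty$, applying the conditional monotone convergence theorem on the left and ordinary monotone convergence on the right to recover $\E[X\mid\mathcal{G}_1]=\E[X]$. The $L^1$ and indicator formulations follow either directly from the cited proposition or by the same truncation.

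For the converse it is enough to assume the identity for indicators $X=\one_A$ with $A\in\mathcal{G}_2$, i.e. $\E[\one_A\mid\mathcal{G}_1]=\E[\one_A]=\p[A]$. Fix $A\in\mathcal{G}_2$ and $B\in\mathcal{G}_1$. Since $\one_B$ is bounded and $\mathcal{G}_1$-measurable, the ``taking out what is known'' theorem gives
\[
\E[\one_A\one_B\mid\mathcal{G}_1]=\one_B\,\E[\one_A\mid\mathcal{G}_1]=\p[A]\,\one_B.
\]
Taking expectations and using the total-expectation identity $\E[\E[\,\cdot\mid\mathcal{G}_1]]=\E[\,\cdot\,]$ yields
\[
\p[A\cap B]=\E[\one_A\one_B]=\E[\,\p[A]\,\one_B\,]=\p[A]\,\p[B].
\]
As $A\in\mathcal{G}_2$ and $B\in\mathcal{G}_1$ were arbitrary, this is precisely the definition of independence of $\mathcal{G}_1$ and $\mathcal{G}_2$.

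The argument is essentially an assembly of results already established: independence of a $\sigma$-Algebra contained in an independent one, the factorization property of the conditional expectation, and the tower/total-expectation identity; there is no deep obstacle. The only points requiring genuine care are the reduction among the three versions of the hypothesis — showing the indicator version alone already forces independence, which then implies the apparently stronger positive and $L^1$ versions through the forward direction — and, in the forward direction, handling an unbounded positive $X$ by truncation and conditional monotone convergence rather than applying the cited proposition directly, since that proposition was stated only for $L^1$ variables.
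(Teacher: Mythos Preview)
Your proof is correct and follows essentially the same route as the paper. The paper only writes out the converse direction (indicator hypothesis implies independence), using the defining identity $\E[\one_A\one_B]=\E[\E[\one_A\mid\mathcal{G}_1]\one_B]$ for $B\in\mathcal{G}_1$ to reach $\p[A\cap B]=\p[A]\p[B]$; your version via the pull-out rule followed by taking expectations is the same computation in slightly different packaging. You are actually more thorough than the paper: you spot the typo in the conditioning, and you handle the forward direction for general positive $X$ by truncation and conditional monotone convergence, whereas the paper simply omits that direction as already covered by the earlier proposition stated for $L^1$ variables.
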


\begin{proof}

We only need to prove that the statement in the bracket implies that $\mathcal{G}_1$ and $\mathcal{G}_2$ are independent. Assume that for all $A\in \mathcal{G}_2$ we have that

\[
\E[\one_A\mid\mathcal{G}_1]=\p[A]
\]

and moreover for all $B\in \mathcal{G}_1$ we have that 

\[
\E[\one_B\one_A]=\E[\E[\one_A\mid\mathcal{G}_1]\one_B].
\]

Note that $\E[\one_A\mid\mathcal{G}_1]=\p[A]$ and therefore $\E[\one_B\one_A]=\p[A\cap B]=\p[A]\E[\one_B]=\p[A]\p[B]$ and hence the claim follows.

\end{proof}

\begin{rem}

Let $Z$ and $Y$ be two real valued r.v.'s. Then $Z$ and $Y$ are independent if and only if for all $h$ Borel measurable, such that $\E[\vert h(Z)\vert]<\infty$, we get $\E[h(Z)\mid Y]=\E[h(Z)]$. To see this we can apply the theorem with $\mathcal{G}_2=\sigma(Z)$ and note that all r.v.'s in $L^1(\Omega,\mathcal{G}_2,\p)$ are of the form $h(Z)$ with $\E[\vert h(Z)\vert]<\infty$. In particular, if $Z\in L^1(\Omega,\F,\p)$, we get $\E[Z\mid Y]=\E[Z]$. Be aware that the latter equation does not imply that $Y$ and $Z$ are independent. For example take $Z\sim\mathcal{N}(0,1)$ and $Y=\vert Z\vert$. Now for all $h$ with $\E[\vert h(\vert Z\vert)\vert]<\infty$ we get $\E[Zh(\vert Z\vert )]=0$. Thus $\E[Z\mid \vert Z\vert]=0$, but $Z$ and $\vert Z\vert$ are not independent.

\end{rem}

\begin{thm}

Let $(\Omega,\F,\p)$ be a probability space. Let $X$ and $Y$ be two r.v.'s on that space with values in the same measure space $E$ and $F$. Assume that $X$ is independent of the sub $\sigma$-Algebra $\mathcal{G}\subset \F$ and that $Y$ is $\mathcal{G}$-measurable. Then for every measurable map $g:E\times F\to \R_+$ we have 

\[
\E[g(X,Y)\mid\mathcal{G}]=\int_E g(x,y)d\p_X(x),
\]

where $\p_X$ is the law of $X$ and the right hand side has to be understood as a function $\phi(Y)$ with 

\[
\phi:y\mapsto \int_E g(x,y)d\p_X(x).
\]

\end{thm}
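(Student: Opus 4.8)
The plan is to verify the defining property of the conditional expectation for positive random variables. Since the right-hand side $\phi(Y)$ will turn out to be a $\mathcal{G}$-measurable positive r.v., it suffices, by the characterization established in the earlier lemma and theorems on $\E[\,\cdot\mid\mathcal{G}\,]$ for $L^+$ r.v.'s, to show that
$$\E[g(X,Y)Z]=\E[\phi(Y)Z]$$
for every positive $\mathcal{G}$-measurable r.v. $Z$ (equivalently, for $Z=\one_A$ with $A\in\mathcal{G}$), where $\phi(y)=\int_E g(x,y)\,d\p_X(x)$. The uniqueness of the conditional expectation among $\mathcal{G}$-measurable r.v.'s will then force $\E[g(X,Y)\mid\mathcal{G}]=\phi(Y)$.

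First I would record two preliminary facts. By Tonelli's theorem applied to the positive measurable function $g$ on the product space $E\times F$, the partial integral $\phi(y)=\int_E g(x,y)\,d\p_X(x)$ is a well-defined $[0,\infty]$-valued measurable function of $y$; consequently $\phi(Y)$ is $\mathcal{G}$-measurable because $Y$ is $\mathcal{G}$-measurable. Next, since both $Y$ and $Z$ are $\mathcal{G}$-measurable, the vector $(Y,Z)$ is $\mathcal{G}$-measurable, so $\sigma(Y,Z)\subset\mathcal{G}$; combined with the hypothesis that $X$ is independent of $\mathcal{G}$, this shows that $X$ and $(Y,Z)$ are independent. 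By Theorem \ref{thm7} the joint law then factors as $\p_{(X,(Y,Z))}=\p_X\otimes\p_{(Y,Z)}$.

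The core computation proceeds by Fubini. Applying Proposition \ref{random} to the random vector $(X,(Y,Z))$ with the positive measurable test function $(x,y,z)\mapsto g(x,y)z$ gives
$$\E[g(X,Y)Z]=\iint_{E\times(F\times\R_+)}g(x,y)z\,d\p_X(x)\,d\p_{(Y,Z)}(y,z).$$
Because the integrand is positive, Fubini--Tonelli allows integrating in $x$ first, which yields
$$\E[g(X,Y)Z]=\int_{F\times\R_+}\left(\int_E g(x,y)\,d\p_X(x)\right)z\,d\p_{(Y,Z)}(y,z)=\int \phi(y)z\,d\p_{(Y,Z)}(y,z)=\E[\phi(Y)Z],$$
which is exactly the required identity.

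I expect the main obstacle to be the careful bookkeeping of independence: making explicit that the single hypothesis ``$X$ independent of $\mathcal{G}$'' upgrades to ``$X$ independent of the vector $(Y,Z)$'', so that the joint law genuinely factors and Fubini can separate the $x$-integration from the integration in $(y,z)$. If one prefers to avoid invoking Tonelli for the measurability of $\phi$, an alternative is to first establish the identity for rectangles $g=\one_{A\times B}$, where it reduces to the independence relation $\p[X\in A,\,(Y,Z)\in\,\cdot\,]=\p_X(A)\,\p_{(Y,Z)}(\,\cdot\,)$, and then extend to general positive measurable $g$ by linearity and the monotone convergence theorem; this extension step is where the structure of the argument is really tested.
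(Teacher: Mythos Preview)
Your proof is correct and follows essentially the same route as the paper: reduce to checking $\E[g(X,Y)Z]=\E[\phi(Y)Z]$ for $\mathcal{G}$-measurable $Z$, use independence of $X$ from $\mathcal{G}$ to factor $\p_{(X,Y,Z)}=\p_X\otimes\p_{(Y,Z)}$, and apply Fubini to integrate out $x$ first. You are in fact more explicit than the paper about the measurability of $\phi$ and about why the independence of $X$ from $\mathcal{G}$ upgrades to independence of $X$ from $(Y,Z)$.
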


\begin{proof}

We need to show that for all $\mathcal{G}$-measurable r.v. $Z$ we get that 

\[
\E[g(X,Y)Z]=\E[\phi(Y)Z].
\]

Let us denote by $\p_{(X,Y,Z)}$ the distribution of $(X,Y,Z)$ on $E\times F\times \R_+$. Since $X$ is independent of $\mathcal{G}$, we have $\p_{(X,Y,Z)}=\p_X\otimes \p_{(Y,Z)}$.Thus 

\begin{align*}
\E[g(X,Y)Z]&=\int_{E\times F\times \R_+}g(x,y)zd\p_{(X,Y,Z)}(x,y,z)=\int_{F\times \R_+}z\left(\int_E g(x,y)d\p_X(x)\right) d\p_{(Y,Z)}(y,z)\\
&=\int_{F\times\R_1}z\phi(y)d\p_{(Y,Z)}(y,z)=\E[Z \phi(Y)].
\end{align*}

\end{proof}

\subsection{Important examples}We need to take a look at two important examples.

\subsubsection{Variables with densities}

Let $(X,Y)\in \R^m\times \R^n$. Assume that $(X,Y)$ has density $P(x,y)$, i.e. for all Borel measurable maps $h:\R^m\times\R^n\to \R_+$ we have 

\[
\E[h(X,Y)]=\int_{\R^m\times\R^n}h(x,y)P(x,y)dxdy.
\]

The density of $Y$ is given by 

\[
Q(y)=\int_{\R^m}P(x,y)dx.
\]

We want to compute $\E[h(X)\mid Y]$ for some measurable map $h:\R^m\to\R_+$. Therefore we have 

\begin{align*}
\E[h(X)g(Y)&=\int_{\R^m\times\R^n}h(x)g(y)P(x,y)dxdy=\int_{\R^n}\left(\int_{\R^m} h(x)P(x,y)dx\right) g(y)dy\\
&=\int_{\R^n}\frac{1}{Q(y)}\left(\int_{\R^m}h(x)P(x,y)dx\right)g(y)Q(y)\one_{\{Q(y)>0\}}dy\\
&=\int_{\R^n}\varphi(y)g(y)Q(y)\one_{\{Q(y)>0\}}dy=\E[\varphi(Y)g(y)],
\end{align*}

where 

\[
\varphi(y)=\begin{cases}\frac{1}{Q(y)}\int_{\R^n}h(x)Q(x,y)dx,& Q(y)>0\\ h(0),& Q(y)=0\end{cases}
\]

\begin{prop}

For $Y\in\R^n$, let $\nu(y,dx)$ be the probability measure on $\R^n$ defined by 

\[
\nu(x,dy)=\begin{cases}\frac{1}{Q(y)}P(x,y)& Q(y)>0\\ \delta_0(dx)& Q(y)=0\end{cases}
\]

Then for all measurable maps $h:\R^m\to\R_+$ we get 

\[
\E[h(X)\mid Y]=\int_{\R^m} h(x)\nu(Y, dx),
\]

where the right hand side has to be understood as $\phi(Y)$, where 

\[
\phi(Y)=\int_{\R^m} h(x)\nu(Y,dx).
\]

\end{prop}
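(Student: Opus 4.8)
The plan is to verify that the candidate $\phi(Y)$, with $\phi(y)=\int_{\R^m}h(x)\,\nu(y,dx)$, satisfies the two defining properties of the conditional expectation $\E[h(X)\mid Y]=\E[h(X)\mid\sigma(Y)]$, and then to invoke its uniqueness in $L^1(\Omega,\sigma(Y),\p)$. First I would record that this $\phi$ coincides with the function $\varphi$ produced in the computation immediately preceding the statement: when $Q(y)>0$ one has $\phi(y)=\frac{1}{Q(y)}\int_{\R^m}h(x)P(x,y)\,dx=\varphi(y)$, and when $Q(y)=0$ the kernel $\nu(y,\cdot)$ is $\delta_0$, so $\phi(y)=h(0)=\varphi(y)$. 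Thus nothing genuinely new has to be computed; the proposition merely repackages that computation in kernel form.

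Next I would check measurability. Since $(x,y)\mapsto h(x)P(x,y)$ is jointly measurable and nonnegative, Tonelli's theorem guarantees that $y\mapsto\int_{\R^m}h(x)P(x,y)\,dx$ is measurable, and $Q$ is measurable for the same reason; hence $\phi$ is measurable and $\phi(Y)$ is $\sigma(Y)$-measurable. It then remains to show that for every bounded (equivalently, positive) $\sigma(Y)$-measurable r.v. $Z$ one has $\E[h(X)Z]=\E[\phi(Y)Z]$. By the characterization of $\sigma(Y)$-measurable random variables established earlier in this chapter (any such $Z$ is of the form $g(Y)$ for a measurable $g$), it suffices to verify $\E[h(X)g(Y)]=\E[\phi(Y)g(Y)]$ for all measurable $g:\R^n\to\R_+$, which is exactly the displayed chain of equalities just above the proposition, justified by Fubini--Tonelli applied to the joint density $P$ together with the definition $Q(y)=\int_{\R^m}P(x,y)\,dx$.

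The one point requiring care --- and the only genuine obstacle --- is the behaviour on the set $\{Q=0\}$. Here I would argue that $\p_Y(\{Q=0\})=\int_{\{Q=0\}}Q(y)\,dy=0$, so this set is $\p_Y$-negligible and the arbitrary convention $\nu(y,\cdot)=\delta_0$ there does not affect any integral against $g(Y)$; moreover $\int_{\R^m}h(x)P(x,y)\,dx=0$ for a.e. $y$ with $Q(y)=0$, since $Q(y)=0$ forces $P(\cdot,y)=0$ a.e. Combining the measurability of $\phi$, the reduction to test functions of the form $g(Y)$, and the integral identity, the uniqueness of the conditional expectation then yields $\E[h(X)\mid Y]=\phi(Y)=\int_{\R^m}h(x)\,\nu(Y,dx)$, as claimed.
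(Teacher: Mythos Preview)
Your proposal is correct and follows exactly the approach of the paper: the proposition is a direct repackaging of the Fubini computation displayed immediately before it, showing $\E[h(X)g(Y)]=\E[\varphi(Y)g(Y)]$ for all measurable $g\geq 0$, which characterizes the conditional expectation. You have added welcome rigor on measurability of $\phi$ and on the negligibility of $\{Q=0\}$ that the paper leaves implicit.
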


\begin{rem}

In the literature, one abusively note 

\[
\E[h(X)\mid Y=y]=\int_{\R^m}h(x)\nu(y,dx),
\]

and $\nu(y,dx)$ is called the $conditional$ $distribution$ of $X$ given $Y=y$ (even though in general we have $\p[Y=y]=0$).

\end{rem}

\subsubsection{The Gaussian case}

Let $(\Omega,\F,\p)$ be a probability space. Let $X,Y_1,...,Y_p\in L^2(\Omega,\F,\p)$. We saw that $\E[X\mid Y_1,...,Y_p]$ is the orthogonal projection of $X$ on $L^2(\Omega,\sigma(Y_1,...,Y_p),\p)$. Since this conditional expectation is $\sigma(Y_1,...,Y_p)$-measurable, it is of the form $\varphi(Y_1,...,Y_p)$. In general, $L^2(\Omega,\sigma(Y_1,...,Y_p),\p)$ is of infinite dimension, so it is bad to obtain $\varphi$ explicitly. We also saw that $\varphi(Y_1,...,Y_p)$ is the best approximation of $X$ in the $L^2(\Omega,\sigma(Y_1,...,Y_p),\p)$ sense by an element of $L^2(\Omega,\sigma(Y_1,...,Y_p),\p)$. Moreover, it is well known that the best $L^2$-approximation of $X$ by an affine function of $\one,Y_1,...,Y_p$ is the best orthogonal projection of $X$ on the vector space $\{\one,Y_1,...,Y_p\}$, i.e.

\[
\E[(X-(\alpha_0+\alpha_1 Y_1+\dotsm +\alpha_p Y_p)^2)].
\]

In general, this is different from the orthogonal projection on $L^2(\Omega,\sigma(Y_1,...,Y_p),\p)$, except in the Gaussian case.

\section{Basic facts on Gaussian vectors}

A random vector $Z=(Z_1,...,Z_n)$ is said to be Gaussian, if for all $\lambda_1,...,\lambda_n\in\R$

\[
\lambda_1 Z_1+\dotsm +\lambda_n Z_n
\]
is Gaussian.Moreover, $Z$ is called centered, if $\E[Z_j]=0$ for all $1\leq  j\leq  n$. Let $Z$ be a Gaussian vector. Then for all $\xi\in\R^n$ we get 
\[
\E\left[e^{i\langle\xi, Z\rangle}\right]=\exp\left(-\frac{1}{2}\xi^t C_Z\xi\right),
\]
where $C_Z:=(C_{ij})$ and $C_{ij}=\E[Z_iZ_j]$. If $Cov(Z_i,Z_j)=0$, then $Z_i$ and $Z_j$ are independent. More generally, we have the Gaussian vectors
\[
(\underbrace{X_1,...,X_{i_1}}_{Y_1},\underbrace{X_{i_1+1},...,X_{i_2}}_{Y_2},...,\underbrace{X_{i_{n-1}+1},...,X_{i_n}}_{Y_n}).
\]
$Y_1$ and $Y_2$ are independent if and only if $Cov(X_j,X_n)=0$, where $1\leq  j\leq  i_1$ and $i_1+1\leq  k\leq  i_2$. If $Z_1,...,Z_n$ are independent Gaussian r.v.'s, we have that 
\[
Z=(Z_1,...,Z_n)
\]
is a Gaussian vector. If $Z$ is a Gaussian vector and $A\in \mathcal{M}(m\times n,\R)$ , we get that $AZ$ is again a Gaussian vector.

\begin{thm}
\label{thm1}
Let $(\Omega,\F,\p)$ be a probability space. Let $X\in L^1(\Omega,\F,\p)$ and $Y_1,...,Y_p\in L^1(\Omega,\F,\p)$ and let $(X,Y_1,...,Y_p)$ be a centered Gaussian vector. Then 

\[
\E[X\mid Y_1,...,Y_p]
\]

is the orthogonal projection of $X$ on the vector space 

\[
span\{Y_1,...,Y_p\}.
\]

Consequently, there exists real numbers $\lambda_1,...,\lambda_p$ such that 

\[
\E[X\mid Y_1,...,Y_p]=\lambda_1 Y_1+\dotsm +\lambda_p Y_p.
\]

\end{thm}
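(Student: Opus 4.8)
The plan is to identify the $L^2$-orthogonal projection of $X$ onto the finite-dimensional subspace $V=\mathrm{span}\{Y_1,\ldots,Y_p\}$ with the conditional expectation $\E[X\mid Y_1,\ldots,Y_p]$, using Gaussianity to upgrade orthogonality to independence. First I would observe that, although $X$ and the $Y_j$ are only assumed to lie in $L^1$, every component of a Gaussian vector has finite moments of all orders, so in fact $X,Y_1,\ldots,Y_p\in L^2(\Omega,\F,\p)$ and the Hilbert space machinery applies. Since $V$ is finite-dimensional it is a closed subspace of $L^2(\Omega,\F,\p)$, so the orthogonal projection $\hat X$ of $X$ onto $V$ exists and can be written as $\hat X=\sum_{j=1}^p\lambda_j Y_j$ for suitable $\lambda_j\in\R$, with the defining orthogonality relations $\E[(X-\hat X)Y_j]=0$ for every $j$.

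Next I would set $W=X-\hat X$ and argue that $W$ is independent of $\sigma(Y_1,\ldots,Y_p)$. The point is that the vector $(W,Y_1,\ldots,Y_p)$ is the image of the Gaussian vector $(X,Y_1,\ldots,Y_p)$ under a linear map, hence is itself Gaussian. All variables being centered, the orthogonality relations read $\mathrm{Cov}(W,Y_j)=\E[WY_j]=0$ for each $j$, so within this Gaussian vector the block $W$ has vanishing covariance with the block $(Y_1,\ldots,Y_p)$. By the characterization of independence for Gaussian vectors (vanishing cross-covariance implies independence of the blocks), $W$ is independent of $(Y_1,\ldots,Y_p)$, and therefore independent of the sub $\sigma$-Algebra $\mathcal{G}=\sigma(Y_1,\ldots,Y_p)$.

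Finally I would combine the independence with the elementary properties of conditional expectation. Since $W$ is independent of $\mathcal{G}$ and centered, the proposition on independence gives $\E[W\mid\mathcal{G}]=\E[W]=0$. On the other hand $\hat X=\sum_j\lambda_j Y_j$ is $\mathcal{G}$-measurable, so $\E[\hat X\mid\mathcal{G}]=\hat X$. By linearity of the conditional expectation,
\[
\E[X\mid\mathcal{G}]=\E[\hat X\mid\mathcal{G}]+\E[W\mid\mathcal{G}]=\hat X=\sum_{j=1}^p\lambda_j Y_j,
\]
which is exactly the asserted identity, and it simultaneously exhibits $\E[X\mid Y_1,\ldots,Y_p]$ as the orthogonal projection of $X$ onto $\mathrm{span}\{Y_1,\ldots,Y_p\}$.

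I expect the main obstacle to be the step from ``$W$ is uncorrelated with each individual $Y_j$'' to ``$W$ is independent of the whole family $(Y_1,\ldots,Y_p)$, equivalently of $\mathcal{G}$''. Pairwise uncorrelatedness alone never suffices for independence; what makes it work here is that $W$ together with all the $Y_j$ forms a single Gaussian vector, so the vanishing of the cross-covariances forces genuine joint independence of the residual from the conditioning variables. It is worth emphasizing that this is precisely where the Gaussian hypothesis is indispensable: for a general $(X,Y_1,\ldots,Y_p)$ the $L^2$-projection onto the span differs from the conditional expectation, the latter being the projection onto the much larger space $L^2(\Omega,\mathcal{G},\p)$.
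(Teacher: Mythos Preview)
Your proposal is correct and follows essentially the same route as the paper: project $X$ onto $\mathrm{span}\{Y_1,\ldots,Y_p\}$, use that the residual together with the $Y_j$ is still Gaussian so zero covariance gives independence, and then conclude by linearity of the conditional expectation. The paper's proof is terser, but the structure and all the key steps coincide with yours.
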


\begin{rem}
\label{rem4}
Moreover, for a measurable map $h:\R\to\R_+$ we get 

\[
\E[h(X)\mid Y_1,...,Y_p]=\int_\R h(x)Q_{\sum_{j=1}^n\lambda_j Y_j,\sigma^2}(x)dx,
\]

where $\sigma^2=\E\left[\left( X-\sum_{j=1}^n\lambda_j Y_j\right)\right]$ and 

\[
Q_{n,\sigma^2}(x)=\frac{1}{\sigma\sqrt{2\pi}}\exp\left(-\frac{(x-m)^2}{2\sigma^2}\right).
\]

\end{rem}

\begin{proof}[Proof of Remark \ref{rem4}]
Exercise.\footnote{This is done similarly to the proof of theorem \ref{thm1}}

\end{proof}

\begin{proof}[Proof of Theorem \ref{thm1}]

Let $\tilde X=\lambda_1 Y_1+\dotsm +\lambda_pY_p$ be the orthogonal projection of $X$ onto $span\{Y_1,...,Y_p\}$, meaning that for all $1\leq  j\leq  p$

\[
\E[(X-\tilde X)Y_j]=0.
\]

Note that this condition gives us explicitly the $\lambda_j's$. We obtain therefore that $(Y_1,...,Y_p,(X-\tilde X))$
is a Gaussian vector. Moreover, we get $\E[(X-\tilde X)Y_j]=Cov(X-\tilde X,Y_j)=0$ and thus $X-\tilde X$ is independent of $Y_1,...,Y_p$. Hence 

\[
\E[X\mid Y_1,...,Y_p]=\E[X-\tilde X+\tilde X\mid Y_1,...,Y_p]=\E[X-\tilde X\mid Y_1,...,Y_p]+\E[\tilde X\mid Y_1,...,Y_p]=\E[X-\tilde X]+\tilde X=\tilde X.
\]

\end{proof}

\section{Transition Kernel and Conditional distribution}

\begin{defn}[Transition Kernel]

Let $(E,\mathcal{E})$ and $(F,\F)$ be two measurable spaces. A transition kernel from $E$ to $F$ is a map

\[
\nu:E\times \F\to [0,1],
\]

such that

\begin{enumerate}[$(i)$]
\item{$\nu(x,\cdot)$ is a probability measure on $\F$ for all $x\in E$.}
\item{$x\mapsto \nu(x,A)$ is $\mathcal{E}$-measurable for all $A\in\F$.}
\end{enumerate}
\end{defn}

\begin{ex}

Let $\rho$ be a $\sigma$-finite measure on $\F$ and let $f:E\times F\to\R_+$ be a map such that 

\[
\int_F f(x,y)d\rho(y)=1.
\]

Then 

\[
\nu(x,A)=\int_A f(x,y)d\rho(y)
\]

is a transition kernel. An example for $f$ would be 

\[
f(x,y)=\frac{1}{\sigma\sqrt{2\pi}}\exp\left(-\frac{(x-y)^2}{2\sigma^2}\right).
\]

\end{ex}

\begin{prop}

The following two hold.

\begin{enumerate}[$(i)$]
\item{Let $h$ be a nonnegative (or bounded) Borel function on a measurable space $(F,\F)$. Then 

\[
\varphi(x)=\int_F h(y)\nu(x,dy)
\]

is a nonnegative (or bounded) measurable function on a measurable space $(E,\mathcal{E})$.

}
\item{If $\rho$ is a probability measure on a measurable space $(E,\mathcal{E})$, then 

\[
\mu(A)=\int_E\nu(x,A)d\rho(x),
\]

is a probability measure on a measurable space $(F,\F)$ for all $A\in \F$.

}

\end{enumerate}

\end{prop}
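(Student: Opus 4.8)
The plan is to prove the two statements separately, handling (i) by the standard functional monotone-class argument (the ``standard machine'') and (ii) by directly checking the axioms of a probability measure.

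For part (i), I would first establish measurability on indicators: if $h=\one_A$ for some $A\in\F$, then $\varphi(x)=\int_F \one_A(y)\nu(x,dy)=\nu(x,A)$, which is $\mathcal{E}$-measurable by property (ii) in the definition of a transition kernel. By linearity of the integral the claim then extends to nonnegative simple functions $h=\sum_{k=1}^m a_k\one_{A_k}$, since $\varphi(x)=\sum_{k=1}^m a_k \nu(x,A_k)$ is a finite sum of $\mathcal{E}$-measurable functions. Next, for a general nonnegative Borel $h$, I would pick an increasing sequence of nonnegative simple functions $h_n\uparrow h$. For each fixed $x\in E$ the set function $\nu(x,\cdot)$ is a measure, so the monotone convergence theorem gives $\varphi_n(x):=\int_F h_n(y)\nu(x,dy)\uparrow \varphi(x)$. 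Each $\varphi_n$ is $\mathcal{E}$-measurable by the simple-function case, and a pointwise limit of measurable functions is measurable, hence $\varphi$ is $\mathcal{E}$-measurable; nonnegativity is immediate. For bounded $h$ I would write $h=h^+-h^-$, apply the nonnegative case to each piece (the integrals are finite since $h$ is bounded), and conclude that the difference $\varphi$ is measurable.

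For part (ii), I would first note that the integrand $x\mapsto \nu(x,A)$ is $\mathcal{E}$-measurable (again property (ii)) and takes values in $[0,1]$, so $\mu(A)$ is a well-defined number in $[0,1]$. Normalization follows from $\mu(F)=\int_E \nu(x,F)\,d\rho(x)=\int_E 1\,d\rho(x)=\rho(E)=1$, using that $\nu(x,\cdot)$ is a probability measure and $\rho$ is a probability measure. For countable additivity, let $(A_n)_{n\geq1}\subset\F$ be pairwise disjoint. For each fixed $x$ the measure $\nu(x,\cdot)$ is countably additive, so $\nu(x,\bigsqcup_n A_n)=\sum_n\nu(x,A_n)$. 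Since all terms are nonnegative, the monotone convergence theorem (applied to the partial sums over the measure $\rho$) justifies interchanging the sum and the integral, yielding $\mu(\bigsqcup_n A_n)=\sum_n\mu(A_n)$.

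The main obstacle is essentially bookkeeping rather than conceptual: the delicate point is justifying that measurability survives the passage to monotone limits in (i), which is precisely where the simple-function case must be combined with the monotone convergence theorem applied fiberwise in $x$. This is also the step that genuinely uses both defining properties of the kernel in tandem, since one needs $\nu(x,\cdot)$ to be a measure (to run monotone convergence) and $x\mapsto\nu(x,A)$ to be measurable (to start the induction).
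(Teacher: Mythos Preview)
Your argument is correct. The paper in fact states this proposition without proof, so there is nothing to compare against; your write-up supplies exactly the standard proof one would expect here. The ``standard machine'' for (i) (indicators $\to$ simple $\to$ monotone limits, with the bounded case handled via $h=h^+-h^-$) and the direct verification of the probability-measure axioms for (ii) via countable additivity of $\nu(x,\cdot)$ and monotone convergence in $\rho$ are both sound and complete.
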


\begin{defn}[Conditional Distribution]

Let $X$ and $Y$ be two r.v.'s with values in a measurable space $(E,\mathcal{E})$. The conditional distribution of $Y$ given $X$ is any transition kernel $\nu$ from $E$ to $F$ such that for all nonnegative (or bounded), measurable maps $h$ on a measurable space $(F,\F)$ one has 

\[
\E[h(Y)\mid X]=\int_F h(y)\nu(X,dy)\hspace{0.5cm}a.s.,
\]

where the last equality should be understood as a map $\phi(X)$ given by 

\[
\phi:x\mapsto \int_Fh(y)\nu(x,dy).
\]

\end{defn}

\begin{rem}

If $\nu$ is the conditional distribution of $Y$ given $X$, we get for all $A\in F$

\[
\p[Y\in A\mid X]=\nu(X,A)\hspace{0.5cm}a.s.,
\]

where we have set $h=\one_A$ in the definition. If $\nu'$ is another such conditional distribution, we get 

\[
\nu(X,A)=\nu'(X,A)\hspace{0.5cm}a.s.
\]

This implies that 

\[
\nu(x,A)=\nu'(x,A)d\p_X(x)\hspace{0.5cm}a.s.
\]

\end{rem}

\begin{thm}

Assume that $(E,\mathcal{E})$ and $(F,\F)$ are two complete, separable, metric, measurable spaces endowed with their Borel $\sigma$-Algebras. Then the conditional distribution of $Y$ given $X$, exists and is a.s. unique.

\end{thm}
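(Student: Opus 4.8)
The plan is to dispatch uniqueness first, since the Remark preceding the statement already supplies the pointwise-in-$A$ agreement, and then to construct the kernel by reducing the whole problem to the real line.

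\textbf{Uniqueness.} Suppose $\nu$ and $\nu'$ are two conditional distributions of $Y$ given $X$. By the Remark above, for each fixed $A\in\F$ one has $\nu(x,A)=\nu'(x,A)$ for $\p_X$-a.e.\ $x$. Since $(F,\F)$ is separable metric, its Borel $\sigma$-Algebra is generated by a countable class $\mathcal{C}=\{A_n\}_{n\geq1}$ stable under finite intersection (for instance the finite intersections of open balls of rational radius centred at points of a countable dense set). Taking the countable union of the associated $\p_X$-null sets produces one set of full $\p_X$-measure on which $\nu(x,A_n)=\nu'(x,A_n)$ holds simultaneously for every $n$. On that set the probability measures $\nu(x,\cdot)$ and $\nu'(x,\cdot)$ agree on the $\pi$-system $\mathcal{C}$ and hence on $\sigma(\mathcal{C})=\F$ by the monotone class (uniqueness of measures) theorem; thus $\nu=\nu'$ a.s.

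\textbf{Reduction to $F=\R$.} For existence I would use that a complete, separable metric space with its Borel $\sigma$-Algebra is Borel isomorphic to a Borel subset of $\R$ (Kuratowski's theorem): this is exactly where the Polish hypothesis on the target is used, and note that only the structure of $F$, not of $E$, is really needed. Let $\varphi\colon F\to B\subset\R$ be a bimeasurable bijection and put $Y'=\varphi(Y)$, a real valued r.v.\ with $\p[Y'\in B]=1$. It then suffices to construct the conditional distribution $\nu'$ of $Y'$ given $X$ on $\R$, after which $\nu(x,A):=\nu'(x,\varphi(A))$ is the desired kernel, provided one checks $\nu'(x,B)=1$ for $\p_X$-a.e.\ $x$ so that $\nu(x,\cdot)$ really lives on $F$.

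\textbf{Construction on the line.} For each rational $q$ I would fix a version $g_q(X)$ of $\E[\one_{\{Y'\le q\}}\mid X]$, which is $\sigma(X)$-measurable and hence a function of $X$. Each of the relations---monotonicity in $q$, the limits $g_q\to0$ as $q\to-\infty$ and $g_q\to1$ as $q\to+\infty$, and right continuity $g_q\downarrow g_{q_0}$ as $q\downarrow q_0$---holds almost surely, by positivity and the monotone convergence property of the conditional expectation. As only countably many relations are involved, they hold together on a single set $\Omega_0$ with $\p[\Omega_0]=1$. On $\Omega_0$ the map $t\mapsto F_x(t):=\inf_{q>t}g_q(x)$ is a genuine distribution function, determining a probability measure $\nu'(x,\cdot)$ by Lebesgue--Stieltjes; off $\Omega_0$ I set $\nu'(x,\cdot)=\delta_0$. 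Measurability of $x\mapsto\nu'(x,A)$ is obtained first for $A=(-\infty,q]$ and then propagated by a monotone class argument, so $\nu'$ is a transition kernel.

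\textbf{The defining identity and the crux.} It remains to verify $\E[h(Y')\mid X]=\int_\R h\,d\nu'(X,\cdot)$; by construction this holds for $h=\one_{(-\infty,q]}$, and linearity together with the monotone class theorem (or the standard machine of indicators, simple functions and monotone limits) extends it to every nonnegative measurable $h$, after which transporting back through $\varphi$ yields the statement for $Y$. The main obstacle throughout is forcing the \emph{uncountable} family of constraints---countable additivity of $\nu(x,\cdot)$ for every disjoint sequence, for every $x$ off a null set---to hold simultaneously; the distribution-function device is precisely what collapses this into the countable family indexed by $\Q$ plus the regularity of measures on $\R$, and the Borel isomorphism is what guarantees we may always reduce to that situation.
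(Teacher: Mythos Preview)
The paper omits the proof entirely (``No proof here''), so there is nothing to compare against. Your argument is correct and is the classical one: uniqueness via a countable generating $\pi$-system and the measure-uniqueness theorem; existence by Borel isomorphism to $\R$ (Kuratowski), the distribution-function trick over $\Q$ to force the uncountable consistency constraints into a single null set, and a monotone class extension. The one point you flag but do not spell out---that $\nu'(x,B)=1$ for $\p_X$-a.e.\ $x$---follows immediately from the defining identity with $h=\one_B$ since $\p[Y'\in B]=1$, after which you redefine $\nu'(x,\cdot)$ on the exceptional set to be any fixed law on $B$; this is routine but worth one line.
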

\begin{proof}
No proof here.
\end{proof}

\chapter{Martingales}

\section{Discrete time Martingales}

Recall that the strong law of large numbers tells us, if $(X_n)_{n\geq 1}$ is iid, $\E[\vert X_i\vert]<\infty$ and $\E[X_i]=\mu$, then 

\[
\frac{1}{n}S_n\xrightarrow{n\to\infty\atop a.s.}\mu,
\]

with $S_n=\sum_{i=1}^nX_i$. We saw that the 0-1 law of Kolmogorov implied that in this case the limit, if it exists, is constant. It is of course of interest to have a framework in which the sequence of r.v.'s converges a.s. to another r.v. This can be achieved in the framework of martingales. In this chapter, we shall consider a probability space $(\Omega,\F,\p)$ as well as an increasing family $(\F_n)_{n\geq 0}$ of sub $\sigma$-Algebras of $\F$, i.e. $\F_n\subset\F_{n+1}\subset \F$. Such a sequence is called a \emph{filtration}. The space $(\Omega,\F,(\F_n)_{n\geq 0},\p)$ is called a \emph{filtered probability} space. We shall also consider a sequence $(X_n)_{n\geq 0}$ of r.v.'s. Such a sequence is generally called a \emph{stochastic process} ($n$ is thought of as time). If for every $n\geq0$, $X_n$ is $\F_n$-measurable, we say that $(X_n)_{n\geq 0}$ is \emph{adapted} (to the filtration $(\F_n)_{n\geq0})$. One can think of $\F_n$ as the information at time $n$ and the filtration $(\F_n)_{n\geq0}$ as the flow of information in time.

\begin{rem}

Let us start with a stochastic process $(X_n)_{n\geq 0}$. We define 

\[
\F_n=\sigma(X_0,...,X_n)=\sigma(X_k\mid0\leq  k\leq  n).
\]

By construction, $\mathcal{F}_n\subset\F_{n+1}$ and $(X_n)_{n\geq0}$ is $(\F_n)_{n\geq0}$-adapted. In this case $(\F_n)_{n\geq 0}$ is called the \emph{natural filtration} of $(X_n)_{n\geq 0}$.

\end{rem}

\begin{rem}

In general, if $(\F_n)_{n\geq0}$ is a filtration, one denotes by 

\[
\F_\infty=\bigvee_{n\geq0}\F_n=\sigma\left(\bigcup_{n\geq 0}\F_n\right).
\]

the tail $\sigma$-Algebra.

\end{rem}

\begin{defn}[Martingale]

Let $(\Omega,\F,(\F_n)_{n\geq0},\p)$ be a filtered probability space. A stochastic process $(X_n)_{n\geq 0}$ is called a martingale, if 

\begin{enumerate}[$(i)$]
\item{$\E[\vert X_n\vert]<\infty$ for all $n\geq 0$.}
\item{$X_n$ is $\F_n$-measurable (adapted).}
\item{$\E[X_n\mid \F_m]=X_m$ a.s. for all $m\leq  n$.}
\end{enumerate}

The last point is equivalent to say 

\[
\E[X_{n+1}\mid\F_n]=X_n\hspace{0.5cm}a.s.,
\]

which can be obtained by using the tower property and induction.

\end{defn}

\begin{ex}
\label{ex3}
Let $(X_n)_{n\geq0}$ be a sequence independent r.v.'s such that $\E[X_n]=0$ for all $n\geq 0$ (i.e. $X_n\in L^1(\Omega,\F,(\F_n)_{n\geq 0},\p)$). Moreover, let $\F_n=\sigma(X_1,...,X_n)$ and $S_n=\sum_{i=1}^nX_i$ with $\F_0=\{\varnothing,\Omega\}$ and $S_0=0$. Then $(S_n)_{n\geq 0}$ is an $\F_n$-martingale.

\end{ex}
\begin{proof}[Proof of Example \ref{ex3}]

We need to check the assumptions for a martingale. 

\begin{enumerate}[$(i)$]
\item{The first point is clear by assumption on $X_1,...,X_n$ and linearity of the expectation. 

\[
\E[\vert S_n\vert]\leq  \sum_{i=1}\E[\vert X_i\vert]<\infty.
\]
}

\item{It is clear that $S_n$ is $\F_n$-measurable, since it is a function $X_1,...,X_n$, which are $\F_n$-measurable.

}

\item{Observe that 

\begin{multline*}
\E[S_{n+1}\mid \F_n]=\E[\underbrace{X_1+...+X_n}_{S_n}+X_{n+1}\mid\F_n]\\=\underbrace{\E[S_n\mid \F_n]}_{S_n}+\E[X_{n+1}\mid\F_n]=S_n+\E[X_{n+1}\mid\F_n]=S_n.
\end{multline*}

}

\end{enumerate}

Therefore, $(S_n)_{n\geq0}$ is a martingale.

\end{proof}

\begin{ex}
\label{ex1}
Let $(\Omega,\F,(\F_n)_{n\geq 0},\p)$ be a filtered probability space and let $Y\in L^1(\Omega,\F,(\F_n)_{n\geq0},\p)$. Define a sequence $(X_n)_{n\geq0}$ by

\[
X_n:=\E[Y\mid\F_n].
\]

Then $(X_n)_{n\geq 0}$ is an $\F_n$-martingale.

\end{ex}

\begin{proof}[Proof of Example \ref{ex1}]

Again, we show the assumptions for a martingale.

\begin{enumerate}[$(i)$]
\item{Since $\vert X_n\vert\leq \E[\vert Y\vert\mid \F_n]$, we get 

\[
\E[\vert X_n\vert]\leq  \E[\vert Y\vert]<\infty.
\]
}

\item{$\E[Y\mid \F_n]$ is $\F_n$-measurable by definition.

}
\item{With the tower property, we get

\[
\E[X_{n+1}\mid \F_n]=\E[\underbrace{\E[Y\mid\F_{n+1}]}_{X_{n+1}}\mid\F_n]=\E[Y\mid\F_n]=X_n.
\]

}

\end{enumerate}

Therefore, $(X_n)_{n\geq0}$ is a martingale.

\end{proof}

\begin{defn}[Regularity of Martingales]

Let $(\Omega,\F,(\F_n)_{n\geq0},\p)$ be a filtered probability space. A martingale $(X_n)_{n\geq 0}$ is said to be regular, if there exists a r.v. $Y\in L^1(\Omega,\F,(\F_n)_{n\geq0},\p)$ such that 

\[
X_n=\E[Y\mid\F_n]
\]

for all $n\geq 0$.

\end{defn}

\begin{prop}

Let $(\Omega,\F,(\F_n)_{n\geq0},\p)$ be a filtered probability space. Let $(X_n)_{n\geq 0}$ be a martingale. Then the map

\[
n\mapsto \E[X_n]
\]

is constant, i.e. for all $n\geq 0$ $$\E[X_n]=\E[X_0].$$

\end{prop}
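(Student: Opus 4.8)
The plan is to exploit the defining martingale property (iii) directly, together with the elementary fact that taking a conditional expectation and then an expectation recovers the full expectation. I would present the shortest route, noting that an inductive variant is equally available.

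First I would apply property (iii) with $m=0$ and arbitrary $n\geq 0$, which gives $\E[X_n\mid\F_0]=X_0$ a.s. Taking expectations on both sides and invoking the corollary $\E[\E[Y\mid\mathcal{G}]]=\E[Y]$ (with $Y=X_n$ and $\mathcal{G}=\F_0$, whose statement and proof appear earlier via equation (4) with $Z=\one_\Omega$), the left-hand side collapses to $\E[X_n]$ while the right-hand side is $\E[X_0]$. This immediately yields $\E[X_n]=\E[X_0]$ for every $n\geq 0$, which is the claim.

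Alternatively, I would argue by induction on $n$ using the equivalent one-step form $\E[X_{n+1}\mid\F_n]=X_n$. Taking expectations and applying the same averaging property gives $\E[X_{n+1}]=\E[X_n]$; with the trivial base case this propagates constancy of $n\mapsto\E[X_n]$ along the whole sequence. Both routes are essentially one line once the right property is selected.

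The only point requiring a word (and it is not truly an obstacle) is integrability: property (i), $\E[\vert X_n\vert]<\infty$, ensures that every expectation and conditional expectation written above is well defined, so that the averaging property applies. Once this is recorded, the statement is an immediate consequence of the tower/averaging behavior of conditional expectation and needs no further computation.
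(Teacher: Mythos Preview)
Your proof is correct and follows essentially the same one-line argument as the paper: apply the martingale property with $m=0$ to get $\E[X_n\mid\F_0]=X_0$ a.s., then take expectations and use $\E[\E[X_n\mid\F_0]]=\E[X_n]$. The inductive variant you mention is a harmless alternative, but the direct route is exactly what the paper does.
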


\begin{proof}

By the definition of a martingale, we get 
\[
\E[X_n]=\E[\E[X_n\mid \F_0]]=X_0.
\]

\end{proof}

\begin{defn}[Discrete Stopping time]

Let $(\Omega,\F,(\F_n)_{n\geq0},\p)$ be a filtered probability space. A r.v. $T:\Omega\to\bar \N=\N\cup\{\infty\}$ is called a stopping time if for every $n\geq 0$

\[
\{T\leq  n\}\in\F_n.
\]

\end{defn}

\begin{rem}

Another, more general definition is used for continuous stochastic processes and may be given in terms of a filtration. Let $(I,\leq )$ be an ordered index set (often $I=[0,\infty)$) or a compact subset thereof, thought of as the set of possible $times$), and let $(\Omega,\F,(\F_n)_{n\geq 0},\p)$ be a filtered probability space. Then a r.v. $T:\Omega\to I$ is called a stopping time if $\{T\leq  t\}\in\F_t$ for all $t\in I$. Often, to avoid confusion, we call it a $\F_t$-stopping time and explicitly specify the filtration. Speaking concretely, for $T$ to be a stopping time, it should be possible to decide whether or not $\{T\leq  t\}$ has occurred on the basis of the knowledge of $\F_t$, i.e. $\{T\leq  t\}$ is $\F_t$-measurable.

\end{rem}

%\begin{rem}
%
%For instance if $X_0=100$, we get 
%
%\[
%T=\inf\{n\geq0\mid X_n\geq 130\}.
%\]
%
%Indeed
%
%\[
%\{T\leq  n\}=\bigcup_{k=0}^n\underbrace{\{X_k\geq 130\}}_{\in\F_k\subset\F_n}\in\F_n.
%\]
%
%
%\end{rem}

\begin{prop}

Let $(\Omega,\F,(\F_n)_{n\geq 0},\p)$ be a filtered probability space. Then 

\begin{enumerate}[$(i)$]
\item{Constant times are stopping times.}
\item{The map 

\[
T:(\Omega,\F)\to(\bar\N,\mathcal{P}(\bar\N))
\]

is a stopping time if and only if $\{T\leq  n\}\in\F_n$ for all $n\geq 0$.

}

\item{If $S$ and $T$ are stopping times, then $S\land T$, $S\lor T$ and $S+T$ are also stopping times.
}

\item{Let $(T_n)_{n\geq 0}$ be a sequence of stopping times. Then $\sup_n T$, $\inf_n T_n$, $\liminf_n T_n$ and $\limsup_n T_n$ are also stopping times.

}
\item{Let $(X_n)_{n\geq 0}$ be a sequence of adapted r.v.'s with values in some measure space $(E,\mathcal{E})$ and let $H\in\mathcal{E}$. Then, with the convention that $\inf\varnothing=\infty$,

\[
D_H=\inf\{ n\in\N\mid X_n\in H\}
\]

is a stopping time.

}

\end{enumerate}

\end{prop}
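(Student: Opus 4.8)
The plan is to verify, in each of the five cases, the defining membership $\{T\le n\}\in\F_n$ for every $n\ge 0$, using repeatedly the single structural input available, namely the nesting $\F_k\subset\F_n$ whenever $k\le n$. For (i), if $T\equiv c$ with $c\in\bar\N$, then $\{T\le n\}$ equals $\Omega$ when $n\ge c$ and $\varnothing$ when $n<c$; both lie in $\F_n$. For (ii), the content beyond the bare definition is the equivalence of the condition $\{T\le n\}\in\F_n$ (all $n$) with $\{T=n\}\in\F_n$ (all $n$). I would prove both directions from the identities $\{T=n\}=\{T\le n\}\setminus\{T\le n-1\}$ and $\{T\le n\}=\bigcup_{k=0}^n\{T=k\}$, the key point being that $\{T\le n-1\}\in\F_{n-1}\subset\F_n$, so the difference stays in $\F_n$; conversely each $\{T=k\}\in\F_k\subset\F_n$ for $k\le n$, so the finite union is in $\F_n$.

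For (iii) I would write $\{S\land T\le n\}=\{S\le n\}\cup\{T\le n\}$ and $\{S\lor T\le n\}=\{S\le n\}\cap\{T\le n\}$, both in $\F_n$ immediately. The sum is the one case needing the $\{T=n\}$-characterization from (ii): decompose
\[
\{S+T=n\}=\bigcup_{k=0}^n\bigl(\{S=k\}\cap\{T=n-k\}\bigr),
\]
and observe $\{S=k\}\in\F_k\subset\F_n$ and $\{T=n-k\}\in\F_{n-k}\subset\F_n$, so each intersection and hence the finite union lies in $\F_n$; applying (ii) in the reverse direction then gives that $S+T$ is a stopping time. For (iv), the suprema and infima are handled directly: $\{\sup_n T_n\le m\}=\bigcap_n\{T_n\le m\}\in\F_m$, while $\{\inf_n T_n\le m\}=\bigcup_n\{T_n\le m\}\in\F_m$ (here I would note that the infimum over $n$ is attained pointwise, since a nonempty subset of $\bar\N$ has a least element by well-ordering, which legitimizes the union). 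The $\liminf$ and $\limsup$ then reduce to iterating these two facts via $\liminf_n T_n=\sup_m\inf_{n\ge m}T_n$ and $\limsup_n T_n=\inf_m\sup_{n\ge m}T_n$: each inner $\inf_{n\ge m}T_n$ (resp.\ $\sup_{n\ge m}T_n$) is a stopping time by the first half, and taking the outer $\sup_m$ (resp.\ $\inf_m$) of stopping times is again a stopping time.

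Finally, for (v), adaptedness of $(X_n)_{n\ge 0}$ means $X_k^{-1}(H)\in\F_k$ for $H\in\mathcal{E}$, so with the convention $\inf\varnothing=\infty$ I would compute
\[
\{D_H\le n\}=\bigcup_{k=0}^n\{X_k\in H\}=\bigcup_{k=0}^n X_k^{-1}(H),
\]
and each term satisfies $X_k^{-1}(H)\in\F_k\subset\F_n$, so the union is in $\F_n$. Overall the proof is essentially a sequence of elementary set manipulations, and I do not expect any serious obstacle; the only two places that require a genuine (if small) idea are the sum $S+T$, which must be routed through the $\{T=n\}$-characterization of (ii) rather than the $\{T\le n\}$ form, and the $\liminf/\limsup$ in (iv), which must be reduced to the monotone $\sup$/$\inf$ operations instead of attacked directly.
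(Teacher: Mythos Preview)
Your proposal is correct and follows essentially the same route as the paper's proof: the same set identities for (ii), the same union/intersection decompositions for $S\land T$, $S\lor T$, the same $\{S+T=n\}=\bigcup_{k=0}^n\{S=k\}\cap\{T=n-k\}$ for the sum, the same $\bigcap$/$\bigcup$ arguments for $\sup$/$\inf$ with reduction of $\limsup$/$\liminf$ to these, and the same union for $\{D_H\le n\}$. Your extra remark that the infimum in $\bar\N$ is attained (justifying $\{\inf_n T_n\le m\}=\bigcup_n\{T_n\le m\}$) is a point the paper glosses over but is indeed worth noting.
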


\begin{proof} We need to show all points.

\begin{enumerate}[$(i)$]

\item{This is clear.}
\item{Note that 

\[
\{T=n\}=\{T\leq  n\}\setminus \{T\leq  n-1\}\in\F_n
\]

and conversely, 

\[
\{T\leq  n\}=\bigcup_{k=0}^n\{ T=k\}\in\F_n. 
\]

}

\item{ Just observe that 

\[
\{S\lor T\leq  n\}=\{S\leq  n\}\cap\{T\leq  n\}\in\F_n
\]
\[
\{S\land T\leq  n\}=\{S\leq  n\}\cup\{T\leq  n\}\in\F_n
\]
\[
\{S+T=n\}=\bigcup_{k=0}^n\underbrace{\{S=k\}}_{\in\F_k\subset\F_n}\cap\underbrace{\{T=n-k\}}_{\in\F_{n-k}\subset\F_n}\in\F_n
\]

}
\item{First observe

\[
\{\sup_{k}T_k\leq  n\}=\bigcap_k\{T_k\leq  n\}\in\F_n
\hspace{0.5cm}\text{and}\hspace{0.5cm}
\{\inf_k T_k\leq  n\}=\bigcup_k\{T_k\leq  n\}\in\F_n.
\]

Now we can rewrite 

\[
\limsup_k T_k=\inf_k\sup_{m\geq k}T_m
\hspace{0.5cm}\text{and}\hspace{0.5cm}
\liminf_k T_k=\sup_k\inf_{m\leq  k}T_m
\]

and use the relation above.

}
\item{For all $n\in\N$, we get 

\[
\{D_H\leq  n\}=\bigcup_{k=0}^n\underbrace{\{X_k\in H\}}_{\in\F_k\subset\F_n}\in\F_n
\]

}

\end{enumerate}
\end{proof}

\begin{rem}

We say that a stopping time $T$ is bounded if there exists $C>0$ such that for all $\omega\in\Omega$

\[
T(\omega)\leq  C
\]

Without loss of generality, we can always assume that $C\in\N$. In this case we shall denote by $X_T$, the r.v. given by 

\[
X_T(\omega)=X_{T(\omega)}(\omega)=\sum_{n=0}^\infty X_n(\omega)\one_{\{T(\omega)=n\}}.
\]

Note that the sum on the right hand side perfectly defined since $T(\omega)$ is bounded.

\end{rem}

\begin{thm}

Let $(\Omega,\F,(\F_n)_{n\geq 0},\p)$ be a filtered probability space. Let $T$ be a bounded stopping time and let $(X_n)_{n\geq 0}$ be a martingale. Then we have 

\[
\E[X_T]=\E[X_0].
\]

\end{thm}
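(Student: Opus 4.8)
The plan is to exploit the boundedness of $T$ to write $X_T$ as a telescoping sum and then to kill each term using the martingale property. Since $T$ is bounded, fix $C\in\N$ with $T(\omega)\leq C$ for all $\omega\in\Omega$. Then $X_T=\sum_{n=0}^C X_n\one_{\{T=n\}}$ is a finite sum of $L^1$ r.v.'s, so it is itself integrable and $\E[X_T]$ is well defined; the uniform bound $C$ is exactly what sidesteps any convergence concerns.

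First I would establish the pointwise identity
$$X_T=X_0+\sum_{k=1}^C (X_k-X_{k-1})\one_{\{T\geq k\}}.$$
Indeed, on the event $\{T=m\}$ the indicator $\one_{\{T\geq k\}}$ equals $1$ exactly for $1\leq k\leq m$, so the right-hand side telescopes to $X_0+(X_m-X_0)=X_m=X_T$. Taking expectations and using linearity gives
$$\E[X_T]=\E[X_0]+\sum_{k=1}^C \E\big[(X_k-X_{k-1})\one_{\{T\geq k\}}\big].$$

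The heart of the argument is to show that each summand vanishes. The key observation is that $\{T\geq k\}=\{T\leq k-1\}^C$, and since $T$ is a stopping time we have $\{T\leq k-1\}\in\F_{k-1}$, hence $\one_{\{T\geq k\}}$ is a bounded $\F_{k-1}$-measurable r.v. I would then condition on $\F_{k-1}$: using the corollary that $\E[\E[\,\cdot\mid\F_{k-1}]]=\E[\,\cdot\,]$ together with the theorem allowing a $\F_{k-1}$-measurable factor to be pulled out of the conditional expectation,
$$\E\big[(X_k-X_{k-1})\one_{\{T\geq k\}}\big]=\E\big[\one_{\{T\geq k\}}\,\E[X_k-X_{k-1}\mid\F_{k-1}]\big].$$
By the martingale property $\E[X_k\mid\F_{k-1}]=X_{k-1}$, so the inner conditional expectation is $0$ a.s. and each term of the sum is zero. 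This yields $\E[X_T]=\E[X_0]$.

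The main point requiring care is the measurability step: one must recognise that $\{T\geq k\}$ is determined by information available at time $k-1$, which is precisely the stopping-time condition, and then correctly invoke the ``take out what is known'' property of the conditional expectation for the bounded $\F_{k-1}$-measurable indicator. Everything else is routine bookkeeping enabled by the finiteness of all sums.
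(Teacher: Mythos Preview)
Your proof is correct and complete. The paper takes a different but equally standard route: instead of telescoping in increments, it writes $\E[X_T]=\sum_{n=0}^{C}\E[X_n\one_{\{T=n\}}]$, replaces each $X_n$ by $\E[X_C\mid\F_n]$ via the martingale property, pulls the $\F_n$-measurable indicator $\one_{\{T=n\}}$ inside the conditional expectation, and applies the tower property to obtain $\sum_{n=0}^{C}\E[X_C\one_{\{T=n\}}]=\E[X_C]=\E[X_0]$. Your increment decomposition has the advantage that it extends verbatim to sub- and supermartingales (the sign of $\E[X_k-X_{k-1}\mid\F_{k-1}]$ immediately gives the corresponding inequality) and is essentially the computation showing that the stopped process $(X_{n\land T})_{n\geq 0}$ is a martingale; the paper's argument is marginally more direct in that it only uses $\{T=n\}\in\F_n$ and never needs the observation that $\{T\geq k\}\in\F_{k-1}$.
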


\begin{proof}

Assume that $T\leq  N\in\N$. Then 

\begin{align*}
\E[X_T]&=\E\left[\sum_{n=0}^\infty X_n\one_{\{T=n\}}\right]=\E\left[\sum_{n=0}^N X_n\one_{\{T=n\}}\right]=\sum_{n=0}^N\E[X_n\one_{\{T=n\}}]=\sum_{n=0}^N\E[\E[X_N\mid\F_n]\one_{\{T=n\}}]\\
&=\sum_{n=0}^N\E[\E[X_n\one_{\{T=n\}}\mid\F_n]]=\sum_{n=0}^N\E[X_n\one_{\{T=n\}}]=\E\left[X_n\sum_{n=0}^N\one_{\{T=n\}}\right]=\E[X_n]=\E[X_0]
\end{align*}

\end{proof}

\begin{defn}[Stopping time $\sigma$-Algebra]

Let $(\Omega,\F,(\F_n)_{n\geq0},\p)$ be a filtered probability space. Let $T$ be a stopping time for $(\F_n)_{n\geq0}$. We call the $\sigma$-Algebra of events prior of $T$ and write $\F_T$ for the $\sigma$-Algebra

\[
\F_T=\{A\in\F\mid A\cap\{T\leq  n\}\in\F_n,\forall n\geq0\}.
\]

\end{defn}

\begin{rem}
We need to show that $\F_T$ is indeed a $\sigma$-Algebra. 

\end{rem}
\begin{prop}

If $T$ is a stopping time, $\F_T$ is a $\sigma$-Algebra.

\end{prop}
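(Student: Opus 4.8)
The plan is to verify directly the three defining axioms of a $\sigma$-Algebra for the collection
$$\F_T=\{A\in\F\mid A\cap\{T\leq n\}\in\F_n,\ \forall n\geq 0\}.$$
The whole argument rests on the single observation that checking membership in $\F_T$ reduces to verifying, for \emph{each} fixed $n$, that a certain intersection lies in $\F_n$; since each $\F_n$ is itself a $\sigma$-Algebra, the stability properties we need are inherited level by level. No obstacle of substance arises here; the only care required is handling complements, where the stopping-time property $\{T\leq n\}\in\F_n$ is used explicitly.

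First I would show $\Omega\in\F_T$: for every $n\geq 0$ we have $\Omega\cap\{T\leq n\}=\{T\leq n\}\in\F_n$ precisely because $T$ is a stopping time, so the defining condition holds. Next, closure under complements. Suppose $A\in\F_T$, so $A\cap\{T\leq n\}\in\F_n$ for all $n$. I would write
$$A^C\cap\{T\leq n\}=\{T\leq n\}\setminus\big(A\cap\{T\leq n\}\big),$$
and note that both $\{T\leq n\}\in\F_n$ and $A\cap\{T\leq n\}\in\F_n$, so their set difference lies in $\F_n$ (a $\sigma$-Algebra is closed under differences). Hence $A^C\in\F_T$. This is the one step where the stopping-time hypothesis is genuinely needed, and it is the closest thing to a subtlety in the proof.

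Finally, closure under countable unions. Let $(A_k)_{k\geq 1}\subset\F_T$. For each fixed $n\geq 0$ I would use the distributivity of intersection over union,
$$\left(\bigcup_{k\geq 1}A_k\right)\cap\{T\leq n\}=\bigcup_{k\geq 1}\big(A_k\cap\{T\leq n\}\big).$$
Since each $A_k\cap\{T\leq n\}\in\F_n$ and $\F_n$ is closed under countable unions, the right-hand side belongs to $\F_n$. As this holds for every $n$, we conclude $\bigcup_{k\geq 1}A_k\in\F_T$. Having verified all three axioms, $\F_T$ is a $\sigma$-Algebra, which completes the proof.
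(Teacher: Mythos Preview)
Your proof is correct and follows essentially the same approach as the paper: verify the three $\sigma$-Algebra axioms by reducing each to a statement about $\F_n$ for fixed $n$, using the set identity $A^C\cap\{T\leq n\}=\{T\leq n\}\setminus(A\cap\{T\leq n\})$ for complements and distributivity for countable unions. Your write-up is in fact slightly more explicit than the paper's in identifying where the stopping-time hypothesis $\{T\leq n\}\in\F_n$ is actually invoked.
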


\begin{proof}

It's clear that for a filtered probability space $(\Omega,\F,(\F_n)_{n\geq0},\p)$, $\Omega\in\F_T$. If $A\in\F_T$, then 

\[
A^C\cap \{T\leq  n\}=\underbrace{\{T\leq  n\}}_{\in\F_n}\setminus (\underbrace{A\cap \{T\leq  n\}}_{\in\F_n})\in\F_n
\]

and hence $A^C\in\F_n$. If $(A_i)_{i\geq0}\in\F_T$, then 

\[
\bigcup_{i\geq 0}A_i\cap \{T\leq  n\}=\bigcup_{i=1}^\infty A_i\cap \underbrace{\{T\leq  n\}}_{\in\F_n}.
\]

Hence $\bigcup_{i\geq 0}A_i\in\F_T$. Therefore $\F_T$ is a $\sigma$-Algebra.

\end{proof}

\begin{rem}

If $T=n_0$ is constant, then $\F_T=\F_{n_0}$.

\end{rem}

\begin{exer} The following exercises are important.

\begin{enumerate}[$(i)$]
\item{Show that 

\[
\F_T=\left\{\bigcup_{n\in\bar\N}A_n\cap \{T=n\}\big| A_\infty\in\F,A_n\in\F_n\right\}.
\]

}

\item{Show that a r.v. $L$ with values in $\bar \N$ is a stopping time if and only if $\left(\one_{\{L\leq  n\}}\right)_{n\geq 0}$ is $(\F_n)$-adapted and for the case it's a stopping time, we get 

\[
L=\inf\{n\geq 0\mid \one_{\{ L\leq  n\}}=1\}.
\]

}

\end{enumerate}
\end{exer}

\begin{prop}

Let $S$ and $T$ be two stopping times. 

\begin{enumerate}[$(i)$]
\item{If $S\leq  T$, then $\F_S\subset \F_T$.}
\item{$\F_{S\land T}=\F_S\cap\F_T$.}
\item{$\{S\leq  T\}$, $\{S=T\}$ and $\{S<T\}$ are $\F_S\cap\F_T$-measurable.}
\end{enumerate}
\end{prop}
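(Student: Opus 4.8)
The plan is to reduce every assertion to the defining property of the stopping-time $\sigma$-algebra, namely that $A\in\F_T$ if and only if $A\cap\{T\leq n\}\in\F_n$ for every $n\geq0$. Throughout I would use two elementary facts repeatedly: each $\F_n$ is a $\sigma$-Algebra (hence closed under finite unions, intersections and complements), and for any stopping time $R$ one has $\{R\leq k\},\{R=k\}\in\F_k\subset\F_n$ whenever $k\leq n$. I would also record at the outset that the intersection $\F_S\cap\F_T$ of two $\sigma$-Algebras is again a $\sigma$-Algebra, which is exactly what legitimises the closure arguments used in $(iii)$.

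For $(i)$, assuming $S\leq T$, I fix $A\in\F_S$ and write $A\cap\{T\leq n\}=(A\cap\{S\leq n\})\cap\{T\leq n\}$, which is valid because $S\leq T$ forces $\{T\leq n\}\subset\{S\leq n\}$. The first factor lies in $\F_n$ since $A\in\F_S$, the second since $T$ is a stopping time, so their product lies in $\F_n$ and thus $A\in\F_T$. Part $(ii)$ then follows quickly: since $S\land T\leq S$ and $S\land T\leq T$, part $(i)$ gives $\F_{S\land T}\subset\F_S\cap\F_T$; conversely, for $A\in\F_S\cap\F_T$ I would use the identity $\{S\land T\leq n\}=\{S\leq n\}\cup\{T\leq n\}$ to write $A\cap\{S\land T\leq n\}=(A\cap\{S\leq n\})\cup(A\cap\{T\leq n\})$, a union of two members of $\F_n$, hence itself in $\F_n$.

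Part $(iii)$ is the crux and the only place where real care is needed. Showing $\{S\leq T\}\in\F_T$ is the easy direction: on $\{T\leq n\}$ one has the decomposition $\{S\leq T\}\cap\{T\leq n\}=\bigcup_{k=0}^{n}(\{T=k\}\cap\{S\leq k\})$, a finite union of sets in $\F_n$. The harder direction is $\{S\leq T\}\in\F_S$, precisely because the natural splitting is indexed by the values of $T$ rather than of $S$. Here I would instead compute $\{S\leq T\}\cap\{S\leq n\}=\bigcup_{k=0}^{n}(\{S=k\}\cap\{T\leq k-1\}^C)$, with the convention $\{T\leq-1\}=\varnothing$; each $\{S=k\}\in\F_k\subset\F_n$ and each complement $\{T\leq k-1\}^C\in\F_{k-1}\subset\F_n$, so the union is in $\F_n$. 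The main obstacle is getting this second decomposition right (rewriting the condition $S\leq T$ on the set $\{S=k\}$ as $T\geq k$, i.e. the complement $\{T\leq k-1\}^C$) and checking the edge case $k=0$.

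Finally, having established $\{S\leq T\}\in\F_S\cap\F_T$, interchanging the roles of $S$ and $T$ yields $\{T\leq S\}\in\F_S\cap\F_T$. Then $\{S=T\}=\{S\leq T\}\cap\{T\leq S\}$ and $\{S<T\}=\{T\leq S\}^C$ both lie in the $\sigma$-Algebra $\F_S\cap\F_T$, which completes the proof.
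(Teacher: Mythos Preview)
Your proof is correct and follows essentially the same route as the paper's. Parts $(i)$ and $(ii)$ are identical in substance; in $(iii)$ the paper uses the equivalent characterisation via $\{T=n\}$ (so that e.g.\ $\{S\leq T\}\cap\{T=n\}=\{S\leq n\}\cap\{T=n\}$ immediately) and verifies each of the three events directly, whereas you work with $\{T\leq n\}$ and then deduce $\{S=T\}$ and $\{S<T\}$ from $\{S\leq T\}$ via closure under intersection and complement---a minor variation, not a genuinely different argument.
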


\begin{proof} We need to show all three points.

\begin{enumerate}[$(i)$]

\item{For $n\in\N$ and $A\in\F_S$ we get

\[
A\cap\{T\leq  n\}= A\cap\underbrace{\{S\leq  n\}\cap\{T\leq  n\}}_{\{T\leq  n\}}=(A\cap\{S\leq  n\})\cap\underbrace{\{T\leq  n\}}_{\in\F_n}\in\F_n.
\]

Therefore $A\in\F_T$.

}

\item{Since $S\land T\leq  S$, we get by $(i)$ that $\F_{S\land T}\subset\F_S$ and similarly that $\F_{S\land T}\subset\F_T$. Let now $A\in \F_S\cap\F_T$. Then 

\[
A\cap\{S\land T\leq  n\}=\left(\underbrace{A\cap \{ S\leq  n\}}_{\in\F_n,(\text{since $A\in\F_S$})}\right)\cup \left( \underbrace{A\cap\{ T\leq  n\}}_{\in\F_n,(\text{since $A\in\F_T$})}\right)\in\F_n.
\]

Therefore $A\in\F_{S\land T}$.

}

\item{Note that 

\[
\{S\leq  T\}\cap\{T=n\}=\{S\leq  n\}\cap \{ T=n\}\in\F_n.
\]

Therefore $\{S\leq  T\}\in\F_T$. Note also that 

\[
\{S<T\}\cap\{T=n\}=\{S<n\}\cap \{T=n\}\in\F_n.
\]

Therefore $\{S<T\}\in\F_T$. Finally, note that 

\[
\{S=T\}\cap\{T=n\}=\{S=n\}\cap\{T=n\}\in\F_n.
\]

Thus $\{S=T\}\in\F_T$. Similarly one can show that these events are also $\F_S$-measurable.

}

\end{enumerate}
\end{proof}

\begin{prop}

Let $(\Omega,\F,(\F_n)_{n\geq0},\p)$ be a filtered probability space. Let $(X_n)_{n\geq 0}$ be a stochastic process, which is adapted, i.e. $X_n$ is $\F_n$-measurable for all $n\geq 0$. Let $T$ be a finite stopping time, i.e. $T<\infty$ a.s., such that $X_T$ is well defined. Then $X_T$ is $\F_T$-measurable.
\end{prop}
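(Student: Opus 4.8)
The plan is to verify the defining property of $\F_T$-measurability directly. Since $X_T$ takes values in $(E,\mathcal{E})$, it suffices to show that for every $B\in\mathcal{E}$ the event $\{X_T\in B\}$ belongs to $\F_T$; by the definition of the stopping-time $\sigma$-Algebra this amounts to checking that
$$\{X_T\in B\}\cap\{T\leq n\}\in\F_n\qquad\text{for all }n\geq 0.$$

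First I would decompose the event according to the value attained by $T$. Because $T$ is finite a.s., on the set $\{T\leq n\}$ we have $X_T=\sum_{k=0}^{n}X_k\one_{\{T=k\}}$, and hence
$$\{X_T\in B\}\cap\{T\leq n\}=\bigcup_{k=0}^{n}\big(\{X_k\in B\}\cap\{T=k\}\big).$$
This reduces the problem to showing that each of the finitely many sets on the right lies in $\F_n$.

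Next I would argue each term is $\F_n$-measurable. Since $(X_n)_{n\geq 0}$ is adapted, $X_k$ is $\F_k$-measurable, so $\{X_k\in B\}\in\F_k$. Since $T$ is a stopping time, the earlier proposition gives $\{T=k\}=\{T\leq k\}\setminus\{T\leq k-1\}\in\F_k$. As $\F_k\subset\F_n$ whenever $k\leq n$, each intersection $\{X_k\in B\}\cap\{T=k\}$ lies in $\F_n$, and a finite union of sets in $\F_n$ is again in $\F_n$. This yields $\{X_T\in B\}\cap\{T\leq n\}\in\F_n$ for every $n$, so $\{X_T\in B\}\in\F_T$, and $X_T$ is $\F_T$-measurable.

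The only delicate point is the treatment of the null event $\{T=\infty\}$: because $T<\infty$ a.s., the expression $X_T=\sum_{k}X_k\one_{\{T=k\}}$ defines $X_T$ almost surely, and on the complementary null set one may assign any constant value without affecting measurability (with respect to the completed $\sigma$-Algebra). This is precisely what the hypothesis that $X_T$ be well defined guarantees; everything else is routine bookkeeping with finite unions, so I do not expect any substantive obstacle.
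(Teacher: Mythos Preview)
Your proof is correct and follows essentially the same route as the paper: decompose $\{X_T\in B\}\cap\{T\leq n\}$ as the finite union $\bigcup_{k=0}^{n}\bigl(\{X_k\in B\}\cap\{T=k\}\bigr)$ and use adaptedness together with the stopping-time property to place each term in $\F_k\subset\F_n$. The paper does not discuss the null set $\{T=\infty\}$ or completion, but your remark there is a harmless addition rather than a different argument.
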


\begin{proof}

Let $\Lambda\in\B(\R)$ be a Borel measurable set. We want to show that 

\[
\{X_T\in\Lambda\}\in\F_T,
\]

that is, for all $n\geq 0$

\[
\{X_T\in\Lambda\}\cap\{T\leq  n\}\in\F_n.
\]

Observe that 

\[
\{X_T\in\Lambda\}\cap\{T\leq  n\}=\bigcup_{k=1}^n\{X_T\in \Lambda\}\cap\{T\leq  k\}=\bigcup_{k=1}^n\underbrace{\{X_k\in\Lambda\}}_{\in\F_k\subset\F_n}\cap\underbrace{\{T=k\}}_{\in\F_k\subset\F_n},
\]

which implies that $\{X_T\in\Lambda\}\cap\{T\leq  n\}\in\F_n$ and the claim follows.

\end{proof}

\begin{thm}

Let $(\Omega,\F,(\F_n)_{n\geq0},\p)$ be a filtered probability space. Let $(X_n)_{n\geq 0}$ be a martingale and let $S$ and $T$ be two bounded stopping times such that $S\leq  T$ a.s. Then we have 

\[
\E[X_T\mid \F_S]=X_S\hspace{0.5cm}a.s.
\]

\end{thm}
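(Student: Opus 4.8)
The plan is to reduce everything to the already-established identity $\E[X_\tau]=\E[X_0]$ for bounded stopping times, using the characterization of conditional expectation by indicator test functions. First I would record the easy preliminaries. Since $S$ and $T$ are bounded, say $S\leq T\leq N$, both $X_S=\sum_{n=0}^N X_n\one_{\{S=n\}}$ and $X_T=\sum_{n=0}^N X_n\one_{\{T=n\}}$ are finite sums of integrable r.v.'s, hence lie in $L^1(\Omega,\F,\p)$; and $X_S$ is $\F_S$-measurable by the preceding proposition. Recalling that it suffices to test the defining relation of $\E[\,\cdot\mid\F_S]$ against r.v.'s of the form $Z=\one_A$ with $A\in\F_S$, the whole theorem comes down to proving
$$\E[X_T\one_A]=\E[X_S\one_A]\qquad\text{for every }A\in\F_S.$$

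To establish this, fix $A\in\F_S$ and introduce the random time $R:=S\one_A+T\one_{A^C}$. I claim $R$ is a bounded stopping time with $S\leq R\leq T$. Boundedness and the inequalities are immediate from $S\leq T$. For the stopping-time property, write
$$\{R\leq n\}=\bigl(A\cap\{S\leq n\}\bigr)\cup\bigl(A^C\cap\{T\leq n\}\bigr).$$
The first set lies in $\F_n$ because $A\in\F_S$. For the second, I use that $S\leq T$ forces $\F_S\subset\F_T$ (previous proposition), so $A^C\in\F_S\subset\F_T$, whence $A^C\cap\{T\leq n\}\in\F_n$. Thus $\{R\leq n\}\in\F_n$ for all $n$, and $R$ is a bounded stopping time.

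Now I apply the earlier theorem ($\E[X_\tau]=\E[X_0]$ for bounded $\tau$ and a martingale) to both $\tau=R$ and $\tau=T$, obtaining $\E[X_R]=\E[X_0]=\E[X_T]$. Since on $A$ we have $R=S$ and on $A^C$ we have $R=T$, the variable $X_R$ equals $X_S\one_A+X_T\one_{A^C}$, so
$$\E[X_S\one_A]+\E[X_T\one_{A^C}]=\E[X_T\one_A]+\E[X_T\one_{A^C}].$$
Cancelling the common term $\E[X_T\one_{A^C}]$ yields $\E[X_S\one_A]=\E[X_T\one_A]$, which is exactly the displayed identity. As this holds for every $A\in\F_S$ and $X_S\in L^1$ is $\F_S$-measurable, the uniqueness/characterization of the conditional expectation gives $\E[X_T\mid\F_S]=X_S$ a.s.

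The only genuinely delicate step is verifying that $R$ is a stopping time; everything else is bookkeeping with the integrability and measurability facts already in hand. That verification hinges precisely on the inclusion $A^C\in\F_S\subset\F_T$, which is why the hypothesis $S\leq T$ (and not merely that $S,T$ are stopping times) is essential. I do not expect any integrability subtleties here because the bounded-stopping-time assumption keeps all the relevant random variables as finite sums of integrable terms.
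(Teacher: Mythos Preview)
Your proof is correct and follows essentially the same approach as the paper: define the auxiliary random time $R=S\one_A+T\one_{A^C}$, verify it is a bounded stopping time, apply $\E[X_R]=\E[X_T]=\E[X_0]$, and cancel the common term to obtain $\E[X_S\one_A]=\E[X_T\one_A]$. Your write-up is in fact slightly more careful than the paper's in justifying $A^C\cap\{T\leq n\}\in\F_n$ via the inclusion $\F_S\subset\F_T$.
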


\begin{proof}

Since we assume that $T\leq  C\in\N$, we note that 

\[
\vert X_T\vert\leq  \sum_{i=0}^C\vert X_i\vert\in L^1(\Omega,\F,(\F_n)_{n\geq 0},\p).
\]

Let now $A\in\F_S$. We need to show that 

\[
\E[X_T\one_A]=\E[X_S\one_A].
\]

Let us define the random time

\[
R(\omega)=S(\omega)\one_A(\omega)+T(\omega)\one_{A^C}(\omega).
\]

We thus note that $R$ is a stopping time. Indeed, we have 

\[
\{R\leq  n\}=(\underbrace{A\cap \{S\leq  n\}}_{\in\F_n})\cup (\underbrace{A^C\cap \{T\leq  n\}}_{\in\F_n}).
\]

Consequently, since $S,T$ and $R$ are bounded, we have 

\[
\E[X_S]=\E[X_T]=\E[X_R]=\E[X_0]. 
\]

Therefore we get 

\[
\E[X_R]=\E[X_S\one_A+X_T\one_{A^C}]\hspace{0.5cm}\text{and}\hspace{0.5cm}\E[X_T]=\E[X_T\one_A+X_T\one_{A^C}]
\]

and thus 

\[
\E[X_S\one_A]=\E[X_T\one_A].
\]

Moreover, since $X_S$ is $\F_S$-measurable, we conclude that 

\[
\E[X_T\mid \F_S]=X_S\hspace{0.5cm}a.s.
\]

\end{proof}

\begin{exer}
Let $T$ be a stopping time and $\Lambda\in\F_T$. Define 

\[
T_\Lambda(\omega)=\begin{cases}T(\omega)&\text{if $\omega\in\Lambda$}\\ \infty&\text{if $\omega\not\in\Lambda$}\end{cases}
\]

Prove that $T_\Lambda$ is a stopping time.

\end{exer}

\begin{prop}

Let $(\Omega,\F,(\F_n)_{n\geq0},\p)$ be a filtered probability space. Let $(X_n)_{n\geq 0}$ be a stochastic process such that for all $n\geq 0$

\[
\E[\vert X_n\vert]<\infty
\]

and with $X_n$ being $\F_n$-measurable. If for all bounded stopping times $T$, we have 

\[
\E[X_T]=\E[X_0],
\]

then $(X_n)_{n\geq 0}$ is a martingale.

\end{prop}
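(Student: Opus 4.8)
The plan is to recover the one-step martingale identity $\E[X_{n+1}\mid\F_n]=X_n$ directly from the hypothesis by testing it against cleverly chosen bounded stopping times. The integrability and adaptedness conditions are already assumed in the statement, so the only thing left to establish is property $(iii)$ of the definition of a martingale, and it suffices to prove it in the equivalent one-step form $\E[X_{n+1}\mid\F_n]=X_n$ a.s. for every $n\geq 0$.

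First I would fix $n\geq 0$ and an arbitrary event $A\in\F_n$, and define the random time
\[
T(\omega)=n\one_A(\omega)+(n+1)\one_{A^C}(\omega).
\]
The key step is to verify that $T$ is a bounded stopping time. Indeed, $\{T\leq k\}=\varnothing\in\F_k$ for $k<n$, while $\{T\leq n\}=A\in\F_n$ since $A\in\F_n$, and $\{T\leq k\}=\Omega\in\F_k$ for $k\geq n+1$; moreover $T\leq n+1$, so $T$ is bounded. The constant time $n+1$ is of course also a bounded stopping time.

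Next I would apply the hypothesis $\E[X_T]=\E[X_0]$ to both $T$ and the constant stopping time $n+1$, which yields $\E[X_T]=\E[X_{n+1}]$. Writing out
\[
X_T=X_n\one_A+X_{n+1}\one_{A^C},
\]
the equality $\E[X_T]=\E[X_{n+1}\one_A+X_{n+1}\one_{A^C}]$ simplifies, after cancelling the common term $\E[X_{n+1}\one_{A^C}]$, to
\[
\E[X_n\one_A]=\E[X_{n+1}\one_A].
\]
Since $A\in\F_n$ was arbitrary and $X_n$ is $\F_n$-measurable, the characterization of the conditional expectation by its testing property against indicators of $\F_n$-sets gives $\E[X_{n+1}\mid\F_n]=X_n$ a.s. As $n$ was arbitrary, $(X_n)_{n\geq 0}$ is a martingale.

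I do not expect a genuine obstacle here: the entire argument hinges on the construction of the two-valued stopping time $T$ and the routine verification that it lies in the right $\sigma$-algebras. The only point requiring a little care is ensuring that the cancellation is legitimate, i.e. that $X_{n+1}\one_{A^C}$ is integrable, which follows from $\E[\vert X_{n+1}\vert]<\infty$; this is precisely where the standing integrability assumption is used.
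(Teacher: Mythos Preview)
Your proof is correct and essentially identical to the paper's: both build a two-valued bounded stopping time that equals one time on a chosen $\F_n$-event and the other time on its complement, then compare $\E[X_T]$ with the value at a constant time to extract $\E[X_n\one_A]=\E[X_{n+1}\one_A]$. The only cosmetic differences are that the paper works directly with arbitrary $m<n$ rather than the one-step case, and swaps the roles of the set and its complement in the definition of $T$; your added remark on the integrability needed for the cancellation is a point the paper leaves implicit.
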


\begin{proof}

Let $0\leq  m<n<\infty$ and $\Lambda\in\F_m$. Define for all $\omega\in\Omega$

\[
T(\omega)=m\one_{\Lambda^C}(\omega)+n\one_\Lambda(\omega).
\]

Then $T$ is a stopping time. Therefore

\[
\E[X_0]=\E[X_T]=\E[X_m\one_{\Lambda^C}+X_n\one_\Lambda]=\E[X_m].
\]

Hence we get 

\[
\E[X_m\one_\Lambda]=\E[X_n\one_\Lambda]
\]

and thus 

\[
\E[X_n\mid \F_m]=X_m\hspace{0.5cm}a.s.
\]

\end{proof}

\section{Submartingales and Supermartingales}

\begin{defn}[Submartingale and Supermartingale]

Let $(\Omega,\F,(\F_n)_{n\geq0},\p)$ be a filtered probability space. A stochastic process $(X_n)_{n\geq0}$ is called a submartingale (resp. supermartingale) if 

\begin{enumerate}[$(i)$]
\item{$\E[\vert X_n\vert]<\infty$ for all $n\geq 0$}
\item{$(X_n)_{n\geq 0}$ is $\F_n$-adapted.}
\item{$\E[X_n\mid \F_m]\geq X_m$ a.s. for all $m\leq  n$ (resp. $\E[X_n\mid \F_m]\leq  X_m$ a.s. for all $m\leq  n$)}
\end{enumerate}
\end{defn}

\begin{rem}

A stochastic process $(X_n)_{n\geq 0}$ is a martingale if and only if it is a submartingale and a supermartingale. A martingale is in particular a submartingale and a supermartingale. If $(X_n)_{n\geq0}$ is a submartingale, then the map $n\mapsto \E[X_n]$ is increasing. If $(X_n)_{n\geq 0}$ is a supermartingale, then the map $n\mapsto \E[X_n]$ is decreasing.

\end{rem}

\begin{ex}

Let $(\Omega,\F,(\F_n)_{n\geq0},\p)$ be a filtered probability space. Let $S_n=\sum_{j=1}^{n}Y_j$, where $(Y_n)_{n\geq1}$ is a sequence of iid r.v.'s. Moreover, let $S_0=0$, $\F_0=\{\varnothing,\Omega\}$ and $\F_n=\sigma(Y_1,...,Y_n)$. Then we get 

\[
\E[S_{n+1}\mid\F_n]=S_n+\E[Y_{n+1}].
\]

If $\E[Y_{n+1}]>0$, then $\E[S_{n+1}\mid\F_n]\geq S_n$ and thus $(S_n)_{n\geq 0}$ is a submartingale. On the other hand, if $\E[Y_{n+1}]<0$, then $\E[S_{n+1}\mid\F_n]\leq  S_n$ and thus $(S_n)_{n\geq 0}$ is a supermartingale.

\end{ex}

\begin{prop}

Let $(\Omega,\F,(\F_n)_{n\geq0},\p)$ be a filtered probability space. If $(M_n)_{n\geq 0}$ is a martingale and $\varphi$ is a convex function such that $\varphi(M_n)\in L^1(\Omega,\F,(\F_n)_{n\geq 0},\p)$ for all $n\geq0$, then 

\[
(\varphi(M_n))_{n\geq 0}
\]

is a submartingale.

\end{prop}

\begin{proof}

The first two conditions for a martingale are clearly satisfied. Now for $m\leq  n$, we get 

\[
\E[M_n\mid \F_m]=M_m\hspace{0.5cm}a.s.,
\]

since $(M_n)_{n\geq 0}$ is assumed to be a martingale. Hence, with Jensen's inequality, we get

\[
\varphi(\E[M_n\mid\F_m])=\varphi(M_m)\leq  \E[\varphi(M_n)\mid\F_m]\hspace{0.5cm}a.s.
\]

\end{proof}

\begin{cor}

Let $(\Omega,\F,(\F_n)_{n\geq0},\p)$ be a filtered probability space. If $(M_n)_{n\geq 0}$ is a martingale, then 

\begin{enumerate}[$(i)$]
\item{$(\vert M_n\vert)_{n\geq0}$ and $(M^+_n)_{n\geq 0}$ are submartingales.}
\item{if for all $n\geq 0$, $\E[M_n^2]<\infty$, then $(M_n^2)_{n\geq 0}$ is a submartingale.}
\end{enumerate}
\end{cor}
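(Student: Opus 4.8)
The plan is to derive the entire corollary as a direct application of the preceding proposition, which asserts that $(\varphi(M_n))_{n\geq 0}$ is a submartingale whenever $(M_n)_{n\geq 0}$ is a martingale, $\varphi$ is convex, and $\varphi(M_n)\in L^1(\Omega,\F,(\F_n)_{n\geq 0},\p)$ for all $n\geq 0$. Thus for each of the three claimed processes I would simply exhibit the relevant convex function $\varphi$ and verify that the integrability hypothesis $\E[\vert\varphi(M_n)\vert]<\infty$ holds for every $n$; the submartingale conclusion then follows at once.

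For part $(i)$, I would take $\varphi(x)=\vert x\vert$ and $\varphi(x)=x^+=\max\{x,0\}$, both of which are convex on $\R$. The integrability of $\vert M_n\vert$ is immediate, since $\E[\vert M_n\vert]<\infty$ is exactly condition $(i)$ in the definition of a martingale. For the positive part, I would use the elementary bound $M_n^+\leq \vert M_n\vert$, whence $\E[M_n^+]\leq \E[\vert M_n\vert]<\infty$, so that $M_n^+\in L^1(\Omega,\F,(\F_n)_{n\geq 0},\p)$ as well. Applying the proposition with these two choices of $\varphi$ gives that $(\vert M_n\vert)_{n\geq 0}$ and $(M_n^+)_{n\geq 0}$ are submartingales.

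For part $(ii)$, I would take $\varphi(x)=x^2$, which is convex on $\R$. Here the integrability requirement $\E[\varphi(M_n)]=\E[M_n^2]<\infty$ is precisely the standing hypothesis of the statement, so there is nothing further to check; the proposition then yields that $(M_n^2)_{n\geq 0}$ is a submartingale.

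The only points requiring genuine verification are the convexity of each $\varphi$ (all standard) and the integrability of $\varphi(M_n)$, and I do not expect any real obstacle: in every case integrability follows either directly from the definition of a martingale, from the crude domination $M_n^+\leq\vert M_n\vert$, or from the explicit hypothesis $\E[M_n^2]<\infty$. The substantive work has already been carried out in the preceding proposition (via Jensen's inequality for conditional expectations), so this corollary is essentially a matter of recording the correct specializations of $\varphi$.
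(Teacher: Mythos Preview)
Your proposal is correct and matches the paper's intended approach exactly: the corollary is stated without proof in the paper, being an immediate specialization of the preceding proposition with $\varphi(x)=\vert x\vert$, $\varphi(x)=x^+$, and $\varphi(x)=x^2$. Your verification of the integrability hypotheses is precisely what is needed to justify the application.
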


\begin{thm}

Let $(\Omega,\F,(\F_n)_{n\geq0},\p)$ be a filtered probability space. Let $(X_n)_{n\geq 0}$ be a submartingale and let $T$ be a stopping time bounded by $C\in\N$. Then 

\[
\E[X_T]\leq  \E[X_C].
\]

\end{thm}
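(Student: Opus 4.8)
The plan is to mimic the proof of the corresponding theorem for martingales (where one gets $\E[X_T]=\E[X_0]$), replacing each equality that came from the martingale property by the inequality supplied by the submartingale property. Since $T$ is bounded by $C$, I first expand
$$\E[X_T]=\E\left[\sum_{n=0}^C X_n\one_{\{T=n\}}\right]=\sum_{n=0}^C\E[X_n\one_{\{T=n\}}],$$
where the sum is finite, so linearity of the expectation applies with no integrability worry because each $X_n\in L^1(\Omega,\F,(\F_n)_{n\geq 0},\p)$.

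The key step is to bound each summand. For $0\leq n\leq C$ the submartingale property gives $X_n\leq \E[X_C\mid\F_n]$ a.s. The event $\{T=n\}$ lies in $\F_n$ because $T$ is a stopping time, so $\one_{\{T=n\}}$ is $\F_n$-measurable and nonnegative. Multiplying the a.s. inequality by this indicator preserves it, and then taking expectations, using the tower property and pulling the $\F_n$-measurable factor $\one_{\{T=n\}}$ inside the conditional expectation, yields
$$\E[X_n\one_{\{T=n\}}]\leq \E\big[\E[X_C\mid\F_n]\one_{\{T=n\}}\big]=\E\big[\E[X_C\one_{\{T=n\}}\mid\F_n]\big]=\E[X_C\one_{\{T=n\}}].$$

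Summing over $n=0,\dots,C$ and using $\sum_{n=0}^C\one_{\{T=n\}}=1$ (valid since $T\leq C$) gives
$$\E[X_T]\leq \sum_{n=0}^C\E[X_C\one_{\{T=n\}}]=\E\left[X_C\sum_{n=0}^C\one_{\{T=n\}}\right]=\E[X_C],$$
which is the claim. There is no deep obstacle here; the only genuinely delicate point is the measurability bookkeeping, namely checking that $\{T=n\}\in\F_n$ so that $\one_{\{T=n\}}$ is $\F_n$-measurable and nonnegative, since this is exactly what lets the conditional-expectation inequality survive both the multiplication and the integration.

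Alternatively, one can avoid $\E[X_C\mid\F_n]$ entirely by using the telescoping identity $X_C-X_T=\sum_{n=0}^{C-1}\one_{\{T\leq n\}}(X_{n+1}-X_n)$ and observing that each term has nonnegative expectation, because $\{T\leq n\}\in\F_n$ and $\E[X_{n+1}-X_n\mid\F_n]\geq 0$ a.s.; this is the same computation organized by increments rather than by the level sets of $T$.
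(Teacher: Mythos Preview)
Your proof is correct and follows exactly the approach the paper intends: it is the same decomposition by level sets $\{T=n\}$ used in the martingale case, with the equality $X_n=\E[X_C\mid\F_n]$ replaced by the submartingale inequality $X_n\leq\E[X_C\mid\F_n]$, which is precisely what the paper's footnote ``The proof is the same as in Theorem 7.7'' is pointing at. Your alternative telescoping argument is also fine and equally standard.
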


\begin{proof}
Exercise\footnote{The proof is the same as in Theorem 7.7.}

\end{proof}

\begin{thm}[Doob's decomposition]

Let $(\Omega,\F,(\F_n)_{n\geq0},\p)$ be a filtered probability space. Let $(X_n)_{n\geq 0}$ be a submartingale. Then there exists a martingale $M=(M_n)_{n\geq 0}$ with $M_0=0$ and a sequence $A=(A_n)_{n\geq 0}$, such that $A_{n+1}\geq A_n$ a.s. with $A_0=0$ a.s., which is called an increasing process, and with $A_{n+1}$ being $\F_n$-measurable, which we will call predictable, such that 

\[
X_n=X_0+M_0+A_n.
\]

Moreover, this decomposition is a.s. unique.

\end{thm}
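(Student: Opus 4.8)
The plan is to construct the predictable increasing process $A$ explicitly by accumulating the upward drift of the submartingale, and then to define $M$ as the remainder, which will turn out automatically to be a martingale. First I would set $A_0 = 0$ and define, for $n \geq 1$,
$$A_n = \sum_{k=1}^n \bigl( \E[X_k \mid \F_{k-1}] - X_{k-1}\bigr).$$
The submartingale property gives $\E[X_k\mid\F_{k-1}] - X_{k-1} \geq 0$ a.s., so $(A_n)_{n\geq 0}$ is nondecreasing with $A_0 = 0$. The summand indexed by $k$ is $\F_{k-1}$-measurable, hence $A_{n+1}$ is $\F_n$-measurable, i.e. $A$ is predictable. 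Integrability of $A_n$ follows because each $\E[X_k\mid\F_{k-1}]$ and each $X_{k-1}$ lies in $L^1(\Omega,\F,\p)$.

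Next I would define $M_n = X_n - X_0 - A_n$, so that $M_0 = 0$, and check the three martingale axioms. Integrability is immediate from $X_n, A_n \in L^1$; adaptedness holds since $X_n$ is $\F_n$-measurable and $A_n$ is $\F_{n-1}$-measurable. For the martingale identity I would write $A_{n+1} = A_n + (\E[X_{n+1}\mid\F_n] - X_n)$ and compute
$$\E[M_{n+1}\mid\F_n] = \E[X_{n+1}\mid\F_n] - X_0 - A_{n+1} = X_n - X_0 - A_n = M_n \quad \text{a.s.},$$
using linearity of the conditional expectation together with the fact that $A_{n+1}$ and $X_0$ are $\F_n$-measurable. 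This yields the decomposition $X_n = X_0 + M_n + A_n$.

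For uniqueness, suppose $X_n = X_0 + M_n + A_n = X_0 + M_n' + A_n'$ are two decompositions of the required form, and set $D_n = M_n - M_n' = A_n' - A_n$. Then $D_0 = 0$, the process $(D_n)_{n\geq 0}$ is a martingale as a difference of two martingales, and at the same time $D_n$ is $\F_{n-1}$-measurable for $n\geq 1$ as a difference of two predictable processes. The martingale property gives $\E[D_n\mid\F_{n-1}] = D_{n-1}$, whereas predictability gives $\E[D_n\mid\F_{n-1}] = D_n$; hence $D_n = D_{n-1}$ for every $n \geq 1$, and starting from $D_0 = 0$ an induction yields $D_n = 0$ a.s. for all $n$. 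Therefore $M_n = M_n'$ and $A_n = A_n'$ a.s.

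The construction is essentially routine; I expect the only points demanding care to be the $L^1$-integrability of $A_n$ (which rests on the earlier fact that the conditional expectation of an $L^1$ random variable is again in $L^1$) and keeping the measurability indices straight so that $A$ comes out predictable rather than merely adapted. This predictability is exactly what drives the uniqueness argument, since a predictable martingale starting at $0$ must vanish identically.
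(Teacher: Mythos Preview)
Your proof is correct and follows essentially the same route as the paper: you define $A_n$ as the accumulated conditional increments $\sum_{k=1}^n(\E[X_k\mid\F_{k-1}]-X_{k-1})$, set $M_n=X_n-X_0-A_n$, and for uniqueness exploit that a predictable martingale starting at zero is identically zero. The only cosmetic differences are that you are slightly more explicit about integrability and that you verify the martingale property at time $n+1$ rather than $n$; otherwise the arguments coincide.
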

\begin{proof}

Let us define $A_0=0$ and for $n\geq 1$

\[
A_n=\sum_{k=1}^n\E[X_k-X_{k-1}\mid\F_{k-1}].
\]

Since $(X_n)_{n\geq 0}$ is a submartingale, we get 

\[
\E[X_k-X_{k-1}\mid\F_{k-1}]\geq 0
\]

and hence $A_{n+1}-A_n\geq 0$. Therefore $(A_n)_{n\geq 0}$ is an increasing process. Moreover, from the definition of the conditional expectation, $A_n$ is $\F_{n-1}$-measurable for $n\geq 1$. Thus $A_n$ is predictable as well. We also note that 

\[
\E[X_n\mid \F_{n-1}]-X_{n-1}=\E[X_n-X_{n-1}\mid \F_{n-1}]=A_n-A_{n-1}.
\]

Hence we get 

\[
\underbrace{\E[X_n\mid\F_{n-1}]}_{\E[X_n-A_n\mid \F_{n-1}]}-A_n=X_{n-1}-A_{n-1}.
\]

If we set $M_n=X_n-A_n-X_0$, it follows that $M=(M_n)_{n\geq 0}$ is a martingale with $M_0=0$. This proves the existence part. For uniqueness, we note that if we have two such decompositions 

\[
X_n=X_0+M_n+A_n=X_0+L_n+C_n,
\]

where $L_n$ denotes the martingale part and $C_n$ the increasing process part, it follows that 

\[
L_n-M_n=A_n-C_n.
\]

Now since $A_n-C_n$ is $\F_{n-1}$-measurable, we get that $L_n-M_n$ is also $\F_{n-1}$-measurable. Thus 

\[
L_n-M_n=\E[L_n-M_n\mid \F_{n-1}]=L_{n-1}-M_{n-1},
\]

because of the martingale property. By induction, we have a chain of equalities 

\[
L_n-M_n=L_{n-1}-M_{n-1}=\dotsm =L_0-M_0=0.
\]

Therefore $L_n=M_n$ and also $A_n=C_n$.

\end{proof}

\begin{cor}

Let $(\Omega,\F,(\F_n)_{n\geq0},\p)$ be a filtered probability space. Let $X=(X_n)_{n\geq 0}$ be a supermartingale. Then there exists a.s. a unique decomposition 

\[
X_n=X_0+M_n-A_n,
\]

where $M=(M_n)_{n\geq 0}$ is a martingale with $M_0=0$ and $A=(A_n)_{n\geq 0}$ is a increasing process with $A_0=0$. 

\end{cor}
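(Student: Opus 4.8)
The plan is to reduce everything to Doob's decomposition, which has just been established for submartingales, by means of the sign change $X\mapsto -X$. First I would verify that if $X=(X_n)_{n\geq 0}$ is a supermartingale, then $Y_n:=-X_n$ defines a submartingale. The integrability $\E[\vert Y_n\vert]=\E[\vert X_n\vert]<\infty$ and the $\F_n$-measurability of $Y_n$ are immediate, and the supermartingale inequality $\E[X_n\mid\F_m]\leq X_m$ for $m\leq n$ becomes, after multiplication by $-1$, the inequality $\E[Y_n\mid\F_m]\geq Y_m$, which is exactly the submartingale property.

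Next I would apply the submartingale version of Doob's decomposition to $Y$: there is a martingale $N=(N_n)_{n\geq 0}$ with $N_0=0$ and a predictable increasing process $A=(A_n)_{n\geq 0}$ with $A_0=0$ such that $Y_n=Y_0+N_n+A_n$. Rewriting in terms of $X$ gives $-X_n=-X_0+N_n+A_n$, hence $X_n=X_0-N_n-A_n$. Setting $M_n:=-N_n$, which is again a martingale with $M_0=0$, yields the desired form $X_n=X_0+M_n-A_n$ with $A$ increasing. It is worth recording the explicit formula $A_n=\sum_{k=1}^n\left(X_{k-1}-\E[X_k\mid\F_{k-1}]\right)$, whose summands are a.s. nonnegative precisely because $X$ is a supermartingale; this both confirms that $A$ is increasing and shows that each $A_n$ is $\F_{n-1}$-measurable, i.e. that $A$ is predictable.

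For uniqueness I would again transport the statement through the negation. Any decomposition $X_n=X_0+M_n-A_n$ of the required type corresponds to a decomposition $Y_n=Y_0+(-M_n)+A_n$ of $Y$, in which $-M$ is a martingale vanishing at $0$ and $A$ is a predictable increasing process vanishing at $0$. Since Doob's theorem asserts the a.s. uniqueness of such a decomposition for the submartingale $Y$, two competing decompositions of $X$ must agree a.s. in both their martingale parts and their increasing parts.

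I do not expect a genuine obstacle, since the result is essentially a corollary of the preceding theorem; the only points requiring care are the bookkeeping of signs and the observation that the increasing process inherited from Doob's theorem is predictable — which is exactly what makes the uniqueness assertion meaningful, as uniqueness fails if one allows arbitrary (merely adapted, non-predictable) increasing processes.
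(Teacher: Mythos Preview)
Your proposal is correct and follows exactly the paper's approach: negate to obtain a submartingale, invoke the submartingale Doob decomposition, and negate back, with uniqueness inherited through the same correspondence. Your write-up is in fact more detailed than the paper's, which simply records the sign change and the identification $M_n=-L_n$, $A_n=C_n$ without spelling out the explicit formula for $A_n$ or the predictability remark.
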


\begin{proof}

Let $Y_n=-X_n$ for all $n\geq 0$. Then the stochastic process obtained by $(Y_n)_{n\geq0}$ is a submartingale. Theorem 8.4. tells us that there exists a unique decomposition

\[
Y_n=Y_0+L_n+C_n,
\]

where $L_n$ denotes the martingale part and $C_n$ the increasing process part. Hence we get 

\[
X_n=X_0-L_n-C_n
\]

and if we take $M_n=-L_n$ and $A_n=C_n$, the claim follows.

\end{proof}

Now consider a stopped process. Let $(\Omega,\F,(\F_n)_{n\geq0},\p)$ be a filtered probability space. Let $T$ be a stopping time and let $(X_n)_{n\geq 0}$ be a stochastic process. We denote by $X^T=(X^T_n)_{n\geq 0}$ the process $(X_{n\land T})_{n\geq 0}$.

\begin{prop}

Let $(\Omega,\F,(\F_n)_{n\geq0},\p)$ be a filtered probability space. Let $(X_n)_{n\geq 0}$ be a martingale (resp. sub- or supermartingale) and let $T$ be a stopping time. Then $(X_{n\land T})_{n\geq 0}$ is also a martingale (resp. sub- or supermartingale).

\end{prop}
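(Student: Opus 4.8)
The plan is to verify directly the three defining properties of a (sub/super)martingale for the stopped process $X^T_n = X_{n\land T}$, treating the three cases simultaneously by keeping track of the sign in the defining (in)equality. The integrability and adaptedness conditions are routine, so I would dispatch them first. Since $n\land T \leq n$, the crude bound $|X_{n\land T}| \leq \sum_{k=0}^n |X_k|$ holds, and the right-hand side is integrable because each $X_k\in L^1(\Omega,\F,\p)$; hence $\E[|X_{n\land T}|]<\infty$. For adaptedness I would write
$$X_{n\land T} = \sum_{k=0}^{n-1} X_k \one_{\{T=k\}} + X_n \one_{\{T\geq n\}},$$
and note that $\{T=k\}\in\F_k\subset\F_n$ for $k\leq n-1$, while $\{T\geq n\}=\{T\leq n-1\}^C\in\F_{n-1}\subset\F_n$; each summand is therefore $\F_n$-measurable, so $X_{n\land T}$ is as well.

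The heart of the argument is the increment identity
$$X_{(n+1)\land T} - X_{n\land T} = \one_{\{T>n\}}\,(X_{n+1} - X_n),$$
which I would establish by a case analysis on $\{T\leq n\}$ versus $\{T\geq n+1\}$: on the first event both $(n+1)\land T$ and $n\land T$ equal $T$, so the increment vanishes and the factor $\one_{\{T>n\}}$ is $0$; on the second event they equal $n+1$ and $n$ respectively, reproducing $X_{n+1}-X_n$ with $\one_{\{T>n\}}=1$. The crucial observation is that $\{T>n\}=\{T\leq n\}^C\in\F_n$, so $\one_{\{T>n\}}$ is a bounded $\F_n$-measurable random variable. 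Applying the pull-out property of conditional expectation proved earlier (the theorem giving $\E[XY\mid\mathcal{G}]=X\,\E[Y\mid\mathcal{G}]$ when $X$ is $\mathcal{G}$-measurable and the relevant integrability holds), I obtain
$$\E[X_{(n+1)\land T} - X_{n\land T}\mid \F_n] = \one_{\{T>n\}}\,\E[X_{n+1} - X_n\mid \F_n]\qquad\text{a.s.}$$

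Finally I would read off the conclusion from the sign of $\E[X_{n+1}-X_n\mid\F_n]$. If $(X_n)_{n\geq 0}$ is a martingale, this conditional expectation is $0$, hence $\E[X_{(n+1)\land T}\mid\F_n]=X_{n\land T}$; if it is a submartingale the expression is $\geq 0$, and multiplying by the nonnegative factor $\one_{\{T>n\}}$ preserves the inequality, giving $\E[X_{(n+1)\land T}\mid\F_n]\geq X_{n\land T}$; the supermartingale case is identical with the inequality reversed. I do not anticipate any genuine obstacle; the only point demanding care is the case analysis behind the increment identity together with the observation that $\{T>n\}$ is $\F_n$-measurable (and not merely $\F_{n+1}$-measurable), since it is precisely this fact that lets the indicator factor pull out of the conditional expectation.
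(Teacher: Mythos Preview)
Your proof is correct and follows essentially the same route as the paper: both arguments hinge on the increment identity $X_{(n+1)\land T}-X_{n\land T}=\one_{\{T\geq n+1\}}(X_{n+1}-X_n)$ together with the observation that $\{T>n\}=\{T\leq n\}^C\in\F_n$, then pull the indicator out of the conditional expectation. You are more careful than the paper in spelling out integrability and adaptedness explicitly, whereas the paper's proof is terse and leaves those points implicit.
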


\begin{proof}

Note that 

\[
\{T\geq n+1\}=\{T\leq  n\}^C\in\F_n.
\]
Hence we have 

\[
\E[X_{n+1\land T}-X_{n\land T}\mid \F_n]=\E[(X_{n+1\land T}-X_{n\land T})\one_{\{T\geq n+1\}}\mid\F_n]=\one_{\{ T\geq n+1\}}\E[X_{n+1}-X_n\mid \F_n].
\]

If $(X_n)_{n\geq 0}$ is a martingale, we deduce that 

\[
\E[X_{n+1\land T}-X_{n\land T}\mid\F_n]=0.
\]

Moreover, $X_{n\land T}$ is $\F_n$-measurable. Therefore

\[
\E[X_{n+1\land T}\mid \F_n]=X_{n\land T}.
\]

The same holds for sub-and super martingales.

\end{proof}

\begin{thm}

Let $(\Omega,\F,(\F_n)_{n\geq0},\p)$ be a filtered probability space. Let $(X_n)_{n\geq 0}$ be a submartingale (resp. supermartingale) and let $S$ and $T$ be two bounded stopping times, such that $S\leq  T$ a.s. Then 

\[
\E[X_T\mid\F_S]\geq X_S\hspace{0.5cm}a.s.\hspace{0.3cm}(resp.\hspace{0.5cm} \E[X_T\mid \F_S]\leq  X_S\hspace{0.5cm}a.s.)
\]

\end{thm}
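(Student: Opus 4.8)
The plan is to reduce the submartingale statement to the martingale case already established, by isolating the monotone drift via Doob's decomposition. By that theorem we may write
$$X_n = X_0 + M_n + A_n,$$
where $(M_n)_{n\geq 0}$ is a martingale with $M_0 = 0$ and $(A_n)_{n\geq 0}$ is an increasing predictable process with $A_0 = 0$. Since $S$ and $T$ are bounded by some $C \in \N$, each of $X_T, M_T, A_T$ (and likewise the versions at $S$) is dominated by $\sum_{i=0}^{C}\left(\vert X_i\vert + \vert M_i\vert + \vert A_i\vert\right) \in L^1(\Omega,\F,\p)$, so that all conditional expectations appearing below are well defined.

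First I would apply linearity of the conditional expectation to obtain
$$\E[X_T \mid \F_S] = X_0 + \E[M_T \mid \F_S] + \E[A_T \mid \F_S].$$
For the martingale term, the optional stopping theorem for martingales, already proved above for bounded stopping times with $S \leq T$, gives $\E[M_T \mid \F_S] = M_S$ a.s. Since $X_0$ is $\F_0$-measurable and $\F_0 \subset \F_S$ (as $0 \leq S$ and $S \leq T \Rightarrow \F_S \subset \F_T$), the constant term is handled, and it remains only to control the increasing part.

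For the increasing part, because $A$ is adapted and $S$ is a finite stopping time, the proposition on measurability of stopped processes shows that $A_S$ is $\F_S$-measurable, whence $\E[A_S \mid \F_S] = A_S$ a.s. Moreover $S \leq T$ a.s. combined with $A_{n+1} \geq A_n$ a.s. yields $A_S \leq A_T$ a.s., so monotonicity of the conditional expectation gives
$$\E[A_T \mid \F_S] \geq \E[A_S \mid \F_S] = A_S \quad \text{a.s.}$$
Combining the three contributions,
$$\E[X_T \mid \F_S] = X_0 + M_S + \E[A_T \mid \F_S] \geq X_0 + M_S + A_S = X_S \quad \text{a.s.},$$
which is the claim. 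The supermartingale case follows immediately by applying the result to $(-X_n)_{n\geq 0}$, which is a submartingale, and reversing the inequality.

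The routine parts are the integrability and measurability bookkeeping; the only genuine content is the observation that Doob's decomposition separates off exactly the monotone drift $A_n$ that turns the martingale equality into a submartingale inequality. The main point to be careful about is ensuring that $A_S$ is genuinely $\F_S$-measurable, so that the monotonicity step lands on $A_S$ itself rather than on an unsimplifiable $\E[A_S \mid \F_S]$; this is precisely where the earlier measurability proposition for $X_T$ is indispensable.
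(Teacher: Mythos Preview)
Your proof is correct, but it takes a different route from the paper. The paper argues directly from the stopped process: assuming $(X_n)$ is a supermartingale, it fixes $A\in\F_S$, uses that $(X_{n\land T})_{n\geq 0}$ is again a supermartingale, and decomposes over the events $\{S=j\}$ to obtain
\[
\E[X_T\one_A]=\sum_{j=0}^{C}\E[X_{C\land T}\one_{A\cap\{S=j\}}]\leq\sum_{j=0}^{C}\E[X_{j\land T}\one_{A\cap\{S=j\}}]=\E[X_S\one_A],
\]
which yields the inequality for conditional expectations.

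Your argument instead invokes Doob's decomposition $X_n=X_0+M_n+A_n$ and reduces everything to the martingale optional stopping theorem already proved, plus the trivial monotonicity $A_S\leq A_T$. This is conceptually transparent: it isolates exactly where the inequality comes from (the increasing compensator), whereas the paper's computation hides this behind the stopped-process manipulation. On the other hand, the paper's method is more self-contained in that it uses only the supermartingale property of $X^T$, without appealing to the existence of a decomposition; in particular it would survive in settings where one has an optional sampling framework but no Doob decomposition at hand. Both proofs are short and both rely only on results established earlier in these notes, so the choice is largely a matter of taste.
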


\begin{proof}

Let us assume that $(X_n)_{n\geq 0}$ is a supermartingale. Let $A\in\F_S$ such that $S\leq  T\leq  C\in\N$. We already know that $(X_{n\land T})_{n\geq 0}$ is a supermartingale. Therefore we get 

\begin{align*}
\E[X_T\one_A]&=\sum_{j=0}^C\E[X_{\underbrace{C\land T}_{T}}\one_A\one_{\{S=j\}}]=\sum_{j=0}^C\E[X_{C\land T}\one_{\underbrace{A\cap\{S=j\}}_{\in\F_j}}]\\
&\leq  \sum_{j=0}^C\E[X_{j\land T}\one_{A\cap \{S=j\}}]=\sum_{j=0}^C\E[X_j\one_A\one_{\{S=j\}}]\\
&=\E\left[\sum_{j=0}^CX_j\one_{\{S=j\}}\one_A\right]=\E[X_S\one_A]=\E[X_T\mid\F_S]\leq  X_S.
\end{align*}
\end{proof}

\begin{cor}

Let $(\Omega,\F,(\F_n)_{n\geq0},\p)$ be a filtered probability space. Let $(X_n)_{n\geq 0}$ be a submartingale (resp. supermartingale) and let $T$ be a bounded stopping time. Then 

\[
\E[X_T]\geq \E[X_0]\hspace{0.5cm}(resp.\hspace{0.5cm}\E[X_T]\leq  \E[X_0]).
\]

Moreover, if $S\leq  T$, for $S$ and $T$ two bounded stopping times, we have

\[
\E[X_T]\geq \E[X_S]\hspace{0.5cm}(resp.\hspace{0.5cm}\E[X_T]\leq  \E[X_S]).
\]

\end{cor}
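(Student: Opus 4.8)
The plan is to deduce both statements directly from the preceding theorem on optional stopping for sub- and supermartingales, so that the whole argument reduces to applying that theorem and then integrating the resulting almost-sure inequality.

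First I would treat the submartingale case, and within it establish the more general ``moreover'' inequality first, since the first claim is merely its specialization. Let $S\leq T$ be two bounded stopping times and let $(X_n)_{n\geq 0}$ be a submartingale. The previous theorem applies verbatim and yields
\[
\E[X_T\mid\F_S]\geq X_S\quad\text{a.s.}
\]
Here $X_T$ and $X_S$ are integrable: since $T$ is bounded by some $C\in\N$ we have $\vert X_T\vert\leq\sum_{i=0}^{C}\vert X_i\vert\in L^1(\Omega,\F,\p)$, and likewise for $X_S$, so all expectations below are well defined. Next I would take expectations on both sides. Monotonicity of the expectation --- integrating an almost-sure inequality preserves it, which is a consequence of positivity applied to the difference --- gives $\E[\E[X_T\mid\F_S]]\geq\E[X_S]$, while the averaging identity $\E[\E[X_T\mid\F_S]]=\E[X_T]$ (the corollary $\E[\E[\cdot\mid\mathcal{G}]]=\E[\cdot]$ proved earlier) collapses the left-hand side. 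Hence $\E[X_T]\geq\E[X_S]$.

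For the first inequality I would simply take $S$ to be the constant stopping time $0$. Constant times are stopping times, as recorded in the earlier proposition; this $S$ is trivially bounded, and $0\leq T$ a.s.\ because $T$ takes values in $\bar\N$. Applying the inequality just obtained, and noting that $X_S=X_0$ since $S\equiv 0$, yields $\E[X_T]\geq\E[X_0]$.

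Finally, the supermartingale case requires no new work: either I invoke the supermartingale branch of the previous theorem, namely $\E[X_T\mid\F_S]\leq X_S$ a.s., and integrate exactly as above to obtain the reversed inequalities, or equivalently I apply the submartingale result to the process $(-X_n)_{n\geq 0}$, which is a submartingale, and multiply through by $-1$. There is no genuine obstacle here; the only points needing care are the bookkeeping that ensures $X_T,X_S\in L^1(\Omega,\F,\p)$ so that the expectations make sense, and the explicit invocation of monotonicity of the expectation together with the averaging identity $\E[\E[\cdot\mid\F_S]]=\E[\cdot]$ when passing from the conditional inequality to the unconditional one.
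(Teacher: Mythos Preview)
Your proof is correct and follows exactly the approach the paper intends: the corollary is stated without proof precisely because it is obtained by taking expectations in the preceding theorem $\E[X_T\mid\F_S]\geq X_S$ (resp.\ $\leq$), then specializing $S=0$ for the first claim. Your careful handling of integrability and the supermartingale case via negation or the parallel branch is entirely appropriate.
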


\begin{exer}

Let $(\Omega,\F,(\F_n)_{n\geq0},\p)$ be a filtered probability space. Let $X=(X_n)_{n\geq 0}$ be a supermartingale and let $T$ be a stopping time. Then $$X_T\in L^1(\Omega,\F,(\F_n)_{n\geq 0},\p)$$ and $$\E[X_T]\leq  \E[X_0]$$ in each case of the following situations.

\begin{enumerate}[$(i)$]
\item{$T$ is bounded.}
\item{$X$ is bounded and $T$ is finite.}
\item{$\E[T]<\infty$ and for some $k\geq 0$, we have 

\[
\vert X_n(\omega)-X_{n-1}(\omega)\vert\leq  k,
\]

for all $\omega\in\Omega$.

}
\end{enumerate}
\end{exer}

\section{Martingale inequalities}

Let $(\Omega,\F,(\F_n)_{n\geq0},\p)$ be a filtered probability space. Let $X=(X_n)_{n\geq 0}$ be a stochastic process, such that $X_n$ is $\F_n$-measurable for all $n\geq 0$. We denote

\[
X_n^*:=\sup_{j\leq  n}\vert X_j\vert.
\]

Note that $(X_n^*)_{n\geq 0}$ is increasing and $\F_n$-adapted. Therefore if $X_n\in L^1(\Omega,\F,(\F_n)_{n\geq 0},\p)$ for all $n\geq 0$, then $(X_n^*)_{n\geq 0}$ is a submartingale. 

\subsection{Maximal inequality and Doob's inequality}

Recall Markov's inequality in terms of $(X_n^*)_{n\geq0}$, which is given by 

\[
\p[X_n^*\geq \alpha]\leq  \frac{\E[X_n^*]}{\alpha},
\]

with the obvious bound 

\[
\E[X_n^*]\leq  \sum_{j=1}^n\E[\vert X_j\vert].
\]

We shall see for instance that when $(X_n)_{n\geq 0}$ is a martingale, one can replace $\E[X_n^*]$ by $\E[\vert X_n\vert]$. 

\begin{prop}
\label{prop1}
Let $(\Omega,\F,(\F_n)_{n\geq0},\p)$ be a filtered probability space. Let $(X_n)_{n\geq 0}$ be a submartingale and let $\lambda>0,k\in\N$. Define 

\begin{align*}
A&:=\left\{\max_{0\leq  n\leq  k}X_n\geq \lambda\right\}\\
B&:=\left\{\min_{0\leq  n\leq  k}X_n\leq  -\lambda\right\}.
\end{align*}

Then the following hold.

\begin{enumerate}[$(i)$]
\item{$$\lambda\p[A]\leq  \E[X_k\one_A],$$}
\item{$$\lambda\p[B]\leq  \E[X_k\one_{B^C}]-\E[X_0].$$}
\end{enumerate}
\end{prop}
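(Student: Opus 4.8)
The plan is to prove both estimates by slicing each event according to the first time the process reaches the relevant threshold, thereby reducing everything to the submartingale property on an $\F_n$-measurable set (for (i)) and to the optional stopping results of the previous section (for (ii)).

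For (i), I would partition $A$ by the first index at which $X_n$ attains the level $\lambda$. Set $A_n=\{X_0<\lambda,\dots,X_{n-1}<\lambda,\,X_n\geq\lambda\}$ for $0\leq n\leq k$. These sets are pairwise disjoint, each $A_n\in\F_n$, and $A=\bigsqcup_{n=0}^k A_n$. On $A_n$ one has $X_n\geq\lambda$, so $\lambda\p[A_n]\leq\E[X_n\one_{A_n}]$. The submartingale property gives $\E[X_k\mid\F_n]\geq X_n$ a.s., and since $A_n\in\F_n$ I would write $\E[X_n\one_{A_n}]\leq\E[\E[X_k\mid\F_n]\one_{A_n}]=\E[X_k\one_{A_n}]$. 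Summing over $n$ and using disjointness yields $\lambda\p[A]=\sum_{n=0}^k\lambda\p[A_n]\leq\sum_{n=0}^k\E[X_k\one_{A_n}]=\E[X_k\one_A]$, which is exactly (i).

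For (ii), I would introduce the bounded stopping time $S=\inf\{0\leq n\leq k : X_n\leq-\lambda\}\wedge k$. This is a stopping time because $\inf\{n : X_n\in(-\infty,-\lambda]\}$ is one (by the hitting-time part of the earlier proposition on stopping times) and the minimum with the constant $k$ is again a stopping time; moreover $S\leq k$, so $S$ is bounded. The point is that on $B$ the minimum drops below $-\lambda$, so $X_S\leq-\lambda$ there, while on $B^C$ the infimum is never attained and $S=k$, so $X_S=X_k$. Splitting the expectation accordingly gives
\[
\E[X_S]=\E[X_S\one_B]+\E[X_S\one_{B^C}]\leq-\lambda\p[B]+\E[X_k\one_{B^C}].
\]
Applying the corollary to the optional stopping theorem for submartingales (for a bounded stopping time $S$, $\E[X_S]\geq\E[X_0]$) gives $\E[X_0]\leq\E[X_S]$, and combining the two displays and rearranging produces $\lambda\p[B]\leq\E[X_k\one_{B^C}]-\E[X_0]$, which is (ii).

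The routine verifications (disjointness of the $A_n$, measurability of $A_n$, and that $S$ is a bounded stopping time) are straightforward. The step requiring the most care is the second one: correctly identifying $X_S$ on $B$ versus $B^C$, and invoking the optional stopping corollary in the right direction for a submartingale, since the inequality $\E[X_S]\geq\E[X_0]$ (rather than an equality, as for martingales) is precisely what makes the term $-\E[X_0]$ appear on the favourable side. This is the main obstacle; once it is handled, the rearrangement is immediate.
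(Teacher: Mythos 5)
Your proof is correct. For part $(ii)$ you follow essentially the same route as the paper: the same bounded stopping time $S=\inf\{n\leq k\mid X_n\leq -\lambda\}\land k$, the same identification $X_S\leq -\lambda$ on $B$ and $X_S=X_k$ on $B^C$, and the same appeal to $\E[X_0]\leq \E[X_S]$ for a bounded stopping time applied to a submartingale. For part $(i)$, however, you take a genuinely different and more elementary path: where the paper introduces the hitting time $T=\inf\{n\mid X_n\geq \lambda\}\land k$ and invokes the optional sampling inequality $\E[X_T]\leq \E[X_k]$, you instead slice $A$ into the disjoint first-entry sets $A_n=\{X_0<\lambda,\dots,X_{n-1}<\lambda,X_n\geq\lambda\}\in\F_n$ and apply the defining submartingale inequality $\E[X_k\mid\F_n]\geq X_n$ on each slice (using that $\one_{A_n}$ is $\F_n$-measurable to write $\E[X_n\one_{A_n}]\leq\E[X_k\one_{A_n}]$), then sum over $n$. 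The two arguments are close relatives, since the paper's stopping time $T$ equals $n$ precisely on your set $A_n$; but your version uses nothing beyond the definition of a submartingale and the tower property, so it is self-contained and independent of the stopping-time theorems of the earlier sections, whereas the paper's version buys uniformity: the identical stopping-time template handles $(i)$ and $(ii)$ in parallel and recycles the previously proved theorem $\E[X_T]\leq\E[X_C]$ for bounded $T\leq C$.
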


\begin{rem}

If $(X_n)_{n\geq 0}$ is a martingale, then $(\vert X_n\vert)_{n\geq 0}$ is a submartingale. Moreover, from $(i)$ we get 

\[
\lambda\p[X^*_k\geq \alpha]\leq  \E[\vert X_k\vert\one_A]\leq  \E[\vert X_k\vert] 
\]

and hence

\[
\p[X_k^*\geq \alpha]\leq  \frac{\E[\vert X_k\vert]}{\alpha}.
\]

\end{rem}

\begin{proof}[Proof of Proposition \ref{prop1}]

We need to show both points.

\begin{enumerate}[$(i)$]
\item{Let us introduce 

\[
T=\inf\{n\in\N\mid X_n\leq  \lambda\}\land k.
\]

Then $T$ is a stopping time, which is bounded by $k$. We thus have 

\[
\E[X_T]\leq  \E[X_k].
\]

We note that $X_T=X_k$ if $T=k$, which happens for $\omega\in A^C$. Hence we get

\[
\E[X_T]=\E[X_T\one_A+X_T\one_{A^C}]=\E[X_T\one_A]+\E[X_k\one_{A^C}]\leq  \underbrace{\E[X_k]}_{\E[X_k(\one_A+\one_{A^C})]}.
\]

Now we note that 

\[
\E[X_T\one_A]\geq \lambda \E[\one_A]=\lambda \p[A].
\]

Therefore we get 

\[
\lambda\p[A]\leq  \E[X_k\one_A].
\]

}

\item{Let us define 

\[
S=\inf\{n\in\N\mid X_n\leq  -\lambda\}\land k.
\]

Again $S$ is a stopping time, which is bounded by $k$. We hence have

\[
\E[X_S]\geq \E[X_0].
\]

Thus 

\[
\E[X_0]\leq  \E[X_S\one_B]+\E[X_S\one_{B^C}]\leq  -\lambda \p[B]+\E[X_k\one_{B^C}].
\]

Therefore we get 

\[
\lambda\p[B]\leq  \E[X_k\one_{B^C}]-\E[X_0].
\]

}

\end{enumerate}

\end{proof}

\begin{prop}[Kolmogorov's inequality]

Let $(\Omega,\F,(\F_n)_{n\geq0},\p)$ be a filtered probability space. Let $(X_n)_{n\geq 0}$ be a martingale, such that for all $n\geq 0$ we have $\E[X_n^2]<\infty$. Then 

\[
\p\left[\max_{0\leq  k\leq  n}\vert X_k\vert\geq \lambda\right]\leq  \frac{\E[X_k^2]}{\lambda^2}.
\]

\end{prop}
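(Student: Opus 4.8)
The plan is to reduce Kolmogorov's inequality to the maximal inequality for submartingales, Proposition \ref{prop1}$(i)$, applied to the squared process. The key preliminary observation is that although $(X_n)_{n\geq 0}$ is merely a martingale, the process $(X_n^2)_{n\geq 0}$ is a \emph{sub}martingale. This follows from the corollary stated earlier: since $x\mapsto x^2$ is convex and we assume $\E[X_n^2]<\infty$ for all $n$, Jensen's inequality for conditional expectations gives $\E[X_{n+1}^2\mid\F_n]\geq (\E[X_{n+1}\mid\F_n])^2 = X_n^2$ a.s., so $(X_n^2)_{n\geq 0}$ is indeed a submartingale adapted to $(\F_n)_{n\geq 0}$.

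Next I would translate the event of interest into a statement about the squared process. Because squaring is monotone on nonnegative reals, one has the identity of events
\[
\left\{\max_{0\leq k\leq n}\vert X_k\vert \geq \lambda\right\}=\left\{\max_{0\leq k\leq n}X_k^2\geq \lambda^2\right\}=:A,
\]
valid for $\lambda>0$. This rewriting is what lets the maximal inequality bite.

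Now I would apply Proposition \ref{prop1}$(i)$ to the submartingale $(X_k^2)_{0\leq k\leq n}$ with the threshold $\lambda^2$ in place of $\lambda$. That proposition yields
\[
\lambda^2\,\p[A]\leq \E\!\left[X_n^2\one_A\right].
\]
Finally, since $X_n^2\geq 0$ we have $\E[X_n^2\one_A]\leq \E[X_n^2]$, and dividing by $\lambda^2$ gives
\[
\p\!\left[\max_{0\leq k\leq n}\vert X_k\vert\geq \lambda\right]=\p[A]\leq \frac{\E[X_n^2]}{\lambda^2},
\]
which is the claimed bound (with the maximal index $n$ in the numerator).

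There is no serious analytic obstacle here: the whole argument is a direct composition of results already established in the excerpt. The only points demanding care are bookkeeping ones, namely correctly invoking the convexity corollary to promote $(X_n^2)$ to a submartingale, matching the level $\lambda^2$ rather than $\lambda$ when calling Proposition \ref{prop1}$(i)$, and keeping the running maximum indexed by the same $n$ that appears on the right-hand side. Once these are aligned, the inequality drops out immediately.
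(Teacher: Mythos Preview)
Your proof is correct and follows essentially the same approach as the paper: both promote $(X_n^2)_{n\geq 0}$ to a nonnegative submartingale via the convexity corollary, rewrite the event $\{\max_{0\leq k\leq n}|X_k|\geq\lambda\}$ as $\{\max_{0\leq k\leq n}X_k^2\geq\lambda^2\}$, and then invoke Proposition~\ref{prop1}$(i)$ at level $\lambda^2$. Your version is in fact slightly more explicit, and you are right that the correct index on the right-hand side is $n$ (the paper's statement has a typographical slip writing $X_k^2$).
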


\begin{proof}

We use the fact that $(X_n^2)_{n\geq 0}$ is a positive submartingale. Therefore we get 

\[
\lambda^2\underbrace{\p\left[\max_{0\leq  k\leq  n}\vert X_k\vert^2\geq \lambda^2\right]}_{\p\left[\max_{0\leq  k\leq  k}\vert X_k\vert\geq \lambda\right]}\leq  \E\left[X_k^2\one_{\left\{\max_{0\leq  k\leq  n}\vert X_k\vert^2\geq \lambda^2\right\}}\right]\leq  \E[X_k^2]
\]

\end{proof}

\begin{thm}[Maximal inequality]

Let $(\Omega,\F,(\F_n)_{n\geq0},\p)$ be a filtered probability space. Let $(X_n)_{n\geq 0}$ be a submartingale. Then for all $\lambda\geq 0$ and $n\in\N$, we get

\[
\lambda\p\left[\max_{0\leq  k\leq  n}\vert X_k\vert\geq \lambda\right]\leq  \E[X_0]+2\E[\vert X_n\vert].
\]

\end{thm}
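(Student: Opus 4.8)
The plan is to deduce this from the two-sided estimate of Proposition \ref{prop1} by splitting the two-sided maximum into an upper and a lower part. The first step is to record the set identity
$$\left\{\max_{0\leq k\leq n}\vert X_k\vert\geq \lambda\right\}=A\cup B,\qquad A:=\left\{\max_{0\leq k\leq n}X_k\geq \lambda\right\},\quad B:=\left\{\min_{0\leq k\leq n}X_k\leq -\lambda\right\},$$
which holds because $\vert X_k\vert\geq\lambda$ forces either $X_k\geq\lambda$ or $X_k\leq-\lambda$. By subadditivity of $\p$ it then suffices to control $\p[A]$ and $\p[B]$ separately, and this is exactly what Proposition \ref{prop1} delivers, applied with terminal index $n$ in place of its $k$.

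Next I would feed in the two conclusions of that proposition. Part $(i)$ gives $\lambda\p[A]\leq\E[X_n\one_A]$, and since $X_n\one_A\leq \vert X_n\vert$ pointwise this yields $\lambda\p[A]\leq\E[\vert X_n\vert]$. Part $(ii)$ gives $\lambda\p[B]\leq\E[X_n\one_{B^C}]-\E[X_0]$, and bounding $X_n\one_{B^C}\leq\vert X_n\vert$ gives $\lambda\p[B]\leq\E[\vert X_n\vert]-\E[X_0]$.

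Combining these via the union bound,
$$\lambda\p\left[\max_{0\leq k\leq n}\vert X_k\vert\geq\lambda\right]\leq \lambda\p[A]+\lambda\p[B]\leq 2\E[\vert X_n\vert]-\E[X_0],$$
and since $-\E[X_0]\leq \E[\vert X_0\vert]$ the claimed inequality follows (the $\E[X_0]$ on the right of the statement should be read as the first absolute moment $\E[\vert X_0\vert]$, since for a submartingale with, e.g., $X_0$ constant and negative the bare $\E[X_0]$ version fails).

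The only genuinely delicate point is the negative side: Proposition \ref{prop1}$(ii)$ naturally produces the extra $-\E[X_0]$ term and is phrased in terms of $B^C$ rather than $B$, because its underlying stopping-time argument stops the submartingale when it first drops to $-\lambda$, and it is that stopped endpoint at time $0$ which generates the $X_0$ contribution. Everything else — the set identity and the pointwise domination of $X_n\one_A$ and $X_n\one_{B^C}$ by $\vert X_n\vert$ — is routine.
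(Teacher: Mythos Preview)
Your proof is correct and follows essentially the same route as the paper's: decompose $\{\max_{0\leq k\leq n}|X_k|\geq\lambda\}=A\cup B$, apply both parts of Proposition \ref{prop1}, bound $X_n\one_A$ and $X_n\one_{B^C}$ by $|X_n|$, and finally pass from $-\E[X_0]$ to $\E[|X_0|]$. Your remark that the statement should read $\E[|X_0|]$ rather than $\E[X_0]$ is also correct---the paper's own proof in fact concludes with $\E[|X_0|]+2\E[|X_n|]$.
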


\begin{proof}

Let $A$ and $B$ be defined as in Proposition 9.1. Then 

\begin{align*}
\lambda \p\left[\max_{0\leq  k\leq  n}\vert X_k\vert \geq \lambda\right]&=\lambda\p[A\cup B]\leq \E[X_k\one_A]-\E[X_0]+\E[X_k\one_{B^C}]\\
&\leq \E[\vert X_0\vert]+\E[\vert X_n\vert]+\E[\vert X_n\vert]=\E[\vert X_0\vert]+2\E[\vert X_n\vert].
\end{align*}

\end{proof}

\begin{thm}[Doob's inequality]
\label{thm2}
Let $(\Omega,\F,(\F_n)_{n\geq0},\p)$ be a filtered probability space. Let $p>1$ and $q>1$, such that $\frac{1}{p}+\frac{1}{q}=1$. 

\begin{enumerate}[$(i)$]
\item{If $(X_n)_{n\geq 0}$ is a submartingale, then for all $n\geq 0$ we have 

\[
\left\|\max_{0\leq  k\leq  n}X_k^+\right\|_p\leq  q\left\|X_n^+\right\|_p.
\]
}
\item{If $(X_n)_{n\geq 0}$ is a martingale, then for all $n\geq 0$ we have 

\[
\left\|\max_{0\leq  k\leq  n}\vert X_k\vert\right\|_p\leq  q\| X_n\|_p.
\]

}
\end{enumerate}

\end{thm}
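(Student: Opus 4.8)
Let me sketch the proof.

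The plan is to prove part $(i)$ first, since part $(ii)$ follows immediately by applying $(i)$ to the submartingale $(|X_n|)_{n\geq 0}$ (which is a submartingale whenever $(X_n)_{n\geq 0}$ is a martingale, by the corollary following Jensen's inequality). So I focus on the submartingale case and the quantity $X_n^{*+}:=\max_{0\leq k\leq n}X_k^+$.

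The key input is the maximal-type bound from Proposition \ref{prop1}$(i)$, but sharpened into a form involving $X_n^+$ rather than $X_n$. First I would observe that since $(X_n)_{n\geq 0}$ is a submartingale and $x\mapsto x^+$ is convex increasing, $(X_n^+)_{n\geq 0}$ is a nonnegative submartingale, and $\{\max_{0\leq k\leq n}X_k\geq\lambda\}=\{X_n^{*+}\geq\lambda\}$ for $\lambda>0$. Then Proposition \ref{prop1}$(i)$ applied to $(X_n^+)_{n\geq 0}$ gives the crucial estimate
\[
\lambda\,\p[X_n^{*+}\geq\lambda]\leq \E\!\left[X_n^+\,\one_{\{X_n^{*+}\geq\lambda\}}\right],\qquad \lambda>0.
\]

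With this in hand, the standard device is the layer-cake (distribution function) representation of the $p$-th moment. Writing $M:=X_n^{*+}$, I would compute
\[
\E[M^p]=\int_0^\infty p\lambda^{p-1}\,\p[M\geq\lambda]\,d\lambda
\leq \int_0^\infty p\lambda^{p-2}\,\E\!\left[X_n^+\,\one_{\{M\geq\lambda\}}\right]d\lambda,
\]
using the maximal estimate above. Then I would apply Fubini's theorem to swap the $\lambda$-integral and the expectation, obtaining
\[
\E[M^p]\leq \E\!\left[X_n^+\int_0^M p\lambda^{p-2}\,d\lambda\right]
=\frac{p}{p-1}\,\E\!\left[X_n^+\,M^{p-1}\right]=q\,\E\!\left[X_n^+\,M^{p-1}\right],
\]
since $q=p/(p-1)$. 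Finally Hölder's inequality with exponents $p$ and $q$ gives $\E[X_n^+M^{p-1}]\leq \|X_n^+\|_p\,\|M^{p-1}\|_q=\|X_n^+\|_p\,(\E[M^p])^{1/q}$. Dividing by $(\E[M^p])^{1/q}$ (which requires $\E[M^p]<\infty$ and nonzero) yields $\|M\|_p=(\E[M^p])^{1/p}\leq q\|X_n^+\|_p$, which is exactly the claim.

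The main obstacle is the division step: one cannot divide by $(\E[M^p])^{1/q}$ unless one first knows this quantity is finite (otherwise the Hölder bound reads $\infty\leq\infty$ and is vacuous). I would handle this by a truncation argument — replace $M$ by $M\wedge N$ for a constant $N$, run the entire chain of inequalities with $M\wedge N$ in place of $M$ on the left (noting $\{M\wedge N\geq\lambda\}\subseteq\{M\geq\lambda\}$ so the maximal estimate still applies to the truncated level sets), so that $\E[(M\wedge N)^p]\leq N^p<\infty$ is guaranteed. This gives $\|M\wedge N\|_p\leq q\|X_n^+\|_p$ uniformly in $N$, and then letting $N\to\infty$ by monotone convergence recovers the full inequality. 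A secondary routine point is justifying the layer-cake identity and the Fubini swap, both valid because the integrand is nonnegative and measurable.
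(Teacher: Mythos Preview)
Your proof is correct and follows essentially the same route as the paper: reduce $(ii)$ to $(i)$ via the submartingale $(|X_n|)$, use the maximal estimate $\lambda\,\p[M\geq\lambda]\leq\E[X_n^+\one_{\{M\geq\lambda\}}]$, apply the layer-cake formula followed by Fubini and H\"older, and resolve the division-by-infinity issue through a truncation $M\wedge N$ and monotone convergence. The only cosmetic difference is that the paper carries the truncation from the outset rather than first writing the formal computation and then patching it.
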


\begin{rem}

Recall that if $X\in L^1(\Omega,\F,(\F_n)_{n\geq 0},\p)$, then 

\[
\|X\|_p=\E[\vert X\vert^p]^{1/p}.
\]

Moreover, if $p=q=2$ and $(X_n)_{n\geq 0}$ is a martingale, then for all $n\geq 0$ we have 

\[
\E\left[\max_{0\leq  k\leq  n} X_k^2\right]\leq  4\E[X_n^2].
\]

In general, we have 

\[
\vert X_n\vert^2\leq  \max_{0\leq  k\leq  n}\vert X_k\vert^p.
\]

Therefore we get 

\[
\E[\vert X_n\vert^p]\leq  \E\left[\max_{0\leq  k\leq  n}\vert X_k\vert^p\right]\leq ^{Doob}q^p\E[\vert X_n\vert^p].
\]

We shall also recall that for $X\in L^p(\Omega,\F,(\F_n)_{n\geq 0},\p)$, we can write 

\[
\E[\vert X\vert^p]=\E\left[\int_0^{\vert X\vert}p\lambda^{p-1}d\lambda\right]=\E\left[\int_0^\infty \one_{\{\vert X\vert\geq \lambda\}}p\lambda^{p-1}d\lambda\right]=\int_0^\infty p\lambda^{p-1}\p[\vert X\vert \geq \lambda]d\lambda
\]

by using Fubini's theorem.

\end{rem}

\begin{proof}[Proof of Theorem \ref{thm2}]

It is enough to prove $(ii)$. Since $(X_n)_{n\geq 0}$ is a submartingale, we know that $(X_n^+)_{n\geq 0}$ is a submartingale. Hence 

\[
\lambda\p\left[\max_{0\leq  k\leq  n}X_k^+\geq \lambda\right]\geq\E\left[X_n^+\one_{\{\max_{0\leq  k\leq  n}X_k^+\geq \lambda\}}\right].
\]

Now ler $Y_n:=\max_{0\leq  k\leq  n}X_k^+$. Then for any $k>0$, we have 

\begin{align*}
\E[(Y_n\land k)^p]&=\int_0^\infty p\lambda^{p-1}\p[Y_n\land k\geq \lambda]d\lambda=\int_0^np\lambda^{p-1}\p[Y_n\geq\lambda]d\lambda\\
&\leq  \int_0^n p\lambda^{p-1}\left(\frac{1}{\lambda}\E[X_n^+\one_{\{ Y_n\geq \lambda\}}]\right)d\lambda\\
&=\E\left[\int_0^n p\lambda^{p-1}X_n^+\one_{\{Y_n\geq\lambda\}}d\lambda\right]=\E\left[\int_0^{Y_n\land k}p\lambda^{p-2}X_n^+d\lambda\right]\\
&=\E\left[\frac{p}{p-1}(Y_n\land k)^{p-1}X_n^+\right]\\
&\leq  q\E[(X_n^+)^p]^{1/p}\E[(Y_n\land k)^p]^{1/q},
\end{align*}

where we have used that $q=\frac{p}{p-1}$ and Markov's inequality. Therefore we obtain

\[
\E[(Y_n\land k)^p]\leq  q\E[(X_n^+)^p]^{1/p}\E[(Y_n\land k)^p]^{1/q}.
\]

Since $\E[(Y_n\land k)^p]\not=0$, we can divide by it to get

\[
\E[(Y_n\land k)^p]^{1-1/q=1/p}\leq  q\E[(X_n^+)^p]^{1/p}
\]

and thus 

\[
\| (Y_k\land n)\|_p\leq  q\| X_n^+\|_p.
\]

Now for $k\to\infty$, monotone convergence implies that 

\[
\|Y_n\|_p\leq  q\|X_n^+\|_p.
\]

\end{proof}

\begin{cor}

Let $(\Omega,\F,(\F_n)_{n\geq0},\p)$ be a filtered probability space. Let $(X_n)_{n\geq 0}$ be a martingale and $p>1$, $q>1$ such that $\frac{1}{p}+\frac{1}{q}=1$. Then 

\[
\left\| \sup_{n\geq 0}\vert X_n\vert\right\|_p\leq  q\sup_{n\geq 0}\| X_n\|_p.
\]

\end{cor}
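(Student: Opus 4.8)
The plan is to deduce this directly from Doob's inequality (Theorem \ref{thm2} $(ii)$) by passing from the finite maxima $\max_{0\le k\le n}|X_k|$ to the full supremum $\sup_{n\ge0}|X_k|$ via monotone convergence. First I would introduce the notation $Y_n:=\max_{0\le k\le n}|X_k|$ and $Y:=\sup_{n\ge0}|X_n|$. The key structural observation is that $(Y_n)_{n\ge0}$ is a nondecreasing sequence of nonnegative random variables with $Y_n\uparrow Y$ pointwise as $n\to\infty$, and consequently $Y_n^p\uparrow Y^p$ as well.

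Next I would invoke Theorem \ref{thm2} $(ii)$: since $(X_n)_{n\ge0}$ is a martingale, for every fixed $n\ge0$ we have
\[
\|Y_n\|_p=\left\|\max_{0\le k\le n}|X_k|\right\|_p\le q\,\|X_n\|_p.
\]
Bounding the right-hand side crudely by its supremum over all indices, this yields the uniform estimate $\|Y_n\|_p\le q\,\sup_{m\ge0}\|X_m\|_p$ for every $n$. The point is that this bound does not depend on $n$, so it survives the passage to the limit.

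Finally, I would apply the monotone convergence theorem to the increasing sequence $Y_n^p\uparrow Y^p$, giving $\E[Y^p]=\lim_{n\to\infty}\E[Y_n^p]$, and hence $\|Y\|_p=\lim_{n\to\infty}\|Y_n\|_p$. Combining this with the uniform bound from the previous step gives
\[
\left\|\sup_{n\ge0}|X_n|\right\|_p=\|Y\|_p\le q\,\sup_{m\ge0}\|X_m\|_p,
\]
which is the claim.

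There is really no serious obstacle here; the only substantive point is the interchange of the $L^p$-norm with the supremum, and the proof hinges entirely on making that interchange rigorous through monotone convergence rather than on any new estimate. I would emphasize that the uniformity of the Doob bound in $n$ is exactly what allows the supremum on the right-hand side to appear, and note that if $\sup_{m\ge0}\|X_m\|_p=\infty$ the inequality is trivially true, so one may assume it finite without loss of generality.
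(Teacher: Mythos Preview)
Your argument is correct and is exactly the intended approach: the paper leaves this corollary as an exercise with the hint ``Use Doob's inequality,'' and you have carried out precisely that, applying Theorem \ref{thm2}$(ii)$ for each fixed $n$ and then passing to the supremum via monotone convergence on $Y_n^p\uparrow Y^p$.
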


\begin{proof}
Exercise\footnote{Use Doob's inequality.}

\end{proof}

%%%%%%%%%%%%%%%%%%%%%%%%%%%%%%%
%%%%%%%%%%%%%%%%%%%%%%%%%%%%%%%
%%%%%%%%%%%%%%%%%%%%%%%%%%%%%%%
%%%%%%%%%%%%%%%%%%%%%%%%%%%%%%%
%%%%%%%%%%%%%%%%%%%%%%%%%%%%%%%
%%%%%%%%%%%%%%%%%%%%%%%%%%%%%%%
%%%%%%%%%%%%%%%%%%%%%%%%%%%%%%%
%%%%%%%%%%%%%%%%%%%%%%%%%%%%%%%

% Tuesday 24.3.2015 

\section{Almost sure convergence for Martingales}

Let $(\Omega,\F,(\F_n)_{n\geq0},\p)$ be a filtered probability space. We start with a useful remark. If $(X_n)_{n\geq 0}$ is a submartingale, we get in particular that $X_n\in L^1(\Omega,\F,(\F_n)_{n\geq0},\p)$ for all $n\geq 0$. Moreover, we know that we can write 

\[
\E[X_n]=\E[X_n^+]-\E[X_n^-]
\]

and hence 

\[
\E[X_n^-]=\E[X_n^+]-\E[X_n].
\]

The submartingale property implies that $\E[X_0]\leq  \E[X_n]$ and thus 

\[
\E[X_n^-]\leq  \E[X_n^+]-\E[X_0].
\]

Therefore, if $\sup_{n\geq 0}\E[X_n^+]<\infty$, then $\E[X_n^-]\leq  \sup_{n\geq 0}\E[X_n^+]-\E[\vert X_0\vert]<\infty$. Since $\vert X_n\vert =X_n^++X_n^-$, we have that $\sup_{n\geq 0}\E[X_n^+]<\infty$ if and only if $\sup_{n\geq 0}\E[\vert X_n\vert]<\infty$.

\begin{lem}[Doob's upcrossing inequality]
\label{lem1}
Let $(\Omega,\F,(\F_n)_{n\geq0},\p)$ be a filtered probability space. Let $X=(X_n)_{n\geq 0}$ be a supermartingale and $a<b$ two real numbers. Then for all $n\geq 0$ we get 

\[
(b-a)\E[N_n([a,b],X)]\leq  \E[(X_n-a)^-],
\]

where $N_n([a,b],x)=\sup\{ k\geq 0\mid T_k(x)\leq  n\}$, i.e. the number of uncrossings of the interval $[a,b]$ by the sequence $x=(x_n)_n$ by time $n$, and $(T_k)_{k\geq 0}$ is a sequence of stoppping times. Moreover, as $n\to\infty$ we have 

\[
N_n([a,b],x)\uparrow N([a,b],x)=\sup\{ k\geq 0\mid T_k(x)<\infty\},
\]

i.e., the total number of up crossings of the interval $[a,b]$.

\end{lem}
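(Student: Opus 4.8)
The plan is to use the classical betting-strategy (martingale transform) argument. First I would make precise the stopping times $(T_k)$ implicit in the statement. Set $T_0=0$ and define recursively the hitting times
$$S_k=\inf\{n\geq T_{k-1}\mid X_n\leq a\},\qquad T_k=\inf\{n\geq S_k\mid X_n\geq b\},$$
with the convention $\inf\varnothing=\infty$. By the earlier proposition on hitting times $D_H$ (together with the closure properties of stopping times) each $S_k$ and $T_k$ is a stopping time, and $N_n([a,b],X)=\sup\{k\geq 0\mid T_k\leq n\}$ counts the completed upcrossings up to time $n$. The monotone statement $N_n\uparrow N$ as $n\to\infty$ is then immediate, since enlarging $n$ can only enlarge the set $\{k\mid T_k\leq n\}$. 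I would then introduce the predictable process $(C_n)_{n\geq 1}$ with values in $\{0,1\}$ given by
$$C_n=\sum_{k\geq 1}\one_{\{S_k<n\leq T_k\}},$$
which equals $1$ exactly on the time steps during which an upcrossing is being attempted, and form the discrete stochastic integral
$$Y_n=\sum_{k=1}^n C_k(X_k-X_{k-1}),\qquad Y_0=0.$$

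Next I would show that $Y=(Y_n)_{n\geq 0}$ is a supermartingale. Since $\{S_k<n\}=\{S_k\leq n-1\}\in\F_{n-1}$ and $\{T_k\leq n-1\}\in\F_{n-1}$, the event $\{C_n=1\}$ lies in $\F_{n-1}$, so $C_n$ is bounded and $\F_{n-1}$-measurable. Pulling it out of the conditional expectation (by the theorem $\E[XZ\mid\F_{n-1}]=X\,\E[Z\mid\F_{n-1}]$ for $\F_{n-1}$-measurable $X$, whose integrability hypotheses hold here since $C_n$ is bounded and the increments are in $L^1$) gives
$$\E[Y_n-Y_{n-1}\mid\F_{n-1}]=C_n\,\E[X_n-X_{n-1}\mid\F_{n-1}]\leq 0,$$
because $C_n\geq 0$ and $X$ is a supermartingale. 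As each $Y_n$ is integrable, this yields $\E[Y_n]\leq\E[Y_0]=0$.

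The heart of the argument is a pathwise lower bound on $Y_n$. Over each block $(S_k,T_k]$ with $T_k\leq n$ the process $C$ contributes the telescoped increment $X_{T_k}-X_{S_k}\geq b-a$, because $X_{S_k}\leq a$ and $X_{T_k}\geq b$; there are exactly $N_n([a,b],X)$ such completed blocks. If at time $n$ an upcrossing is still in progress, i.e. $S_k\leq n<T_k$ for some $k$, the corresponding partial contribution is $X_n-X_{S_k}\geq X_n-a\geq -(X_n-a)^-$, and otherwise it is $0\geq -(X_n-a)^-$. Summing the contributions gives
$$Y_n\geq (b-a)N_n([a,b],X)-(X_n-a)^-.$$
Taking expectations and combining with $\E[Y_n]\leq 0$ produces the claimed inequality
$$(b-a)\E[N_n([a,b],X)]\leq\E[(X_n-a)^-].$$

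The main obstacle is the bookkeeping in this last step: one must verify carefully that $C$ equals $1$ on precisely the disjoint blocks $(S_k,T_k]$ and $0$ elsewhere, so that $Y_n$ decomposes exactly as the sum of the completed excursions plus a single, possibly incomplete, final excursion, and that this final excursion is correctly bounded below by $-(X_n-a)^-$. Once the decomposition is made rigorous the estimate follows directly; the supermartingale/predictability half of the argument is entirely routine.
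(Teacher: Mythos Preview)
Your proof is correct. Both your argument and the paper's establish the same pathwise lower bound on the ``gains'' process and then show that its expectation is nonpositive, but the mechanisms for the latter differ. The paper writes the gains directly as the finite sum $\sum_{k=1}^n(X_{T_k\land n}-X_{S_k\land n})$ and invokes the optional stopping theorem for bounded stopping times (proved earlier in the chapter) term by term: since $S_k\land n\leq T_k\land n$ are bounded, $\E[X_{T_k\land n}]\leq\E[X_{S_k\land n}]$, whence the expected sum is $\leq 0$. You instead package the same quantity as the discrete stochastic integral $Y_n=(C\bullet X)_n$ of a predictable $\{0,1\}$-valued process against $X$, and use the one-line computation that such a transform of a supermartingale is again a supermartingale to get $\E[Y_n]\leq 0$. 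Your route avoids appealing to optional stopping and is entirely self-contained once predictability of $C$ is checked; the paper's route is shorter given the machinery already in place. The pathwise decomposition into completed excursions plus a final partial one, and the bound $-(X_n-a)^-$ on the latter, are identical in both proofs.
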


\begin{lem}
\label{lem4}
A sequence of real numbers $x=(x_n)_n$ converges in $\bar \R=\R\cup\{\pm\infty\}$ if and only if $N([a,b],x)<\infty$ for all rationals $a<b$. 
\end{lem}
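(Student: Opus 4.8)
The plan is to reduce everything to the standard characterization that a sequence $x = (x_n)_n$ converges in $\bar{\R}$ if and only if $\liminf_n x_n = \limsup_n x_n$ (allowing the common value to be $\pm\infty$), and then to relate the finiteness of the upcrossing counts $N([a,b], x)$ to the gap between $\liminf$ and $\limsup$. The whole statement will follow from the following bridge, which I would establish first: for fixed reals $a < b$, infinitely many upcrossings of $[a,b]$ force $\liminf_n x_n \leq a$ and $\limsup_n x_n \geq b$, while conversely $\liminf_n x_n < a$ together with $\limsup_n x_n > b$ forces $N([a,b], x) = \infty$.

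To prove this bridge I would unwind the definition of the upcrossing times $(T_k)_{k \geq 0}$ from Lemma \ref{lem1}: an upcrossing is completed each time the sequence, after visiting a value $\leq a$, later attains a value $\geq b$. If $N([a,b], x) = \infty$, then there are infinitely many indices $n$ with $x_n \leq a$ (the bottoms of the upcrossings) and infinitely many indices $n$ with $x_n \geq b$ (the tops), which immediately gives $\liminf_n x_n \leq a$ and $\limsup_n x_n \geq b$. Conversely, if $\liminf_n x_n < a$ and $\limsup_n x_n > b$, then both $\{n \mid x_n < a\}$ and $\{n \mid x_n > b\}$ are infinite, and I can select indices alternating between these two sets to exhibit infinitely many completed upcrossings, so $N([a,b], x) = \infty$.

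With the bridge in hand, both directions are short. For the forward direction, suppose $x_n \to \ell \in \bar{\R}$, so that $\liminf_n x_n = \limsup_n x_n = \ell$, and suppose toward a contradiction that $N([a,b], x) = \infty$ for some rationals $a < b$; the bridge yields $\ell \leq a < b \leq \ell$, which is absurd, so every rational interval is crossed only finitely often. For the reverse direction I would argue by contraposition: if $x$ fails to converge in $\bar{\R}$, then $\liminf_n x_n < \limsup_n x_n$, and by density of $\Q$ I may pick rationals $a < b$ with $\liminf_n x_n < a < b < \limsup_n x_n$; the bridge then gives $N([a,b], x) = \infty$, so the hypothesis ``$N([a,b], x) < \infty$ for all rational $a < b$'' fails.

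The main obstacle is purely the bridge lemma, that is, making the informal statement ``infinitely many upcrossings $\Longleftrightarrow$ the sequence oscillates across $[a,b]$'' completely rigorous from the stopping-time definition of $N_n([a,b], x)$; once the asymmetry between the closed conditions $\leq a$, $\geq b$ appearing in the definition and the strict conditions $< a$, $> b$ produced by $\liminf$ and $\limsup$ is handled, the remainder is routine. The restriction to rational $a, b$ is only a convenience, since it makes the family of ``bad'' intervals countable, which is what the later martingale applications require; it plays no essential role in the equivalence itself.
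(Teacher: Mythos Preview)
Your proposal is correct and follows essentially the same route as the paper: both directions are argued via the characterization $\liminf_n x_n = \limsup_n x_n$, with the forward direction by contradiction (infinitely many upcrossings of $[a,b]$ force $\liminf \leq a < b \leq \limsup$) and the reverse by contraposition (choose rationals $a<b$ strictly between $\liminf$ and $\limsup$). The paper's proof is simply terser, asserting your ``bridge'' facts without justification, whereas you spell them out.
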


\begin{proof}

Suppose that $x$ converges. Then if for some $a<b$ we had that $N([a,b],x)=\infty$, that would imply that $\liminf_n x_n\leq  a<b\leq  \limsup_nx_n$, which is a contradiction. Next suppose that $x$ does not converge. Then $\liminf_nx_n<\limsup_nx_n$ and so taking $a<b$ rationals between these two numbers gives that $N([a,b],x)=\infty$.

\end{proof}

\begin{proof}[Proof of Lemma \ref{lem1}]

We will omit the dependence on $X$ from $T_k$ and $S_k$ and we will write $N=N_n([a,b],X)$ to simplify notation. By the definition of the times $(T_k)_{k\geq 0}$ and $(S_k)_{k\geq 0}$, it is clear that for all $k$

\begin{equation}
X_{T_k}-X_{S_k}\geq b-a.
\end{equation}

We have

\begin{align*}
\sum_{k=1}^n(X_{T_k\land n}-X_{S_k\land n})&=\sum_{k=1}^N(X_{T_k}-X_{S_k})+\sum_{k=N+1}^n(X_n-X_{S_k\land n})\one_{\{N<n\}}\\
&=\sum_{k=1}^N(X_{T_k}-X_{S_k})+(X_n-X_{S_{N+1}})\one_{\{S_{N+1}\leq  n\}},
\end{align*}

since the only term contributing in the second sum appearing in the middle of the last equation chain is $N+1$, by the definition of $N$. Indeed, if $S_{N+2}\leq  n$, then that would imply that $T_{N+1}\geq n$, which would contradict the definition of $N$. using induction on $k\geq 0$, it is easy to see that $(T_k)_{k\geq 0}$ and $(S_k)_{k\geq 0}$ are stopping times. Hence for all $n\geq 0$, we have that $S_k\land n\leq  T_k\land n$ are bounded stopping times and thus we get that $\E[X_{S_k\land n}]\geq \E[X_{T_k\land n}]$, for all $k\geq 0$. Therefore, taking expectations in the equations above and using the inequality (9) we get 

\[
0\geq \E\left[\sum_{k=1}^n(X_{T_k\land n}-X_{S_k\land n})\right]\geq (b-a)\E[N]-\E[(X_n-a)^-],
\]

since $(X_n-X_{S_{N+1}})\one_{\{S_{N+1}\leq  n\}}\geq -(X_n-a)^-$. Rearranging gives the desired inequality.

\end{proof}

\begin{thm}[Almost sure martingale convergence theorem]

Let $(\Omega,\F,(\F_n)_{n\geq0},\p)$ be a filtered probability space. Let $X=(X_n)_{n\geq 0}$ be a submartingale such that $\sup_{n\geq 0}\E[\vert X_n\vert]<\infty$. Then the sequence $(X_n)_{n\geq 0}$ converges a.s. to a r.v. $X_\infty\in L^1(\Omega,\F_\infty,(\F_n)_{n\geq 0},\p)$ as $n\to\infty$, where $\F_\infty=\sigma\left(\bigcup_{n\geq 0}\F_n\right)$.

\end{thm}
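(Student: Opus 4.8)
The plan is to derive the almost sure convergence from the two preparatory lemmas: Doob's upcrossing inequality (Lemma \ref{lem1}) controls the expected number of upcrossings, while Lemma \ref{lem4} converts a bound on upcrossings into convergence in $\bar\R$. Since Lemma \ref{lem1} is phrased for supermartingales whereas the hypothesis here is that $X$ is a submartingale, the first move is to pass to the sequence $Y=(Y_n)_{n\geq 0}$ defined by $Y_n:=-X_n$, which is a supermartingale satisfying $\sup_{n\geq 0}\E[|Y_n|]=\sup_{n\geq 0}\E[|X_n|]<\infty$. It then suffices to prove that $Y$ converges a.s. to an integrable limit, since $X=-Y$ will inherit the same property. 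Both Lemma \ref{lem1} and the deterministic criterion Lemma \ref{lem4} will be applied to this single sequence $Y$, so no bookkeeping of up- versus down-crossings is needed.

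Next I would fix rationals $a<b$ and apply Lemma \ref{lem1} to $Y$, obtaining $(b-a)\E[N_n([a,b],Y)]\leq \E[(Y_n-a)^-]$. The key elementary estimate is $(Y_n-a)^-\leq |Y_n|+|a|$, so the right-hand side is bounded uniformly in $n$ by $\sup_{n\geq 0}\E[|Y_n|]+|a|<\infty$. Because $N_n([a,b],Y)\uparrow N([a,b],Y)$ as $n\to\infty$, monotone convergence yields $(b-a)\E[N([a,b],Y)]<\infty$, hence $N([a,b],Y)<\infty$ almost surely. Writing $\Omega_{a,b}=\{N([a,b],Y)=\infty\}$, each $\Omega_{a,b}$ is null, and the countable union $\Omega_0:=\bigcup_{a<b,\,a,b\in\Q}\Omega_{a,b}$ over all rational pairs is therefore also null.

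For $\omega\notin\Omega_0$ we have $N([a,b],Y)(\omega)<\infty$ for every rational pair $a<b$, so Lemma \ref{lem4} guarantees that $Y_n(\omega)$ converges in $\bar\R$, and equivalently so does $X_n(\omega)$. I would then define $X_\infty:=\limsup_n X_n$, setting it to $0$ on $\Omega_0$, so that $X_n\to X_\infty$ a.s. It remains to show $X_\infty\in L^1$, which in particular forces $X_\infty$ to be finite a.s. This is exactly where the boundedness hypothesis is indispensable: by Fatou's lemma, $\E[|X_\infty|]=\E[\liminf_n|X_n|]\leq \liminf_n\E[|X_n|]\leq \sup_{n\geq 0}\E[|X_n|]<\infty$. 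Finally, $X_\infty$ is $\F_\infty$-measurable as an a.s. limit of the $\F_n$-measurable (hence $\F_\infty$-measurable) variables $X_n$, giving $X_\infty\in L^1(\Omega,\F_\infty,\p)$.

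The main obstacle is organizational rather than conceptual: one must correctly align the upcrossing lemma (stated for supermartingales) with the submartingale hypothesis through the sign change $Y=-X$, and apply Lemma \ref{lem4} only on the full-measure complement of the countable union $\Omega_0$. The genuinely substantive point, distinguishing mere convergence in $\bar\R$ from convergence to a finite, integrable limit, is the Fatou estimate, which is precisely where the assumption $\sup_{n\geq 0}\E[|X_n|]<\infty$ does the real work.
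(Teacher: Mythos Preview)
Your proposal is correct and follows essentially the same route as the paper: bound the expected number of upcrossings via Lemma~\ref{lem1}, use monotone convergence and a countable union over rational pairs to get finiteness of all upcrossing counts on a set of full measure, invoke Lemma~\ref{lem4} for convergence in $\bar\R$, and finish with Fatou to obtain integrability of the limit. Your explicit passage to the supermartingale $Y=-X$ before applying Lemma~\ref{lem1} is in fact slightly more careful than the paper's own proof, which applies the supermartingale form of the upcrossing inequality directly to the submartingale $X$ without comment.
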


\begin{proof}

Let $a<b\in \Q$. By Doob's upcrossing inequality, we get that 

\[
\E[N_n([a,b],X)]\leq  (b-a)^{-1}\E[(X_n-a)^-]\leq  (b-a)^{-1}\E[\vert X_n\vert +a].
\]
By monotone convergence, since $N_n([a,b],X)\uparrow N([a,b],X)$ as $n\to\infty$, we get that 

\[
\E[N([a,b],X)]\leq  (b-a)^{-1}\left(\sup_n\E[\vert X_n\vert]+a\right)<\infty,
\]

by the assumption on $X$ being bounded in $L^1(\Omega,\F,(\F_n)_{n\geq 0},\p)$. Therefore, we get that $N([a,b],X)<\infty$ a.s. for every $a<b\in \Q$. Hence, 

\[
\p\left[\bigcap_{a<b\in \Q}\{ N([a,b),X)<\infty\}\right]=1.
\]

Writing $\Omega_0=\bigcap_{a<b\in\Q}\{ N([a,b),X)<\infty\}$, we have that $\p[\Omega_0]=1$ and by lemma \ref{lem4} on $\Omega_0$ we have that $X$ converges to a possible infinite limit $X_\infty$. So we can define 

\[
X_\infty=\begin{cases}\lim_{n\to\infty}X_n,& \text{ on $\Omega_0$,}\\ 0,&\text{ on $\Omega\setminus\Omega_0$,}\end{cases}
\]

Then $X_\infty$ is $\F_\infty$-measurable and by Fatou and the assumption on $X$ being in $L^1(\Omega,\F,(\F_n)_{n\geq 0},\p)$ we get 

\[
\E[\vert X_\infty\vert]=\E\left[\liminf_{n\to\infty} \vert X_n\vert\right]\leq  \liminf_{n\to\infty}\E[\vert X_n\vert]<\infty.
\]

Hence $X_\infty\in L^1(\Omega,\F_\infty,(\F_n)_{n\geq 0},\p)$.

\end{proof}

\begin{cor}

Let $(\Omega,\F,(\F_n)_{n\geq0},\p)$ be a filtered probability space. Let $(X_n)_{n\geq 0}$ be a nonnegative  supermartingale. Then $(X_n)_{n\geq 0}$ converges a.s. to a limit $X_\infty\in L^1(\Omega,\F_\infty,(\F_n)_{\infty},\p)$ and which satisfies 

\[
X_n\geq \E[X_\infty\mid \F_n]\hspace{0.5cm}a.s.
\]

\end{cor}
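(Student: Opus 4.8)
The plan is to deduce the almost sure convergence from the submartingale convergence theorem and then obtain the inequality by passing to the limit in the supermartingale relation with the help of the conditional Fatou lemma.

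First I would reduce the convergence statement to the Almost sure martingale convergence theorem. Since $(X_n)_{n\geq 0}$ is a supermartingale, the process $(-X_n)_{n\geq 0}$ is a submartingale, and I must verify the hypothesis $\sup_{n\geq 0}\E[\vert -X_n\vert]<\infty$. Because $X_n\geq 0$ we have $\vert X_n\vert=X_n$, and because the map $n\mapsto\E[X_n]$ is decreasing for a supermartingale we get $\E[X_n]\leq\E[X_0]<\infty$ for every $n$; hence $\sup_{n\geq 0}\E[\vert X_n\vert]=\E[X_0]<\infty$. Applying the theorem to $(-X_n)_{n\geq 0}$ yields a.s. convergence of $-X_n$ to an $L^1$ limit, so $X_n\to X_\infty$ a.s. with $X_\infty\in L^1(\Omega,\F_\infty,(\F_n)_{n\geq 0},\p)$. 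In particular $X_\infty\geq 0$ a.s., being an a.s. limit of nonnegative variables.

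Next I would fix $n\geq 0$ and exploit the supermartingale property along the tail. For every $m\geq n$ the defining inequality gives $\E[X_m\mid\F_n]\leq X_n$ a.s. Since $X_m\to X_\infty$ a.s. and each $X_m$ is nonnegative, I would apply the conditional version of Fatou's lemma: using $\liminf_m X_m=X_\infty$ a.s.,
\[
\E[X_\infty\mid\F_n]=\E\left[\liminf_{m\to\infty}X_m\,\Big|\,\F_n\right]\leq\liminf_{m\to\infty}\E[X_m\mid\F_n]\leq X_n\quad\text{a.s.}
\]
This is exactly the asserted inequality $X_n\geq\E[X_\infty\mid\F_n]$ a.s.

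The main obstacle is organizational rather than computational: one must be careful about the direction of the conditional Fatou inequality (the correct statement being $\E[\liminf_m Y_m\mid\mathcal{G}]\leq\liminf_m\E[Y_m\mid\mathcal{G}]$ for nonnegative $Y_m$) and must keep the index $n$ fixed while sending $m\to\infty$ along the tail $m\geq n$, so that each conditional expectation appearing in the $\liminf$ is genuinely bounded by the single random variable $X_n$. The nonnegativity of $(X_n)_{n\geq 0}$ is used twice, once to bound $\E[X_n]$ by $\E[X_0]$ and once to license the use of Fatou's lemma, so it is precisely the hypothesis that makes both parts go through.
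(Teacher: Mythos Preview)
Your proof is correct and follows essentially the same approach as the paper: pass to the submartingale $(-X_n)_{n\geq 0}$, verify the $L^1$ boundedness hypothesis (you check $\sup_n\E[\vert X_n\vert]<\infty$ directly, while the paper observes $(-X_n)^+=0$ so $\sup_n\E[(-X_n)^+]=0$; these are equivalent by the remark preceding the convergence theorem), apply the almost sure convergence theorem, and then obtain the inequality via conditional Fatou on $\E[X_m\mid\F_n]\leq X_n$ for $m\geq n$. Your presentation is in fact slightly more careful than the paper's, which states ``a.s.\ and $L^1$'' convergence even though only a.s.\ convergence is guaranteed here.
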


\begin{proof}

Note that $(-X_n)_{n\geq 0}$ is a submartingale, thus $(-X_n)^+=0$ for all $n\geq 0$, which implies that $\sup_{n\geq 0}\E[-X_n^+]=0<\infty$. Hence 

\[
X_n\xrightarrow{n\to\infty\atop \text{a.s. and $L^1$}}X_\infty\in L^1(\Omega,\F_\infty,(\F_n)_{n\geq 0},\p).
\]

Moreover, for all $m\geq n$ we have 

\[
X_n\geq \E[X_m\mid\F_n].
\]

By Fatou we get

\[
X_n\geq \liminf_{m\to\infty}\E[X_m\mid \F_n]\geq \E\left[\liminf_{m\to\infty}X_m\mid \F_n\right]=\E[X_\infty\mid \F_n]. 
\]

\end{proof}

\begin{ex}[Simple random walk on $\mathbb{Z}$]

Let $Y_n=1+Z_1+\dotsm +Z_n$, for $Z_j$ iid with $\p\left[Z_j=\pm1\right]=\frac{1}{2}$, $Y_0=1$, $\F_0=\{\varnothing,\Omega\}$ and $\F_n=\sigma(Z_1,...,Z_n)$. Then we have already seen that that $(Y_n)_{n\geq 0}$ is a martingale. Let $T=\inf\{n\geq 0\mid Y_n=0\}$. We need to show that $T<\infty$ a.s. Let $X_n=Y_{n\land T}$. Then $(X_n)_{n\geq 0}$ is also a martingale. Moreover, $X_n\geq 0$ for all $n\geq 0$ and $(X_n)_{n\geq 0}$ converges a.s. to a r.v. $X_\infty\in L^1(\Omega,\F_\infty,(\F_n)_{n\geq 0},\p)$. Since $X_n=Y_{n\land T}$, we get $X_\infty=Y_T$. The convergence of $(X_n)_{n\geq 0}$ implies that $T<\infty$ a.s., indeed on the set $\{T=\infty\}$ we get $\vert X_{n+1}-X_n\vert=1$. Consequently, on $\{T=\infty\}$, we get that $(X_n)_{n\geq 0}$ is not a Cauchy sequence and therefore cannot converge. This implies that $\p[T=\infty]=0$ and thus $T<\infty$. Hence 

\[
\lim_{n\to\infty}X_n=Y_T=0.
\]

We also note that $\E[X_n]=1>\E[X_\infty]=0$ for all $n\geq 0$ and so $X_n$ does not converge to $X_\infty$ in $L^1$.

\end{ex}
%%%%%%%%%%%%%%%%%%%%%%%%%%%%%%%
%%%%%%%%%%%%%%%%%%%%%%%%%%%%%%%
%%%%%%%%%%%%%%%%%%%%%%%%%%%%%%%
%%%%%%%%%%%%%%%%%%%%%%%%%%%%%%%
%%%%%%%%%%%%%%%%%%%%%%%%%%%%%%%
%%%%%%%%%%%%%%%%%%%%%%%%%%%%%%%
%%%%%%%%%%%%%%%%%%%%%%%%%%%%%%%
%%%%%%%%%%%%%%%%%%%%%%%%%%%%%%%

\section{$L^p$-convergence for Martingales}

\begin{thm}
\label{thm6}
Let $(\Omega,\F,(\F_n)_{n\geq0},\p)$ be a filtered probability space. Let $(X_n)_{n\geq 0}$ be a martingale. Then $(X_n)_{n\geq0}$ converges a.s. to a r.v. $X_\infty\in L^1(\Omega,\F_\infty,(\F_n)_{n\geq0},\p)$ if and only if there exists a r.v. $Z\in L^1(\Omega,\F,(\F_n)_{n\geq 0},\p)$ such that $X_n=\E[Z\mid \F_n]$ for all $n\geq0$, where $\F_\infty=\sigma\left(\bigcup_{n\geq 0}\F_n\right)$.

\end{thm}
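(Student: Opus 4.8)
The plan is to prove the two implications separately, using $Z=X_\infty$ as the natural candidate in the forward direction and combining Example \ref{ex1} with the almost sure martingale convergence theorem in the reverse direction. The common thread is the defining relation of the conditional expectation $\E[X_m\one_A]=\E[\E[X_\infty\mid\F_m]\one_A]$ for $A\in\F_m$, so at bottom the whole proof reduces to identifying $X_m$ with $\E[X_\infty\mid\F_m]$.

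For the reverse implication ($\Leftarrow$), I would assume $X_n=\E[Z\mid\F_n]$ for some $Z\in L^1(\Omega,\F,\p)$. By the corollary giving $\E[\vert\E[Z\mid\F_n]\vert]\leq\E[\vert Z\vert]$, we get $\E[\vert X_n\vert]\leq\E[\vert Z\vert]<\infty$ uniformly in $n$, hence $\sup_{n\geq 0}\E[\vert X_n\vert]<\infty$. The almost sure martingale convergence theorem then produces an a.s. limit $X_\infty\in L^1(\Omega,\F_\infty,\p)$. This direction is the easy one and needs nothing beyond quoting the earlier results.

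For the forward implication ($\Rightarrow$), I would set $Z=X_\infty$ and attempt to verify $X_m=\E[X_\infty\mid\F_m]$ for every $m$. The martingale property gives, for each $A\in\F_m$ and every $n\geq m$,
\[
\E[X_n\one_A]=\E[X_m\one_A].
\]
The goal is to pass to the limit $n\to\infty$ on the left and conclude $\E[X_\infty\one_A]=\E[X_m\one_A]$, which, since $X_m$ is $\F_m$-measurable, is exactly the relation defining $X_m=\E[X_\infty\mid\F_m]$, and hence exhibits $Z=X_\infty$ as the desired representing variable.

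The main obstacle is precisely this interchange of limit and expectation, $\E[X_n\one_A]\to\E[X_\infty\one_A]$. Almost sure convergence of $X_n$ to $X_\infty$ together with $X_\infty\in L^1$ is not by itself enough to justify it; one must upgrade almost sure convergence to convergence in $L^1$, equivalently establish that the family $(X_n)_{n\geq 0}$ is uniformly integrable. Thus the crux of the argument is a uniform integrability estimate: I would aim to show $\sup_n\E[\vert X_n\vert\one_{\{\vert X_n\vert>K\}}]\to 0$ as $K\to\infty$, and then combine uniform integrability with the a.s. convergence (via the standard Vitali-type argument) to obtain the $L^1$ limit. Once $L^1$ convergence is secured, the passage to the limit above is immediate and the identification $Z=X_\infty$ closes the proof; I expect that establishing this uniform integrability is where the real work lies.
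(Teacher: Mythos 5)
Your reverse direction is correct and matches the paper: $X_n=\E[Z\mid\F_n]$ gives $\E[\vert X_n\vert]\leq \E[\vert Z\vert]$ uniformly in $n$, and the almost sure martingale convergence theorem applies. The forward direction, however, has a genuine gap, and it sits exactly where you located the ``real work.'' You propose to \emph{derive} the uniform integrability $\sup_n\E[\vert X_n\vert\one_{\{\vert X_n\vert>K\}}]\xrightarrow{K\to\infty}0$ from the hypothesis that $X_n\to X_\infty$ a.s.\ with $X_\infty\in L^1$. This cannot be done, because it is false: the paper's own example of the stopped simple random walk ($Y_n=1+Z_1+\dotsm+Z_n$ with $\p[Z_j=\pm1]=\frac{1}{2}$, $T=\inf\{n\geq 0\mid Y_n=0\}$, $X_n=Y_{n\land T}$) is a nonnegative martingale converging a.s.\ to $X_\infty=0\in L^1$, yet $\E[X_n]=1$ for all $n$, so $L^1$ convergence, and equivalently u.i., fails. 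Moreover this $(X_n)_{n\geq 0}$ is not of the form $\E[Z\mid\F_n]$: if it were, your own easy direction (together with the corollary identifying the limit as $\E[Z\mid\F_\infty]$, which gives $L^1$ convergence) would force $1=\E[X_n]\to\E[X_\infty]=0$. So the theorem as literally stated is false, and the paper's proof quietly strengthens the hypothesis: in the forward direction it asserts ``$X_m\xrightarrow{L^1}X_\infty$'' and uses that to pass to the limit in $\E[X_n\one_A]=\E[X_m\one_A]$ for $A\in\F_n$. The intended statement (see the summary remark after Theorem \ref{thm3}) is that $(X_n)_{n\geq 0}$ converges a.s.\ \emph{and in $L^1$} if and only if it is regular; under that hypothesis your limit interchange is immediate and no u.i.\ estimate is needed --- uniform integrability is part of the assumption here, not a consequence to be extracted from a.s.\ convergence.

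One further point: if you adopt the corrected statement, your reverse direction becomes incomplete as written, since quoting the a.s.\ convergence theorem yields only almost sure convergence. The paper supplies the missing $L^1$ convergence by a truncation argument: first treat bounded $Z$ (where $\vert X_n\vert\leq M$ and dominated convergence applies), then approximate general $Z$ by $Z\one_{\{\vert Z\vert\leq M\}}$ with $\E[\vert Z-Z\one_{\{\vert Z\vert\leq M\}}\vert]<\varepsilon$ and use the $L^1$-contraction $\E[\vert\E[W\mid\F_n]\vert]\leq\E[\vert W\vert]$ to conclude that $(X_n)_{n\geq 0}$ is Cauchy in $L^1$. Alternatively, you may quote the corollary from the uniform integrability section stating that $\{\E[Z\mid\mathcal{G}]\mid\mathcal{G}\subset\F \text{ a } \sigma\text{-Algebra}\}$ is u.i.\ for fixed $Z\in L^1$; either route closes that half.
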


\begin{rem}

We shall see that one can always represent $X_n$ as 

\[
X_n=\E[X_\infty\mid \F_n].
\]
\end{rem}

\begin{proof}[Proof of Theorem \ref{thm6}]

For the left right implication, we note that for all $m\geq n$ and for all $A\in\F_n$ we have

\[
\E[X_n\one_A]=\E[X_m\one_A].
\]

Therefore we se that $X_m\xrightarrow{m\to\infty\atop L^1}X_\infty$, implies that 

\[
\lim_{m\to\infty}\E[X_m\one_A]=\E[X_\infty\one_A]
\]

and therefore 

\[
X_n=\E[X_\infty\mid\F_n].
\]

For the left implication, we see that if $X_n=\E[Z\mid \F_n]$, then 

\[
\vert X_n\vert\leq \E[\vert Z\vert\mid \F_n]
\]

and thus

\[
\E[\vert X_n\vert]\leq  \E[\vert Z\vert].
\]

This implies that 

\[
\sup_{n\geq 0}\E[\vert X_n\vert]\leq  \E[\vert Z\vert]<\infty.
\]

Hence we know now that $X_n\xrightarrow{n\to\infty_ a.s.}X_\infty$. It remains to show that 

\[
X_n\xrightarrow{n\to\infty\atop L^1}X_\infty.
\]

First, we assume that $Z$ is bounded, i.e. for all $\omega\in\Omega$, 

\[
\vert Z(\omega)\vert\leq  M\in\R_+.
\]

Hence $\vert X_n(\omega)\vert\leq  M$ and thus $L^1$-convergence follows from dominated convergence. For the general case, let $\varepsilon>0$ and $M\in\R_+$, such that 

\[
\E[\vert Z-Z\one_{\{ \vert Z\vert \leq  M\}}\vert]<\varepsilon.
\]

Thus, for all $n\geq 0$

\[
\E[\vert X_n-\E[Z\one_{\{ \vert Z\vert \leq  M\}}\mid \F_n]\vert]\leq  \E[\E[\vert Z\vert \one_{\{\vert Z\vert >M\}}\mid \F_n]]=\E[\vert Z\vert \one_{\{ \vert Z\vert >M\}}]<\varepsilon.
\]

Moreover, from the bounded case, it follows that 

\[
\left(\E[Z\one_{\{\vert Z\vert \leq  M\}}\mid \F_n]\right)_{n\geq 0}
\]

converges in $L^1$. Hence, there exists $n_0\in\N$ such that for all $m,n\geq n_0$, we have

\[
\E[\vert \E[Z\one_{\|\vert Z\vert \leq  M\}}\mid\F_m]-\E[Z\one_{\{\vert Z\vert \leq  M\}}\mid \F_n]\vert]<\varepsilon.
\]

Now a simple application of the triangular inequality and the above estimates gives, for all $m,n\geq n_0$

\[
\E[\vert X_m-X_n\vert]\leq  3\varepsilon. 
\]

Therefore $(X_n)_{n\geq 0}$ is a Cauchy sequence in $L^1$ and hence it converges in $L^1$.
\end{proof}

\begin{cor}
\label{cor1}
Let $(\Omega,\F,(\F_n)_{n\geq0},\p)$ be a filtered probability space. Let $Z\in L^1(\Omega,\F,(\F_n)_{n\geq 0},\p)$. The unique martingale $X_n=\E[Z\mid \F_n]$ converges a.s. and in $L^1$ to $X_\infty=\E[Z\mid \F_\infty]$, where $\F_\infty=\sigma\left(\bigcup_{n\geq 0}\F_n\right)$.

\end{cor}

\begin{proof}

First, we note that $X_\infty$ is $\F_\infty$-measurable. Now choose $A\in\F_n$. Then 

\[
\lim_{n\to\infty}\E[Z\one_A]=\lim_{n\to\infty}\E[X_n\one_A]=\E[X_\infty\one_A],
\]

and hence for all $A\in \bigcup_{n\geq 0}\F_n$, 

\[
\E[Z\one_A]=\E[X_\infty\one_A].
\]

The monotone class theorem implies that for every $A\in\F_\infty$, 

\[
\E[Z\one_A]=\E[X_\infty\one_A], 
\]

which implies that 

\[
X_\infty=\E[Z\mid \F_\infty].
\]

\end{proof}

\begin{exer}

Prove Kolmogorov's 0-1 law with corollary \ref{cor1}.

\end{exer}

\begin{thm}[$L^p$ martingale convergence theorem]
\label{thm3}
Let $(\Omega,\F,(\F_n)_{n\geq0},\p)$ be a filtered probability space. Let $(X_n)_{n\geq 0}$ be a martingale. Assume that there exists $p\geq 1$ such that 

\[
\sup_{n\geq 0}\E[\vert X_n\vert^p]<\infty.
\]

Then 

\[
X_n\xrightarrow{n\to\infty\atop \text{$a.s.$ and $L^p$}}X_\infty.
\]

Moreover, we have 

\[
\E[\vert X_\infty\vert^p]=\sup_{n\geq 0}\E[\vert X_n\vert^p]
\]

and 

\[
\E[(X_\infty^*)^p]\leq \left( \frac{p}{p-1}\right)^p\E[\vert X_\infty\vert^p],
\]

where 

\[
X_\infty^*=\sup_{n\geq 0}\vert X_n\vert.
\]

\end{thm}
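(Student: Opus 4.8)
The plan is to establish almost sure convergence first, then upgrade it to $L^p$ convergence by producing an integrable dominating function via Doob's inequality, and finally to read off the two moment identities. Note that the stated constant $\frac{p}{p-1}$ only makes sense for $p>1$, which is the relevant range.

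First I would observe that boundedness in $L^p$ forces boundedness in $L^1$: applying Jensen to the convex map $x\mapsto \vert x\vert^p$ (equivalently, the inclusion $L^p\subset L^1$) gives $\E[\vert X_n\vert]\leq \E[\vert X_n\vert^p]^{1/p}$, so $\sup_{n\geq 0}\E[\vert X_n\vert]<\infty$. Since a martingale is in particular a submartingale, the almost sure martingale convergence theorem applies and yields a r.v. $X_\infty\in L^1(\Omega,\F_\infty,(\F_n)_{n\geq 0},\p)$ with $X_n\to X_\infty$ a.s.

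Next I would control the maximal function $X_\infty^*=\sup_{n\geq 0}\vert X_n\vert$, which is the heart of the argument. Writing $X_n^*=\max_{0\leq k\leq n}\vert X_k\vert$, Doob's inequality (Theorem \ref{thm2}, part $(ii)$) with conjugate exponent $q=\frac{p}{p-1}$ gives $\E[(X_n^*)^p]\leq q^p\E[\vert X_n\vert^p]\leq q^p\sup_{m\geq 0}\E[\vert X_m\vert^p]$. Since $X_n^*\uparrow X_\infty^*$, monotone convergence yields
$$\E[(X_\infty^*)^p]\leq q^p\sup_{n\geq 0}\E[\vert X_n\vert^p]<\infty,$$
so $X_\infty^*\in L^p$. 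Because $\vert X_n\vert\leq X_\infty^*$ for every $n$ and $\vert X_\infty\vert=\lim_n\vert X_n\vert\leq X_\infty^*$ a.s., we obtain the domination $\vert X_n-X_\infty\vert^p\leq 2^p(X_\infty^*)^p\in L^1$. As $\vert X_n-X_\infty\vert^p\to 0$ a.s., the dominated convergence theorem gives $\E[\vert X_n-X_\infty\vert^p]\to 0$, i.e. $X_n\to X_\infty$ in $L^p$.

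It remains to identify the moments. Since $x\mapsto \vert x\vert^p$ is convex and $(X_n)_{n\geq 0}$ is a martingale, $(\vert X_n\vert^p)_{n\geq 0}$ is a submartingale, so $n\mapsto \E[\vert X_n\vert^p]$ is increasing and $\sup_{n\geq 0}\E[\vert X_n\vert^p]=\lim_{n\to\infty}\E[\vert X_n\vert^p]$. The $L^p$ convergence forces $\|X_n\|_p\to\|X_\infty\|_p$, hence $\E[\vert X_n\vert^p]\to\E[\vert X_\infty\vert^p]$, giving $\sup_{n\geq 0}\E[\vert X_n\vert^p]=\E[\vert X_\infty\vert^p]$. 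Substituting this into the maximal bound produces
$$\E[(X_\infty^*)^p]\leq \left(\frac{p}{p-1}\right)^p\E[\vert X_\infty\vert^p],$$
as claimed. The only genuine obstacle is the $L^p$ step: all the work lies in showing $X_\infty^*\in L^p$ so that dominated convergence may be invoked, and this is exactly where Doob's inequality together with monotone convergence does the heavy lifting.
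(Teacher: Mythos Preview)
Your proposal is correct and follows essentially the same route as the paper: reduce to $L^1$-boundedness for a.s.\ convergence, use Doob's inequality to put $X_\infty^*$ in $L^p$, apply dominated convergence with the bound $|X_n-X_\infty|^p\le 2^p(X_\infty^*)^p$ for $L^p$-convergence, and then read off the moment identity from the submartingale property of $(|X_n|^p)_{n\ge 0}$. If anything, you are slightly more explicit than the paper in spelling out the monotone-convergence passage from $X_n^*$ to $X_\infty^*$ and in deducing $\|X_n\|_p\to\|X_\infty\|_p$ from $L^p$-convergence.
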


\begin{rem}

Let us summarize what we have seen so far.

\begin{itemize}
\item{If $(X_n)_{n\geq 0}$ is bounded in $L^1$, we get $X_n\xrightarrow{n\to\infty\atop a.s.}X_\infty\in L^1$.}
\item{$X_n\xrightarrow{n\to\infty\atop \text{$a.s.$ and $L^1$}}X_\infty$ if and only if $X_n=\E[X_\infty\mid \F_n]$.}
\item{If $(X_n)_{n\geq 0}$ is bounded in $L^p$ with $p\geq 1$, then $X_n\xrightarrow{n\to\infty\atop\text{$a.s.$ and $L^p$}}X_\infty$.}
\end{itemize}
\end{rem}

\begin{proof}[Proof of Theorem \ref{thm3}]

We first note that since $\sup_{n\geq 0}\E[\vert X_n\vert^p]<\infty$, we also have that 

\[
\sup_{n\geq 0}\E[\vert X_n\vert]<\infty.
\]

Thus $X_n\xrightarrow{n\to\infty\atop a.s.}X_\infty$. From Doob's inequality, we get 

\[
\E[(X_\infty^*)^p]\leq  \left(\frac{p}{p-1}\right)^p\sup_{n\geq 0}\E[\vert X_n\vert^p]<\infty
\]

and therefore $X_\infty^*\in L^p(\Omega,\F,(\F_n)_{n\geq 0},\p)$. Moreover, for all $n\geq 0$ we get $\vert X_n\vert\leq  X_\infty^*$ and 

\[
\vert X_n-X_\infty\vert^p\leq  2^p(X_\infty^*)p.
\]

Using dominated convergence, we get

\[
\E[\vert X_n-X_\infty\vert^p]\xrightarrow{n\to\infty}0
\]

and thus

\[
X_n\xrightarrow{n\to\infty\atop L^p}X_\infty.
\]

Finally, we note that $(\vert X_n\vert^p)_{n\geq 0}$ is a positive submartingale. Hence we know that 

\[
(\E[\vert X_n\vert^p])_{n\geq 0}
\]

is increasing, which implies that 

\[
\E[\vert X_\infty\vert^p]=\lim_{n\to\infty}\E[\vert X_n\vert^p]=\sup_{n\geq 0}\E[\vert X_n\vert^p].
\]

\end{proof}

\section{Uniform integrability}

\begin{defn}[Uniformly integrable]

Let $(\Omega,\F,(\F_n)_{n\geq0},\p)$ be a filtered probability space. A family $(X_i)_{i\in I}$ of r.v.'s in $L^1(\Omega,\F,(\F_n)_{n\geq 0},\p)$, indexed by an arbitrary index set $I$, is called uniformly integrable (denoted by u.i.) if 

\[
\lim_{a\to\infty}\sup_{i\in I}\E[\vert X_i\vert \one_{\{\vert X_i\vert >a\}}]=0.
\]

\end{defn}

\begin{rem}

A singel r.v. in $L^1(\Omega,\F,(\F_n)_{n\geq0},\p)$ is always u.i. (this follows from dominated convergence). If $\vert I\vert<\infty$, then using 

\[
\vert X_i\vert \leq  \sum_{j\in I}\vert X_j\vert,
\]

we get 

\[
\E[\vert X_i\vert \one_{\{\vert X_i\vert >a\}}]\leq  \sum_{j\in I}\E[\vert X_j\vert\one_{\{ X_j\vert>a\}}]\xrightarrow{a\to\infty}0.
\]

Let $(X_i)_{i\in I}$ be u.i. For $a$ large enough, we then have 

\[
\sup_{i\in I}\E[\vert X_i\vert \one_{\{\vert X_i\vert >a\}}]\leq  1.
\]

Hence 

\[
\sup_{i\in I}\E[\vert X_i\vert]=\sup_{i\in I}\E[\vert X_i\vert (\one_{\{\vert X_i\vert\leq  a\}}+\one_{\{ \vert X_i\vert >a\}})]\leq  1+a<\infty,
\]

which implies that $(X_i)_{i\in I}$ is bounded in $L^1(\Omega,\F,(\F_n)_{n\geq 0},\p)$.
\end{rem}

\begin{ex}

Let $Z\in L^1(\Omega,\F,(\F_n)_{n\geq0},\p)$. Then the family 

\[
\Theta=\{X\in L^1(\Omega,\F,(\F_n)_{n\geq 0},\p)\mid \vert X\vert\leq  Z\}
\]

is u.i. Indeed, we have 

\[
\sup_{X\in\Theta}\E[\vert X\vert \one_{\{\vert X\vert>a\}}]\leq  \E[Z\one_{\{Z>a\}}]\xrightarrow{a\to\infty}0.
\]

\end{ex}
\begin{ex}

Let $\phi:\R_+\to\R_+$ be a measurable map, such that 

\[
\frac{\phi(x)}{x}\xrightarrow{x\to\infty}\infty.
\]

Then for all $C>0$, the family 

\[
\Theta_C=\{X\in L^1(\Omega,\F,(\F_n)_{n\geq0},\p)\mid \E[\phi(\vert X\vert)]\leq  C\}
\]

is u.i. Indeed, for $a$ large enough, we have

\[
\E[\vert X\vert \one_{\{\vert X\vert >a\}}]=\E\left[\frac{\vert X\vert}{\phi(\vert X\vert)}\phi(\vert X\vert)\one_{\{\vert X\vert>a\}}\right]\leq \sup_{x>a}\left(\frac{x}{\phi(x)}\right)\E[\phi(\vert X\vert)]\leq  C\sup_{x>a}\left(\frac{x}{\phi(x)}\right)\xrightarrow{a\to\infty}0.
\]

Thus 

\[
\sup_{X\in\Theta}\E[\vert X\vert\one_{\{\vert X\vert>a\}}]\leq  C\sup_{x>a}\left(\frac{x}{\phi(x)}\right)
\]

\end{ex}

\begin{prop}

Let $(\Omega,\F,(\F_n)_{n\geq0},\p)$ be a filtered probability space. Let $(X_i)_{i\in I}$ be a family of r.v.'s bounded in $L^1(\Omega,\F,(\F_n)_{n\geq 0},\p)$, i.e. $\sup_{i\in I}\E[\vert X_i\vert]<\infty$. Then $(X_i)_{i\in I}$ is u.i. if and only if for all $\varepsilon>0$ there is a $\delta>0$ such that for all $\A\in \F$, if $\p[A]<\delta$ then $\sup_{i\in I}\E[\vert X_i\vert \one_A]<\varepsilon$.

\end{prop}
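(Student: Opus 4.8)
We must show that for a family $(X_i)_{i\in I}$ bounded in $L^1$, uniform integrability is equivalent to the following absolute-continuity-type condition: for all $\varepsilon>0$ there is $\delta>0$ such that $\p[A]<\delta$ implies $\sup_{i\in I}\E[\vert X_i\vert\one_A]<\varepsilon$.

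Let me sketch both directions.

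**Forward direction ($\Rightarrow$).** Assume $(X_i)_{i\in I}$ is u.i. Given $\varepsilon>0$, choose $a$ large enough that $\sup_{i\in I}\E[\vert X_i\vert\one_{\{\vert X_i\vert>a\}}]<\varepsilon/2$. For any $A\in\F$ I would split the integral according to whether $\vert X_i\vert$ exceeds the threshold $a$:
$$\E[\vert X_i\vert\one_A]=\E[\vert X_i\vert\one_A\one_{\{\vert X_i\vert\le a\}}]+\E[\vert X_i\vert\one_A\one_{\{\vert X_i\vert>a\}}]\le a\,\p[A]+\E[\vert X_i\vert\one_{\{\vert X_i\vert>a\}}].$$
The second term is $<\varepsilon/2$ by the choice of $a$, uniformly in $i$. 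Setting $\delta=\varepsilon/(2a)$, whenever $\p[A]<\delta$ we get $a\,\p[A]<\varepsilon/2$, so $\sup_{i\in I}\E[\vert X_i\vert\one_A]<\varepsilon$. This direction is the easy one.

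**Reverse direction ($\Leftarrow$).** Assume the $\varepsilon$–$\delta$ condition together with the $L^1$-boundedness hypothesis $\sup_{i\in I}\E[\vert X_i\vert]=:M<\infty$. I want to show u.i. The key is Markov's inequality applied to the set $A_i^a:=\{\vert X_i\vert>a\}$: since $\vert X_i\vert$ is a positive r.v.,
$$\p[A_i^a]=\p[\vert X_i\vert>a]\le\frac{\E[\vert X_i\vert]}{a}\le\frac{M}{a}.$$
Now given $\varepsilon>0$, take the $\delta>0$ furnished by the hypothesis, and choose $a$ large enough that $M/a<\delta$. Then for every $i\in I$ we have $\p[A_i^a]<\delta$, so the $\varepsilon$–$\delta$ condition gives $\E[\vert X_i\vert\one_{A_i^a}]<\varepsilon$. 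Taking the supremum over $i$ and then letting $a\to\infty$ yields $\lim_{a\to\infty}\sup_{i\in I}\E[\vert X_i\vert\one_{\{\vert X_i\vert>a\}}]=0$, which is exactly uniform integrability.

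**Main obstacle.** The only subtle point is that the reverse direction genuinely requires the $L^1$-boundedness hypothesis: without it Markov's inequality gives no uniform control on $\p[A_i^a]$, and the implication fails. The mechanism tying everything together is that the sets $A_i^a=\{\vert X_i\vert>a\}$ simultaneously serve as the cutting sets for u.i. and as small-probability sets (via Markov) to which the $\varepsilon$–$\delta$ condition applies; recognizing that one should apply the hypothesis to these specific sets, rather than arbitrary $A$, is the crux. Both directions are then short computations, so I would not expect any serious difficulty beyond keeping the roles of $a$ and $\delta$ straight.
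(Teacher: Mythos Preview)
Your proof is correct and matches the paper's argument essentially line for line: the forward direction splits $\E[\vert X_i\vert\one_A]$ at the threshold $a$ and sets $\delta=\varepsilon/(2a)$, and the reverse direction uses Markov's inequality together with the $L^1$-bound to force $\p[\vert X_i\vert>a]<\delta$ and then applies the hypothesis to $A=\{\vert X_i\vert>a\}$. Your remark that $L^1$-boundedness is genuinely needed for the reverse implication is also correct.
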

\begin{proof}

For the right implication, let $\varepsilon>0$. Then there exists $a>0$ such that 

\[
\sup_{i\in I}\E[\vert X_i\vert \one_{\{\vert X_i\vert >a\}}]<\frac{\varepsilon}{2}.
\]

Now let $\delta=\frac{\varepsilon}{2a}$ and $A\in\F$ such that $\p[A]<\delta$. Then 

\[
\E[\vert X_i\vert \one_A]=\E[\vert X_i\vert \one_A\one_{\{\vert X_i\vert>a\}}]+\E[\vert X_i\vert \one_A\one_{\vert X_i\vert \leq  a\}}]\leq  \E[\vert X_i\vert \one_{\{\vert X_i\vert>a\}}]+a\underbrace{\E[\one_A]}_{\p[A]}<\frac{\varepsilon}{2}+a\delta<\frac{\varepsilon}{2}+a\frac{\varepsilon}{2a}=\varepsilon.
\]

For the left implication, let $C=\sup_{i\in I}\E[\vert X_i\vert]<\infty$. From Markov's inequality,we get 

\[
\p[\vert X_i\vert>a]\leq  \frac{\E[\vert X_i\vert]}{a}\leq  \frac{C}{a}.
\]

Now let $\delta >0$ such that $(ii)$ holds. If $\frac{C}{a}<\delta$, then for all $i\in I$

\[
\E[\vert X_i\vert \one_{\{\vert X_i\vert >a\}}]<\varepsilon.
\]

\end{proof}

\begin{cor}
Let $(\Omega,\F,(\F_n)_{n\geq0},\p)$ be a filtered probability space. Let $X$ be a bounded r.v., i.e.  $X\in L^1(\Omega,\F,(\F_n)_{n\geq 0},\p)$. Then the family 

\[
\{\E[X\mid\mathcal{G}]\mid \mathcal{G}\subset \F,\text{ $\mathcal{G}$ is a $\sigma$-Algebra}\}
\]

is u.i.

\end{cor}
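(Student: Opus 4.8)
The plan is to verify the definition of uniform integrability directly, i.e. to show that
$$\lim_{a\to\infty}\sup_{\mathcal{G}}\E\left[\vert\E[X\mid\mathcal{G}]\vert\one_{\{\vert\E[X\mid\mathcal{G}]\vert>a\}}\right]=0,$$
where the supremum runs over all sub $\sigma$-Algebras $\mathcal{G}\subset\F$. The two ingredients I would combine are the contraction property of the conditional expectation (the previous corollary, giving $\vert\E[X\mid\mathcal{G}]\vert\leq\E[\vert X\vert\mid\mathcal{G}]$ and $\E[\vert\E[X\mid\mathcal{G}]\vert]\leq\E[\vert X\vert]$) and the $\varepsilon$-$\delta$ characterization of uniform integrability proved in the previous proposition, applied to the single r.v.\ $X$, which is trivially bounded in $L^1$ and u.i.

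First I would fix $\mathcal{G}$, write $Y=\E[X\mid\mathcal{G}]$ and observe that the level set $A_a:=\{\vert Y\vert>a\}$ is $\mathcal{G}$-measurable. Since $\one_{A_a}$ is bounded and $\mathcal{G}$-measurable, the defining relation of the conditional expectation together with $\vert Y\vert\leq\E[\vert X\vert\mid\mathcal{G}]$ gives
$$\E[\vert Y\vert\one_{A_a}]\leq\E[\E[\vert X\vert\mid\mathcal{G}]\one_{A_a}]=\E[\vert X\vert\one_{A_a}].$$
This is the crucial step: it replaces the integral of $\vert\E[X\mid\mathcal{G}]\vert$ over its own large set by the integral of the fixed r.v.\ $\vert X\vert$ over that same set, so that the entire $\mathcal{G}$-dependence is pushed into the set $A_a$.

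Next I would control $\p[A_a]$ uniformly in $\mathcal{G}$. By Markov's inequality and the contraction $\E[\vert Y\vert]\leq\E[\vert X\vert]$,
$$\p[A_a]=\p[\vert\E[X\mid\mathcal{G}]\vert>a]\leq\frac{\E[\vert\E[X\mid\mathcal{G}]\vert]}{a}\leq\frac{\E[\vert X\vert]}{a},$$
and the right-hand side is independent of $\mathcal{G}$ and tends to $0$ as $a\to\infty$. Now, given $\varepsilon>0$, the previous proposition (applied to the single, u.i.\ r.v.\ $X$) yields a $\delta>0$ such that $\p[A]<\delta$ implies $\E[\vert X\vert\one_A]<\varepsilon$ for every $A\in\F$. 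Choosing $a$ so large that $\E[\vert X\vert]/a<\delta$, I obtain $\p[A_a]<\delta$ for all $\mathcal{G}$ simultaneously, hence $\E[\vert Y\vert\one_{A_a}]\leq\E[\vert X\vert\one_{A_a}]<\varepsilon$ for every $\mathcal{G}$. Taking the supremum over $\mathcal{G}$ and then letting $\varepsilon\to0$ establishes the claim.

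There is no genuine obstacle here; the only point requiring care is that both the bound on $\p[A_a]$ and the choice of $\delta$ must hold uniformly in $\mathcal{G}$, which is precisely what the two cited results provide: the uniformity in the $\varepsilon$-$\delta$ criterion holds over \emph{all} measurable sets $A$, and the Markov bound is uniform because the contraction constant $\E[\vert X\vert]$ does not depend on $\mathcal{G}$.
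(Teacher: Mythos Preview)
Your proof is correct and follows essentially the same route as the paper: both use the $\varepsilon$--$\delta$ criterion for the single r.v.\ $X$, the Markov bound $\p[\vert\E[X\mid\mathcal{G}]\vert>a]\leq\E[\vert X\vert]/a$, and the key inequality $\E[\vert\E[X\mid\mathcal{G}]\vert\one_{A_a}]\leq\E[\vert X\vert\one_{A_a}]$ coming from $\vert\E[X\mid\mathcal{G}]\vert\leq\E[\vert X\vert\mid\mathcal{G}]$ and the $\mathcal{G}$-measurability of $A_a$. Your write-up is in fact more explicit about why each step is uniform in $\mathcal{G}$.
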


\begin{proof}

Let $\varepsilon>0$. Then there exists a $\delta >0$ such that for all $A\in \F$, if $\p[A]<\delta$ then $\E[\vert X\vert \one_A]<\varepsilon$. Then for all $a>0$, we have 

\[
\p[\E[X\mid \mathcal{G}]>a]\leq  \frac{\E[\vert \E[X\mid \mathcal{G}]\vert]}{a}\leq  \frac{\E[\vert X\vert]}{a}.
\]

For $a$ large enough, i.e. $\frac{\E[\vert X\vert]}{a}<\delta$, we have 

\[
\E[\vert \E[X\mid \mathcal{G}]\vert \one_{\{ \vert \E[X\mid \mathcal{G}]\vert >a\}}]\leq  \E[\E[\vert X\vert \one_{\vert \E[X\mid \mathcal{G}]\vert >a\}}\mid \mathcal{G}]]\leq  \E[\vert X\vert \one_{\{\E[X\mid \mathcal{G}]\vert >a\}}]<\varepsilon.
\]

\end{proof}

\begin{thm}

Let $(\Omega,\F,(\F_n)_{n\geq0},\p)$ be a filtered probability space. Let $(X_n)_{n\geq 0}$ be a sequence of r.v.'s in $L^1(\Omega,\F,(\F_n)_{n\geq 0},\p)$, which converges in probability to $X_\infty$. Then $X_n\xrightarrow{n\to\infty\atop L^1}X_\infty$ if and only if $(X_n)_{n\geq 0}$ is u.i.

\end{thm}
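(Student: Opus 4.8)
The statement is the Vitali convergence theorem, and the plan is to prove each implication using the $\varepsilon$--$\delta$ characterization of uniform integrability established in the preceding proposition, together with the facts (recorded in the remarks above) that a single integrable r.v. and any finite family are u.i., and that a u.i. family is bounded in $L^1$. I will also use that convergence in probability yields an a.s.\ convergent subsequence, proved earlier in the chapter on convergence.

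For the direction \emph{$L^1$-convergence $\Rightarrow$ u.i.}: First I would note that $(X_n)_{n\geq0}$ is bounded in $L^1$, since $\E[\vert X_n\vert]\leq \E[\vert X_n-X_\infty\vert]+\E[\vert X_\infty\vert]$ and the first term is bounded because it tends to $0$. To verify the $\varepsilon$--$\delta$ criterion, fix $\varepsilon>0$ and choose $N$ with $\E[\vert X_n-X_\infty\vert]<\frac{\varepsilon}{2}$ for $n\geq N$. The finite family $\{X_0,\dots,X_{N-1},X_\infty\}$ is u.i., so by the proposition there is $\delta>0$ such that $\p[A]<\delta$ implies $\E[\vert X_k\vert\one_A]<\varepsilon$ for $k<N$ and $\E[\vert X_\infty\vert\one_A]<\frac{\varepsilon}{2}$. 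For $n\geq N$ and such $A$, using $\vert X_n\vert\one_A\leq \vert X_n-X_\infty\vert+\vert X_\infty\vert\one_A$,
\[
\E[\vert X_n\vert\one_A]\leq \E[\vert X_n-X_\infty\vert]+\E[\vert X_\infty\vert\one_A]<\varepsilon.
\]
Hence $\sup_n\E[\vert X_n\vert\one_A]<\varepsilon$ whenever $\p[A]<\delta$, and the proposition gives that $(X_n)_{n\geq0}$ is u.i.

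For the converse \emph{(u.i.\ and convergence in probability $\Rightarrow$ $L^1$-convergence)}: I would first show $X_\infty\in L^1$. Convergence in probability yields a subsequence $X_{n_k}\to X_\infty$ a.s., so Fatou gives $\E[\vert X_\infty\vert]\leq \liminf_k\E[\vert X_{n_k}\vert]\leq \sup_n\E[\vert X_n\vert]<\infty$, the last bound holding because u.i.\ implies boundedness in $L^1$. Now fix $\varepsilon>0$. Applying the proposition to the u.i.\ family $(X_n)_{n\geq0}$ and to the integrable $X_\infty$, choose $\delta>0$ so that $\p[A]<\delta$ forces both $\sup_n\E[\vert X_n\vert\one_A]<\varepsilon$ and $\E[\vert X_\infty\vert\one_A]<\varepsilon$. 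Writing $A_n=\{\vert X_n-X_\infty\vert>\eta\}$ and splitting over $A_n$ and its complement,
\[
\E[\vert X_n-X_\infty\vert]\leq \eta+\E[\vert X_n\vert\one_{A_n}]+\E[\vert X_\infty\vert\one_{A_n}].
\]
Since $\p[A_n]=\p[\vert X_n-X_\infty\vert>\eta]\to0$ by convergence in probability, for $n$ large enough $\p[A_n]<\delta$, so the last two terms are each $<\varepsilon$; choosing $\eta<\varepsilon$ gives $\E[\vert X_n-X_\infty\vert]<3\varepsilon$ eventually. As $\varepsilon$ was arbitrary, $X_n\to X_\infty$ in $L^1$.

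The main obstacle is the converse direction: one must first secure that $X_\infty$ is integrable before the truncation split is meaningful, and then ensure that the ``bad'' set $A_n$ has probability below the single threshold $\delta$ that works simultaneously for the whole u.i.\ family $(X_n)_{n\geq0}$ and for $X_\infty$, which is exactly what the $\varepsilon$--$\delta$ characterization provides. The first implication is essentially bookkeeping with finite families.
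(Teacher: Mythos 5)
Your proof is correct, and while the forward implication matches the paper's argument almost exactly (both reduce to a finite family plus the $\varepsilon$--$\delta$ characterization; you anchor the triangle inequality at $X_\infty$ where the paper anchors at $X_N$, an immaterial difference), your converse is a genuinely different route. The paper never touches $X_\infty$ directly in that direction: it observes that the family of differences $(X_n-X_m)_{(n,m)\in\N^2}$ is again u.i., splits $\E[\vert X_n-X_m\vert]$ along the two thresholds $\{\vert X_n-X_m\vert\geq\varepsilon\}$ and $\{\vert X_n-X_m\vert>a\}$, and concludes that $(X_n)_{n\geq 0}$ is Cauchy in $L^1$, invoking completeness of $L^1$ to produce a limit. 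You instead first secure $X_\infty\in L^1$ by Fatou along an a.s.\ convergent subsequence (extracted from convergence in probability) and then estimate $\E[\vert X_n-X_\infty\vert]$ directly, splitting on $A_n=\{\vert X_n-X_\infty\vert>\eta\}$ and using one $\delta$ that serves both the u.i.\ family and the single integrable $X_\infty$. What each approach buys: the paper's Cauchy argument avoids any separate integrability claim for $X_\infty$, but it leaves implicit the identification of the $L^1$ limit with $X_\infty$ (one must note that $L^1$ convergence implies convergence in probability and that limits in probability are a.s.\ unique); your direct estimate handles that identification automatically and makes the integrability of $X_\infty$ explicit, at the modest cost of the subsequence-plus-Fatou preliminary. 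Both are complete proofs of the Vitali-type equivalence.
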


\begin{proof}

For the right implication, we first note that $(X_n)_{n\geq 0}$ is bounded in $L^1(\Omega,\F,(\F_n)_{n\geq 0},\p)$ since it converges in $L^1$. For $\varepsilon>0$, there exists $N\in\N$ such that for $n\geq N$ we get 

\[
\E[\vert X_N-X_n\vert]<\frac{\varepsilon}{2}.
\]

Next we note that $\{X_0,X_1,...,X_N\}$ is u.i. since it is a finite family of bounded r.v.'s. Therefore, there exists a $\delta>0$, such that for all $A\in\F$, if $\p[A]<\delta$ then $\E[\vert X_n\vert\one_A]<\frac{\varepsilon}{2}$ for all $n\in\{0,1,...,N\}$. Finally for $n\geq N$, we get 

\[
\E[\vert X_n\vert \one_A]\leq  \E[\vert X_N-X_n\vert \one_A]+\E[\vert X_N\vert \one_A]<\frac{\varepsilon}{2}+\frac{\varepsilon}{2}=\varepsilon.
\]

Thus $(X_n)_{n\geq 0}$ is u.i.

\vspace{0.5cm}

For the left implication, we note that if $(X_n)_{n\geq 0}$ is u.i., then the family $(X_n-X_m)_{(n,m)\in\N^2}$ is also u.i. since 

\[
\E[\vert X_n-X_m\vert \one_A]\leq  \E[\vert X_n\vert \one_A]+\E[X_m\vert \one_A]
\]

for all $A\in\F$. Now for $\varepsilon>0$, there exists $a$ sufficiently large, such that 

\[
\E[\vert X_n-X_m\vert \one_{\{\vert X_n-X_m\vert >a\}}]<\varepsilon.
\]

Moreover, we note that 

\[
\E[\vert X_m-X_m\vert]\leq  \E[\vert X_n-X_m\vert \one_{\{ \vert X_n-X_m\vert<\varepsilon\}}]+\E[\vert X_n-X_m\vert \one_{\{\vert X_n-X_m\vert\geq \varepsilon\}}]
\]
\[
\leq  \varepsilon +\E[\vert X_n-X_m\vert\one_{\{\vert X_n-X_m\vert \geq\varepsilon\}}\one_{\{\vert X_n-X_m\vert \leq  a\}}]+\E[\vert X_n-X_m\vert\one_{\{\vert X_n-X_m\vert\geq \varepsilon\}}\one_{\{\vert X_n-X_m\vert >a\}}]
\]
\[
\leq  \varepsilon+a\p[\vert X_n-X_m\vert\geq\varepsilon]+\E[\vert X_n-X_m\vert \one_{\{\vert X_n-X_m\vert>a\}}].
\]

Then, using that $\lim_{n\to\infty}\p[\vert X_n-X_m\vert\geq\varepsilon]=0$, we can show that the right hand side converges to zero for $a$ large enough, which implies that $(X_n)_{n\geq 0}$ is a Cauchy sequence $L^1(\Omega,\F,(\F_n)_{n\geq 0},\p)$ and hence converges in $L^1$.

\end{proof}

Combining all our previous results, we have; if $(X_n)_{n\geq 0}$ is a martingale, then the following are equivalent

\begin{enumerate}[$(i)$]
\item{$(X_n)_{n\geq 0}$ converges a.s. and in $L^1$.}
\item{$(X_n)_{n\geq 0}$ is u.i.}
\item{$(X_n)_{n\geq 0}$ is regular and $X_n=\E[X_\infty\mid \F_n]$ a.s.}
\end{enumerate}

\section{Stopping theorems}

If $S\leq  T$ are two bounded stopping times and $(X_n)_{n\geq 0}$ a martingale, then 

\[
\E[X_T\mid \F_S]=X_S\hspace{0.5cm}a.s.
\]

If $(X_n)_{n\geq 0}$ is an adapted process, which converges a.s. to $X_\infty$, we can define $X_T$ for all stopping times (finite or not) by 

\[
X_T=\sum_{n=0}^\infty X_n\one_{\{T=n\}}+X_\infty\one_{\{ T=\infty\}}.
\]

\begin{thm}

Let $(\Omega,\F,(\F_n)_{n\geq0},\p)$ be a filtered probability space. Let $(X_n)_{n\geq 0}$ be u.i. martingale. Then for any stopping time $T$, we have that 

\[
\E[X_\infty\mid \F_T]=X_T\hspace{0.5cm}a.s.
\]

In particular 

\[
\E[X_T]=\E[X_\infty]=\E[X_n]
\]

for all $n\geq 0$. If $S$ and $T$ are two stopping times, such that $S\leq  T$, then 

\[
\E[X_T\mid \F_S]=X_S\hspace{0.5cm}a.s.
\]
\end{thm}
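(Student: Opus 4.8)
The plan is to reduce everything to the single identity $\E[X_\infty\mid\F_T]=X_T$ a.s. and then read off the remaining assertions as immediate consequences. Since $(X_n)_{n\geq0}$ is a u.i. martingale, the equivalences established at the end of the previous section give that it converges a.s. and in $L^1$ to a limit $X_\infty$ and that $X_n=\E[X_\infty\mid\F_n]$ for every $n$; this representation is the engine of the whole argument. To prove $X_T=\E[X_\infty\mid\F_T]$ I would verify the three defining properties of the conditional expectation: that $X_T$ is $\F_T$-measurable, that $X_T\in L^1(\Omega,\F,(\F_n)_{n\geq0},\p)$, and that $\E[X_T\one_A]=\E[X_\infty\one_A]$ for every $A\in\F_T$.

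For measurability I would argue as in the finite case treated earlier: for a Borel set $B$ and finite $n$,
$$\{X_T\in B\}\cap\{T\leq n\}=\bigcup_{k=0}^n\left(\{X_k\in B\}\cap\{T=k\}\right)\in\F_n,$$
while on $\{T=\infty\}$ one has $X_T=X_\infty$ and $(\{X_\infty\in B\}\cap\{T=\infty\})\cap\{T\leq n\}=\varnothing\in\F_n$; together these show $\{X_T\in B\}\in\F_T$. For integrability, the bound $|X_n|\leq\E[|X_\infty|\mid\F_n]$ combined with $\{T=n\}\in\F_n$ yields $\E[|X_n|\one_{\{T=n\}}]\leq\E[|X_\infty|\one_{\{T=n\}}]$, and summing over $n\in\bar\N$ gives $\E[|X_T|]\leq\E[|X_\infty|]<\infty$.

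The central step is the integral identity. Fixing $A\in\F_T$, I would split $A$ along the partition $(\{T=n\})_{n\in\bar\N}$. Here one uses that $A\cap\{T=n\}=(A\cap\{T\leq n\})\setminus(A\cap\{T\leq n-1\})\in\F_n$, so that by the representation $X_n=\E[X_\infty\mid\F_n]$,
$$\E[X_n\one_{A\cap\{T=n\}}]=\E[\E[X_\infty\mid\F_n]\one_{A\cap\{T=n\}}]=\E[X_\infty\one_{A\cap\{T=n\}}].$$
Summing these over finite $n$, adding the term on $\{T=\infty\}$ where $X_T=X_\infty$ matches trivially, and justifying the interchange of sum and expectation by dominating $\sum_n|X_\infty|\one_{A\cap\{T=n\}}$ by $|X_\infty|\in L^1$, I obtain $\E[X_T\one_A]=\E[X_\infty\one_A]$. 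By uniqueness of the conditional expectation this gives $X_T=\E[X_\infty\mid\F_T]$ a.s. Taking $A=\Omega$ yields $\E[X_T]=\E[X_\infty]$, and $\E[X_n]=\E[\E[X_\infty\mid\F_n]]=\E[X_\infty]$ gives the full chain of equalities. Finally, for $S\leq T$ I would note $\F_S\subset\F_T$ and apply the tower property: $\E[X_T\mid\F_S]=\E[\E[X_\infty\mid\F_T]\mid\F_S]=\E[X_\infty\mid\F_S]=X_S$ a.s.

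The main obstacle, and the part requiring the most care, is the bookkeeping with possibly infinite stopping times: one must treat the $\{T=\infty\}$ contribution separately throughout (it is where the a.s. and $L^1$ limit $X_\infty$ genuinely enters, in contrast to the bounded-$T$ theorems proved earlier), and one must justify exchanging the infinite sum over $\{T=n\}$ with the expectation. Both difficulties are resolved by the uniform domination coming from $|X_n|\leq\E[|X_\infty|\mid\F_n]$, so the key is to establish that bound first and carry it through each estimate.
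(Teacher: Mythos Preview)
Your proof is correct and follows essentially the same route as the paper: both arguments decompose along the partition $(\{T=n\})_{n\in\bar\N}$, use the representation $X_n=\E[X_\infty\mid\F_n]$ on each piece to convert $\E[X_T\one_A]$ into $\E[X_\infty\one_A]$, and then derive the $S\leq T$ statement from the tower property via $\F_S\subset\F_T$. You are a bit more explicit than the paper about the $\F_T$-measurability of $X_T$ and the domination justifying the sum--integral interchange, but the substance is identical.
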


\begin{proof}

We first want to check that $X_T$ is in $L^1(\Omega,\F,(\F_n)_{n\geq 0},\p)$. Therefore we have

\begin{align*}
\E[\vert X_T\vert]&=\sum_{n=0}^\infty \E[\vert X_n\vert\one_{\{ T=n\}}]+\E[\vert X_\infty\vert\one_{\{ T=\infty\}}]\leq  \sum_{n=0}^\infty \E[\E[\vert X_\infty\vert\mid \F_n]\one_{\{T=n\}}]+\E[\vert X_\infty\vert \one_{\{ T=\infty\}}]\\
&=\sum_{n=0}^\infty\E[\E[\vert X_\infty\vert \one_{\{ T=n\}}\mid \F_n]]+\E[\vert X_\infty\vert\one_{\{ T=\infty\}}]=\sum_{n=0}^\infty \E[\vert X_\infty\vert \one_{\{ T=n\}}]+\E[\vert X_\infty\vert \one_{\{T=\infty\}}]\\
&=\E[\vert X_\infty\vert]<\infty
\end{align*}

Now let $A\in\F_T$. Then

\begin{align*}
\E[X_T\one_A]&=\sum_{n\in\N\cup\{\infty\}}\E[X_T\one_{A\cap\{T=n\}}]=\sum_{n\in\N\cup\{\infty\}}\E[X_n\one_{A\cap\{T=n\}}]=\sum_{n\in\N\cup\{\infty\}}\E[\E[X_\infty\mid \F_n]\one_{A\cap\{ T=n\}}]\\
&=\sum_{n\in\N\cup\{\infty\}}\E[X_\infty\one_{A\cap\{ T=n\}}]=\E[X_\infty\one_A]
\end{align*}

where we have used that $X_\infty\in L^1(\Omega,\F_\infty,(\F_n)_{n\geq 0},\p)$ and Fubini for the first and last equation. Now since $X_T$ is $\F_T$-measurable, we get that $\E[X_\infty\mid \F_T]=X_T$ a.s. Finally for $S\leq  T$, we have $\F_S\subset\F_T$ and thus 

\[
X_S=\E[X_\infty\mid \F_S]=\E[\E[X_\infty\mid \F_T]\mid \F_S]=\E[X_T\mid \F_S].
\]

\end{proof}

\begin{rem}

If $(X_n)_{n\geq 0}$ is a u.i. martingale, then the family 

\[
\{X_T\mid \text{$T$ a stopping time}\}
\]

is u.i. Indeed, we note that 

\[
\{X_T\mid \text{$T$ a stopping time}\}=\{\E[X_\infty\mid \F_T]\mid \text{$T$ a stopping time}\}\subset \E[X_\infty\mid \mathcal{G}]\mid \text{$\mathcal{G}$ a $\sigma$-Algebra, $\mathcal{G}\subset\F$}\},
\]

where the superset is u.i. If $N\in \N$, then $(X_{n\land N})_{n\geq 0}$ is u.i. Indeed, if $Y_n=X_{n\land N}$, then $(Y_n)_{n\geq 0}$ is a martingale and 

\[
Y_n=\E[X_N\mid \F_n]=\E[Y_\infty\mid \F_n].
\]

\end{rem}

\begin{ex}[Another random walk]

Consider a simple random walk with $X_0=k\geq 0$. Let $m\geq 0$, $0\leq  k\leq  m$ and 

\[
T=\inf\{n\geq 1\mid \text{$X_n=0$ or $X_n=m$}\}
\]

with $X_n=k+Y_1+\dotsm +Y_n$, where $(Y_n)_{n\geq 1}$ are iid and $\p[Y_n=\pm 1]=\frac{1}{2}$. We have already seen that $T<\infty$ a.s. Now let $Z_n=X_{n\land T}$. Then $(Z_n)_{n\geq 0}$ is a martingale and $0\leq  Z_n\leq  m$. Therefore, $(Z_n)_{n\geq 0}$ is u.i. and hence $Z_n$ converges a.s. and in $L^1$ to $Z_\infty$ with 

\[
\E[Z_\infty]=\E[Z_0]=k.
\]

Moreover, 

\[
\E[Z_\infty]=\E[X_T]=\E[m\one_{\{ X_T=m\}}]+\E[0\one_{\{X_T=0\}}]=m\p[X_T=m],
\]

which implies that $\p[X_T=m]=\frac{k}{m}$ and thus $\p[X_T=0]=1-\p[X_T=m]=\frac{m-k}{m}$. 

\vspace{0.5cm}

Now let us assume that $\p[Y_n=1]=p$ and $\p[Y_n=-1]=1-p=q$ for $p\in (0,1)$ and $p\not=\frac{1}{2}$. Let us consider 

\[
Z_n=\left(\frac{q}{p}\right)^{X_n}.
\]

Then $(Z_n)_{n\geq 0}$ is a martingale. Indeed, by definition $Z_n$ is adapted and $Z_n\in L^1(\Omega,\F,(\F_n)_{n\geq 0},\p)$ because 

\[
k-n\leq  X_n\leq  k+n
\]

and thus $Z_n$ is bounded. Moreover, 

\[
\E[Z_{n+1}\mid \F_n]=\E\left[\left(\frac{q}{p}\right)^{X_n}\left(\frac{q}{p}\right)^{Y_n}\mid \F_n\right]=Z_n\E\left[\left(\frac{q}{p}\right)^{Y_n}\mid \F_n\right]=Z_n\E\left[\left(\frac{q}{p}\right)^{Y_n}\right]=Z_n\{q+p\}=Z_n,
\]
and therefore $(Z_n)_{n\geq 0}$ is a martingale. Now $(Z_{n\land T})_{n\geq 0}$ is bounded and hence u.i., which implies that $(Z_{n\land T})_{n\geq 0}$ converges a.s. and in $L^1$. We also have that 

\[
\E[Z_T]=\E[Z_0]=\left(\frac{q}{p}\right)^k.
\]

On the other hand we have 

\[
\E[Z_T]=\left(\frac{q}{p}\right)^m\p[X_T=m]+(1-\p[X_T=m]).
\]

Hence we get 

\[
\p[X_T=m]=\frac{\left(\frac{q}{p}\right)^k-1}{\left(\frac{q}{p}\right)^m-1}.
\]
\end{ex}

\begin{thm}

Let $(\Omega,\F,(\F_n)_{n\geq0},\p)$ be a filtered probability space. Let $(X_n)_{n\geq 0}$ be a supermartingale. Assume that one of the following two conditions is satisfied

\begin{enumerate}[$(i)$]
\item{$X_n\geq 0$ for all $n\geq 0$.}
\item{$(X_n)_{n\geq 0}$ is u.i.}
\end{enumerate}

Then for every stopping time $T$ (finite or not) we get that $X_T\in L^1(\Omega,\F,(\F_n)_{n\geq 0},\p)$. Moreover, if $S$ and $T$ are two stopping time, such that $S\leq  T$, then in case

\begin{enumerate}[$(i)$]
\item{$\one_{\{S<\infty}X_S\geq \E[X_T\one_{\{ T<\infty\}}\mid \F_S]$ a.s.}
\item{$X_S\geq \E[X_T\mid \F_S]$ a.s.}
\end{enumerate}

\end{thm}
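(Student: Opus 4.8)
The plan is to reduce both cases to the corresponding facts for \emph{bounded} stopping times (the supermartingale optional stopping theorem and its corollary, both already proved) by approximating $T$ and $S$ with the bounded stopping times $T\land N$ and $S\land N$, and then letting $N\to\infty$. The two cases need genuinely different limiting arguments: in case $(i)$ only positivity and monotone convergence are available, whereas in case $(ii)$ uniform integrability lets me route everything through a Doob decomposition and the optional stopping theorem for u.i.\ martingales.

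First I would settle integrability. In case $(i)$, since $X_n\geq 0$ and $X_n\to X_\infty$ a.s.\ with $X_\infty\in L^1$ (the corollary on nonnegative supermartingales), the stopped variables $X_{T\land N}$ converge a.s.\ to $X_T$ and satisfy $\E[X_{T\land N}]\leq \E[X_0]$ by the bounded optional stopping corollary; Fatou's lemma then gives $\E[X_T]\leq \E[X_0]<\infty$. In case $(ii)$ I would instead use the Doob decomposition $X_n=X_0+M_n-A_n$ with $M$ a martingale, $M_0=0$, and $A$ increasing, $A_0=0$. A u.i.\ supermartingale is bounded in $L^1$, hence converges a.s., and u.i.\ upgrades this to $L^1$, so $X_n\to X_\infty$ in $L^1$; from $\E[A_n]=\E[X_0]-\E[X_n]$ and monotone convergence I get $A_\infty\in L^1$, whence $A_n\to A_\infty$ in $L^1$ and therefore $M_n\to M_\infty:=X_\infty-X_0+A_\infty$ in $L^1$. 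By Theorem \ref{thm6} this forces $M_n=\E[M_\infty\mid\F_n]$, so $M$ is regular (u.i.). Then $X_T=X_0+M_T-A_T$ with $0\leq A_T\leq A_\infty\in L^1$ and $M_T=\E[M_\infty\mid\F_T]\in L^1$, so $X_T\in L^1$.

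For the inequality in case $(ii)$ I would simply combine the two parts of the decomposition: optional stopping for the u.i.\ martingale $M$ gives $\E[M_T\mid\F_S]=M_S$, while $A_S\leq A_T$ (monotonicity of $A$ and $S\leq T$, interpreting the values at $\infty$ as $A_\infty$) together with the $\F_S$-measurability of $A_S$ gives $\E[A_T\mid\F_S]\geq A_S$; subtracting yields $\E[X_T\mid\F_S]=X_0+M_S-\E[A_T\mid\F_S]\leq X_S$ a.s. For case $(i)$ I would argue directly. Fix $A\in\F_S$. Using $\F_{S\land N}=\F_S\cap\F_N$ one checks $A\cap\{S\leq N\}\in\F_{S\land N}$, so integrating the bounded-time inequality $\E[X_{T\land N}\mid\F_{S\land N}]\leq X_{S\land N}$ over this set gives $\E[X_{T\land N}\one_{A\cap\{S\leq N\}}]\leq \E[X_S\one_{A\cap\{S\leq N\}}]$, since $X_{S\land N}=X_S$ on $\{S\leq N\}$. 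On the left I bound below, using $X_{T\land N}\geq 0$ and $\{T\leq N\}\subseteq\{S\leq N\}$, by $\E[X_T\one_{A\cap\{T\leq N\}}]$. Letting $N\to\infty$ and applying monotone convergence at both ends (legitimate because $X\geq 0$, $\{S\leq N\}\uparrow\{S<\infty\}$ and $\{T\leq N\}\uparrow\{T<\infty\}$) yields $\E[X_T\one_{\{T<\infty\}}\one_A]\leq \E[\one_{\{S<\infty\}}X_S\one_A]$ for all $A\in\F_S$, which is exactly $\E[X_T\one_{\{T<\infty\}}\mid\F_S]\leq \one_{\{S<\infty\}}X_S$ a.s.

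The main obstacle is the case $(i)$ limiting argument: without uniform integrability one cannot control the mass escaping to $\{T=\infty\}$, which is precisely why the statement must carry the indicators $\one_{\{T<\infty\}}$ and $\one_{\{S<\infty\}}$. The delicate points are to arrange every truncation so that each passage to the limit is monotone (hence justified purely by nonnegativity), and to correctly match the conditioning $\sigma$-algebras by using $\F_{S\land N}=\F_S\cap\F_N$ when integrating the bounded-time inequality against sets of $\F_S$.
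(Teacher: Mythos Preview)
Your argument is correct in both cases. Case~$(i)$ follows the paper's route essentially verbatim: truncate to $S\land N\leq T\land N$, integrate the bounded optional stopping inequality over $A\cap\{S\leq N\}$ (the paper re-derives this integrated inequality from scratch via auxiliary stopping times $S^A,T^A$, whereas you cite the conditional bounded-time theorem directly, which is fine and slightly cleaner), then pass to the limit. Your care in checking $A\cap\{S\leq N\}\in\F_{S\land N}=\F_S\cap\F_N$ and in invoking monotone convergence rather than dominated convergence is well placed.

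For case~$(ii)$ you take a genuinely different decomposition. The paper writes $X_n=Y_n+Z_n$ with $Z_n=\E[X_\infty\mid\F_n]$ a u.i.\ martingale and $Y_n=X_n-Z_n\geq0$ a nonnegative supermartingale converging a.s.\ to~$0$; it then applies case~$(i)$ to $Y$ (where the indicators $\one_{\{S<\infty\}},\one_{\{T<\infty\}}$ become harmless because $Y_\infty=0$) and the u.i.\ martingale stopping theorem to $Z$, and adds. You instead use the Doob decomposition $X_n=X_0+M_n-A_n$ and show that u.i.\ of $X$ forces $A_\infty\in L^1$ and hence $M$ converges in $L^1$, so $M$ is regular; then $\E[M_T\mid\F_S]=M_S$ and $A_S\leq A_T$ combine to give the result. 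Both are valid. The paper's approach is economical in that case~$(ii)$ becomes a corollary of case~$(i)$ plus the martingale result, and it avoids having to prove that the martingale part of Doob's decomposition inherits uniform integrability. Your approach is more self-contained for~$(ii)$ and makes the role of predictability and monotonicity of $A$ explicit; the small price is the separate argument, via Theorem~\ref{thm6}, that $M$ is closed.
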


\begin{proof}

We first deal with the case $(i)$. We have already seen that if $T$ is a bounded stopping time, we have 

\[
\E[X_T]\leq  \E[X_0].
\]

With Fatou we get 

\[
\E\left[\liminf_{n\to\infty}X_{n\land T}\right]\leq \liminf_{n\to\infty}\E[X_{n\land T}]\leq  \E[X_0],
\]

which implies that $X_T\in L^1(\Omega,\F,(\F_n)_{n\geq 0},\p)$. Now let $S\leq  T$ be two stopping times. First assume that $S\leq  T\leq  N\in\N$. Then we know that 

\[
\E[X_S]\geq \E[X_T].
\]

Now let $A\in\F_S$ and consider the stopping times

\begin{align*}
S^A(\omega)&=\begin{cases}S(\omega)&\omega\in A\\ 0&\omega\not\in A\end{cases}\\
T^A(\omega)&=\begin{cases}T(\omega)&\omega\in A\\ 0&\omega\not\in A\end{cases}
\end{align*}

Then we get that 

\[
S^A\leq  T^A\leq  N,
\]

thus $\E[X_{S^A}]\geq \E[X_{T^A}]$ and therefore $\E[X_S\one_A]\geq \E[X_T\one_A]$ for all $A\in \F_S$.

Let us now go back to the general case $S\leq  T$ and let $B\in \F_S$. Let us now apply the above to $S\land k$, $T\land k$ and $A=B\cap\{S\leq  k\}\in\F_S$. Then we get 

\[
\E[X_{S\land k}\one_{B\cap \{ S\leq  k\}}]\geq \E[X_{T\land k}\one_{B\cap\{S\leq  k\}}]\geq \E[X_{T\land k}\one_{B\cap\{T\leq  k\}}].
\]

Hence we get 

\[
\E[X_{S\land k}\one_{B\cap\{S\leq  k\}}]\geq \E[X_{T\land k}\one_{B\cap \{ T\leq  k\}}]
\]

and thus

\[
\E[X_S\one_B\one_{\{S\leq  k\}}]\geq \E[X_T\one_B\one_{\{ T\leq  k\}}].
\]

By dominated convergence, we obtain 

\[
\E[X_S\one_B\one_{\{ S<\infty\}}]\geq \E[X_T\one_B\one_{\{T<\infty\}}].
\]

Now let $\tilde X_S=\one_{\{S<\infty\}}X_S$ and $\tilde X_T=\one_{\{ T<\infty\}}X_T$. Then for any $B\in \F_S$, we get 

\[
\E[\tilde X_S\one_B]\geq \E[\tilde X_T\one_B]=\E[\one_B\E[\tilde X_T\mid \F_S]].
\]

Since the last equality is true for all $B\in \F_S$, we can conclude that 

\[
\tilde X_S\geq \E[\tilde X_T\mid \F_S].
\]

Now let us prove $(ii)$. We know from previous results that in this case $X_n\xrightarrow{n\to\infty\atop \text{a.s. and $L^1$}}X_\infty$. We have $X_n\geq\E[X_m\mid \F_n]$ for all $m\geq n$. The $L^1$-convergence as $n\to\infty$ gives

\[
X_n\geq \E[X_\infty\mid \F_n].
\]

Moreover, the martingale $Z_n:=\E[X_\infty\mid \F_n]$ converges a.s. to $X_\infty$. Set $Y_n:=X_n-Z_n$. Then $(Y_n)_{n\geq 0}$ is a positive supermartingale and hence it converges a.s. to $X_\infty-Z_\infty=0$. We now apply case $(i)$ to deduce that $X_T=Y_T+Z_T\in L^1(\Omega,\F,(\F_n)_{n\geq 0},\p)$ and $Y_S\geq \E[Y_T\mid \F_S]$\footnote{We use the fact that $Y_S\one_{\{S=\infty\}}=0$ and $Y_T\one_{\{T=\infty\}}=0$, since $Y_\infty=0$.}. The stopping theorem applied to $(Z_n)_{n\geq 0}$ gives

\[
Z_S=\E[Z_T\mid \F_S]. 
\]

This implies that 

\[
Y_S+Z_S\geq \E[Z_T+Y_T\mid \F_S]
\]

and thus 

\[
X_S\geq \E[X_T\mid \F_S].
\]

\end{proof}

\section{Applications of Martingale limit theorems}

\subsection{Backward Martingales and the law of large numbers}

\begin{defn}[Backward Filtration]

Let $(\Omega,\F,\p)$ be a probability space. A backward filtration is a family $(\F_n)_{n\in-\N}$ of $\sigma$-Algebras indexed by the negative integers, which we will denote by $(\F_n)_{n\leq  0}$, such that for all $n\leq  m\leq  0$ we have 

\[
\F_n\subset\F_m.
\]

\end{defn}

\begin{rem}

We will write 

\[\F_{-\infty}:=\bigcap_{n\leq  0}\F_n.
\]

It is clear that $\F_{-\infty}$ is also a $\sigma$-Algebra included in $\F$. A stochastic process $(X_n)_{n\leq  0}$, indexed by the negative integers, is called a backwards martingale (resp. backwards sub- or supermartingale) if for all $n\leq 0$, $X_n$ is $\F_n$-measurable, $\E[\vert X_n\vert]<\infty$ and for all $n\leq  m$ we have 

\[
\E[X_m\mid \F_n]=X_n\hspace{0.5cm}\text{(resp. $\E[X_m\mid \F_n]< X_n$ or $\E[X_m\mid \F_n]\geq X_n$).}
\]

\end{rem}

\begin{thm}[Backward convergence theorem]

Let $(\Omega,\F,(\F_n)_{n\leq 0},\p)$ be a backward filtered probability space. Let $(X_n)_{n\leq  0}$ be a backward supermartingale. Assume that 

\begin{equation}
\sup_{n\leq  0}\E[\vert X_n\vert]<\infty.
\end{equation}

Then $(X_n)_{n\leq  0}$ is u.i. and converges a.s. and in $L^1$ to $X_{-\infty}$ as $n\to-\infty$. Moreover, for all $n\leq  0$, we have 

\[
\E[X_n\mid \F_{-\infty}]\leq  X_{-\infty}\hspace{0.3cm}a.s.
\]

\end{thm}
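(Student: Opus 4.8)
Let $(\Omega,\F,(\F_n)_{n\leq 0},\p)$ be a backward filtered probability space and let $(X_n)_{n\leq 0}$ be a backward supermartingale with $\sup_{n\leq 0}\E[|X_n|]<\infty$. Then $(X_n)_{n\leq 0}$ is uniformly integrable and converges a.s. and in $L^1$ to some $X_{-\infty}$, with $\E[X_n\mid\F_{-\infty}]\leq X_{-\infty}$ a.s.

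Let me think about how to prove this.

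The key insight is that a backward supermartingale is, in a sense, *better behaved* than a forward one. The indices run down to $-\infty$, so as $n\to-\infty$ we are looking at restrictions to smaller and smaller $\sigma$-algebras. The finite segment from any $n$ up to $0$ is an ordinary forward supermartingale. So I should exploit the finite-segment structure together with the tools already developed: Doob's upcrossing inequality and the uniform integrability characterizations.

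Let me plan the three things to prove: a.s. convergence, uniform integrability, and then $L^1$ convergence + the final inequality.

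**A.s. convergence via upcrossings.** For each $m\leq 0$, consider the finite sequence $X_m, X_{m+1},\dots, X_0$. This is a genuine (forward) supermartingale indexed by $\{m,\dots,0\}$. For rationals $a<b$, let $N^{(m)}([a,b])$ be the number of upcrossings of $[a,b]$ by this finite sequence. By Doob's upcrossing inequality (Lemma in the excerpt), applied to this finite supermartingale run from time $m$ to time $0$,
\[
(b-a)\,\E[N^{(m)}([a,b])]\leq \E[(X_0-a)^-].
\]
The crucial point is that the right-hand side is $\E[(X_0-a)^-]$, a bound *independent of $m$*, because time $0$ is the fixed right endpoint. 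As $m\to-\infty$, $N^{(m)}([a,b])\uparrow N^{-\infty}([a,b])$, the total number of upcrossings of the whole backward sequence, so by monotone convergence $\E[N^{-\infty}([a,b])]\leq (b-a)^{-1}\E[(X_0-a)^-]<\infty$. Hence $N^{-\infty}([a,b])<\infty$ a.s. for every rational pair $a<b$, and intersecting over the countably many such pairs gives, by the deterministic Lemma on upcrossings and convergence, that $X_n$ converges a.s. in $\overline{\R}$ as $n\to-\infty$. Fatou together with $\sup_n\E[|X_n|]<\infty$ shows the limit $X_{-\infty}$ is finite a.s. and in $L^1$.

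**Uniform integrability.** This is the step I expect to be the main obstacle, and where the backward direction really pays off. Decompose $X_n = M_n + A_n$ where I want to isolate a uniformly integrable martingale part and a monotone part. Concretely, since $(X_n)$ is a backward supermartingale, for $n\leq m\leq 0$ we have $X_n\geq \E[X_m\mid\F_n]$, so $\E[X_n]\geq \E[X_m]$; thus $n\mapsto \E[X_n]$ is nonincreasing in $n$ (decreasing as $n\to -\infty$), and by hypothesis it is bounded, hence it converges to a finite limit $\ell$ as $n\to-\infty$. Now consider the martingale $M_n:=\E[X_0\mid\F_n]$; by the Corollary on conditional expectations being uniformly integrable, the family $\{\E[X_0\mid\mathcal G]\}$ over sub-$\sigma$-algebras $\mathcal G$ is u.i., so $(M_n)_{n\leq 0}$ is u.i. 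The difference $R_n := X_n - M_n$ is a nonnegative backward supermartingale (since $X_n\geq \E[X_0\mid\F_n]=M_n$ by the supermartingale property run up to $0$) with $\E[R_n]=\E[X_n]-\E[X_0]\to \ell-\E[X_0]$ bounded. For such a nonnegative family, I show u.i. directly: using $\E[R_n\,\one_{\{R_n>a\}}]=\E[R_n]-\E[R_n\,\one_{\{R_n\leq a\}}]$ and the convergence of $\E[R_n]$ together with a.s. convergence of $R_n$ (from the first step) plus Fatou, one controls the tail uniformly. Since $X_n = M_n + R_n$ and both families are u.i., $(X_n)$ is u.i.

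**$L^1$ convergence and the final inequality.** Once I have a.s. convergence together with uniform integrability, the Theorem from the ``Uniform integrability'' section (a sequence converging in probability, hence in particular a.s., is $L^1$-convergent iff it is u.i.) yields $X_n\xrightarrow{L^1}X_{-\infty}$. For the inequality $\E[X_n\mid\F_{-\infty}]\leq X_{-\infty}$: fix $n\leq 0$ and $A\in\F_{-\infty}\subset\F_k$ for all $k\leq 0$. For $k\leq n$ the supermartingale property gives $\E[X_n\mid\F_k]\leq X_k$, so $\E[X_n\,\one_A]\leq \E[X_k\,\one_A]$. Letting $k\to-\infty$ and using $L^1$ convergence $X_k\to X_{-\infty}$ to pass the limit under the integral, I obtain $\E[X_n\,\one_A]\leq \E[X_{-\infty}\,\one_A]$ for all $A\in\F_{-\infty}$. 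Since $X_{-\infty}$ is $\F_{-\infty}$-measurable (as an a.s. limit of $\F_k$-measurable variables for arbitrarily small $k$), this gives $\E[X_n\mid\F_{-\infty}]\leq X_{-\infty}$ a.s., completing the proof. The subtlety to watch is justifying the limit exchange, which is exactly what $L^1$ convergence provides.
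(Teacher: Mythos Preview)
Your a.s.\ convergence argument via upcrossings on the finite segments $X_m,\dots,X_0$, and your final paragraph deriving $L^1$-convergence and the inequality $\E[X_n\mid\F_{-\infty}]\leq X_{-\infty}$, are correct and match the paper.

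The uniform integrability step has a genuine gap. Your decomposition $X_n=M_n+R_n$ with $M_n=\E[X_0\mid\F_n]$ is fine, and $(M_n)$ is u.i.\ as a family of conditional expectations of a fixed $L^1$ variable. But your sketch for $(R_n)$ does not close. From $\E[R_n\one_{\{R_n>a\}}]=\E[R_n]-\E[R_n\one_{\{R_n\leq a\}}]$, convergence of $\E[R_n]$ to some $L$ and bounded convergence on the second term yield $\lim_n\E[R_n\one_{\{R_n>a\}}]=L-\E[R_{-\infty}\one_{\{R_{-\infty}\leq a\}}]$; letting $a\to\infty$ leaves the residue $L-\E[R_{-\infty}]$, which Fatou only guarantees to be $\geq 0$, not $=0$. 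Nothing in ``a.s.\ convergence plus convergence of means plus Fatou'' forces $\E[R_{-\infty}]=L$. In fact your decomposition has not simplified the task: proving u.i.\ for the nonnegative backward supermartingale $(R_n)$ is the original problem in the nonnegative case, and the supermartingale inequality must still be exploited at this stage.

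The paper's route is a direct estimate with no decomposition. Given $\varepsilon>0$, first choose $k$ close enough to $-\infty$ that $0\leq\E[X_n]-\E[X_k]<\varepsilon/2$ for all $n\leq k$ (possible since $n\mapsto\E[X_n]$ is monotone and bounded). For $n\leq k$, apply the supermartingale inequality $\E[X_n\one_A]\geq\E[X_k\one_A]$ with the $\F_n$-sets $A=\{X_n<-\alpha\}$ and $A=\{X_n\leq\alpha\}$ to obtain
\[
\E\bigl[|X_n|\one_{\{|X_n|>\alpha\}}\bigr]\leq \E\bigl[|X_k|\one_{\{|X_n|>\alpha\}}\bigr]+\tfrac{\varepsilon}{2}.
\]
Since $\p[|X_n|>\alpha]\leq C/\alpha$ uniformly in $n$ and the single variable $X_k$ is integrable, the first term is $<\varepsilon/2$ once $\alpha$ is large. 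The missing idea in your argument is precisely this comparison against a \emph{fixed} index $k$ via the supermartingale property, rather than attempting to pass to the limit in $n$.
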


\begin{proof}

First we show a.s. convergence. Let therefore $k\geq 1$ be a fixed integer. For $n\in\{1,...,k\}$, let $Y_n^k=X_{n-k}$ and $\mathcal{G}_n^k=\F_{n-k}$. For $n>k$, we take $Y_n^k=X_0$ and $\mathcal{G}_n^k=\F_0$. Then $(Y_n^k)_{n\geq 0}$ is a supermartingale with respect to $(\mathcal{G}_n^k)_{n\geq 0}$. We now apply Doob's upcrossing inequality to the submartingale $(-Y_n^k)_{n\geq 0}$ to obtain that for $a<b$

\[
(b-a)\E[N_k([a,b],-Y^k)]\leq  \E[(-Y_n^k-a)^+]=\E[(-X_0-a)^+]\leq  \vert a\vert +\E[\vert X_0\vert].
\]

We note that when $k\uparrow \infty$, $N_k([a,b],-Y^k)$ increases and 

\begin{multline*}
N([a,b],-X):=\sup\{k\in\N\mid \exists m_1<n_1<\dotsm <m_k<n_k\leq  0;\\
-X_{m_1}\leq  a,-X_{n_1}\geq b,...,-X_{m_k}\leq  a,-X_{n_k}\geq b\}.
\end{multline*}

With monotone convergence we get 

\[
(b-a)\E[N([a,b],-X)]\leq  \vert a\vert +\E[\vert X_0\vert]<\infty.
\]

One can easily show that $(X_n)_{n\leq  0}$ converges a.s. to $X_\infty$ as $n\to-\infty$ and Fatou implies then that $\E[\vert X_{-\infty}\vert]<\infty$. We want to show that $(X_n)_{n\leq  0}$ is u.i. Thus, let $\varepsilon>0$. The sequence $(\E[X_n])_{n\geq 0}$ is increasing and bounded; we can take $k\leq  0$ small enough to get for $n\leq  k$, 

\[
\E[X_n]\leq  \E[X_k]+\frac{\varepsilon}{2}.
\]

Moreover, the finite family $\{X_k,X_{k+1},...,X_{-1},X_0\}$ is u.i. and one can then choose $\alpha>0$ large enough such that for all $k\leq  n\leq  0$

\[
\E[\vert X_n\vert\one_{\{\vert X_n\vert>\alpha}]<\varepsilon.
\]

We can also choose $\delta>0$ sufficiently small such that for all $A\in\F$, $\p[A]<\delta$ implies that $\E[\vert X_n\mid \one_A]<\frac{\varepsilon}{2}$. Now if $n<k$, we get 

\begin{align*}
\E[\vert X_n\vert \one_{\{ \vert X_n\vert >\alpha}]&=\E[-X_n\one_{\{ X_n<-\alpha\}}]+\E[X_n\one_{\{ X_n>\alpha\}}]=-\E[X_n\one_{\{ X_n<-\alpha\}}]+\E[X_n]-\E[X_n\one_{\{X_n\leq  \alpha\}}]\\
&\leq  -\E[\E[X_k\mid \F_n]\one_{\{ X_n<-\alpha\}}]+\E[X_k]+\frac{\varepsilon}{2}-\E[\E[X_k\mid \F_n]\one_{X_\leq  \alpha}]\\
&=-\E[X_k\one{\{ X_n<-\alpha\}}]+\E[X_k]+\frac{\varepsilon}{2}-\E[X_k\one_{\{ X_n\leq  \alpha\}}]\\
&=-\E[X_n\one_{\{ X_n<-\alpha\}}]+\E[X_n\one_{X_n>\alpha\}}]+\frac{\varepsilon}{2}\leq  \E[\vert X_k\vert \one_{\{ \vert X_n\vert>\alpha\}}]+\frac{\varepsilon}{2}.
\end{align*}

Next, we observe that 

\[
\p[\vert X_n\vert>\alpha]\leq \frac{1}{\alpha}\E[X_n]\leq  \frac{C}{\alpha},
\]

where $C=\sup_{n\leq  0}\E[\vert X_n\vert]<\infty$. Choose $\alpha$ such that $\frac{C}{\alpha}<\delta$. Consequently, we get 

\[
\E[\vert X_k\vert\one_{\{\vert X_n\vert>\alpha\}}]<\frac{\varepsilon}{2}.
\]

Hence, for all $n< k$, $\E[\vert X_n\vert \one_{\{ \vert X_n\vert >\alpha\}}]<\varepsilon$. This inequality is also true for $k\leq  n\leq  0$ and thus we have that $(X_n)_{n\leq  0}$ is u.i. To conclude, we note that u.i. and a.s. convergence implies $L^1$ convergence. Then, for $m\leq  n$ and $A\in\F_{-\infty}\subset \F_m$, we have 

\[
\E[X_n\one_A]\leq  \E[X_m\one_A]\xrightarrow{m\to-\infty}\E[X_{-\infty}\one_A].
\]

Therefore, $\E[\E[X_n\mid \F_{-\infty}]\one_A]\leq  \E[X_{-\infty}\one_A]$ and hence 

\[
\E[X_n\mid \F_{-\infty}]\leq  X_{-\infty}.
\]

\end{proof}

\begin{rem}

Equation $(1)$ is always satisfied for backward martingales. Indeed, for all $n\leq  0$ we get 

\[
\E[X_0\mid \F_n]=X_n,
\]

which implies that $\E[\vert X_n\vert]\leq  \E[\vert X_0\vert]$ and thus 

\[
\sup_{n\leq  0}\E[\vert X_n\vert]<\infty.
\]

Backward martingales are therefore always u.i.

\end{rem}

\begin{cor}

Let $(\Omega,\F,\p)$ be a probability space. Let $Z$ be a r.v. in $L^1(\Omega,\F,\p)$ and let $(\mathcal{G}_n)_{n\geq 0}$ be a decreasing family of $\sigma$-Algebras. Then 

\[
\E[Z\mid \mathcal{G}_n]\xrightarrow{n\to\infty\atop\text{a.s. and $L^1$}}\E[Z\mid \mathcal{G}_\infty],
\]

where 

\[
\mathcal{G}_\infty:=\bigcap_{n\geq 0}\mathcal{G}_n.
\]

\end{cor}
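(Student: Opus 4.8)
The statement to prove is the corollary: for $Z \in L^1(\Omega,\F,\p)$ and a decreasing family $(\mathcal{G}_n)_{n\geq 0}$ of $\sigma$-Algebras with $\mathcal{G}_\infty = \bigcap_{n\geq 0}\mathcal{G}_n$, we have $\E[Z\mid\mathcal{G}_n] \to \E[Z\mid\mathcal{G}_\infty]$ almost surely and in $L^1$ as $n\to\infty$. The plan is to recognize this as an application of the Backward convergence theorem proved just above, which handles backward supermartingales (in particular backward martingales) bounded in $L^1$.

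**Reindexing into a backward martingale.** First I would relabel the decreasing family to fit the backward filtration conventions of the theorem. Since $(\mathcal{G}_n)_{n\geq 0}$ decreases, I set $\F_{-n} := \mathcal{G}_n$ for $n\geq 0$, so that $(\F_m)_{m\leq 0}$ is a genuine backward filtration: for $m \leq m' \leq 0$, writing $m=-n$ and $m'=-n'$ with $n\geq n'\geq 0$, the inclusion $\mathcal{G}_n \subset \mathcal{G}_{n'}$ gives exactly $\F_m \subset \F_{m'}$. Correspondingly, define $X_{-n} := \E[Z\mid\mathcal{G}_n] = \E[Z\mid\F_{-n}]$. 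Note that $\F_{-\infty} = \bigcap_{m\leq 0}\F_m = \bigcap_{n\geq 0}\mathcal{G}_n = \mathcal{G}_\infty$.

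**Verifying the backward martingale property and $L^1$-boundedness.** Next I would check that $(X_m)_{m\leq 0}$ is a backward martingale. Each $X_{-n}=\E[Z\mid\F_{-n}]$ is $\F_{-n}$-measurable and in $L^1$ by the contraction property $\E[|\E[Z\mid\mathcal{G}_n]|] \leq \E[|Z|]<\infty$. For the martingale identity, take $m \leq m'\leq 0$, i.e. $\F_m \subset \F_{m'}$; by the tower property applied to the nested $\sigma$-Algebras,
\[
\E[X_{m'}\mid\F_m] = \E\big[\E[Z\mid\F_{m'}]\mid\F_m\big] = \E[Z\mid\F_m] = X_m,
\]
which is precisely the backward martingale condition. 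This is exactly the situation described in the remark immediately preceding the corollary: for backward martingales the hypothesis $\sup_{m\leq 0}\E[|X_m|]<\infty$ holds automatically, since $\E[|X_m|] = \E[|\E[Z\mid\F_m]|] \leq \E[|Z|]$ for every $m$, so $\sup_{m\leq 0}\E[|X_m|]\leq\E[|Z|]<\infty$.

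**Applying the theorem and identifying the limit.** Having verified all hypotheses, the Backward convergence theorem gives that $(X_m)_{m\leq 0}$ is uniformly integrable and converges a.s. and in $L^1$ to some $X_{-\infty}$ as $m\to-\infty$; in our reindexing this says $\E[Z\mid\mathcal{G}_n] \to X_{-\infty}$ a.s. and in $L^1$ as $n\to\infty$. The remaining, and only genuinely substantive, step is to identify $X_{-\infty}$ with $\E[Z\mid\mathcal{G}_\infty]$. I expect this to be the main obstacle, handled in the standard way: $X_{-\infty}$ is $\F_{-\infty}=\mathcal{G}_\infty$-measurable (as an a.s. limit of $\mathcal{G}_n$-measurable variables along the tower, using that $X_{-\infty}$ agrees with the limit which is measurable with respect to $\bigcap_n\mathcal{G}_n$). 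To pin down the conditional expectation, fix $A\in\mathcal{G}_\infty$; then $A\in\mathcal{G}_n$ for every $n$, so $\E[X_{-n}\one_A] = \E[\E[Z\mid\mathcal{G}_n]\one_A] = \E[Z\one_A]$, and passing to the limit using $L^1$-convergence yields $\E[X_{-\infty}\one_A] = \E[Z\one_A]$ for all $A\in\mathcal{G}_\infty$. By the defining property of conditional expectation and uniqueness, $X_{-\infty} = \E[Z\mid\mathcal{G}_\infty]$ a.s., completing the proof.
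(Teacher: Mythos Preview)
Your proof is correct and follows essentially the same approach as the paper: reindex by setting $\F_{-n}=\mathcal{G}_n$ and $X_{-n}=\E[Z\mid\mathcal{G}_n]$, observe this is a backward martingale (hence automatically $L^1$-bounded), and apply the Backward convergence theorem. The only minor difference is in the identification of the limit: the paper invokes the last conclusion of that theorem (the inequality $\E[X_n\mid\F_{-\infty}]\leq X_{-\infty}$, which is an equality for martingales) together with the tower property to write $X_{-\infty}=\E[X_0\mid\F_{-\infty}]=\E[\E[Z\mid\F_0]\mid\F_{-\infty}]=\E[Z\mid\mathcal{G}_\infty]$, whereas you verify the defining property of conditional expectation directly via $L^1$-convergence on test sets $A\in\mathcal{G}_\infty$. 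Both identifications are standard and correct.
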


\begin{proof}

For $n\geq 0$ define $X_{-n}:=\E[Z\mid \F_n]$, where $\F_{-n}=\mathcal{G}_n$. Then $(X_n)_{n\leq  0}$ is a backward martingale with respect to $(\F_n)_{n\leq  0}$. Hence theorem 14.1. implies that $(X_n)_{n\leq  0}$ converges a.s. and in $L^1$ for $n\to\infty$. Moreover\footnote{This follows from the last part of theorem 14.1.}, 

\[
X_\infty=\E[X_0\mid \F_{-\infty}]=\E[\E[Z\mid \F_0]\mid \F_{-\infty}]=\E[Z\mid \F_{-\infty}]=\E[Z\mid \mathcal{G}_\infty].
\]

\end{proof}

\begin{thm}[Kolmogorov's 0-1 law]

Let $(\Omega,\F,\p)$ be a probability space. Let $(X_n)_{n\geq 1}$ be a sequence of independent r.v.'s with values in arbitrary measure spaces. For $n\geq 1$, define the $\sigma$-Algebra

$$\B_n:=\sigma(\{X_k\mid k\geq n\}).$$

The tail $\sigma$-Algebra $\B_\infty$ is defined as 

$$\B_\infty:=\bigcap_{n=1}^\infty\B_n.$$

Then $\B_\infty$ is trivial in the sense that for all $B\in \B_\infty$ we have $\p[B]\in\{0,1\}$.

\end{thm}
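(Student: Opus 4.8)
The plan is to reuse the machinery already built for this chapter rather than the direct independence computation: the forward martingale convergence theorem (Corollary \ref{cor1}) together with the fact that conditioning on a $\sigma$-algebra independent of a random variable returns its expectation. Fix an event $B\in\B_\infty$ and set $Z=\one_B\in L^1(\Omega,\F,\p)$. With the increasing filtration $\F_n=\sigma(X_1,\dots,X_n)$ (and $\F_0=\{\varnothing,\Omega\}$) I would form the Doob martingale $M_n:=\E[\one_B\mid\F_n]$, which is a genuine martingale by the example $X_n=\E[Y\mid\F_n]$ discussed earlier. The whole argument then consists in computing the limit $\lim_{n\to\infty}M_n$ in two different ways.

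First I would compute the limit abstractly. By Corollary \ref{cor1}, $M_n$ converges a.s. (and in $L^1$) to $\E[\one_B\mid\F_\infty]$, where $\F_\infty=\sigma\left(\bigcup_{n\geq0}\F_n\right)=\sigma(X_1,X_2,\dots)$. Now observe that $B\in\B_\infty\subset\B_1=\sigma(X_1,X_2,\dots)=\F_\infty$, so $\one_B$ is already $\F_\infty$-measurable; hence $\E[\one_B\mid\F_\infty]=\one_B$ a.s., and therefore $M_n\to\one_B$ a.s.

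Second I would compute each $M_n$ explicitly. The key structural input is that $B\in\B_\infty\subset\B_{n+1}=\sigma(X_{n+1},X_{n+2},\dots)$, and that $\sigma(X_1,\dots,X_n)=\F_n$ is independent of $\sigma(X_{n+1},X_{n+2},\dots)$ because the $(X_k)_{k\geq1}$ are independent. Consequently $\one_B$ is independent of $\F_n$, and the proposition stating that a random variable independent of a sub-$\sigma$-algebra $\mathcal{G}$ satisfies $\E[Y\mid\mathcal{G}]=\E[Y]$ gives $M_n=\E[\one_B\mid\F_n]=\E[\one_B]=\p[B]$ a.s., a constant in $n$. Comparing the two evaluations of the limit yields $\one_B=\p[B]$ a.s.; squaring this a.s.\ identity and taking expectations gives $\p[B]=\p[B]^2$, so $\p[B]\in\{0,1\}$, as claimed.

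The step I expect to be the main obstacle, or at least the one needing the most care to invoke correctly, is the independence claim used in the second evaluation: that $\F_n$ and the infinitely-generated $\sigma$-algebra $\sigma(X_{n+1},X_{n+2},\dots)$ are independent. This is not the finite block-independence statement verbatim but its extension to an infinite tail block, which rests on the earlier proposition (proved via a monotone-class argument on a $\pi$-system stable under finite intersection) that $\sigma(X_1,\dots,X_p)$ and $\sigma(X_{p+1},X_{p+2},\dots)$ are independent whenever the $X_k$ are independent. Everything else—the martingale property of $M_n$, the identification $\E[\one_B\mid\F_\infty]=\one_B$, and the final arithmetic—is routine once that independence is in hand.
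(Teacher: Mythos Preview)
Your proof is correct, and it is precisely the martingale approach that the paper sets as an exercise right after Corollary~\ref{cor1}. It is, however, genuinely different from the proof the paper actually gives (in the first part of the notes, to which the second occurrence of the theorem refers). There, the argument is purely an independence computation: one shows that $\mathcal{D}_n=\sigma(X_1,\dots,X_n)$ is independent of $\B_\infty$ for every $n$, passes by a monotone-class argument to obtain that $\sigma(X_1,X_2,\dots)$ is independent of $\B_\infty$, and since $\B_\infty\subset\sigma(X_1,X_2,\dots)$ concludes that $\B_\infty$ is independent of itself, whence $\p[B]=\p[B]^2$.

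The two routes share the same structural input---the independence of $\F_n$ and $\sigma(X_{n+1},X_{n+2},\dots)$---but package the passage to the limit differently. The paper pushes the monotone-class argument one step further, extending independence from $\bigcup_n\F_n$ to $\F_\infty$; you instead let Corollary~\ref{cor1} (i.e.\ martingale convergence) do that limiting step for you, which is arguably cleaner once the martingale machinery is in place. Your identification of the main care point is accurate: the independence of $\F_n$ and the infinite tail block is exactly the proposition the paper proves just before stating the $0$--$1$ law. One cosmetic remark: from $\one_B=\p[B]$ a.s.\ you can conclude directly, since an indicator that is a.s.\ constant must be a.s.\ $0$ or a.s.\ $1$; the squaring detour is unnecessary.
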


\begin{proof}
This proof can be found in the stochastics I notes.
\end{proof}

\begin{lem}
\label{lem5}
Let $(\Omega,\F,\p)$ be a probability space. Let $Z\in L^1(\Omega,\F,\p)$ and $\mathcal{H}_1$ and $\mathcal{H}_2$ two $\sigma$-Algebras included in $\F$. Assume that $\mathcal{H}_2$ is independent of $\sigma(Z)\lor \mathcal{H}_1$. Then 

\[
\E[Z\mid \mathcal{H}_1\lor \mathcal{H}_2]=\E[Z\mid \mathcal{H}_1].
\]

\end{lem}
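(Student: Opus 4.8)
The plan is to show that the random variable $\E[Z\mid\mathcal{H}_1]$ is itself a version of $\E[Z\mid\mathcal{H}_1\lor\mathcal{H}_2]$, and then invoke the a.s.\ uniqueness of the conditional expectation. The candidate $\E[Z\mid\mathcal{H}_1]$ is $\mathcal{H}_1$-measurable, hence $(\mathcal{H}_1\lor\mathcal{H}_2)$-measurable, and it lies in $L^1$; so by the characterization of the conditional expectation given $\mathcal{H}_1\lor\mathcal{H}_2$, it suffices to verify the defining identity
\[
\E[Z\one_W]=\E[\E[Z\mid\mathcal{H}_1]\one_W]
\]
for every $W\in\mathcal{H}_1\lor\mathcal{H}_2$.

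First I would reduce the verification to the $\pi$-system $\mathcal{C}=\{A\cap B\mid A\in\mathcal{H}_1,\ B\in\mathcal{H}_2\}$, which is stable under finite intersection and generates $\mathcal{H}_1\lor\mathcal{H}_2$. On such a rectangle $W=A\cap B$ the computation splits cleanly using the two independence facts contained in the hypothesis. For the left-hand side, $Z\one_A$ is $\sigma(Z)\lor\mathcal{H}_1$-measurable while $\one_B$ is $\mathcal{H}_2$-measurable; since $\mathcal{H}_2$ is independent of $\sigma(Z)\lor\mathcal{H}_1$, the factorization of the expectation for independent (integrable times bounded) variables gives
\[
\E[Z\one_A\one_B]=\E[Z\one_A]\,\p[B].
\]
For the right-hand side, $\E[Z\mid\mathcal{H}_1]\one_A$ is $\mathcal{H}_1$-measurable, hence independent of $\mathcal{H}_2$ (because $\mathcal{H}_1\subset\sigma(Z)\lor\mathcal{H}_1$), so
\[
\E[\E[Z\mid\mathcal{H}_1]\one_A\one_B]=\E[\E[Z\mid\mathcal{H}_1]\one_A]\,\p[B]=\E[Z\one_A]\,\p[B],
\]
where the last step is just the defining property of $\E[Z\mid\mathcal{H}_1]$ tested against the $\mathcal{H}_1$-measurable indicator $\one_A$. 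Both sides therefore equal $\E[Z\one_A]\,\p[B]$, so the identity holds on $\mathcal{C}$.

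The remaining step is to upgrade equality on the generating $\pi$-system $\mathcal{C}$ to equality for all $W\in\mathcal{H}_1\lor\mathcal{H}_2$. I would do this by a monotone class (or $\pi$--$\lambda$) argument: the two finite measures $W\mapsto\E[Z\one_W]$ and $W\mapsto\E[\E[Z\mid\mathcal{H}_1]\one_W]$ agree on $\mathcal{C}$, and since $\mathcal{C}$ is stable under intersection and generates the $\sigma$-algebra, they agree on all of $\mathcal{H}_1\lor\mathcal{H}_2$. To stay within the signed case one applies this to the positive and negative parts $Z^{\pm}$ separately (each argument above is valid for $Z^{\pm}\in L^1$), then subtracts. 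Once the defining identity holds for every $W\in\mathcal{H}_1\lor\mathcal{H}_2$, uniqueness of the conditional expectation yields $\E[Z\mid\mathcal{H}_1\lor\mathcal{H}_2]=\E[Z\mid\mathcal{H}_1]$ a.s.

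The genuinely delicate point is not the algebra but the justification of the monotone class extension together with the independence factorizations: one must be careful that $Z$ is only integrable (not bounded), so the factorization $\E[Z\one_A\one_B]=\E[Z\one_A]\p[B]$ should be argued through the independence of the $\sigma$-algebra $\sigma(Z)\lor\mathcal{H}_1$ from $\mathcal{H}_2$ (which makes $Z\one_A$ and the bounded $\one_B$ independent), rather than through a naive product-of-expectations rule. That is the step I expect to require the most care.
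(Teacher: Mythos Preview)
Your proposal is correct and follows essentially the same route as the paper: verify the defining identity $\E[Z\one_W]=\E[\E[Z\mid\mathcal{H}_1]\one_W]$ on the $\pi$-system of rectangles $A\cap B$ with $A\in\mathcal{H}_1$, $B\in\mathcal{H}_2$ via the independence factorization, and then extend to all of $\mathcal{H}_1\lor\mathcal{H}_2$ by the monotone class theorem. If anything, your write-up is more careful than the paper's in flagging the need to treat $Z^{\pm}$ separately and in justifying the factorization for an integrable-times-bounded product.
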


\begin{proof}

Let $A\in \mathcal{H}_1\lor \mathcal{H}_2$ such that $A=B\cap C$, where $B\in \mathcal{H}_1$ and $C\in\mathcal{H}_2$. Then 

\begin{multline*}
\E[Z\one_A]=\E[Z\one_B\one_C]=\E[Z\one_B]\E[\one_C]=\E[\one_A]\E[\E[Z\mid \mathcal{H}_1]\one_B]\\=\E[\E[Z\mid \mathcal{H}_1]\one_B\one_C]=\E[\E[Z\mid \mathcal{H}_1]\mid \one_A].
\end{multline*}

Now we note that 

\[
\sigma(W=\{ B\cap C\mid B\in\mathcal{H}_1,C\in \mathcal{H}_2\})=\mathcal{H}_1\lor \mathcal{H}_2
\]

and $W$ is stable under finite intersections. Thus the monotone class theorem implies that for all $A\in \mathcal{H}_1\lor \mathcal{H}_2$ we have

\[
\E[Z\one_A]=\E[\E[Z\mid\mathcal{H}_1]\one_A]
\]

\end{proof}

\begin{thm}[Strong law of large numbers]

Let $(\Omega,\F,\p)$ be a probability space. Let $(\xi_n)_{n\geq 1}$ be a sequence of iid r.v.'s such that for all $n\geq 1$ we have $\E[\vert \xi_n\vert]<\infty$. Moreover, let $S_0=0$ and $S_n=\sum_{j=1}^n\xi_j$. Then 

\[
\frac{S_n}{n}\xrightarrow{n\to\infty\atop\text{a.s. and $L^1$}}\E[\xi_1].
\]

\end{thm}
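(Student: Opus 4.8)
The plan is to realize $S_n/n$ as a backward martingale and then invoke the two tools just developed: the backward convergence theorem and Kolmogorov's $0$-$1$ law. First I would introduce the decreasing family of $\sigma$-Algebras $\mathcal{G}_n = \sigma(S_n, S_{n+1}, S_{n+2}, \dots)$. Because $S_{n+k} = S_n + \xi_{n+1} + \dots + \xi_{n+k}$, one has $\mathcal{G}_n = \sigma(S_n, \xi_{n+1}, \xi_{n+2}, \dots)$, and evidently $\mathcal{G}_{n+1} \subset \mathcal{G}_n$, so $(\mathcal{G}_n)_{n\geq 1}$ is a decreasing family.

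The central step is to show that $\E[\xi_1 \mid \mathcal{G}_n] = S_n/n$ a.s. The mechanism is exchangeability: for each $1 \le i \le n$ I would argue that $\E[\xi_i \mid \mathcal{G}_n] = \E[\xi_1 \mid \mathcal{G}_n]$. Indeed, $(\xi_1, \dots, \xi_n)$ is iid, so its law is invariant under permutations of its coordinates, while the conditioning data, namely $S_n$ (a symmetric function of $\xi_1, \dots, \xi_n$) together with $(\xi_{n+1}, \xi_{n+2}, \dots)$ (independent of $\xi_1, \dots, \xi_n$), are unchanged by such a permutation. Summing over $i$ and using that $S_n$ is $\mathcal{G}_n$-measurable yields $n\,\E[\xi_1 \mid \mathcal{G}_n] = \E[S_n \mid \mathcal{G}_n] = S_n$, which gives the identity. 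This is the delicate point of the whole argument; to make it rigorous I would verify $\E[\xi_i \one_A] = \E[\xi_1 \one_A]$ on the $\pi$-system of sets of the form $A = \{S_n \in B\} \cap \{(\xi_{n+1}, \dots, \xi_{n+k}) \in C\}$, which generates $\mathcal{G}_n$ and is stable under finite intersection, and then conclude by a monotone class argument.

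With this identity established, I would set $\F_{-n} = \mathcal{G}_n$ and $X_{-n} = \E[\xi_1 \mid \mathcal{G}_n] = S_n/n$ for $n \ge 1$, so that $(X_m)_{m \le 0}$ is precisely the backward martingale attached to $\xi_1 \in L^1(\Omega,\F,\p)$ and the decreasing family $(\mathcal{G}_n)_{n\geq 1}$. The corollary to the backward convergence theorem then applies verbatim and gives that $S_n/n = \E[\xi_1 \mid \mathcal{G}_n]$ converges both a.s. and in $L^1$ to $\E[\xi_1 \mid \mathcal{G}_\infty]$, where $\mathcal{G}_\infty = \bigcap_{n \ge 1} \mathcal{G}_n$.

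It remains to identify the limit as the constant $\E[\xi_1]$. Whenever $\lim_n S_n/n$ exists it equals $\lim_n (\xi_{m+1} + \dots + \xi_n)/n$ for every fixed $m$, since discarding finitely many terms alters the average by $O(1/n)$; hence the limit is measurable with respect to $\sigma(\xi_{m+1}, \xi_{m+2}, \dots)$ for all $m$, that is, with respect to the tail $\sigma$-Algebra $\B_\infty$. By Kolmogorov's $0$-$1$ law $\B_\infty$ is trivial, so the limit is a.s. equal to a constant $c$. The $L^1$ convergence then pins down its value, because $\E[\lim_n S_n/n] = \lim_n \E[\E[\xi_1 \mid \mathcal{G}_n]] = \E[\xi_1]$, forcing $c = \E[\xi_1]$. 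Therefore $S_n/n \to \E[\xi_1]$ a.s. and in $L^1$, as claimed.
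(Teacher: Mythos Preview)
Your proof is correct and follows essentially the same route as the paper: realize $S_n/n$ as $\E[\xi_1\mid\mathcal{G}_n]$ for the decreasing family $\mathcal{G}_n=\sigma(S_n,\xi_{n+1},\xi_{n+2},\dots)$, apply the backward martingale convergence corollary, and identify the limit via Kolmogorov's $0$--$1$ law and the $L^1$ convergence. The only cosmetic difference is that the paper first proves $\E[\xi_1\mid S_n]=S_n/n$ (via the fact that $(\xi_k,S_n)\stackrel{law}{=}(\xi_1,S_n)$) and then invokes the lemma $\E[Z\mid\mathcal{H}_1\lor\mathcal{H}_2]=\E[Z\mid\mathcal{H}_1]$ with $\mathcal{H}_2=\sigma(\xi_{n+1},\xi_{n+2},\dots)$ to pass to $\mathcal{G}_n$, whereas you establish $\E[\xi_1\mid\mathcal{G}_n]=S_n/n$ directly on the generating $\pi$-system; both are valid implementations of the same symmetry argument.
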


\begin{proof}

At first, we want to show that $\E[\xi_1\mid S_n]=\frac{S_n}{n}$. Indeed, we know that there is a measurable map $g$ such that 

\[
\E[\xi_1\mid S_n]=g(S_n).
\]

Moreover, we know that for $k\in\{1,...,n\}$ we have $(\xi_1,S_n)$ and $(\xi_k,S_n)$ have the same law. Now for all bounded and Borel measurable maps $h$ we have 

\[
\E[\xi_k h(S_n)]=\E[\xi_1 h(S_n)]=\E[g(S_n)h(S_n)].
\]

Thus $\E[\xi_k\mid S_n]=g(S_n)$. We have 
\[
\sum_{j=1}^n\E[\xi_j\mid S_n]=\E\left[\sum_{j=1}^n \xi_j\mid S_n\right]=S_n, 
\]

but on the other hand we have that 

\[
\sum_{j=1}^n\E[\xi_j\mid S_n]=ng(s_n).
\]

Hence $g(S_n)=\frac{S_n}{n}$. Now take $\mathcal{H}_1=\sigma(S_n)$ and $\mathcal{H}_2=\sigma(S_n,\xi_{n+1},\xi_{n+2},...)$. Thus, by lemma \ref{lem5}, we get 

\[
\E[\xi_1\mid S_n,\xi_{n+1},\xi_{n+2},...]=\E[\xi_1\mid S_n].
\]

Now define $\mathcal{G}:=\sigma(S_n,\xi_{n+1},\xi_{n+2},...)$. Then we have $\mathcal{G}_{n+1}\subset\mathcal{G}_n$ because $S_{n+1}=S_n+\xi_{n+1}$. Hence, it follows that $\E[\xi_1\mid \mathcal{G}_n]=\E[\xi_1\mid S_n]=\frac{S_n}{n}$ converges a.s. and in $L^1$ to some r.v., but Kolmogorov's 0-1 law implies that this limit is a.s. constant. In particular, $\E\left[\frac{S_n}{n}\right]=\E[\xi_1]$ converges in $L^1$ to this limit, which is thus $\E[\xi_1]$.

\end{proof}

\begin{exer}[Hewitt-Savage 0-1 law]
Let $(\xi_n)_{n\geq 1}$ be iid r.v.'s with values in some measurable space $(E,\mathcal{E})$. The map $\omega\mapsto (\xi_1(\omega),\xi_2(\omega),...)$ defines a r.v. without values in $E^{\N^\times}$.  A measurable map $F$ defined on $E^{\N^\times}$ is said to be symmetric if 

\[
F(x_1,x_2,...)=F(x_{\pi(1)},x_{\pi(2)},...)
\]

for all permutations $\pi$ of $\N^\times$ with finite support. 

\vspace{0.5cm}

Prove that if $F$ is a symmetric function on $E^{\N^\times}$, then $F(\xi_1,\xi_2,...)$ is a.s. constant.

$Hint:$ Consider $\F_n=\sigma(\xi_1,x_2,...,\xi_n)$, $\mathcal{G}_n=\sigma(\xi_{n+1},\xi_{n+2},...)$, $Y=F(\xi_1,\xi_2,...)$, $X=\E[Y\mid \F_n]$ and $Z_n=\E[Y\mid \mathcal{G}_n]$.

\end{exer}

\subsection{Martingales bounded in $L^2$ and random series}

Let $(\Omega,\F,(\F_n)_{n\geq 0},\p)$ be a filtered probability space. Let $(M_n)_{n\geq 0}$ be a martingale in $L^2(\Omega,\F,(\F_n)_{n\geq 0},\p)$, i.e. $\E[M_n^2]<\infty$ for all $n\geq 0$. We say that $(M_n)_{n\geq 0}$ is bounded in $L^2(\Omega,\F,(\F_n)_{n\geq 0},\p)$ if $\sup_{n\geq 0}\E[M_n^2]<\infty$. For $n\leq  \nu$, we have that $\E[M_\nu\mid \F_n]=M_n$ implies that $(M_\nu-M_n)$ is orthogonal to $L^2(\Omega,\F,(\F_n)_{n\geq 0},\p)$. Hence, for all $s\leq  t\leq  n\leq  \nu$, $(M_\nu-M_n)$ is orthogonal to $(M_t-M_s)$. 

\[
\left\langle M_\nu-M_n,M_t-M_s\right\rangle=0\Longleftrightarrow \E[(M_\nu-M_n)(M_t-M_s)]=0.
\]

Now write $M_n=M_0+\sum_{k=1}^n(M_k-M_{k-1})$. $M_n$ is then a sum of orthogonal terms and therefore

\[
\E[M_n^2]=\E[M_0^2]+\sum_{k=1}^n\E[(M_k-M_{k-1})^2].
\]

\begin{thm}
Let $(M_n)_{n\geq 0}$ be a martingale in $L^2(\Omega,\F,(\F_n)_{n\geq 0},\p)$. Then $(M_n)_{n\geq 0}$ is bounded in $L^2(\Omega,\F,(\F_n)_{n\geq 0},\p)$ if and only if $\sum_{k\geq 1}\E[(M_k-M_{k-1})^2]<\infty$ and in this case $$M_n\xrightarrow{n\to\infty\atop \text{a.s. and $L^1$}}M_\infty.$$ 

\end{thm}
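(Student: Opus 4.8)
The plan is to build everything on the Pythagorean-type identity
$$\E[M_n^2]=\E[M_0^2]+\sum_{k=1}^n\E[(M_k-M_{k-1})^2],$$
which was derived immediately above from the orthogonality of the martingale increments. First I would record that, since each summand $\E[(M_k-M_{k-1})^2]$ is nonnegative, the partial sums $\sum_{k=1}^n\E[(M_k-M_{k-1})^2]$ form a nondecreasing sequence in $n$. Adding the fixed constant $\E[M_0^2]$ does not affect boundedness, so $n\mapsto\E[M_n^2]$ is itself nondecreasing, and $\sup_{n\geq 0}\E[M_n^2]<\infty$ holds if and only if these increasing partial sums stay bounded, i.e. if and only if $\sum_{k\geq 1}\E[(M_k-M_{k-1})^2]<\infty$. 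This settles the equivalence with no further work.

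For the convergence statement, assume $(M_n)_{n\geq 0}$ is bounded in $L^2$. By Cauchy--Schwarz on the probability space we have $\E[\vert M_n\vert]\leq\E[M_n^2]^{1/2}\leq\left(\sup_{m\geq 0}\E[M_m^2]\right)^{1/2}<\infty$ for every $n$, so $(M_n)_{n\geq 0}$ is bounded in $L^1$. The almost sure martingale convergence theorem then supplies a r.v. $M_\infty\in L^1(\Omega,\F_\infty,(\F_n)_{n\geq 0},\p)$ with $M_n\to M_\infty$ a.s. To upgrade this to $L^1$ (indeed $L^2$) convergence, I would invoke the $L^p$ martingale convergence theorem (Theorem \ref{thm3}) with $p=2$: since $\sup_{n\geq 0}\E[\vert M_n\vert^2]<\infty$, that theorem yields $M_n\to M_\infty$ both a.s. and in $L^2$. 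Because we work on a probability space, the nesting $L^2\subset L^1$ gives $\|M_n-M_\infty\|_1\leq\|M_n-M_\infty\|_2\to 0$, so convergence holds in $L^1$ as claimed.

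There is essentially no serious obstacle here: the analytic heart---orthogonality of increments and the resulting additivity of second moments---has already been carried out before the statement, and the two directions of the equivalence reduce to the trivial observation that a nondecreasing real sequence is bounded if and only if it converges. The only point requiring a little care is to cite the correct earlier convergence theorem for the mode of convergence asserted; routing through the $L^p$ theorem with $p=2$ is the cleanest choice and automatically delivers the $L^1$ conclusion.
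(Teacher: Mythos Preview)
Your proposal is correct and is exactly the argument the paper has in mind: the paper establishes the Pythagorean identity $\E[M_n^2]=\E[M_0^2]+\sum_{k=1}^n\E[(M_k-M_{k-1})^2]$ immediately before the statement and then states the theorem without further proof, leaving the reader to draw the equivalence from that identity and the convergence from the already-proved $L^p$ martingale convergence theorem (Theorem~\ref{thm3}) with $p=2$. Your write-up simply makes these implicit steps explicit.
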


\begin{thm}

Suppose that $(X_n)_{n\geq 1}$ is a sequence of independent r.v.'s such that for all $k\geq 1$, $\E[X_k]=0$ and $\sigma_k^2=Var(X_k)<\infty$. Then 

\begin{enumerate}[$(i)$]
\item{$\sum_{k\geq1}\sigma^2_k<\infty$ implies that $\sum_{k\geq 1}X_k$ converges a.s.}
\item{If there is a $C>0$ such that for all $\omega\in\Omega$ and $k\geq1$, $\vert X_k(\omega)\vert\leq  C$, then $\sum_{k\geq 1}X_k$ converges a.s. implies that $\sum_{k\geq 1}\sigma_k^2<\infty$.}
\end{enumerate}
\end{thm}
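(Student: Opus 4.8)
The two parts are of rather different character, so I would treat them separately, in both cases routing everything through the martingale $S_n = \sum_{k=1}^n X_k$ with $S_0 = 0$ and the natural filtration $\F_n = \sigma(X_1,\dots,X_n)$. Since the $X_k$ are independent and centered, $(S_n)_{n\geq 0}$ is a martingale, exactly as in the example following the definition of a martingale.

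For part $(i)$ the work is essentially already done by the theorem immediately preceding this statement. The martingale increments are $S_k - S_{k-1} = X_k$, so $\E[(S_k - S_{k-1})^2] = \E[X_k^2] = \sigma_k^2$, using $\E[X_k] = 0$. The hypothesis $\sum_{k\geq 1}\sigma_k^2 < \infty$ is then precisely the condition $\sum_{k\geq 1}\E[(S_k - S_{k-1})^2] < \infty$, so that theorem gives both that $(S_n)_{n\geq 0}$ is bounded in $L^2$ and that $S_n$ converges a.s. (and in $L^1$); this is exactly the asserted a.s. convergence of $\sum_{k\geq 1} X_k$.

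For part $(ii)$ I would argue by contradiction, assuming $\sum_{k\geq 1}\sigma_k^2 = \infty$ while $\sum_{k\geq 1}X_k$ converges a.s. The first step is to introduce the compensator $A_n = \sum_{k=1}^n \sigma_k^2$ and to check that $N_n = S_n^2 - A_n$ is a martingale: conditioning on $\F_n$ and expanding $(S_n + X_{n+1})^2$, the cross term vanishes by independence and $\E[X_{n+1}] = 0$, while $\E[X_{n+1}^2 \mid \F_n] = \sigma_{n+1}^2$ cancels the increment $A_{n+1} - A_n$. Next, for fixed $a > 0$ I would introduce the stopping time $T_a = \inf\{n : |S_n| > a\}$ and exploit the boundedness $|X_k| \leq C$: before time $T_a$ one has $|S_k| \leq a$, and at time $T_a$ one adds a single jump of size at most $C$, so that $|S_{n \wedge T_a}| \leq a + C$ for every $n$. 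Since the stopped process $(N_{n \wedge T_a})_{n \geq 0}$ is again a martingale, $\E[N_{n\wedge T_a}] = \E[N_0] = 0$, that is $\E[A_{n\wedge T_a}] = \E[S_{n\wedge T_a}^2] \leq (a+C)^2$. Letting $n \to \infty$ and using monotone convergence for the increasing sequence $A_{n \wedge T_a} \uparrow A_{T_a}$ yields $\E[A_{T_a}] \leq (a+C)^2$.

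The conclusion then follows from the divergence hypothesis: on the event $\{T_a = \infty\}$ one has $A_{T_a} = \sum_{k\geq 1}\sigma_k^2 = \infty$, so the finiteness of $\E[A_{T_a}]$ forces $\p[T_a = \infty] = 0$. Thus $\sup_n |S_n| > a$ a.s., and since $a$ was arbitrary, $\sup_n |S_n| = \infty$ a.s. This contradicts the assumed a.s. convergence of $S_n$, which would make $(S_n)_{n\geq 0}$ a.s. bounded. The main obstacle is the construction in part $(ii)$: one must combine the correct martingale ($S_n^2 - A_n$) with the truncation stopping time and the uniform bound $C$ so that optional stopping produces an estimate independent of $n$. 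The bookkeeping showing $|S_{n\wedge T_a}| \leq a + C$ (and that $T_a \geq 1$, since $S_0 = 0$) is where care is needed, and the passage $\E[A_{n\wedge T_a}] \to \E[A_{T_a}]$ must be justified by monotone convergence rather than by any integrability of $T_a$ itself.
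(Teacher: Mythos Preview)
Your proof is correct and follows essentially the same route as the paper: the same partial-sum martingale for $(i)$, and for $(ii)$ the same compensated martingale $S_n^2-A_n$, the same stopping time $T_a=\inf\{n:|S_n|>a\}$, and the same key bound $\E[A_{n\wedge T_a}]\le (a+C)^2$. The only cosmetic difference is that the paper argues directly (choosing $\alpha$ with $\p[T_\alpha=\infty]>0$ and reading off $\sum\sigma_k^2<\infty$), whereas you phrase the same step as a contradiction by assuming $\sum\sigma_k^2=\infty$.
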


\begin{proof}

Consider $\F_n=\sigma(X_1,...,x_n)$ with $F_0=\{\varnothing,\Omega\}$, $S_n=\sum_{j=1}^nX_j$ with $S_0=0$ and $A_n=\sum_{k=1}^n\sigma_k^2$ with $A_0=0$. Moreover, set $M_n=S_n^2-A_n$. Then $(S_n)_{n\geq 0}$ is a martingale and 

\[
\E[(S_n-S_{n-1})^2]=\E[X_n^2]=\sigma_n^2.
\]

Thus $\sum_{n\geq 1}\sigma_n^2<\infty$ inplies $\sum_{n\geq 1}\E[(S_n-S_{n-1})^2]<\infty$ and hence $(S_n)_{n\geq 0}$ 
is bounded in 

$L^2(\Omega,\F,(\F_n)_{n\geq 0},\p)$, which means that $S_n$ converges a.s. Next we show that $(M_n)_{n\geq 0}$ is a martingale. We have 

\[
\E[(S_n-S_{n-1})^2\mid \F_{n-1}]=\E[X_n^2\mid \F_{n-1}]=\E[X_n^2]=\sigma_n^2.
\]

Hence we get 

\begin{multline*}
\sigma_n^2=\E[(S_n-S_{n-1})^2\mid \F_{n-1}]=\E[S_n^2-2S_{n-1}S_n+S_{n-1}^2\mid F_{n-1}]=\\
=\E[S_n^2\mid\F_{n-1}]-2S_{n-1}^2+S_{n-1}^2=\E[S_n^2\mid \F_{n-1}]-S_{n-1}^2,
\end{multline*}

which implies that $(M_n)_{n\geq 0}$ is a martingale. Let $T:=\inf\{n\in\N\mid \vert S_n\vert>\alpha\}$ for some constant $\alpha$. Then $T$ is a stopping time. $(M_{n\land T})_{n\geq 1}$ is a martingale and hence 

\[
\E[M_{n\land T}]=\E[S_{n\land T}^2]-\E[A_{n\land T}]=0.
\]

Therefore $\E[S_{n\land T}^2]=\E[A_{n\land T}]$ and if $T$ is finite, $\vert S_T-S_{T-1}\vert<\vert X_T\vert\leq  C$ for some constant $C$, thus $\vert S_{n\land T}\vert \leq  C+\alpha$ and hence $\E[A_{n\land T}]\leq  (C+\alpha)^2$ for all $n\geq 0$. Now since $A_n$ is increasing we get that $\E[A_{n\land T}]\leq  (C+\alpha)^2<\infty.$ Since $\sum_{n\geq 0}X_n$ converges a.s., $\sum_{k=1}^n X_k$ is bounded and there exists $\alpha>0$ such that $\p[T=\alpha]>0$. Choosing\footnote{Note that $\E\left[\sum_{k\geq 1}\sigma_k^2\one_{\{ T=\infty\}}\right]=\sum_{k\geq 1}\sigma_k^2\p[T=\infty]$} $\alpha$ right yields $\sum_{k\geq 1}\sigma_k^2<\infty$.

\end{proof}

\begin{ex}

Let $(a_n)_{n\geq 1}$ be a sequence of real numbers and let $(\xi_n)_{n\geq 1}$ be iid r.v.'s with $\p[\xi=\pm 1]=\frac{1}{2}$. Then $\sum_{n\geq 1}a_n\xi_n$ converges a.s. if and only if $\sum_{n\geq 1}a_n^2<\infty$. Indeed, we get $\vert a_n\xi_n\vert=\vert a_n\vert\xrightarrow{n\to\infty}0$ and therefore there exists a $C>0$ such that for all $n\geq 1$, $\vert a_n\vert \leq  C$. Now for a r.v. $X$ recall that we write $\Phi_X(t)=\E\left[e^{itX}\right]$. We also know

\[
e^{itx}=\sum_{n\geq 0}\frac{i^nt^nx^n}{n!},\hspace{0.5cm}e^{itX}=\sum_{n\geq 0}\frac{i^nt^nX^n}{n!}.
\]

Moreover, define $R_n(x)=e^{ix}-\sum_{k=0}^n\frac{i^kx^k}{k!}$. Therefore we get 

\[
\vert R_n(x)\vert\leq  \min \left(2\frac{\vert x\vert^n}{n!},\frac{\vert x\vert^{n+1}}{(n+1)!}\right).
\]

Indeed, $\vert R_0(x)\vert=\vert e^{ix}-1\vert=\left\vert\int_0^x ie^{iy}dy\right\vert\leq  \min(2,\vert x\vert).$ Moreover, we have $\vert R_n(x)\vert=\left\vert \int_0^x iR_{n-1}(y)dy\right\vert$. Hence the claim follows by a simple induction on $n$. If $X$ is such that $\E[X]=0$, $\E[X^2]=\sigma^2<\infty$ and $e^{itX}-\left( 1+itX-\frac{t^2X^2}{2}\right)=R_2(tX)$ we get 

\[
\E\left[e^{itX}\right]=1-\frac{\sigma^2t^2}{2}+\E[R_2(tX)]
\]

and $\E[R_2(tX)]\leq  t^2\E[\vert X\vert^2\land tX^3]$. With dominated convergence it follows that $\Phi(t)=1-\frac{t^2\sigma^2}{2}+o(t^2)$ as $t\to 0$.

\end{ex}

\begin{lem}

Let $(\Omega,\F,(\F_n)_{n\geq 0},\p)$ be a filtered probability space. Let $(X_n)_{n\geq 0}$ be a sequence of independent r.v.'s bounded by $k>0$. Then if $\sum_{n\geq 0}X_n$ converges a.s., $\sum_{n\geq 0}\E[X_n]$ and $\sum_{n\geq 0}Var(X_n)$ both converge.

\end{lem}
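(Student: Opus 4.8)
The plan is to reduce both assertions to the preceding theorem on sums of independent centered random variables, which already handles the bounded centered case in both directions. First I would record the trivial but essential observation that $|X_n|\le k$ forces $X_n\in L^2$, so that $\E[X_n]$ and $Var(X_n)$ are all finite and the statement is well posed; in particular $\E[X_n^2]\le k^2$.

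To obtain $\sum_{n\ge 0}Var(X_n)<\infty$ I would use a symmetrization argument. Enlarging the probability space to a product, introduce an independent copy $(X_n')_{n\ge 0}$ of the sequence $(X_n)_{n\ge 0}$, independent of it, and set $Y_n=X_n-X_n'$. The pairs $(X_n,X_n')$ are then independent across $n$, so the $Y_n$ are independent; moreover each $Y_n$ is centered, satisfies $|Y_n|\le 2k$, and has $Var(Y_n)=2\,Var(X_n)$. Since $\sum X_n$ converges a.s. and $(X_n')$ has the same law as $(X_n)$, the series $\sum X_n'$ converges a.s. as well, whence $\sum Y_n=\sum(X_n-X_n')$ converges a.s. on the enlarged space. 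The $Y_n$ are now exactly in the situation of part $(ii)$ of the preceding theorem (independent, centered, uniformly bounded by $2k$), so a.s. convergence of $\sum Y_n$ yields $\sum Var(Y_n)<\infty$, that is, $\sum Var(X_n)<\infty$.

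With the variances summable, the mean part follows by re-centering. Put $Z_n=X_n-\E[X_n]$; these are independent, centered, and $Var(Z_n)=Var(X_n)$, so $\sum Var(Z_n)<\infty$. Part $(i)$ of the preceding theorem then gives that $\sum Z_n$ converges a.s. Subtracting from the hypothesis that $\sum X_n$ converges a.s., the series $\sum(X_n-Z_n)=\sum\E[X_n]$ converges a.s. But its partial sums are deterministic constants, so a.s. convergence is just ordinary convergence, and hence $\sum\E[X_n]$ converges.

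The only genuinely delicate point is the symmetrization step: one must pass to the product space carefully, so that the copy $(X_n')$ really is independent of the original sequence, that each $Y_n$ inherits independence and the bound $2k$, and that a.s. convergence of each of $\sum X_n$ and $\sum X_n'$ transfers to a.s. convergence of $\sum Y_n$. Everything after that is a direct application of the two halves of the preceding theorem, with the elementary identity $Var(X_n-X_n')=2\,Var(X_n)$ doing the bookkeeping.
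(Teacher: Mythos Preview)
Your argument is correct, and the route to $\sum Var(X_n)<\infty$ is genuinely different from the paper's. The paper does not symmetrize: instead it develops a Taylor estimate for the characteristic function of a bounded centered variable, namely $|\Phi_Z(t)|\le \exp(-t^2\sigma^2/3)$ for $|t|\le 1/k$, and then argues by contradiction. With $Z_n=X_n-\E[X_n]$ one has $|\Phi_{Z_n}(t)|=|\Phi_{X_n}(t)|$, and since the a.s.\ limit $S=\sum X_n$ has characteristic function $\Phi_S(t)=\prod_n\Phi_{X_n}(t)$ (by dominated convergence), divergence of $\sum Var(X_n)$ would force $|\Phi_S(t)|=0$ for all small $t\ne 0$, contradicting continuity of $\Phi_S$ at $0$. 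From that point on the two proofs coincide: once $\sum Var(X_n)<\infty$, centering and part~$(i)$ of the preceding theorem give a.s.\ convergence of $\sum(X_n-\E[X_n])$, and subtraction yields convergence of $\sum\E[X_n]$.

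Your symmetrization is arguably cleaner in this context: it reduces the lemma entirely to the two halves of the preceding theorem, with no extra analytic input beyond the product-space construction and $Var(X_n-X_n')=2\,Var(X_n)$. The paper's approach, on the other hand, avoids enlarging the space and gives an explicit quantitative bound on $|\Phi_S|$ in terms of the tail of the variance series, which is of some independent interest. Your caution about the product-space step is well placed but the verification is routine: the events $\{\sum X_n\text{ converges}\}$ and $\{\sum X_n'\text{ converges}\}$ each have full $\p\times\p$-measure, so their intersection does, and on it $\sum Y_n$ converges.
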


\begin{proof}

If $Z$ is a r.v. such that $\vert Z\vert<k$, $\E[Z]=0$ and $\sigma^2=Var(Z)<\infty$, then for $\vert t\vert\leq  \frac{1}{k}$ we get 

\[
\vert \Phi_Z(t)\vert\leq  1-\frac{t^2\sigma^2}{2}+\frac{t^3k\E[Z^2]}{6}\leq  1-\frac{t^2\sigma^2}{2}+\frac{t^2\sigma^2}{6}=1-\frac{t^2\sigma^2}{3}\leq  \exp\left(-\frac{t^2\sigma^2}{3}\right).
\]

Let $Z_n=X_n-\E[X_n]$. Then $\vert \Phi_{Z_n}(t)\vert=\vert \Phi_{X_n}(t)\vert$ and $\vert Z_n\vert\leq  2k$. If $\sum_{n\geq 0}X_n=\infty$, we get

\[
\prod_{n\geq0}\vert\Phi_{X_n}(t)\vert=\prod_{n\geq 0}\vert\Phi_{Z_n}(t)\vert\leq  \exp\left(-\frac{1}{3}t^2\sum_{n\geq 0}Var(X_n)\right)=0.
\]

This is a contradiction, since $\vert \Phi_{\sum_{n\geq 0}X_n}(t)\vert\xrightarrow{n\to\infty}\vert\Phi(t)\vert$ with $\Phi$ continuous and $\Phi(0)=1$. Hence $\sum_{n\geq 0}Var(X_n)=\sum_{n\geq 0}Var(Z_n)<\infty$. Since $\E[Z_n]=0$ and $\sum_{n\geq 0}Var(Z_n)<\infty$, we have $\sum_{n\geq 0}Z_n$ converges a.s., but $\sum_{n\geq 0}-Z_n=\sum_{n\geq 0}X_n-\sum_{n\geq 0}\E[X_n]$ and thus since $\sum_{n\geq 0}X_n$ converges a.s. it follows that $\sum_{n\geq 0}\E[X_n]$ converges.

\end{proof}

\begin{thm}[Kolmogorov's three series theorem]

Let $(\Omega,\F,\p)$ be a probability space. Let $(X_n)_{n\geq 0}$ be a sequence of independent r.v.'s. Then $\sum_{n\geq 0}X_n$ converges a.s. if and only if for some $k>0$ (then for every $k>0$) the following properties hold.
\begin{enumerate}[$(i)$]
\item{$\sum_{n\geq 0}\p[\vert X_n\vert>k]<\infty$}
\item{$\sum_{n\geq 0}\E\left[X_n^{(k)}\right]$ converges, where $X_n^{(k)}=X_n\one_{\{\vert X_n\vert\leq  k\}}$}
\item{$\sum_{n\geq 0}Var\left(X_n^{(k)}\right)<\infty$}
\end{enumerate}

\end{thm}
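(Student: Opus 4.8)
The plan is to run the classical truncation argument, using the two results immediately preceding the statement as black boxes: the theorem on centered independent $L^2$ variables (giving a.s.\ convergence from summable variances, and its partial converse for bounded variables) and the Lemma stating that for independent variables bounded by $k$, a.s.\ convergence of $\sum_{n}X_n$ forces both $\sum_n \E[X_n]$ and $\sum_n Var(X_n)$ to converge. The glue between the truncated series $\sum_n X_n^{(k)}$ and the original series $\sum_n X_n$ will be supplied by the two halves of the Borel--Cantelli lemma. Note first that each $X_n^{(k)}=X_n\one_{\{\vert X_n\vert\leq k\}}$ is a measurable function of $X_n$ alone, so the family $(X_n^{(k)})_{n\geq 0}$ is again independent; likewise the events $\{\vert X_n\vert>k\}$ are independent. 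This is what makes both Borel--Cantelli directions applicable.

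For sufficiency, I would fix a $k$ for which (i)--(iii) hold. By (i) and the first Borel--Cantelli lemma, $\p\left[\limsup_n\{\vert X_n\vert>k\}\right]=0$, so a.s.\ one has $X_n=X_n^{(k)}$ for all but finitely many $n$; hence $\sum_n X_n$ converges a.s.\ if and only if $\sum_n X_n^{(k)}$ does. To handle the truncated series, set $Y_n=X_n^{(k)}-\E[X_n^{(k)}]$. The $Y_n$ are independent, centered, and bounded by $2k$, hence in $L^2$, with $\sum_{n}Var(Y_n)=\sum_n Var(X_n^{(k)})<\infty$ by (iii). The previous theorem (part $(i)$) then gives that $\sum_n Y_n$ converges a.s. Since (ii) says $\sum_n\E[X_n^{(k)}]$ converges, adding the two yields a.s.\ convergence of $\sum_n X_n^{(k)}=\sum_n Y_n+\sum_n\E[X_n^{(k)}]$, and therefore of $\sum_n X_n$.

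For necessity, assume $\sum_n X_n$ converges a.s.\ and fix an arbitrary $k>0$. Convergence of the series forces $X_n\to 0$ a.s., so $\{\vert X_n\vert>k\}$ occurs only finitely often a.s., i.e.\ $\p\left[\limsup_n\{\vert X_n\vert>k\}\right]=0$. Because these events are independent, the second Borel--Cantelli lemma would give probability $1$ were $\sum_n\p[\vert X_n\vert>k]=\infty$; this contradiction establishes (i). With (i) in hand, the first Borel--Cantelli lemma again shows $X_n=X_n^{(k)}$ eventually a.s., so $\sum_n X_n^{(k)}$ differs from the convergent $\sum_n X_n$ by finitely many terms and hence converges a.s. Now $(X_n^{(k)})_{n\geq 0}$ is independent and bounded by $k$, so the Lemma preceding the statement applies directly and yields convergence of $\sum_n\E[X_n^{(k)}]$ and of $\sum_n Var(X_n^{(k)})$, which are exactly (ii) and (iii).

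Finally, the ``for some $k$, then for every $k$'' clause is automatic from the two directions: if (i)--(iii) hold for one $k$, sufficiency gives a.s.\ convergence of $\sum_n X_n$, and necessity then returns (i)--(iii) for every $k$. I expect the only genuinely delicate point to be the bookkeeping around the truncation equivalence — justifying carefully that a.s.\ $X_n=X_n^{(k)}$ eventually makes the two series share the same convergence behavior, and checking independence is preserved under truncation so that both Borel--Cantelli directions and the two cited results may be invoked; everything else is routine assembly.
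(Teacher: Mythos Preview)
Your proposal is correct and follows essentially the same route as the paper: Borel--Cantelli (both directions) to pass between $\sum_n X_n$ and $\sum_n X_n^{(k)}$, centering $X_n^{(k)}$ and invoking the preceding $L^2$ theorem for sufficiency, and the preceding lemma on bounded independent summands for necessity. If anything, you are more explicit than the paper about independence being preserved under truncation and about the ``for some $k$, then for every $k$'' clause.
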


\begin{proof}

Suppose that for some $k>0$, $(i)$, $(ii)$ and $(iii)$ hold. Then 

\[
\sum_{n\geq 0}\p\left[X_n\not=X_n^{(k)}\right]=\sum_{n\geq 0}\p[\vert X_n\vert >k]<\infty.
\]

It follows from the Borel-Cantelli lemma that $\p\left[X_n=X_n^{(k)}\text{for all but finitely many $n$}\right]=1$. Hence we only need to show that $\sum_{n\geq 0}X_n^{k}$ converges a.s. Because of $(ii)$ it is enough to show that $\sum_{n\geq 0}Y_n^{(k)}$ converges, where $Y_n^{(k)}=X_n^{(k)}-\E\left[X_n^{(k)}\right]$. The convergence of $\sum_{n\geq 0}Y_n^{(k)}$ follows then from $(iii)$. Conversely assume that $\sum_{n\geq 0}X_n$ converges a.s. and that $k\in(0,\infty)$. Since $X\xrightarrow{n\to\infty}0$ a.s., we have that $\vert X_n\vert>k$ for only finitely many $n$. Therefore, the Borel-Cantelli lemma implies $(i)$. Since $X_n=X_n^{(k)}$ for all but finitely many $n$, $\sum_{n\geq 0}X_n^{(k)}$ converges and it follows from lemma 14.8. that $(ii)$ and $(iii)$ have to hold.

\end{proof}

\begin{lem}[Ces\`aro]

Suppose $(b_n)_{n\geq 1}$ is a sequence of strictly positive real numbers with $b_n\uparrow\infty$ as $n\to\infty$. Let $(v_n)_{n\geq 1}$ be a sequence of real numbers such that $v_n\xrightarrow{n\to\infty}v$. Then 

\[
\frac{1}{b_n}\sum_{k=1}^n(b_k-b_{k-1})v_k\xrightarrow{n\to\infty}v\hspace{0.5cm}(b_0=0)
\]

\end{lem}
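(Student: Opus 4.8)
The plan is to reduce to the centered case and then split the weighted sum into a fixed head and a controllable tail. First I would record the telescoping identity
\[
\sum_{k=1}^n (b_k - b_{k-1}) = b_n - b_0 = b_n,
\]
which uses $b_0 = 0$. Applying this to the constant sequence $v$ shows that $\frac{1}{b_n}\sum_{k=1}^n (b_k-b_{k-1})v = v$ exactly, for every $n$. Hence, writing $w_k = v_k - v$, it suffices to prove the statement with $v$ replaced by $0$, i.e. that $\frac{1}{b_n}\sum_{k=1}^n (b_k-b_{k-1})w_k \to 0$ whenever $w_k \to 0$. I would also note at this stage that since $b_n \uparrow \infty$ the increments satisfy $b_k - b_{k-1} \geq 0$ (and $b_1 - b_0 = b_1 > 0$); this nonnegativity of the weights is what will let me pass to absolute values cleanly in the tail estimate.

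Next, fix $\varepsilon > 0$ and choose $N$ with $|w_k| < \varepsilon$ for all $k > N$ (possible since $w_k \to 0$). I would then split
\[
\frac{1}{b_n}\sum_{k=1}^n (b_k-b_{k-1})w_k = \frac{1}{b_n}\sum_{k=1}^N (b_k-b_{k-1})w_k + \frac{1}{b_n}\sum_{k=N+1}^n (b_k-b_{k-1})w_k,
\]
valid for $n > N$. The first term has a numerator $C_N := \sum_{k=1}^N (b_k-b_{k-1})w_k$ that no longer depends on $n$, so since $b_n \to \infty$ it tends to $0$ as $n \to \infty$. For the second term I would use the nonnegativity of the increments and the telescoping identity again:
\[
\left|\frac{1}{b_n}\sum_{k=N+1}^n (b_k-b_{k-1})w_k\right| \leq \frac{\varepsilon}{b_n}\sum_{k=N+1}^n (b_k-b_{k-1}) = \frac{\varepsilon(b_n - b_N)}{b_n} \leq \varepsilon.
\]

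Combining the two estimates gives $\limsup_{n\to\infty}\left|\frac{1}{b_n}\sum_{k=1}^n (b_k-b_{k-1})w_k\right| \leq \varepsilon$, and letting $\varepsilon \downarrow 0$ yields the desired convergence to $0$; undoing the reduction recovers the limit $v$. I do not expect any genuine obstacle here: the argument is the classical $\varepsilon$-splitting for Cesàro-type averages. The only point requiring a little care is the head term, where the numerator is a fixed finite sum but is divided by $b_n$; this is precisely where the hypothesis $b_n \uparrow \infty$ is essential, and it is also the only place the divergence of $(b_n)$ (rather than mere monotonicity) is used.
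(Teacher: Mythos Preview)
Your proof is correct and follows essentially the same approach as the paper: center by subtracting $v$ (using $\sum_{k=1}^n(b_k-b_{k-1})=b_n$), then split into a fixed head divided by $b_n\to\infty$ and a tail controlled by $\varepsilon$ via the nonnegative increments. The paper's version is terser but the argument is identical.
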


\begin{proof}

Note that 

\begin{multline*}
\left\vert \frac{1}{b_n}\sum_{k=1}^n(b_k-b_{k-1})v_k-v\right\vert=\left\vert \frac{1}{b_n}\sum_{k=1}^n(b_k-b_{k-1})(v_k-v)\right\vert\leq  \frac{1}{b_n}\sum_{k=1}^N(b_k-b_{k-1})\vert v_k-v\vert\\+\frac{1}{b_n}\sum_{k=N+1}^n(b_k-b_{k-1})\vert v_k-v\vert.
\end{multline*}

Now we only have to choose $N$ such that $n\geq N$ and $\vert v_k-v\vert<\varepsilon$ for any $\varepsilon>0$.
\end{proof}

\begin{lem}[Kronecker]

Let $(b_n)_{n\geq 1}$ be a sequence of real numbers, strictly positive with $b_n\uparrow \infty$ as $n\to\infty$. Let $(x_n)_{n\geq 1}$ be a sequence of real numbers. Then if $\sum_{n\geq 1}\frac{x_n}{b_n}$ converges, we get that 

\[
\frac{x_1+\dotsm +x_n}{b_n}\xrightarrow{n\to\infty}0.
\]

\end{lem}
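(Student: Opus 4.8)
The plan is to deduce Kronecker's lemma from the Ces\`aro lemma that was just established, via a summation-by-parts (Abel) transformation. First I would introduce the partial sums $u_n = \sum_{k=1}^n \frac{x_k}{b_k}$, with the convention $u_0 = 0$; by hypothesis $u_n \to u$ for some finite $u \in \R$. The key algebraic observation is the identity $x_k = b_k(u_k - u_{k-1})$, which allows me to rewrite the average $\frac{1}{b_n}(x_1 + \dots + x_n)$ entirely in terms of the convergent sequence $(u_k)_{k\geq 0}$ and the weights $(b_k)$.

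Next I would apply summation by parts to $\sum_{k=1}^n b_k(u_k - u_{k-1})$. Expanding into two sums and shifting the index in one of them, while using $u_0 = 0$ to kill the boundary term, gives the identity
$$\sum_{k=1}^n b_k(u_k - u_{k-1}) = b_n u_n - \sum_{k=1}^{n-1}(b_{k+1}-b_k)u_k.$$
Dividing by $b_n$ then yields
$$\frac{1}{b_n}\sum_{k=1}^n x_k = u_n - \frac{1}{b_n}\sum_{k=1}^{n-1}(b_{k+1}-b_k)u_k.$$

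To handle the remaining average I would reindex it: the substitution $j = k+1$ turns $\sum_{k=1}^{n-1}(b_{k+1}-b_k)u_k$ into $\sum_{j=2}^n(b_j - b_{j-1})u_{j-1}$, and since the $j=1$ term $(b_1 - b_0)u_0$ vanishes under the conventions $b_0 = 0$, $u_0 = 0$, this equals $\sum_{j=1}^n(b_j - b_{j-1})u_{j-1}$. Setting $v_j := u_{j-1}$, so that $v_j \to u$, this is precisely the weighted average appearing in the Ces\`aro lemma, whence $\frac{1}{b_n}\sum_{j=1}^n(b_j - b_{j-1})v_j \to u$. Combining this with $u_n \to u$ gives $\frac{1}{b_n}(x_1 + \dots + x_n) \to u - u = 0$, which is the claim.

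The only delicate point is the index bookkeeping in the Abel transformation, in particular tracking the boundary terms so that the conventions $b_0 = 0$ and $u_0 = 0$ make the leftover sum fit the exact form demanded by the Ces\`aro lemma. Everything beyond this is a direct appeal to the already-proven lemma, so no genuinely new estimate is required.
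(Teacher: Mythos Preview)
Your proof is correct and follows exactly the same route as the paper: introduce the partial sums $u_n=\sum_{k\le n} x_k/b_k$, perform an Abel summation-by-parts on $\sum_k b_k(u_k-u_{k-1})$, divide by $b_n$, and appeal to the preceding Ces\`aro lemma to make both terms tend to $u$. The only cosmetic difference is that you carry the reindexing $v_j:=u_{j-1}$ explicitly, whereas the paper applies Ces\`aro directly to $v_k$; your bookkeeping is in fact slightly cleaner.
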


\begin{proof}

Let $v_n=\sum_{k=1}^n\frac{x_n}{b_n}$ and $v=\lim_{n\to\infty}v_n$. Then $v_n-v_{n-1}=\frac{x_n}{b_n}$. Moreover, we note that

\[
\sum_{k=1}^nx_k=\sum_{k=1}^nb_k(v_k-v_{k-1})=b_nv_n-\sum_{k=1}^n(b_k-b_{k-1})v_k,
\]

which implies that 

\[
\frac{x_1+\dotsm+x_n}{b_n}=v_n-\frac{1}{b_n}\sum_{k=1}^n(b_k-b_{k-1})v_k\xrightarrow{n\to\infty}v-v=0.
\]

\end{proof}

\begin{prop}

Let $(\Omega,\F,\p)$ be a probability space. Let $(w_n)_{n\geq 1}$ be a sequence of r.v.'s such that $\E[w_n]=0$ for all $n\geq 1$ and $\sum_{n\geq 1}\frac{Var(w_n)}{n^2}<\infty$. Then 

\[
\frac{1}{n}\sum_{n\geq 1}w_n\xrightarrow{n\to\infty \atop a.s.}0.
\]

\end{prop}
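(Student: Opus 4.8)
The plan is to reduce this Ces\`aro-type statement to the almost-sure convergence of a random series and then to invoke Kronecker's lemma. First I would introduce the filtration $\F_n = \sigma(w_1,\dots,w_n)$ and form the partial sums $M_n = \sum_{k=1}^n \frac{w_k}{k}$ with $M_0 = 0$. The goal of the first half of the argument is to show that $M_n$ converges almost surely as $n\to\infty$.

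To obtain this convergence I would apply the theorem on martingales bounded in $L^2$ stated just above. For $(M_n)_{n\geq 0}$ to be a martingale I need $\E[w_{n+1}\mid\F_n]=0$, and this is where the independence of the $w_n$ enters (which I read as an implicit hypothesis, since $\E[w_n]=0$ alone does not suffice for a.s. convergence of the series). Assuming the $w_n$ are independent, the proposition on the conditional expectation of an independent variable gives $\E[w_{n+1}\mid\F_n]=\E[w_{n+1}]=0$, so $(M_n)_{n\geq 0}$ is a martingale in $L^2$ with increments $M_k-M_{k-1}=\frac{w_k}{k}$ satisfying $\E[(M_k-M_{k-1})^2]=\frac{Var(w_k)}{k^2}$ because $\E[w_k]=0$. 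The hypothesis $\sum_{k\geq 1}\frac{Var(w_k)}{k^2}<\infty$ is then exactly the condition $\sum_{k\geq 1}\E[(M_k-M_{k-1})^2]<\infty$ that makes $(M_n)_{n\geq 0}$ bounded in $L^2$, whence $M_n\to M_\infty$ a.s. Alternatively, one may invoke directly the random-series theorem proved above, applied to the independent centered variables $X_k:=\frac{w_k}{k}$ with $\sum_k Var(X_k)<\infty$, to reach the same conclusion that $\sum_{k\geq 1}\frac{w_k}{k}$ converges a.s.

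With the a.s. convergence of $\sum_{k\geq 1}\frac{w_k}{k}$ in hand, I would finish with Kronecker's lemma. Taking $b_n=n$, which is strictly positive and increases to $\infty$, and $x_k=w_k$, the convergence of $\sum_k \frac{x_k}{b_k}=\sum_k\frac{w_k}{k}$ implies, on the almost-sure event where the series converges, that $\frac{x_1+\dotsm+x_n}{b_n}=\frac{1}{n}\sum_{k=1}^n w_k\to 0$. Since this holds for every $\omega$ in an event of probability one, we conclude $\frac{1}{n}\sum_{k=1}^n w_k\xrightarrow{n\to\infty\atop a.s.}0$, as claimed.

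The main obstacle is precisely the a.s. convergence of $\sum_k\frac{w_k}{k}$: without some independence or martingale-difference structure the bare hypotheses $\E[w_n]=0$ and $\sum Var(w_n)/n^2<\infty$ do not force it, so the crux is to recognize the partial sums as an $L^2$-bounded martingale (equivalently, to apply the random-series theorem) and to record the identity $\E[(w_k/k)^2]=Var(w_k)/k^2$. Once the series converges almost surely, the passage through Kronecker's lemma is deterministic and routine.
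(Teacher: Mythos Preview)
Your proposal is correct and follows exactly the route the paper intends: the paper leaves this as an exercise with the hint that, by Kronecker's lemma, it suffices to show that $\sum_{k\geq 1}\frac{w_k}{k}$ converges a.s., and you fill this in precisely by applying the $L^2$-bounded martingale theorem (equivalently, the random-series theorem for independent centered variables) to $X_k=w_k/k$. Your observation that independence is an implicit hypothesis is well taken and accurate for the context---the proposition sits in the subsection on random series of independent variables, right after those two results.
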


\begin{proof}

Exercise.\footnote{From Kronecker's lemma it is enough to prove that $\sum_{k\geq 1}\frac{w_k}{k}$ converges a.s.} 

\end{proof}

\begin{thm}

Let $(\Omega,\F,\p)$ be a probability space. Let $(X_n)_{n\geq 1}$ be independent and non-negative r.v.'s such that $\E[X_n]=1$ for all $n\geq 1$. Define $M_0=1$ and for $n\in\N$, let 

\[
M_n=\prod_{j=1}^nX_j.
\]

Then $(M_n)_{n\geq 1}$ is a non-negative martingale, so that $M_\infty:=\lim_{n\to\infty}M_n$ exists a.s. Then the following are equivalent.

\begin{enumerate}[$(i)$]
\item{$\E[M_\infty]=1$.}
\item{$M_n\xrightarrow{n\to\infty\atop L^1}M_\infty$.}
\item{$(M_n)_{n\geq 1}$ is u.i.}
\item{$\prod_{n}a_n>0$, where $0<a_n=\E[X_n^{1/2}]\leq  1$.}
\item{$\sum_{n}(1-a_n)<\infty$.}
\end{enumerate}

Moreover, if one of the following (then every one) statements hold, then 

\[
\p[M_\infty=0]=1.
\]

\end{thm}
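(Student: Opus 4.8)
The plan is to prove the cycle $(i)\Rightarrow(iv)\Rightarrow(iii)\Rightarrow(ii)\Rightarrow(i)$, to establish $(iv)\Leftrightarrow(v)$ as a separate elementary fact about infinite products, and to obtain the concluding assertion as a byproduct of the step $\neg(iv)\Rightarrow M_\infty=0$ a.s. (so that the final claim holds precisely when the equivalent conditions fail). First I would record the preliminaries. Since the $X_j$ are independent and $\E[X_{n+1}]=1$, we have $\E[M_{n+1}\mid\F_n]=M_n\,\E[X_{n+1}\mid\F_n]=M_n\,\E[X_{n+1}]=M_n$, so $(M_n)_{n\geq 1}$ is a non-negative martingale with $\E[M_n]=1$ for all $n$; being a non-negative martingale it is bounded in $L^1$ and thus converges a.s. to a finite $M_\infty\in L^1$ by the a.s. martingale convergence theorem. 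That $a_n\in(0,1]$ follows from Jensen's inequality applied to the concave map $x\mapsto\sqrt{x}$, and $a_n>0$ because $a_n=0$ would force $X_n=0$ a.s., contradicting $\E[X_n]=1$.

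The crux is an auxiliary martingale. I would set $N_n=\prod_{j=1}^n X_j^{1/2}/a_j$ with $N_0=1$. The same computation as above shows $(N_n)_{n\geq 0}$ is a non-negative martingale (here $\E[X_{n+1}^{1/2}]/a_{n+1}=1$), and by independence
\[
\E[N_n^2]=\prod_{j=1}^n\frac{\E[X_j]}{a_j^2}=\Big(\prod_{j=1}^n a_j\Big)^{-2}.
\]
Thus $(N_n)$ is bounded in $L^2$ if and only if $\prod_j a_j>0$, i.e. exactly under $(iv)$. The bridge back to $M_n$ is the identity $M_n=N_n^2\prod_{j=1}^n a_j^2$, which I will use in both remaining directions.

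For $(iv)\Rightarrow(iii)$ I would argue by domination. When $(N_n)$ is bounded in $L^2$, Doob's inequality gives $N_\infty^*:=\sup_n N_n\in L^2$, so $(N_\infty^*)^2\in L^1$. Since each $a_j\leq 1$ we have $\prod_{j\leq n}a_j^2\leq 1$, whence $0\leq M_n=N_n^2\prod_{j\leq n}a_j^2\leq (N_\infty^*)^2$ for every $n$; a family dominated by a fixed $L^1$ r.v. is uniformly integrable, giving $(iii)$. The implication $(iii)\Rightarrow(ii)$ is the theorem that a sequence converging in probability (here $M_n\to M_\infty$ a.s.) converges in $L^1$ if and only if it is u.i., and $(ii)\Rightarrow(i)$ is immediate since $L^1$ convergence preserves the value $\E[M_n]=1$.

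It remains to close the cycle and to prove the final statement, both of which come from the contrapositive step. Suppose $(iv)$ fails, so $\prod_{j\leq n}a_j^2\to 0$. The martingale $(N_n)$ still converges a.s. to a finite limit $N_\infty$ (again non-negative martingale convergence), whence $M_n=N_n^2\prod_{j\leq n}a_j^2\to 0$ a.s.; comparing with $M_n\to M_\infty$ gives $M_\infty=0$ a.s. This is precisely the concluding assertion, and it forces $\E[M_\infty]=0\neq 1$, so $(i)$ fails, establishing $(i)\Rightarrow(iv)$ and completing the cycle. Finally $(iv)\Leftrightarrow(v)$ is the standard infinite-product criterion: taking logarithms and using $1-a_n\leq-\log a_n$ together with $-\log a_n\leq 2(1-a_n)$ once $a_n\geq\tfrac12$, one sees $\prod_n a_n>0\iff\sum_n(-\log a_n)<\infty\iff\sum_n(1-a_n)<\infty$. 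I expect the main obstacle to be spotting the square-root normalization defining $N_n$ and verifying the domination $M_n\leq(N_\infty^*)^2$; everything else is bookkeeping with the martingale convergence and uniform-integrability results already available.
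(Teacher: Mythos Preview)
Your proof is correct; the paper leaves this theorem entirely as an exercise, so there is no argument to compare against. The square-root martingale $N_n=\prod_{j\leq n}X_j^{1/2}/a_j$ together with the identity $M_n=N_n^2\prod_{j\leq n}a_j^2$ is exactly the standard device for Kakutani's theorem, and your cycle $(i)\Rightarrow(iv)\Rightarrow(iii)\Rightarrow(ii)\Rightarrow(i)$ with $(iv)\Leftrightarrow(v)$ handled separately is clean. You also correctly read the ``moreover'' clause as applying when the equivalent conditions \emph{fail} (the paper's phrasing is garbled), and your derivation of $M_\infty=0$ a.s.\ from $\prod_j a_j=0$ via a.s.\ convergence of $(N_n)$ to a finite limit is the right way to get both $(i)\Rightarrow(iv)$ and the dichotomy in one stroke.
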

\begin{proof}
Exercise.

\end{proof}

\subsection{A martingale central limit theorem} 

\begin{thm}
Let $(\Omega,\F,(\F_n)_{n\geq 0},\p)$ be a filtered probability space. Let $(X_n)_{n\geq 0}$ be a sequence of real valued r.v.'s such that for all $n\geq 1$

\begin{enumerate}[$(i)$]
\item{$\E[X_n\mid \F_{n-1}]=0$.}
\item{$\E[X_n^2\mid \F_{n-1}]=1$.}
\item{$\E[\vert X_n\vert^3\mid \F_{n-1}]\leq  k<\infty$.}
\end{enumerate}

Let $S_n=\sum_{j=1}^nX_j$. Then 

\[
\frac{S_n}{\sqrt{n}}\xrightarrow{n\to\infty\atop law}\mathcal{N}(0,1).
\]

\end{thm}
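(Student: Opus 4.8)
The plan is to use the characteristic function method together with Lévy's continuity theorem (stated earlier in the excerpt), so that it suffices to prove that for every fixed $t\in\R$,
$$\E\left[\exp\left(i\frac{t}{\sqrt n}S_n\right)\right]\xrightarrow{n\to\infty}e^{-t^2/2},$$
since $e^{-t^2/2}$ is the characteristic function of $\mathcal N(0,1)$ and is continuous at $0$. First I would fix $t$ and write $S_n=\sum_{j=1}^nX_j$, so that $e^{itS_n/\sqrt n}=\prod_{j=1}^ne^{itX_j/\sqrt n}$. Using the remainder estimate from the earlier example, namely $e^{iy}=1+iy-\tfrac{y^2}{2}+R_2(y)$ with $|R_2(y)|\le\frac{|y|^3}{6}$, applied to $y=\tfrac{t}{\sqrt n}X_j$ and then conditioning on $\F_{j-1}$, assumptions $(i)$ and $(ii)$ give
$$\E\left[e^{itX_j/\sqrt n}\,\middle|\,\F_{j-1}\right]=1-\frac{t^2}{2n}+\E\left[R_2\!\left(\tfrac{t}{\sqrt n}X_j\right)\,\middle|\,\F_{j-1}\right],$$
and assumption $(iii)$ bounds the last term: $\bigl|\E[R_2(\tfrac{t}{\sqrt n}X_j)\mid\F_{j-1}]\bigr|\le\frac{|t|^3}{6n^{3/2}}\E[|X_j|^3\mid\F_{j-1}]\le\frac{|t|^3k}{6n^{3/2}}$. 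The crucial structural point is that the deterministic leading part $c_n:=1-\tfrac{t^2}{2n}$ does not depend on $\omega$.

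The heart of the argument is then to compare the random product with the deterministic quantity $c_n^{\,n}$ via the telescoping identity
$$\prod_{j=1}^na_j-\prod_{j=1}^nb_j=\sum_{j=1}^n\Bigl(\prod_{l<j}a_l\Bigr)(a_j-b_j)\Bigl(\prod_{l>j}b_l\Bigr),$$
taken with $a_j=e^{itX_j/\sqrt n}$ and $b_j=c_n$. For $n$ large enough that $|c_n|\le1$, each partial product $\prod_{l<j}a_l$ is $\F_{j-1}$-measurable with modulus $1$, while $\prod_{l>j}b_l=c_n^{\,n-j}$ is deterministic with modulus $\le1$; conditioning on $\F_{j-1}$ and using the tower property replaces the factor $a_j-b_j$ by its conditional expectation $\E[e^{itX_j/\sqrt n}\mid\F_{j-1}]-c_n$, whose modulus is at most $\frac{|t|^3k}{6n^{3/2}}$. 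Taking moduli and summing the $n$ terms yields
$$\left|\E\left[e^{itS_n/\sqrt n}\right]-c_n^{\,n}\right|\le n\cdot\frac{|t|^3k}{6n^{3/2}}=\frac{|t|^3k}{6\sqrt n}\xrightarrow{n\to\infty}0.$$
Since $c_n^{\,n}=\bigl(1-\tfrac{t^2}{2n}\bigr)^n\to e^{-t^2/2}$, the desired limit follows and Lévy's theorem concludes the proof.

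The main obstacle is precisely the mismatch exploited in the telescoping step: the product $\prod_je^{itX_j/\sqrt n}$ is a genuinely random object, whereas the target $c_n^{\,n}$ is deterministic, so the naive factorization across independent factors used in the i.i.d. proof is unavailable here. Conditions $(i)$ and $(ii)$ are what force the conditional characteristic factor to equal $1-\tfrac{t^2}{2n}$ plus a small correction that is \emph{nonrandom to leading order}, and the telescoping-plus-tower-property manoeuvre is what converts this pointwise control into a uniform global error of order $n^{-1/2}$. Care must be taken only to ensure that the partial products stay bounded (automatic, since $|e^{iy}|=1$) and that $|c_n|\le1$ for $n$ large so that no trailing factor $c_n^{\,n-j}$ can amplify the per-step error.
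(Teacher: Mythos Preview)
Your argument is correct and is essentially the same as the paper's proof: both obtain $\bigl|\E[e^{itX_j/\sqrt n}\mid\F_{j-1}]-(1-\tfrac{t^2}{2n})\bigr|\le \tfrac{k|t|^3}{6n^{3/2}}$ from the Taylor remainder and the three hypotheses, and then telescope $\E[e^{itS_n/\sqrt n}]-(1-\tfrac{t^2}{2n})^n$ into $n$ terms each of size $O(n^{-3/2})$. The only cosmetic difference is that the paper derives the one-step relation $\bigl|\E[e^{iuS_p/\sqrt n}]-(1-\tfrac{u^2}{2n})\E[e^{iuS_{p-1}/\sqrt n}]\bigr|\le \tfrac{K|u|^3}{6n^{3/2}}$ first and then multiplies by $(1-\tfrac{u^2}{2n})^{n-p}$ to telescope, whereas you invoke the product identity $\prod a_j-\prod b_j=\sum(\prod_{l<j}a_l)(a_j-b_j)(\prod_{l>j}b_l)$ directly; expanding your $j$-th summand gives exactly the paper's $j$-th term, so the two telescopes coincide.
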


\begin{proof}

Define $\Phi_{n,j}(u)=\E\left[e^{iu\frac{X_j}{\sqrt{n}}}\mid \F_{j-1}\right]$. A Taylor expansion yields

\[
\exp\left(iu\frac{X_j}{\sqrt{n}}\right)=1+iu\frac{X_j}{\sqrt{n}}-\frac{u^2}{2n}X_j^2-\frac{iu^3}{6n^{3/2}}\bar X_j^3,
\]

where $\bar X_j$ is a random number between 0 and $X_j$. Therefore we get 

\[
\Phi_{n,j}(u)=1+iu\frac{1}{\sqrt{n}}\E[X_j\mid \F_{j-1}]-\frac{u^2}{2n}\E[X_j^2\mid \F_{j-1}]-\frac{iu^3}{6n^{3/2}}\E[\bar X_j^3\mid \F_{j-1}]
\]

and thus 

\[
\Phi_{n,j}(u)-1+\frac{u^2}{2n}=-\frac{iu^3}{6n^{3/2}}\E[\bar X_j^2\mid \F_{j-1}].
\]

Hence we get 

\[
\E\left[e^{iu\frac{S_p}{\sqrt{n}}}\right]=\E\left[e^{iu\frac{S_{p-1}}{\sqrt{n}}}e^{iu\frac{X_p}{\sqrt{n}}}\right]=\E\left[e^{iu\frac{S_{p-1}}{\sqrt{n}}}\E\left[e^{iu\frac{X_p}{\sqrt{n}}}\mid \F_{p-1}\right]\right]=\E\left[e^{iu\frac{S_{p-1}}{\sqrt{n}}}\Phi_{n,p}(u)\right].
\]

Consequently, we get 

\[
\E\left[e^{iu\frac{S_p}{\sqrt{n}}}\right]=\E\left[e^{iu\frac{S_{p-1}}{\sqrt{n}}}\left(1+\frac{u^2}{2n}-\frac{iu^3}{6n^{3/2}}\bar X_p^3\right)\right].
\]

Thus we get that

\[
\E\left[e^{iu\frac{S_p}{\sqrt{n}}}-\left(1-\frac{u^2}{2n}\right)e^{iu\frac{S_{p-1}}{\sqrt{n}}}\right]=\E\left[e^{iu\frac{S_{p-1}}{\sqrt{n}}}\frac{iu^3}{6n^{3/2}}\bar X_p^3\right],
\]

which implies that 

\[
\left\vert \E\left[e^{iu \frac{S_p}{\sqrt{n}}}-\left(1-\frac{u^2}{2n}\right)e^{iu\frac{S_{p-1}}{\sqrt{n}}}\right]\right\vert\leq  \frac{K\vert u\vert ^3}{6n^{3/2}}.\hspace{1cm}(\star)
\]

Let us fix $n\in\N$. For $n$ large enough, we have $0\leq  1-\frac{u^2}{2n}\leq  1$. Multiplying both sides of $(\star)$ by $\left(1-\frac{u^2}{2n}\right)^{n-p}$, we get 

\[
\left\vert \left(1-\frac{u^2}{2n}\right)^{n-p}\E\left[e^{iu\frac{S_p}{\sqrt{n}}}\right]-\left(1-\frac{u^2}{2n}\right)^{n-p+1}\E\left[e^{iu\frac{S_{p-1}}{\sqrt{n}}}\right]\right\vert\leq  \frac{K\vert u\vert^3}{6n^{3/2}}.\hspace{1cm}(\star\star)
\]

By taking $K$ sufficiently large, we can assume that $(\star\star)$ holds for all $n$. Now we note that 

\[
\E\left[e^{iu\frac{S_n}{\sqrt{n}}}\right]-\left(1-\frac{u^2}{2n}\right)^n=\sum_{p=1}^n\left\{\left(1-\frac{u^2}{2n}\right)^{n-p}\E\left[e^{iu\frac{S_p}{\sqrt{n}}}\right]-\left(1-\frac{u^2}{2n}\right)^{n-p+1}\E\left[e^{iu\frac{S_{p-1}}{\sqrt{n}}}\right]\right\}.
\]

Therefore we get 

\[
\left\vert\E\left[e^{iu\frac{S_n}{\sqrt{n}}}\right]-\left(1-\frac{u^2}{2n}\right)^n\right\vert\leq  n\frac{K\vert u\vert^3}{n^{3/2}}=\frac{K\vert u\vert^3}{6\sqrt{n}},
\]

which implies that 

\[
\lim_{n\to\infty}\E\left[e^{iu\frac{S_n}{\sqrt{n}}}\right]=e^{-\frac{u^2}{2}}.
\]

\end{proof}

\chapter{Markov Chains}

\section{Definition and first properties}

In this chapter $E$ will be a finite or a countable set, endowed with the $\sigma$-Algebra $\mathcal{P}(E)$. A stochastic matrix on $E$ is a family $(Q(x,y))_{x,y\in E}$ of real numbers satisfying

\begin{enumerate}[$(i)$]
\item{$0\leq  Q(x,y)\leq  1$ for all $x,y\in E$.}
\item{$\sum_{y\in E}Q(x,y)=1$ for all $x\in E$.}
\end{enumerate}

This is a transition probability from $E$ to $E$ in the following sense: if for $x\in E$ and $A\subset E$ we write

\[
\nu(x,A)=\sum_{y\in A}\nu(x,y),
\]

we see that $\nu$ is a transition kernel probability from $E$ to $E$. Conversely, if we start with such a transition kernel, the formula 

\[
Q(x,y)=\nu(x,\{y\})
\]

defines a stochastic matrix on $E$. For $n\geq 1$, we can define $Q_n=Q^n$. Indeed, $Q_1=Q$ and by induction

\[
Q_{n+1}(x,y)=\sum_{z\in E}Q_n(x,z)Q(z,y).
\]

One can check that $Q_n$ is also a stochastic matrix on $E$. For $n=0$ we take $Q_0(x,y)=\one_{\{x=y\}}$. For a measurable map $f:E\to\R_+$ we write $Qf$ as the function defined by 

\[
Qf(x)=\sum_{y\in E}Q(x,y)f(y).
\]

\begin{defn}[Markov chain]

Let $(\Omega,\F,\p)$ be a probability space. Let $Q$ be a stochastic matrix on $E$ and let $(X_n)_{n\geq 0}$ be a stochastic process with values in $E$. We say that $(X_n)_{n\geq 0}$ is a Markov chain with transition matrix $Q$ if for all $n\geq 0$, the conditional distribution of $X_{n+1}$ given $(X_0,X_1,...,X_n)$ is $Q(X_n,y)$, or equivalently if for all $x_0,x_1,...,x_n,y\in E$

\[
\p[X_{n+1}=y\mid X_0=x_0,X_1=x_1,...,X_{n-1}=x_{n-1},X_n=x_n]=Q(x_n,y)
\]

such that $\p[X_0=x_0,....,X_n=x_n]>0$.

\end{defn}

\begin{rem}

In general, the conditional distribution of $X_{n+1}$ given $X_0,X_1,...,X_n$ depends on all the variables $X_0,X_1,...,X_n$. The fact that this conditional distribution only depend on $X_n$ is called the Markov property.

\end{rem}

\begin{rem}
$Q(x,\cdot)$, which is the distribution of $X_{n+1}$ given $X_n=x_n$ does not depend on $n$: this is the homogeneity of the Markov chain.

\end{rem}

\begin{prop}

Let $(\Omega,\F,\p)$ be a probability space. A stochastic process $(X_n)_{n\geq 0}$ with values in $E$ is a Markov chain with transition kernel $Q$ if and only if for all $n\geq 0$ and for all $x_0,x_1,...,x_n\in E$

\begin{equation}
\label{markov}
\p[X_0=x_0,X_1=x_1,...,X_n=x_n]=\p[X_0=x_0]\prod_{j=1}^nQ(x_{j-1},x_j).
\end{equation}

In particular, if $\p[X_0=x_0]>0$, then 

\[
\p[X_n=x_n\mid X_0=x_0]=Q_n(x_0,x_n).
\]

\end{prop}

\begin{proof}

If $(X_n)_{n\geq 0}$ is a Markov chain with transition matrix $Q$, we have 

\begin{align*}
\p[X_0=x_0,...,X_n=x_n,X_{n+1}=x_{n+1}]&=\p[X_0=x_0,...,X_n=x_n]\p[X_{n+1}=x_{n+1}\mid X_0=x_0,...,X_n=x_n]\\
&=\p[X_1=x_1,...,X_n=x_n]\\
&=Q(x_{n+1},x_n).
\end{align*}

Thus we can conclude by induction. Conversely, if (\ref{markov}) is satisfied, then 

\[
\p[X_{n+1}=y\mid X_0=x_0,...,X_n=x_n]=\frac{\p[X_0=x_0]\prod_{j=0}^{n-1}Q(x_j,x_{j+1})Q(x_n,y)}{\p[X_0=x_0]\prod_{j=0}^{n-1}Q(x_j,x_{j+1})}=Q(x_n,y).
\]

To conclude, we note that 

\[
Q_n(x_0,x)=\sum_{x_1,...x_{n+1}\in E}\prod_{j=1}^nQ(x_{j-1}x_j).
\]

\end{proof}

\begin{rem}

Proposition 15.1. shows that for a Markov chain, $(X_0,X_1,...,X_n)$ is completely determined by the initial distribution (that of $X_0$) and the transition matrix $Q$. For now we want to note $\p[A\mid Z]$ for $\E[\one_A\mid Z]$.

\end{rem}

\begin{prop}

Let $(\Omega,\F,\p)$ be a probability space. Let $(X_n)_{n\geq 0}$ be a Markov chain with transition matrix $Q$. 

\begin{enumerate}[$(i)$]
\item{For all $n\geq 0$ and for all measurable maps $f:E\to \R_+$ we have 

\[
\E[f(X_{n+1})\mid X_0,X_1,...,X_n]=\E[f(X_{n+1})\mid X_n]=Qf(X_n).
\]

More generally for all $\{ i_1,...,i_n\}\subset \{0,1,...,n-1\}$, we have

\[
\E[f(X_{n+1})\mid X_{i_1},X_{i_2},...,X_{i_n},X_n]=Qf(X_n).
\]

}
\item{For all $n\geq 0, p\geq 1$ and for all $y_1,...,y_p\in E$ we have 

\[
\p[X_{n+1}=y_1,...,X_{n+p}=y_p\mid X_0,...,X_n]=Q(X_n,y_1)\prod_{j=1}^{p-1}Q(y_j,y_{j+1})
\]

and hence 

\[
\p[X_{n+p}=y_p\mid X_n]=Q_p(X_n,y_p).
\]

If we take $Y_p=X_{n+p}$, then $(Y_p)_{p\geq 0}$ is also a Markov chain with transition matrix $Q$.

}

\end{enumerate}

\end{prop}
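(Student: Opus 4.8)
The plan is to reduce everything to computations on the atoms of $\sigma(X_0,\dots,X_n)$. Since $E$ is countable, this $\sigma$-algebra is generated by the countable partition of $\Omega$ into the events $\{X_0=x_0,\dots,X_n=x_n\}$, so any $\sigma(X_0,\dots,X_n)$-measurable r.v. is constant on each atom, and on an atom of positive probability the conditional expectation is the ordinary average against $\p[\,\cdot\mid X_0=x_0,\dots,X_n=x_n]$. For part $(i)$ I would fix such an atom and write
$$\E[f(X_{n+1})\mid X_0=x_0,\dots,X_n=x_n]=\sum_{y\in E}f(y)\,\p[X_{n+1}=y\mid X_0=x_0,\dots,X_n=x_n],$$
then invoke the defining Markov property to replace the conditional probability by $Q(x_n,y)$, which gives $\sum_{y}f(y)Q(x_n,y)=Qf(x_n)$. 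As $X_n=x_n$ on the atom, this equals $Qf(X_n)$, and since the atom was arbitrary we get $\E[f(X_{n+1})\mid X_0,\dots,X_n]=Qf(X_n)$ a.s. Because $Qf(X_n)$ is $\sigma(X_n)$-measurable and $\sigma(X_n)\subset\sigma(X_0,\dots,X_n)$, the tower property yields $\E[f(X_{n+1})\mid X_n]=Qf(X_n)$ as well.

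For the intermediate conditioning on a sub-collection $X_{i_1},\dots,X_{i_k},X_n$, I would use the tower property once more: these variables generate a $\sigma$-algebra squeezed between $\sigma(X_n)$ and $\sigma(X_0,\dots,X_n)$, and since the full conditional expectation $Qf(X_n)$ is already $\sigma(X_n)$-measurable, conditioning it down onto the intermediate $\sigma$-algebra leaves it unchanged, so the answer is again $Qf(X_n)$.

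For part $(ii)$ the key input is the joint-law formula of Proposition 15.1. On an atom $\{X_0=x_0,\dots,X_n=x_n\}$ of positive probability I would form the ratio
$$\frac{\p[X_0=x_0,\dots,X_n=x_n,X_{n+1}=y_1,\dots,X_{n+p}=y_p]}{\p[X_0=x_0,\dots,X_n=x_n]},$$
expand numerator and denominator by Proposition 15.1, and cancel the common factor $\p[X_0=x_0]\prod_{j=1}^n Q(x_{j-1},x_j)$, leaving exactly $Q(x_n,y_1)\prod_{j=1}^{p-1}Q(y_j,y_{j+1})$. Summing over $y_1,\dots,y_{p-1}$ and applying the inductive definition of $Q_p$ then gives $\p[X_{n+p}=y_p\mid X_n=x_n]=Q_p(x_n,y_p)$. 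To conclude that $(Y_p)_{p\geq 0}=(X_{n+p})_{p\geq 0}$ is a Markov chain with matrix $Q$, I would verify the characterization of Proposition 15.1 directly: marginalizing $\p[X_0=x_0,\dots,X_{n+p}=y_p]$ over $x_0,\dots,x_{n-1}$ collapses the first $n$ factors into $\p[X_n=y_0]$, yielding $\p[Y_0=y_0,\dots,Y_p=y_p]=\p[Y_0=y_0]\prod_{j=1}^p Q(y_{j-1},y_j)$, which is precisely the defining identity.

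I expect the only real care to lie in the bookkeeping rather than in any conceptual difficulty: staying on positive-probability atoms (values on null atoms are irrelevant once the conditional expectation is taken as an a.s.-defined object), keeping the chain of $\sigma$-algebra inclusions straight for the intermediate conditioning, and matching indices correctly when collapsing the telescoping products. All the substance is carried by Proposition 15.1 together with the definition of the chain.
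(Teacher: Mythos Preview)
Your proposal is correct and follows essentially the same approach as the paper: for $(i)$ you compute on atoms using the defining Markov property and then invoke the tower property for the intermediate conditioning, exactly as the paper does; for $(ii)$ you appeal to the joint-law formula of Proposition~15.1, cancel, sum over $y_1,\dots,y_{p-1}$ to obtain $Q_p$, and then marginalize to verify the characterization of Proposition~15.1 for $(Y_p)_{p\geq 0}$. Your write-up is in fact more careful about atoms of positive probability and the $\sigma$-algebra inclusions than the paper's rather terse proof.
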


\begin{proof}

For $(i)$, we have 

\[
\E[f(X_{n++})\mid X_0,X_1,...,X_n]=\sum_{y\in E}Q(X_n,y)f(y)=Qf(X_n).
\]

Now if $\{i_1,...,i_n\}\subset \{ 0,1,...,n-1\}$, then\footnote{Recall that if $X\in(E,\mathcal{E})$, $Y\in(F,\F)$, we call the conditional distribution of $Y$ given $X$ any transition kernel $\nu:E\to F$ such that for all measurable maps $h:(F;\F)\to \R_+$, $\E[h(y)\mid X]=\int_\R h(y)\nu(X,dy)$.} 

\begin{multline*}
\E[f(X_{n+1})\mid X_{i_1},...,X_{i_n},X_n]=\E[\E[f(X_{n+1})\mid X_0,X_1,...,X_n]\mid X_{i_1,...,X_{i_n}},X_n]\\=\E[Qf(X_n)\mid X_{i_1},...,X_{i_n},X_n]=Qf(X_n).
\end{multline*}

For $(ii)$, note that it follows immediately from $(\ref{markov})$ that 

\[
\p[X_{n+1}=y_1,...,X_{n-p}=y_p\mid X_0=x_0,...,X_n=x_n]=Q(x_n,y_1)\prod_{j=1}^{p-1}Q(y_j,y_{j+1}). 
\]
The formula for $\p[X_{n-p}=y_p\mid X_n]$ follows for $y_1,...,y_{p-1}\in E$. Finally we note that 

\[
\p[Y_0=,Y_1=y_1,...,Y_p=y_p]=\p[X_n=y_n]\prod_{j=0}^pQ(y_{j-1},y_j)
\]

and we can apply proposition 15.1.

\end{proof}

\begin{ex}

Let $(\Omega,\F,\p)$ be a probability space. Let $(X_n)_{n\geq 0}$ be a sequence of independent r.v.'s with values in $E$, with the same distribution $\mu$. Then $(X_n)_{n\geq 0}$ is a Markov chain with transition matrix

\[
Q(x,y)=\mu(y)
\]

for all $x,y\in E$.
 
\end{ex}

\begin{ex}

Let $(\Omega,\F,\p)$ be a probability space. Let $\eta,\xi_1,...,\xi_n,...$ be independent r.v.'s in $\mathbb{Z}^d$. We assume that $\xi_1,...\xi_n,...$ have the same distribution $\mu$. For $n\geq 0$ we write 

\[
X_n=\eta+\xi_1+\dotsm+\xi_n.
\]

Then $(X_n)_{n\geq 0}$ is a Markov chain with transition matrix 

\[
Q(x,y)=\mu(y-x)
\]

for all $x,y\in E$. Indeed, 

\begin{multline*}
\p[X_{n+1}=y\mid X_0=x_0,...,X_n=x_n]=\p[\xi_{n+1}=y-x_n\mid X_0=x_0,...,X_n=x_n]\\=\p[\xi_{n+1}=y-x_n]=\mu(y-x_n).
\end{multline*}

If $(e_1,...,e_d)$ is the canonical basis of $\R^d$, and if $\mu(e_i)=\mu(-e_i)=\frac{1}{2d}$ for all $i\in\{1,...,d\}$, then the Markov chain is called a simple random walk on $\mathbb{Z}^d$.

\end{ex}

\begin{ex}

Let $\mathcal{P}_2(E)$ be the subsets of $E$ with two elements and let $A\subset\mathcal{P}_2(E)$. For $x\in E$, we note 

\[
A_x=\{y\in E\mid \{x,y\}\in A\}.
\]
We assume that $A_x\not=\varnothing$ for all $x\in E$. We then define a transition matrix on $E$ by setting for $x,y\in E$

\[
Q(x,y)=\begin{cases}\frac{1}{\vert A_x\vert},&\text{if $\{x,y\}\in A$}\\ 0,&\text{otherwise}\end{cases}
\]

A Markov chain with transition matrix $Q$ is called a simple random walk on the graph $(E,A)$.

\end{ex}

\section{The Canonical Markov chain}

We start with the existence of a Markov chain associated with a given transition matrix.

\begin{prop}
\label{prop2}
Let $(\Omega,\F,\p)$ be a probability space. Let $Q$ be a stochastic matrix on $E$. There exists a probability space $(\Omega',\F',\p')$ on which there exists, for all $x\in E$, a stochastic process $(X_n^x)_{n\geq 0}$ which is a Markov chain with transition matrix $Q$, starting from $X_0^x=x$.

\end{prop}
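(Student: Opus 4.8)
The plan is to realize all the chains at once as deterministic functions of a single sequence of independent uniform random variables, so that the entire family $(X_n^x)_{n\geq 0}$, indexed by the starting points $x\in E$, is defined on one common probability space. This is the same idea that underlies the random walk examples above, where the chain was written as a function of iid increments; here the increments are replaced by a universal ``simulation'' variable.

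First I would build a measurable update map $f\colon E\times[0,1)\to E$ with the property that feeding it a uniform variable produces a $Q$-distributed transition. Since $E$ is at most countable, I enumerate it and, for each fixed $x\in E$, partition $[0,1)$ into consecutive intervals $(I_{x,y})_{y\in E}$ with the length of $I_{x,y}$ equal to $Q(x,y)$; this is possible precisely because $\sum_{y\in E}Q(x,y)=1$. Setting $f(x,u)=y$ whenever $u\in I_{x,y}$ gives a measurable map such that, for $U$ uniform on $[0,1)$, one has $\p[f(x,U)=y]=Q(x,y)$ for all $x,y\in E$. Next I take a probability space $(\Omega',\F',\p')$ carrying a sequence $(U_n)_{n\geq 1}$ of independent random variables, each uniform on $[0,1)$ (the existence of such an infinite product I would invoke as a standard fact, as in the product-space construction sketched in the introduction). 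For each $x\in E$ I then define $(X_n^x)_{n\geq 0}$ recursively by $X_0^x=x$ and $X_{n+1}^x=f(X_n^x,U_{n+1})$, so that each $X_n^x$ is a measurable function of $(U_1,\dots,U_n)$ and hence a genuine $E$-valued r.v.\ with $X_0^x=x$ by construction.

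Finally I would verify that $(X_n^x)_{n\geq 0}$ is a Markov chain with transition matrix $Q$. The cleanest route is to establish the joint-law identity $(\ref{markov})$ with initial distribution $\delta_x$ by induction on $n$, and then appeal to the converse direction of the preceding proposition. The base case $n=0$ is trivial. For the inductive step, on the event $\{X_n^x=x_n\}$ one has $X_{n+1}^x=f(x_n,U_{n+1})$, so the event $\{X_0^x=x_0,\dots,X_{n+1}^x=x_{n+1}\}$ is the intersection of $\{X_0^x=x_0,\dots,X_n^x=x_n\}\in\sigma(U_1,\dots,U_n)$ with $\{U_{n+1}\in I_{x_n,x_{n+1}}\}\in\sigma(U_{n+1})$. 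Independence of $U_{n+1}$ from $(U_1,\dots,U_n)$ lets the probability factor as $\p[X_0^x=x_0,\dots,X_n^x=x_n]\cdot Q(x_n,x_{n+1})$, which by the induction hypothesis equals $\one_{\{x_0=x\}}\prod_{j=1}^{n+1}Q(x_{j-1},x_j)$; this is exactly $(\ref{markov})$. The main obstacle is precisely this measurable bookkeeping: defining $f$ and the recursion carefully, ensuring $\sigma(X_0^x,\dots,X_n^x)\subset\sigma(U_1,\dots,U_n)$ so that the past is independent of $U_{n+1}$, and justifying the substitution $X_{n+1}^x=f(x_n,U_{n+1})$ on $\{X_n^x=x_n\}$. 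Everything else is routine, and once $(\ref{markov})$ holds the Markov property follows from the characterization already proved.
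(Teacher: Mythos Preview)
Your proposal is correct and follows essentially the same route as the paper: build an inverse-CDF update map $f(x,u)$ from consecutive subintervals of $[0,1)$ of lengths $Q(x,y)$, drive it with an iid uniform sequence $(U_n)_{n\ge1}$, and verify the Markov property via the factorization $(\ref{markov})$ using the independence of $U_{n+1}$ from $\sigma(U_1,\dots,U_n)$. The only difference is that the paper explicitly manufactures the sequence $(U_n)$ on $\Omega'=[0,1)$ with Lebesgue measure by splitting the binary digits of $\omega$ via a bijection $\N\times\N\to\N$, whereas you invoke the existence of such a sequence as a standard fact.
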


\begin{proof}

We can take $\Omega'=[0,1)$, with the Borel $\sigma$-Algebra and the Lebesgue measure. For $\omega\in[0,1)$, we set 

\[
\omega=\sum_{n=0}^\infty \mathcal{E}_n(\omega)2^{-n-1},
\]

with $\mathcal{E}_n(\omega)\in\{0,1\}$. We can further take the sequence $(\mathcal{E}_n)_{n\geq 0}$ of iid r.v.'s with $\p[\mathcal{E}_n=1]=\p[\mathcal{E}_n=0]=\frac{1}{2}$. If $\varphi:\N\times\N\to \N$ is a bijection, then the r.v.'s $\eta_{i,j}=\mathcal{E}_{\varphi(i,j)}$ for $i,j\in\N$ are also iid. Now define 

\[
U_i=\sum_{j=0}^\infty\eta_{i,j}2^{-j-1}.
\]

Therefore $U_0,U_1,U_2,...$ are iid. Denote $E=\{y_1,y_2,...,y_k,...\}$ and fix $x\in E$. We set $X_0^x=x$ and $X_1^x=y_k$ if 

\[
\sum_{i\leq  j<k}Q(x,y_j)<U_1\leq  \sum_{1\leq  j\leq  k}Q(x,y_j),
\]

such that $\p[X_1^x=y]=Q(x,y)$ for all $y\in E$. We then proceed by induction $X_{n+1}^x=y_k$ if 

\[
\sum_{1\leq  j<k} Q(X_n^x,y_j)<U_{n+1}\leq  \sum_{1\leq  j\leq  k}Q(X_n^x,y_j).
\]

Using the independence of the $U_i$'s, we check that for all $k\geq 1$, we have 

\begin{multline*}
\p[X_{n+1}^x=y_k\mid X_0^x=x,X_1^x=x_1,...,X_n^x=x_n]\\=\p\left[\sum_{1\leq  j<k}Q(x_n,y_j)<U_{n+1}\leq  \sum_{1\leq  j\leq  k}Q(x_n,y_j)\mid X_0^x=x,...,X_n^x=x_n\right]=Q(x_n,y_k).
\end{multline*}

\end{proof}

In the sequel, we shall take $\Omega=E^\N$. An element $\omega\in \Omega$ is a sequence $\omega=(\omega_0,\omega_1,...)$ of elements in $E$. Define the coordinate map 

\[
X_n(\omega)=\omega_n.
\]

We take $\F$ to be the smallest $\sigma$-Algebra on $\Omega$ which make all $X_n$'s measurable. This $\sigma$-Algebra is generated by 

\[
C=\{\omega\in\Omega\mid \omega_0=x_0,\omega_1=x_1,...,\omega_n=x_n\}
\]

for $n\in\N$ and $x_0,...,x_n\in E$.

\begin{lem}
\label{lem2}
Let $(G,\mathcal{G})$ be a measurable space and let $\psi:G\to\Omega$ be a map. Then $\psi$ is measurable if and only if for all $n\geq 0$, $X_n\circ\psi$ is measurable. 

\end{lem}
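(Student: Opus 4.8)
Let $(G,\mathcal{G})$ be a measurable space and $\psi:G\to\Omega=E^{\N}$. Then $\psi$ is measurable if and only if $X_n\circ\psi$ is measurable for every $n\geq 0$.

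The plan is to prove the two implications separately, exploiting the fact that $\F$ is defined as the smallest $\sigma$-Algebra on $\Omega$ making all the coordinate maps $X_n$ measurable. The forward direction is immediate from composition, and the reverse direction is the substantive one, handled by a standard ``good sets'' argument on the generating family.

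First I would treat the easy implication. Suppose $\psi$ is measurable. Each coordinate map $X_n:\Omega\to E$ is $\F$-measurable by the very definition of $\F$ (it is built to make the $X_n$ measurable). Since the composition of two measurable maps is measurable, $X_n\circ\psi:G\to E$ is $\mathcal{G}$-measurable for every $n\geq 0$. This direction needs no further work.

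For the converse, assume $X_n\circ\psi$ is measurable for all $n$. The goal is to show $\psi^{-1}(A)\in\mathcal{G}$ for every $A\in\F$. The key idea is to introduce the collection
\[
\mathcal{A}=\{A\subset\Omega \mid \psi^{-1}(A)\in\mathcal{G}\}.
\]
First I would check that $\mathcal{A}$ is a $\sigma$-Algebra on $\Omega$: this follows from the elementary set-theoretic identities $\psi^{-1}(A^C)=(\psi^{-1}(A))^C$ and $\psi^{-1}\!\left(\bigcup_k A_k\right)=\bigcup_k \psi^{-1}(A_k)$, together with $\psi^{-1}(\Omega)=G\in\mathcal{G}$, so that $\mathcal{A}$ inherits the $\sigma$-Algebra axioms from $\mathcal{G}$. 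Next I would show that $\mathcal{A}$ contains a generating family for $\F$. Concretely, $\F$ is generated by sets of the form $X_n^{-1}(\{y\})$ for $n\geq 0$ and $y\in E$ (equivalently by the cylinder sets $C$ described before the lemma). For any such set, $\psi^{-1}(X_n^{-1}(\{y\}))=(X_n\circ\psi)^{-1}(\{y\})\in\mathcal{G}$ precisely because $X_n\circ\psi$ is assumed measurable and $\{y\}\in\mathcal{P}(E)$. Hence every generator of $\F$ lies in $\mathcal{A}$. Since $\mathcal{A}$ is a $\sigma$-Algebra containing the generators of $\F$, and $\F$ is by definition the smallest such $\sigma$-Algebra, we conclude $\F\subset\mathcal{A}$, which says exactly that $\psi^{-1}(A)\in\mathcal{G}$ for all $A\in\F$, i.e. $\psi$ is measurable.

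The only point demanding a little care — the ``main obstacle,'' though it is mild — is making precise which family generates $\F$ and verifying that $\mathcal{A}$ is stable enough to absorb the generated $\sigma$-Algebra. The clean way around this is to avoid any explicit manipulation of cylinder sets and instead invoke the general principle that $X_n^{-1}(\mathcal{P}(E))$, ranged over all $n$, generates $\F$ by construction; then the ``good sets'' argument closes automatically. No delicate estimates or limiting arguments are involved, so the proof is essentially a formal verification once the collection $\mathcal{A}$ is introduced.
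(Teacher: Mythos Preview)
Your proof is correct and follows essentially the same approach as the paper: the paper also introduces the $\sigma$-Algebra $\mathcal{A}=\{A\in\F\mid \psi^{-1}(A)\in\mathcal{G}\}$, observes it contains all sets $X_n^{-1}(\{y\})$, and concludes $\F\subset\mathcal{A}$ since $\F$ is the smallest $\sigma$-Algebra making all $X_n$ measurable. Your writeup is simply more detailed in verifying that $\mathcal{A}$ is a $\sigma$-Algebra and in spelling out the easy forward implication, which the paper omits.
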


\begin{proof}

We only need to show that if $X_n\circ \psi$ is measurable for all $n\geq 0$, then $\psi$ is measurable as well. Consider the $\sigma$-Algebra $\A$ on $\Omega$ given by 

\[
\A:=\{A\in\F\mid \psi^{-1}(A)\in\mathcal{G}\},
\]

which contains all the sets of the form $X_n^{-1}(y)$ for $y\in E$. Hence all the $X_n$'s are measurable with respect to $\A$ and the claim follows.

\end{proof}

\begin{thm}

Let $(\Omega,\F,\p)$ be a probability space. Let $Q$ be a stochastic matrix on $E$. For all $x\in E$, there exists a unique probability measure $\p_x$, on $\Omega=E^\N$, such that the sequence of coordinate functions $(X_n)_{n\geq 0}$ is a Markov chain with transition matrix $Q$ and $\p_x[X_0=x]=1$.

\end{thm}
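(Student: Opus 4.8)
The plan is to obtain $\p_x$ as the image (pushforward) of the already-constructed measure from Proposition~\ref{prop2}, and then to deduce uniqueness from the fact that the finite-dimensional distributions of a Markov chain are completely forced by the characterization \eqref{markov}. This avoids any appeal to a Kolmogorov-type extension theorem: consistency of the finite-dimensional laws is automatic, since $\p_x$ is transported from a measure that already exists.

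For existence, I would start from the probability space $(\Omega',\F',\p')$ and the Markov chain $(X_n^x)_{n\geq 0}$ with transition matrix $Q$ and $X_0^x=x$ provided by Proposition~\ref{prop2}. Define the map
$$\Psi_x:\Omega'\to\Omega=E^{\N},\qquad \Psi_x(\omega')=(X_0^x(\omega'),X_1^x(\omega'),\ldots).$$
Since $X_n\circ\Psi_x=X_n^x$ is measurable for every $n\geq0$, Lemma~\ref{lem2} shows that $\Psi_x$ is measurable, so I can set $\p_x:=\p'\circ\Psi_x^{-1}$, the image measure, which is a probability measure on $(\Omega,\F)$. It then remains to check that under $\p_x$ the coordinate process is the desired Markov chain. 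For a cylinder $C=\{X_0=x_0,\ldots,X_n=x_n\}$ one has $\Psi_x^{-1}(C)=\{X_0^x=x_0,\ldots,X_n^x=x_n\}$, hence
$$\p_x[X_0=x_0,\ldots,X_n=x_n]=\p'[X_0^x=x_0,\ldots,X_n^x=x_n]=\one_{\{x_0=x\}}\prod_{j=1}^n Q(x_{j-1},x_j),$$
where the last equality uses \eqref{markov} applied to $(X_n^x)_{n\geq0}$ together with $\p'[X_0^x=x]=1$. Reading \eqref{markov} in the converse direction, this identity on cylinders says precisely that $(X_n)_{n\geq0}$ is a Markov chain with transition matrix $Q$ under $\p_x$, and taking $n=0$ gives $\p_x[X_0=x]=1$.

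For uniqueness, note that any probability measure on $(\Omega,\F)$ satisfying the required properties must assign to every cylinder $C$ exactly the value displayed above, again by \eqref{markov}. The collection of cylinders is stable under finite intersections (a $\pi$-system) and generates $\F$, since $\F$ was defined as the smallest $\sigma$-Algebra making all coordinate maps $X_n$ measurable. Two probability measures agreeing on a generating $\pi$-system agree on the whole $\sigma$-Algebra (monotone class theorem), so $\p_x$ is uniquely determined.

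I expect the only genuinely delicate points to be the two bookkeeping steps: establishing measurability of $\Psi_x$ cleanly through Lemma~\ref{lem2} rather than by a direct coordinate-by-coordinate argument, and verifying that the cylinders indeed form a generating $\pi$-system so that the monotone class theorem applies to give uniqueness. The probabilistic core—that the pushforward reproduces the correct finite-dimensional laws—drops out immediately from Proposition~\ref{prop2} and \eqref{markov}, so no separate consistency or $\sigma$-additivity verification is needed.
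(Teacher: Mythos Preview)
Your proposal is correct and follows essentially the same route as the paper: existence via the pushforward of $\p'$ from Proposition~\ref{prop2} along the trajectory map (with measurability supplied by Lemma~\ref{lem2}), and uniqueness by agreement on the generating $\pi$-system of cylinders together with the monotone class theorem. Your write-up is in fact somewhat more explicit than the paper's about the $\pi$-system verification, but the architecture is identical.
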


\begin{proof}

From proposition \ref{prop2}, there exists $(\Omega',\F',\p')$ and $(X_n^x)_{n\geq 0}$ which is a Markov chain with transition matrix $Q$ and such that $X_0^x=x$. We then define $\p_x$ as the image of $\p'$ by the map 

\begin{align*}
\Omega'&\longrightarrow\Omega\\
\omega'&\longmapsto (X_n^x(\omega'))_{n\geq 0}
\end{align*}

This map is measurable because of lemma \ref{lem2}. We have $\p_x[X_0=x]=\p'[X_0^x=x]=1$ and for $x_0,x_1,...,x_n\in E$ we have 

\begin{multline*}
\p_x[X_0=x_0,X_1=x_1,...,X_n=x_n]=\p'[X_0^x=x_0,X_1^x=x_1,...,X_n^x=x_n]\\=\p'[X_0^x=x_0]\prod_{j=1}^nQ(x_{j-1},x_j)=\p_x[X_0=x_0]\prod_{j=1}Q(x_{j-1},x_j)
\end{multline*}

Therefore it follows that the sequence of the coordinate functions is a Markov chain. For uniqueness, we note that if $\p'_x$ is another such probability measure, $\p_x$ and $\p_x'$ coincide on cylinders. It then follows from an application of the monotone class theorem that $\p_x=\p_x'$.

\end{proof}

\begin{rem}

It follows for all $n\geq 0$ and for all $x,y\in E$, that we have 

\[
\p_x[X_n=y]=Q_n(x,y).
\]

\end{rem}

\begin{rem}
If $\mu$ is a probability measure on $E$, we write  

\[
\p_\mu:=\sum_{x\in E}\mu(x)\p_x,
\]

which defines a new probability measure on $\Omega$. By writing an explicit formula for $\p_\mu[X_0=x_0,...,X_n=x_n]$, one immediately checks that under $\p_\mu$ we get that $(X_n)_{n\geq 0}$ is a Markov chain with transition matrix $Q$, where $X_0$ has the distribution $\mu$.

\end{rem}

\begin{rem}

Let $(X'_n)_{n\geq 0}$ be a Markov chain with transition matrix $Q$ and initial distribution $\mu$. Then for all measurable subsets $B\subset\Omega=E^\N$, we have 

\[
\p[(X'_n)_{n\geq 0}\in B]=\p_\mu[B].
\]

This is only true when $B$ is a cylinder as in the proof above. This shows that all results proven for the canonical Markov chains are true for a general Markov chain with the same transition matrix.  One of the advantages of using a canonical Markov chain is that one can use translation operators. For all $k\in\N$, we define 

\[
\Theta_k:\Omega\to\Omega,\hspace{0.5cm}(\omega_n)_{n\geq 0}\mapsto \Theta_k((\omega_n)_{n\geq 0})=(\omega_{k+n})_{n\geq 0}.
\]

Lemma 16.2. shows that these applications are measurable. We write $\F_n=\sigma(X_0,...,X_n)$ and we use the notation $\E_x$ when we integrate with respect to $\p_x$. 
\end{rem}

\begin{thm}[Simple Markov property]
\label{thm5}
Let $(\Omega,\F,\p)$ be a probability space. Let $F$ and $G$ be two measurable, positive maps on $\Omega$ and let $n\geq 0$. Assume that $F$ is $\F_n$-measurable. Then for all $x\in E$, 

\[
\E_x[FG\circ \Theta_n]=\E_x[F\E_{X_n}[G]].
\]

Equivalently, we have 

\[
\E_x[G\circ \Theta_n\mid \F_n]=\E_{X_n}[G].
\]

We say that the conditional distribution of $\Theta_n(\omega)$ given $(X_0,X_1,...,X_n)$ is $\p_{X_n}$.

\end{thm}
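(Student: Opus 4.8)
The plan is to reduce both sides to cylinder events, verify the identity there by the explicit product formula for $\p_x$ from Proposition~15.1 (equation~\eqref{markov}), and then bootstrap to arbitrary $F,G$ by linearity and monotone convergence. Fix $x\in E$ once and for all, and set $g(y):=\E_y[G]$, so that $\E_{X_n}[G]=g(X_n)$ is a function of $X_n$ alone and is therefore $\F_n$-measurable. It suffices to establish the first identity $\E_x[F\,G\circ\Theta_n]=\E_x[F\,g(X_n)]$ for all nonnegative $\F_n$-measurable $F$: this says precisely that $g(X_n)$ is a version of $\E_x[G\circ\Theta_n\mid\F_n]$, so the conditional-expectation form will follow from the a.s.\ uniqueness of conditional expectations proved earlier.

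First I would treat the base case $F=\one_{\{X_0=x_0,\dots,X_n=x_n\}}$ and $G=\one_{\{X_0=y_0,\dots,X_p=y_p\}}$. Since $X_k\circ\Theta_n=X_{n+k}$, one has $G\circ\Theta_n=\one_{\{X_n=y_0,\dots,X_{n+p}=y_p\}}$, so $F\,G\circ\Theta_n$ is the indicator of a cylinder that is empty unless $x_n=y_0$; applying \eqref{markov} gives
$$\E_x[F\,G\circ\Theta_n]=\p_x[X_0=x_0]\Big(\prod_{j=1}^nQ(x_{j-1},x_j)\Big)\Big(\prod_{j=1}^pQ(y_{j-1},y_j)\Big)\one_{\{x_n=y_0\}}.$$
On the other hand $g(y)=\p_y[X_0=y_0,\dots,X_p=y_p]=\one_{\{y=y_0\}}\prod_{j=1}^pQ(y_{j-1},y_j)$ by \eqref{markov}, and evaluating $g(X_n)$ on $\{X_n=x_n\}$ produces exactly the same expression. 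Thus the two sides agree; the only point requiring care is bookkeeping the indices so that the factor $Q(x_n,y_1)$ appearing on the left merges with $x_n=y_0$ into $Q(y_0,y_1)$.

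Next I would fix such a cylinder $F$ and extend to all nonnegative measurable $G$. The class of bounded measurable $G$ satisfying $\E_x[F\,G\circ\Theta_n]=\E_x[F\,g(X_n)]$ is a vector space containing the constants, closed under bounded monotone limits (using monotone convergence on both sides, together with $g_k(y)\uparrow g(y)$ when $G_k\uparrow G$), and it contains the indicators of the cylinders, which form a $\pi$-system generating $\F$. By the functional monotone class theorem it contains every bounded measurable $G$, and a final application of monotone convergence ($G=\lim_k G\wedge k$) covers all nonnegative $G$.

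Finally I would extend in $F$. Because $E$ is countable, $\F_n=\sigma(X_0,\dots,X_n)$ is generated by the countable partition into atoms $\{X_0=x_0,\dots,X_n=x_n\}$, and every nonnegative $\F_n$-measurable $F$ is a countable nonnegative combination of their indicators; since both sides are linear and continuous under increasing limits, the identity passes to all nonnegative $\F_n$-measurable $F$. This gives the displayed identity, and hence, by uniqueness of the conditional expectation, the equivalent statement $\E_x[G\circ\Theta_n\mid\F_n]=\E_{X_n}[G]$. The main obstacle is not any single computation but organizing the two successive extension steps so that nonnegativity is preserved and the interchange of limits with $\E_x$ and with $y\mapsto\E_y[\,\cdot\,]$ is legitimate; the base case itself is routine once the index matching is handled.
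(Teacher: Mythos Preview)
Your proof is correct and follows essentially the same route as the paper: verify the identity on cylinder indicators for both $F$ and $G$ via the product formula~\eqref{markov}, then extend in $G$ by a monotone class argument. You are in fact more careful than the paper about the two extension steps (the paper simply says ``we can restrict ourselves to'' indicator $F$ and invokes the monotone class theorem for $G$ without spelling out the passage to unbounded $G$ or back to general $F$), but the strategy is the same.
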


\begin{rem}

Theorem \ref{thm5} is also true if one replace $\E_x$ with $\E_\mu$.
\end{rem}

\begin{proof}[Proof of Theorem \ref{thm5}]

It is enough to prove the first statement. For this we can restrict ourselves to the case where $F=\one_{\{ X_0=x_0,X_1=x_1,...,X_n=x_n\}}$ for $x_0,x_1,...,x_n\in E$. Let us first consider the case where $G=\one_{\{X_0=y_0,...,X_p=y_p\}}$. In this case, if $y\in E$, then 

\[
\E_y[G]=\one_{\{ Y_0=y\}}\prod_{j=1}^pQ(y_{j-1},y_j)
\]

and 

\begin{multline*}
\E_x[FG\circ \Theta_n]=\p_x[X_0=x_0,...,X_{n}=x_{n},X_n=y_0,X_{n+1}=y_{1},...,X_{n+p}=y_p]\\
=\one_{\{X_0=x\}}\prod_{j=1}^nQ(x_{j-1},x_j)\one_{\{y_0=y_n\}}\prod_{j=1}^pQ(y_{j-1},y_j)
\end{multline*}

Now it follows from the monotone class theorem that the above holds for $G=\one_A$, for $A\in\F_n$ and thus we can conclude.
\end{proof}

\begin{rem}

We would like to make sense of theorem 16.4. for $n$ replaced by a stopping time $T$. Indeed let us assume that we want to  the problem of knowing whether a Markov chain starting from $x$ visits back $x$. In other words, let us write 

\[
N_x=\sum_{n=0}^\infty \one_{\{X_n=x\}}
\]

and ask the question, whether we have that $\p_x[N_x=\infty]=1$. It is in fact enough to check that it comes back at least once at $x$. If $H_x=\inf\{n\geq 1\mid X_n=x\}$, with the convention $\inf\varnothing=\infty$, we have $\p_x[N_x=\infty]=1$ if and only if $\p_x[H_x<\infty]=1$, which is trivial. If $\p_x[N_x<\infty]=1$, and if theorem 16.4. is true for stopping times, then $\theta_{H_x}(\omega)=\left(\omega_{H_x(\omega)=n}\right)_{n\geq 0}$ has the law $\p_x$. But then, since $N_x(\omega)=1+N_x(\Theta_{H_x}(\omega))$, we see that $N_x$ has the same distribution as $1+N_x$ under $\p_x$. This is only possible if $N_x=\infty$ a.s.

\end{rem}

\begin{thm}[Strong Markov property]

Let $(\Omega,\F,(\F_n)_{n\geq 0},\p)$ be a filtered probability space. Let $T$ be a stopping time for the filtration $(\F_n)_{n\geq 0}$. Let $F$ and $G$ be two measurable and positive functions on $\Omega$. Assume further that $F$ is $\F_T$-measurable. Then for all $x\in E$, we get 

\[
\E_x[\one_{\{ T<\infty\}}FG\circ \Theta_T]=\E_x[\one_{\{ T<\infty\}}F\E_x[G]]
\]

or equivalently 

\[
\E_x[\one_{\{T<\infty\}}G\circ \Theta_T\mid \F_T]=\one_{\{ T<\infty\}}\E_{X_T}[G].
\]

\end{thm}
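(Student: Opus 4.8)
The plan is to reduce the strong Markov property to the simple Markov property (Theorem \ref{thm5}) by slicing the event $\{T<\infty\}$ according to the value taken by $T$. Since $T$ takes values in $\bar\N$, I would write $\{T<\infty\}=\bigsqcup_{n\geq 0}\{T=n\}$ as a disjoint union and observe that on the event $\{T=n\}$ one has $\Theta_T=\Theta_n$ and $X_T=X_n$. This immediately gives
$$\E_x[\one_{\{T<\infty\}}FG\circ\Theta_T]=\sum_{n\geq 0}\E_x[F\one_{\{T=n\}}G\circ\Theta_n],$$
where the interchange of expectation and summation is legitimate because all terms are nonnegative (monotone convergence). Here I write $\E_{X_T}[G]$ for the right-hand side, which is the intended reading of the statement, consistent with the equivalent conditional form.

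The key technical point to establish first is that $F\one_{\{T=n\}}$ is $\F_n$-measurable whenever $F$ is $\F_T$-measurable. I would treat $F=\one_A$ with $A\in\F_T$ first: by the definition of $\F_T$ we have $A\cap\{T\leq n\}\in\F_n$ and $A\cap\{T\leq n-1\}\in\F_{n-1}\subset\F_n$, so
$$A\cap\{T=n\}=(A\cap\{T\leq n\})\setminus(A\cap\{T\leq n-1\})\in\F_n,$$
which shows that $\one_A\one_{\{T=n\}}=\one_{A\cap\{T=n\}}$ is $\F_n$-measurable. The general case of a positive $\F_T$-measurable $F$ then follows by linearity for positive simple functions and monotone convergence.

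Having established this, I would apply the simple Markov property (Theorem \ref{thm5}) to the $\F_n$-measurable function $F\one_{\{T=n\}}$ together with $G$, obtaining for each fixed $n$
$$\E_x[F\one_{\{T=n\}}G\circ\Theta_n]=\E_x[F\one_{\{T=n\}}\E_{X_n}[G]].$$
Summing over $n\geq 0$ and using once more that $X_n=X_T$ on $\{T=n\}$, the right-hand side becomes
$$\sum_{n\geq 0}\E_x[F\one_{\{T=n\}}\E_{X_T}[G]]=\E_x[\one_{\{T<\infty\}}F\E_{X_T}[G]],$$
which is exactly the asserted identity. The equivalent conditional form follows because $\one_{\{T<\infty\}}\E_{X_T}[G]$ is $\F_T$-measurable (since $\{T<\infty\}\in\F_T$ and $X_T$ is $\F_T$-measurable on that event) and the displayed identity tests it against every positive $\F_T$-measurable $F$.

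The main obstacle I anticipate is the measurability bookkeeping rather than any analytic difficulty: one must carefully justify that $F\one_{\{T=n\}}$ lands in $\F_n$ and that $x\mapsto\E_x[G]$ is measurable so that $\E_{X_T}[G]$ is a genuine $\F_T$-measurable random variable. Once these facts are secured, the argument is a clean decomposition over $\{T=n\}$ plus a single invocation of the simple Markov property, with positivity taking care of every limit and summation interchange.
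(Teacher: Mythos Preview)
Your proposal is correct and follows exactly the same approach as the paper: decompose over $\{T=n\}$, note that $\one_{\{T=n\}}F$ is $\F_n$-measurable, apply the simple Markov property (Theorem~\ref{thm5}) termwise, and sum over $n$. Your write-up is in fact more detailed than the paper's, which compresses the whole argument into two lines and leaves the $\F_n$-measurability of $\one_{\{T=n\}}F$ as an observation.
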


\begin{proof}

For $n\geq 0$, we have 

\[
\E_x[\one_{\{ T=n\}}FG\circ\Theta_T]=\E_x[\one_{\{ T=n\}}FG\circ \Theta_n]=\E_n[\one_{\{ T=n\}}\E_{X_n}[G]].
\]

The last equality follows from a previous theorem after obtaining that $\one_{\{T=n\}}F$ is $\F_n$-measurable. We then sum over $n$.

\end{proof}

\begin{cor}

Let $(\Omega,\F,(\F_n)_{n\geq 0},\p)$ be a filtered probability space. Let $T$ be a stopping time such that $\p_x[T<\infty]=1$. Let us assume that there exists $y\in E$ such that $\p_x[X_T=y]=1$. Then under $\p_x$, $\Theta_T(\omega)$ is independent of $\F_T$ and has the law $\p_y$.

\end{cor}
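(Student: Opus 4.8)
The plan is to derive both claims directly from the Strong Markov property, exploiting the two hypotheses $\p_x[T<\infty]=1$ and $\p_x[X_T=y]=1$ to replace the random quantity $\E_{X_T}[G]$ by the constant $\E_y[G]$.

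First I would establish that $\Theta_T$ has law $\p_y$ under $\p_x$. Taking $F\equiv 1$ in the Strong Markov property and using that $\one_{\{T<\infty\}}=1$ holds $\p_x$-almost surely, one obtains for every positive measurable $G$ on $\Omega$ that $\E_x[G\circ\Theta_T]=\E_x[\E_{X_T}[G]]$. Since $X_T=y$ holds $\p_x$-almost surely, the integrand $\E_{X_T}[G]$ equals the constant $\E_y[G]$ almost surely, whence $\E_x[G\circ\Theta_T]=\E_y[G]$. As this holds for all positive measurable $G$ (in particular for indicators of cylinder sets, which generate $\F$), the image measure of $\p_x$ under $\Theta_T$ coincides with $\p_y$.

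Next I would establish independence. Fix an arbitrary positive $\F_T$-measurable function $F$ and an arbitrary positive measurable function $G$ on $\Omega$. The Strong Markov property yields
\[
\E_x[F\,G\circ\Theta_T]=\E_x[\one_{\{T<\infty\}}F\,G\circ\Theta_T]=\E_x[\one_{\{T<\infty\}}F\,\E_{X_T}[G]]=\E_x[F\,\E_y[G]],
\]
where I have again used $\one_{\{T<\infty\}}=1$ and $X_T=y$ $\p_x$-almost surely. Because $\E_y[G]$ is a constant it factors out of the expectation, and combining this with the first part, where $\E_y[G]=\E_x[G\circ\Theta_T]$, gives
\[
\E_x[F\,G\circ\Theta_T]=\E_x[F]\,\E_x[G\circ\Theta_T].
\]
Since this holds for all such $F$ and $G$, it is precisely the assertion that the $\sigma$-algebra $\sigma(\Theta_T)$ is independent of $\F_T$ under $\p_x$.

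The argument presents no serious obstacle: the only point requiring care is the reduction of $\E_{X_T}[G]$ to the constant $\E_y[G]$, which rests on the almost-sure identity $X_T=y$ together with the fact that the exceptional event $\{T=\infty\}$ is $\p_x$-null, allowing the indicator $\one_{\{T<\infty\}}$ to be dropped throughout. Finally, a monotone-class argument ensures that verifying the factorization, and the identification of the law, on a generating class, namely cylinder sets for the $\Theta_T$-coordinate and bounded $\F_T$-measurable functions for $F$, suffices to conclude on all of $\F$.
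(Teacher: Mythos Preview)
Your proof is correct and follows essentially the same approach as the paper: apply the Strong Markov property, use $T<\infty$ and $X_T=y$ $\p_x$-a.s.\ to replace $\E_{X_T}[G]$ by the constant $\E_y[G]$, and factor it out. The paper compresses everything into the single chain $\E_x[F\,G\circ\Theta_T]=\E_x[F\,\E_{X_T}[G]]=\E_x[F\,\E_y[G]]=\E_x[F]\,\E_y[G]$, from which both the law and the independence are read off at once, whereas you separate the two conclusions; the substance is identical.
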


\begin{proof}

We only have to note that 

\[
\E_x[FG\circ \Theta_T(\omega)]=\E_x[F\E_{X_T}[G]]=\E_x[F\E_y[G]]=\E_x[F]\E_y[G].
\]

\end{proof}

\section{Classification of states}

From now on we will only use the canonical Markov chain. Recall that for $x\in E$ we have 

\begin{align*}
H_x&=\inf\{n\geq 1\mid X_n=x\},\\
N_x&=\sum_{n\geq 0}\one_{\{ X_n=x\}}.
\end{align*}

\begin{prop}[Recurrence and Transience]

Let $x\in E$. Then we have two situations, which can occur.

\begin{enumerate}[$(i)$]
\item{$\p_x[H_x<\infty]=1$ and $N_x=\infty$ $\p_x$-a.s. In this case $x$ is said to be reccurrent.}
\item{$\p_x[H_x<\infty]<1$ and $N_x<\infty$ $\p_x$-a.s. More precisely 

\[
\E_x[N_x]=\frac{1}{\p_x[H_x<\infty]}<\infty.
\]
In this case $x$ is said to be transient.

}

\end{enumerate}

\end{prop}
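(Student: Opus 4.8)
The plan is to reduce everything to the return probability $q:=\p_x[H_x<\infty]$ and to apply the strong Markov property at the first return time $H_x$. First I would record the two structural facts that make the argument work: $H_x=\inf\{n\geq 1\mid X_n=x\}$ is a stopping time for $(\F_n)_{n\geq 0}$ (it is the hitting time of $\{x\}$ forced to start at $n=1$, as in the proposition on stopping times), and on the event $\{H_x<\infty\}$ one has $X_{H_x}=x$ by the very definition of $H_x$. These are precisely the ingredients needed to invoke the strong Markov property with $T=H_x$.

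The key step is a decomposition of the occupation count $N_x$. Under $\p_x$ we have $X_0=x$, while no visit to $x$ occurs at times $1,\dots,H_x-1$; hence on $\{H_x<\infty\}$ one has $N_x=1+N_x\circ\Theta_{H_x}$, and on $\{H_x=\infty\}$ one has $N_x=1$. Consequently, for every $m\geq 1$, the event $\{N_x\geq m+1\}$ equals $\{H_x<\infty\}\cap(\Theta_{H_x})^{-1}(\{N_x\geq m\})$. I would then apply the strong Markov property in its $\one_{\{T<\infty\}}$ form with $F\equiv 1$ and $G=\one_{\{N_x\geq m\}}$, using $X_{H_x}=x$ on $\{H_x<\infty\}$ to replace $\E_{X_{H_x}}[G]$ by $\E_x[G]$. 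This yields the recursion $\p_x[N_x\geq m+1]=q\,\p_x[N_x\geq m]$. Since $\p_x[N_x\geq 1]=1$, induction gives $\p_x[N_x\geq m]=q^{m-1}$ for all $m\geq 1$; that is, $N_x$ is geometrically distributed.

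From this closed form the dichotomy follows at once. If $q=1$, then $\p_x[N_x\geq m]=1$ for all $m$, so $N_x=\infty$ $\p_x$-a.s. and $x$ is recurrent. If $q<1$, then $\p_x[N_x\geq m]=q^{m-1}\to 0$, so $N_x<\infty$ $\p_x$-a.s., and summing the tail probabilities gives $\E_x[N_x]=\sum_{m\geq 1}\p_x[N_x\geq m]=\sum_{m\geq 1}q^{m-1}=\frac{1}{1-q}<\infty$, so $x$ is transient. (The derivation returns $\E_x[N_x]=\frac{1}{1-\p_x[H_x<\infty]}=\frac{1}{\p_x[H_x=\infty]}$, which is the quantity that is finite exactly in the transient case.)

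The main obstacle is bookkeeping rather than conceptual. One must justify the identity $N_x=1+N_x\circ\Theta_{H_x}$ on $\{H_x<\infty\}$ with care, checking that $N_x\circ\Theta_{H_x}$ counts exactly the visits at times $\geq H_x$, so that the lone initial visit at time $0$ is what contributes the $+1$. One must also be sure to use the strong Markov property in the form carrying $\one_{\{T<\infty\}}$, rather than the corollary that presupposes $\p_x[T<\infty]=1$; this is essential in the transient case, where $q<1$ and $H_x$ is infinite with positive probability.
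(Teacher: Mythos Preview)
Your argument is correct and is essentially the same as the paper's: apply the strong Markov property at $H_x$ to obtain the recursion $\p_x[N_x\geq m+1]=q\,\p_x[N_x\geq m]$, deduce $\p_x[N_x\geq m]=q^{m-1}$ by induction from $\p_x[N_x\geq 1]=1$, and then read off the dichotomy and the expectation by summing tails. Your parenthetical remark is also on point: the computation yields $\E_x[N_x]=\frac{1}{1-q}=\frac{1}{\p_x[H_x=\infty]}$, so the formula $\frac{1}{\p_x[H_x<\infty]}$ displayed in the statement (and repeated at the end of the paper's proof) is a typo.
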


\begin{proof}

For $k\geq 1$, it follows from the strong Markov property that 

\[
\p_x[N_x\geq k+1]=\E_x[\one_{H_x<\infty}\one_{N_x\geq k}\circ \Theta_{H_x}]=\E_x[\one_{H_x<\infty}\E_x[\one_{N_x\geq k}]]=\p_x[H_x<\infty]\p_x[N_x\geq k].
\]

Now since $\p_x[N_x\geq 1]=1$, it follows by induction that 

\[
\p_x[N_x\geq k]=\p_x[H_x<\infty]^{k-1}.
\]

If $\p_x[H_x<\infty]=1$, then $\p_x[N_x=\infty]=1$. If $\p_x[H_x<\infty]<1$, then 

\[
\E_x[N_x]=\sum_{k\geq 1}\p_x[N_x\geq k]=\sum_{k\geq 1}\p_x[H_x<\infty]^{k-1}=\frac{1}{1-\p_x[H_x<\infty]}=\frac{1}{\p_x[H_x<\infty]}<\infty
\]

\end{proof}

\begin{defn}[Potential kernel]

The potential kernel of the chain is given by the function 

\[
U:E\times E\to [0,\infty],\hspace{1cm}(x,y)\mapsto \E_x[N_y].
\]

\end{defn}

\begin{prop}

The following hold.

\begin{enumerate}[$(i)$]
\item{For all $x,y\in E$ we have

\[
U(x,y)=\sum_{n\geq 0}Q_n(x,y).
\]

}
\item{$U(x,x)=\infty$ if and only if $x$ is recurrent.

}
\item{For all $x,y\in E$ it follows that if $x\not=y$ then 

\[
U(x,y)=\p_x[H_y<\infty]U(y,y).
\]
}

\end{enumerate}

\end{prop}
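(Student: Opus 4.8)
The three claims all flow from manipulating the potential kernel $U(x,y)=\E_x[N_y]$ where $N_y=\sum_{n\geq 0}\one_{\{X_n=y\}}$. I would establish them in the order $(i)$, $(ii)$, $(iii)$, since $(ii)$ follows from $(i)$ together with the recurrence/transience dichotomy, and $(iii)$ uses the strong Markov property in the same spirit as the proof of the recurrence/transience proposition.

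For $(i)$, I would start from the definition and interchange expectation with the (countable, nonnegative) sum, which is justified by Tonelli. Concretely,
\[
U(x,y)=\E_x[N_y]=\E_x\left[\sum_{n\geq 0}\one_{\{X_n=y\}}\right]=\sum_{n\geq 0}\E_x[\one_{\{X_n=y\}}]=\sum_{n\geq 0}\p_x[X_n=y].
\]
Then I invoke the earlier remark that $\p_x[X_n=y]=Q_n(x,y)$ to conclude $U(x,y)=\sum_{n\geq 0}Q_n(x,y)$. This step is purely formal.

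For $(ii)$, I would simply combine $(i)$ applied at $y=x$ with the Recurrence and Transience proposition. If $x$ is recurrent, that proposition gives $N_x=\infty$ $\p_x$-a.s., hence $U(x,x)=\E_x[N_x]=\infty$. If $x$ is transient, the same proposition gives the explicit finite value $\E_x[N_x]=\tfrac{1}{\p_x[H_x<\infty]}<\infty$, so $U(x,x)<\infty$. This establishes the equivalence $U(x,x)=\infty \iff x \text{ recurrent}$.

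**The main step.** The real work is $(iii)$, and I expect it to be the principal obstacle. Here $x\neq y$, and the idea is to decompose the event of visiting $y$ at all into ``first hitting time of $y$ is finite'' followed by the excursions counted from that first visit. Using $H_y=\inf\{n\geq 1\mid X_n=y\}$ and the translation operator $\Theta_{H_y}$, I would write, on $\{H_y<\infty\}$, that $N_y(\omega)=N_y(\Theta_{H_y}(\omega))$ because when starting from $x\neq y$ no visit to $y$ occurs strictly before time $H_y$ (the first term $\one_{\{X_0=y\}}$ vanishes since $X_0=x$). Then I apply the strong Markov property at the stopping time $H_y$: since $\p_x[X_{H_y}=y]=1$ on $\{H_y<\infty\}$,
\[
U(x,y)=\E_x[N_y]=\E_x\left[\one_{\{H_y<\infty\}}\,N_y\circ\Theta_{H_y}\right]=\E_x\left[\one_{\{H_y<\infty\}}\,\E_{X_{H_y}}[N_y]\right]=\p_x[H_y<\infty]\,\E_y[N_y].
\]
Recognizing $\E_y[N_y]=U(y,y)$ finishes the proof. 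The delicate points I would check carefully are: that $N_y$ genuinely restarts at $H_y$ with no double-counting or missed visits (this is where $x\neq y$ matters), that $N_y$ is a measurable positive function on $\Omega$ so the strong Markov property applies, and that the indicator $\one_{\{H_y<\infty\}}$ can be pulled out as the $F$ in the strong Markov statement $\E_x[\one_{\{T<\infty\}}F\,G\circ\Theta_T]=\E_x[\one_{\{T<\infty\}}F\,\E_{X_T}[G]]$. Once the excursion decomposition is set up correctly, the computation itself is short.
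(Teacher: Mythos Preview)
Your proposal is correct and follows essentially the same route as the paper: part $(i)$ is the same Tonelli interchange, part $(iii)$ is the identical strong Markov decomposition $\E_x[N_y]=\E_x[\one_{\{H_y<\infty\}}N_y\circ\Theta_{H_y}]=\p_x[H_y<\infty]U(y,y)$, and your argument for $(ii)$ via the recurrence/transience dichotomy is exactly what the paper leaves as an exercise. Your explicit justification of $N_y=N_y\circ\Theta_{H_y}$ on $\{H_y<\infty\}$ using $x\neq y$ is a detail the paper omits but that you are right to flag.
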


\begin{proof}

We need to show all three points. 

\begin{enumerate}[$(i)$]
\item{Note that 

\[
U(x,y)=\E_x\left[\sum_{n\geq 0}\one_{\{N_x=y\}}\right]=\sum_{n\geq 0}\p_x[X_n=y]=\sum_{n\geq 0}Q_n(x,y).
\]

}

\item{Exercise.}

\item{This follows from the strong Markov property. We get

\[
\E_x[N_y]=\E_x[\one_{\{H_y<\infty\}}N_y\circ\Theta_{H_y}]=\E_x[\one_{\{H_y<\infty\}}\E_y[N_y]]=\p_x[H_y<\infty]U(y,y).
\]

}

\end{enumerate}

\end{proof}

\begin{ex}

Let us consider the Markov chain, starting at 0, on $\mathbb{Z}^d$ with transition matrix given by 

\[
Q((x_1,...,x_d),(y_1,...,y_d))=\frac{1}{2^d}\prod_{i=1}^d\one_{\{\vert y_i-x_i\vert=1\}}.
\]

This Markov chain has the same distribution as $(Y_n^1,...,Y_n^d)_{n\geq 0}$, where $Y^1,...,Y^d$ are iid r.v.'s of a simple random walk on $\mathbb{Z}$, starting at 0. Hence we get 

\[
Q_n(0,0)=\p[Y_n^1=0,...,Y_n^d=0]=\p[Y_n^1=0]^d.
\]

Moreover, $\p[Y_n^1=0]=0$ if $n$ is odd and for $n=2k$ we get 

\[
\p[Y_{2k}^1=0]=2^{-2k}\binom{2k}{k}.
\]

Therefore we get that 

\[
U(0,0)=\sum_{k\geq 0}Q_{2k}(0,0)=\sum_{k\geq 0}\left(2^{-2k}\binom{2k}{k}\right)^d.
\]

Now with Stirling's formula\footnote{Recall that $n!\approx \sqrt{2\pi n}\left(\frac{n}{e}\right)^n$} we get

\[
2^{-2k}\binom{2k}{k}\approx_{k\to\infty}\frac{\left(\frac{2k}{e}\right)^{2k}\sqrt{4\pi k}}{2^{2k}\left(\left(\frac{k}{e}\right)^k\sqrt{2\pi k}\right)^2}\approx_{k\to\infty}\sqrt{\frac{1}{\pi k}}.
\]

Consequently, 0 is recurrent if $d=1$ and transient if $d\geq 2$.

\end{ex}

Now let us denote the set of all recurrent points by $R$. Then we can obtain the following lemma.

\begin{lem}

Let $x\in R$ and let $y\in E$ such that $U(x,y)>0$. Then $y\in R$ and 

\[
\p_y[H_x<\infty]=1.
\]

In particular we get that $U(y,x)>0$.

\end{lem}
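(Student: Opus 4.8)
The plan is to establish the three assertions in order: first $\p_y[H_x<\infty]=1$, then the recurrence of $y$, and finally $U(y,x)>0$ essentially for free. If $y=x$ there is nothing to prove, so I would assume $y\neq x$ throughout, which in particular lets me use the identity $U(x,y)=\p_x[H_y<\infty]\,U(y,y)$ with the two points interchanged.

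For the claim $\p_y[H_x<\infty]=1$ I would argue by contradiction, exploiting recurrence of $x$ in the form $N_x=\infty$ $\p_x$-almost surely. Suppose $\p_y[H_x=\infty]>0$. Since $U(x,y)=\sum_{n\geq 0}Q_n(x,y)>0$, there is some $n_0\geq 1$ with $Q_{n_0}(x,y)>0$ (note $n_0\geq 1$ because $Q_0(x,y)=\one_{\{x=y\}}=0$). Writing $G=\one_{\{H_x=\infty\}}$ and $F=\one_{\{X_{n_0}=y\}}$, which is $\F_{n_0}$-measurable, the simple Markov property (Theorem \ref{thm5}) applied at the deterministic time $n_0$ gives
\[
\E_x[F\,(G\circ\Theta_{n_0})]=\E_x\!\left[\one_{\{X_{n_0}=y\}}\,\p_{X_{n_0}}[H_x=\infty]\right]=Q_{n_0}(x,y)\,\p_y[H_x=\infty]>0.
\]
The left-hand side equals $\p_x[X_{n_0}=y,\ X_m\neq x\ \forall m>n_0]$. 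On this event the chain sits at $y\neq x$ at time $n_0$ and never visits $x$ afterwards, so $N_x\leq n_0<\infty$. Thus $\p_x[N_x<\infty]>0$, contradicting the recurrence of $x$; hence $\p_y[H_x<\infty]=1$.

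The ``in particular'' part is then immediate: since $y\neq x$, the identity $U(y,x)=\p_y[H_x<\infty]\,U(x,x)$ together with $U(x,x)=\infty$ (as $x$ is recurrent) yields $U(y,x)=\infty>0$. In particular $\sum_{n\geq 0}Q_n(y,x)>0$, so there exists $m_0$ with $Q_{m_0}(y,x)>0$.

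Finally, to show $y\in R$ I would bound $U(y,y)$ from below along the route $y\to x\to\cdots\to x\to y$. By Chapman--Kolmogorov, for every $n\geq 0$,
\[
Q_{m_0+n+n_0}(y,y)\geq Q_{m_0}(y,x)\,Q_n(x,x)\,Q_{n_0}(x,y),
\]
and summing over $n$ gives
\[
U(y,y)\geq Q_{m_0}(y,x)\,Q_{n_0}(x,y)\sum_{n\geq 0}Q_n(x,x)=Q_{m_0}(y,x)\,Q_{n_0}(x,y)\,U(x,x)=\infty,
\]
so $y$ is recurrent by the characterisation $U(y,y)=\infty\Longleftrightarrow y\in R$. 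The one delicate point is the contradiction step: I must correctly identify the shifted event $(\{H_x=\infty\})\circ\Theta_{n_0}$ with $\{X_m\neq x\ \forall m>n_0\}$ and confirm that visits to $x$ are genuinely confined to times before $n_0$; everything else is routine bookkeeping with the potential kernel.
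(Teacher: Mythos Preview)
Your proof is correct and follows essentially the same route as the paper: first establish $\p_y[H_x<\infty]=1$ via a Markov-property argument exploiting $\p_x[N_x<\infty]=0$, then use Chapman--Kolmogorov to force $U(y,y)=\infty$. The only notable difference is that the paper applies the \emph{strong} Markov property at the stopping time $H_y$ (obtaining $\p_x[H_y<\infty]\,\p_y[H_x=\infty]\leq\p_x[N_x<\infty]=0$), whereas you apply the \emph{simple} Markov property at a deterministic time $n_0$ with $Q_{n_0}(x,y)>0$; your version is slightly more elementary but otherwise the arguments coincide.
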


\begin{proof}

Let us first show that $\p_y[H_x<\infty]=1$. We have that 

\begin{multline*}
0=\p_x[N_x<\infty]\geq \p_x[H_y<\infty, H_x\circ\Theta_{H_y}=\infty]=\E_x[\one_{\{H_y<\infty\}}\one_{\{H_x=\infty\}}\circ\Theta_{H_y}]\\=\E_x[\one_{\{H_y<\infty\}}\p_y[H_x=\infty]]=\p_x[H_y<\infty]\p_y[H_x=\infty].
\end{multline*}

Then $U(x,y)>0$ implies that $\p_x[H_y<\infty]>0$ and thus $\p_y[H_x=\infty]=0$. Hence we can find $n_1,n_2\geq 1$ such that 

\[
Q_{n_1}(x,y)\geq 0, \hspace{1cm} Q_{n_2}(y,x)>0.
\]
 
Then for all $p\geq 0$ we get that 

\[
Q_{n_1+p+n_2}(y,y)\geq Q_{n_2}(y,x)Q_p(x,x)Q_{n_1}(x,y),
\]
 
and thus we have 
 
\[
U(y,y)\geq\sum_{p=0}^\infty Q_{n_1+p+n_2}(y,y)\geq Q_{n_2}(y,x)\left(\sum_{p=0}^\infty Q_p(x,x)\right)Q_{n_1}(x,y)=\infty,
\]
 
since $x\in R$.
\end{proof}

\begin{rem}
The above lemma has the following important consequence. If $x\in R$ and $y\in E\setminus R$, then $U(x,y)=0$, which means that one cannot go from a recurrent point to a transient point.
\end{rem}

\begin{thm}[Classification of states]

Let $R$ be the set of all recurrent points and let $N_x$ be defined as above. Then there exists a partition of $R$, denoted $(R_i)_{i\in I}$ for some index set $I$, i.e. 

\[
R=\bigcup_{i\in I}R_i,
\]

such that the following properties hold.

\begin{enumerate}[$(i)$]
\item{If $x\in R$ and $i\in I$ such that $x\in R_i$, we get that 

\begin{itemize}
\item{$N_y=\infty$, $\p_x$-a.s. for all $y\in R_i$,}
\item{$N_y=0$, $\p_x$-a.s. for all $y\in E\setminus R_i$.}
\end{itemize}

}
\item{If $x\in E\setminus R$ and $T=\inf\{n\geq 0\mid X_n\in R\}$, we get that 

\begin{itemize}
\item{$T=\infty$ and $N_y<\infty$, $\p_x$-a.s. for all $y\in E$,}
\item{$T<\infty$ and then there exists $j\in I$ such that for all $n\geq T$ we get $X_n\in R_j$, $\p_x$-a.s.}
\end{itemize}

}
\end{enumerate}

\end{thm}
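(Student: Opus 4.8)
The plan is to build the partition of $R$ from the equivalence relation generated by the "communication" structure already established in the preceding lemma, and then to read off both parts of the theorem using the strong Markov property together with the recurrence dichotomy. The key input is the lemma just proved: if $x\in R$ and $U(x,y)>0$, then $y\in R$ and $\p_y[H_x<\infty]=1$ (so also $U(y,x)>0$). This says the relation $x\leftrightarrow y :\Longleftrightarrow U(x,y)>0$ and $U(y,x)>0$ is symmetric on $R$; it is reflexive since $U(x,x)=\infty$ for recurrent $x$, and I would verify transitivity by concatenating paths, i.e. from $Q_{n_1}(x,y)>0$ and $Q_{m_1}(y,z)>0$ one gets $Q_{n_1+m_1}(x,z)>0$ hence $U(x,z)>0$. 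This gives an equivalence relation on $R$, and I define $(R_i)_{i\in I}$ to be its equivalence classes, so that $R=\bigcup_{i\in I}R_i$ is a genuine partition.

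First I would prove part $(i)$. Fix $x\in R_i$. For $y\in R_i$ we have $U(x,y)>0$, so $\p_x[H_y<\infty]>0$; combined with $y\in R$ and $\p_y[H_x<\infty]=1$, an application of the strong Markov property at $H_y$ shows the chain started at $x$ almost surely reaches $y$, and then since $y$ is recurrent it returns to $y$ infinitely often, giving $N_y=\infty$ $\p_x$-a.s. Concretely, by the strong Markov property $\p_x[N_y=\infty]=\E_x[\one_{\{H_y<\infty\}}\p_y[N_y=\infty]]=\p_x[H_y<\infty]$, and one must argue $\p_x[H_y<\infty]=1$ using that $x$ and $y$ communicate both ways inside the recurrent class. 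For $y\in E\setminus R_i$ there are two cases: if $y$ is transient then $U(x,y)=0$ by the remark following the lemma (one cannot go from recurrent to transient), so $N_y=0$ $\p_x$-a.s.; if $y$ is recurrent but in a different class $R_j$, then $U(x,y)=0$ as well, for otherwise the lemma would force $y\leftrightarrow x$ and hence $y\in R_i$, a contradiction. In either case $\E_x[N_y]=U(x,y)=0$ forces $N_y=0$ $\p_x$-a.s.

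Next I would treat part $(ii)$, where $x\in E\setminus R$ and $T=\inf\{n\geq 0\mid X_n\in R\}$. On $\{T=\infty\}$ the chain never enters $R$, so for every $y\in E$ the visited states are transient and $N_y<\infty$ $\p_x$-a.s. (since $\E_x[N_y]=U(x,y)=\p_x[H_y<\infty]U(y,y)<\infty$ when $y$ is transient, and $N_y=0$ when $y$ is recurrent). On $\{T<\infty\}$ the state $X_T$ is recurrent, so it lies in some class $R_j$; applying the strong Markov property at $T$, the post-$T$ chain $\Theta_T(\omega)$ has law $\p_{X_T}$, and by part $(i)$ applied to the recurrent starting point $X_T\in R_j$, the chain thereafter stays within $R_j$ and visits no state outside it, giving $X_n\in R_j$ for all $n\geq T$, $\p_x$-a.s. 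A small subtlety is that $j$ is random in general (it depends on which class $X_T$ falls into), so the statement should be read as: on $\{T<\infty\}$, $X_T$ determines a class $R_j$ and the chain remains in it.

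The main obstacle I expect is the careful bookkeeping in part $(i)$ to upgrade $\p_x[H_y<\infty]>0$ to $\p_x[H_y<\infty]=1$ for $y$ in the same class — this requires using recurrence of $x$ to argue that a walk starting at $x$, which returns to $x$ infinitely often, must almost surely eventually hit any state $y$ it can reach, via a Borel–Cantelli-type argument on the i.i.d.\ excursions between successive returns to $x$ furnished by the strong Markov property. Once that almost-sure reachability inside a class is in hand, everything else follows cleanly from the potential-kernel identities and the recurrence/transience dichotomy already proved.
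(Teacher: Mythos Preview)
Your approach is correct and essentially the same as the paper's: partition $R$ by the equivalence relation $x\sim y\Longleftrightarrow U(x,y)>0$ on $R$ (symmetry from the lemma, transitivity by concatenating $Q_{n}(x,y)Q_{m}(y,z)>0$), then deduce both parts via the strong Markov property and the potential-kernel identities.

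The one place you overcomplicate things is the ``main obstacle'' you flag about upgrading $\p_x[H_y<\infty]>0$ to $\p_x[H_y<\infty]=1$ for $y\in R_i$. No Borel--Cantelli argument on excursions is needed: since $y\in R$ and $U(y,x)>0$ (by the symmetry you already established), the very same lemma applied with the roles of $x$ and $y$ \emph{swapped} gives $\p_x[H_y<\infty]=1$ directly. This is exactly what the paper does, and it makes the strong Markov computation $\p_x[N_y=\infty]=\p_x[H_y<\infty]\,\p_y[N_y=\infty]=1$ immediate.
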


\begin{proof}

For $x,y\in R$, let us note $x\sim Y$ if $U(x,y)>0$. It follows from lemma 3.3. that this is an equivalence relation\footnote{for transitivity observe that if $Q_n(x,y)>0$ and $Q_m(y,z)>0$, then $Q_{n+m}(x,z)$.} on $R$. Let $i\in I$ and $x\in R_i$. We have that $U(x,y)=0$ for all $y\in E\setminus R_i$ and hence $N_y=0$, $\p_x$-a.s. for all $y\in E\setminus R_i$. If $y\in R_i$, we have from lemma 3.3. that $\p_x[H_y<\infty]=1$ and it follows from the strong Markov property that 

\[
\p_x[N_y=\infty]=\E_x[\one_{\{H_y<\infty\}}\one_{\{N_y=\infty\}}\circ\Theta_{H_y}]=\p_x[H_y<\infty]\p_y[N_y=\infty]=1.
\]

If $x\in E\setminus R$ and $T=\infty$, it easily follows from the strong Markov property that $N_y<\infty$ for all $y\in E\setminus R$. If $T<\infty$, let $j\in I$ such that $X_T\in R_j$. Then apply the strong Markov property with $T$ and the first part of the theorem, which then gives us that $X_n\in R_j$ for all $n\geq T$.

\end{proof}

\begin{defn}[Irreducibility]

A chain is called irreducible if $U(x,y)>0$ for all $x,y\in E$.

\end{defn}

\begin{cor}

If the chain is irreducible, then the following hold.

\begin{enumerate}[$(i)$]
\item{Either all states are recurrent and there exists a recurrent chain and for all $x\in E$ we get 

\[
\p_x[N_y=\infty, \forall y\in E]=1,
\]

}

\item{or all states are transient and then for all $x\in E$ we get 

\[
\p_x[N_y<\infty,\forall y\in E]=1.
\]

}

Moreover, if $\vert E\vert<\infty$, then only the first case can occur.

\end{enumerate}

\end{cor}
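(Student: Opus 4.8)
The plan is to first establish the dichotomy between recurrence and transience for an irreducible chain, then read off the two probabilistic conclusions from the Classification of states theorem together with a countability argument, and finally handle the finite-state claim by a counting trick. To begin, I would show that under irreducibility either every state is recurrent or every state is transient. Suppose some state $x_0$ is recurrent, i.e. $x_0\in R$. For an arbitrary $y\in E$, irreducibility gives $U(x_0,y)>0$, so the preceding lemma (applied with $x=x_0$) forces $y\in R$. Hence $R=E$ and all states are recurrent. Contrapositively, if some state is transient then no state can be recurrent, so all states are transient, which gives the dichotomy.

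For case $(i)$, where $R=E$: irreducibility means $U(x,y)>0$ for all $x,y$, so $x\sim y$ for all $x,y\in E$ under the equivalence relation $U(\cdot,\cdot)>0$ of the Classification theorem, whence there is a single class $R_1=E$. Fixing $x\in E$, part $(i)$ of the Classification theorem yields $\p_x[N_y=\infty]=1$ for each $y\in E$. Since $E$ is at most countable, I would intersect these countably many almost-sure events to conclude $\p_x[N_y=\infty,\ \forall y\in E]=1$.

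For case $(ii)$, where every state is transient, I would bound the potential kernel. For $x=y$, transience gives $U(x,x)<\infty$; for $x\neq y$, the identity $U(x,y)=\p_x[H_y<\infty]U(y,y)$ together with $U(y,y)<\infty$ gives $U(x,y)<\infty$. Thus $\E_x[N_y]=U(x,y)<\infty$ for all $x,y$, so each $N_y$ is finite $\p_x$-a.s.; intersecting over the countably many $y$ gives $\p_x[N_y<\infty,\ \forall y\in E]=1$.

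Finally, for the finite-state claim I would argue by contradiction using the pointwise identity $\sum_{y\in E}N_y=\sum_{n\geq 0}\sum_{y\in E}\one_{\{X_n=y\}}=\sum_{n\geq 0}1=\infty$, valid for every trajectory since $X_n$ occupies exactly one state at each time. If all states were transient and $\vert E\vert<\infty$, then $\sum_{y\in E}N_y$ would be a finite sum of a.s.-finite terms, hence finite a.s., contradicting the identity; so the transient case is impossible when $E$ is finite, leaving only case $(i)$. The proof is essentially bookkeeping on top of the Classification theorem, and the only points requiring care are the passage from ``for each $y$ an almost-sure statement'' to ``almost surely, for all $y$'', where countability of $E$ is used, and the counting identity in the finite case, which is the one genuinely new observation.
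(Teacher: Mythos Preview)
Your proposal is correct and follows essentially the same route as the paper: the dichotomy comes from the lemma that $U(x,y)>0$ with $x$ recurrent forces $y$ recurrent, the probabilistic conclusions are read off from the Classification of states theorem (with the countable intersection you rightly flag), and the finite-state case is handled by the identity $\sum_{y\in E}N_y=\sum_{n\geq 0}1=\infty$. If anything, your write-up is more complete than the paper's, which leaves the transient conclusion in case $(ii)$ implicit; your argument via $U(x,y)=\p_x[H_y<\infty]\,U(y,y)<\infty$ is a clean way to make that step explicit.
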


\begin{proof}

If there exists a recurrent state, the lemma 3.3. shows that all states are recurrent with $U(x,y)>0$ for all $x,y\in E$ and there is only one recurrent chain. If $\vert E\vert<\infty$ and if all states are transient, then we get $\p_x$-a.s. that 

\[
\sum_{y\in E}N_y<\infty,
\]

but we know 
\[
\sum_{y\in E}N_y=\sum_{y\in E}\sum_{n\geq 0}\one_{\{X_n=y\}}=\sum_{n\geq 0}\sum_{y\in E}\one_{\{X_n=y\}}=\infty.
\]

\end{proof}

\begin{ex}

Let us first recall that the results obtained for the canonical Markov chain hold for any arbitrary Markov chain. Now let $(Y_n)_{n\geq 0}$ be a Markov chain with transition matrix $Q$. For instance, if $Y_0=y$ and $N_x^Y=\sum_{n\geq0} \one_{\{ Y-n=x\}}$, we have for that $k\in\bar\N$  

\[
\p[N_x^Y=k]=\p_y[N_x=k],
\]

since the left hand side is $\p[(Y_n)_{n\geq 0}\in B]$ with $B:=\{\omega\in E^\N\mid N_x(\omega)=k\}$. 

\end{ex}

\begin{thm}

Let $(\xi_j)_{j\geq 1}$ be iid r.v.'s having the law $\mu$ with $\xi_j\in\mathbb{Z}$ for all $j\geq 1$ and assume that $\E[\vert\xi_1\vert]<\infty$ and $m=\E[\xi_1]$. Moreover, set $Y_n=Y_0+\sum_{j=1}^n\xi_j$. Then 

\begin{enumerate}[$(i)$]
\item{if $m\not=0$, then all states are transient.}
\item{if $m=0$, then all states are recurrent. Moreover, the chain is irreducible if and only if the subgroup given by $\{y\in\mathbb{Z}\mid \mu(y)>0\}$ is in $\mathbb{Z}$.}
\end{enumerate}

\end{thm}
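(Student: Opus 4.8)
The plan is to treat the random walk $Y_n = Y_0 + \sum_{j=1}^n \xi_j$ on $\mathbb{Z}$ as a canonical Markov chain with transition matrix $Q(x,y) = \mu(y-x)$, and to apply the recurrence/transience dichotomy together with the potential-kernel criterion $U(x,x) = \infty \iff x$ recurrent. By translation invariance, $Q_n(x,x) = \p[\sum_{j=1}^n \xi_j = 0] =: p_n$ does not depend on $x$, so $U(x,x) = \sum_{n \geq 0} p_n$ is the same for every state. Hence either all states are recurrent or all are transient, and it suffices to decide which case holds from the value of $m = \E[\xi_1]$.

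For part $(i)$, when $m \neq 0$, I would invoke the strong law of large numbers (proved earlier in the excerpt): $\frac{Y_n - Y_0}{n} \to m$ a.s., so $|Y_n| \to \infty$ a.s. whenever $m \neq 0$. This forces $N_x = \sum_{n \geq 0} \one_{\{Y_n = x\}} < \infty$ a.s. for every $x$, since the walk visits any fixed site only finitely often before escaping to $\pm\infty$. By the recurrence/transience proposition, $N_x < \infty$ a.s. is equivalent to $x$ being transient, so all states are transient. This argument is clean because it uses the limit theorem directly rather than estimating $p_n$.

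For part $(ii)$, when $m = 0$, the goal is to show $U(x,x) = \sum_n p_n = \infty$. The natural route is a characteristic-function argument: write $p_n = \frac{1}{2\pi}\int_{-\pi}^{\pi} \Phi_{\xi_1}(t)^n \, dt$ where $\Phi_{\xi_1}(t) = \E[e^{it\xi_1}]$, and sum over $n$ formally to get $\sum_n p_n = \frac{1}{2\pi}\int_{-\pi}^{\pi} \frac{dt}{1 - \Phi_{\xi_1}(t)}$ (made rigorous by summing $\sum_n r^n p_n$ for $r \uparrow 1$ and applying monotone convergence). Since $m = \E[\xi_1] = 0$, the expansion $\Phi_{\xi_1}(t) = 1 - \frac{\sigma^2 t^2}{2} + o(t^2)$ near $t = 0$ (with the linear term vanishing because $m=0$) shows $1 - \Phi_{\xi_1}(t)$ behaves like $t^2$ near the origin, so $\frac{1}{1-\Phi_{\xi_1}(t)} \sim \frac{2}{\sigma^2 t^2}$ is non-integrable, giving $\sum_n p_n = \infty$ and hence recurrence. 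I expect this Fourier-analytic step to be the main obstacle, both because one must justify the interchange of sum and integral via Abel summation, and because the case where $\xi_1$ lacks a finite second moment (only $\E[|\xi_1|] < \infty$ is assumed) requires the weaker estimate that $\operatorname{Re}(1 - \Phi_{\xi_1}(t)) = o(t)$ is false in general, so one instead uses that $m=0$ implies $\frac{1 - \operatorname{Re}\Phi_{\xi_1}(t)}{|t|} \to 0$ is not quite enough — the correct tool is the renewal/Chung–Fuchs theorem, whose proof reduces exactly to showing $\liminf_{t \to 0} \frac{1 - \operatorname{Re}\Phi_{\xi_1}(t)}{|t|} = 0$ when $m = 0$.

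Finally, for the irreducibility statement, I would note that the subgroup $G = \{y \in \mathbb{Z} \mid \text{$y$ is reachable}\}$ generated by the support $\{y : \mu(y) > 0\}$ is the set of states communicating with $Y_0$; the chain is irreducible (that is, $U(x,y) > 0$ for all $x,y$) precisely when this subgroup is all of $\mathbb{Z}$, i.e. when $\gcd$ of the support is $1$ and the support is not confined to a proper coset. I would verify that $U(x,y) > 0 \iff y - x \in G$ using that $Q_n(x,y) > 0$ exactly when $y-x$ is a sum of $n$ support elements, and then $G = \mathbb{Z}$ is the irreducibility condition as stated.
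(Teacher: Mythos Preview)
Your treatment of $(i)$ matches the paper's exactly: the strong law of large numbers forces $|Y_n|\to\infty$ a.s.\ when $m\neq 0$, so every state is visited only finitely often and is transient.

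For $(ii)$ your route is genuinely different from the paper's, and the difference matters under the stated hypotheses. You propose the Fourier-analytic computation $p_n=\frac{1}{2\pi}\int_{-\pi}^{\pi}\Phi_{\xi_1}(t)^n\,dt$ and $\sum_n p_n=\frac{1}{2\pi}\int\frac{dt}{1-\Phi_{\xi_1}(t)}$, and you rightly flag that the quadratic behaviour of $1-\Phi_{\xi_1}$ near $0$ needs a second moment, which is \emph{not} assumed; you then defer to Chung--Fuchs without supplying that argument, so as written your proof of $(ii)$ is incomplete in exactly the case the theorem targets. The paper avoids characteristic functions entirely and gives a self-contained counting argument that works under $\E[|\xi_1|]<\infty$ alone: suppose $0$ is transient, so $U(0,0)<\infty$; translation invariance gives $U(0,x)\le U(x,x)=U(0,0)$ for every $x$, hence $\sum_{|x|\le n}U(0,x)\le Cn$ with $C=3U(0,0)$. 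On the other hand, the (weak) law of large numbers, needing only the first moment, furnishes an $N$ such that $\sum_{|x|\le\varepsilon p}Q_p(0,x)>\tfrac12$ for all $p\ge N$; summing over $N\le p\le n$ yields $\sum_{|x|\le\varepsilon n}U(0,x)>\tfrac{n-N}{2}$, which for $\varepsilon=\frac{1}{6C}$ contradicts the upper bound $C\varepsilon n=\tfrac{n}{6}$ once $n$ is large. This is precisely the Chung--Fuchs idea implemented by elementary averaging rather than Fourier inversion, and it closes the gap you identified. Your irreducibility sketch agrees with the paper's: one checks that $H=\{x\in\mathbb{Z}: U(0,x)>0\}$ is a subgroup (closure under addition from $Q_{n+m}(0,x+y)\ge Q_n(0,x)Q_m(0,y)$, under negation from recurrence via $U(x,0)>0$), and since $H$ contains the support of $\mu$ one has $H=\mathbb{Z}$ exactly when that support generates $\mathbb{Z}$.
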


\begin{proof}

We need to show both points.

\begin{enumerate}[$(i)$]
\item{If $m\not=0$, we know from the strong law of large numbers that $(Y_n)\xrightarrow{n\to\infty\atop a.s.}\infty$ and hence all states are transient.}

\item{Assume $m=0$ and $0$ is transient. Thus $U(0,0)<\infty$. We can assume that $Y_0=0$. For $x\in\mathbb{Z}$ we get 

\[
U(0,x)\leq  U(x,x)=U(0,0).
\]

Therefore we get that for all $n\geq 1$

\[
\sum_{\vert x\vert\leq  n}U(0,x)\leq  (2n+1)U(0,0)\leq  Cn,
\]

where $C=3 U(0,0)$. Moreover, by the law of large numbers, there exists some $N$ sufficiently large such that for all $n\geq N$

\[
\p[\vert Y_n\vert\leq  \varepsilon n]>Y_2
\]

with $\varepsilon=\frac{1}{6C}$, or equivalently 

\[
\sum_{\vert x\vert\leq  \varepsilon n}Q_n(0,x)\geq \frac{1}{2}.
\]

If $p\geq n\geq N$, then 

\[
\sum_{\vert x\vert\leq  \varepsilon n}Q_p(0,x)\geq \sum_{\vert x\vert\leq  \varepsilon p}Q_p(0,x)>\frac{1}{2},
\]

and by summing over $p$, we get 

\[
\sum_{\vert x\vert\leq  \varepsilon n}U(0,n)\geq \sum_{p=N}^n\sum_{\vert x\vert\leq  \varepsilon p}Q_p(0,x)>\frac{n-N}{\varepsilon}.
\]

But if $\varepsilon n\geq 1$, then 

\[
\sum_{\vert x\vert\leq  \varepsilon n}U(0,x)\leq  C\varepsilon n=\frac{n}{6},
\]

which leads to a contradiction for $n$ sufficiently large. For the last statement, write $G=\langle x\in\mathbb{Z}\mid \mu(x)>0\rangle$. Then, since $Y_0=0$, we get 

\[
\p[Y_n\in G\mid \forall n\in \N]=1.
\]

If $G\not=\mathbb{Z}$, then obviously $(Y_n)_{n\geq 0}$ is irreducible. If $G=\mathbb{Z}$, then write 

\[
H:=\{x\in\mathbb{Z}\mid U(0,x)>0\}.
\]

It follows that $H$ is a subgroup of $\mathbb{Z}$, indeed, for $x,y\in H$, we get that

\[
Q_{n+p}(0,x+y)\geq Q_n(0,x)Q_p(x,x+y)=Q_n(0,x)Q_p(0,y),
\]

and thus we get that $x+y\in H$. For $x\in H$ and with $0$ being recurrent, we get that $U(0,x)>0$ implies that $U(x,x)>0$ and therefore $U(x,0)=U(0,-x)$, which implies that $-x\in H$. Now since $H\supset\{ x\in\mathbb{Z}\mid \mu(x)>0\}$, we get that $H=\mathbb{Z}$ and the claim follows.

}

\end{enumerate}

\end{proof}

\begin{ex}

Let $\mu=\frac{1}{2}\delta_{-2}+\frac{1}{2}\delta_{2}$. Then all states are recurrent but there are two recurrence classes.

\end{ex}

\subsection{Random variable on a graph}

Assume that $\vert E\vert<\infty$. Moreover, let $A\subset\mathcal{P}(E)$ and for $x\in E$ let 

\[
A_x=\{y\in E\mid \{x,y\}\in A\}\not=\varnothing.
\]

A graph $\mathcal{G}$ is said to be connected if every pair of vertices in the graph is connected.

\begin{prop}

The simple random walk on a finite graph is recurrent and irreducible.
\end{prop}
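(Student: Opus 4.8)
The plan is to invoke the classification theory developed above, specifically the Corollary stating that an irreducible chain on a finite state space must have all states recurrent. The strategy therefore splits naturally into two parts: first establish irreducibility of the simple random walk on a connected finite graph, and then deduce recurrence as an immediate consequence of finiteness.

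First I would show irreducibility, i.e. that $U(x,y)>0$ for all $x,y\in E$. Recall that the transition matrix is $Q(x,y)=\frac{1}{\vert A_x\vert}$ when $\{x,y\}\in A$ and $0$ otherwise. Since the graph $\mathcal{G}=(E,A)$ is connected, for any pair $x,y\in E$ there exists a path $x=z_0,z_1,\dots,z_k=y$ with $\{z_{j-1},z_j\}\in A$ for each $j$. Along this path each one-step transition probability is strictly positive, so
\[
Q_k(x,y)\geq Q(z_0,z_1)Q(z_1,z_2)\dotsm Q(z_{k-1},z_k)>0.
\]
By the formula $U(x,y)=\sum_{n\geq 0}Q_n(x,y)$ from the proposition on the potential kernel, it follows that $U(x,y)\geq Q_k(x,y)>0$. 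Since $x,y$ were arbitrary, the chain is irreducible in the sense of the definition.

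Next I would apply the Corollary on irreducible chains: for an irreducible chain, either all states are recurrent or all states are transient, and moreover when $\vert E\vert<\infty$ only the recurrent case can occur. The proof of that corollary rests on the observation that if all states were transient then $\sum_{y\in E}N_y<\infty$ almost surely, whereas $\sum_{y\in E}N_y=\sum_{n\geq 0}\sum_{y\in E}\one_{\{X_n=y\}}=\sum_{n\geq 0}1=\infty$, a contradiction. Since here $\vert E\vert<\infty$ is assumed, this forces recurrence of every state.

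I do not anticipate a genuine obstacle, since the heavy lifting has already been done in the classification theorem; the only point requiring care is the irreducibility step, where one must make explicit use of the connectedness hypothesis to build a positive-probability path between any two vertices. It is worth emphasizing that connectedness is exactly what guarantees such a path exists, and that the assumption $A_x\neq\varnothing$ ensures every $Q(x,\cdot)$ is a genuine probability distribution so that the one-step probabilities along the path are well defined and positive.
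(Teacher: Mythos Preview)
Your proposal is correct and follows exactly the route the paper intends: the paper leaves this as an exercise with the hint ``Connected implies irreducible. Then apply corollary 3.5,'' and you carry out precisely that argument, using connectedness to exhibit a positive-probability path establishing $U(x,y)>0$, and then invoking the corollary on irreducible chains with $\vert E\vert<\infty$ to force recurrence.
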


\begin{proof}

Exercise\footnote{Connected implies irreducible. Then apply corollary 3.5.}.

\end{proof}

\section{Invariant measures}

\begin{defn}[Invariant measure]

Let $\mu$ be a positive measure on $E$ such that $\mu(x)<\infty$ for all $x\in E$ and $\mu\not\equiv 0$. We say that $\mu$ is invariant for the transition matrix $Q$ if for all $y\in E$ we get 

\[
\mu(y)=\sum_{x\in E}\mu(x)Q(x,y).
\]

With matrix notation, this means

\[
\mu Q=\mu.
\]

Since for all $n\in\N$, $Q_n=Q^n$, we have $\mu Q_n\mu=\mu$.

\end{defn}

\subsection{Interpretation}

Assume that $\mu(E)<\infty$, which is always the case when $E$ is finite. We can assume that $\mu(E)=1$. Then for all measurable maps $f:E\to\R_+$, we get that 

\[
\E_\mu[f(X_1)]=\sum_{x\in E}\mu(x)\sum_{y\in E}Q(x,y)f(y)=\sum_{y\in E}f(y)\sum_{x\in E}\mu(x)Q(x,y)=\sum_{y\in E}\mu(y)f(y).
\]

Hence under $\p_\mu$, we get that $X_1$ ${law}\atop{=}$ $X_0\sim \mu$. Using the fact that $\mu Q_n=\mu$, we show that under $\p_\mu$, we get that $X_n\sim \mu$. For all measurable maps $F:\Omega\to\R_+$ we have 

\[ 
\E_\mu[F\circ \Theta_\one]=\E_\mu[\E_{X_1}[F]]=\sum_{x\in E}\mu(x)\E_x[F]=\E_\mu[F],
\]

which implies that under $\p_\mu$, we get that $(X_{1+n})_{n\geq 0}$ has the same law\footnote{Same holds for $(X_{k+n})_{n\geq 0}$ for all $k\geq 0$} as $(X_n)_{n\geq 0}$.

\begin{ex}

For the r.v. on $\mathbb{Z}^d$, we get $Q(x,y)=(y,x)$. One then immediately checks that the counting measure on $\mathbb{Z}^d$ is invariant.

\end{ex}

\begin{defn}[Reversible measure]

Let $\mu$ be  positive measure on $E$ such that $\mu(x)<\infty$ for all $x\in E$. $\mu$ is said to be reversible if for all $x,y\in E$ we get that 

\[
\mu(x)Q(x,y)=\mu(y)Q(y,x).
\]

\end{defn}

\begin{prop}

A reversible measure is invariant.

\end{prop}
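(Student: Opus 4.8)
The plan is to verify the invariance identity $\mu(y)=\sum_{x\in E}\mu(x)Q(x,y)$ directly by substituting the reversibility relation into its right-hand side. Since all quantities involved are nonnegative, there are no convergence subtleties: every sum below is a sum of nonnegative terms, so rearrangement and factoring are justified without any appeal to integrability beyond the standing assumption $\mu(x)<\infty$.

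Concretely, I would fix $y\in E$ and start from the sum appearing on the right-hand side of the invariance condition. Using the reversibility hypothesis $\mu(x)Q(x,y)=\mu(y)Q(y,x)$ termwise, I rewrite
\[
\sum_{x\in E}\mu(x)Q(x,y)=\sum_{x\in E}\mu(y)Q(y,x)=\mu(y)\sum_{x\in E}Q(y,x).
\]
The factor $\mu(y)$ can be pulled out of the sum since it does not depend on the summation index $x$. Now I invoke the defining property (ii) of a stochastic matrix, namely $\sum_{x\in E}Q(y,x)=1$ for every $y\in E$, which gives
\[
\sum_{x\in E}\mu(x)Q(x,y)=\mu(y)\cdot 1=\mu(y).
\]
As $y\in E$ was arbitrary, this is precisely the statement $\mu Q=\mu$, i.e.\ $\mu$ is invariant.

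There is essentially no obstacle here; the result is a one-line computation exploiting that reversibility symmetrizes the flux between states and that the rows of $Q$ sum to one. The only point worth recording is the side condition in the definition of an invariant measure that $\mu\not\equiv 0$: this is automatic under the implicit convention that the positive measure $\mu$ appearing in the reversibility hypothesis is not identically zero, so no separate argument is needed. I would keep the write-up to the two displayed equations above together with the remark identifying the final line with the matrix identity $\mu Q=\mu$.
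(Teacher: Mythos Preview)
Your proof is correct and follows exactly the same route as the paper: substitute the reversibility relation into $\sum_{x\in E}\mu(x)Q(x,y)$, factor out $\mu(y)$, and use that the rows of $Q$ sum to one. The paper compresses this into a single displayed line, but the argument is identical.
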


\begin{proof}

If $\mu$ is reversible, we get that 

\[
\sum_{x\in E}\mu(x)Q(x,y)=\sum_{x\in E}\mu(y)Q(y,x)=\mu(y).
\]

\end{proof}

\begin{rem}

There exists invariant measures which are not reversible, for example the counting measure is not reversible if the $\mathcal{G}$ is not symmetric, i.e. $\mathcal{G}(x)=\mathcal{G}(-x)$.  

\end{rem}

\begin{ex}[Random walk on a graph]

The measure $\mu(x)=\vert A_x\vert$ is reversible. Indeed, if $\{x,y\}\in A$, we get 

\[
\mu(x)Q(x,y)=\vert A_x\vert\frac{1}{\vert A_x\vert}=1=\mu(y)Q(y,x).
\]

\end{ex}

\begin{ex}[Ehrenfest's model]

This is the Markov chain on $\{1,...,k\}$ with transition matrix

\[
\begin{cases}Q(j,j+1)=\frac{k-j}{k},& 0\leq  j\leq  k-1\\ Q(j,j-1)=\frac{j}{k},&1\leq  j\leq  k\end{cases}
\]

A measure $\mu$ is reversible if and only if for $0\leq  j\leq  k-1$ we have

\[
\mu(j)\frac{k-j}{k}=\mu(j+1)\frac{j+1}{k}.
\]

One can check that $\mu(j)=\binom{k}{j}$ is a solution. 

\end{ex}

\begin{thm}

Let $x\in E$ be recurrent. The formula 

\[
\mu(y)=\E_x\left[\sum_{k=0}^{H_x-1}\one_{\{ X_k=y\}}\right]
\]

defines an invariant measure. Moreover, $\mu(y)>0$ if and only if $y$ is in the same recurrence class as $x$.

\end{thm}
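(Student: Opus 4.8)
The plan is to recognize $\mu(y)$ as the expected number of visits to $y$ strictly before the first return to $x$, and to exploit the representation $\mu(y)=\sum_{n\geq 0}\p_x[X_n=y,\,H_x>n]$, which follows by writing the defining sum as $\sum_{n\geq 0}\one_{\{X_n=y\}}\one_{\{n<H_x\}}$ and applying monotone convergence. First I would record two elementary facts. Since $x$ is recurrent, $\p_x[H_x<\infty]=1$, so the random sum is a.s. finite and $\mu$ is well defined; moreover, under $\p_x$ the only index $k\in\{0,\dots,H_x-1\}$ with $X_k=x$ is $k=0$, whence $\mu(x)=\p_x[X_0=x]=1$, so in particular $\mu\not\equiv 0$.

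The core of the argument is the invariance $\mu Q=\mu$. The key structural observation is that $\{H_x>n\}=\{X_1\neq x,\dots,X_n\neq x\}$ is $\F_n$-measurable, so the simple Markov property gives $\p_x[X_{n+1}=y,\,H_x>n]=\sum_{z\in E}\p_x[X_n=z,\,H_x>n]\,Q(z,y)$. Summing over $n\geq 0$ and interchanging the (nonnegative) sums yields $\sum_z\mu(z)Q(z,y)=\sum_{n\geq 0}\p_x[X_{n+1}=y,\,H_x>n]$. I would then split into two cases. For $y\neq x$, the event $\{X_{n+1}=y,\,H_x>n\}$ coincides with $\{X_{n+1}=y,\,H_x>n+1\}$, since a step landing at $y\neq x$ cannot be the return to $x$; reindexing identifies the right-hand side with $\sum_{m\geq 1}\p_x[X_m=y,\,H_x>m]=\mu(y)$, the $m=0$ term vanishing because $X_0=x\neq y$. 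For $y=x$, the event $\{X_{n+1}=x,\,H_x>n\}$ is exactly $\{H_x=n+1\}$, so the right-hand side sums to $\p_x[H_x<\infty]=1=\mu(x)$, again using recurrence. This proves $\mu Q=\mu$, and iterating gives $\mu Q_m=\mu$ for all $m$.

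It remains to treat positivity, finiteness, and the characterization by recurrence classes; all three follow from invariance together with $\mu(x)=1$ and the classification of states. Write $R_i$ for the recurrence class of $x$. If $y\in R_i$, then $U(x,y)>0$ and $U(y,x)>0$, so there are $m,m'$ with $Q_m(x,y)>0$ and $Q_{m'}(y,x)>0$; from $\mu=\mu Q_m$ I get $\mu(y)\geq \mu(x)Q_m(x,y)=Q_m(x,y)>0$, and from $\mu(x)=\mu Q_{m'}(x)\geq \mu(y)Q_{m'}(y,x)$ I get $\mu(y)\leq 1/Q_{m'}(y,x)<\infty$. If $y\notin R_i$, then $y$ is either transient or lies in another recurrence class, and in either case the lemma on recurrence classes gives $U(x,y)=0$; since $\mu(y)=\sum_n\p_x[X_n=y,\,H_x>n]\leq \sum_n\p_x[X_n=y]=U(x,y)$, this forces $\mu(y)=0$. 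This shows simultaneously that every $\mu(y)$ is finite and that $\mu(y)>0$ precisely when $y$ lies in the recurrence class of $x$.

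The main obstacle is the invariance computation, specifically getting the two case distinctions right: the crux is recognizing $\{H_x>n\}$ as an $\F_n$-measurable event so that the Markov property applies, and then correctly identifying $\{X_{n+1}=x,\,H_x>n\}$ with $\{H_x=n+1\}$ in the case $y=x$ while absorbing the boundary index in the case $y\neq x$. The finiteness of $\mu(y)$ is the other point requiring care, since the naive bound $\mu(y)\leq U(x,y)$ is useless for recurrent $y$, where $U(x,y)=\infty$; it is essential instead to use invariance in the form $\mu(y)Q_{m'}(y,x)\leq \mu(x)=1$.
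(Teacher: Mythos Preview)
Your proof is correct and follows essentially the same approach as the paper. The only cosmetic difference is in the invariance step: the paper first rewrites $\mu(y)=\E_x\bigl[\sum_{k=1}^{H_x}\one_{\{X_k=y\}}\bigr]$ (valid since $X_0=X_{H_x}=x$ under $\p_x$) and then conditions on $X_{k-1}$, which handles all $y$ uniformly, whereas you keep the original range, condition on $X_n$, and split into the cases $y=x$ and $y\neq x$; the positivity and finiteness arguments are identical.
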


\begin{proof}

Let us first note that if $y$ is not in the same recurrence class as $x$, then 

\[
\E_x[N_y]=U(x,y)=0,
\]

which implies that $\mu(y)=0$. For $y\in E$, we get that

\begin{multline*}
\mu(y)=\E_x\left[\sum_{k=1}^{H_x}\one_{\{ X_k=y\}}\right]=\sum_{z\in E}\E_x\left[\sum_{k=1}^{H_x}\one_{\{ X_{k-1}=z,X_k=y\}}\right]=\sum_{z\in E}\sum_{k\geq 1}\E_x\left[\one_{\{k\leq  H_x,X_{k-1}=z\}}\one_{\{ X_k=y\}}\right]\\=\sum_{z\in E}\sum_{k\geq 1}\E_x\left[\one_{\{k\leq  H_x,X_{k-1}=z\}}\right]=\sum_{z\in E}\E_x\left[\sum_{k=1}^{H_x}\one_{X_{k-1}=z}\right]Q(z,y)=\sum_{z\in E}\mu(z)Q(z,y).
\end{multline*}

We showed that $\mu Q=\mu$. Thus, it follows that $\mu Q_n=\mu$ for all $n\geq 0$. In particular, we get 

\[
\mu(x)=1=\sum_{z\in E}\mu(z)Q_n(z,x).
\]

Let $y$ be in the same recurrence class as $x$. Then there exists some $n\geq 0$ such that $Q_n(y,x)>0$, which implies that 

\[
\mu(y)<\infty.
\]

We can also find $m\geq 0$ such that $Q_m(x,y)>0$ and 

\[
\mu(y)=\sum_{z\in E}\mu(z)Q_m(z,y)\geq Q_m(x,x)>0.
\]
\end{proof}

\begin{rem}

If there exists several recurrence classes $R_i$ with $i\in I$ for some index set $I$ and if we set

\[
\mu_i(y)=\E_{x_i}\left[\sum_{k=0}^{H_{x_i}-1}\one_{\{ X_k=y\}}\right],
\]

we obtain a invariant measure with disjoint supports.

\end{rem}

\begin{thm}

Let us assume that the Markov chain is irreducible and recurrent. Then the invariant measure is unique up to a multiplicative constant.
\end{thm}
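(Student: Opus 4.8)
The plan is to show that any invariant measure coincides, up to a scalar, with the canonical invariant measure produced by the preceding theorem. Fix a state $x\in E$; since the chain is irreducible and recurrent, every state is recurrent, so the formula $\mu(y)=\E_x\left[\sum_{k=0}^{H_x-1}\one_{\{X_k=y\}}\right]$ defines an invariant measure with $\mu(x)=1$ (only the term $k=0$ contributes at $y=x$). Let $\nu$ be an arbitrary invariant measure. First I would record that $\nu(x)\in(0,\infty)$: finiteness is part of the definition of an invariant measure, while positivity follows from irreducibility, because if $\nu(w)>0$ for some $w$ and $Q_n(w,x)>0$ for a suitable $n$, then invariance gives $\nu(x)=\sum_{z}\nu(z)Q_n(z,x)\geq\nu(w)Q_n(w,x)>0$. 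Hence we may normalize $\nu$ so that $\nu(x)=1$, and it then suffices to prove $\nu=\mu$.

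The heart of the argument is the comparison $\nu(y)\geq\mu(y)$ for all $y$. Starting from $\nu=\nu Q$, I would isolate the contribution of the reference state,
\[
\nu(y)=Q(x,y)+\sum_{z\neq x}\nu(z)Q(z,y),
\]
and then substitute the same identity repeatedly for each $\nu(z)$ with $z\neq x$. After $n$ iterations this produces
\[
\nu(y)=\sum_{k=1}^{n}\p_x\!\left[X_k=y,\;X_1\neq x,\dots,X_{k-1}\neq x\right]+R_n(y),
\]
where $R_n(y)=\sum_{z\neq x}\nu(z)\,\p_z[\,\text{a length-}n\text{ path avoiding }x\text{ ends at }y\,]\geq 0$. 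Dropping the nonnegative remainder and letting $n\to\infty$ (all terms being nonnegative, the partial sums increase), the series recognizes exactly the excursion count $\mu(y)=\sum_{k\geq1}\p_x[X_k=y,\;H_x\geq k]$, giving $\nu(y)\geq\mu(y)$ for every $y$.

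Finally I would set $\eta:=\nu-\mu$. Both $\nu$ and $\mu$ are invariant, so $\eta$ is invariant, and by the previous step $\eta\geq 0$; moreover $\eta(x)=\nu(x)-\mu(x)=0$. Invariance gives $\eta=\eta Q_n$ for all $n$, so $0=\eta(x)=\sum_{z}\eta(z)Q_n(z,x)$; since every summand is nonnegative, $\eta(z)Q_n(z,x)=0$ for all $z$ and $n$. By irreducibility each $z$ admits some $n$ with $Q_n(z,x)>0$, forcing $\eta(z)=0$. Thus $\eta\equiv0$, i.e. $\nu=\mu$, and so any invariant measure is a multiple of $\mu$.

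I expect the main obstacle to be the second paragraph: carrying out the iteration cleanly, correctly identifying the telescoped sum with the ``first-excursion'' probabilities $\p_x[X_k=y,\,H_x\geq k]$, and arguing that the remainder $R_n$ is nonnegative and may simply be discarded. The comparison $\nu\geq\mu$ is where all the probabilistic content lies; once it is established, the finishing step using irreducibility and $\eta(x)=0$ is routine. Throughout, the interchanges of summation are justified by Tonelli's theorem since every quantity involved is nonnegative.
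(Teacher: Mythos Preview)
Your proposal is correct and follows essentially the same route as the paper: establish the one-sided comparison $\nu\geq\mu$ (after normalizing $\nu(x)=1$) by unfolding the invariance relation until the excursion probabilities $\p_x[X_k=y,\,H_x\geq k]$ appear, then use irreducibility together with $\eta(x)=0$ to force the nonnegative invariant difference $\eta=\nu-\mu$ to vanish. The paper packages your iterated substitution as a formal induction on $p$, proving $\nu(y)\geq\nu(x)\,\E_x\bigl[\sum_{k=0}^{p\wedge(H_x-1)}\one_{\{X_k=y\}}\bigr]$ and then letting $p\to\infty$; this is exactly the clean way to make your second paragraph rigorous, and the finishing argument via $0=\sum_z\eta(z)Q_n(z,x)$ is identical to the paper's.
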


\begin{proof}

Let $\mu$ be an invariant measure. We can show by induction that for $p\geq 0$ and for all $x,y\in E$, we get

\[
\mu(y)\geq \mu(x)\E_x\left[\sum_{k=0}^{p\land (H_x-1)}\one_{\{ X_k=y\}}\right].\hspace{1.5cm}(\star) 
\]

First, if $x=y$, this is obvious. Let us thus suppose that $x\not=y$. If $p=0$, the inequality is immediate. Let us assume $(\star)$ holds for $p$. Then

\begin{align*}
\mu(y)&=\sum_{z\in E}\mu(z)Q(z,y)\geq \mu(x)\sum_{z\in E}\E_x\left[\sum_{k=0}^{p\land (H_x-1)}\one_{\{ X_k=z\}}\right]Q(z,y)\\
&=\mu(x)\sum_{z\in E}\sum_{k=0}^p\E_x[\one_{\{ X_k=z,k\leq  H_x-1\}}]Q(z,y)\\
&=\mu(x)\sum_{z\in E}\sum_{k=0}^p
\E_x[\one_{\{ X_k=z,k\leq  H_x-1\}}]\one_{\{ X_{k-1}=y\}}\\
&=\mu(x)\E_x\left[\sum_{k=0}^{p\land (H_x-1)}\one_{\{ X_{k-1}=y\}}\right]=\mu(x)\E_x\left[\sum_{k=1}^{(p+1)\land (H_x)}\one_{\{ X_k=y\}}\right].
\end{align*}

This establishes the result for $p+1$. Now, if we let $p\to\infty$ in $(\star)$, we get 

\[
\mu(y)\geq \mu(x)\E_x\left[\sum_{k=0}^{H_x-1}\one_{\{X_k=y\}}\right].
\]

Let us fix $x\in E$. The measure $\nu_x(y)=\E_x\left[\sum_{k=0}^{H_x-1}\one_{\{ X_k=y\}}\right]$ is invariant and we have $\mu(y)\geq \mu(x)\nu_x(y)$. Hence for all $n\geq 1$ we get 

\[
\mu(x)=\sum_{z\in E}\mu(z)Q_n(z,x)\geq \sum_{x\in E}\mu(x)\nu_x(z)Q_n(z,x)=\mu(x)\nu_x(x)=\mu(x).
\]

Therefore we get that $\mu(z)=\mu(x)=\nu_x(z)$ for all $z$ such that $Q_n(z,x)>0$. Since the chain is irreducible, there exists some $n\geq 0$ such that $Q_n(z,x)>0$. This implies finally that 

\[
\mu(z)=\mu(x)\nu_x(z).
\]

\end{proof}

\begin{cor}
\label{cor2}
Let us assume the chain is irreducible and recurrent. Then one of the following hold.

\begin{enumerate}[$(i)$]
\item{There exists an invariant probability measure $\mu$ and for $x\in E$ we have 

\[
\E_x[H_x]=\frac{1}{\mu(x)}.
\]

}

\item{
All invariant measures have infinite total mass and for $x\in E$ we get 

\[
\E_x[H_x]=\infty.
\]

}

\end{enumerate}

In the first case, the basis is said to be positive recurrent and in the second case it is said to be negative recurrent.

\end{cor}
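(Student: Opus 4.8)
The plan is to exploit the explicit invariant measure
\[
\nu_x(y)=\E_x\left[\sum_{k=0}^{H_x-1}\one_{\{X_k=y\}}\right]
\]
attached to a recurrent point $x$ in the preceding theorem, together with the uniqueness (up to a multiplicative constant) of the invariant measure that was just established under the irreducible-and-recurrent hypothesis. The entire dichotomy will reduce to computing the total mass of $\nu_x$, so the first task is to evaluate $\nu_x(E)$.

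By Fubini's theorem (all terms being nonnegative) I would interchange the sum over $y$ with the expectation and the sum over $k$ to obtain
\[
\nu_x(E)=\sum_{y\in E}\E_x\left[\sum_{k=0}^{H_x-1}\one_{\{X_k=y\}}\right]=\E_x\left[\sum_{k=0}^{H_x-1}\sum_{y\in E}\one_{\{X_k=y\}}\right]=\E_x[H_x],
\]
since for each fixed $k$ exactly one state $y$ satisfies $X_k=y$. Thus the total mass of $\nu_x$ is precisely $\E_x[H_x]$, which is well defined because recurrence guarantees $H_x<\infty$ $\p_x$-a.s. Next I would compute the single value $\nu_x(x)$: under $\p_x$ we have $X_0=x$, and from $H_x=\inf\{n\geq1\mid X_n=x\}$ none of $X_1,\dots,X_{H_x-1}$ equals $x$; hence the only contributing term in $\sum_{k=0}^{H_x-1}\one_{\{X_k=x\}}$ is $k=0$, so $\nu_x(x)=1$.

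With these two identities in hand the cases split immediately. If $\E_x[H_x]<\infty$, then $\nu_x$ is a finite measure and $\mu:=\nu_x/\E_x[H_x]$ is an invariant probability measure with $\mu(x)=\nu_x(x)/\E_x[H_x]=1/\E_x[H_x]$, i.e. $\E_x[H_x]=1/\mu(x)$; this is case $(i)$. If instead $\E_x[H_x]=\infty$, then $\nu_x$ has infinite total mass, and since by uniqueness every invariant measure is a constant multiple of $\nu_x$, every invariant measure has infinite total mass; this is case $(ii)$. Moreover, because the chain is irreducible, uniqueness forces the same alternative for every starting state simultaneously: for any other $x'$ the measure $\nu_{x'}$ is proportional to $\nu_x$, so $\E_{x'}[H_{x'}]=\nu_{x'}(E)$ is finite exactly when $\E_x[H_x]$ is.

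The argument is essentially routine once the two identities $\nu_x(E)=\E_x[H_x]$ and $\nu_x(x)=1$ are established; the only point requiring genuine care is ensuring these quantities are well defined and that the uniqueness theorem truly applies, both of which are delivered by the standing irreducible-and-recurrent assumption (in particular $H_x<\infty$ $\p_x$-a.s.). I therefore expect no substantive obstacle beyond bookkeeping.
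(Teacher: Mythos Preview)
Your proposal is correct and follows essentially the same approach as the paper: both use the explicit invariant measure $\nu_x$, establish the identities $\nu_x(E)=\E_x[H_x]$ and $\nu_x(x)=1$, and invoke the uniqueness theorem to pass between the two alternatives. The only cosmetic difference is that the paper first notes that all invariant measures are proportional (hence all finite or all infinite) and then, in case $(i)$, starts from an assumed invariant probability $\mu=C\nu_x$ and solves for $C$, whereas you start from $\nu_x$ and split on whether $\E_x[H_x]$ is finite; the computations are identical.
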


\begin{rem}
If $E$ is finite, then only the first case can occur.

\end{rem}

\begin{proof}[Proof of Corollary \ref{cor2}]

We know that in this situation all invariant measures are proportional. Hence they all have finite mass or infinite mass. For case $(i)$, let $\mu$ be the invariant probability measure and let $x\in E$. Moreover, let 

\[
\nu_x(y)=\E_x\left[\sum_{k=0}^{H_x-1}\one_{\{ X_k=y\}}\right].
\]

Then for some $C>0$ we get $\mu=C\nu$. We can determine $C$ by 

\[
1=\mu(E)=C\nu_x(E),
\]

which implies that $C=\frac{1}{\nu_x(E)}$ and thus 

\[
\mu(x)=\frac{\nu_x(x)}{\nu_x(E)}=\frac{1}{\nu_x(E)}.
\]

But on the other hand we have 

\[
\nu_x(E)=\sum_{y\in E}\E_x\left[\sum_{k=0}^{H_x-1}\one_{\{ X_k=y\}}\right]=\E_x\left[\sum_{k=0}^{H_x-1}\left(\sum_{y\in E}\one_{\{ X_k=y\}}\right)\right]=\E_x[H_x].
\]

In case $(ii)$, $\nu_x$ is infinite and thus 

\[
\E_x[H_x]=\nu_x(E)=\infty.
\]
\end{proof}

\begin{appendix}
\appendixpage
\noappendicestocpagenum
\addappheadtotoc

\chapter{Measure theory}

\section{Measurable Spaces}

To start with measure theory, we want to handle the abstract setting of a measure space at first. This definitions should lead to a formal understanding of abstract measure theoretical background. The most important notion is that of a $\sigma$-Algebra.

\begin{defn}[$\sigma$-Algebra and measurable sets]

Let $E$ be a Set. A $\sigma$-Algebra $\mathcal{A}$ on $E$ is a collection of subsets of $E$, which satisfies the following conditions.

\begin{enumerate}[$(i)$]
\item{The ground space has to be in $\A$, i.e. $E\in\mathcal{A}$,}
\item{If $A\in\mathcal{A}$ then $A^C\in\mathcal{A}$, where $A^C$ denotes the complement of $A$,}
\item{If $(A_n)_{n\in\mathbb{N}}\subset \mathcal{A}$ is a collection of elements in $\A$ then $\bigcup_{n\in\mathbb{N}}A_n\in\mathcal{A}$.}
\end{enumerate}

Moreover, the elements of $\mathcal{A}$ are called measurable sets. The tupel $(E,\A)$, that is the set $E$ endowed with the $\sigma$-Algebra $\A$, is called a measurable space. 
\end{defn}

\begin{rem} This definition implies the following.

\begin{enumerate}[$(i)$]
\item{Every $\sigma$-Algebra $\A$ is a subset of $\mathcal{P}(E)$, i.e. $\mathcal{A}\subseteq\mathcal{P}(E)$, where $\mathcal{P}(E)$ denotes the power set of $E$, that is the set of all subsets of $E$.}
\item{The empty set has to be in $\A$, i.e. $\varnothing\in\mathcal{A}$,} 
\item{If $(A_n)_{n\in\N}\subset\mathcal{A}$ is a collection of elements of $\A$ then $\bigcap_{n\in\mathbb{N}}A_n\in\mathcal{A}$, i.e. 
$$ \bigcap_{n\in\mathbb{N}}A_n=\left(\bigcup_{n\in\mathbb{N}}A_n^C\right)^C.$$ 
}
\end{enumerate}

\end{rem}

\begin{ex}[Examples of $\sigma$-Algebras]

We give the following simple examples for $\sigma$-Algebras on a set $E$.

\begin{enumerate}[$(i)$]
\item{$\mathcal{A}=\{\emptyset,E\}$ is called the \emph{trivial} or the \emph{smallest} $\sigma$-Algebra on $E$.}
\item{$\mathcal{A}=\mathcal{P}(E)$ is the \emph{largest} $\sigma$-Algebra\footnote{This is convenient for finite and countable measureable spaces} on $E$.}
\item{$\mathcal{A}=\{A\subset E\mid A$ is countable or $A^C$ is countable$\}$}.
\end{enumerate}

\end{ex}

\begin{exer}
Show that the examples above are indeed $\sigma$-Algebras.
\end{exer}

Let us consider a set $A_n\in \A$ for $n\in\mathbb{N}$. The following observation are useful

\begin{enumerate}[$(i)$]
\item{If $A_n$ is a countable set for all $n\in\N$, then $\bigcup_{n\in\mathbb{N}}A_n$ is also a countable set and we know that $$\bigcup_{n\in\mathbb{N}}A_n\in\mathcal{A}.$$}
\item{If there is a $n_0\in\N$ such that $A_{n_0}$ is an uncountable set, it follows that $A_{n_0}^C$ is a countable set, i.e.
$$\left(\bigcup_{n\in\mathbb{N}}A_n\right)^C=\bigcap_{n\in\mathbb{N}}A_n^C\subset A_{n_0}^C,$$ which implies that $\left(\bigcup_{n\in\N}A_n\right)^C$ is countable.}
\end{enumerate}

We can construct many more interesting $\sigma$-Algebras by noting that any arbitrary intersection of $\sigma$-Algebras is again a $\sigma$-Algebra. Let therefore $(\mathcal{A}_i)_{i\in I}$ be a family of $\sigma$-Algebras and $I$ an arbitrary Indexset, then the set

$$\mathcal{A}:=\bigcap_{i\in I}\mathcal{A}_i$$

is also a $\sigma$-Algebra.

\begin{defn}[Generated $\sigma$-Alegbra]

Let $E$ be a set and let $\mathcal{C}$ be a subset of $\mathcal{P}(E)$. Then there exists a smallest $\sigma$-Algebra, denoted by $\sigma(\mathcal{C})$, which contains $\mathcal{C}$. This $\sigma$-Algebra may be defined as

$$\sigma(\mathcal{C})=\bigcap_{\mathcal{C}\subset\mathcal{A}\atop\mathcal{A}\hspace{0.1cm} \text{a $\sigma$-Algebra} }\mathcal{A}.$$

\end{defn}

\begin{rem} We can observe that if $\mathcal{C}$ is a $\sigma$-Algebra itself, then clearly $\sigma(\mathcal{C})=\mathcal{C}$. Moreover, for two subsets $\mathcal{C}\subset\mathcal{P}(E)$ and $\mathcal{C}'\subset\mathcal{P}(E)$ with $\mathcal{C}\subset\mathcal{C}'$ we get that $\sigma(\mathcal{C})\subset\sigma(\mathcal{C}')$.

%\begin{enumerate}
%\item{If $\mathcal{C}$ is a $\sigma$-Algebra, then $\sigma(\mathcal{C})=\mathcal{C}$.}
%\item{If $\mathcal{C}\subset\mathcal{C}'\Rightarrow\sigma(\mathcal{C})\subset\sigma(\mathcal{C}')$.
%In particular, if $\mathcal{C}\subset\mathcal{C}'$, and if $\mathcal{C}'$ is a $\sigma$-Algebra, then $\sigma(\mathcal{C})\subset\mathcal{C}'$}
%\end{enumerate}

\end{rem}

\begin{ex}

Let $E$ be a set and let $A\subset E$ be a subset. Moreover, let $\mathcal{C}=A$. Then we would get 

\[
\sigma(\mathcal{C})=\{\varnothing,A,A^C,E\}.
\]

More generally, let $E=\bigcup_{i\in I}E_i$,  where $I$ is a finite or countable index set and $E_i\cap E_j=\varnothing$ for $i\not=j$. Then we call $(E_i)_{i\in I}$ a partition of $E$ and the set 

$$\mathcal{A}=\left\{\bigcup_{j\in J}E_j\mid \hspace{0.2cm}J\subset I\right\}$$

has the structure of a $\sigma$-Algebra. Now let $\mathcal{C}=\left\{\{x\}\mid x\in E\right\}$. Then we would get that 

$$\sigma(\mathcal{C})=\left\{A\subset E\mid \text{$A$ is countable or $A^C$ is countable}\right\}.$$

\end{ex}

\section{Topological Spaces}

The notion of a $\sigma$-Algebra is related to one of the most general constructions, that of a \emph{topology}. To deal with Euclidean spaces and for the description of a natural notion of $\sigma$-Algebra, we need to take a closer look at the \emph{topological} point of view. Therefore we want to describe some point set aspects of topology, basically also introduced in analysis.

\begin{defn}[Topological space]

Let $X$ be a set. A topology on X is a family $\mathcal{O}$ of subsets of $X$ satisfying:

\begin{enumerate}[$(i)$]
\item{The ground set is in $\mathcal{O}$, i.e. $X\in\mathcal{O}$,}
\item{The empty set is in $\mathcal{O}$, i.e. $\varnothing\in\mathcal{O}$,}
\item{(finite intersection) For $O_1,...,O_n\in\mathcal{O}$ with $n\geq1$ we get that $\bigcap_{i=1}^{n}O_i\in\mathcal{O}$,}
\item{(arbitrary union) For $O_i\in\mathcal{O}$ with $i\in I$, where $I$ is any index set $I$, we get that $\bigcup_{i\in I}O_i\in\mathcal{O}$.}
\end{enumerate}

The elements of $\mathcal{O}$ are called open sets and the complements are called closed sets. Moreover, we call the tupel $(X,\mathcal{O})$ a topological space.

\end{defn}

\begin{rem}[Hausdorff]

Let $(X,\mathcal{O})$ be a topological space. The topology $\mathcal{O}$ on $X$ is said to be Hausdorff if and only if for all $x,y\in X$ with $x\not=y$ there is a $O_x\in\mathcal{O}$ with $x\in O_x$ and there exists a $O_y\in\mathcal{O}$ with $y\in O_y$, i.e. $O_x\cap O_y=\varnothing$.

\end{rem}

\begin{defn}[Metric space]

Let $X$ be a set. A metric is a map $d:X\times X\longrightarrow \mathbb{R}_{+}$, which, for all $x,y,z\in X$, satisfies the following.

\begin{enumerate}[$(i)$]
\item{(zero distance) $d(x,y)=0\Longleftrightarrow x=y,$}
\item{(symmetry) $d(x,y)=d(y,x),$}
\item{(triangle inequality) $d(x,y)\leq d(x,z)+d(z, y),$}
\end{enumerate}

We call a set $X$ endowed with a metric, written $(X,d)$, a metric space.
If we have a norm $\|\cdot\|$ on $X$, and we then consider a normed space $(X,\|\cdot\|)$, we get the relation $d(x,y):=\|x-y\|$, which defines a distance.
If $(X,d)$ is a metric space, the topology on $X$ is associated 
with $d$ and is, for some arbitrary index set $I$, given by

$$\mathcal{O}_X^d:=\left\{\bigcup_{i\in I}B_{r_i}(x_i)\mid x_i\in X,\hspace{0.1cm} r_i\in\mathbb{R}_{+}\right\},\hspace{0.5cm}B_{r_i}(x_i):=\left\{y\in X\mid d(x_i,y)<r_i\right\}$$

\end{defn}

\begin{defn}[Basis and separability]

A topological space $(X,\mathcal{O})$ is said to have a countable basis of open sets $\{w_n\}_{n\in\N}$ if for every open set $O\in\mathcal{O}$, there exists a countable index set $I\subset\mathbb{N}$, such that

$$O=\bigcup_{n\in I}w_n.$$

Moreover, a metric space $(X,d)$ is said to be separable, if it contains a sequence $(x_n)_{n\in\mathbb{N}}$ which is dense in $X$, that is, for all $x\in X$ there exists a subsequence $(x_{n_k})_{k\in\N}$ of $(x_n)_{n\in N}$, such that $d(x,x_{n_k})\xrightarrow{k\to\infty}0$.

\end{defn}

\begin{prop}
A metric space is separable if and only if it has a countable basis of open sets.
\end{prop}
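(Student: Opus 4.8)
The statement to prove is the equivalence: a metric space $(X,d)$ is separable if and only if it has a countable basis of open sets. This is a standard two-direction argument, and I will prove each implication separately.

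The plan is to first prove the easier direction, namely that a countable basis implies separability. Suppose $(X,d)$ has a countable basis $\{w_n\}_{n\in\N}$ of open sets. I would discard any empty $w_n$, and from each nonempty $w_n$ choose a point $x_n\in w_n$ (using the axiom of choice for countable families). The claim is that $(x_n)_{n\in\N}$ is dense. To verify this, I take any $x\in X$ and any radius $r>0$; the open ball $B_r(x)$ is an open set, hence a union of basis elements, and since $x\in B_r(x)$, there is some $w_n\subset B_r(x)$ with $x\in w_n$. The chosen point $x_n$ then lies in $B_r(x)$, so $d(x,x_n)<r$. Running this with $r=1/k$ produces a subsequence converging to $x$, establishing density.

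For the converse, I would assume $(X,d)$ is separable with dense sequence $(x_n)_{n\in\N}$, and exhibit a countable basis explicitly. The natural candidate is the family of balls with rational radii centered at the dense points:
$$
\mathcal{B}=\left\{B_q(x_n)\mid n\in\N,\ q\in\Q_+\right\}.
$$
This family is countable, being indexed by $\N\times\Q_+$. The work is to show every open set $O$ is a union of members of $\mathcal{B}$. Given $O\in\mathcal{O}_X^d$ and a point $y\in O$, by definition of the metric topology there is $r>0$ with $B_r(y)\subset O$. I would choose a rational $q$ with $0<q<r/2$, and by density pick some $x_n$ with $d(y,x_n)<q$. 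Then $y\in B_q(x_n)$, and for any $z\in B_q(x_n)$ the triangle inequality gives $d(y,z)\leq d(y,x_n)+d(x_n,z)<2q<r$, so $B_q(x_n)\subset B_r(y)\subset O$. Thus each $y\in O$ sits in a basis ball contained in $O$, and $O$ is the union of all such balls.

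The main obstacle, such as it is, lies in the converse direction: one must be careful with the radius bookkeeping in the triangle inequality argument, taking $q<r/2$ rather than $q<r$ so that the ball $B_q(x_n)$ centered near $y$ still fits inside $B_r(y)$. The factor of two is exactly what makes the containment work, and getting this constant wrong is the only place the proof can go astray. Both directions otherwise reduce to the definition of the metric topology (every open set is a union of balls) together with density, and no deeper machinery is required.
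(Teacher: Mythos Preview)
Your proof is correct and follows essentially the same approach as the paper: for one direction you pick a point from each nonempty basis element to get a dense sequence, and for the other you take balls of rational radius centered at the dense sequence and use the triangle inequality to show this is a basis. The paper does the two implications in the opposite order and uses the constants $\varepsilon/4$ and $\varepsilon/2$ rather than your $q<r/2$, but the substance is identical.
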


\begin{proof}
We first prove the direction $\Longrightarrow$. Therefore we can observe that $\mathscr{B}:=\{B_r(x_n)\}_{r_\in\Q_+,n\in\N}$ is a basis of open sets, and thus we can write every open set as $O=\bigcup_{b\in\mathscr{B}\atop b\subset O}b$. Now let $x\in O$. Then there exists an $\varepsilon>0$, such that $B_\varepsilon(x)\subset O$ and there is a $n_0$ with $d(x_{n_0},x)<\frac{\varepsilon}{4}$ and thus $x\in B_{\frac{\varepsilon}{2}}(x_{n_0})\subset O$. Now we prove the direction $\Longleftarrow$. Let therefore $\{w_n\}_{n\in\N}$ of open sets. Then we can choose a $x_n\in w_n$ and check that the sequence $(x_n)_{n\in \N}$ is a dense subset, which gives the claim. 
\end{proof}

\begin{defn}[Product topology]

Let $(X,\mathcal{O}_X)$ and $(Y,\mathcal{O}_Y)$ be two topological spaces. The product topology for the product space $X\times Y$ is defined, with an arbitrary index set $I$, by the family of open sets

$$\mathcal{O}_{X\times Y}:=\left\{\bigcup_{i\in I}O^X_i\times \mathcal{O}^Y_i\mid O^X\in \mathcal{O}_X,O^Y\in\mathcal{O}_Y\right\}.$$

\end{defn}

\begin{defn}[Continuity]

Let $(X,\mathcal{O}_X)$ and $(Y,\mathcal{O}_Y)$ be two topological spaces. A map $f:(X,\mathcal{O}_X)\longrightarrow(Y,\mathcal{O}_Y)$ is continuous if and only if for all $O^Y \in\mathcal{O}_Y$, the image of $O^Y$ under $f^{-1}$ is open, that is

$$f^{-1}(O^Y)=\{x\in X\mid f(x)\in O^Y\}\in\mathcal{O}_X$$

\end{defn}

\begin{defn}[Canonical projection]

Let $X$ and $Y$ be two sets. Then we can define the canonical projections to be the surjective maps

\begin{align*}
\pi_X: X\times Y &\longrightarrow X\\
(x,y)&\longmapsto x
\end{align*}
\begin{align*}
\pi_Y:X\times Y&\longrightarrow Y\\
(x,y)&\longmapsto y
\end{align*}

\end{defn}

\begin{rem}

A useful observation is that the product topology is defined in such a way that the canonical projections are continuous, that is 

$$\pi_X^{-1}(O^X)=O^X\times Y\in \mathcal{O}_{X\times Y},\hspace{1cm}\pi_Y^{-1}(O^Y)=X\times O^Y\in\mathcal{O}_{X\times Y}$$

\end{rem}

\begin{rem}

We can also define a metric on two metric spaces $(X,d)$ and $(Y,\delta)$ given by

$$D_p((x,y),(x',y'))=(d^p(x,x')+\delta^p(y,y'))^{\frac{1}{p}},$$

for $p\geq 1$.

\end{rem}

\begin{prop} 

Let $(X,\mathcal{O}_X)$ and $(\mathcal{O}_Y)$ be two topological spaces. 
If $(X,\mathcal{O}_X)$ and $(Y,\mathcal{O}_Y)$ have a countable basis of open sets, then $(X\times Y,\mathcal{O}_{X\times Y})$ also has a countable basis of open sets. Moreover, Let $(X,d)$ and $(Y,\delta)$ be two metric spaces. If $(X,d)$ and $(Y,\delta)$ are separable, then $(X\times Y, D_p)$ is also separable.

\end{prop}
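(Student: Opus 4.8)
The plan is to prove the two assertions separately, in each case exhibiting an explicit family indexed by $\N\times\N$ (which is countable) and checking the required property directly; I would deliberately keep the two arguments independent rather than chaining one into the other.

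For the first assertion, let $\{U_n\}_{n\in\N}$ be a countable basis for $\mathcal{O}_X$ and $\{V_m\}_{m\in\N}$ a countable basis for $\mathcal{O}_Y$. I would show that $\{U_n\times V_m\}_{(n,m)\in\N\times\N}$ is a countable basis for $\mathcal{O}_{X\times Y}$. Countability is immediate since $\N\times\N$ is countable. To see it is a basis, take an arbitrary $O\in\mathcal{O}_{X\times Y}$; by the definition of the product topology it has the form $O=\bigcup_{i\in I}O_i^X\times O_i^Y$ with $O_i^X\in\mathcal{O}_X$ and $O_i^Y\in\mathcal{O}_Y$. Writing each factor through the bases as $O_i^X=\bigcup_{n\in A_i}U_n$ and $O_i^Y=\bigcup_{m\in B_i}V_m$, and distributing the Cartesian product over the unions, I obtain $O_i^X\times O_i^Y=\bigcup_{n\in A_i,\,m\in B_i}U_n\times V_m$, so $O$ is itself a union of members of the family. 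The only point requiring care is the distributive set identity $(\bigcup_n U_n)\times(\bigcup_m V_m)=\bigcup_{n,m}(U_n\times V_m)$, which is a routine verification.

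For the second assertion, let $(x_n)_{n\in\N}$ be dense in $(X,d)$ and $(y_m)_{m\in\N}$ dense in $(Y,\delta)$. I would claim that the countable set $\{(x_n,y_m)\}_{(n,m)\in\N\times\N}$ is dense in $(X\times Y,D_p)$. Given $(x,y)\in X\times Y$ and $\varepsilon>0$, density lets me pick $n$ with $d(x,x_n)^p<\varepsilon^p/2$ and $m$ with $\delta(y,y_m)^p<\varepsilon^p/2$, whence
$$D_p\big((x,y),(x_n,y_m)\big)=\big(d(x,x_n)^p+\delta(y,y_m)^p\big)^{1/p}<(\varepsilon^p)^{1/p}=\varepsilon.$$
Thus every point of $X\times Y$ is approximated arbitrarily well by points of $\{(x_n,y_m)\}$, which gives a dense sequence and hence separability. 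The only subtlety here is the splitting of the tolerance between the two coordinates: controlling each coordinate within $\varepsilon/2^{1/p}$ of its target is exactly what forces the $p$-th powers to add up below $\varepsilon^p$.

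Finally, I would remark that the second assertion could alternatively be deduced from the first together with the earlier Proposition characterizing separability as possession of a countable basis of open sets: separability of the factors yields countable bases, the first part produces a countable basis for the product, and the equivalence returns separability. I would nonetheless prefer the direct argument, since the deduction tacitly requires identifying the metric topology generated by $D_p$ with the product topology $\mathcal{O}_{X\times Y}$, an extra verification that the direct proof avoids. The main (and rather mild) obstacle throughout is purely bookkeeping — making precise the distributive identity in the first part and the radius split in the second — so no genuine difficulty is expected.
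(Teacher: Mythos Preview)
Your proposal is correct and follows exactly the approach of the paper: the paper takes the same candidate family $\{U_n\times V_m\}_{(n,m)\in\N^2}$ for the first assertion and then leaves the second assertion as an exercise, which you have carried out in the natural way via the countable set $\{(x_n,y_m)\}$ and a direct $D_p$-estimate. Your write-up simply fills in the details the paper omits.
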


\begin{proof}

First, let $\mathcal{U}_X=\{U_n\}_{n\in\N}$ be a basis of open sets on $X$ and $\mathcal{V}_Y=\{V_n\}_{n\in\N}$ a basis of open sets on $Y$. Then $\{U_n\times V_m\}_{(n,m)\in\N^2}$ is a basis of open sets for $X\times Y$, which proves the first claim. We leave the second claim as an exercise for the reader.
\end{proof}

\section{Borel sets}

Topologically, the Borel sets in a topological space are the $\sigma$-Algebra generated by the open sets. One can build up the Borel sets from the open sets by iterating the operations of complementation and taking countable unions.

\begin{defn}[Borel $\sigma$-Algebra]

Let $(E,\mathcal{O})$ be a topological space. Then $\sigma(\mathcal{O})$ is called the Borel $\sigma$-Algebra of $E$ and is denoted by $\mathcal{B}(E)$. Moreover, the elements of $\mathcal{B}(E)$ are called Borel sets.

\end{defn}

\begin{rem}

Observe that if $E=\mathbb{R}$, then $\mathcal{B}(\mathbb{R})\not=\mathcal{P}(\mathbb{R})$. That means that there exist subsets which are not Borel measurable.

\end{rem}

\begin{prop}

Let $(E,\mathcal{O})$ be a topological space with a countable basis of open sets $\{w_n\}_{n\in\N}$. Then 

$$\mathcal{B}(E)=\sigma(\{w_n\}_{n\in\N})$$

\end{prop}

\begin{proof}

Since $\{w_n\}_{n\in\N}\subset\mathcal{O}$, we get that $\sigma(\{w_n\}_{n\in\N})\subset\sigma(\mathcal{O})=\mathcal{B}(E)$. Moreover, since every open set $O$ can be written as $O=\bigcup_{j\in J\subset \N}w_j$, we deduce that for all $O\in\mathcal{O}$ we get $O\in \sigma(\{ w_n\}_{n\in\N})$ and thus $\sigma(\mathcal{O})\subset \sigma(\{w_n\}_{n\in\N})$.

\end{proof}

\begin{rem}
\label{rem1}
An important observation is also that the $\sigma$-Algebra generated by open sets equals the $\sigma$-Algebra generated by closed sets of the form, that is, if we denote by $\mathcal{O}^C:=\{F\subset E\mid \text{$F$ is closed with respect to the topology $\mathcal{O}$}\}$, 

$$\mathcal{B}(E)=\sigma(\{F\}_{F\in \mathcal{O}^C}).$$

\end{rem}

\begin{proof}[Proof of Remark \ref{rem1}]

We show the direction $\Longrightarrow$. For $O\in\mathcal{O}$ set $F:=O^C$, which is closed, that is $F\in \mathcal{O}^C$. The fact that $F$ is closed implies that $F\in \sigma(\{F\}_{F\in\mathcal{O}^C})$ and thus $F^C=O\in\sigma(\{ F\}_{F\in\mathcal{O}^C})$, because of the properties of a $\sigma$-Algebra. Hence $\mathcal{O}\subset\sigma(\{ F\}_{F\in\mathcal{O}^C})$ and therefore $\sigma(\mathcal{O})\subset\sigma(\{ F\}_{F\in\mathcal{O}^C})$. The other direction is similar, hence we leave it as an exercise.
\end{proof}

\begin{rem}
\label{rem2}
Consider the case $E=\mathbb{R}$. Then we would get 

$$\mathcal{B}(\mathbb{R})=\sigma(\{[a,\infty)\}_{a\in \Q})=\sigma(\{(a,\infty)_{a\in \Q})\})=\sigma(\{(-\infty,a]\}_{a\in \Q}))=\sigma(\{(-\infty,a)_{a\in \Q})\}).$$

\end{rem}

\begin{proof}[Proof of Remark \ref{rem2}]

Recall that $\mathbb{Q}$ is a dense subset of $\mathbb{R}$. Therefore it follows that

$$\{(\alpha,\beta)\mid \alpha,\beta \in \mathbb{Q},\alpha<\beta\}=\{(\rho-r,\rho+r)\mid \rho\in\mathbb{Q},r\in\mathbb{Q}_+\}=\{B_r(\rho)\mid \rho\in\mathbb{Q},r\in\mathbb{Q}_+\}$$

is a countable basis of open sets in $\mathbb{R}$ and thus

$$\mathcal{B}(\mathbb{R})=\sigma\left(\{(\alpha,\beta)\}_{\alpha,\beta\in\Q\atop \alpha>\beta}\right).$$

Moreover, it is important to observe that $(\alpha,\beta)=(\alpha,\infty)\cap[\beta,\infty)^C$ with 

\[
(\alpha,\infty)=\bigcup_{n\in\N}\left[\frac{\alpha}{n},\infty\right).
\]

Therefore we get that $(\alpha,\infty)\in\sigma(\{(\alpha,\infty)\}_{\alpha\in\Q})$ and thus $(\alpha,\infty)\cap[\beta,\infty)^C\in\sigma(\{[\alpha,\infty)\}_{\alpha\in\Q})$. It follows from the definition of the Borel $\sigma$-Algebra that

\[
\mathcal{B}(\R)=\sigma(\{(\alpha,\beta)\}_{\alpha,\beta\in\R})\subset\sigma(\{[\alpha,\infty\}_{\alpha\in\R})\subset\sigma(\{ F\}_{F\in\mathcal{O}_\R^C})=\B(\R),
\]

which finally implies that $\sigma(\{[\alpha,\infty)\}_{\alpha\in\R})=\B(\R)$.
\end{proof}

\section{Positive Measures}

\begin{defn}[Positive measure]
Let $(E,\A)$ be a measurable space. A positive measure $\mu$ on $(E,\mathcal{A})$ is an application $\mu:\mathcal{A}\longrightarrow[0,\infty]\subset\overline{\mathbb{R}}$, which satisfies the following.

\begin{enumerate}[$(i)$]
\item{(measure of the empty set is zero) $\mu(\varnothing)=0$,}
\item{($\sigma$-additivity) For all sequences $(A_n)_{n\in\mathbb{N}}\in\mathcal{A}$ of disjoint measurable sets, that is $A_i\cap A_j=\varnothing$ for $i\not=j$, we have

$$\mu\left(\bigcup_{n\in\N}A_n \right)=\sum_{n\in\N}\mu(A_n).$$}

Moreover, we call a triple $(E,\A,\mu)$, that is a measurable space endowed with a specific measure, a measure space.

\end{enumerate}

\end{defn}

%\begin{figure}
%\begin{center}
%\includegraphics[height=3cm, width=5cm]{sets}
%\end{center}
%\caption{Example for $\sigma$-additivity. Consider intersecting measurable sets $A$,$B$,$C$,$D$ and $E$. We see that intuitively the measure of the union is smaller then the sum of the measures.}
%\end{figure}

\begin{rem}

A nice observation of $\bar\R_+$ is that all sums are convergent, that is, for any sequence $(x_n)_{n\in\N}\subset[0,\infty]$, we get $\sum_{n\in\N}x_n\in[0,\infty]$. We can formulate an equivalent definition of this sum as $$\sum_{n\in\N}x_n=\sup_{I\subset\N\atop \text{$I$ finite}}\sum_{n\in I}x_n.$$
More general, for any sequence $(x_\alpha)_{\alpha\in A}$ of real numbers $x_\alpha\in[0,\infty]$ with an arbitrary index set $A$, either countable or uncountable, we can define $$\sum_{\alpha\in A}x_\alpha:=\sup_{I\subset A\atop\text{$A$ finite}}\sum_{\alpha\in I}x_\alpha.$$ Moreover, for any two index sets $A$ and $B$ and for any bijection $\phi:B\xrightarrow{\sim}A$, we get $$\sum_{\alpha\in A}x_\alpha=\sum_{\beta\in B}x_{\phi(\beta)}.$$
\end{rem}

\begin{prop}

Let $(E,\mathcal{A},\mu)$ be a measure space. Then the following hold.

\begin{enumerate}[$(i)$]

\item{Let $A,B\in\A$ be two measurable sets such that $A\subset B$. Then $\mu(A)\leq \mu(B)$. Moreover, if $\mu(A)<\infty$, then $\mu(B\setminus A)=\mu(B)-\mu(A)$.
}

\item{
(Inclusion-exclusion) Let $A,B\in \A$ be two measurable sets. Then $\mu(A)+\mu(B)=\mu(A\cup B)+\mu(A\cap B)$.
}
\item{
Let $(A_n)_{n\in\N}\subset\A$ be an increasing sequence of measurable sets. Then 

$$\mu\left(\bigcup_{n\in\mathbb{N}}A_n\right)=\lim_{n\to\infty}\uparrow\mu(A_n).$$
}

\item{
Let $(B_n)_{n\in\N}\subset\A$ be a decreasing sequence of measurable sets. Moreover, let $\mu(B_0)<\infty$. Then

$$\mu\left(\bigcap_{n\in\N}B_n\right)=\lim_{n\to\infty}\downarrow\mu(B_n).$$
}

\item{
($\sigma$-subadditivity) Let $(A_n)_{n\in\N}\subset\A$ be a sequence of measurable sets. Then

$$\mu\left(\bigcup_{n\in\mathbb{N}}A_n\right)\leq \sum_{n\in\mathbb{N}}\mu(A_n).$$
}
\end{enumerate}
\end{prop}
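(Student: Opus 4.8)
The plan is to derive everything from $\sigma$-additivity, and the very first step will be to extract finite additivity from it: given pairwise disjoint $A_1,\dots,A_n\in\mathcal{A}$, I would complete the list to a sequence by setting $A_k=\varnothing$ for $k>n$, which remains pairwise disjoint, and apply $\sigma$-additivity together with $\mu(\varnothing)=0$ to obtain $\mu(A_1\cup\dots\cup A_n)=\sum_{k=1}^n\mu(A_k)$. This finite additivity, used repeatedly, is the workhorse for all five items.

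For (i) I would write $B$ as the disjoint union $B=A\sqcup(B\setminus A)$ (both pieces lie in $\mathcal{A}$), so finite additivity gives $\mu(B)=\mu(A)+\mu(B\setminus A)$. Since $\mu(B\setminus A)\geq 0$, monotonicity $\mu(A)\leq\mu(B)$ follows, and when $\mu(A)<\infty$ I may subtract it from both sides to get $\mu(B\setminus A)=\mu(B)-\mu(A)$. Item (ii) is then a two-line computation: decomposing $A\cup B=A\sqcup(B\setminus A)$ and $B=(A\cap B)\sqcup(B\setminus A)$ and applying finite additivity to each yields $\mu(A\cup B)+\mu(A\cap B)=\mu(A)+\mu(B\setminus A)+\mu(A\cap B)=\mu(A)+\mu(B)$, an identity valid in $[0,\infty]$ with no finiteness assumption, since all terms are added and none subtracted.

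For the continuity statements I would disjointify. In (iii), given an increasing sequence, set $C_0=A_0$ and $C_n=A_n\setminus A_{n-1}$ for $n\geq 1$; these are pairwise disjoint, satisfy $\bigcup_{k=0}^n C_k=A_n$ and $\bigcup_n C_n=\bigcup_n A_n$, so $\sigma$-additivity and finite additivity give
$$\mu\Big(\bigcup_n A_n\Big)=\sum_n\mu(C_n)=\lim_{n\to\infty}\sum_{k=0}^n\mu(C_k)=\lim_{n\to\infty}\mu(A_n),$$
the limit being increasing by the monotonicity from (i). Item (iv) I would reduce to (iii): with $B_0\supset B_1\supset\dots$ and $\mu(B_0)<\infty$, the sets $A_n:=B_0\setminus B_n$ increase to $B_0\setminus\bigcap_n B_n$; applying (iii) and then (i) (legitimate precisely because $\mu(B_n)\leq\mu(B_0)<\infty$) turns the claim into $\mu(B_0)-\mu(\bigcap_n B_n)=\lim_n\big(\mu(B_0)-\mu(B_n)\big)$, from which the finiteness of $\mu(B_0)$ lets me cancel. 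Finally (v) again uses disjointification, now with $C_n=A_n\setminus(A_0\cup\dots\cup A_{n-1})$: these are disjoint with $C_n\subset A_n$ and $\bigcup_n C_n=\bigcup_n A_n$, so $\mu(\bigcup_n A_n)=\sum_n\mu(C_n)\leq\sum_n\mu(A_n)$ by $\sigma$-additivity and the monotonicity of (i).

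The only genuine obstacle is bookkeeping in the extended reals: I must avoid ever subtracting an infinite quantity. This is why (i) carries the hypothesis $\mu(A)<\infty$ and why (iv) requires $\mu(B_0)<\infty$ — without the latter the claim is simply false (take $B_n=[n,\infty)$ under Lebesgue measure, where $\bigcap_n B_n=\varnothing$ yet each $\mu(B_n)=\infty$). Being explicit about where finiteness is invoked, and organizing the items so that each later one calls only on earlier ones, is the main thing to get right.
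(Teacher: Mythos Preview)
Your proposal is correct and follows essentially the same route as the paper: items (i), (iii), (iv), and (v) are argued identically (disjointify, apply $\sigma$-additivity, reduce (iv) to (iii)). The one small difference is in (ii): the paper uses the three-way split $A\cup B=(A\setminus B)\sqcup(B\setminus A)\sqcup(A\cap B)$ and then separates into the cases $\mu(A\cap B)=\infty$ and $\mu(A\cap B)<\infty$, whereas your two decompositions $A\cup B=A\sqcup(B\setminus A)$ and $B=(A\cap B)\sqcup(B\setminus A)$ give the identity directly in $[0,\infty]$ with no case analysis --- a mild improvement.
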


\begin{proof}

For $(i)$, observe that $B=(B\setminus A)\sqcup A$ and thus $\mu(B)=\mu(B\setminus A)+\mu(A)$. Moreover, if $\mu(A)<\infty$, then $\mu(B\setminus A)=\mu(B)-\mu(A)$. For $(ii)$, observe that $A\cup B= (A\setminus B)\sqcup (B\setminus A)\sqcup(A\cap B)$ and thus $\mu(A\cup B)=\mu(A\setminus B)+\mu(B\setminus A)+\mu(A\cap B)$. Assume $\mu(A\cap B)=\infty$, then we get $(ii)$, since $\mu(A\cup B),\mu(A),\mu(B)\geq \mu(A\cap B)=\infty$. On the other hand, assume $\mu(A\cap B)<\infty$, then $\mu(A\setminus B)=\mu(A\cap B^C)$ and $\mu(B\setminus A)=\mu(B\cap A^C)$, which implies that $\mu(A\cup B)=\mu(A)+\mu(B)-2\mu(A\cap B)+\mu(A\cap B)$. Rearranging things, we get the claim. For $(iii)$, let $C_0=A_0$ and $C_n=A_n\setminus A_{n-1}$ for all $n\geq 1$. Then we get 

\[
\bigcup_{n\in\N}C_n=\bigcup_{n\in\N}A_n,
\]

where the $C_n$'s are disjoint and moreover, $\mu(C_n)=\mu(A_n)-\mu(A_{n-1})$. Therefore we get 

\[
\mu\left(\bigcup_{n\in\N}A_n\right)=\mu\left(\bigcup_{n\in\N}C_n\right)=\sum_{n\in\N}\mu(C_n)=\lim_{N\to\infty}\sum_{n=0}^N\mu(C_n)=\lim_{N\to\infty}\uparrow\mu(A_N).
\]

For $(iv)$, let $A_n=B_0\setminus B_1$, which implies that $A_n\subset A_{n+1}$ for all $n\in \N$. Thus we get 

\begin{multline*}
\mu(A_n)=\mu(B_0)-\mu\left(\bigcup_{n\in\N}B_n\right)=\mu\left(b_0\setminus \bigcup_{n\in\N}B_n\right)\\=\mu\left(\bigcup_{n\in\N}A_n\right)=\lim_{n\to\infty}\uparrow \mu(A_n)=\lim_{n\to\infty}(\mu(B_0)-\mu(B_n)).
\end{multline*}

Now, since $\mu(B_0)<\infty$, we get that

\[
\mu\left(\bigcap_{n\in\N}B_n\right)=\lim_{n\to\infty}\downarrow\mu(B_n),
\]

where we have used the fact that 

\[
\bigcup_{n\in\N}A_n=\bigcup_{n\in\N}(B_0\setminus B_n)=\bigcup_{n\in\N}(B_0\cap B_n^C)=B_0\cap\left(\bigcup_{n\in\N}B_n^C\right)=B_0\cap\left(\bigcap_{n\in\N}B_n\right)^C=B_0\setminus\bigcap_{n\in\N}B_n,
\]

and that $\bigcap_{n\in\N}\subseteq B_0$. For $(v)$, let $C_0=A_0$ and for $n\geq 1$ let 

\[
C_n=A_n\setminus\bigcup_{k=0}^{n-1}A_k,
\]

where the $C_n$'s are disjoint and moreover, $\bigcup_{n\in\N}A_n=\bigcup_{n\in\N}C_n\subset A_n$. Therefore we get

\[
\mu\left(\bigcup_{n\in\N}A_n\right)=\mu\left(\bigcup_{n\in\N}C_n\right)=\sum_{n\in\N}\mu(C_n)\leq  \sum_{n\in\N}\mu(A_n).
\]

This can also been proved by induction, which we leave as an exercise for the reader.
\end{proof}

\begin{ex}[Dirac measure]

Let $(E,\A)$ be a measurable space such that for any $x\in E$, we get that $\{ x\}\in\A$. we can define a measure $\delta:E\times \A\longrightarrow \{1,0\}$ by 

\[
\delta_x(A)=\one_A(x)=\begin{cases}1,&x\in A\\ 0,& x\not\in A\end{cases} 
\]

This measure is called the \emph{Dirac measure} or the \emph{Dirac mass at $x$}. More generally, if we consider sequences $(x_n)_{n\in\N}\subset E$ and $(\alpha_n)_{n\in\N}\subset [0,\infty]$, we can define a measure $\mathscr{D}_{(x_n)}^{(\alpha_n)}:\A\longrightarrow \bar\R_+$, which is defined by 

\[
\mathscr{D}_{(x_n)}^{(\alpha_n)}(A)=\left(\sum_{n\in\N}\alpha_n\delta_{x_n}\right)=\sum_{n\in\N}\alpha_n\delta_{x_n}(A).
\]
\end{ex}

\begin{ex}[Lebesgue measure]

There exists a unique measure on the measurable space $(\mathbb{R},\mathcal{B}(\mathbb{R}))$, which is denoted by $\lambda$, such that for all open intervals $(a,b)\in \B(\R)$ it is given by 

$$\lambda((a,b))=b-a.$$

\end{ex}

\begin{defn}[Finite-, $\sigma$-finite- and probability measures]

Let $(E,\A)$ be a measurable space. We say that a measure $\mu$ is 

\begin{enumerate}[$(i)$]
\item{finite if $\mu(E)<\infty$.}
\item{a probability measure if $\mu(E)=1$.}
\item{$\sigma$-finite if there exists an increasing sequence (partition of the total space) $(E_n)_{n\in\N}\subset\mathcal{A}$, such that $E=\bigcup_{n\in\N}E_n$ and 
with $\mu(E_n)<\infty$ for all $n\in\N$.}
\end{enumerate}
\end{defn}

\begin{defn}[Atom]

Let $(E,\A,\mu)$ be a measure space. An element $x\in E$ is called an atom for $\mu$ if the set $\{x\}\in\A$ and $$\mu(\{x\})>0.$$
\end{defn}

%
%\begin{defn}{\bf Radon measure}
%Let $\mu$ be a measure on $\R^n$.
%The measure $\mu$ is called a Radon measure if $\mu$ is Borel regular and if $\mu(K)<\infty$ for any compact set $K\subset\R^n$.
%
%\end{defn}

\begin{defn}[Product $\sigma$-Algebra]

Let $(E_1,\mathcal{A}_1)$ and $(E_2,\mathcal{A}_2)$ be two measurable spaces. Then we can define the product $\sigma$-Algebra $\A_1\otimes\A_2$ on the product space $E_1\times E_2$ by

\[
\A_1\otimes\A_2=\sigma(A_1\times A_2),
\]

where $A_1\in\A_1$ and $A_2\in\A_2$. This is actually the $\sigma$-Algebra which contains all sets of the form $(A_1\times A_2)$.

\end{defn}

Let $(E,\A)$ and $(F,\B)$ be two measure spaces. Consider a map $f:E\longrightarrow F$. Moreover, let $I$ be an arbitrary index set and for $i\in I$, let $A_i\subset E$ and $B_i\subset F$. We can write, for $A\subset E$

\[
f(A):=\{ f(x)\mid x\in A\},
\]

and similarly, for $B\subset F$, we can write 

\[
f^{-1}(B)=\{ x\in E\mid f(x)\in B\}.
\]

Moreover, it is easy to observe the following relations.

\begin{enumerate}[$(i)$]
\item{$f\left(\bigcup_{i\in I}A_i\right)=\bigcup_{i\in I}f(A_i).$}
\item{$f\left(\bigcap_{i\in I}A_i\right)\subseteq \bigcap_{i\in I}f(A_i). \hspace{0.3cm}\text{(where equality holds if $f$ is injective)}$}
\item{$f^{-1}\left(\bigcup_{i\in I}B_i\right)=\bigcup_{i\in I}f^{-1}(B_i).$
}
\item{$f^{-1}\left(\bigcap_{i\in I}B_i\right)=\bigcap_{i\in I}f^{-1}(B_i).$}
\item{$f^{-1}(B)^C=f^{-1}(B)^C.$}
\item{

If $\mathcal{C}\subset \mathcal{P}(F)$, then $f^{-1}(\mathcal{C}):=f^{-1}(C)\mid C\in\mathcal{C}\}$.

}
\end{enumerate}

\begin{prop}

Let $E$ and $F$ be two measurable spaces, where $\mathcal{B}$ is a $\sigma$-Algebra on $F$. Then $$\A:=f^{-1}(\B)=\{f^{-1}(B)\mid B\in \B\}$$ is a $\sigma$-Algebra on $E$.

\end{prop}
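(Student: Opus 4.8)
The plan is to verify directly the three defining properties of a $\sigma$-Algebra for the collection $\A = f^{-1}(\B) = \{f^{-1}(B)\mid B\in\B\}$, using the elementary preimage identities listed immediately before the statement. Since these identities reduce every set-theoretic operation on preimages to the corresponding operation performed inside $F$, and since $\B$ is already assumed to be a $\sigma$-Algebra on $F$, each verification amounts to transporting the closure property of $\B$ across $f^{-1}$. I expect no genuine obstacle here; the only point requiring minor care is the bookkeeping of the representing sets $B, B_n \in \B$.

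First I would check that the ground set lies in $\A$. Because $\B$ is a $\sigma$-Algebra on $F$, we have $F\in\B$, and since every point of $E$ is mapped by $f$ into $F$, the identity $f^{-1}(F)=E$ holds. Hence $E=f^{-1}(F)\in\A$. Next I would verify closure under complementation. Let $A\in\A$, so that $A=f^{-1}(B)$ for some $B\in\B$. Applying the identity $f^{-1}(B)^C=f^{-1}(B^C)$ gives $A^C=f^{-1}(B^C)$. Since $\B$ is a $\sigma$-Algebra, $B^C\in\B$, and therefore $A^C=f^{-1}(B^C)\in\A$.

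Finally I would treat closure under countable unions. Let $(A_n)_{n\in\N}\subset\A$. By definition of $\A$, for each $n\in\N$ we may write $A_n=f^{-1}(B_n)$ with $B_n\in\B$. Using the identity $f^{-1}\left(\bigcup_{n\in\N}B_n\right)=\bigcup_{n\in\N}f^{-1}(B_n)$ we obtain
\[
\bigcup_{n\in\N}A_n=\bigcup_{n\in\N}f^{-1}(B_n)=f^{-1}\left(\bigcup_{n\in\N}B_n\right).
\]
Because $\B$ is a $\sigma$-Algebra, $\bigcup_{n\in\N}B_n\in\B$, and hence $\bigcup_{n\in\N}A_n\in\A$. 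Having established all three properties, I would conclude that $\A=f^{-1}(\B)$ is indeed a $\sigma$-Algebra on $E$. The essential structural observation driving the whole argument is that $f^{-1}$ commutes with complements and arbitrary (in particular countable) unions, so that the axioms for $\A$ follow mechanically from those for $\B$.
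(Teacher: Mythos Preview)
Your proof is correct and follows exactly the same approach as the paper: verify the three $\sigma$-Algebra axioms for $\A$ by using the preimage identities $f^{-1}(F)=E$, $f^{-1}(B)^C=f^{-1}(B^C)$, and $f^{-1}\left(\bigcup_n B_n\right)=\bigcup_n f^{-1}(B_n)$, together with the corresponding closure properties of $\B$. Your write-up is in fact more explicit and cleanly organized than the paper's rather terse version, but substantively there is no difference.
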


\begin{proof}

First, it is obvious that $f^{-1}(F)=E$, which implies that if $F\in\B$ then $E\in \A$. Moreover, it holds that $$f^{-1}(B)^C=f^{-1}(B^C)\in\A$$ for all $B\in\B$ since arbitrary unions of elements in $\B$ are again in $\B$.

\end{proof}

\begin{rem}
It is sometimes usual to write $\sigma(f)$ instead of $f^{-1}(B)$.

\end{rem}

\begin{ex}

Let $(E,\A)$ be a measurable space and let $F\subset E$ be a subset of $E$. Moreover, let $\iota:F\hookrightarrow (E,\A)$ be the canonical injection. Then we get 

\[
\iota^{-1}(\A)=\{\iota^{-1}(A)\mid A\in\A\}=\{F\cap A\mid A\in\A\}.
\]
\end{ex}

\begin{ex}

Let $(E,\A)$ be a measurable space and let $F\subset E$ be a subset of $E$. Moreover, let $\pi_E:E\times F\longrightarrow (E,\A)$ be the canonical projection. Then we get 

\[
\pi_E^{-1}(\A)=\{\pi_E^{-1}(A)\mid A\in\A\}=\{A\times F\mid A\in\A\}.
\]

\end{ex}

\begin{defn}[Image $\sigma$-Algebra]

Let $(E.\A)$ and $(F,\B)$ be measurable spaces and let $f:E\longrightarrow F$ be a map. The image $\sigma$-Algebra of $\A$ by $f$ is defined by 

\[
\mathcal{I}=\{I\in\mathcal{P}(F)\mid f^{-1}(I)\in\A\}.
\]

\end{defn}

\begin{prop} 

Let $(X,d)$ be a metric space and let $Y\subset X$. Then the Borel $\sigma$-Algebra of $Y$ is given by 

\[
\B(Y)=\{A\cap Y\mid A\in\B(X)\}.
\]

Moreover, if $Y\in\B(X)$ then $\B(Y)\subset \B(X)$ and $\B(Y)=\{A\in\B(X)\mid A\subset Y\}$.

\end{prop}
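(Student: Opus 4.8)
The plan is to identify the candidate family $\mathcal{T}:=\{A\cap Y\mid A\in\B(X)\}$ with $\B(Y)$, where $Y$ carries the subspace metric and hence the subspace topology whose open sets are exactly the traces $O\cap Y$ of open sets $O$ of $X$. First I would observe that $\mathcal{T}$ is the inverse-image $\sigma$-Algebra $\iota^{-1}(\B(X))$ of the canonical injection $\iota:Y\hookrightarrow X$, since $\iota^{-1}(A)=A\cap Y$; by the proposition on inverse images of $\sigma$-Algebras, $\mathcal{T}$ is therefore a $\sigma$-Algebra on $Y$.

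For the inclusion $\B(Y)\subset\mathcal{T}$, every open set of $Y$ has the form $O\cap Y$ with $O$ open in $X$, and since $O\in\B(X)$ we get $O\cap Y\in\mathcal{T}$. Thus $\mathcal{T}$ is a $\sigma$-Algebra containing all open sets of $Y$, so it contains the $\sigma$-Algebra they generate, namely $\B(Y)$.

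The reverse inclusion $\mathcal{T}\subset\B(Y)$ is the main obstacle, because a generic $A\in\B(X)$ is reachable from the open sets only through iterated complementation and countable union, so one cannot argue directly on $A$. Here I would use the good-sets principle: define $\mathcal{G}:=\{A\subset X\mid A\cap Y\in\B(Y)\}$ and check that $\mathcal{G}$ is a $\sigma$-Algebra on $X$, using $X\cap Y=Y$, the identity $A^C\cap Y=Y\setminus(A\cap Y)$ (complementation inside $Y$), and $\left(\bigcup_n A_n\right)\cap Y=\bigcup_n(A_n\cap Y)$; and that $\mathcal{G}$ contains every open $O\subset X$ because $O\cap Y$ is open in $Y$. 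Since $\B(X)$ is the smallest $\sigma$-Algebra containing the open sets, $\B(X)\subset\mathcal{G}$, which says exactly that $A\cap Y\in\B(Y)$ for every $A\in\B(X)$, i.e. $\mathcal{T}\subset\B(Y)$. Combining the two inclusions gives $\B(Y)=\mathcal{T}$.

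For the second statement, suppose $Y\in\B(X)$. Given $B\in\B(Y)$, write $B=A\cap Y$ with $A\in\B(X)$ by the first part; then $B$ is an intersection of two members of $\B(X)$, hence $B\in\B(X)$, and clearly $B\subset Y$, giving $\B(Y)\subset\{A\in\B(X)\mid A\subset Y\}$ and in particular $\B(Y)\subset\B(X)$. Conversely, if $A\in\B(X)$ with $A\subset Y$, then $A=A\cap Y\in\mathcal{T}=\B(Y)$. The two inclusions yield $\B(Y)=\{A\in\B(X)\mid A\subset Y\}$.
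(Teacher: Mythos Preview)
Your argument is correct and follows the same route as the paper, which also works through the canonical injection $\iota:Y\hookrightarrow X$ and the identity $\iota^{-1}(A)=A\cap Y$. The only difference is cosmetic: the paper compresses your two inclusions into the single chain $\B(Y)=\sigma(\iota^{-1}(\mathcal{O}_X))=\iota^{-1}(\sigma(\mathcal{O}_X))=\iota^{-1}(\B(X))$, invoking the general commutation $\sigma(f^{-1}(\mathcal{C}))=f^{-1}(\sigma(\mathcal{C}))$ without proof, whereas your good-sets argument for $\mathcal{T}\subset\B(Y)$ is precisely the standard proof of that commutation in this special case.
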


\begin{proof}
Let $\iota:Y\hookrightarrow X$ be the canonical injection of $Y$ into $X$. Then 

\[
\mathcal{O}_Y:=\{O\cap Y\mid O\in\mathcal{O}_X\}=\iota^{-1}(\mathcal{O}_X).
\]

Moreover, we get 

\[
\B(Y)=\sigma(\mathcal{O}_Y)=\sigma(\iota^{-1}(\mathcal{O}_X))=\iota^{-1}(\sigma(\mathcal{O}_X))=\iota^{-1}(\B(X))=\{A\cap Y\mid A\subset \B(X)\},
\]

which proves the first part of the proposition. The second part is easily obtained from the fact that $\sigma$-Algebras are stable under finite intersections.

\end{proof}

We have the following examples of Borel $\sigma$-Algebras.

\begin{ex}
Let $X=\R_+$. Then 
 
\[
\B(\R_+)=\{A\subseteq \R_+\mid A\in\B(\R)\}.
\]

\end{ex}

\begin{ex}

Let $X=\R^\times=\R\setminus\{0\}$. Then 

\[
\B(\R^\times)=\{A\in\B(\R)\mid 0\not\in A\}.
\]

\end{ex}

\begin{ex}[Borel sets on $\bar \R$]

Let us define $\bar \R=\R\cup\{-\infty,\infty\}$ and let us consider the map 

\begin{align*}
f:\R&\longrightarrow (-1,1)\\
x&\longmapsto \frac{x}{\sqrt{x^2+1}}
\end{align*}

We can now consider an extension $\tilde f$ of $f$, which is defined on $\bar \R$ such that $\tilde f\mid_\R=f$ with $\tilde f(-\infty)=-1$ and $\tilde f(\infty)=1$. Moreover, we can consider $\bar \R$ as a metric space by the considering the distance given for all $x,y\in\bar \R$ as $\|\tilde f(x)-\tilde f(y)\|$. We write therefore $(\bar\R,\delta)$ as a metric space with the metric $\delta$. Thus we can define the Borel $\sigma$-Algebra of $\bar \R$ by the Borel sets, which are described by the metric topology of $\bar \R$. This concept is important as we will work many times with the space $\bar \R$.

\end{ex}

\begin{rem}

It is useful to note that $\bar \R$ describes a totally ordered set, since $\leq $ arises with the usual naturalness as in $\R$. Moreover, the identity map 

\begin{align*}
id:(\R,\delta\mid_\R)&\longrightarrow (\R,\|\cdot\|)\\
x&\longmapsto x
\end{align*}

is a homeomorphism. Another useful observation is that $(\bar \R,\delta)$ is a compact space and homeomorphic to the interval $[-1,1]$ and eventually $\R$ is an open subset of $\bar\R$.

\end{rem}

\begin{exer}

Show that 

\[
\B(\bar \R)=\sigma(\{[a,\infty]\}_{a\in\Q})=\sigma(\{(a,\infty]\}_{a\in\Q}).
\]
\end{exer}

\section{Measurable Maps}

Measure theory and the notion of integration require special structures on functions, which need to satisfy different properties, such as being measurable or bounded. The notion of a measurable map is important for the study of the integration with respect to a certain measure and the fact that we can only consider integration with respect to a measure if the integrating function satisfies measurability. It is now important to use the $\sigma$-Algebras of the underlying spaces similar to the topological notion of continuity where the topologies of the underlying spaces are used. Let us therefore define a measurable map. 

\begin{defn}[Measurable Map]

Let $(E,\A)$ and $(F,\B)$ be two measurable spaces and let $f:E\longrightarrow F$ be a map. We say that $f$ is measurable, if for all $B\in\B$ we get $f^{-1}(B)\in\A$.

\end{defn}

\begin{prop}
Let $(X,\A),(Y,\B)$ and $(Z,\mathcal{G})$ be measurable space and consider the composition 

\[
(X,\A)\xrightarrow{f}(X,\B)\xrightarrow{g}(Z,\mathcal{G}).
\]

If $f$ and $g$ are both measurable, then $g\circ f$ is also measurable.

\end{prop}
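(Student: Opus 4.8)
The plan is to argue directly from the definition of a measurable map: to establish that $g\circ f:(X,\A)\to(Z,\mathcal{G})$ is measurable, it suffices to show that $(g\circ f)^{-1}(C)\in\A$ for every $C\in\mathcal{G}$. So I would fix an arbitrary set $C\in\mathcal{G}$ and trace it back through the two maps.

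The key step is the elementary preimage identity for a composition, namely
\[
(g\circ f)^{-1}(C)=f^{-1}\left(g^{-1}(C)\right),
\]
which follows immediately from unwinding the definitions: $x\in(g\circ f)^{-1}(C)$ means $g(f(x))\in C$, i.e. $f(x)\in g^{-1}(C)$, i.e. $x\in f^{-1}(g^{-1}(C))$. With this identity in hand, the rest is a two-stage application of the hypotheses. First, since $g$ is measurable and $C\in\mathcal{G}$, the definition of measurability gives $g^{-1}(C)\in\B$. Setting $B:=g^{-1}(C)\in\B$, I then use that $f$ is measurable to conclude $f^{-1}(B)\in\A$.

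Combining these, $(g\circ f)^{-1}(C)=f^{-1}(g^{-1}(C))=f^{-1}(B)\in\A$. Since $C\in\mathcal{G}$ was arbitrary, this shows $(g\circ f)^{-1}(C)\in\A$ for all $C\in\mathcal{G}$, which is exactly the measurability of $g\circ f$.

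There is no genuine obstacle here: the only substantive ingredient is the composition identity for preimages, and even that is a direct consequence of the set-theoretic relations for $f^{-1}$ recorded earlier in the excerpt. I would therefore keep the write-up to a few lines, making sure to state the identity explicitly before applying measurability of $g$ and then of $f$ in that order, since the order of application (outer map first, inner map second) is the one point a reader might otherwise find slightly confusing.
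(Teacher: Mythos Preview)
Your proof is correct and follows exactly the approach the paper intends: the paper leaves this as an exercise with the hint ``Use the definition of a measurable map,'' and your argument does precisely that via the preimage identity $(g\circ f)^{-1}(C)=f^{-1}(g^{-1}(C))$.
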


\begin{proof}

Exercise.\footnote{Use the definition of a measurable map.}

\end{proof}

\begin{prop}
Let $(E,\A)$ and $(F,\B)$ be two measurable spaces and let $f:E\longrightarrow F$ be a map. Moreover, assume that there exists $\mathcal{C}\subset\mathcal{P}(F)$ such that $\sigma(\mathcal{C})=\B$. Then $f$ is measurable if and only if for all $C\in\mathcal{C}$ we have $f^{-1}(C)\in\A$.

\end{prop}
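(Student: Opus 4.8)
The statement to prove is a standard but useful criterion for measurability: if $\mathcal{C} \subset \mathcal{P}(F)$ generates $\mathcal{B}$ (that is, $\sigma(\mathcal{C}) = \mathcal{B}$), then a map $f : E \to F$ is measurable if and only if $f^{-1}(C) \in \mathcal{A}$ for every $C \in \mathcal{C}$. The forward direction is immediate from the definition of measurability, since $\mathcal{C} \subseteq \sigma(\mathcal{C}) = \mathcal{B}$, so every $C \in \mathcal{C}$ is in particular a measurable set and hence $f^{-1}(C) \in \mathcal{A}$. The content is entirely in the reverse direction, so the plan is to focus there.

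**The main idea: the good-sets principle.** For the converse, I would assume $f^{-1}(C) \in \mathcal{A}$ for all $C \in \mathcal{C}$ and aim to show $f^{-1}(B) \in \mathcal{A}$ for every $B \in \mathcal{B}$. The standard tool is to consider the collection of "good sets"
\[
\mathcal{D} = \{ B \in \mathcal{P}(F) \mid f^{-1}(B) \in \mathcal{A} \},
\]
which the excerpt has essentially already introduced under the name \emph{image $\sigma$-Algebra} $\mathcal{I}$ (Definition of the image $\sigma$-Algebra, with $\mathcal{I} = \{ I \in \mathcal{P}(F) \mid f^{-1}(I) \in \mathcal{A} \}$). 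First I would verify that $\mathcal{D}$ is itself a $\sigma$-Algebra on $F$. This uses the elementary preimage identities listed earlier in the excerpt: $f^{-1}(F) = E \in \mathcal{A}$ gives $F \in \mathcal{D}$; the relation $f^{-1}(B^C) = f^{-1}(B)^C$ together with closure of $\mathcal{A}$ under complements shows $\mathcal{D}$ is closed under complementation; and $f^{-1}\!\left(\bigcup_{n} B_n\right) = \bigcup_{n} f^{-1}(B_n)$ together with closure of $\mathcal{A}$ under countable unions shows $\mathcal{D}$ is closed under countable unions.

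**Completing the argument.** By hypothesis $\mathcal{C} \subseteq \mathcal{D}$, since $f^{-1}(C) \in \mathcal{A}$ for every $C \in \mathcal{C}$. Now $\mathcal{D}$ is a $\sigma$-Algebra containing $\mathcal{C}$, and $\sigma(\mathcal{C})$ is by definition the smallest $\sigma$-Algebra containing $\mathcal{C}$, so $\mathcal{B} = \sigma(\mathcal{C}) \subseteq \mathcal{D}$. This means $f^{-1}(B) \in \mathcal{A}$ for every $B \in \mathcal{B}$, which is exactly the statement that $f$ is measurable. This concludes the reverse direction and hence the proof.

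**Where the difficulty lies.** There is no genuine obstacle here; the only thing that requires care is making sure the "good sets" collection $\mathcal{D}$ is verified to be a $\sigma$-Algebra cleanly, invoking the correct preimage identities from the list preceding the statement, and then correctly applying the minimality property of $\sigma(\mathcal{C})$. The conceptual crux — and the reason the lemma is worth stating — is precisely that one only needs to check the defining condition on a \emph{generating} family rather than on all of $\mathcal{B}$, and this is captured entirely by the inclusion $\mathcal{C} \subseteq \mathcal{D} \Rightarrow \sigma(\mathcal{C}) \subseteq \mathcal{D}$. I would present the forward direction in a single sentence and devote the bulk of the write-up to the three routine closure verifications and the final minimality step.
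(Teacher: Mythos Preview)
Your proposal is correct and follows essentially the same approach as the paper: define the good-sets collection $\mathcal{G} = \{B \mid f^{-1}(B) \in \A\}$, verify it is a $\sigma$-Algebra containing $\mathcal{C}$, and conclude $\B = \sigma(\mathcal{C}) \subseteq \mathcal{G}$. Your write-up is in fact more thorough than the paper's, which simply asserts that $\mathcal{G}$ is a $\sigma$-Algebra without spelling out the preimage-identity verifications and does not bother to state the trivial forward direction.
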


\begin{proof}

Let us first define the $\sigma$-Algebra $\mathcal{G}$ by 

\[
\mathcal{G}:=\{B\in\B\mid f^{-1}(B)\in\A\}\supset \mathcal{C}.
\]

Now, since $\mathcal{G}$ is a $\sigma$-Algebra, we get that $\sigma(\mathcal{C})\subset\mathcal{G}$ and thus $\mathcal{G}=\B$, which proves the claim.

\end{proof}

\begin{ex}

Let\footnote{We can also take $(\bar \R,\B(\bar \R))$.} $(F,\B)=(\R,\B(\R))$. To show that $f$ is measurable, it is enough to show either that $f^{-1}((a,b))\in\A$ or $f^{-1}((-\infty,a))\in\A$, for $a,b\in\R$ with $a<b$.

\end{ex}

\begin{ex}[continuous maps are measurable]
Assume that $E$ and $F$ are two metric spaces (or topological spaces), endowed with their Borel $\sigma$-Algebra respectively. Then $f$ is measurable if for every open set $O$ of $F$ we have $f^{-1}(O)\in\B(E)$. In particular we can say that \emph{continuous maps are measurable maps}.
\end{ex}

\begin{ex}

Let $A\subset E$ be a subset of $E$. Then the map $\one_A$ is measurable if and only if $A\in\A$.

\end{ex}

\begin{rem}

The notion of measurability of a map $f:E\longrightarrow F$, between two measurable spaces $(E,\A)$ and $(F,\B)$, means that $f^{-1}(\B)\in \A$. The smallest $\sigma$-Algebra on $E$ which makes $f$ measurable is given by $f^{-1}(\B)$ and we denote it by $\sigma(f)$. Moreover, we want to emphasize that we can write $\{f\in B\}$ for $f^{-1}(B)=\{x\in E\mid f(x)\in B\}$. Hence we can write $\{f\geq b\}$ instead of $f^{-1}([b,\infty))$ or $\{ f=b\}$ instead of $f^{-1}(\{b\})$. If $f$ is constant and if for some $C\in F$, we have $f(x)=C$ for all $x\in E$, then $f$ is always measurable, since $f^{-1}(\B)=\{\varnothing, E\}$.

\end{rem}

\begin{lem}

Let $(E,\A),(F_1,\B_1)$ and $(F_2,\B_2)$ be measurable spaces and let $f_1:E\longrightarrow F_1$ and $f_2:E\longrightarrow F_2$ be two measurable maps. Then  the map 

\begin{align*}
f:(E,\A)&\longrightarrow (F_1\times F_2,\B_1\otimes \B_2)\\
x&\longmapsto (f_1(x),f_2(x))
\end{align*}

is measurable.

\end{lem}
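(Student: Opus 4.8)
The plan is to reduce measurability of $f$ to checking preimages of a convenient generating class for the product $\sigma$-Algebra, rather than verifying $f^{-1}(B) \in \A$ for every $B \in \B_1 \otimes \B_2$ directly. By definition, the product $\sigma$-Algebra is generated by measurable rectangles, i.e.
\[
\B_1\otimes\B_2=\sigma(\mathcal{C}),\qquad \mathcal{C}=\{B_1\times B_2\mid B_1\in\B_1,\ B_2\in\B_2\}.
\]
Hence by the earlier proposition on measurability with respect to a generating class (the one stating that $f$ is measurable whenever $\sigma(\mathcal{C})=\B$ and $f^{-1}(C)\in\A$ for all $C\in\mathcal{C}$), it suffices to show that $f^{-1}(B_1\times B_2)\in\A$ for every rectangle $B_1\times B_2\in\mathcal{C}$.

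First I would compute the preimage of a rectangle explicitly. For $B_1\in\B_1$ and $B_2\in\B_2$, unwinding the definition of $f=(f_1,f_2)$ gives
\[
f^{-1}(B_1\times B_2)=\{x\in E\mid f_1(x)\in B_1\text{ and }f_2(x)\in B_2\}=f_1^{-1}(B_1)\cap f_2^{-1}(B_2).
\]
This identity is the crux of the argument and follows directly from the fact that a pair $(f_1(x),f_2(x))$ lies in $B_1\times B_2$ precisely when each coordinate lies in the corresponding factor.

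Next I would invoke the hypotheses: since $f_1$ and $f_2$ are measurable, we have $f_1^{-1}(B_1)\in\A$ and $f_2^{-1}(B_2)\in\A$. Because $\A$ is a $\sigma$-Algebra, it is stable under finite intersections, so $f_1^{-1}(B_1)\cap f_2^{-1}(B_2)\in\A$, whence $f^{-1}(B_1\times B_2)\in\A$. Applying the generating-class proposition then yields that $f$ is measurable, completing the proof.

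I do not expect any genuine obstacle here: the entire content is the reduction to rectangles together with the set-theoretic identity for the preimage of a product. The only point requiring a little care is to make sure the generating class of rectangles really generates $\B_1\otimes\B_2$, which is exactly the definition of the product $\sigma$-Algebra given earlier, and to use the generating-class criterion rather than attempting to describe every element of $\B_1\otimes\B_2$ concretely (which would be awkward and unnecessary).
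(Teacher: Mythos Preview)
Your proof is correct and follows essentially the same approach as the paper: reduce to the generating class of measurable rectangles, compute $f^{-1}(B_1\times B_2)=f_1^{-1}(B_1)\cap f_2^{-1}(B_2)$, and conclude using measurability of $f_1,f_2$ and closure of $\A$ under finite intersections. If anything, your version is slightly more explicit in spelling out the set-theoretic identity and invoking the generating-class criterion.
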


\begin{proof}

Let us define $\mathcal{C}:=\{B_1\times B_2\mid\hspace{0.1cm}B_1\in\mathcal{B}_1,\hspace{0.1cm}B_2\in\mathcal{B}_2\}$. Then we get, by definition of the product $\sigma$-Algebra, that $\sigma(\mathcal{C})=\mathcal{B}_1\otimes\mathcal{B}_2$. Now, for $B_1\times B_2\in \mathcal{C}$ we get that $f^{-1}(B_1\times B_2)=\underbrace{\underbrace{f^{-1}(B_1)}_{\in\mathcal{A}}\cap \underbrace{f^{-1}(B_2)}_{\in\mathcal{A}}}_{\in\mathcal{A}}\in\A$. Therefore, it follows that $f$ is measurable.
\end{proof}

\begin{rem}

Consider $f_1=\pi_1\circ f$ and $f_2=\pi_2\circ f$ with 

\begin{align*}
\pi_i:F_1\times F_2&\longrightarrow F_i\\
(y_1,y_2)&\longmapsto y_i
\end{align*}

for $i\in\{1,2\}$ with $\pi_1$ and $\pi_2$ are measurable. Then $f_1$ and $f_2$ are measurable.

\end{rem}

\begin{cor}
Let $(E,\A)$ be a measurable space and let $f,g:E\longrightarrow \R$ be two measurable maps, where $\R$ is endowed with its Borel $\sigma$-Algebra $\B(\R)$. Then 

\begin{enumerate}[$(i)$]
\item{$f+g$}
\item{$f\cdot g$}
\item{$f^+=\max\{f,0\}$}
\item{$f^-=\max\{-f,0\}$}
\item{$\vert f\vert$}
\end{enumerate}

are measurable, where $f = f^+-f^-$ and $\vert f\vert =f^++f^-$.

\end{cor}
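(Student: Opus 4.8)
The plan is to reduce each of the five items to the composition of a measurable map with a continuous one. The main engine will be the Lemma proved just above, which asserts that $(f,g)\colon E\to(\R^2,\B(\R)\otimes\B(\R))$ is measurable whenever $f$ and $g$ are, together with the two facts established earlier in this section: continuous maps between metric (hence topological) spaces are measurable for the Borel $\sigma$-Algebras, and the composition of two measurable maps is measurable. The only preliminary work is to match the product $\sigma$-Algebra appearing in the Lemma with the Borel $\sigma$-Algebra of the plane.

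First I would record the auxiliary identity $\B(\R)\otimes\B(\R)=\B(\R^2)$. For ``$\supseteq$'' I use the countable-basis results: since $\R$ has a countable basis $\{w_n\}$ of open sets, $\R^2$ has the countable basis $\{w_n\times w_m\}$, so by the proposition identifying $\B$ with the $\sigma$-Algebra generated by a countable basis we get $\B(\R^2)=\sigma(\{w_n\times w_m\})$; each rectangle $w_n\times w_m$ is a product of Borel sets and hence lies in $\B(\R)\otimes\B(\R)$. For ``$\subseteq$'', the projections $\pi_1,\pi_2\colon\R^2\to\R$ are continuous, hence measurable into $(\R^2,\B(\R^2))$, so for Borel $A_1,A_2$ we have $A_1\times A_2=\pi_1^{-1}(A_1)\cap\pi_2^{-1}(A_2)\in\B(\R^2)$, and since $\B(\R)\otimes\B(\R)$ is generated by such rectangles the inclusion follows. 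Consequently $(f,g)\colon E\to(\R^2,\B(\R^2))$ is measurable.

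With this in hand the five claims are immediate. For $(i)$ and $(ii)$, the addition map $s(x,y)=x+y$ and the multiplication map $m(x,y)=xy$ are continuous $\R^2\to\R$, hence measurable, so $f+g=s\circ(f,g)$ and $f\cdot g=m\circ(f,g)$ are measurable as compositions. For $(iii)$, $(iv)$, $(v)$ I use the continuous self-maps of $\R$ given by $\psi_+(t)=\max\{t,0\}$, $t\mapsto -t$, and $t\mapsto|t|$: then $f^+=\psi_+\circ f$, $f^-=\psi_+\circ(-f)$ (the negation being measurable since it is continuous), and $|f|=|\cdot|\circ f$ are all measurable; alternatively $|f|=f^++f^-$ follows from $(i)$.

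I expect the only genuine obstacle to be the identity $\B(\R)\otimes\B(\R)=\B(\R^2)$, since everything else is a one-line composition argument. If one prefers to avoid invoking it, the same conclusions can be obtained by direct computation of preimages of the generators $(-\infty,a)$: for instance $\{f+g<a\}=\bigcup_{q\in\Q}\bigl(\{f<q\}\cap\{g<a-q\}\bigr)$ is a countable union of elements of $\A$, and for products one uses $fg=\tfrac14\bigl((f+g)^2-(f-g)^2\bigr)$ after noting that $h^2=\phi\circ h$ is measurable for any measurable $h$, where $\phi(t)=t^2$ is continuous. Either route closes the argument.
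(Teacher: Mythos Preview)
Your proof is correct and follows exactly the same approach as the paper: pair $(f,g)$ into $\R^2$ via the preceding Lemma, then compose with the continuous operations $(x,y)\mapsto x+y$, $(x,y)\mapsto xy$, $t\mapsto t^+$, etc. You are in fact more careful than the paper, which glosses over the identification $\B(\R)\otimes\B(\R)=\B(\R^2)$ that you rightly flag and justify; the paper simply asserts that the composition works and leaves $(ii)$--$(v)$ as an exercise.
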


\begin{proof}
We will only show $(i)$ and leave the other points as an exercise for the reader. The map $f+g$ is a composition of the map $h:x\longmapsto (f(x),g(x))$ and $r:(a,b)\longmapsto a+b$. The map $h$ is clearly measurable, since $f$ and $g$ are measurable and the map $r$ is clearly continuous and thus measurable. As we have seen, the composition of two measurable maps is again measurable, which shows that $f+g$ is measurable. The proof of the other points is similar. 

\end{proof}

\begin{rem}

Let us consider the field of complex numbers $\C$ and make the identification $\C\simeq \R^2$. Then we can naturally make sense of the measurability of the map $f:E\longrightarrow \C$, where $\C$ is endowed with its Borel $\sigma$-Algebra $\B(\C)$, by saying that $f$ is measurable if and only if $Re(f)$ and $Im(f)$ are measurable.
\end{rem}

\section{The Theorems of Lusin and Egorov}

There are two important theorems which make statements about convergence types of measurable maps. They are important to understand the behavior of sequence of measurable maps and to understand the importance of uniform convergence.

\begin{thm}[Egorov]

Let $(E,\A,\mu)$ be a measure space. Let $f_k:E\longrightarrow \bar \R$ be measurable for all $k\in\mathbb{N}$ and $f:E\longrightarrow\bar \R$ be measurable and $\mu$-a.e. finite. Moreover $f_k(x)\xrightarrow{k\to\infty} f(x)$ $\mu$-a.e. for $x\in E$. Then for all $\delta>0$ there exists $F\subset E$,  with $F$ compact and $\mu(E\setminus F)<\delta$ and 

$$\sup_{x\in F}\vert f_k(x)-f(x)\vert\xrightarrow{k\to\infty} 0,$$

i.e. $(f_k)_{k\in\mathbb{N}}$ converges uniformly to $f$ in $F$.

\end{thm}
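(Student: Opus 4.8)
The plan is to produce, for a given $\delta>0$, a measurable set on whose complement the convergence is uniform, controlling the measure of the exceptional set by a geometric series, and then to shrink that set to a compact one using inner regularity of $\mu$. Throughout I shall assume $\mu(E)<\infty$, which is indispensable here (without it the statement fails, as $f_k=\one_{[k,k+1]}$ on the line shows) and is implicit in the present setting.

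First I would isolate the bad sets. For integers $n\geq 1$ and $k\geq 1$ set
\[
A_{n,k}=\bigcup_{j\geq k}\left\{x\in E\mid |f_j(x)-f(x)|>\tfrac{1}{n}\right\}.
\]
Each $A_{n,k}$ is measurable, being a countable union of preimages of Borel sets under the measurable maps $f_j-f$, and for fixed $n$ the sequence $(A_{n,k})_{k\geq 1}$ is decreasing in $k$. Let $N$ be the $\mu$-null set off which $f_k\to f$ pointwise and $f$ is finite. If $x\notin N$, then for each $n$ there is a $k$ with $x\notin A_{n,k}$, so $\bigcap_{k\geq 1}A_{n,k}\subseteq N$ and hence $\mu\left(\bigcap_{k\geq 1}A_{n,k}\right)=0$ for every $n$.

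Next comes the key measure-theoretic step. Since $\mu(A_{n,1})\leq \mu(E)<\infty$, continuity from above (property $(iv)$ of positive measures) gives $\lim_{k\to\infty}\mu(A_{n,k})=\mu\left(\bigcap_{k\geq 1}A_{n,k}\right)=0$. Thus for each $n$ I may choose $k_n$ with $\mu(A_{n,k_n})<\delta\,2^{-n-1}$. Putting $B=\bigcup_{n\geq 1}A_{n,k_n}$, $\sigma$-subadditivity (property $(v)$) yields $\mu(B)\leq\sum_{n\geq 1}\mu(A_{n,k_n})<\delta/2$. On $E\setminus B$ we have, for every $n$, every $j\geq k_n$ and every $x\in E\setminus B$, that $|f_j(x)-f(x)|\leq 1/n$; so given $\varepsilon>0$, choosing $n>1/\varepsilon$ forces $\sup_{x\in E\setminus B}|f_j(x)-f(x)|\leq 1/n<\varepsilon$ for all $j\geq k_n$, which is exactly uniform convergence on the measurable set $E\setminus B$.

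Finally I would upgrade $E\setminus B$ to a compact witness. By inner regularity of $\mu$ there is a compact $F\subseteq E\setminus B$ with $\mu\big((E\setminus B)\setminus F\big)<\delta/2$; then $\mu(E\setminus F)=\mu(B)+\mu\big((E\setminus B)\setminus F\big)<\delta$, and since $F\subseteq E\setminus B$ the convergence $\sup_{x\in F}|f_k(x)-f(x)|\to 0$ is inherited. The two genuine obstacles are precisely the two structural hypotheses hidden in the statement: finiteness of $\mu$, which legitimizes the passage to the limit via continuity from above and is the crux of the whole argument, and inner regularity, which is needed only for the last cosmetic passage from a measurable exceptional set to a compact $F$. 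If one were content with $F$ merely measurable, the last paragraph would be dropped entirely.
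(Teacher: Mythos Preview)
Your proof is correct and follows essentially the same route as the paper: the same nested tail sets (your $A_{n,k}$ are the paper's $C_{i,j}$, with thresholds $1/n$ instead of $2^{-i}$), the same use of continuity from above under $\mu(E)<\infty$, the same geometric choice $\mu(A_{n,k_n})<\delta\,2^{-n-1}$, and the same final passage to a compact $F$ via inner regularity. Your explicit flagging of the two hidden hypotheses (finite total mass and inner regularity) is a welcome addition, as the paper uses both without stating them.
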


\begin{proof}

Let $\delta >0$. For $i,j\in\mathbb{N}$ set 

$$C_{i,j}:=\bigcup_{k=j}^{\infty}\{x\in E\mid \vert f_k(x)-f(x)\vert > 2^{-1}\}.$$

$C_{i,j}$ is $\mu$-measurable, because $f$ and $f_k$ are $\mu$-measurable and $C_{i,(j+1)}\subset C_{i,j}$, $\forall i,j$.
We also know that $f_k(x)\xrightarrow{k\to\infty}f(x)$ for $\mu$-a.e. $x\in E$ and since $\mu(E)<\infty$ it follows that for all $i\in\N$

$$\lim_{j\to\infty}\mu(C_{i,j})=\mu\left(\bigcup_{j=1}^\infty C_{i,j}\right)=0.$$

So for every $i$ there exists a $N(i)\in\N$ with 

$$\mu(C_{i,N(i)})<\delta\cdot 2^{-i-1}.$$

Now set $A=E\setminus \bigcup_{i=1}^\infty C_{i,N(i)}$. Then 

$$\mu(E\setminus A)\leq \sum_{i=1}^{\infty}\mu(C_{i,N(i)})<\delta/2,$$

and for all $i\in\N$ and $k\geq N(i)$

$$\sup_{x\in A}\vert f_k(x)-f(x)\vert \leq 2^{-i}.$$

Choose a $F\subset A$, where $F$ is compact with $\mu(A\setminus F)<\delta/2$. Hence we have

$$\mu(E\setminus F)\leq \mu(E\setminus A)+\mu(A\setminus F)<\delta.$$

\end{proof}

\begin{thm}[Lusin]

Let $(E,\A,\mu)$ be a measure space. Let $f:E\longrightarrow\bar \R$ be measurable and $\mu$-a.e. finite. Then for all $\delta>0$ there exists $F\subset E$, $F$ compact with $\mu(E\setminus F)<\delta$ and $f\mid_F:F\longrightarrow \R$ is continuous.

\end{thm}

\begin{proof}

We split the proof onto two parts.

\begin{enumerate}[$(i)$]

\item{We are going to show this theorem for step functions of the form

$$g=\sum_{i=1}^Ib_i\one_{B_i},$$

where we set $E=\bigsqcup_{i=1}^{I}B_i$ with $B_i\cap B_j=\varnothing$ for $i\not=j$. For $\delta>0$ choose $F_i\subset B_i$ compact with 

$$\mu(B_i\setminus F_i)<\delta\cdot 2^{-i},\hspace{0.2cm}1\leq i\leq I$$

Since the sets $B_i$ are disjoint, it follows that the sets $F_i$ are also disjoint, because of the fact that they are also compact it follows that $d(F_i,F_j)>0$ for $i\not=j$. Therefore we notice that $g$ is locally constant, i.e. continuous on $F:=\bigcup_{i=1}^IF_i\subset E.$ Moreover $F\subset E$ and 

$$\mu(E\setminus F)=\mu\left(\bigcup_{i=1}^I(B_i\setminus F_i)\right)\leq \sum_{i=1}^I\mu(B_i\setminus F_i)<\delta.$$

}
\item{Let $f_k:E\longrightarrow \R$ be a step function with

$$f(x)=\lim_{k\to\infty}f_k(x),\hspace{0.2cm}x\in E,$$

where 

$$f_k=\sum_{j=1}^k\frac{1}{j}\one_{A_j}=\sum_{i=1}^{I_k}b_{ik}\one_{B_{ik}},\hspace{0.2cm}k\in\N,$$

with $B_{ik}\cap B_{jk}=\varnothing$ for $i\not=j$ and $\bigsqcup_{i=1}^{I_k}B_{ik}=E$ and with 

$$b_{ik}=\sum_{B_{ik}\subset A_j}\frac{1}{j},\hspace{0.2cm}1\leq i\leq I_k,\hspace{0.2cm}k\in\N.$$

For $\delta>0$, $g=f_k$ choose compact sets $F_k\in E$ as in part $(i)$ with 

$$\mu(E\setminus F_k)<\delta\cdot 2^{-k-1},\hspace{0.2cm}f_k|_{F_k}:F_k\longrightarrow \R\hspace{0.1cm}\text{continuous,}\hspace{0.1cm}k\in\N.$$

Choose also $F_0\subset E$ compact with 

$$\mu(E\setminus F_0)<\delta/2,\hspace{0.2cm}\sup_{x\in F_0}\vert f_k(x)-f(x)\vert\xrightarrow{k\to\infty} 0.$$

Finally let $F=\bigcap_{k=0}^\infty F_k\subset E$ Note that $F$ is compact with 

$$\mu(E\setminus F)\leq \mu\left(\bigcup_{k=0}^\infty(E\setminus F_k)\right)\leq \sum_{k=0}^\infty\mu(E\setminus F_k)<\delta.$$

and because of the fact that $F\subset F_0$ it follows that 

$$\sup_{x\in F}\vert f_k(x)-f(x)\vert\xrightarrow{k\to\infty} 0.$$

The continuity of $f_k\mid_{F}$, $k\in\N$, gives us now the continuity of $f\mid_F:F\longrightarrow \R.$

}

\end{enumerate}
\end{proof}

\section{The limit superior and limit inferior}

The notion of a limit plays a very big role in measure and integration theory. The way how limits interact with integrals and how they behave under certain situations (for example changing the order of taking limits and integrating) lead to the famous limit theorems of Lebesgue integration. We need to recall the notion of the limsup and the one for the liminf in order to get a better intuition of how sequences of measurable function behave. Let therefore $(a_n)_{n\in\mathbb{N}}$ be a sequence in $\bar \R$ and define the limit superior and the limit inferior of $(a_n)$ as 

$$\limsup_{n\to\infty} a_n=\lim_{n\rightarrow\infty}\swarrow\left(\sup_{k\geq  n} a_k\right)=\inf_n \sup_{k\geq n}a_k$$
$$\liminf_{n\to\infty} a_n=\lim_{n\rightarrow\infty}\nearrow\left(\inf_{k\geq n} a_k\right)=\sup_n\inf_{k\geq n}a_k$$

\begin{rem}
Note the crucial thing that the above limits always exists in $\bar \R$.
\end{rem}

\begin{prop}

Let $(E,\A)$ be a measurable space and let $(f_n)_{n\in\N}$ be a sequence of measurable maps such that $f_n:E\longrightarrow \bar \R$. Then 

\begin{enumerate}[$(i)$]
\item{$\sup_n f_n$}
\item{$\inf_n f_n$}
\item{$\limsup_n f_n$}
\item{$\liminf_n f_n$}
\end{enumerate}

are measurable. Im particular, if $f_n\xrightarrow{n\to\infty} f$ then $f$ is also measurable. In general we can say that $\{x\in E\mid \lim_{n\to\infty}f_n(x)\text{ exists}\}$ is measurable.

\end{prop}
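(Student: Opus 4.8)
The plan is to reduce each claim to the measurability criterion via a generating class: by the exercise identifying $\B(\bar\R)$, we have $\B(\bar\R)=\sigma(\{(a,\infty]\}_{a\in\Q})$, and taking complements also $\B(\bar\R)=\sigma(\{[-\infty,a)\}_{a\in\Q})$. Hence to prove that a map $g:E\to\bar\R$ is measurable it suffices to show $g^{-1}((a,\infty])\in\A$ (resp.\ $g^{-1}([-\infty,a))\in\A$) for every $a\in\Q$. Each $f_n$ is measurable, so $f_n^{-1}((a,\infty])=\{f_n>a\}$ and $f_n^{-1}([-\infty,a))=\{f_n<a\}$ lie in $\A$ for all $n$ and all $a$; these are the only facts about the $f_n$ I will use.

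For $(i)$ and $(ii)$ I would exploit the pointwise identities $\{\sup_n f_n>a\}=\bigcup_n\{f_n>a\}$ and $\{\inf_n f_n<a\}=\bigcup_n\{f_n<a\}$, valid because a supremum exceeds $a$ exactly when some term does, and dually for the infimum. The right-hand sides are countable unions of sets in $\A$, hence in $\A$; by the generating-class criterion this gives measurability of $\sup_n f_n$ and $\inf_n f_n$.

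For $(iii)$ and $(iv)$ I would feed the first two parts into the formulas $\limsup_n f_n=\inf_n(\sup_{k\geq n}f_k)$ and $\liminf_n f_n=\sup_n(\inf_{k\geq n}f_k)$. Applying $(i)$ to the tail sequence $(f_k)_{k\geq n}$ shows each $g_n:=\sup_{k\geq n}f_k$ is measurable, and then $(ii)$ applied to $(g_n)_{n}$ shows $\inf_n g_n=\limsup_n f_n$ is measurable; the $\liminf$ is symmetric. The ``in particular'' statement is then immediate: if $f_n\to f$ pointwise then $f=\limsup_n f_n$, a measurable map.

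The last assertion is the only delicate point, because on $\bar\R$ one cannot simply form the difference $\limsup_n f_n-\liminf_n f_n$ (it is ill-defined where both are $+\infty$ or both $-\infty$). Writing $u=\limsup_n f_n$ and $v=\liminf_n f_n$, both measurable by $(iii)$ and $(iv)$, the limit exists in $\bar\R$ exactly when $u=v$; since $v\leq u$ always holds, $\{x:\lim_n f_n(x)\text{ exists}\}=\{v=u\}=\{v<u\}^C$. To see that $\{v<u\}$ is measurable I would use the rational-separation identity $\{v<u\}=\bigcup_{q\in\Q}\bigl(\{v<q\}\cap\{u>q\}\bigr)$, which holds because between any two extended reals with $v(x)<u(x)$ there is a rational strictly in between (including the cases $v(x)=-\infty$ or $u(x)=+\infty$). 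Each $\{v<q\}=v^{-1}([-\infty,q))$ and $\{u>q\}=u^{-1}((q,\infty])$ lies in $\A$ by measurability of $u$ and $v$, so the countable union and its complement are measurable. The main obstacle is thus purely the extended-real arithmetic in this final step; everything else is a routine application of the generating-class criterion.
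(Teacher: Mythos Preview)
Your proof is correct. Parts $(i)$--$(iv)$ and the ``in particular'' claim follow essentially the same route as the paper: the paper writes out only the $\inf$ case, observing $\{\inf_n f_n<a\}=\bigcup_n\{f_n<a\}\in\A$, and leaves $\sup$, $\limsup$, $\liminf$ implicit by the same mechanism, just as you do.

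The genuine difference is in the last assertion about $\{x:\lim_n f_n(x)\text{ exists}\}$. You argue directly via rational separation, writing $\{v<u\}=\bigcup_{q\in\Q}(\{v<q\}\cap\{u>q\})$ and exhibiting this as a countable union of measurable sets; this is elementary and self-contained. The paper instead packages $u$ and $v$ into the measurable map $G:x\mapsto(\liminf_n f_n(x),\limsup_n f_n(x))$ with values in $\bar\R^2$, and observes that the set in question is $G^{-1}(\Delta)$ where $\Delta$ is the diagonal of $\bar\R^2$, which is closed and hence Borel. The paper's argument is slicker but silently uses two earlier pieces of machinery: that a pair of measurable maps is measurable into the product $\sigma$-Algebra, and that the Borel $\sigma$-Algebra of a product of separable metric spaces coincides with the product of the Borel $\sigma$-Algebras (so that closed sets in $\bar\R^2$ lie in $\B(\bar\R)\otimes\B(\bar\R)$). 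Your rational-separation argument sidesteps both of these, at the cost of a small explicit computation; it also transparently handles the extended-real cases you were worried about, whereas the paper's diagonal argument handles them automatically since $\Delta\subset\bar\R^2$ already includes the points $(\pm\infty,\pm\infty)$.
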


\begin{proof}

Let us first define $f(x):=\inf_n f_n(x)$. Now we see that it is enough to show that for all $a\in\R$ we have $f^{-1}([-\infty,a))\in\A$. Indeed, we can observe that 

\[
f^{-1}([-\infty,a))=\left\{ x\in E\mid \inf_n f_n(x)<a\right\}=\bigcup_n\left\{ x\in E\mid f_n(x)<a\right\}\in\A,
\]

and therefore we can say that $f^{-1}([-\infty,a))=\bigcup_n\left\{ f_n<a\right\}$. Moreover, we have that 

\[
\left\{ x\in E\mid \lim_{n\to\infty} f_n(x) \text{ exists}\right\}=\left\{ x\in E\mid \liminf_n f_n(x)=\limsup_n f_n(x)\right\} =G^{-1}(\Delta)\in \A,
\]

where $G$ is the map given by $G:x\longmapsto (\liminf_n f_n(x),\limsup_n f_n(x))$ and $\Delta$ is the diagonal of $\bar \R^2$, which is closed and hence measurable.

\end{proof}

\begin{ex}
If a map $f:\R\longrightarrow \R$ is differentiable, its derivative $f'$ will be measurable and we can hence write it as a limit of measurable functions as 

\[
f'(x)=\lim_{n\to\infty}n\left(f\left(x+\frac{1}{n}\right)-f(x)\right).
\]

\end{ex}

\begin{rem}

Let $(E,\A)$ be a measurable space and let $(f_n)_{n\in\N}$ be a sequence of measurable functions $f_n:E\longrightarrow X$ to some space $X$. If $\bar \R$ is described as the metric space $(X,d)$, we get that $f_n\xrightarrow{n\to\infty}f$ implies that $f$ is measurable. Moreover, $f_n\xrightarrow{n\to\infty}f$ if and only if for all $x\in E$ we get $\lim_{n\to\infty}f_n(x)=f(x)$ if and only if for all $x\in E$ we get $\lim_{n\to\infty}d(f_n(x),f(x))=0$. If we consider a closed set $F\subset X$, we get 

\begin{align*}
f^{-1}(F)&=\left\{x\in E\mid d(f(x),F)=0\right\}=\left\{ x\in E\mid \lim_{n\to\infty}d(f_n(x),F)=0\right\}\\
&=\left\{x\in E\mid \forall p\geq 1,\exists N\in\N \text{ such that $\forall n\geq N$ } d(f_n(x),F)\leq  \frac{1}{p}\right\}\\
&=\bigcap_{p\geq 1}\bigcup_{N\in\N}\bigcap_{n\geq N}\left\{ x\in E\mid d(f_n(x),F)\geq \frac{1}{p}\right\}\in\A.
\end{align*}

If we consider a complete metric space $(E,d)$, then one can show that $\{x\in E\mid f_n(x) \text{ converges}\}\in \A$. We leave this as an exercise for the reader.
\end{rem}

\begin{defn}[Push-forward measure]

Let $(E,\A)$ and $(F,\B)$ be two measurable spaces and let $\mu$ be a positive measure on $(E,\A)$. Moreover, let $f:E\longrightarrow F$ be a measurable map. Then the push-forward of the measure $\mu$ by $f$, denoted by $f_*\mu$ is defined for all $B\in\B$ as 

\[
f_*\mu(B):=\mu(f^{-1}(B)).
\]

\end{defn}

\section{Simple Functions}

After we have developed the notion of a measurable map, we need to discuss a class of very powerful and, as the name points out, \emph{simple functions}. The advantage of these type of functions are exactly the fact that they are simple to handle and moreover one can basically proof many things for measurable functions by proving it for simple functions and deduce the general case out of that. We will later see the advantage of them being \emph{dense} in different spaces. Let us start with the definition of a simple function.

\begin{defn}[Simple function]

Let $(E,\A)$ be a measurable space. A map $f:E\longrightarrow \R$ is called simple, if it is measurable and if it takes a finite number of values. Recall again that $\R$ is considered as a measurable space endowed with its Borel $\sigma$-Algebra $\B(\R)$.
\end{defn}

\begin{rem}
\label{rem3}
By definition, one can therefore write any simple function $f$ as 

\[
f=\sum_{i\in I}\alpha_i\one_{A_i},
\]

where $I$ is a finite index set, $\alpha_i$ are real numbers and the sets $(A_i)_{i\in I}$ form a $\A$-measurable partition of $E$, i.e. $E=\bigcup_{i\in I}A_i$, $A_i\cap A_j=\varnothing$ if $i\not=j$ and $A_i\in \A$ for all $i\in I$.
\end{rem}

\begin{proof}[Proof of Remark \ref{rem3}]

Note first that for all $x\in E$, we get that $f(x)\in\{\alpha_1,...,\alpha_k\}$, where the $\alpha_i$'s are distinct if and only if $f^{-1}(\{\alpha_i\})=\{x\in E\mid f(x)=\alpha_i\}=A_i\in\A$, since $f$ is measurable. Hence we get that $A_i\cap A_j=\varnothing$ for $i\not=j$ and $\bigcup_{i=1}^kA_i=E$. This representations is unique if the $\alpha_i's$ are distinct. We can write the canonical form therefore as 

\[
f=\sum_{\alpha\in f(E)}\alpha\one_{f=\alpha}.
\]

\end{proof}

\begin{rem}

We can notice the fact that the simple functions form a \emph{commutative algebra}. indeed, let $f=\sum_{i\in I}\alpha_i\one_{A_i}$ and $g=\sum_{j\in J}\beta_j\one_{B_j}$ be two simple functions (in canonical form) and let $\lambda\in\R$. Then we can easily obtain that 

\[
\lambda f+g=\sum_{i\in I\atop j\in J}(\lambda\alpha_i+\beta_j)\one_{A_i\cap B_j} 
\]

and since $(A_i\cap B_j)_{(i,j)\in I\times J}$ forms an $\A$-partition of $E$, we get that 

\[
f\cdot g=\sum_{i\in I\atop j\in J}\alpha_i\beta_j\one_{A_i\cap B_j}
\]

is a simple function and moreover, 

\[
\max(f,g)=\sum_{i\in I\atop j\in J}\max(\alpha_i,\beta_j)\one_{A_i\cap B_j}.
\]

\end{rem}

\begin{thm}

Let $(E,\A)$ be a measurable space and let $f:E\longrightarrow \R$ be a measurable map. Then there exists a sequence of simple functions $(f_n)_{n\in\N}$ such that for all $x\in E$ we get 

\[
f_n(x)\xrightarrow{n\to\infty}f(x).
\]

Moreover, if 

\begin{enumerate}[$(i)$]
\item{$f\geq 0$, we can choose an increasing $f_n\geq 1$ ($0\leq  f_n\leq  f_{n+1}$).
}
\item{$f$ is bounded, $f_n$ can be chosen such that the convergence is uniformly, i.e. 

\[
\sup_{x\in E}\vert f_n(x)-f(x)\vert\xrightarrow{n\to\infty}0.
\]
}
\end{enumerate}

\end{thm}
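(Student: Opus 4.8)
The plan is to reduce everything to the construction of one canonical increasing sequence for a nonnegative measurable function, and then to read off both the general pointwise statement and the bounded/uniform refinement from that single construction.

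First I would prove $(i)$. Given $f\geq 0$ measurable, I would fix $n\in\N$, chop the range $[0,\infty)$ into the dyadic pieces $[k/2^n,(k+1)/2^n)$ for $0\leq k< n2^n$ together with the tail $[n,\infty)$, and set
\[
f_n=\sum_{k=0}^{n2^n-1}\frac{k}{2^n}\one_{A_{n,k}}+n\,\one_{B_n},
\]
where $A_{n,k}=f^{-1}([k/2^n,(k+1)/2^n))$ and $B_n=f^{-1}([n,\infty))$. Since $f$ is measurable and these are preimages of Borel sets, every $A_{n,k}$ and $B_n$ lies in $\A$; as the sum is finite, $f_n$ is simple. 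Equivalently $f_n=\varphi_n\circ f$ with $\varphi_n(t)=2^{-n}\lfloor 2^n t\rfloor$ on $[0,n)$ and $\varphi_n(t)=n$ on $[n,\infty)$, which makes the monotonicity bookkeeping cleaner.

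The two claims to verify are then $f_n\leq f_{n+1}$ and $0\leq f(x)-f_n(x)<2^{-n}$ as soon as $n>f(x)$. For monotonicity I would argue pointwise on each dyadic cell: passing from level $n$ to $n+1$, the cell $[k/2^n,(k+1)/2^n)$ splits into two halves, on the lower of which $\varphi_{n+1}$ equals $k/2^n=\varphi_n$ and on the upper of which it strictly exceeds $k/2^n$, while on $\{f\geq n\}$ one checks $\varphi_{n+1}(f)\geq n=\varphi_n(f)$; this refinement argument is the one place demanding genuine care, and I expect it to be the main (purely combinatorial) obstacle. Pointwise convergence is then immediate: since $f$ is real-valued, for any $x$ and any $n>f(x)$ we have $x\notin B_n$, so $f(x)$ and $f_n(x)$ lie in the same dyadic cell and differ by less than $2^{-n}$.

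For the general statement I would split $f=f^+-f^-$ into its measurable nonnegative parts, apply $(i)$ to obtain increasing simple functions $g_n\uparrow f^+$ and $h_n\uparrow f^-$, and set $f_n=g_n-h_n$; this is simple, since differences of simple functions are simple by the commutative-algebra remark, and it converges pointwise to $f^+-f^-=f$. Finally, for $(ii)$ I would observe that if $|f|\leq M$ then $f^+,f^-\leq M$, so for every $n>M$ both tail sets $B_n$ vanish and the estimate from $(i)$ gives $0\leq f^+(x)-g_n(x)<2^{-n}$ and $0\leq f^-(x)-h_n(x)<2^{-n}$ for all $x\in E$ simultaneously; hence $\sup_{x\in E}|f_n(x)-f(x)|\leq 2^{1-n}\to 0$, which is exactly uniform convergence. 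Nothing beyond the dyadic construction and the elementary simple-function algebra is needed.
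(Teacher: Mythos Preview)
Your proposal is correct and follows essentially the same approach as the paper: the identical dyadic construction $f_n=\sum_{k=0}^{n2^n-1}(k/2^n)\one_{\{k/2^n\le f<(k+1)/2^n\}}+n\one_{\{f\ge n\}}$, the same cell-splitting argument for monotonicity, and the same $f=f^+-f^-$ reduction for the general signed case. The only cosmetic difference is that the paper verifies the uniform bound directly for nonnegative bounded $f$ (obtaining $2^{-n}$), whereas you pass through $f^\pm$ and pick up a harmless factor of $2$.
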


\begin{proof}

Let us first assume that $f\geq 0$. For $n\in\mathbb{N}$, we set

$$E_{n,\infty}:=\{f\geq n\},\hspace{0.3cm}E_{n,k}:=\left\{\frac{k}{2^n}\leq f<\frac{k+1}{2^n}\right\},\hspace{0.4cm}k\in\{0,1,...,n2^n-1\}.$$

Thus we have $E_{n,k},E_{n,\infty}\in\mathcal{A}$. Now define

$$f_n:=\sum_{k=0}^{n2^n-1}\frac{k}{2^n}\cdot\one_{E_{n,k}}+n\one_{E_{n,\infty}}$$

and obtain that $f_n$ is simple by construction. For $x\in E_{n,k}$ we get

$$f_{n+1}(x)=\begin{cases}f_n(x), &\frac{2k}{2^{n+1}}\leq f(x)<\frac{2k+1}{2^{n+1}}\\ f_n(x)+\frac{1}{2^{n+1}}, &\frac{2k+1}{2^{n+1}}\leq f(x)<\frac{2(k+1)}{2^{n+1}}\end{cases}$$

and if $x\in E_{n,\infty}$ we get

$$f_{n+1}(x)=\begin{cases} n+1, & f(x)\geq n+1\\ \frac{n2^{n+1}+l}{2^{n+1}}, &\frac{n2^{n+1}+l}{2^{n+1}}\leq f(x)<\frac{n2^{n+1}+l+1}{2^{n+1}}\end{cases}$$

It follows that $0\leq f_n(x)\leq f_{n+1}(x)$ for all $ x\in E$.
If furthermore $x\in \{f<n\}$,  then $$0\leq f(x)-f_n(x)\leq 2^{-n}=\frac{1}{2^n}=\frac{k+1}{2^n}-\frac{k}{2^n}\xrightarrow{n\to\infty} 0$$ or equivalently

$$f_n(x)\xrightarrow{n\to\infty} f(x)$$

on $\left\{x\in E\mid f(x)<\infty\right\}=\bigcup_{k\geq 1}\{f<k\}$, and if $x\in \{f=\infty\}=\bigcap_{n\in\mathbb{N}}\{f\geq n\}$, then

$$f_n(x)=n\xrightarrow{n\to\infty}\infty.$$

If we have a function $f:E\longrightarrow \mathbb{R}^+$ and we assume that there exists aome $M\in(0,\infty)$ such that $0\leq f_n(x)\leq M$ for all $x\in E$, then for $n>M$ with $\{f\geq n\}=\varnothing$ it follows that for all $x\in E$ we get

$$0\leq f(x)-f_n(x)\leq 2^{-n},$$

which implies that

$$\sup_{x\in E}\vert f(x)-f_n(x)\vert \leq 2^{-n}\xrightarrow{n\to\infty} 0.$$

Let us emphasize the real case. If we have a function $f:E\longrightarrow \overline{\mathbb{R}}$ we have the decompositions as

$$f=f^+-f^-,\hspace{0.4cm}\vert f\vert =f^1+f^-,\hspace{0.4cm}\text{with}\hspace{0.2cm}f^+,f^-\geq 0.$$

If we now take $f_n^+$ and $f_n^-$ as constructed above we can obtain $f_n^+ \uparrow f^+$ and $f_n^- \uparrow f^-$ for $n\to\infty$. Moreover, we notice that for $x\in E$ the sequences $(f_n^+(x))_{n\in\N}$ and $(f_n^-(x))_{n\in\N}$ cannot be simultaneously nonzero and therefore

$$f_n=f_n^+-f_n^-\xrightarrow{n\to\infty} f=f^+-f^-.$$
\end{proof}

\section{Monotone classes}

A very important notion is that of a monotone class. We will see that there are many things which can be deduced by using the \emph{monotone class lemma}. 

\begin{defn}[Monotone Class]

Let $E$ be some topological space and let $M\subset\mathcal{P}(E)$. $M$ is called a monotone class if the following holds.

\begin{enumerate}[$(i)$]
\item{$E\in M$.}
\item{Let $A\in M$ and $B\in M$. If $A\subset B$ $\Longrightarrow B\setminus A\in M$.}
\item{Let $(A_n)_{n\in\N}\in M$. If $A_n\subset A_{n+1}$ $\Longrightarrow \bigcup_{n\in\mathbb{N}}A_n \in M$.}
\end{enumerate}
\end{defn}

\begin{rem}
A $\sigma$-Algebra is a monotone class\footnote{Prove that as an exercise.}
\end{rem}

\begin{rem}
As for $\sigma$-Algebras, we notice that an arbitrary intersection of monotone classes is again a monotone class. Thus, if $\mathcal{C}\in \mathcal{P}(E)$, we can define the monotone class generated by $\mathcal{C}$ as

$$M(\mathcal{C})=\bigcap_{\mathcal{C}\subset M\atop M\hspace{0.1cm}\text{mon.cl.}}M.$$

This is also by construction the smallest monotone class containing $\mathcal{C}$.
\end{rem}

\begin{thm}[Monotone Classes lemma]

Let $E$ be a topological space. If $\mathcal{C}\subset\mathcal{P}(E)$ is stable under finite intersection, i.e. for $A\in \mathcal{C}$ and $B\in\mathcal{C}\Longrightarrow A\cap B\in \mathcal{C}$, then 

$$\sigma(\mathcal{C})=M(\mathcal{C}).$$
\end{thm}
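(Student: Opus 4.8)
The plan is to prove the two inclusions separately. The inclusion $M(\mathcal{C})\subset\sigma(\mathcal{C})$ is immediate: by the remark above, every $\sigma$-Algebra is a monotone class, so $\sigma(\mathcal{C})$ is a monotone class containing $\mathcal{C}$; since $M(\mathcal{C})$ is by definition the smallest such monotone class, it follows that $M(\mathcal{C})\subset\sigma(\mathcal{C})$. The entire content of the theorem is therefore the reverse inclusion $\sigma(\mathcal{C})\subset M(\mathcal{C})$, and for this it suffices to prove that $M:=M(\mathcal{C})$ is itself a $\sigma$-Algebra, because then $M$ is a $\sigma$-Algebra containing $\mathcal{C}$ and hence contains $\sigma(\mathcal{C})$.

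First I would record the easy structural facts about $M$. Stability under complementation is free: since $E\in M$ and any $A\in M$ satisfies $A\subset E$, property $(ii)$ of a monotone class gives $A^C=E\setminus A\in M$. Next I would observe that a monotone class which is stable under complementation and under finite intersection is automatically a $\sigma$-Algebra: de Morgan turns finite intersections into finite unions, and an arbitrary countable union $\bigcup_{n}A_n$ can be rewritten as the increasing union $\bigcup_n(A_1\cup\dotsm\cup A_n)$, which lies in $M$ by property $(iii)$. Thus the whole problem reduces to the single assertion that $M$ is stable under finite intersection.

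The key step, and the heart of the proof (the classical Dynkin argument), is to establish this intersection property by a two-stage bootstrap using the auxiliary classes
$$M_A:=\{B\in M\mid A\cap B\in M\},\qquad A\in M.$$
I would first check that for fixed $A$ the class $M_A$ is a monotone class: $E\in M_A$ since $A\cap E=A\in M$; if $B\subset B'$ are in $M_A$ then $A\cap(B'\setminus B)=(A\cap B')\setminus(A\cap B)\in M$ by $(ii)$; and if $B_n\uparrow$ in $M_A$ then $A\cap\bigcup_n B_n=\bigcup_n(A\cap B_n)\in M$ by $(iii)$. Now comes the stage where the hypothesis on $\mathcal{C}$ is used. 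For $A\in\mathcal{C}$ and any $B\in\mathcal{C}$, stability of $\mathcal{C}$ under finite intersection gives $A\cap B\in\mathcal{C}\subset M$, so $\mathcal{C}\subset M_A$; minimality of $M$ then forces $M_A=M$. In words: $A\cap B\in M$ whenever $A\in\mathcal{C}$ and $B\in M$.

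Finally I would bootstrap to the general case. Fix an arbitrary $B\in M$ and consider $M_B=\{A\in M\mid A\cap B\in M\}$, which is a monotone class by the same verification as above (using $A\cap B=B\cap A$). The previous paragraph shows exactly that every $A\in\mathcal{C}$ lies in $M_B$, so $\mathcal{C}\subset M_B$ and minimality again yields $M_B=M$. This says $A\cap B\in M$ for all $A,B\in M$, i.e. $M$ is stable under finite intersection, which completes the reduction and hence the proof. The only real subtlety to watch is the order of quantifiers in the two stages: one must first settle the intersection property with one factor in $\mathcal{C}$ before promoting both factors to $M$, and it is precisely the first stage that consumes the stability hypothesis on $\mathcal{C}$.
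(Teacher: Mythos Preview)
Your proof is correct and follows essentially the same route as the paper: both argue that $M(\mathcal{C})\subset\sigma(\mathcal{C})$ by minimality, then reduce the reverse inclusion to showing that $M(\mathcal{C})$ is stable under finite intersection, and establish this via the same two-stage Dynkin bootstrap with the auxiliary classes $M_A$. Your write-up is in fact more explicit than the paper's in spelling out why complementation is automatic and why finite-intersection stability suffices to make a monotone class a $\sigma$-Algebra.
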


\begin{proof}

It is obvious that, $M(\mathcal{C})\subset\sigma(\mathcal{C})$ since a $\sigma$-Algebra is also a monotone class. Next we want to show that $M(\mathcal{C})$ is a $\sigma$-Algebra to conclude that $\sigma(\mathcal{C})\subset M(\mathcal{C})$ and hence then $M(\mathcal{C})$ contains $\mathcal{C}$, i.e. $\sigma(\mathcal{C})$. It is  not difficult to see that a monotone class, which is stable under finite intersection, is a $\sigma$-Algebra. Let us therefore show that $M(\mathcal{C})$ is stable under finite intersections. First, we fix $A\in\mathcal{C}$ and define

$$M_A:=\{B\in\mathcal{C}\mid A\cap B\in M(\mathcal{C})\}.$$

Then we get that $\mathcal{C}\in M_A$ since $\mathcal{C}$ is stable under finite intersections and obviously $E\in M_A$. We can also note that If $B,B'\in M_A$ and $B\subset B'$, with 

$$A\cap (B'\setminus B)=\underbrace{\underbrace{(A\cap B')}_{\in M(\mathcal{C})}\setminus \underbrace{(A\cap B)}_{\in M(\mathcal{C})}}_{\in M(\mathcal{C})}$$

then $B'\setminus B\in M_A$. Moreover, if $(B_n)_{n\in\N}\in M_A$ and $A\cap\bigcup_n\underbrace{(A\cap B_n)}_{\in M(\mathcal{C})}$ we get the implication

$$\bigcup_{n\geq 1}(A\cap B_n)\in M(\mathcal{C})\Longrightarrow \bigcup_{n\geq 1}B_n\in M_A.$$

since $(A\cap B_n)$ is increasing. Finally we can conclude the above facts. That means if $M_A$ is a monotone class containing $\mathcal{C}$, then $M_A=M(\mathcal{C})$, which shows that for all $A\in\mathcal{C}$ and $B\in M(\mathcal{C})$ we get $A\cap B\in M(\mathcal{C})$. We can now apply the same idea another time. Fix $B\in M(\mathcal{C})$ and define 

\[
M:=\{A\in M(\mathcal{C})\mid A\cap B\in M(\mathcal{C})\}.
\]

Now, from above, we get that $M$ is a monotone class, i.e. $M\subset M(\mathcal{C})$ and thus for all $A,B\in M(\mathcal{C})$ we get $A\cap B\in M(\mathcal{C})$. Hence it follows that $M(\mathcal{C})$ is stable under finite intersections and is therefore a $\sigma$-Algebra.
\end{proof}

\begin{cor}

Let $(E,\A)$ be a measurable space and let $\mu,\nu$ be two measures on $(E,\mathcal{A})$. Moreover, assume that there exists a family of subsets $\mathcal{C}$, which is stable under finite intersections, such that $\sigma(\mathcal{C})=\mathcal{A}$ and $\mu(A)=\nu(A)$ for all $A\in\mathcal{C}$. Then the following hold.

\begin{enumerate}[$(i)$]

\item{If $\mu(E)=\nu(E)<\infty$, then we get $\mu=\nu$.
}

\item{If there exists an increasing family $(E_n)_{n\in\N}$ with $E_n\in\mathcal{C}$ such that 

$$E=\bigcup_{n\in\N}E_n$$

and $\mu(E_n)=\nu(E_n)<\infty$, then it follows that $\mu=\nu$.
}
\end{enumerate}
\end{cor}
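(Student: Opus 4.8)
Let me identify the final statement. It's the Corollary right after the Monotone Classes lemma: two measures $\mu,\nu$ on $(E,\A)$ that agree on a $\cap$-stable generating family $\mathcal{C}$ with $\sigma(\mathcal{C})=\A$ are equal, under either a finiteness hypothesis $(i)$ $\mu(E)=\nu(E)<\infty$, or a $\sigma$-finiteness-type exhaustion $(ii)$ by sets $E_n\in\mathcal{C}$ with $\mu(E_n)=\nu(E_n)<\infty$.

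**Overall approach.** The whole point is to feed the Monotone Classes lemma the right class. The plan is to define the collection of sets on which $\mu$ and $\nu$ coincide, show it is a monotone class, observe it contains the $\cap$-stable family $\mathcal{C}$, and then invoke the lemma to conclude it contains $\sigma(\mathcal{C})=\A$, i.e. it is all of $\A$. Part $(ii)$ will be reduced to part $(i)$ by a localization argument.

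**Key steps for $(i)$.** First I would set
\[
M:=\{A\in\A\mid \mu(A)=\nu(A)\}.
\]
I must verify $M$ is a monotone class. Condition $E\in M$ is exactly the hypothesis $\mu(E)=\nu(E)<\infty$. For the difference condition, take $A,B\in M$ with $A\subset B$; since $\mu(B)<\infty$ (as $\mu(B)\leq\mu(E)<\infty$) the earlier proposition gives $\mu(B\setminus A)=\mu(B)-\mu(A)$ and likewise for $\nu$, so $\mu(B\setminus A)=\nu(B\setminus A)$ and $B\setminus A\in M$; this is precisely where finiteness is used, to legitimize the subtraction. For the increasing-union condition, take $(A_n)_{n\in\N}\subset M$ with $A_n\subset A_{n+1}$; by the continuity-from-below property of measures, $\mu\left(\bigcup_n A_n\right)=\lim_n\uparrow\mu(A_n)=\lim_n\uparrow\nu(A_n)=\nu\left(\bigcup_n A_n\right)$, so the union lies in $M$. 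Thus $M$ is a monotone class containing $\mathcal{C}$, and since $\mathcal{C}$ is stable under finite intersection, the Monotone Classes lemma yields $\A=\sigma(\mathcal{C})=M(\mathcal{C})\subset M$, giving $\mu=\nu$.

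**Key steps for $(ii)$ and the main obstacle.** For the $\sigma$-finite case I would fix $n$ and define the restricted (``trace'') measures $\mu_n(A):=\mu(A\cap E_n)$ and $\nu_n(A):=\nu(A\cap E_n)$ on $(E,\A)$. These are finite measures with $\mu_n(E)=\mu(E_n)=\nu(E_n)=\nu_n(E)<\infty$, and for $C\in\mathcal{C}$ we have $\mu_n(C)=\mu(C\cap E_n)=\nu(C\cap E_n)=\nu_n(C)$ because $C\cap E_n\in\mathcal{C}$ (here $\mathcal{C}$ being $\cap$-stable and $E_n\in\mathcal{C}$ is essential). Applying part $(i)$ to $\mu_n,\nu_n$ gives $\mu(A\cap E_n)=\nu(A\cap E_n)$ for every $A\in\A$ and every $n$. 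Finally, since $E_n\uparrow E$, the sets $A\cap E_n$ increase to $A$, and continuity from below gives
\[
\mu(A)=\lim_{n\to\infty}\uparrow\mu(A\cap E_n)=\lim_{n\to\infty}\uparrow\nu(A\cap E_n)=\nu(A),
\]
so $\mu=\nu$. The main subtlety to get right is the verification that the trace measures agree on all of $\mathcal{C}$ — this hinges on $C\cap E_n\in\mathcal{C}$, which is exactly why the hypothesis requires the exhausting sets $E_n$ to lie in $\mathcal{C}$ and not merely in $\A$; without $\cap$-stability of $\mathcal{C}$ the reduction to part $(i)$ would fail.
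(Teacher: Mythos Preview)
Your proposal is correct and follows essentially the same approach as the paper: define the class $\{A\in\A\mid\mu(A)=\nu(A)\}$, verify it is a monotone class containing $\mathcal{C}$, apply the Monotone Classes lemma for $(i)$, and reduce $(ii)$ to $(i)$ via the trace measures $\mu_n(\cdot)=\mu(\cdot\cap E_n)$ and continuity from below. If anything, you are more careful than the paper in two places: you make explicit that finiteness is what justifies the subtraction in the difference step, and you spell out that $\mu_n$ and $\nu_n$ agree on $\mathcal{C}$ because $C\cap E_n\in\mathcal{C}$, which is exactly why the hypothesis $E_n\in\mathcal{C}$ matters.
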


\begin{proof}

Let us first define the set $G:=\{A\in\A\mid \mu(A)=\nu(A)\}$. By assumption we get that $\mathcal{C}\subset G$. Moreover, we note that $G$ is a monotone class. Note at first that $E\in G$ by assumption since $\mu(E)=\nu(E)$. Now let $A,B\in G$ such that $A\subset B$ and since 

\[
\mu(B\setminus A)=\mu(B)-\mu(A)=\nu(B)-\nu(A)=\nu(B\setminus A),
\]

we get that $(B\setminus A)\in G$. Now let $(A_n)_{n\in\N}$ be an increasing sequence in $G$. Then the fact 

\[
\mu\left(\bigcup_{n\in\N}A_n\right)=\lim_{n\to\infty}\mu(A_n)=\lim_{n\to\infty}\nu(A_n)=\nu\left(\bigcup_{n\in\N}A_n\right)
\]

implies that $\bigcup_{n\in\N}A_n\in G$. Moreover, since $G$ is a monotone class containing $\mathcal{C}$, we get that $G$ contains $M(\mathcal{C})$. On the other hand we know that $\mathcal{C}$ is stable under finite intersections and therefore it follows that $M(\mathcal{C})=\sigma(\mathcal{C})$ and that $G=\sigma(\mathcal{C})=\A$. Now define for all $n\in\N$ and $A\in \A$ the two sequences 

\begin{align*}
\mu_n(A)&=\mu(A\cap E_n)\\
\nu_n(A)&=\nu(A\cap E_n)
\end{align*}

Now since $\mu(E)=\nu(E)$, we get the same for the sequence elements and obtain therefore that $\mu_n=\nu_n$. Moreover, for $A\in \A$, we have 

\[
\mu(A)=\lim_{n\to\infty}\mu(A\cap E_n)=\lim_{n\to\infty}\uparrow\nu(A\cap E_n)=\nu(A).
\]

Hence we get that $\mu=\nu$.
\end{proof}

\begin{rem}

There are several applications of this corollary. Let us emphasize a first one, by giving already a small introduction to the Lebesgue measure. Assume that $\lambda$ is a measure on the measurable space $(\R,\B(\R))$ such that $\lambda((a,b))=b-a$ for $a<b$ and let $\mathcal{C}$ be the class of intervals $E_n=(-n,n)$ for $n\geq 1$. With the corollary above, it follows that $\lambda$ is unique. We will call $\lambda$ the Lebesgue measure. A second application is that a finite measure on $(\R,\B(\R))$ is uniquely characterized, for $a\in\R$, by the values 

\[
\mu((-\infty,a])=\mu((-\infty,a)).
\]

\end{rem}

\chapter{Integration with respect to a positive measure}

\section{Integration for positive (nonnegative) functions}

In this chapter we will introduce the integral as a new concept in terms of a measure, which is a more general point of view instead of the Riemann integral, which has several drawbacks. Consider for example the function 

\[
\one_\mathbb{Q}(x):=\begin{cases}1,&x\in\mathbb{Q}\\0,&x\not\in\mathbb{Q}\end{cases}
\]

There is no way where we can say that this function would be Riemann integrable and thus we cant make sense of the integral 

\[
\int_0^1\one_\mathbb{Q}(x)dx
\]

in terms of the theory of Riemann integration. However, with the notion of an integral with respect to a positive measure we can also deal with such integrals as we will see. Let, in this chapter, $(E,\mathcal{A})$ be a measurable space and let $f(x)=\sum_{i=1}^n\alpha_i\one_{A_i}(x)$ be a simple function, where $A_i\in \A$ and $\alpha_i\in\R$ for all $1\leq  i\leq  n$ and $n\in\N$. Moreover, let us assume without loss of generality that 

$$\alpha_1<\alpha_2<...<\alpha_n.$$

Then $A_i=f^{-1}(\{\alpha_i\})\in\mathcal{A}$. Let us also denote by $\mu$ a positive measure on $(E,\A)$. Then we can define the integral for simple functions as follows.

\begin{defn}[Integral with respect to a measure]

Assume that $f$ takes values in $\mathbb{R}_+$ with  $0\leq \alpha_1<...<\alpha_n$. Then the Integral of $f$ with respect to $\mu$ is defined by

$$\int f d\mu=\sum_{i=1}^{n}\alpha_i\mu(A_i),$$

where we use the convention $0\cdot \infty=0$, in case that $\alpha_i=0$ and $\mu(A_i)=\infty$. Moreover, if $f(x)=\one_{A}(x)+0\cdot \one_{A^C}(x)$, for $A\in\A$, then

$$\int f d\mu=\mu(A)(+0\cdot \mu(A^C)).$$

\end{defn}

\begin{rem}
It is easy to obtain that by definition, if $\int fd\mu\in[0,\infty]$, then if $f=0$ we get that $\int fd\mu=0$. 
\end{rem}

\begin{rem}
This integral is well defined. Indeed, let $$f(x)=\sum_{i=1}^n\alpha_i\one_{A_i}(x)$$ be the canonical form of the simple function of $f$, i.e. $\alpha_i$ distinct, $A_i=f^{-1}(\{\alpha_i\})$. Then, by definition, it follows that 

$$\int fd\mu =\sum_{i=1}^n\alpha_i\mu(A_i).$$

On the other hand, we can also write $f$ as  

$$f(x)=\sum_{j=1}^m\beta_j\one_{B_j}(x),$$

where $(\beta_j)_{1\leq  j\leq  m}$ forms an $\mathcal{A}$-partition, with $\beta_j\geq 0$ and where the $\beta_j$'s are not necessarily distinct. We want to show that the integral is still the same. Note that for each $i\in\{1,...,n\}$, we get that $A_i$ is the disjoint union of the sets $B_j$ for which $\alpha_i=\beta_j$. Then the additivity property of the measure shows that

$$\mu(A_i)=\sum_{\{j\mid \beta_j=\alpha_j\}}\mu(B_j),$$

and therefore the integral doesn't depend on the representation of $f$.
\end{rem}

\begin{prop}
Let $f$ and $g$ be two simple, positive and measurable functions on $E$. Then

\begin{enumerate}[$(i)$]
\item{For $a,b\geq 0$ we get

\[
\int (af+bg)d\mu=a\int fd\mu+b\int gd\mu.
\]

}
\item{If $f\leq  g$, then 

\[
\int fd\mu\leq \int gd\mu.
\]
}
\end{enumerate}
\end{prop}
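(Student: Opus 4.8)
The plan is to reduce both statements to the definition of the integral of a simple function by writing $f$ and $g$ over a common $\mathcal{A}$-measurable partition of $E$. Write $f=\sum_{i\in I}\alpha_i\one_{A_i}$ and $g=\sum_{j\in J}\beta_j\one_{B_j}$ in canonical form, with $\alpha_i,\beta_j\geq 0$. The key observation, which I would establish first, is that the collection $(A_i\cap B_j)_{(i,j)\in I\times J}$ again forms an $\mathcal{A}$-measurable partition of $E$, since the $A_i$ partition $E$, the $B_j$ partition $E$, and finite intersections of measurable sets are measurable. On this refined partition I can express both functions simultaneously:
\[
f=\sum_{i\in I}\sum_{j\in J}\alpha_i\one_{A_i\cap B_j},\qquad g=\sum_{i\in I}\sum_{j\in J}\beta_j\one_{A_i\cap B_j}.
\]
Because the integral does not depend on the chosen representation of a simple function (as noted in the remark following the definition), I may compute each integral over this common partition.

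For $(i)$, with $a,b\geq 0$ the function $af+bg$ is again simple, positive, and measurable, and on the common partition it reads $af+bg=\sum_{i\in I}\sum_{j\in J}(a\alpha_i+b\beta_j)\one_{A_i\cap B_j}$. Applying the definition of the integral and the additivity of $\sum$, I would compute
\[
\int(af+bg)\,d\mu=\sum_{i\in I}\sum_{j\in J}(a\alpha_i+b\beta_j)\mu(A_i\cap B_j).
\]
Splitting the sum and factoring out $a$ and $b$, this equals $a\sum_{i,j}\alpha_i\mu(A_i\cap B_j)+b\sum_{i,j}\beta_j\mu(A_i\cap B_j)$. Using the additivity of $\mu$ over the partition, namely $\mu(A_i)=\sum_{j\in J}\mu(A_i\cap B_j)$ and $\mu(B_j)=\sum_{i\in I}\mu(A_i\cap B_j)$, these two sums collapse to $a\int f\,d\mu+b\int g\,d\mu$, which is the desired linearity.

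For $(ii)$, suppose $f\leq g$ pointwise. On each atom $A_i\cap B_j$ of the common partition with $\mu(A_i\cap B_j)>0$, the values of $f$ and $g$ are the constants $\alpha_i$ and $\beta_j$ respectively, so $f\leq g$ forces $\alpha_i\leq\beta_j$ there. Hence term by term $\alpha_i\mu(A_i\cap B_j)\leq\beta_j\mu(A_i\cap B_j)$, and summing over all $(i,j)$ gives $\int f\,d\mu\leq\int g\,d\mu$. The only mild subtlety, which I would handle explicitly, is the treatment of atoms of measure zero: on such sets the inequality $\alpha_i\leq\beta_j$ need not hold, but those terms contribute nothing to either sum (using the convention $0\cdot\infty=0$ when a coefficient vanishes), so they may be discarded harmlessly. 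This bookkeeping around null atoms and the $0\cdot\infty$ convention is the only place requiring care; everything else is a direct consequence of the representation-independence of the integral and the finite additivity of $\mu$.
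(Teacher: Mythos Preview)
Your proof is correct. For part $(i)$ you follow essentially the same route as the paper: pass to the common refinement $(A_i\cap B_j)_{(i,j)}$, use representation-independence, and read off linearity from the definition.

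For part $(ii)$ your approach differs from the paper's. The paper leverages part $(i)$: since $g-f$ is again a positive simple function, one writes $g=f+(g-f)$ and applies linearity to get $\int g\,d\mu=\int f\,d\mu+\int(g-f)\,d\mu\geq\int f\,d\mu$. Your argument instead stays on the common refinement and compares term by term. Both are valid; the paper's route is shorter and avoids the null-set bookkeeping entirely, while yours is more self-contained and does not rely on $(i)$. One small remark on your version: your caution about atoms of measure zero is slightly misplaced. If $A_i\cap B_j\neq\varnothing$ then $f\leq g$ already forces $\alpha_i\leq\beta_j$ regardless of the measure; only the genuinely empty intersections can fail the inequality, and those contribute zero anyway. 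So the distinction you need is empty versus nonempty, not measure zero versus positive measure---but this does not affect the correctness of the argument.
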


\begin{proof}

For $(i)$, let us first set

$$f=\sum_{i=1}^n\alpha_i\one_{A_i}\hspace{01cm}g=\sum_{k=1}^m\alpha_k'\one_{A_k'}.$$

Moreover, we can note that

$$A_i=\bigcup_{k=1}^m(A_i\cap A_k'),\hspace{1cm}A_k'=\bigcup_{i=1}^n(A_k'\cap A_i)$$

and therefore we can write

$$f=\sum_{j=1}^p\beta_j\one_{B_j}\hspace{1cm}g=\sum_{j=1}^p\gamma_j\one_{B_j},$$

where $(B_j)_{1\leq  j\leq  pj}$ is an $\mathcal{A}$-partition of $E$ obtained from a reordering of $(A_i\cap A_k')$. Thus we get 

$$\int fd\mu=\sum_{j=1}^p\beta_j\mu(B_j),\hspace{1cm}\int gd\mu=\sum_{j=1}^p\gamma_j\mu(B_j).$$

It follows that 

$$\int (af+bg)d\mu=\sum_{j=1}^p(a\beta_j+b\gamma_j)\mu(B_j)=a\sum_{j=1}^p\beta_j\mu(B_j)+b\sum_{j=1}^p\gamma_j\mu(B_j)=a \int f d\mu+b\int gd\mu.$$

For $(ii)$, we can write $g$ as $g=f+g-f$, where $f\geq 0$ and $g-f\geq 0.$ Then

$$\int g d\mu=\underbrace{\int(g-f)d\mu}_{\geq 0}+\int fd\mu\geq \int fd\mu.$$

\end{proof}

\begin{defn}[Integral for measurable maps]

Let $(E,\mathcal{A},\mu)$ be a measure space. Moreover, let us denote by $\mathcal{E}^+$ the set of all nonnegative simple functions and let $f:E\longrightarrow [0,\infty)$ be a measurable map. Then we can define the integral for $f$ to be given as 

$$\int fd\mu:=\sup_{h\leq f\atop h\in\mathcal{E}^+}\int hd\mu.$$

\end{defn}

\begin{rem}

Sometimes we have different notation for the same.

$$\int fd\mu=\int f(x)d\mu(x)=\int f(x)\mu(dx).$$

\end{rem}

\begin{prop}

Let $(E,\A,\mu)$ be a measure space and let $f,g:E\longrightarrow [0,\infty)$ be measurable maps. Then

\begin{enumerate}[$(i)$]
\item{If $f\leq g$, then 

$$\int fd\mu\leq \int gd\mu.$$
}

\item{If $\mu(\{x\in E\mid f(x)>0\})=0,$ then 

$$\int fd\mu=0.$$
}
\end{enumerate}
\end{prop}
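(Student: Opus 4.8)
The plan is to prove both parts directly from the defining formula $\int f d\mu = \sup_{h\leq f,\, h\in\mathcal{E}^+}\int h d\mu$, using only the monotonicity of the supremum and the monotonicity of the measure $\mu$ already established earlier. No limit theorems are needed, since both claims are really order statements.

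For $(i)$, I would first observe that the hypothesis $f\leq g$ forces an inclusion between the two families of simple functions over which the suprema are taken. Indeed, if $h\in\mathcal{E}^+$ and $h\leq f$, then $h\leq f\leq g$, so
$$\{h\in\mathcal{E}^+\mid h\leq f\}\subseteq\{h\in\mathcal{E}^+\mid h\leq g\}.$$
Since the supremum of $\int h d\mu$ over the smaller family cannot exceed the supremum over the larger one, we obtain $\int f d\mu\leq\int g d\mu$ at once. This part is immediate once the inclusion is noted.

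For $(ii)$, write $N=\{x\in E\mid f(x)>0\}$ and assume $\mu(N)=0$. The key step is to show that every simple $h\in\mathcal{E}^+$ with $h\leq f$ already satisfies $\int h d\mu=0$; the claim then follows by taking the supremum. Writing $h=\sum_{i=1}^n\alpha_i\one_{A_i}$ in canonical form with $0\leq\alpha_1<\dotsm<\alpha_n$ and $A_i=h^{-1}(\{\alpha_i\})$, I would split the indices according to whether $\alpha_i=0$ or $\alpha_i>0$. The terms with $\alpha_i=0$ contribute nothing by the convention $0\cdot\infty=0$. For an index $i$ with $\alpha_i>0$, the inequality $h\leq f$ gives $f(x)\geq\alpha_i>0$ for all $x\in A_i$, hence $A_i\subseteq N$, and monotonicity of $\mu$ yields $\mu(A_i)\leq\mu(N)=0$. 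Therefore every summand in $\int h d\mu=\sum_{i=1}^n\alpha_i\mu(A_i)$ vanishes, so $\int h d\mu=0$, and passing to the supremum gives $\int f d\mu=0$.

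The main obstacle — in fact the only delicate point — is the bookkeeping in $(ii)$: one must keep the $\alpha_i=0$ terms separate so that the convention $0\cdot\infty=0$ is invoked rather than producing a spurious $\infty$, and one must use the set inclusion $A_i\subseteq N$ together with monotonicity of the measure (Proposition on measures, part $(i)$) rather than any continuity argument. Everything else reduces to the definition of the integral and the order-preservation of $\sup$.
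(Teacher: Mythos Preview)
Your proof is correct. The paper actually leaves this proposition as an exercise, with only the hint that it suffices to reduce to simple functions; your argument is a clean completion of that exercise, working directly from the definition $\int f\,d\mu=\sup_{h\in\mathcal{E}^+,\,h\leq f}\int h\,d\mu$ exactly as the hint suggests.
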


\begin{proof}
Exercise.\footnote{It is enough to show this for simple functions.}
\end{proof}

\begin{thm}[Monotone convergence theorem]

Let $(f_n)_{n\in\N}$ be an increasing sequence of positive and measurable functions (with values in $[0,\infty)$), and let $f=\lim_{n\to \infty}\uparrow f_n$. Then 

$$\int fd\mu=\lim_{n\to\infty}\int f_nd\mu.$$
\end{thm}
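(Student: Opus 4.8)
The plan is to prove the Monotone Convergence Theorem in the standard way: establish one inequality directly from monotonicity of the integral, then establish the reverse inequality by approximating from below with simple functions and exploiting a shrinking-set argument combined with continuity of the measure along increasing sequences.

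First I would record that the limit $f = \lim_{n\to\infty}\uparrow f_n$ exists as a function into $[0,\infty]$ because the sequence is increasing, and that $f$ is measurable as the pointwise supremum of measurable functions. Since $f_n \leq f_{n+1} \leq f$ for every $n$, the monotonicity property of the integral (the preceding proposition) gives $\int f_n\,d\mu \leq \int f_{n+1}\,d\mu \leq \int f\,d\mu$, so the sequence $\left(\int f_n\,d\mu\right)_{n}$ is increasing and bounded above by $\int f\,d\mu$. Hence its limit exists in $[0,\infty]$ and
\[
\lim_{n\to\infty}\int f_n\,d\mu \leq \int f\,d\mu.
\]
This is the easy half.

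For the reverse inequality, by the definition of $\int f\,d\mu$ as a supremum over simple functions $h \in \mathcal{E}^+$ with $h \leq f$, it suffices to show that for each such $h$ we have $\lim_{n\to\infty}\int f_n\,d\mu \geq \int h\,d\mu$. The key device is to fix a constant $c \in (0,1)$ and introduce the sets $E_n := \{x \in E \mid f_n(x) \geq c\,h(x)\}$. I would check that $(E_n)_{n}$ is increasing and that $\bigcup_n E_n = E$: for any $x$, if $h(x)=0$ then $x \in E_n$ trivially, and if $h(x)>0$ then $c\,h(x) < h(x) \leq f(x)$, so since $f_n(x)\uparrow f(x)$ the point $x$ eventually lands in $E_n$. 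Writing $h = \sum_{i=1}^m \alpha_i \one_{A_i}$ in canonical form, I would then estimate
\[
\int f_n\,d\mu \geq \int f_n \one_{E_n}\,d\mu \geq c\int h\one_{E_n}\,d\mu = c\sum_{i=1}^m \alpha_i\, \mu(A_i \cap E_n).
\]
Letting $n\to\infty$ and invoking continuity of $\mu$ along the increasing sequence $(A_i \cap E_n)_n$ with union $A_i$ (from the earlier proposition on measures of increasing sets), each term converges to $\alpha_i\,\mu(A_i)$, giving $\lim_{n\to\infty}\int f_n\,d\mu \geq c\int h\,d\mu$.

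The main obstacle, and the step requiring the most care, is precisely this $c$-truncation argument: one cannot directly pass the limit inside because $f_n$ need not dominate $h$ for any finite $n$, so the slack factor $c<1$ is essential to force $f_n\one_{E_n} \geq c\,h\one_{E_n}$ while the sets $E_n$ exhaust $E$. Once the inequality $\lim_{n\to\infty}\int f_n\,d\mu \geq c\int h\,d\mu$ holds for every $c\in(0,1)$, I would let $c\uparrow 1$ to obtain $\lim_{n\to\infty}\int f_n\,d\mu \geq \int h\,d\mu$, then take the supremum over all simple $h \leq f$ to conclude $\lim_{n\to\infty}\int f_n\,d\mu \geq \int f\,d\mu$. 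Combining the two inequalities yields the claimed equality.
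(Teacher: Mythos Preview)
Your proof is correct and follows essentially the same route as the paper: the easy inequality from monotonicity, then the reverse via a simple function $h\leq f$, a constant $c\in(0,1)$ (the paper writes $a$), the exhausting sets $E_n=\{f_n\geq ch\}$, continuity of $\mu$ along the increasing sequence $A_i\cap E_n$, and finally letting $c\uparrow 1$ and taking the supremum over $h$. Your write-up is in fact slightly more careful in justifying measurability of $f$ and the exhaustion $\bigcup_n E_n=E$ via the case split on $h(x)=0$.
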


\begin{proof}

We know that since $f_n\leq f$, we have $\int f_nd\mu\leq \int f d\mu$ and hence

$$\lim_{n\to\infty}\int f_nd\mu\leq \int fd\mu.$$

We need to show that 

$$\int fd\mu\leq \lim_{n\to\infty}\int f_nd\mu.$$

Let therefore $h=\sum_{i=1}^{m}\alpha_i\one_{A_i}$,
such that $h\leq f$, and let $a\in[0,1)$. Moreover let then  

$$E_n:=\{x\in E\mid ah(x)\leq f_n(x)\}.$$

We see immediately that $E_n$ is measurable\footnote{Prove this as an exercise.} and since $f_n\uparrow f$ as $n\to\infty$ and $a<1$, we see that 

$$E=\bigcup_{n\geq 1}E_n,\hspace{0.5cm}E_n\subset{E_{n+1}}.$$

We also note that $f_n\geq a\one_{E_n}h$. To see this, wee need to emphasize two cases.

\begin{enumerate}[$(i)$]

\item{If $x\in E_n$, then $a\one_{E_n}(x)h(x)=ah(x)$ and by definition of $E_n$ we get that$f_n(x)\geq ah(x).$
}
\item{If $x\not\in E_n$, then $a\one_{E_n}(x)h(x)=0$ and thus $f_n(x)\geq 0$ holds since $f_n$ is positive.
}
\end{enumerate}

Hence it follows that

$$\int f_nd\mu\geq \int a\one_{E_n}hd\mu=a\int\one_{E_n}hd\mu=a\sum_{i=1}^m\alpha_i\mu(A_i\cap E_n).$$

Since $E_n\uparrow E$ and $A_i\cap E_n\uparrow A_i$ as $n\to\infty$, we have $\mu(A_i\cap E_n)\uparrow \mu(A_i)$ as $n\to\infty$. Thus  we get that

$$\lim_{n\to\infty}\int f_nd\mu\geq a\sum_{i=1}^m\alpha_i\mu(A_i)=a\int hd\mu.$$

Note that the left side doesn't depend on $a$. Now we let $a\to 1$ and obtain then

$$\lim_{n\to\infty}\int f_nd\mu\geq \int hd\mu.$$

This is now true for every $h\in\mathcal{E}^+$ with $h\leq f$ and the left hand side doesn't depend on $h$. Therefore we have

$$\lim_{n\to\infty}\uparrow \int f_nd\mu\geq \sup_{h\in\mathcal{E}^+\atop h\leq f}\int hd\mu=\int fd\mu.$$

Let us recall that for any positive measurable map $f$ (values in $[0,\infty)$) there exists an increasing sequence $(f_n)_{n\in\N}$ of simple positive functions such that $f=\lim_{n\to\infty}f_n$.
\end{proof}

\begin{prop}

Let $(E,\A,\mu)$ be a measure space. Then

\begin{enumerate}[$(i)$]

\item{If $f$ and $g$ are two positive and measurable on $E$ and $a,b\in\mathbb{R}_+$, then 

$$\int (af+bg)d\mu=a\int fd\mu + b\int gd\mu.$$
}

\item{If $(f_n)_{n\in\N}$ is a sequence of measurable and positive functions on $E$, then 

$$\int\sum_n f_nd\mu=\sum_n\int f_nd\mu.$$
}
\end{enumerate}
\end{prop}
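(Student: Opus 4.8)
The plan is to reduce both statements to the already-established linearity and monotonicity of the integral on nonnegative \emph{simple} functions, using the approximation theorem (every nonnegative measurable map is an increasing pointwise limit of simple nonnegative functions) together with the monotone convergence theorem. The whole argument is just a matter of passing to limits through identities that are already known on $\mathcal{E}^+$, so no delicate estimates are required; the work is in arranging the approximating sequences so that they remain increasing.

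For $(i)$ I would first invoke the approximation theorem to pick increasing sequences $(f_n)_{n\in\N}$ and $(g_n)_{n\in\N}$ of nonnegative simple functions with $f_n\uparrow f$ and $g_n\uparrow g$. Since $a,b\in\R_+$, the sequence $(af_n+bg_n)_{n\in\N}$ is again a sequence of nonnegative simple functions, it is increasing, and it satisfies $af_n+bg_n\uparrow af+bg$ pointwise. Linearity on $\mathcal{E}^+$ gives, for every $n$,
$$\int(af_n+bg_n)\,d\mu=a\int f_n\,d\mu+b\int g_n\,d\mu.$$
Now I apply the monotone convergence theorem three times: to $af_n+bg_n\uparrow af+bg$ on the left, and to $f_n\uparrow f$ and $g_n\uparrow g$ on the right (noting that $af_n\uparrow af$ and $bg_n\uparrow bg$, where the cases $a=0$ or $b=0$ are covered by the convention $0\cdot\infty=0$). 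Letting $n\to\infty$ yields $\int(af+bg)\,d\mu=a\int f\,d\mu+b\int g\,d\mu$.

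For $(ii)$ I would first establish the finite-sum version by induction on $N$ using part $(i)$, namely $\int\sum_{k=1}^N f_k\,d\mu=\sum_{k=1}^N\int f_k\,d\mu$. Then I set $S_N=\sum_{k=1}^N f_k$ and observe that, because each $f_k\geq 0$, the sequence $(S_N)_{N\in\N}$ is increasing and converges pointwise (in $[0,\infty]$) to $\sum_{n}f_n$. A single application of the monotone convergence theorem then gives
$$\int\sum_{n}f_n\,d\mu=\int\lim_{N\to\infty}\uparrow S_N\,d\mu=\lim_{N\to\infty}\int S_N\,d\mu=\lim_{N\to\infty}\sum_{k=1}^N\int f_k\,d\mu=\sum_{n}\int f_n\,d\mu.$$

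The only real point requiring care—and hence the main obstacle—is verifying that the sequences fed into the monotone convergence theorem are genuinely monotone increasing; this is where the hypotheses $a,b\geq 0$ in $(i)$ and $f_k\geq 0$ in $(ii)$ are essential, since without nonnegativity the partial sums need not increase and the theorem would not apply. Everything else is bookkeeping, and I would make explicit only the justification that $af_n+bg_n$ and $S_N$ lie in the classes to which the earlier results apply.
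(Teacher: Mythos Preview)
Your proposal is correct and follows essentially the same route as the paper: for $(i)$ you approximate $f,g$ by increasing simple functions, use linearity on $\mathcal{E}^+$, and pass to the limit via monotone convergence; for $(ii)$ you apply $(i)$ to the partial sums $S_N$ and invoke monotone convergence once more. The paper does exactly this (with slightly less commentary on the monotonicity checks).
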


\begin{proof}

For $(i)$, take two positive sequences $(f_n)_{n\in\N}$ and $(g_n)_{n\in\N}\geq 0$ of simple functions such that $f_n\uparrow f$ and $g_n\uparrow g$ as $n\to\infty$. Then we get

$$\int (af+bg)d\mu\stackrel{MCV}{=}\lim_{n\to\infty}\int (af_n+bg_n)d\mu=\lim_{n\to\infty}\left(a\int f_n d\mu+b\int g_nd\mu\right)\stackrel{MCV}{=}a\int fd\mu +b\int gd\mu,$$

where $MCV$ stands for monotone convergence. Now $(ii)$ is an Immediate consequence of the monotone convergence theorem (MCV). Indeed, set

$$g_N=\sum_{n=1}^Nf_n,$$

with $g_N\geq 0$ and $\lim_{N\to\infty}\uparrow g_N=\sum_{n=1}^\infty f_n$. If we now apply the monotone convergence theorem to $(g_N)_{N\in\N}$, we get
\begin{multline*}
\int\sum_{n=1}^{\infty}f_n d\mu=\int \lim_{N\to\infty}g_Nd\mu \stackrel{MCV}{=}\lim_{N\to\infty}\int g_Nd\mu\\=\lim_{N\to\infty}\int\sum_{n=1}^Nf_nd\mu\stackrel{(i)}{=}\lim_{N\to\infty}\sum_{n=1}^N\int f_nd\mu=\sum_{n=1}^{\infty}\int f_n d\mu,
\end{multline*}

which proves the proposition.
\end{proof}

\begin{ex}
Let us consider the Dirac measure $\delta_x$ for $x\in E$ and let $f:E\longrightarrow \mathbb{R}_+$ be a measurable map. Then

$$\int fd\delta_x=f(x).$$
\end{ex}

\begin{ex}
Let us consider the space $\mathbb{N}$, the $\sigma$-Algebra $\mathcal{P}(\mathbb{N})$ and the counting measure on $\N$, which is the measure $\mu$ satisfying that for all $A\subset \N$ we get $\mu(A)=\vert A\vert$. Then for a measurable map $f:\mathbb{N}\longrightarrow \mathbb{R}_+$ we get

$$\int fd\mu=\sum_{n\in\mathbb{N}}f(n).$$ 

Moreover, we also get for every positive sequence $(a_{n,k})_{n\in\mathbb{N},k\in\mathbb{N}}\geq 0$ that 

$$\sum_{k\in\mathbb{N}}\sum_{n\in\mathbb{N}}a_{n,k}=\sum_{n\in\mathbb{N}}\sum_{k\in\mathbb{N}}a_{n,k}.$$
\end{ex}

\begin{cor}
\label{cor1}
Let $(E,\A)$ be a measurable space and let $f$ be a positive measurable map on $E$. Moreover, let us define for $A\in\mathcal{A}$ a map $\nu$ on $E$ by

$$\nu(A):=\int_{E}\one_{A}(x)f(x)d\mu=\int_Af(x)d\mu.$$

Then $\nu$ is a measure on $(E,\mathcal{A})$, which is called the measure with density $f$ with respect to $\mu$ and we write

$$\nu=f\circ \mu.$$
\end{cor}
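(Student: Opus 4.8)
The plan is to verify directly the two defining axioms of a positive measure for $\nu$, namely that $\nu(\varnothing)=0$ and that $\nu$ is $\sigma$-additive, relying crucially on the result proven just above that the integral commutes with countable sums of positive measurable functions.

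First I would check that $\nu$ is well defined and takes values in $[0,\infty]$: since $f\geq 0$ and $\one_A\geq 0$, the product $\one_A f$ is a positive measurable map, so its integral $\int \one_A f\, d\mu$ exists in $[0,\infty]$. For the empty set, $\one_\varnothing\equiv 0$, hence $\nu(\varnothing)=\int 0\cdot f\, d\mu=\int 0\, d\mu=0$ by the convention that the integral of the zero function vanishes.

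The heart of the argument is $\sigma$-additivity. Let $(A_n)_{n\in\N}\subset\A$ be pairwise disjoint and put $A=\bigcup_{n\in\N}A_n$. The key elementary observation is the pointwise identity $\one_A=\sum_{n\in\N}\one_{A_n}$, which holds precisely because the $A_n$ are disjoint: for each $x\in E$ at most one term $\one_{A_n}(x)$ is nonzero. Multiplying by $f$ gives $\one_A f=\sum_{n\in\N}\one_{A_n}f$, a countable sum of positive measurable functions. Applying the proposition on integration of series of positive functions then yields
\[
\nu(A)=\int \one_A f\, d\mu=\int \sum_{n\in\N}\one_{A_n}f\, d\mu=\sum_{n\in\N}\int \one_{A_n}f\, d\mu=\sum_{n\in\N}\nu(A_n),
\]
which is exactly the $\sigma$-additivity required.

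There is essentially no hard step here; the only point demanding care is the interchange of sum and integral, but this is handed to us directly by the preceding proposition (itself a consequence of the monotone convergence theorem), so the whole proof reduces to assembling these pieces together with the disjointness identity for indicator functions.
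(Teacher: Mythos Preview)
Your proof is correct and follows essentially the same approach as the paper: verify $\nu(\varnothing)=0$ directly, then obtain $\sigma$-additivity from the pointwise identity $\one_{\bigcup_n A_n}=\sum_n \one_{A_n}$ for disjoint $A_n$ together with the preceding proposition allowing interchange of sum and integral for positive measurable functions. The paper's argument is slightly terser but identical in substance.
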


\begin{rem}

It is clear that if $\mu(A)=0$ then $\nu(A)=0$ for all $A\in\A$.
\end{rem}

\begin{proof}[Proof of corollary \ref{cor1}]

First of all $\nu(\varnothing)=0$ follows from the given proposition 1.4. Now let $(A_n)_{n\in\mathbb{N}}\in\mathcal{A}$ be a sequence of measurable sets with $A_n\cap A_m=\varnothing$ for all $n\not=m$. Then 

\[
\nu\left(\bigcup_{n\geq 1}A_n\right)=\int\one_{\bigcup_{n\geq 1}A_n}(x)f(x)d\mu=\int\sum_{n\geq 1}\one_{A_n}(x)f(x)d\mu=\sum_{n\geq 1}\int \one_{A_n}(x)f(x)d\mu=\sum_{n\geq 1}\nu(A_n),
\]

which actually shows that $\nu$ is a measure on $E$.
\end{proof}

\begin{rem} 
We say that a property is true $\mu$-almost everywhere and we write $\mu$-a.e. (or a.e. if it is clear for which measure), if this property holds on a set $A\in\mathcal{A}$ with $\mu(A^C)=0$. For example, if $f$ and $g$ are both measurable maps on $E$, then $f=g$ a.e. means that

$$\mu(\{x\in E\mid f(x)\not=g(x)\})=0.$$
\end{rem}

\begin{prop}
Let $(E,\A,\mu)$ be a measure space and Let $f,g:E\longrightarrow \R$ be measurable and positive functions. Then the following hold.
\begin{enumerate}[$(i)$]
\item{$\mu(\{x\in E\mid f(x)>a\})\leq  \frac{1}{a}\int fd\mu$ for all $a>0$.
}
\item{If $\int fd\mu<\infty$, then $f<\infty$ a.e.
}
\item{$\int fd\mu=0$ if and only if $f=0$ a.e.
}
\item{If $f=g$ a.e., then $\int fd\mu=\int gd\mu$.
}
\end{enumerate}
\end{prop}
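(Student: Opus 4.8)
The plan is to derive everything from part $(i)$, which is the Markov (Tchebisheff) inequality and serves as the engine for the rest. To prove $(i)$, fix $a>0$ and set $A=\{x\in E\mid f(x)>a\}$, which is measurable since $f$ is. The key observation is the pointwise bound $f\geq a\one_A$: on $A$ we have $f>a=a\one_A$, and on $A^C$ we have $a\one_A=0\leq f$ since $f$ is positive. Applying the monotonicity of the integral (the earlier proposition stating $f\leq g\Rightarrow\int f\,d\mu\leq\int g\,d\mu$) together with the value of the integral of a simple function gives
\[
\int f\,d\mu\geq\int a\one_A\,d\mu=a\,\mu(A),
\]
and dividing by $a>0$ yields $(i)$.

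Next I would obtain $(ii)$ and $(iii)$ as consequences. For $(ii)$, I would write $\{f=\infty\}=\bigcap_{n\geq1}\{f>n\}$ and note that by $(i)$ we have $\mu(\{f=\infty\})\leq\mu(\{f>n\})\leq\frac{1}{n}\int f\,d\mu$ for every $n$; since $\int f\,d\mu<\infty$, letting $n\to\infty$ forces $\mu(\{f=\infty\})=0$, i.e. $f<\infty$ a.e. For $(iii)$, the backward implication is immediate: if $f=0$ a.e. then $\mu(\{f>0\})=0$, so the earlier proposition (if $\mu(\{f>0\})=0$ then $\int f\,d\mu=0$) applies directly. For the forward implication, assume $\int f\,d\mu=0$ and write $\{f>0\}=\bigcup_{n\geq1}\{f>\tfrac1n\}$, an increasing union. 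By $(i)$, $\mu(\{f>\tfrac1n\})\leq n\int f\,d\mu=0$ for each $n$, and by continuity of the measure along increasing sequences we conclude $\mu(\{f>0\})=\lim_{n\to\infty}\mu(\{f>\tfrac1n\})=0$, so $f=0$ a.e.

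Finally, for $(iv)$ I would exploit linearity. Let $N=\{x\in E\mid f(x)\neq g(x)\}$, so $\mu(N)=0$ by hypothesis. Decompose $f=f\one_N+f\one_{N^C}$ and $g=g\one_N+g\one_{N^C}$, where all four summands are positive and measurable. On $N^C$ the two functions agree, so $f\one_{N^C}=g\one_{N^C}$; meanwhile $\{f\one_N>0\}\subseteq N$ and $\{g\one_N>0\}\subseteq N$ both have measure zero, so the zero-support proposition gives $\int f\one_N\,d\mu=\int g\one_N\,d\mu=0$. Applying additivity of the integral (the linearity proposition for positive functions) then gives
\[
\int f\,d\mu=\int f\one_N\,d\mu+\int f\one_{N^C}\,d\mu=\int g\one_{N^C}\,d\mu=\int g\,d\mu.
\]
None of the four steps is genuinely difficult once $(i)$ is in hand; the only points requiring care are the bookkeeping with the null set $N$ in $(iv)$ and the correct use of continuity of the measure along the increasing family $\{f>\tfrac1n\}$ in $(iii)$, which is where I would be most careful to invoke the right earlier result rather than to compute anything.
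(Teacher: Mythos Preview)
Your proof is correct. Parts $(i)$--$(iii)$ match the paper's argument almost exactly (the only cosmetic difference is that in $(iii)$ the paper bounds $\mu(\{f>0\})$ by $\sigma$-subadditivity, $\mu\bigl(\bigcup_n\{f>\tfrac1n\}\bigr)\le\sum_n\mu(\{f>\tfrac1n\})=0$, whereas you use continuity along the increasing family; either works).

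Part $(iv)$ is where you genuinely diverge. You split $f=f\one_N+f\one_{N^C}$ and invoke additivity plus the zero-support proposition on $N$. The paper instead sandwiches: from $f=g$ a.e.\ it gets $f\vee g=f\wedge g$ a.e., hence $\int(f\vee g)\,d\mu=\int(f\wedge g)\,d\mu$, and then the chain $f\wedge g\le f\le f\vee g$ (and likewise for $g$) forces both integrals to equal this common value. Your route is arguably cleaner and avoids introducing $\vee,\wedge$; the paper's route has the minor advantage of not needing to separately argue that $f\one_N$ and $g\one_N$ have null support, since the single function $f\vee g-f\wedge g$ does all the work. Both are short and rest on the same earlier proposition.
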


\begin{proof} We show each points seperately.
\begin{enumerate}[$(i)$]
\item{Consider the set $A_a=\{x\in E\mid f(x)>a\}$. Then we can observe that $f(x)\geq a\one_{A_a}(x)$ for all $x\in A_a$. Then it follows that 
\[
\int f(x)d\mu\geq \int a\one_{A_a}(x)d\mu
\]
and 
\[
\mu(A_a)\leq  \frac{1}{a}\int fd\mu.
\]
}
\item{For $n\geq 1$ consider the sets $A_n=\{x\in E\mid f(x)\geq n\}$ and $A_\infty=\{x\in E\mid f(x)=\infty\}=\bigcap_{n\in\N}A_n$. Then we get that
\[
\mu(A_\infty)=\lim_{n\to\infty}\mu(A_n),
\]
which implies that 
\[
\mu(A_n)\leq  \underbrace{\frac{1}{n}\int f(x)d\mu}_{\xrightarrow{n\to\infty} 0}<\infty.
\]
}
\item{We have already seen that if $f=0$ a.e., then $\int fd\mu=0$. Therefore, it is enough to show the other direction. Let $n>1$. Then 
\[
\mu(B_n)\leq  n\underbrace{\int fd\mu}_{0}=0.
\]
Thus we get that
\[
\mu(\{x\in E\mid f(x)>0\})=\mu\left(\bigcup_{n\geq 1}\right)\leq \sum_{n\geq 1}\mu(B_n)=0.
\]
}
\item{Let us introduce a special notation at this point. We write $f\lor g$ for $\sup(f,g)$ and $f\land g$ for $\inf(f,g)$. Now assume that $f=g$ a.e., which implies that $f\lor g=f\land g$ and hence we get
\[
\int (f\lor g)d\mu=\int (f\land g)d\mu+\int (f\lor g-f\land g)d\mu=\int (f\land g)d\mu.
\]
Because of the fact that
\begin{align*}
f\land g\leq  f\leq  f\lor g&\Longrightarrow \int fd\mu=\int(f\lor g)d\mu=\int (f\land g)d\mu\\
f\land g\leq  g\leq  f\lor g&\Longrightarrow \int gd\mu=\int (f\lor g)d\mu=\int (f\land g)d\mu,
\end{align*}
we finally get 
\[
\int fd\mu=\int gd\mu.
\]
}
\end{enumerate}
\end{proof}

\begin{thm}[Fatou's lemma]

Let $(f_n)_{n\in\N}$ be a sequence of real valued, measurable and positive functions on a measure space $(E,\A,\mu)$. Then

$$\int \liminf_{n\to\infty}f_nd\mu\leq \liminf_{n\to\infty}\int f_nd\mu.$$

\end{thm}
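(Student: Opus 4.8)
The plan is to reduce Fatou's lemma to the Monotone convergence theorem, which was established earlier. The key device is to introduce the auxiliary sequence
$$g_n:=\inf_{k\geq n}f_k,\hspace{0.5cm}n\in\N.$$
First I would record the elementary properties of this sequence. Each $g_n$ is measurable (an infimum of countably many measurable functions is measurable, as shown in the proposition on $\limsup$/$\liminf$ of measurable maps) and positive, since each $f_k$ takes values in $[0,\infty)$. Moreover $(g_n)_{n\in\N}$ is increasing: passing from $n$ to $n+1$ removes terms from the index set over which the infimum is taken, so $g_n\leq g_{n+1}$. By the very definition of the limit inferior recalled just before the statement, we have $\lim_{n\to\infty}\uparrow g_n=\sup_n\inf_{k\geq n}f_k=\liminf_{n\to\infty}f_n$.

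Next I would apply the Monotone convergence theorem to the increasing sequence $(g_n)_{n\in\N}$, with limit $f:=\liminf_{n\to\infty}f_n$. This yields immediately
$$\int\liminf_{n\to\infty}f_n\,d\mu=\int\lim_{n\to\infty}\uparrow g_n\,d\mu=\lim_{n\to\infty}\int g_n\,d\mu.$$
It now remains to compare $\lim_{n\to\infty}\int g_n\,d\mu$ with $\liminf_{n\to\infty}\int f_n\,d\mu$.

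The comparison step uses monotonicity of the integral (established for positive measurable functions). For each fixed $n$ and every $k\geq n$ we have $g_n=\inf_{j\geq n}f_j\leq f_k$, hence $\int g_n\,d\mu\leq\int f_k\,d\mu$. Since this holds for all $k\geq n$, taking the infimum over such $k$ on the right gives $\int g_n\,d\mu\leq\inf_{k\geq n}\int f_k\,d\mu$. Letting $n\to\infty$ and using that the right-hand side increases to $\liminf_{n\to\infty}\int f_n\,d\mu$, I obtain
$$\lim_{n\to\infty}\int g_n\,d\mu\leq\liminf_{n\to\infty}\int f_n\,d\mu.$$
Chaining this with the equality from the Monotone convergence theorem produces the desired inequality.

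There is no serious obstacle here: the entire content is choosing the right monotone minorant sequence $g_n=\inf_{k\geq n}f_k$ so that its pointwise limit is exactly $\liminf_n f_n$, which makes the Monotone convergence theorem directly applicable. The only points requiring a little care are verifying measurability of the infima (already available from the earlier proposition) and correctly handling the interchange of infimum and integral, which is a one-line monotonicity argument rather than a genuine difficulty. I would also note in passing that the hypothesis $f_n\geq 0$ is essential, since it is what allows the Monotone convergence theorem to be invoked.
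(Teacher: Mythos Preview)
Your proof is correct and follows essentially the same approach as the paper: define the increasing auxiliary sequence $g_n=\inf_{k\geq n}f_k$, apply the Monotone convergence theorem to it, and then compare $\int g_n\,d\mu$ with $\inf_{k\geq n}\int f_k\,d\mu$ via monotonicity of the integral. The paper's argument is identical in structure, differing only in that it does not introduce separate notation for $g_n$.
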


\begin{proof}

Recall that we actually have

$$\liminf_{n\to\infty} f_n=\lim_{n\to\infty}\nearrow \left(\inf_{k\geq n}f_n\right),\hspace{0.2cm}\left(=\sup_n\inf_{k\geq n}f_k\right).$$

From the monotone convergence theorem (MCV) we get

$$\int \liminf_{n\to\infty} f_nd\mu\stackrel{MCV}{=}\lim_{n\to\infty}\int \inf_{k\geq n}f_kd\mu.$$

Now for all $p\geq k$ we have

\begin{align*}
\inf_{n\geq k}f_n &\leq f_p\\
\Longrightarrow\int \inf_{n\geq k}f_nd\mu &\leq \int f_pd\mu\\
\Longrightarrow\int \inf_{n\geq k}f_nd\mu &\leq \inf_{p\geq k}\int f_pd\mu\\
\Longrightarrow\lim_{k\to\infty}\int\inf_{n\geq k}f_nd\mu &\leq \lim_{k\to\infty}\uparrow\inf_{p\geq k}\int f_pd\mu=\liminf_{n\to\infty}\int f_nd\mu,
\end{align*}
which proves the claim.
\end{proof}

\section{Integrable functions}
\begin{defn}[Integrable]
Let $(E,\A,\mu)$ be a measure space and let $f:E\longrightarrow \R$ be a measurable map. We say that $f$ is integrable with respect to $\mu$ if 

$$\int \vert f\vert d\mu<\infty.$$

Moreover, if $f$ is integrable, we define its integral to be given by
$$\int fd\mu =\int f^+d\mu -\int f^-d\mu.$$
\end{defn}

\begin{rem} It always holds that

$$\int f^{\pm}d\mu\leq \int \vert f\vert d\mu.$$

For instance 

$$\int_{\mathbb{R}^+}\frac{\sin(x)}{x}dx=\frac{\pi}{2},\hspace{0.3cm}\text{but}\hspace{0.3cm}\int_{\mathbb{R}^+}\left\vert\frac{\sin(x)}{x}\right\vert dx=\infty.$$

Moreover, we will denote by $\mathcal{L}^1(E,\mathcal{A},\mu)$ the space of integrable (and measurable) functions. Furthermore, we denote by $\mathcal{L}^1_+(E,\mathcal{A},\mu)$ the same space, but containing only positive functions.
\end{rem}

\begin{prop}
Let $(E,\A,\mu)$ be a measure space. Then the following hold.
\begin{enumerate}[$(i)$]
\item{If $f\in\mathcal{L}^1(E,\mathcal{A},\mu)$, then  $\left\vert\int fd\mu\right\vert \leq \int \vert f\vert d\mu.$
}
\item{$\mathcal{L}^1(E,\mathcal{A},\mu)$ is a vector space and the map $f\mapsto \int\vert f\vert d\mu$ is a linear form.
}
\item{If $f,g\in\mathcal{L}^1(E,\mathcal{A},\mu)$ and $f\leq g$, then $\int fd\mu\leq \int gd\mu.$
}
\item{If $f,g\in\mathcal{L}^1(E,\mathcal{A},\mu)$ and $f=g$ a.e., then  $\int fd\mu=\int gd\mu.$}
\end{enumerate}
\end{prop}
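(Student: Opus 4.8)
The plan is to derive all four statements from the results already established for nonnegative measurable functions—namely the positive linearity and monotonicity of $\int\cdot\,d\mu$ on $\mathcal{L}^1_+(E,\A,\mu)$ (Proposition 1.6 of this chapter) and the characterization $\int h\,d\mu=0\iff h=0$ a.e. (the preceding proposition)—together with the definition $\int f\,d\mu=\int f^+\,d\mu-\int f^-\,d\mu$ and the pointwise identities $|f|=f^++f^-$ and $f=f^+-f^-$. I note in passing that the ``linear form'' in (ii) must be read as $f\mapsto\int f\,d\mu$, since $f\mapsto\int|f|\,d\mu$ is only a seminorm.

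First I would prove (i). As $f\in\mathcal{L}^1$, both $\int f^+\,d\mu$ and $\int f^-\,d\mu$ are finite, so the difference is well defined, and the triangle inequality in $\R$ followed by additivity on $\mathcal{L}^1_+$ gives
$$\left|\int f\,d\mu\right|=\left|\int f^+\,d\mu-\int f^-\,d\mu\right|\leq\int f^+\,d\mu+\int f^-\,d\mu=\int(f^++f^-)\,d\mu=\int|f|\,d\mu.$$
For the vector-space part of (ii), for $f,g\in\mathcal{L}^1$ and $a,b\in\R$ the bound $|af+bg|\leq|a|\,|f|+|b|\,|g|$ combined with monotonicity and positive linearity yields $\int|af+bg|\,d\mu\leq|a|\int|f|\,d\mu+|b|\int|g|\,d\mu<\infty$, so $af+bg\in\mathcal{L}^1$.

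The main obstacle is the additivity of $f\mapsto\int f\,d\mu$, because one cannot simply write $(f+g)^+=f^++g^+$. Instead I would start from the identity between nonnegative functions
$$(f+g)^++f^-+g^-=(f+g)^-+f^++g^+,$$
and apply positive linearity to both sides:
$$\int(f+g)^+\,d\mu+\int f^-\,d\mu+\int g^-\,d\mu=\int(f+g)^-\,d\mu+\int f^+\,d\mu+\int g^+\,d\mu.$$
All six integrals are finite (by the vector-space step), so rearranging gives $\int(f+g)\,d\mu=\int f\,d\mu+\int g\,d\mu$. For scalars I would split into the cases $a\geq0$, where $(af)^\pm=af^\pm$, and $a<0$, where $(af)^+=|a|f^-$ and $(af)^-=|a|f^+$; in both cases $\int af\,d\mu=a\int f\,d\mu$ follows from positive homogeneity on $\mathcal{L}^1_+$. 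Together these show $f\mapsto\int f\,d\mu$ is a linear form.

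Finally, (iii) and (iv) are short consequences of (i) and (ii). For (iii), $f\leq g$ gives $g-f\geq0$, so additivity and nonnegativity of the integral of a positive function yield $\int g\,d\mu-\int f\,d\mu=\int(g-f)\,d\mu\geq0$. For (iv), $f=g$ a.e. gives $|f-g|=0$ a.e., hence $\int|f-g|\,d\mu=0$ by the earlier characterization; then combining (i) with additivity,
$$\left|\int f\,d\mu-\int g\,d\mu\right|=\left|\int(f-g)\,d\mu\right|\leq\int|f-g|\,d\mu=0,$$
so $\int f\,d\mu=\int g\,d\mu$.
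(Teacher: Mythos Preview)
Your proof is correct and follows essentially the same approach as the paper: the same triangle-inequality argument for (i), the same identity $(f+g)^++f^-+g^-=(f+g)^-+f^++g^+$ for additivity and the same sign-split for homogeneity in (ii), and the same $g=f+(g-f)$ trick for (iii). The only minor deviation is in (iv), where the paper instead observes that $f=g$ a.e.\ forces $f^{\pm}=g^{\pm}$ a.e.\ and then invokes the already-proved positive case directly; your route via $\int|f-g|\,d\mu=0$ and part (i) is equally valid, and your remark that the ``linear form'' must be $f\mapsto\int f\,d\mu$ rather than $f\mapsto\int|f|\,d\mu$ correctly catches a typo in the statement.
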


\begin{proof}We show each point seperately.
\begin{enumerate}[$(i)$]
\item{Let $f\in\mathcal{L}^1(E,\mathcal{A},\mu)$. Then we can obtain

\begin{align*}
\left\vert \int fd\mu\right\vert &=\left\vert \int f^+d\mu-\int f^-d\mu\right\vert\\
&\leq \left\vert \int f^+d\mu\right\vert +\left\vert\int f^-d\mu\right\vert\\
&=\int f^+d\mu +\int f^- d\mu\\
&=\int \left(f^++f^-\right)d\mu\\
&=\int \vert f\vert d\mu.
\end{align*}
}
\item{Indeed, $\mathcal{L}^1(E,\mathcal{A},\mu)$ is a linear space and for $f\in \mathcal{L}^1(E,\A,\mu)$ we get that the map
\begin{equation}
\label{map}
f\longmapsto\int fd\mu
\end{equation}
is a linear form. First we want to show that $\int afd\mu=a\int fd\mu$ for some $a\in\R$. Let therefore $a\in \R$ and consider the case where $a\geq 0$ and the one where $a<0$ as follows.
\begin{align*}
\underline{\underline{a\geq 0}}:\hspace{0.5cm}\int(af)d\mu&=\int(af)^+d\mu-\int (af)^-d\mu\\
&= a\int f^+d\mu -a\int f^-d\mu\\
&= a\int fd\mu.
\end{align*}

\begin{align*}
\underline{\underline{a<0}}:\hspace{0.5cm}\int (af)d\mu&=\int (af)^+d\mu-\int (af)^-d\mu\\
&=(-a)\int f^-d\mu -\int f^+d\mu\\
&=a\int fd\mu.
\end{align*}

If $f$ and $g$ are in $\mathcal{L}^1(E,\mathcal{A},\mu)$, the inequality $\vert f+g\vert\leq \vert f\vert +\vert g\vert$ implies that $(f+g)\in\mathcal{L}^1(E,\mathcal{A},\mu)$. One has to check the linearity of the map (\ref{map}). It is easy to obtain the following implications.
\begin{align*}
(f+g)^+-(f+g)^-&=(f+g)=f^+-f^-+g^+-g^-\\
\Longrightarrow(f+g)^++f^-+g^-&=(f+g)^-+f^++g^+\\
\Longrightarrow\int (f+g)^+d\mu+\int f^-d\mu+\int g^-d\mu&=\int (f+g)^-d\mu+\int f^+d\mu+\int g^+d\mu\\
\Longrightarrow\int (f+g)^+d\mu-\int(f+g)^-d\mu&=\int f^+d\mu-\int f^-d\mu+\int g^+d\mu-\int g^-d\mu\\
\Longrightarrow\int (f+g)d\mu&=\int fd\mu+\int gd\mu.
\end{align*}
}
\item{Let $f,g\in\mathcal{L}^1(E,\mathcal{A},\mu)$ with $f\leq g$. We can write $g=f+(g-f)$ and by assumption $g-f\geq 0$, which also implies that $\int (g-f)d\mu\geq 0$. Therefore we have
\[
\int gd\mu=\int fd\mu+\int(g-f)d\mu\geq \int f d\mu, 
\]
which proves the claim.
}

\item{Let $f,g\in\mathcal{L}^1(E,\A,\mu)$ with $f=g$ a.e., which also implies that $f^+=g^+$ a.e. and $f^-=g^-$ a.e. Thus it follows that 
\[
\int f^+d\mu=\int g^+d\mu\hspace{0.3cm}\text{and}\hspace{0.3cm}\int f^-d\mu=\int g^-d\mu.
\]
Therefore $\int fd\mu=\int gd\mu$.
}
\end{enumerate}
\end{proof}

\begin{exer}
Show that if $f,g\in\mathcal{L}^1(E,\mathcal{A},\mu)$ and $f\leq g$ a.e., then 
$$\int fd\mu\leq \int gd\mu.$$
\end{exer}

\subsection{Extension to the complex case}
Let $(E,\A,\mu)$ be a measure space and let $f:E\longrightarrow \C(\cong \mathbb{R}^2)$ be a measurable map, which basically means that $Re(f)$ and $Im(f)$ are both measurable maps. We say that $f$ is integrable if $Re(f)$ and $Im(f)$ are both integrable, or equivalently
\[
\int \vert f\vert d\mu<\infty,
\]
and we write $f\in\mathcal{L}^1_\C(E,\A,\mu)$. This is simply because of the fact that
\[
\int fd\mu=\int (Re(f)+Im(f))d\mu=\int Re(f)d\mu+\int Im(f)d\mu.
\]
Moreover, the properties $(i),(ii)$ and $(iv)$ of proposition also hold for the complex case.

\section{Lebesgue's dominated convergence theorem}

An important question of integration theory is whether the interchanging of limits and integrals is actually possible and under which condition on the sequence of maps on some measure space. We have already seen the monotone convergence theorem and Fatou's lemma, giving some simple conditions for such an interchange. Another way of achieving the same with different conditions is due to Lebesgue, who gave a more general condition, which is going to be discussed now. 

\begin{thm}[Lebesgue's dominated convergence theorem]
Let $(E,\A,\mu)$ be a measure space and let $(f_n)_{n\in\N}$ be a sequence of functions in $\mathcal{L}^1(E,\mathcal{A},\mu)$ (resp. $\mathcal{L}^1_{\mathbb{C}}(E,\mathcal{A},\mu)$). Moreover, assume that the following hold.
\begin{enumerate}[$(i)$]
\item{There exists a measurable map $f$ on $E$ with values in $\mathbb{R}$ (resp. $\mathbb{C}$) such that for all $x\in E$
$$\lim_{n\to\infty}f_n(x)=f(x).$$
}
\item{There exists a psotive and measurable map $g:E\longrightarrow \mathbb{R}$ such that 
$$\int gd\mu<\infty$$
and such that $\vert f_n\vert \leq g$ a.e. for all $n\in\N$.
}
\end{enumerate}
Then $f\in \mathcal{L}^1(E,\mathcal{A},\mu)$ (resp. $\mathcal{L}^1_{\mathbb{C}}(E,\mathcal{A},\mu))$ and we have
\[
\lim_{n\to\infty}\int\vert f_n-f\vert d\mu=0
\]
and hence 
\[
\lim_{n\to\infty}\int f_nd\mu=\int fd\mu.
\]
\end{thm}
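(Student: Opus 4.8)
The plan is to apply Fatou's lemma to a cleverly chosen sequence of nonnegative functions, the standard trick for deducing dominated convergence from Fatou.

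First I would establish integrability of the limit. Since $\vert f_n\vert \leq g$ a.e. for all $n$ and $f_n \to f$ pointwise, passing to the limit gives $\vert f\vert \leq g$ a.e. Because $\int g \, d\mu < \infty$, it follows that $\int \vert f\vert \, d\mu \leq \int g \, d\mu < \infty$, so $f \in \mathcal{L}^1(E,\mathcal{A},\mu)$ (and the $\vert Re(f)\vert, \vert Im(f)\vert \leq \vert f\vert$ remark handles the complex case). This also shows each $f_n - f$ is integrable, so the quantities $\int \vert f_n - f\vert \, d\mu$ all make sense.

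The heart of the argument is to consider the sequence $g_n := 2g - \vert f_n - f\vert$. Each $g_n$ is measurable, and since $\vert f_n - f\vert \leq \vert f_n\vert + \vert f\vert \leq 2g$ a.e., we have $g_n \geq 0$ a.e. Moreover, because $f_n(x) \to f(x)$ for every $x$, we get $\vert f_n - f\vert \to 0$ pointwise, hence $g_n \to 2g$ pointwise (a.e.), and in particular $\liminf_n g_n = 2g$ a.e. Applying Fatou's lemma to $(g_n)_{n\in\N}$ yields
\[
\int 2g \, d\mu = \int \liminf_{n\to\infty} g_n \, d\mu \leq \liminf_{n\to\infty} \int g_n \, d\mu = \liminf_{n\to\infty}\left(\int 2g \, d\mu - \int \vert f_n - f\vert \, d\mu\right).
\]
Using $\liminf_n(c - a_n) = c - \limsup_n a_n$ for the constant $c = \int 2g\,d\mu < \infty$, this rearranges to $\limsup_{n\to\infty} \int \vert f_n - f\vert \, d\mu \leq 0$. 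Since each integral is nonnegative, I conclude $\lim_{n\to\infty} \int \vert f_n - f\vert \, d\mu = 0$. Finally, the inequality $\left\vert \int f_n \, d\mu - \int f \, d\mu \right\vert = \left\vert \int (f_n - f) \, d\mu \right\vert \leq \int \vert f_n - f\vert \, d\mu$ (from property $(i)$ of the integrability proposition and linearity) gives $\lim_{n\to\infty} \int f_n \, d\mu = \int f \, d\mu$.

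The only genuinely delicate point is ensuring that the subtraction step is legitimate: this is precisely why the finiteness $\int 2g \, d\mu < \infty$ is essential, since one cannot cancel an infinite constant from both sides. I would take care to state that the a.e. failures (where $\vert f_n\vert \leq g$ or $g_n \geq 0$ may fail) lie in a common null set and hence do not affect any of the integrals, so all manipulations hold a.e. and therefore at the level of integrals.
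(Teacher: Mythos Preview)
Your proof is correct and follows essentially the same route as the paper: both apply Fatou's lemma to the nonnegative sequence $2g-\vert f_n-f\vert$, use finiteness of $\int 2g\,d\mu$ to cancel and obtain $\limsup_n\int\vert f_n-f\vert\,d\mu\leq 0$, and then deduce convergence of the integrals. Your treatment of the a.e.\ exceptional sets is in fact slightly cleaner than the paper's, which handles them via an explicit reduction to indicator-truncated functions $\tilde f_n=\one_A f_n$.
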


\begin{proof}
Let us first assume some stronger assumptions.
\begin{enumerate}[$(i)$]
\item{$\lim_{n\to\infty}f_n(x)=f(x)$ for all $x\in E$.
}
\item{There exists a positive and measurable map $g:E\longrightarrow \mathbb{R}$ such that 

$$\int gd\mu<\infty$$

and $\vert f_n(x)\vert \geq g(x)$ for all $x\in E$. 
}
\end{enumerate}
Consider the general case where $(i)$ and $(ii)$ hold. Now define the set

$$A=\left\{x\in E\mid f_n(x)\xrightarrow{n\to\infty}f(x)\hspace{0.1cm}\text{and}\hspace{0.1cm}\forall n, \vert f_n(x)\vert\leq g(x)\right\}.$$

Then $\mu(A^C)=0$. Let us now apply the first part of the proof to 
\begin{align*}
\tilde{f}_n(x)&=\one_{A}(x)f_n(x)\\
\tilde{f}(x)&=\one_{A}(x)f(x).
\end{align*}
Thus we have that $f=\tilde{f}$ a.e. and $f_n=\tilde{f}_n$ a.e. Therefore, we get the following equations.

\begin{align*}
\int f_nd\mu&=\int \tilde f_nd\mu\\
\int fd\mu&=\int \tilde fd\mu\\
\lim_{n\to\infty}\int f_nd\mu&=\int fd\mu\\
\lim_{n\to\infty}\int\vert f-f_n\vert d\mu&=0
\end{align*}

Since now $\vert f\vert \leq g$ and $\int gd\mu<\infty$, we get that
$\int \vert f\vert d\mu<\infty$. Hence we have
$$\vert f-f_n\vert \leq 2g\hspace{0.3cm}\text{and}\hspace{0.3cm}\vert f-f_n\vert\xrightarrow{n\to\infty}0.$$
Now, applying Fatou's lemma, we can observe that
\[
\liminf_{n\to\infty}\int 2g-\vert f-f_n\vert d\mu\geq \int \liminf_{n\to\infty} 2g-\vert f-f_n\vert d\mu,
\]
and therefore 
$$\liminf_{n\to\infty} \int 2g-\vert f-f_n\vert d\mu\geq \int 2g d\mu,$$
which implies that
$$\int 2gd\mu-\limsup_{n\to\infty}\int \vert f-f_n\vert d\mu\geq \int 2g d\mu.$$
it is easy now to observe that $\limsup_{n\to\infty}\int \vert f-f_n\vert d\mu\leq  0,$ which basically implies that $\lim_{n\to\infty}\int \vert f-f_n\vert d\mu =0$ and thus we can finally deduce
$$\left\vert \int fd\mu-\int f_nd\mu\right\vert=\left\vert\int(f-f_n)d\mu\right\vert\leq  \int \vert f-f_n\vert d\mu\xrightarrow{n\to\infty}0$$
which simply means that
$$\lim_{n\to\infty}\int f_n d\mu=\int fd\mu.$$
\end{proof}

\section{Parameter Integrals}
In this section we want to consider the special case of an integral. Basically, we want to look at integrals of the form  
$$\int f(u,x)d\mu,$$
for some $u\in E$, which actually gives rise to a map 
\begin{align*}
F:E&\longrightarrow \R\\
u&\longmapsto F(u)=\int f(u,x)d\mu
\end{align*}
\begin{ex}[Gamma function]
Let us start with a special example of such a map. The Gamma function is defined as the map
\begin{align*}
\Gamma:\C&\longrightarrow \R\\
s&\longmapsto \Gamma(s)=\int_0^\infty t^{s-1}e^{-t}dt,
\end{align*}
The question is, whether this integral converges for all $s\in\C$. This is certainly not the case and only possible if $Re(s)>0$. An important functional equation is given by 
\[
\Gamma(n+1)=n!,
\]
where $n\in\N$. We will need this function later to describe the volume of the unit ball in $\R^d$.
\end{ex}
\begin{thm}
Let $(E,\A,\mu)$ be a measure space. Let $(U,d)$ be a metric space and let $f:U\times E\longrightarrow\Big|_\C^\R$ with $u_0\in U$. Moreover, assume that the following hold.
\begin{enumerate}[$(i)$]
\item{The map $x\mapsto f(u,x)$ is measurable for all $x\in U$.
}
\item{The map $u\mapsto f(u,x)$ is continuous at $u_0$ a.e.
}
\item{There exists a measurable function $g\in\mathcal{L}^1(E,\mathcal{A},\mu)$ such that for all $u\in U$
$$\vert f(u,x)\vert \leq  g(x)\hspace{0.3cm}\text{-a.e.}$$ 
}
\end{enumerate}
Then the map $F(u)=\int_Ef(u,x)d\mu$ is well defined and continuous at $u_0$.
\end{thm}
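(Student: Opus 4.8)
The plan is to separate the statement into two claims — well-definedness of $F$ and continuity at $u_0$ — and to reduce the continuity assertion to a single application of Lebesgue's dominated convergence theorem through sequential continuity.

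First I would settle well-definedness. Fix $u \in U$. Hypothesis $(i)$ says $x \mapsto f(u,x)$ is measurable, and hypothesis $(iii)$ gives $\vert f(u,x)\vert \leq g(x)$ $\mu$-a.e. with $g \in \mathcal{L}^1(E,\A,\mu)$. By monotonicity of the integral, $\int_E \vert f(u,x)\vert \, d\mu \leq \int_E g \, d\mu < \infty$, so $f(u,\cdot) \in \mathcal{L}^1(E,\A,\mu)$ and $F(u) = \int_E f(u,x)\, d\mu$ is a well-defined real (resp. complex) number.

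Since $(U,d)$ is a metric space, continuity of $F$ at $u_0$ is equivalent to sequential continuity: it suffices to show that whenever $(u_n)_{n \in \N}$ is a sequence in $U$ with $d(u_n,u_0) \to 0$, one has $F(u_n) \to F(u_0)$. Fixing such a sequence, I would set $f_n := f(u_n,\cdot)$ and aim to apply the dominated convergence theorem to $(f_n)_{n\in\N}$ with limit $f(u_0,\cdot)$ and dominating function $g$.

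The hard part will be the bookkeeping of the null sets, since both the continuity hypothesis $(ii)$ and the domination hypothesis $(iii)$ hold only almost everywhere, and in $(iii)$ the exceptional set depends on $u$. I would let $N_0$ be the null set off which $u \mapsto f(u,x)$ is continuous at $u_0$ (from $(ii)$), and for each $n$ let $N_n$ be the null set off which $\vert f(u_n,x)\vert \leq g(x)$ (from $(iii)$ with $u = u_n$). Then $N := N_0 \cup \bigcup_{n \in \N} N_n$ is a countable union of null sets, so $\mu(N) = 0$ by $\sigma$-subadditivity. For every $x \in E \setminus N$, the continuity of $u \mapsto f(u,x)$ at $u_0$ combined with $u_n \to u_0$ gives $f_n(x) = f(u_n,x) \to f(u_0,x)$, while simultaneously $\vert f_n(x)\vert \leq g(x)$ for all $n$. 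Hence $f_n \to f(u_0,\cdot)$ $\mu$-a.e. and $\vert f_n\vert \leq g$ $\mu$-a.e. with $g$ integrable, which are exactly the hypotheses of the dominated convergence theorem (in its a.e.\ form, as established in the proof of that theorem by passing to the full-measure set $E \setminus N$). It follows that $\int_E f_n \, d\mu \to \int_E f(u_0,\cdot)\, d\mu$, that is $F(u_n) \to F(u_0)$. As the sequence $(u_n)$ was arbitrary, $F$ is continuous at $u_0$.
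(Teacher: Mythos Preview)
Your proof is correct and follows essentially the same approach as the paper: well-definedness from the integrable dominator, then sequential continuity via dominated convergence applied to $f_n=f(u_n,\cdot)$. Your handling of the countable union of null sets is in fact more careful than the paper's argument, which simply invokes dominated convergence without tracking the $u$-dependent exceptional sets from $(iii)$.
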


\begin{proof}
From $(iii)$ follows that the map $x\mapsto f(u,x)$ is integrable for every $u\in U$, and so $F(u)$ is well defined. Take a sequence $(u_n)_{n\in\N}\in U$ such that $u_n\xrightarrow{n\to \infty} u_0$, which basically means that $d(u_n,u_0)\xrightarrow{n\to \infty} 0$. Then by continuity from $(ii)$, we get that $f(u_n,x)\xrightarrow{n\to \infty} f(u_0,x)$ a.e. and from $(iii)$ we can apply Lebesgue's dominated convergence theorem to obtain that
$$\lim_{n\to\infty}F(u_n)=\lim_{n\to\infty}\int f(u_n,x)d\mu=\int f(u_0,x)d\mu=F(u_0).$$
\end{proof}
\begin{rem}
Observe that $F(u)$ is continuous if $F$ is continuous at every point $u\in U$.
\end{rem}

\begin{ex}[Fourier analysis] we want to give several examples of Fourier analysis at this point.
\begin{enumerate}
\item{Let $\mu$ be a measure on $(\mathbb{R},\mathcal{B}(\mathbb{R}))$ such that $\mu(\{x\})=0$ for all $x\in\R$. Moreover, let $\phi\in\mathcal{L}^1(\R,\B(\R),\mu)$. Then the map
\[
F(u)=\int_\R\one_{(-\infty,u]}(x)\phi(x)d\mu=\int_{(-\infty,u]}\phi(x)d\mu
\]
is continuous. Here we have $f(u,x)=\one_{(-\infty,u]}(x)\phi(x)$. The map $u\mapsto f(u,x)$ is continuous at $u_0$ for all $x\in\R\setminus\{u_0\}$ for some $u_0\in\R$. But $\mu(\{u_0\})=0$, which implies that the map $u\mapsto f(u,x)$ is a.e. continuous at $u_0$ with $\vert f(u,x)\vert\leq  \phi(x)\vert$ and $\vert\phi\vert$ is integrable by assumption.
}
\item{Consider now the Lebesgue measure $\lambda$ and $\phi\in\mathcal{L}^1(\mathbb{R},\mathcal{B}(\mathbb{R}),\lambda)$. Define moreover

$$\hat{\phi}(u)=\int_{\mathbb{R}}e^{iux}\phi(x)d\lambda,$$

which is called the \emph{Fourier-transform} of $\phi$. The map $u\mapsto e^{iux}\phi(x)$ is actually continuous for all $x\in \R$, which implies that $\hat{\phi}(u)$ is continuous at any $u\in\mathbb{R}$.
}
\item{Let again $\phi\in\mathcal{L}^1(\mathbb{R},\mathcal{B}(\mathbb{R}),\lambda)$ and $h:\mathbb{R}\longrightarrow \mathbb{R}$ be a continuous and bounded map. The \emph{convolution} of $h$ and $\phi$ is given by

$$(h*\phi)(u)=\int_{\mathbb{R}}h(u-x)\phi(x)d\lambda(x).$$

The map $(h*\phi)$ is continuous at any $u\in\mathbb{R}$. Moreover, the map $u\mapsto h(u-x)\phi(x)$ is continuous for all $x\in\mathbb{R}$. Since $h$ is bounded, there exists a constant $k>0$, such that $\vert h(x)\vert\leq  k$ for all $x\in\R$.
}
\end{enumerate}
\end{ex}

\section{Differentiation of Parameter Integrals}

\begin{thm}[Differentiation of Integrals]
\label{diff}
Let $I\in \R$ be a real interval and $(E,\A,\mu)$ a measure space. Let $f:I\times E\longrightarrow \Big|^\R_\C$ and let $u_0\in I$. Moreover, assume that the following hold.
\begin{enumerate}[$(i)$]
\item{The map $x\mapsto f(u,x)$ is in $\mathcal{L}^1(E,\A,\mu)$ for all $u\in I$.
}
\item{The map $u\mapsto f(u,x)$ is a.e. differentiable at $u_0$  with derivation denoted by $\partial_uf(u_0,x)$.
}
\item{There exists a map $g\in\mathcal{L}^1(E,\A,\mu)$ such that for all $u\in I$ 
\[
\vert f(u,x)-f(u_0,x)\vert\leq  g(x)\vert u-u_0\vert. 
\]
}
\end{enumerate}
Then the map $F(u)=\int_Ef(u,x)d\mu$ is differentiable at $u_0$ and its derivative is given by 
\[
F'(u)=\int_E\partial_uf(u_0,x)d\mu.
\]
\end{thm}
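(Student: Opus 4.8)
The plan is to reduce the statement to Lebesgue's dominated convergence theorem via the difference quotient, using the sequential characterization of a limit to bridge the gap between the continuous limit $u \to u_0$ and the sequential version of dominated convergence proved earlier in the excerpt. First I would fix the difference quotient: for $u \in I$ with $u \neq u_0$, define the measurable function
\[
\phi_u(x) = \frac{f(u,x) - f(u_0,x)}{u - u_0}.
\]
By hypothesis $(i)$ both $x \mapsto f(u,x)$ and $x \mapsto f(u_0,x)$ lie in $\mathcal{L}^1(E,\A,\mu)$, so their difference is integrable and, by linearity of the integral,
\[
\frac{F(u) - F(u_0)}{u - u_0} = \int_E \phi_u(x) \, d\mu.
\]
Thus the existence and value of $F'(u_0)$ amount to computing the limit of $\int_E \phi_u \, d\mu$ as $u \to u_0$.

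Next I would record the two ingredients that feed dominated convergence. Hypothesis $(ii)$ gives, for a.e. $x$, that $\phi_u(x) \to \partial_u f(u_0,x)$ as $u \to u_0$, while hypothesis $(iii)$ yields the uniform domination $\vert \phi_u(x) \vert \leq g(x)$ for all $u \neq u_0$ and a.e. $x$, with $g \in \mathcal{L}^1(E,\A,\mu)$. Before applying the convergence theorem I would note that $\partial_u f(u_0,\cdot)$ is itself measurable (as an a.e. limit of the measurable $\phi_u$ along any sequence) and, letting $u \to u_0$ in the domination, satisfies $\vert \partial_u f(u_0,x) \vert \leq g(x)$ a.e., hence belongs to $\mathcal{L}^1(E,\A,\mu)$; this guarantees the right-hand side of the claimed formula is well defined.

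The main obstacle is that Lebesgue's dominated convergence theorem as stated in the excerpt applies to \emph{sequences} $(f_n)_{n \geq 1}$, whereas here the limit is taken over the continuous parameter $u$. To handle this I would invoke the sequential characterization of limits: it suffices to show that for \emph{every} sequence $(u_n)_{n \geq 1}$ in $I \setminus \{u_0\}$ with $u_n \to u_0$, the quantity $\int_E \phi_{u_n} \, d\mu$ converges to the same value $\int_E \partial_u f(u_0,x) \, d\mu$. For a fixed such sequence, the functions $\phi_{u_n}$ satisfy $\phi_{u_n}(x) \to \partial_u f(u_0,x)$ a.e. and $\vert \phi_{u_n}(x) \vert \leq g(x)$ a.e. with $g$ integrable, so the dominated convergence theorem applies directly and gives
\[
\lim_{n \to \infty} \int_E \phi_{u_n}(x) \, d\mu = \int_E \partial_u f(u_0,x) \, d\mu.
\]
Since the limiting value is independent of the chosen sequence, the continuous limit exists and equals this common value.

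Finally I would assemble the conclusion: the computation above shows
\[
\lim_{u \to u_0} \frac{F(u) - F(u_0)}{u - u_0} = \int_E \partial_u f(u_0,x) \, d\mu,
\]
which is precisely the assertion that $F$ is differentiable at $u_0$ with $F'(u_0) = \int_E \partial_u f(u_0,x) \, d\mu$. The only subtle points to spell out carefully are the measurability and integrability of $\partial_u f(u_0,\cdot)$ and the sequence-to-continuum reduction; everything else is linearity of the integral and a direct citation of dominated convergence.
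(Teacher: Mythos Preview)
Your proposal is correct and follows exactly the same route as the paper: form the difference quotients $\phi_{u_n}$ along an arbitrary sequence $u_n\to u_0$, use hypothesis $(iii)$ for domination and $(ii)$ for a.e.\ convergence, and apply dominated convergence. Your write-up is in fact more careful than the paper's, since you explicitly justify the sequential-to-continuous reduction and the measurability and integrability of $\partial_u f(u_0,\cdot)$, points the paper leaves implicit.
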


\begin{rem}
It is often useful to replace $(ii)$ and $(iii)$ with the following points respectively.
\begin{enumerate}[$(i)$]

\item{The map $u\mapsto f(u,x)$ is a.e. differentiable at any point in $I$.
}
\item{There exists a map $g\in\mathcal{L}^1(E,\mathcal{A},\mu)$ such that $\vert\partial_uf(u,x)\vert\leq  g(x)$ a.e. for all $u\in I$.
}
\end{enumerate}
Moreover, if $f$ is differentiable on a interval $[a,b]$, there exists a $\theta_{a,b}$, by the mean value theorem, such that
\[
f'(\theta_{a,b})=\frac{f(b)-f(a)}{b-a}.
\]
If $f'$ is also bounded on $[a,b]$, i.e. there is a $M$ such that $\vert f'(x)\vert\leq  M$, then 
\[
\left\vert\frac{f(b)-f(a)}{b-a}\right\vert\leq  M.
\]
\end{rem}

\begin{proof}[Proof of Theorem \ref{diff}]
Let $(u_n)_{n\geq 1}$ be a sequence in $I$ such that $u_n\xrightarrow{n\to\infty}u_0$ and assume that $u_n\not=u_0$ for all $n\geq 1$. Now define the sequence 
\[
\phi_n(x).=\frac{f(u_n,x)-f(u_0,x)}{u_n-u_0}.
\]
Then we can obtain that 
\[
\lim_{n\to\infty}\phi_n(x)=\partial_uf(u_0,x)\hspace{0.3cm}\text{a.e.}
\]
Now $(iii)$ allows us to use Lebesgue's dominated convergence theorem. Hence we get
\[
\lim_{n\to\infty}\frac{F(u_n)-F(u_0)}{u_n-u_0}=\lim_{n\to\infty}\int_E\phi_n(x)d\mu=\int_E\partial_uf(u_0,x)d\mu.
\]
\end{proof}

\begin{ex}[Fourier analysis] Let us give the following examples of Fourier analysis.
\begin{enumerate}[$(i)$]
\item{Let $\phi\in \mathcal{L}^1(\mathbb{R},\mathcal{B}(\mathbb{R}),\lambda)$ be an integrable map such that 

$$\int_\mathbb{R}\vert x\phi(x)\vert d\lambda<\infty.$$

Then its Fourier-transform is given by

$$\hat{\phi}(u)=\int_\mathbb{R}\underbrace{e^{iux}\phi(x)}_{f(u,x)}d\lambda,$$

which is differentiable and the derivative of it is then

$$\hat{\phi}'(u)=i\int_{\mathbb{R}}e^{iux}x\phi(x)d\lambda.$$

Therefore we can write
$$\vert\partial_uf(u,x)\vert=\vert ie^{iux}x\phi(x)\vert=\vert x\phi(x)\vert.$$
}

\item{Let $\phi\in \mathcal{L}^1(\mathbb{R},\mathcal{B}(\mathbb{R}),\lambda)$ and $h:\mathbb{R}\longrightarrow \mathbb{R}$ be a bounded $C^1$-map with bounded derivative $h'$. Then the convolution $(h*\phi)$ is differentiable and its derivative is given by $(h*\phi)'=h'*\phi'$. Recall that the convolution $(h*\phi)$ is given by 

$$(h*\phi)(u)=\int_{\mathbb{R}}\underbrace{h(u-x)\phi(x)}_{f(u,x)}d\lambda.$$

Moreover, $\vert h(u-x)\phi(x)\vert\leq M\vert\phi(x)\vert$, where $M$ is such that $\vert h(x)\vert\leq M$ and $\vert h'(x)\vert\leq M$ for all $x\in\R$. Then

$$\partial_uf(u,x)=h'(u-x)\phi(x),\hspace{0.4cm}\text{and}\hspace{0.4cm}\vert\partial_uf(u,x)\vert \leq M\vert\phi(x)\vert.$$
}
\end{enumerate}
\end{ex}

\begin{exer}
Let $\mu$ be a measure on $(\mathbb{R},\mathcal{B}(\mathbb{R}))$ such that for all $x\in\mathbb{R}$, $\mu(\{x\})=0$. Moreover, let $\phi\in\mathcal{L}^1(\mathbb{R},\mathcal{B}(\mathbb{R}),\mu)$ such that

$$\int_\mathbb{R}\vert x\phi(x)\vert d\mu<\infty.$$

Furthermore, for $u\in \mathbb{R}$, define 

$$F(u)=\int_{\mathbb{R}}(u-x)^+\phi(x)d\mu.$$

Show that $F$ is differentiable and that

$$F'(u)=\int_{(-\infty,u]}\phi(x)d\mu.$$
\end{exer}

\end{appendix}

\addcontentsline{toc}{chapter}{Bibliography}

\end{document}